\newtheorem{theorem}{Theorem}[section]
\newtheorem{lemma}[theorem]{Lemma}
\newtheorem{corollary}[theorem]{Corollary}
\newtheorem{prop}[theorem]{Proposition}
\newtheorem{ass}[theorem]{Assumption}
\newtheorem{pprop}[theorem]{Pseudo-Proposition}
\theoremstyle{definition}
\newtheorem{definition}[theorem]{Definition}
\theoremstyle{remark}
\newtheorem{remark}[theorem]{Remark}
\numberwithin{equation}{section}
\setlist{leftmargin=*}
\newcommand\nc{\newcommand}
\nc{\on}{\operatorname}
\nc{\E}{\mathbf{E}}
\nc{\R}{\mathbb R}
\nc{\C}{\mathbb C}
\nc{\Q}{\mathbb Q}
\nc{\Z}{\mathbb Z}
\nc{\N}{\mathbb N}
\nc{\F}{\mathbb F}
\nc{\wt}{\widetilde}
\nc{\ol}{\overline}
\nc{\short}[3]{0 \longrightarrow #1 \longrightarrow #2 \longrightarrow #3 \longrightarrow 0}
\nc{\pd}[2]{\frac{\partial #1}{\partial #2}}
\nc{\rnc}{\renewcommand}
\nc{\e}{\varepsilon}
\nc{\DMO}{\DeclareMathOperator}
\nc{\grad}{\nabla}
\nc{\Exp}{\mathbf{Exp}}
\nc{\fsp}{\fontdimen2\font=2.1pt}
\rnc{\leq}{\leqslant}
\rnc{\geq}{\geqslant}
\rnc{\d}{\mathrm{d}}
\rnc{\O}{\mathrm{O}}
\rnc{\exp}{\mathbf{Exp}}
\newenvironment{nouppercase}{%
  \renewcommand{\uppercasenonmath}[1]{}}{}
\title{\Large Kardar-Parisi-Zhang Equation from Long-Range Exclusion Processes}
\author{\large Kevin Yang}
\begin{document}
\setstretch{1.0}
\fsp
\begin{nouppercase}
\maketitle
\end{nouppercase}

\begin{abstract}
\fsp We prove here that the height function associated to non-simple exclusion processes with arbitrary jump-length converges to the solution of the Kardar-Parisi-Zhang SPDE under suitable scaling and renormalization. This extends the work of \cite{DT} for arbitrary jump-length and \cite{GJ16} for the non-stationary regime. Thus we answer a ``Big Picture Question" from the AIM workshop on KPZ and also expand on the almost empty set of non-integrable and non-stationary particle systems for which weak KPZ universality is proven. We use an approximate microscopic Cole-Hopf transform as in \cite{DT} but we develop tools to analyze local statistics of the particle system via local equilibrium and work of \cite{GJ16}. Local equilibrium is done via the one-block step in \cite{GPV} for path-space/\emph{dynamic} statistics.
\end{abstract}

{\hypersetup{linkcolor=blue}
\setcounter{tocdepth}{1}
\tableofcontents}

\section{Introduction}
\fsp The Kardar-Parisi-Zhang SPDE, which we call the KPZ equation, is an SPDE whose statistics are conjectured to be universal among a large class of rough dynamic interfaces including fluctuations of burning fronts, bacterial growth colonies, and crack formation. In the physics literature this was unrigorously shown in \cite{KPZ} but mathematical proof is a major open problem. To begin a precise discussion, we write down the KPZ equation with an effective diffusivity $\alpha\in\R_{>0}$/effective asymmetry $\alpha'\in\R$:
\small\begin{align}
\partial_{T} \mathbf{h} \ = \ \frac{\alpha}{2} \partial_{X}^{2} \mathbf{h} \ - \ \frac{\alpha'}{2} | \partial_{X} \mathbf{h} |^{2} \ + \ \alpha^{\frac12} \xi. \label{eq:KPZ}
\end{align}\normalsize\normalsize
The $\xi$-term is a Gaussian space-time white noise of delta-covariance $\E\xi_{T,X}\xi_{S,Y}=\delta_{T-S}\delta_{X-Y}$. The KPZ equation is a \emph{singular} SPDE because of both the roughness of the $\xi$-term and the nonlinear dependence on the slope $\partial_{X}\mathbf{h}$. To motivate the universality problem of interest here, both from a mathematical and physics perspective, we give the following interpretation of ``singular".
\begin{itemize}[leftmargin=*]
\item Consider a growth model $\wt{\mathbf{h}}$ given by the solution of a version of \eqref{eq:KPZ} but where the quadratic function of the slope is instead an arbitrary nonlinear function. To get the KPZ equation from the $\wt{\mathbf{h}}$-equation, we may Taylor expand the nonlinear function there as a function of the slope $\partial_{X}\wt{\mathbf{h}}$. The first two terms, which are constant and linear in $\partial_{X}\wt{\mathbf{h}}$, can be removed by elementary transformations, and the leading-order term that remains is the quadratic in \eqref{eq:KPZ}. Though this argument may hold water at a heuristic level, it ultimately gives an incorrect effective asymmetry $\alpha'$ because of the singular nature of \eqref{eq:KPZ}. Dealing with this singular equation rigorously is a major goal of \cite{Hai13} which was later generalized into a theory of regularity structures in \cite{Hai14}. Rigorous application of regularity structures to make the current bullet point on universality \emph{correct} is done in \cite{HQ}.
\item We emphasize that though regularity structures were successfully implemented to confirm universality of the KPZ equation for SPDE growth models in \cite{HQ}, it depends on the noise in these SPDEs to be space-time white noise. This allows one to build solutions of said SPDEs using explicit Gaussian-based objects. Work of \cite{HS} extends universality to more general continuum noises. Extensions of universality via regularity structures to a few types of semi-discrete noise are done in \cite{EH,Matetski}. However, this does not include noises that are relevant for non-simple exclusions. In particular, using regularity structures for universality in the context of general particle systems with a genuinely discrete flavor is open.
\end{itemize}
The previous bullet point illustrates difficulties/interesting aspects of the proposed universality of the KPZ equation. The rest of this introduction before we introduce the particle system of interest in this paper is organized as follows. First we introduce a solution to KPZ which will allow us to avoid dealing with the singular features of the KPZ equation \eqref{eq:KPZ}. Then we record a list of results concerning convergence to KPZ for special \emph{integrable} or \emph{solvable} particle systems. We conclude with progress beyond solvable models and the contributions of this paper. Later in this introduction section we will discuss additional background.

To deal with the singular nature of the KPZ equation, instead of using the regularity structures in \cite{Hai14} we will instead employ the following Cole-Hopf transform/solution from \cite{BG}. First, for $\alpha\in\R_{>0}$ and $\lambda\in\R$ we introduce the \emph{stochastic heat equation}:
\small\begin{align}
\partial_{T} \mathbf{Z} \ = \ \frac{\alpha}{2} \partial_{X}^{2} \mathbf{Z} \ - \ \lambda \alpha^{\frac12} \mathbf{Z}\xi. \label{eq:SHE}
\end{align}\normalsize\normalsize
%
\begin{itemize}[leftmargin=*]
\item We specialize to $\lambda=\alpha'\alpha^{-1}$ and \emph{define} the solution of the KPZ equation \eqref{eq:KPZ} as $\mathbf{h}\overset{\bullet}=-\lambda^{-1}\log\mathbf{Z}$. We clarify this log-transform is well-defined because \eqref{eq:SHE} admits a continuous solution via Ito calculus for its Duhamel form, and for positive initial data it remains positive with probability 1; see \cite{Mu}. We will also call the stochastic heat equation \eqref{eq:SHE} the ``SHE".
\item We must also include an infinite renormalization/counter-term in $\mathbf{h}$ to handle singular aspects of KPZ; see Remark \ref{remark:ch2RG}.
\end{itemize}
In \cite{BG} the authors show that the height function associated to a nearest-neighbor ASEP/"asymmetric simple exclusion process" model converges to the Cole-Hopf solution of KPZ, and since then a number of similar results were obtained in \cite{CGST}, \cite{CST}, and \cite{CT}, for example. The key input for these papers is that the height function for the microscopic particle system exhibits an \emph{algebraic duality}; the exponential of the height function satisfies an \emph{exact} microscopic version of the stochastic heat equation. Duality of such particle systems therefore provides a direction towards establishing the KPZ equation scaling limit for microscopic height functions while only encountering stochastic analysis of the SHE and thus \emph{without} directly addressing the singular features of the KPZ equation; recall from earlier that addressing these singular aspects for SPDEs in the context of particle systems is open. However duality is an indication of integrability or solvability, and the ``set of integrable models" is very sparse and rare among all particle systems, so universality of the KPZ equation is unlikely to be solved with ideas based solely on integrability.

In a step outside the set of integrable models, the authors in \cite{DT} prove universality of the KPZ equation for the height functions associated to a non-simple generalization of ASEP in \cite{BG}. The exponential of the height function solves a microscopic SHE as in \cite{BG} but with additional error terms. In \cite{DT}, the authors assume the maximal jump-length in the non-simple model is at most 3 in which case the aforementioned error terms can be addressed with standard ideas of hydrodynamic limits. In a nutshell, our contribution is dealing with these errors for arbitrary jump-lengths. The approach that we will take to doing this is analyzing these errors using a homogenization strategy based on ideas from the first step in a general ``Boltzmann-Gibbs principle" originally introduced in \cite{BR84}; we cite \cite{GJ15} for a refined version of this Boltzmann-Gibbs principle. The Boltzmann-Gibbs principle is generally accessible for systems with invariant measure initial conditions; for general non-stationary initial measures it becomes a difficult problem. One version of the Boltzmann-Gibbs principle was established in \cite{CYau} for non-equilibrium initial measures. However, the proofs within \cite{CYau} are not applicable in this article because of a few stochastic analytic difficulties for the SHE that come from the singular features of the KPZ equation. Thus, we introduce another mechanism to access parts of a non-equilibrium Boltzmann-Gibbs principle based on adapting the classical one-block scheme in \cite{GPV} to path-space/dynamic statistics of the particle system. This gives another \emph{probabilistic} and \emph{general} homogenization tool to establishing KPZ equation scaling limits for particle systems.

To reiterate, our first main goal is to extend \cite{DT} to arbitrary maximal jump-length, answering, in large part, one ``Big Picture Problem" from the AIM workshop on KPZ. The second is to develop general tools for non-equilibrium particle systems.
\subsection{The Model}
The particle system we study is a generalization of the non-simple exclusion process studied in \cite{DT} but for arbitrary-length steps in the particle random walk; we cite \cite{DT} for this entire subsection. We give its generator below.
\begin{itemize}[leftmargin=*]
\item Using spin-notation adopted previously in \cite{DT}, given any sub-lattice $\mathfrak{I} \subseteq \Z$ define $\Omega_{\mathfrak{I}} = \{\pm 1\}^{\mathfrak{I}}$. Observe such an association prescribes a mapping $\mathfrak{I} \mapsto \Omega_{\mathfrak{I}}$ for which any containment $\mathfrak{I}\subseteq\mathfrak{I}'$ induces the canonical projection $\Omega_{\mathfrak{I}'} \to \Omega_{\mathfrak{I}}$ given by
\small\begin{align}
\Pi_{\mathfrak{I}'\to\mathfrak{I}}: \Omega_{\mathfrak{I}'} \to \Omega_{\mathfrak{I}}, \quad (\eta_{x})_{x \in \mathfrak{I}'} \mapsto (\eta_{x})_{x \in \mathfrak{I}}.
\end{align}\normalsize\normalsize
We abuse notation and denote $\Pi_{\mathfrak{I}} \overset{\bullet}= \Pi_{\mathfrak{I}' \to \mathfrak{I}}$ for any $\mathfrak{I} \subseteq \mathfrak{I}' \subseteq \Z$. We adopt the physical interpretation that for any $\eta \in \Omega_{\Z}$, the value $\eta_{x} = 1$ indicates the presence of a particle located at $x \in \Z$ and that $\eta_{x} = -1$ indicates the absence of a particle. 
\item We introduce a maximal jump-length $\mathfrak{m}\in\Z_{>0}\cup\{+\infty\}$ and two sets of coefficients/speeds; note ellipticity $\alpha_{1} > 0$ below:
\small\begin{align}
\mathrm{A} \ = \ \left\{ \alpha_1,\ldots,\alpha_{\mathfrak{m}} \in \R_{\geq0}: \alpha_{1} > 0, \ {{\sum}}_{k = 1}^{\mathfrak{m}} \alpha_k = 1\right\}, \quad \Gamma \ = \ \left\{ \gamma_k \in \R \right\}_{k = 1,\ldots,\mathfrak{m}}.
\end{align}\normalsize\normalsize
For any pair of sites $x,y \in \Z$, we denote by $\mathfrak{S}_{x,y}$ the generator for a speed-1 symmetric exclusion process on the bond $\{x,y\}$. We specify the generator $\mathfrak{S}^{N,!!}$ of our dynamic for $N \in \Z_{>0}$ large via its action on a generic functional $\varphi: \Omega_{\Z} \to \R$:
\small\begin{align}
\mathfrak{S}^{N,!!}\varphi(\eta) \ \overset{\bullet}= \ N^{2} \sum_{k = 1}^{\mathfrak{m}} \alpha_{k} \sum_{x \in \Z} \left(\frac12+\frac12N^{-\frac12}\gamma_{k}\mathbf{1}_{\eta_{x}=-1}\mathbf{1}_{\eta_{x+k}=1} - \frac12N^{-\frac12}\gamma_{k}\mathbf{1}_{\eta_{x}=1}\mathbf{1}_{\eta_{x+k}=-1}\right) \mathfrak{S}_{x,x+k}\varphi(\eta).
\end{align}\normalsize\normalsize
We will denote by $\eta_{T}$ the particle configuration observed after time-$T$ evolution under the $\mathfrak{S}^{N,!!}$ dynamic. To be clear, every superscript $!$ denotes another scaling factor of $N$ for any operator for the entirety of this paper.
\end{itemize}
\begin{definition}
Provided any time $T \in \R_{\geq 0}$, let us define $\mathbf{h}^{N}_{T,0}$ to be $2$ times the net flux of particles across the bond $\{0,1\}$, with leftward traveling particles counting as positive flux. We also define the following \emph{height function} from \cite{DT}:
\begin{subequations}
\small\begin{align}
\mathbf{h}_{T,x}^{N} \ &\overset{\bullet}= \ \mathbf{h}_{T,0}^{N} \ + \ \mathbf{1}_{x\geq1}{{\sum}}_{y=1,\ldots,x} \eta_{T,y} \ - \ \mathbf{1}_{x<1}{{\sum}}_{y=x,\ldots,0}\eta_{T,y}.
\end{align}\normalsize\normalsize
\end{subequations}
\end{definition}
The height function $\mathbf{h}^{N}$ becomes the solution of the KPZ equation in the large-$N$ limit under appropriate renormalization. This is the main theorem of the current paper. Looking more closely at what showing this scaling limit entails, we address first what the effective diffusivity $\alpha\in\R_{>0}$ and the effective asymmetry $\alpha'\in\R$ should be. This tells us what the limit KPZ stochastic PDE for $\mathbf{h}^{N}$ should be, and this also tells us how to define the corresponding microscopic Cole-Hopf transform. Indeed, let us remark from our discussion of SHE above that defining the Cole-Hopf transform for the limit KPZ equation requires knowing the ratio $\alpha'\alpha^{-1}\in\R$. We will not perform the calculation in here, although to this end we reference the \emph{KPZ scaling theory} calculation in \cite{DT} immediately after (1.8) therein. Ultimately, the effective diffusivity and asymmetry are those of the particle random walk.
\begin{definition}
We define $\alpha \overset{\bullet}= {{\sum}}_{k=1}^{\mathfrak{m}}k^{2}\alpha_{k}$ and $\alpha'\overset{\bullet}={{\sum}}_{k=1}^{\mathfrak{m}}k\alpha_{k}\gamma_{k}$. We also define $\lambda\overset{\bullet}=\alpha'\alpha^{-1}$ following notation in \cite{DT}.
\end{definition}
\begin{definition}
Define the \emph{microscopic Cole-Hopf transform} denoted by $\mathbf{Z}^{N}$ by the following analog of the Cole-Hopf transform of the continuum SHE/KPZ equation. In the following, the growth speed $\mathfrak{v}_{N} \in \R$ is defined in (1.29) in \cite{DT} for arbitrary $\mathfrak{m}$:
\small\begin{align}
\mathbf{Z}_{T,x}^{N} \ \overset{\bullet}= \ \exp\left(-\lambda N^{-\frac12}\mathbf{h}_{T,x}^{N}+N\mathfrak{v}_{N}T\right).
\end{align}\normalsize\normalsize
We will realize $\mathbf{h}^{N}$ and $\mathbf{Z}^{N}$ as functions on $\R_{\geq0}\times\R$ via piecewise linear interpolation of their values on $\R_{\geq0}\times\Z$.
\end{definition}
\begin{remark}\label{remark:ch2RG}
Technically $\mathbf{Z}^{N}$ is the microscopic Cole-Hopf transform of the \emph{renormalized} height function $\mathbf{h}^{N}$ with counter-term of speed $N\mathfrak{v}_{N}\gg1$. The renormalization/counter-term is a microscopic indication of singular features of the KPZ equation.
\end{remark}
\subsection{Main Theorem}
The primary result of the paper is a scaling limit for $\mathbf{Z}^{N}$ under a large class of initial probability measures defined as follows. We emphasize this class of initial measures is also of interest in \cite{BG} and \cite{DT} for example.
\begin{definition}\label{definition:NS}
We say a probability measure $\mu_{0,N}$ on $\Omega_{\Z}$ is \emph{near stationary}, if the following moment bounds hold with respect to $\mu_{0,N}$ \emph{uniformly} in $N\in\Z_{>0}$ provided any $p \in \Z_{\geq 1}$ and $0 \leq\vartheta<\frac12$:
\small\begin{align}
\sup_{x \in \Z} \E|\mathbf{Z}_{0,x}^{N}|^{2p} + \sup_{x,y \in \Z} N^{2p\vartheta}\left(|x-y|^{-2p\vartheta}\E|\mathbf{Z}_{0,x}^{N}-\mathbf{Z}_{0,y}^{N}|^{2p}\right) \ \lesssim_{p,\vartheta} \ 1.
\end{align}\normalsize\normalsize
Moreover, we require $\mathbf{Z}_{0,NX}^{N} \to_{N\to\infty} \mathbf{Z}_{0,X}$ locally uniformly for some continuous initial data $\mathbf{Z}_{0,\bullet}:\R\to\R_{\geq0}$.
\end{definition}
Before we can present the main result, we must first introduce a few assumptions which we package as one. The first part of the following set of assumptions is a \emph{finite} maximal jump-length that does not depend on $N\in\Z_{>0}$. We comment on what may be done to relax this constraint though this just amounts to technical adjustments throughout a few different parts of the paper. We will take the assumption of finite maximal jump-length to make this paper more readable. The second part of the following set of assumptions is more serious. It asserts that the speed of the asymmetric jumps in the particle random walk approximately satisfies a linear constraint from \cite{DT}. We actually borrow and improve on such constraint for asymmetric jumps in the particle system from \cite{DT}. In spirit of universality, the constraint should not necessarily be there. It would be interesting to remove it.
\begin{ass}\label{ass:ass}
\fsp The maximal jump-length $\mathfrak{m}$ is uniformly bounded and independent of $N\in\Z_{>0}$. Moreover, we have the a priori estimate $\sup_{k=1,\ldots,\mathfrak{m}}\alpha_{k}|\gamma_{k}-\bar{\gamma}_{k}| \lesssim N^{-1/2}$, where the ``specialized" speeds of asymmetric jumps are given by 
\small\begin{align}
\alpha_{k} \bar{\gamma}_{k} \ &\overset{\bullet}= \ 2 \lambda{{\sum}}_{\ell = k}^{\mathfrak{m}}\alpha_{\ell}\left(\ell k^{-1}-1\right) \ + \ \lambda\alpha_{k}.
\end{align}\normalsize\normalsize
\end{ass}
In the following statement of our main result, we will employ the Skorokhod space of $\mathbf{D}_{1}\overset{\bullet}=\mathbf{D}([0,1],\mathbf{C}(\mathbb{K}))$ of cadlag paths valued in the Banach space of continuous functions $\mathbf{C}(\mathbb{K})$ on an arbitrary but fixed compact set $\mathbb{K}\subseteq\R$. We cite \cite{Bil} for details.
\begin{theorem}\label{theorem:KPZ}
\fsp Under near-stationary initial data, the process $\mathbf{Z}_{T,NX}^{N}$ is tight in the large-$N$ limit with respect to the Skorokhod topology on $\mathbf{D}_{1}$. All limit points are the solution to \emph{SHE} with parameters $\alpha,\lambda \in \R$ defined earlier and initial data $\mathbf{Z}_{0,\bullet}$.
\end{theorem}
\begin{itemize}[leftmargin=*]
\item There is nothing special about time $1$. It may be replaced by any fixed positive time. We also allow $\mathbb{K}$ to be any compact set.
\item In \cite{DT}, and even in \cite{BG} and related works, the class of near-stationary initial probability measures considered allow for the a priori estimates given in Definition \ref{definition:NS} to grow in some exponential-linear fashion. We did not allow for that here. However, our methods will still hold for this larger set of initial data. The differences are almost cosmetic and depend only on diffusive tails of a discretization of the classical Gaussian heat kernel as in \cite{DT}. We focus on the above class of initial data introduced in Definition \ref{definition:NS} without the exponential growth just to make this paper more readable.
\item We have assumed finite maximal jump-length for the particle random walk in our paper. We can actually remove this with a strategy based on the following outline \emph{if} we assume the sequences $\{\alpha_{k}\}_{k\in\Z_{>0}}$ and $\{\alpha_{k}\gamma_{k}\}_{k\in\Z_{>0}}$ have all moments as measures on $\Z_{>0}$. Take any $\e\in\R_{>0}$ as small as we want but independent of $N\in\Z_{>0}$. Because both $\{\alpha_{k}\}_{k\in\Z_{>0}}$ and $\{\alpha_{k}\gamma_{k}\}_{k\in\Z_{>0}}$ admit all moments, the speed of jump of length more than $N^{\e}$ is at most $\kappa_{C}N^{-C}$ for any $C\in\R_{>0}$, so roughly speaking we can forget about all jumps of length more than $N^{\e}$, and $\mathfrak{m}=N^{\e}$. We are almost in a situation of finite maximal jump-length, but $\mathfrak{m}=N^{\e}$ still grows in $N\in\Z_{>0}$ even if slowly. It turns out that the presence of $\mathfrak{m}=N^{\e}$ only affects estimates which are power-savings in $N\in\Z_{>0}$ and their dependence on $\mathfrak{m}=N^{\e}$ is polynomial. Thus these extra $N^{10\e}$ factors, for example, are negligible. 
\end{itemize}
\subsection{Narrow-Wedge Initial Measure}
The analysis in this paper can be adjusted to treat exclusion processes considered herein where the initial probability measure on the set of particle configurations is \emph{not} near-stationary or anything nearby, but rather the \emph{narrow-wedge} initial measure/configuration. This initial measure for the set of particle configurations gives rise to a microscopic Cole-Hopf transform with large-$N$ limit the Dirac point mass; for a detailed discussion, see \cite{ACQ} and \cite{C11}.

Because of the distributional nature of Dirac point masses the adjustments we need in order to treat exclusion processes here but with narrow-wedge initial measure are nontrivial and technical since they require adjustments of the hydrodynamic-limit-input of this paper and not just the stochastic analytic inputs as was the case for \cite{DT}. Adding these would require several detailed technical arguments. For this reason we will defer the extension to narrow-wedge initial measure to a separate paper. 
\subsection{Background}
We have mentioned already the open problem of implementing the theory of regularity structures outside stochastic PDEs in \cite{Hai14} and \cite{HQ} and to height functions of interacting particle systems. We now discuss a different approach to weak KPZ universality known as a \emph{theory of energy solutions}. This approach was thoroughly explored in \cite{GJ15} by Goncalves and Jara. The approach via energy solutions is designed around a nonlinear martingale problem for the KPZ equation or its avatar in the stochastic Burgers equation. As much of previous literature on hydrodynamic limits and their fluctuations for interacting particle systems was also based on martingale problems, such nonlinear martingale problem was engineered to fit in the ``same framework" to confirm the universality of \eqref{eq:KPZ} for many non-integrable interacting particle systems. In particular, it does not apply the Cole-Hopf transform and instead directly makes sense of the singular features of KPZ. The theory of energy solutions, however, depends on the model being at/\emph{very} close to some invariant measure to make sense of these singular features of KPZ. In particular, the theory of energy solutions applies only to \emph{stationary} interacting particle systems while much of the interest in the current work is proving a KPZ scaling limit for \emph{non-stationary} interacting particle systems. 
\subsection{Organization}
We will not be able to discuss the actual content of this paper until the end of Section \ref{section:Framework} at which point we will have set up an \emph{approximate} microscopic version of SHE for $\mathbf{Z}^{N}$. We instead give the following high-level outline for now.
\begin{itemize}[leftmargin=*]
\item In Section \ref{section:Framework}, we re-develop the framework in Section 2 of \cite{DT}. The strategy for the proof of Theorem \ref{theorem:KPZ} is also given. 
\item In Section \ref{section:HydroStuff}, we establish local equilibrium via entropy production in infinite-volume along with equilibrium calculations.
\item In Section \ref{section:Ctify}, we introduce compactification of the $\mathbf{Z}^{N}$-dynamics. This is effectively done by heat kernel estimates. 
\item In Section \ref{section:D1B}, we develop our main technical contribution which we call a \emph{dynamic variation} of the one-block strategy.
\item In Section \ref{section:KPZ1}, we establish preliminary time-regularity estimates for the microscopic Cole-Hopf transform $\mathbf{Z}^{N}$.
\item In Section \ref{section:KPZ2}, we use the dynamical one-block strategy and time-regularity in a multiscale analysis to get a ``key" estimate.
\item In Section \ref{section:KPZ3}, we prove Theorem \ref{theorem:KPZ} for near-stationary data using the above ``key" estimate. 
\end{itemize}
In the appendix we record auxiliary heat kernel and martingale estimates which are based on Proposition A.1/Corollary A.2 in \cite{DT} and Lemma 3.1 in \cite{DT}. We also include a list of notation in the appendix for consult while reading this paper. Lastly, this paper is long consequence of trying to provide enough explanation for ideas in here and clarify each of the many big points. In particular, we try to leave no seemingly abstract estimate/construction without a word about why it is helpful/what it is doing. We also often explain proofs of various results to clarify the technical details here which adds to the length of the paper as well.
\subsection{Acknowledgements}
The author thanks Amir Dembo for discussion and advice, and Li-Cheng Tsai and Stefano Olla for useful discussion. The author also thanks anonymous referees for immensely useful feedback on earlier versions.
\subsection{Comments on Notation}
We have an index for notation we use often in this paper in the appendix section of this paper. We give here a few pieces of notation from the aforementioned appendix section that are more commonly used.
\begin{itemize}[leftmargin=*]
\item A ``universal" constant is one that depends on nothing beyond possibly fixed data of the particle system, for example speeds $\{\alpha_{k}\}_{k=1}^{\mathfrak{m}}$. When we refer to a constant as ``arbitrarily/sufficiently small but universal", we mean arbitrarily/sufficiently small depending only on a uniformly bounded number of universal constants. The same is true for ``arbitrarily/sufficiently large but universal" constants except the word ``small" is replaced by ``large". The reader is invited to take ``arbitrarily/sufficiently small but universal" constants to be $999^{-999}$ and to take ``arbitrarily/sufficiently large but universal" constants to be $999^{999}$.
\item Provided any $a,b\in\R$, we define the discretized interval $\llbracket a,b\rrbracket=[a,b]\cap\Z$.
\item For any finite set $I$, let $\kappa_{I}\in\R$ be a constant depending \emph{only} on $I$. We also define a \emph{normalized} sum $\wt{{{\sum}}}_{i\in I} \overset{\bullet}= |I|^{-1}{{\sum}}_{i\in I}$.
\item The ``microscopic time-scale" is order $N^{-2}$ and the ``macroscopic time-scale" is order $1$. The ``mesoscopic time-scales" are any time-scales that are between these two time-scales. Similarly the ``microscopic length-scale" is order 1 and the ``macroscopic length-scale" is order $N$. The ``mesoscopic length-scales" are between these two length-scales.
\item Script font is used for operators. Fraktur font is used for particle system data. Bold font is used for PDE-type objects.
\item Finally, starting in Section \ref{section:KPZ1}, the subscript ``$\mathrm{st}$" for random times $\mathfrak{t}_{\mathrm{st}}\in\R_{\geq0}$ stands for ``stopping time".
\end{itemize}
%
%
%
\section{Approximate Microscopic Stochastic Heat Equation}\label{section:Framework}
We recap framework developed in \cite{DT} with adjustments catered to our analysis in this paper. We then provide an outline of overcoming obstacles discussed in \cite{DT} that limit the maximal jump-length in \cite{DT}. We will employ here invariant probability measures for relevant exclusion processes from Definition \ref{definition:ensembles}. These are canonical and grand-canonical ensembles.
\subsection{Quantities of Interest}
First we introduce notation for functionals for which we can conduct probabilistic analysis.
\begin{definition}
\fsp For any $\mathfrak{f}:\Omega_{\Z}\to\R$, define its \emph{support} as the smallest subset $\mathfrak{I}\subseteq\Z$ so that $\mathfrak{f}$ depends only on $\eta_{x}$ for $x\in\mathfrak{I}$.
\end{definition}
\begin{definition}
For $X\in\Z$, define $\tau_{X}:\Omega_{\Z}\to\Omega_{\Z}$ to shift a configuration so that $(\tau_{X}\eta)_{Z}=\eta_{Z+X}$ for all $(\eta,Z)\in\Omega_{\Z}\times\Z$.
\end{definition}
We define three classes of functionals/coefficients below. The first is a generalization of ``weakly vanishing" terms in \cite{DT}.
\begin{definition}
A functional $\mathfrak{w}: \R_{\geq 0} \times \Z \times \Omega_{\Z} \to \R$ is \emph{weakly vanishing} if $|\mathfrak{w}| \lesssim N^{-\beta}$ for some $\beta \in \R_{>0}$ universal, or if:
\begin{itemize}[leftmargin=*]
\item For all $(T,X,\eta) \in \R_{\geq 0} \times \Z \times \Omega_{\Z}$, we have $\mathfrak{w}_{T,X}(\eta) = \mathfrak{w}_{0,0}(\tau_{-X}\eta_{T})$ for some ``reference functional" $\mathfrak{w}_{0,0}: \Omega_{\Z} \to \R$.

\item We have $\E^{\mu_{0,\Z}} \mathfrak{w}_{0,0} = 0$, where $\mu_{0,\Z}$ is the product Bernoulli measure on $\Omega_{\Z}$ defined by $\E^{\mu_{0,\Z}}\eta_{x}=0$ for all $x \in \Z$.

\item We have the deterministic bound $|\mathfrak{w}_{0,0}| \lesssim 1$ uniformly in $N \in \Z_{>0}$ and $\eta \in \Omega_{\Z}$ with a universal implied constant. 
\item The support of $\mathfrak{w}_{0,0}$ is uniformly bounded, so that it is contained in an interval of length independent of $N\in\Z_{>0}$.
\end{itemize}
\end{definition}
\emph{At the level of hydrodynamic limits}, weakly vanishing terms are negligible as near-stationary initial measures imply that the global $\eta$-density is roughly $0$. This is the defining property of weakly vanishing terms in \cite{DT}. The one-block and two-blocks steps in the proof of Lemma 2.5 in \cite{DT} will also apply for weakly vanishing terms defined above. We return to this in the proof for Theorem \ref{theorem:KPZ}, but we note here weakly vanishing terms will not give any difficulties that were not already treated in \cite{DT}.
\begin{definition}
A functional $\mathfrak{g}: \R_{\geq 0} \times \Z \times \Omega_{\Z} \to \R$ is a \emph{pseudo-gradient} if the following conditions are satisfied:
\begin{itemize}[leftmargin=*]
\item For all $(T,X,\eta) \in \R_{\geq 0} \times \Z \times \Omega_{\Z}$, we have $\mathfrak{g}_{T,X}(\eta) = \mathfrak{g}_{0,0}(\tau_{-X}\eta_{T})$ with reference functional $\mathfrak{g}_{0,0}:\Omega_{\Z}\to\R$.
\item We have the deterministic bound $|\mathfrak{g}_{0,0}| \lesssim 1$ uniformly in $N \in \Z_{>0}$ and $\eta \in \Omega_{\Z}$. The support of $\mathfrak{g}_{0,0}$ is uniformly bounded.
\item We have $\E^{\mu_{\varrho,\mathfrak{I}}^{\mathrm{can}}} \mathfrak{g}_{0,0} = 0$ for any canonical ensemble parameter $\varrho \in \R$ and subset $\mathfrak{I} \subseteq \Z$ containing the support of $\mathfrak{g}_{0,0}$.
\end{itemize}
\end{definition}
If $\mathfrak{g}_{0,0}$ is a discrete gradient, so that $\mathfrak{g}_{0,0} = \tau_{-\mathfrak{j}}\mathfrak{f}_{0,0} - \mathfrak{f}_{0,0}$ for a local functional $\mathfrak{f}_{0,0}:\Omega_{\Z}\to\R$ satisfying the required uniform boundedness and support condition and $\mathfrak{j}\in\Z$ is uniformly bounded, then $\mathfrak{g}_{0,0}$ is the reference functional for a pseudo-gradient. To show this, uniform boundedness of $\mathfrak{j}\in\Z$, of $\mathfrak{f}_{0,0}$, and of the support of $\mathfrak{f}_{0,0}$ guarantee the second bullet point for $\mathfrak{g}_{0,0}$. To see the last bullet point for $\mathfrak{g}_{0,0}$, it suffices to see that the canonical ensembles in that third bullet point are ``invariant under shifts"; on any subset containing supports of $\tau_{-\mathfrak{j}}\mathfrak{f}_{0,0}$ and $\mathfrak{f}_{0,0}$, with respect to any canonical ensemble $\tau_{-\mathfrak{j}}\mathfrak{f}_{0,0}$ and $\mathfrak{f}_{0,0}$ are equal in law.

For a non-gradient example of a pseudo-gradient we give the cubic nonlinearity in Proposition 2.3 in \cite{DT} in the case where the maximal jump-length is $\mathfrak{m}\geq4$. This cubic functional evaluated at the particle system at $(0,0) \in \R_{\geq0}\times\Z$ satisfies required estimates for the reference functional in the second bullet point in the definition of pseudo-gradients that we gave above. This can be checked directly because this cubic nonlinearity is the sum of a uniformly bounded $\mathfrak{m}$-dependent number of difference of cubic monomials in spins contained in some neighborhood of length-scale at most $10\mathfrak{m}$. To justify the interesting vanishing-in-expectation requirement in the third bullet point in the definition above, we observe that canonical ensembles are invariant under swapping spins at deterministic points. Thus, if we take $\eta_{1}\eta_{2}\eta_{3}-\eta_{-1}\eta_{-2}\eta_{-3}$, for example, in expectation with respect to any canonical ensemble containing $\{\pm1,\pm2,\pm3\}$ we may replace $\eta_{-1}\eta_{-2}\eta_{-3} \mapsto \eta_{1}\eta_{2}\eta_{3}$ at the level of expectations.
\begin{definition}
A functional $\wt{\mathfrak{g}}: \R_{\geq 0} \times \Z \times \Omega_{\Z} \to \R$ is said to have a \emph{pseudo-gradient factor} if it is uniformly bounded and we have a factorization of functionals $\wt{\mathfrak{g}} = \mathfrak{g}\cdot\mathfrak{f}$ such that the following constraints are satisfied:
\begin{itemize}[leftmargin=*]
\item We have $\mathfrak{f}_{T,X}(\eta) = \mathfrak{f}_{0,0}(\tau_{-X}\eta_{T})$. The support of $\mathfrak{f}_{0,0}$ is bounded but may be $N$-dependent. We still require $|\mathfrak{f}_{0,0}| \lesssim 1$.
\item The factor $\mathfrak{g}$ is a pseudo-gradient, and the $\eta$-wise supports of $\mathfrak{g}_{0,0}$ and $\mathfrak{f}_{0,0}$ are disjoint subsets in $\Z$.
\item The factor $\mathfrak{f}_{0,0}$ is an average of terms that are each a product of $\eta$-spins times uniformly bounded/deterministic constants.
\end{itemize}
\end{definition}
Since the supports of the pseudo-gradient factor and the functional $\mathfrak{f}$ in the last class of functionals are disjoint subsets, it is easy to see $\wt{\mathfrak{g}}$-functionals above are also pseudo-gradients. However, observe that the support of the reference functional $\mathfrak{f}_{0,0}$ is allowed to grow with $N\in\Z_{>0}$ so the same is true of $\wt{\mathfrak{g}}$. Our analysis for pseudo-gradients deteriorates in the length-scale of their support so we will not be able to efficiently study these $\wt{\mathfrak{g}}$-terms via the same ideas. The point of introducing the previous class of functionals is to highlight the probing of a pseudo-gradient factor with uniformly bounded support we will need to do.
\subsection{Approximate SHE}
Dynamics of the microscopic Cole-Hopf transform will be driven by the following heat operators.
\begin{definition}\label{definition:HEAT}
Let $\mathbf{H}^{N}$ be the heat kernel with $\mathbf{H}_{S,S,x,y}^{N} = \mathbf{1}_{x=y}$ solving the semi-discrete parabolic equation below where the operator $\mathscr{L}^{!!}$, which is also defined below, acts on the backwards spatial variable of the heat kernel:
\small\begin{align}
\partial_{T}\mathbf{H}_{S,T,x,y}^{N} \ &= \ \mathscr{L}^{!!}\mathbf{H}_{S,T,x,y}^{N}
\end{align}\normalsize\normalsize
The $\mathscr{L}^{!!}$ operator is the discrete-type Laplacian defined below in which $\wt{\alpha}_{k} = \alpha_{k} + \mathscr{O}(N^{-1})$ are defined in Lemma 1.2 of \cite{DT}:
\small\begin{align}
\mathscr{L}^{!!} \ &\overset{\bullet}= \ 2^{-1}{{{\sum}}}_{k=1}^{\mathfrak{m}} \wt{\alpha}_{k} \Delta_{k}^{!!}.
\end{align}\normalsize\normalsize
We introduced $\Delta_{k}\varphi_{x} \overset{\bullet}= \varphi_{x+k}+\varphi_{x-k}-2\varphi_{x}$ and $\Delta_{k}^{!!} \overset{\bullet}= N^{2}\Delta_{k}$ given any function $\varphi: \Z \to \R$. Additionally for $(T,x) \in \R_{\geq 0}\times\Z$, we define space-time and spatial heat/convolution-operators acting on space-time test functions $\varphi: \R_{\geq 0} \times \Z \to \R$:
\small\begin{align}
\mathbf{H}_{T,x}^{N}\varphi \ &\overset{\bullet}= \ \mathbf{H}_{T,x}^{N}\varphi_{S,y} \ \overset{\bullet}= \ \mathbf{H}_{T,x}^{N}\varphi_{\bullet,\bullet} \ \overset{\bullet}= \ \int_{0}^{T}{{{\sum}}}_{y\in\Z}\mathbf{H}_{S,T,x,y}^{N}\cdot\varphi_{S,y} \ \d S \\
\mathbf{H}_{T,x}^{N,\mathbf{X}}\varphi \ &\overset{\bullet}= \ \mathbf{H}_{T,x}^{N,\mathbf{X}}\varphi_{0,y} \ \overset{\bullet}= \ \mathbf{H}_{T,x}^{N,\mathbf{X}}\varphi_{0,\bullet} \ \overset{\bullet}= \ {{{\sum}}}_{y\in\Z}\mathbf{H}_{0,T,x,y}^{N} \cdot \varphi_{0,y}.
\end{align}\normalsize\normalsize
We occasionally use convexity of the spatial heat operator, so for any norm $\|\|$ and $p\in\R_{\geq1}$ we have $\|\mathbf{H}_{T,x}^{N,\mathbf{X}}\varphi\|^{p} \leq \mathbf{H}_{T,x}^{N}\|\varphi\|^{p}$.
\end{definition}
The main result of this section is an SDE-type equation for $\mathbf{Z}^{N}$; it borrows largely from Section 2 of \cite{DT}.
\begin{prop}\label{prop:Duhamel}
\fsp Consider $\e_{X,1}>0$ arbitrarily small but universal and $\beta_{X} \overset{\bullet}= \frac13+\e_{X,1}$. We have, with notation defined after,
\begin{subequations}
\small\begin{align}
\d\mathbf{Z}_{T,x}^{N} \ &= \ \mathscr{L}^{!!}\mathbf{Z}_{T,x}^{N}\d T + \mathbf{Z}_{T,x}^{N}\d\xi_{T,x}^{N} + \Phi_{T,x}^{N,2}\d T + \Phi_{T,x}^{N,3}\d T \label{eq:Duhamel} \\
\mathbf{Z}_{T,x}^{N} \ &= \ \mathbf{H}_{T,x}^{N,\mathbf{X}}\mathbf{Z}_{0,\bullet}^{N} + \mathbf{H}_{T,x}^{N}(\mathbf{Z}^{N}\d\xi^{N}) + \mathbf{H}_{T,x}^{N}(\Phi^{N,2}) + \mathbf{H}_{T,x}^{N}(\Phi^{N,3}). \label{eq:IntGT}
\end{align}\normalsize\normalsize
\end{subequations}
The first $\Phi^{N,2}$-term contains the ``pseudo-gradient content" in the $\mathbf{Z}^{N}$-equation. We use notation to be defined after:
\small\begin{align}
\Phi^{N,2}_{T,x} \ &\overset{\bullet}= \ N^{\frac12}\mathscr{A}_{N^{\beta_{X}}}^{\mathbf{X},-}(\mathfrak{g}_{T,x}) \cdot \mathbf{Z}_{T,x}^{N} + N^{\beta_{X}}\left(\wt{{{\sum}}}_{\mathfrak{l}=1,\ldots,N^{\beta_{X}}} \wt{\mathfrak{g}}_{T,x}^{\mathfrak{l}}\right)\mathbf{Z}_{T,x}^{N} +  N^{-\frac12}\wt{{{\sum}}}_{\mathfrak{l}=1,\ldots,N^{\beta_{X}}} \grad_{-7\mathfrak{l}\mathfrak{m}}^{!}(\mathfrak{b}_{T,x}^{\mathfrak{l}}\mathbf{Z}_{T,x}^{N})
\end{align}\normalsize\normalsize
The $\Phi^{N,3}$-term contains the ``weakly-vanishing content" in the $\mathbf{Z}^{N}$-equation for which we also use notation defined after:
\small\begin{align}
\Phi^{N,3}_{T,x} \ &\overset{\bullet}= \ \mathfrak{w}_{T,x}\mathbf{Z}_{T,x}^{N} + {{{\sum}}}_{k=-2\mathfrak{m}}^{2\mathfrak{m}}c_{k}\grad_{k}^{!}(\mathfrak{w}_{T,x}^{k}\mathbf{Z}_{T,x}^{N}).
\end{align}\normalsize\normalsize
%
\begin{itemize}[leftmargin=*]
\item The martingale integrator $\d\xi^{N}$ is defined in \emph{(2.4)} of \cite{DT}. It generalizes to any maximal-length in straightforward fashion.
\item We define a spatial-average ``operator" where $\mathfrak{g}_{T,x} = \mathfrak{g}_{0,0}(\tau_{-x}\eta_{T})$ is a pseudo-gradient whose support has size at most $5\mathfrak{m}$:
\small\begin{align}
\mathscr{A}_{N^{\beta_{X}}}^{\mathbf{X},-}(\mathfrak{g}_{T,x}) \ &\overset{\bullet}= \ \wt{{{\sum}}}_{\mathfrak{l}=1,\ldots,N^{\beta_{X}}}\tau_{-7\mathfrak{l}\mathfrak{m}}\mathfrak{g}_{T,x}.
\end{align}\normalsize\normalsize
The summands in the $\mathscr{A}^{\mathbf{X},-}(\mathfrak{g})$-term have disjoint supports as spatial shifts are with respect to multiples of $7\mathfrak{m}$. The superscript ``$\mathbf{X},-$" emphasizes a spatial average in the negative spatial direction. The support of $\mathscr{A}^{\mathbf{X},-}(\mathfrak{g})$ has size $\mathscr{O}(N^{\beta_{X}})$.
\item The terms $\wt{\mathfrak{g}}^{\mathfrak{l}}_{T,x} = \wt{\mathfrak{g}}^{\mathfrak{l}}_{0,0}(\tau_{-x}\eta_{T})$ have support of length $\mathscr{O}(N^{\beta_{X}})$. Each $\wt{\mathfrak{g}}^{\mathfrak{l}}$-term admits a pseudo-gradient factor.
\item The terms $\mathfrak{b}^{\mathfrak{l}}$ are uniformly bounded. The term $\mathfrak{w}$ is the sum of a uniformly bounded number of weakly vanishing terms.
\item The terms $\mathfrak{w}^{k}$ for $|k|\leq2\mathfrak{m}$ are weakly vanishing, and the deterministic coefficients $c_{k} \in \R$ are uniformly bounded.
\item Given $k \in \Z$, define the discrete gradient $\grad_{k}\varphi_{x} \overset{\bullet}= \varphi_{x+k} - \varphi_{x}$ for any $\varphi: \Z \to \R$ and its continuum rescaling $\grad_{k}^{!} \overset{\bullet}= N\grad_{k}$.
\end{itemize}
\end{prop}
\begin{remark}\label{remark:ch2Duhamel}
The multiples $5\mathfrak{m}$ and $7\mathfrak{m}$ can be replaced with any uniformly bounded multiples; it will not change our analysis of \eqref{eq:Duhamel}. All we need is $\mathscr{A}^{\mathbf{X},-}$ in Proposition \ref{prop:Duhamel} is an average of pseudo-gradients with \emph{disjoint} supports.
\end{remark}
\begin{remark}
With respect to any canonical ensemble pseudo-gradients are fluctuations, so averaging them in $\mathscr{A}^{\mathbf{X},-}$ above will provide better cancellation as the length-scale of averaging increases. The error in replacing the pseudo-gradient $\mathfrak{g}$ by a spatial-average will grow in the length-scale of spatial-averaging which is where the second term in $\Phi^{N,2}$ comes from; this will come in the proof of Proposition \ref{prop:Duhamel}. Lastly, cancellation at canonical ensembles will eventually give cancellation at non-equilibrium.
\end{remark}
\begin{lemma}\label{lemma:Duhamel1}
 We have the exact identity ${{\sum}}_{k=1}^{\mathfrak{m}} k \alpha_{k}\gamma_{k} = {{\sum}}_{k=1}^{\mathfrak{m}}k\alpha_{k}\bar{\gamma}_{k}$.
\end{lemma}
\begin{proof}[Proof of \emph{Proposition \ref{prop:Duhamel}}]
The stochastic equation \eqref{eq:Duhamel} of SDE-type is derived using almost entirely Proposition 2.3 from \cite{DT} and the derivation of $\mathbf{Z}^{N}$-dynamics in \cite{DT} that was done prior to Proposition 2.3 in \cite{DT}. We provide only extra ingredients in additional spatial averaging and reorganizing with Taylor expansion of the exponential formula for the microscopic Cole-Hopf transform similar to the proof for Proposition 2.3 in \cite{DT}. We give the additional ingredients below starting with notation. 
\begin{itemize}[leftmargin=*]
\item Define $\mathfrak{f}_{\mathfrak{k}}^{\mathfrak{m}}\overset{\bullet}=\tau_{\mathfrak{k}\mathfrak{m}}\mathfrak{f}_{T,x}$ for any $\mathfrak{k}\in\Z$ and $\mathfrak{f}:\Omega_{\Z}\to\R$ as we only shift by $\mathfrak{m}$-multiples. We also define $\grad_{\mathfrak{k}}^{\mathfrak{m}}\overset{\bullet}=\grad_{\mathfrak{k}\mathfrak{m}}$.
\item We also declare that all functionals, including the microscopic Cole-Hopf transform, are evaluated at $(T,x)$.
\end{itemize}
Proposition 2.3 from \cite{DT} gives the desired result but with cubic nonlinearities in place of the $(\bar{\mathbf{Z}}^{N})^{-1}\Phi^{N,2}$-terms, up to another difference that we address at the end of this proof that concerns quadratic polynomials in spins. The cubic nonlinearities have support contained in $\llbracket-\mathfrak{m},\mathfrak{m}\rrbracket\subseteq\Z$. We will replace this cubic nonlinearity with its spatial average on the length-scale $N^{\beta_{X}}$. The error terms, which come from Taylor expansion, are the remaining terms in $\Phi^{N,2}$ and additional weakly vanishing terms.
 
The cubic nonlinearities in Proposition 2.3 of \cite{DT} are pseudo-gradients as canonical ensembles are permutation-invariant. If $\mathfrak{c}$ denotes the contribution of these nonlinearities, an elementary discrete ``product/Leibniz rule" gets the following identities for which we define $\wt{\mathfrak{c}}\overset{\bullet}=\mathfrak{c}_{-2}^{\mathfrak{m}}$ and for which we give a little more explanation afterwards:
\small\begin{align}
N^{\frac12}\mathfrak{c}\mathbf{Z}^{N} \ &= \ N^{\frac12}\mathfrak{c}^{\mathfrak{m}}_{-2}\mathbf{Z}^{N} + N^{\frac12}\mathfrak{c}_{-2}^{\mathfrak{m}}\left(\grad_{-2}^{\mathfrak{m}}\mathbf{Z}^{N}\right) - N^{\frac12}\grad_{-2}^{\mathfrak{m}}(\mathfrak{c}\mathbf{Z}^{N}) \label{eq:Duhamel1} \\
&= \ N^{\frac12}\wt{\mathfrak{c}}_{-7\mathfrak{l}}^{\mathfrak{m}}\mathbf{Z}^{N} \ + \ N^{\frac12}\wt{\mathfrak{c}}_{-7\mathfrak{l}}^{\mathfrak{m}}\left(\grad_{-7\mathfrak{l}}^{\mathfrak{m}}\mathbf{Z}^{N}\right) - N^{\frac12}\grad_{-7\mathfrak{l}}^{\mathfrak{m}}\left(\wt{\mathfrak{c}}\mathbf{Z}^{N}\right) + N^{\frac12}\mathfrak{c}_{-2}^{\mathfrak{m}}\left(\grad_{-2}^{\mathfrak{m}}\mathbf{Z}^{N}\right) - N^{\frac12}\grad_{-2}^{\mathfrak{m}}(\mathfrak{c}\mathbf{Z}^{N}). \label{eq:Duhamel2}
\end{align}\normalsize\normalsize
The second line \eqref{eq:Duhamel2} follows from applying the same discrete product/Leibniz rule used to obtain \eqref{eq:Duhamel1} but applied to the first term within the RHS of \eqref{eq:Duhamel1} and with $-2$ replaced by $-7\mathfrak{l}$. We will now match each term in \eqref{eq:Duhamel} to either a weakly vanishing term contributing to $\Phi^{N,3}$ or one of the terms in $\Phi^{N,2}$ at least after averaging \eqref{eq:Duhamel2} over $\mathfrak{l}\in\llbracket1,N^{\beta_{X}}\rrbracket$.
\begin{itemize}[leftmargin=*]
\item Up to uniformly bounded error terms that are of order $N^{-1/2}$, the second term within the RHS of \eqref{eq:Duhamel1} is the product of linear polynomials in the spins and the cubic polynomial $\mathfrak{c}$, all supported in some neighborhood with size at most $5\mathfrak{m}$. We emphasize there is no $N^{1/2}$ in this product. This product is also without any constant term because multiplying the cubic polynomial by a linear polynomial cannot cancel spin-factors to obtain a constant. As it is a polynomial in spins supported in the same set of size $5\mathfrak{m}$, it is weakly vanishing as any polynomial of spins without constant term vanishes in expectation with respect to $\mu_{0,\Z}$.
\item The final term from the RHS of \eqref{eq:Duhamel1} has the form of a gradient term in $\Phi^{N,3}$ for $\mathfrak{k}=-2\mathfrak{m}$ since it is an \emph{unscaled} gradient acting on the product of a uniformly bounded functional and $\mathbf{Z}^{N}$ which is then multiplied by the $N^{1/2}$-factor.
\item We move to the first three terms in \eqref{eq:Duhamel2}. Defining $\mathfrak{b}^{\mathfrak{l}} \overset{\bullet}= \wt{\mathfrak{c}}$ turns the third term in \eqref{eq:Duhamel2} into a gradient-term in $\Phi^{N,2}$.
\item As $\mathfrak{c}$ is a pseudo-gradient so is $\wt{\mathfrak{c}}=\mathfrak{c}_{-2}^{\mathfrak{m}}$. Thus the first term in \eqref{eq:Duhamel2} is the first term in $\Phi^{N,2}$ after we average over $\mathfrak{l}\in\llbracket1,N^{\beta_{X}}\rrbracket$.
\item Analysis of the cubic nonlinearity $\mathfrak{c}$ in Proposition 2.3 of \cite{DT} now amounts to computing the second term in \eqref{eq:Duhamel2}. For this we compute the gradient of the microscopic Cole-Hopf transform by Taylor expansion of its exponential formula in terms of $\eta$-spins. Taylor expansion in this fashion is done in the proof of Proposition 2.3 in \cite{DT} for example. The result of such Taylor expansion gives a representation of the second term in \eqref{eq:Duhamel2} that we describe as follows. First we emphasize the support of the $\wt{\mathfrak{c}}_{-7\mathfrak{l}}^{\mathfrak{m}}$-factor in the second term in \eqref{eq:Duhamel2} is contained strictly to the left of $x-7\mathfrak{l}\mathfrak{m}$. Taylor expansion gives
\small\begin{align}
N^{\frac12}\wt{\mathfrak{c}}_{-7\mathfrak{l}}^{\mathfrak{m}}\left(\grad_{-7\mathfrak{l}}^{\mathfrak{m}}\mathbf{Z}^{N}\right) \ &= \ {\sum}_{\mathfrak{k}=1}^{\infty} N^{-\frac12\mathfrak{k}+\frac12}\lambda^{\mathfrak{k}}(\mathfrak{k}!)^{-1}\left(\wt{\mathfrak{c}}_{-7\mathfrak{l}}^{\mathfrak{m}}\left(\grad_{-7\mathfrak{l}}^{\mathfrak{m}}\mathbf{h}^{N}\right)^{\mathfrak{k}}\right) \\
&= \ {\sum}_{\mathfrak{k}=1}^{\infty} N^{-\frac12\mathfrak{k}+\beta_{X}\mathfrak{k}+\frac12}\lambda^{\mathfrak{k}}(\mathfrak{k}!)^{-1}\left(\wt{\mathfrak{c}}_{-7\mathfrak{l}}^{\mathfrak{m}}\left(N^{-\beta_{X}}\grad_{-7\mathfrak{l}}^{\mathfrak{m}}\mathbf{h}^{N}\right)^{\mathfrak{k}}\right). \label{eq:Duhamel3}
\end{align}\normalsize\normalsize
The equation \eqref{eq:Duhamel3} may be interpreted as the Taylor series for $\mathbf{Z}^{N}$ and recalling $\mathbf{Z}^{N}$ is the exponential of $\mathbf{h}^{N}$. We now observe that the $\mathbf{h}^{N}$-gradient is a linear polynomial in $\eta$-spins which are contained a neighborhood with length at most $\kappa_{\mathfrak{m}}N^{\beta_{X}}$ strictly to the right of $x-7\mathfrak{l}\mathfrak{m}$ and thus disjoint from the support of $\wt{\mathfrak{c}}^{\mathfrak{m}}_{-7\mathfrak{l}}$. We thereby additionally observe that per sum-index $\mathfrak{k}\in\Z_{\geq1}$ the corresponding summand on the far RHS is a functional with pseudo-gradient factor $\wt{\mathfrak{c}}^{\mathfrak{m}}_{-7\mathfrak{l}}$. Indeed observe the remaining functional-factor $N^{-\beta_{X}}\grad_{-7\mathfrak{l}}^{\mathfrak{m}}\mathbf{h}^{N}$ is uniformly bounded if $|\mathfrak{l}|\lesssim N^{\beta_{X}}$ since the $\eta$-spins are uniformly bounded. Additionally, the prefactor in the $\mathfrak{k}$-summand is at most $N^{\beta_{X}}N^{-(\mathfrak{k}-1)/2+(\mathfrak{k}-1)\beta_{X}}$ times uniformly bounded factors. Thus the infinite series within the far RHS of \eqref{eq:Duhamel3} is summable as $\beta_{X}<\frac12$ so it is a functional with pseudo-gradient factor $\wt{\mathfrak{c}}_{-7\mathfrak{l}}^{\mathfrak{m}}$ which is then scaled by $N^{\beta_{X}}$. After we average \eqref{eq:Duhamel3} over $\mathfrak{l}\in\llbracket1,N^{\beta_{X}}\rrbracket$ we may match the second term from \eqref{eq:Duhamel2} to the second term in $\Phi^{N,2}$. Alternatively instead of taking the entire infinite series into the functional-with-pseudo-gradient-factor/second term in $\Phi^{N,2}$ we may cut this series off at $\mathfrak{k}=4$ with an error that is uniformly vanishing in the large-$N$ limit.
\end{itemize}
We now return to the last difference between Proposition 2.3 from \cite{DT} and Proposition \ref{prop:Duhamel} concerning quadratic polynomials in $\eta$-spins that we remarked on at the beginning of this proof. Within Proposition 2.3 from \cite{DT} this quadratic polynomial was absorbed as a weakly vanishing term. However because we take the weaker Assumption \ref{ass:ass} we must treat it differently here. First we introduce some notation to define this quadratic polynomial of interest.
\begin{itemize}[leftmargin=*]
\item Define the length-$k$ neighborhood $\mathfrak{I}_{x,k}\overset{\bullet}=\{z_{1},z_{2}\in\Z: z_{1}\leq x<z_{2}, \ z_{2}-z_{1}=k\}$ and $\alpha_{k}\wt{\gamma}_{k}\overset{\bullet}=\alpha_{k}\gamma_{k}-\alpha_{k}\bar{\gamma_{k}} = \mathscr{O}(N^{-1/2})$.
\end{itemize}
This quadratic polynomial is the sum of terms defined below where we sum over $k\in\llbracket1,\mathfrak{m}\rrbracket$ and the neighborhood $\mathfrak{I}_{x,k}$:
\small\begin{align}
N\alpha_{k} \wt{\gamma}_{k} \eta_{T,z_{1}} \eta_{T,z_{2}} \ &= \ N\alpha_{k} \wt{\gamma}_{k}(\eta_{T,z_{1}}\eta_{T,z_{2}}- \eta_{T,x} \eta_{T,x+1}) + N\alpha_{k}\wt{\gamma}_{k}\eta_{T,x} \eta_{T,x+1} \ \overset{\bullet}= \ \Phi_{1;k,z_{1},z_{2}}+\Phi_{2;k}. \label{eq:Duhamel4}
\end{align}\normalsize\normalsize
Observe the $\Phi_{1;}$-term is a pseudo-gradient because $\eta$-spins are exchangeable with respect to any canonical ensemble. This was the justification for the cubic nonlinearity in Proposition 2.3 in \cite{DT} being a pseudo-gradient as well. In particular after we sum over the two finite sets $k\in\llbracket1,\mathfrak{m}\rrbracket$ and $\mathfrak{I}_{x,k}$ we get another pseudo-gradient since pseudo-gradients are closed under uniformly bounded linear combinations. Thus we may employ the same decomposition and expansions for the sum of $\Phi_{1;}$-terms as those we used to address the cubic nonlinearity in Proposition 2.3 in \cite{DT}. We emphasize Assumption \ref{ass:ass} implies $\Phi_{1;}$ is order $N^{1/2}$ like the aforementioned cubic nonlinearity. Meanwhile the second $\Phi_{2;k}$-terms are all independent of the $\mathfrak{I}_{x,k}$-variables. Therefore after we sum over $\mathfrak{I}_{x,k}$, we get from $\Phi_{2;k}$ the factor $k\alpha_{k}\wt{\gamma}_{k}$ times a $k$-independent factor. Summing over $k\in\llbracket1,\mathfrak{m}\rrbracket$ and applying Lemma \ref{lemma:Duhamel1} shows that $\Phi_{2;k}$-terms within the far RHS of \eqref{eq:Duhamel4} ultimately contribute zero to the $\mathbf{Z}^{N}$-stochastic equation.
\end{proof}
\begin{proof}[Proof of \emph{Lemma \ref{lemma:Duhamel1}}]
By definition of $\lambda \in \R$, it suffices to prove ${\sum}_{k = 1}^{\mathfrak{m}} k \alpha_{k} \bar{\gamma}_{k} = \lambda {\sum}_{k = 1}^{\mathfrak{m}} k^{2} \alpha_{k}$. By definition of $\bar{\gamma}_{k}$, observe
\small\begin{align}
{\sum}_{k = 1}^{\mathfrak{m}} k \alpha_{k} \bar{\gamma}_{k} \ = \ 2 \lambda {\sum}_{k = 1}^{\mathfrak{m}} {\sum}_{\ell = k}^{\mathfrak{m}} \ (\ell - k) \alpha_{\ell} \ + \ \lambda {\sum}_{k = 1}^{\mathfrak{m}} k \alpha_{k} \ &= \ 2 \lambda {\sum}_{k = 1}^{\mathfrak{m}} {\sum}_{\ell = k}^{\mathfrak{m}} \ell \alpha_{\ell} \ - \ 2 \lambda {\sum}_{k = 1}^{\mathfrak{m}} k {\sum}_{\ell = k}^{\mathfrak{m}} \alpha_{\ell} \ + \ \lambda {\sum}_{k = 1}^{\mathfrak{m}} k \alpha_{k}. \nonumber
\end{align}\normalsize\normalsize
We rewrite both double summations on the far RHS by accumulating the resulting coefficients for all $\alpha_{k}$ with $k \in \Z_{>0}$. In the first double summation, we obtain $k \alpha_{k}$ a total of $k$-times provided any $k \in \Z_{>0}$. Inside the second double summation, we grab one copy of $j\alpha_{k}$ for each $j \in \llbracket 1, k \rrbracket$.  Combining these two observations with elementary calculations gives
\small\begin{align}
2 \lambda {\sum}_{k = 1}^{\mathfrak{m}} {\sum}_{\ell = k}^{\mathfrak{m}} \ell \alpha_{\ell} \ - \ 2 \lambda {\sum}_{k = 1}^{\mathfrak{m}} k {\sum}_{\ell = k}^{\mathfrak{m}} \alpha_{\ell} \ = \ 2 \lambda {\sum}_{k = 1}^{\mathfrak{m}} \left( k^{2} - {\sum}_{j = 1}^{k} j \right) \alpha_{k} \ &= \ \lambda {\sum}_{k = 1}^{\mathfrak{m}} (k^{2} - k) \alpha_{k}.
\end{align}\normalsize\normalsize
Combining the previous two displays completes the proof.
\end{proof}
\subsection{Strategy}
We now discuss ``proof" of Theorem \ref{theorem:KPZ}. We clarify that we will not present results here that we actually get in this paper. The reasons for this are technical and instead we will get slightly adapted quantitative versions of the following; we discuss these reasons at the end of this strategy discussion. For this reason, all results discussed \emph{here} will be ``pseudo-results".

The proof of Theorem \ref{theorem:KPZ} is built on the following key result that effectively says we can forget about the $\Phi^{N,2}$-contribution in the stochastic equation for $\mathbf{Z}^{N}$ from Proposition \ref{prop:Duhamel}. We discuss its importance and implications afterwards.
\begin{pprop}\label{pprop:S1}
\fsp Define a norm $\|\varphi\|_{\mathfrak{t};\mathbb{X}}\overset{\bullet}=\sup_{0\leq\mathfrak{s}\leq\mathfrak{t}}\sup_{x\in\mathbb{X}}|\varphi_{\mathfrak{s},x}|$ for any $\mathfrak{t}\in\R_{\geq0}$ and $\mathbb{X}\subseteq\R$. We have convergence in probability $\|\mathbf{H}^{N}(\Phi^{N,2})\|_{1;\Z} \to_{N\to\infty} 0$ where $\Phi^{N,2}$ is defined in the statement of \emph{Proposition \ref{prop:Duhamel}}.
\end{pprop}
Due to standard linear procedure, it is enough to analyze the process $\mathbf{Y}^{N}$ that solves the same stochastic equation as $\mathbf{Z}^{N}$ from Proposition \ref{prop:Duhamel} but without the $\Phi^{N,2}$-term and replacing all factors of $\mathbf{Z}^{N}$ with $\mathbf{Y}^{N}$ and while keeping the same initial data. More precisely it is left to show $\mathbf{Y}^{N}$ converges to the solution of SHE. At this point, outside of the \emph{exact} microscopic version of the SHE corresponding to the first two terms in the stochastic equation of $\mathbf{Y}^{N}$ from Proposition \ref{prop:Duhamel}, there is only weakly-vanishing data. Thus the SHE scaling limit for $\mathbf{Y}^{N}$ can be computed using the same ideas from hydrodynamic limits that were used in the proof of Theorem 1.1 in \cite{DT}. There is a caveat to this that is ultimately negligible; see the end of the proof of Proposition \ref{prop:KPZ2}.

We discuss elements of the would-be-proof of Pseudo-Proposition \ref{pprop:S1} below. For simplicity, let us assume that the gradient term in $\Phi^{N,2}$ is equal to 0. To control such a gradient term, we use a simple summation-by-parts argument and apply regularity estimates for the heat kernel in the heat operator $\mathbf{H}^{N}$ like in \cite{DT}. Let us also assume the order $N^{\beta_{X}}$-term in $\Phi^{N,2}$ in Proposition \ref{prop:Duhamel} is equal to 0. It turns out that all steps we require to control this $N^{\beta_{X}}$-term will be steps we need to control the leading-order $N^{1/2}$-term in $\Phi^{N,2}$ in Proposition \ref{prop:Duhamel} anyway. The first step to control the order $N^{1/2}$-term in $\Phi^{N,2}$ is an introduction-of-cutoff.
\begin{pprop}\label{pprop:S2}
\fsp Define the cutoff-spatial average $\mathscr{C}^{\mathbf{X},-}$ as $ \mathscr{A}^{\mathbf{X},-}(\mathfrak{g})$ in $\Phi^{N,2}$ in \emph{Proposition \ref{prop:Duhamel}} but only when this spatial average is at most $N^{\e-\beta_{X}/2}$ in absolute value. On the complement event, define $\mathscr{C}^{\mathbf{X},-}$ to be 0. We have, in probability,
\small\begin{align}
{\lim}_{N\to\infty} \|\mathbf{H}^{N}(N^{\frac12}|\mathscr{A}^{\mathbf{X},-}_{N^{\beta_{X}}}(\mathfrak{g}_{S,y})-\mathscr{C}_{S,y}^{\mathbf{X},-}|\mathbf{Z}^{N})\|_{1;\Z} \ = \ 0.
\end{align}\normalsize\normalsize
\end{pprop}
It turns out that such a step will be unnecessary for the order $N^{\beta_{X}}$-term in $\Phi^{N,2}$ from Proposition \ref{prop:Duhamel} just because it is lower-order. We will explain both the would-be-proof and utility of the previous Pseudo-Proposition \ref{pprop:S2} in future subsections. The key second step towards the would-be-proof of Pseudo-Proposition \ref{pprop:S1} is the following replacement-by-time-average.
\begin{pprop}\label{pprop:S3}
\fsp Given $\mathfrak{t}\in\R_{>0}$, define $\mathscr{A}^{\mathfrak{t};\mathbf{T},+}_{S,y} \overset{\bullet}= \mathfrak{t}^{-1}\int_{0}^{\mathfrak{t}} \mathscr{C}^{\mathbf{X},-}_{S+\mathfrak{r},y} \d\mathfrak{r}$ and for $\mathfrak{t}=0$ we instead define $\mathscr{A}^{\mathfrak{t};\mathbf{T},+} = \mathscr{C}^{\mathbf{X},-}$. In words $\mathscr{A}^{\mathfrak{t};\mathbf{T},+}$ is the time-average of $\mathscr{C}^{\mathbf{X},-}$ with respect to time-scale $\mathfrak{t}\in\R_{\geq0}$ in the ``positive time-direction". 

Defining $\mathfrak{t}_{\max}\overset{\bullet}=N^{-1}$, we have the convergence in probability
\small\begin{align}
{\lim}_{N\to\infty} \|\mathbf{H}^{N}(N^{\frac12}(\mathscr{A}^{\mathfrak{t}_{\max};\mathbf{T},+}_{S,y}-\mathscr{C}^{\mathbf{X},-}_{S,y})\mathbf{Z}^{N})\|_{1;\Z} \ = \ 0.
\end{align}\normalsize\normalsize
\end{pprop}
The conclusion of Pseudo-Proposition \ref{pprop:S3} is replacement of the spatial-average-with-cutoff $\mathscr{C}^{\mathbf{X},-}$ defined in the statement of Pseudo-Proposition \ref{pprop:S2} with its time-average on time-scale $\mathfrak{t}_{\max}=N^{-1}$. To take advantage of such replacement we would need to estimate the time-average $\mathscr{A}^{\mathfrak{t}_{\max};\mathbf{T},+}$. Ultimately, we would get the following.
\begin{pprop}\label{pprop:S4}
 Admit the setting of \emph{Pseudo-Proposition \ref{pprop:S3}}. We have the convergence in the probability
\small\begin{align}
{\lim}_{N\to\infty} \|\mathbf{H}^{N}(N^{\frac12}\mathscr{A}_{S,y}^{\mathfrak{t}_{\max};\mathbf{T},+}\mathbf{Z}^{N})\|_{1;\Z} \ = \ 0.
\end{align}\normalsize\normalsize
\end{pprop}
Pseudo-Proposition \ref{pprop:S1} would now follow from the triangle inequality for the norm $\|\|_{1;\Z}$. We now explain the would-be-proofs for each of Pseudo-Proposition \ref{pprop:S2}, Pseudo-Proposition \ref{pprop:S3}, and Pseudo-Proposition \ref{pprop:S4}. 
\subsection{Strategy -- Local Equilibrium}
Before we start the would-be-proofs for Pseudo-Proposition \ref{pprop:S2}, Pseudo-Proposition \ref{pprop:S3}, and Pseudo-Proposition \ref{pprop:S4}, we first introduce key invariant measure calculations. The first is a large-deviations-type estimate which we establish in Lemma \ref{lemma:LDP} and Corollary \ref{corollary:LDP}, and the second is the Kipnis-Varadhan ``Brownian" inequality which we establish in Lemma \ref{lemma:KV}, Lemma \ref{lemma:SpectralH-1}, and Lemma \ref{lemma:H-1SpectralPGF}.
\begin{lemma}\label{lemma:LE1}
Suppose the particle system starts at any ``canonical ensemble" invariant measure so that the measure on particle configurations on any finite subset of $\Z$ is a convex combination of canonical ensembles in \emph{Definition \ref{definition:ensembles}} on that subset. We have
\small\begin{align}
{\lim}_{N\to\infty}N^{\frac12}\E|\mathscr{A}^{\mathbf{X},-}_{N^{\beta_{X}}}(\mathfrak{g}_{S,y})-\mathscr{C}^{\mathbf{X},-}_{S,y}| \ = \ 0.
\end{align}\normalsize\normalsize
\end{lemma}
\begin{lemma}\label{lemma:LE2}
 Suppose the particle system starts at a canonical ensemble invariant measure as in \emph{Lemma \ref{lemma:LE1}}. For any $\mathfrak{t}\in\R_{\geq0}$,
\small\begin{align}
N^{1/2}\E|\mathscr{A}_{S,y}^{\mathfrak{t};\mathbf{T},+}| \ \lesssim \ N^{-1/2}\mathfrak{t}^{-1/2}N^{-\beta_{X}/2+\e}.
\end{align}\normalsize\normalsize
\end{lemma}
The proofs of Lemmas \ref{lemma:LE1} and \ref{lemma:LE2} depend heavily on invariant measures. However, we do not need these bounds \emph{pointwise} in space-time but only after the law of the particle system is \emph{averaged} against the heat kernel in space-time because of the ``heat-operator-integrated" structure of the proposed estimates within Pseudo-Proposition \ref{pprop:S2}, Pseudo-Proposition \ref{pprop:S3}, and Pseudo-Proposition \ref{pprop:S4}. From \cite{GPV}, in this \emph{averaged} sense the law of the particle system is close to a convex combination of canonical ensemble invariant measures at least on mesoscopic space-time scales. We now note that the statistics we address in Lemma \ref{lemma:LE1} and Lemma \ref{lemma:LE2} are mesoscopic statistics. Thus, we expect that Lemma \ref{lemma:LE1} and Lemma \ref{lemma:LE2} still hold beyond the canonical ensemble initial measures when the absolute values therein are integrated against the heat kernel.

However, there are obstructions in actually implementing such a local equilibrium idea that concern how local equilibrium works at a more technical but still important level. We will refer to them in the would-be-proof of Pseudo-Proposition \ref{pprop:S4}.
\begin{itemize}[leftmargin=*]
\item Following \cite{GPV}, we reduce to local equilibrium using entropy production estimates. Because of the asymmetry in the model not being sufficiently weak, and because of the infinite-volume features of the lattice $\Z$ where the exclusion process evolves, establishing entropy production estimates in our setting is noticeably more difficult; Section \ref{section:HydroStuff} addresses this.
\item We reduce to a local equilibrium by comparing the averaged law of the particle system to an invariant measure at the level of relative entropy using the entropy inequality in Appendix 1.8 in \cite{KL} which connects relative entropy to large deviations, for example. We also require a log-Sobolev inequality for the symmetric simple exclusion process established in \cite{Yau} to control relative entropy by Dirichlet form in order to use entropy production from the first bullet point. The LSI in \cite{Yau} is quadratic in the length-scale we want to reduce to local equilibrium on, so to reduce to local equilibrium on bigger length-scales, we need better entropy production or control, at a level of large deviations, on the functional that we are estimating in expectation.
\end{itemize}
\subsection{Pseudo-Proposition \ref{pprop:S2}}
Assume Lemma \ref{lemma:LE1} holds without expectation and the $\mathbf{Z}^{N}$-process is uniformly bounded as it is supposed to look like the SHE solution. By Lemma \ref{lemma:LE1}, there is nothing to do for invariant measure initial data. However, because the statistic we estimate in Lemma \ref{lemma:LE1} is mesoscopic, in general we can finish by reducing to local equilibrium.
\subsection{Pseudo-Proposition \ref{pprop:S3}}
We use the fast-variable/slow-variable idea from stochastic homogenization. This proposes that on the time-scale on which we want to replace $\mathscr{C}^{\mathbf{X},-}$ by time-average, the heat kernel and the $\mathbf{Z}^{N}$-process both exhibit little variation. The error for such a replacement-by-time-average is thus controlled by time-regularity of the heat kernel times $\mathscr{C}^{\mathbf{X},-}$ and time-regularity of the $\mathbf{Z}^{N}$-process times the size of $\mathscr{C}^{\mathbf{X},-}$. The heat kernel is smooth with an integrable ``enough" short-time singularity. On the other hand, we also expect that $\mathbf{Z}^{N}$ has time-regularity matching that of the SHE solution, and this is Holder regularity of exponent $(\frac14)^{-}$. Thus, the time-regularity for $\mathbf{Z}^{N}$ is worse than that for the heat kernel, so we control the error given by time-regularity of $\mathbf{Z}^{N}$ times $\mathscr{C}^{\mathbf{X},-}$. For any time-scale $\mathfrak{t}\in\R_{\geq0}$, we thus expect the resulting error to be at most, recalling $\mathscr{C}^{\mathbf{X},-}$ has an a priori bound by definition in Pseudo-Proposition \ref{pprop:S2}, of order roughly equal to:
\small\begin{align}
N^{1/2}\|\mathscr{C}^{\mathbf{X},-}\|_{\infty}\mathfrak{t}^{1/4} \ \lesssim \ N^{1/2-\beta_{X}/2+\e}\mathfrak{t}^{1/4} \ \lesssim \ N^{1/3}\mathfrak{t}^{1/4}. \label{eq:S31}
\end{align}\normalsize\normalsize
Recall $\beta_{X}=1/3+\e_{X,1}$ in the statement of Proposition \ref{prop:Duhamel}. Pseudo-Proposition \ref{pprop:S3} proposes that we take $\mathfrak{t}=\mathfrak{t}_{\max}=N^{-1}$ for which the RHS of the estimate \eqref{eq:S31} blows up. To remedy this we will appeal to the following multiscale procedure. Roughly it proposes an initial replacement of $\mathscr{C}^{\mathbf{X},-}$ by its time-average on \emph{some} time-scale and applying Lemma \ref{lemma:LE2} and local equilibrium to boost our a priori bound for $\mathscr{C}^{\mathbf{X},-}$ after time-average. First assume for simplicity that Lemma \ref{lemma:LE2} holds without expectation.
\begin{itemize}[leftmargin=*]
\item The bound \eqref{eq:S31} tells us we can pick the time-scale $\mathfrak{t}=\mathfrak{t}_{1,1} = N^{-2+\e_{1}}$ for $\e_{1}\in\R_{>0}$ arbitrarily small but still universal, so we first replace $\mathscr{C}^{\mathbf{X},-}$ with its time-average $\mathscr{A}^{\mathfrak{t}_{1,1};\mathbf{T},+}$ with respect to this preliminary time-scale $\mathfrak{t}_{1,1}=N^{-2+\e_{1}}$.
\item Let us now replace the time-average $\mathscr{A}^{\mathfrak{t}_{1,1};\mathbf{T},+}$ with respect to the time-scale $\mathfrak{t}_{1,1}=N^{-2+\e_{1}}$ with the time-average $\mathscr{A}^{\mathfrak{t}_{1,2};\mathbf{T},+}$ with respect to the time-scale $\mathfrak{t}_{1,2}=N^{\e_{1}}\mathfrak{t}_{1,1}$ inside of the heat operator integrated against both the heat kernel and the $\mathbf{Z}^{N}$-process. To this end, we first observe that $\mathscr{A}^{\mathfrak{t}_{1,2};\mathbf{T},+}$ is basically a time-average of $\mathscr{A}^{\mathfrak{t}_{1,1};\mathbf{T},+}$ on the latter/larger time-scale $\mathfrak{t}_{1,2}$. Therefore, the error we get is \eqref{eq:S31} as before but now with $\mathscr{A}^{\mathfrak{t}_{1,1};\mathbf{T},+}$ in place of $\mathscr{C}^{\mathbf{X},-}$ and with $\mathfrak{t}=\mathfrak{t}_{1,2}=N^{\e_{1}}\mathfrak{t}_{1,1}$. Lemma \ref{lemma:LE2} gives the following ultimately vanishing bound for the error in this second ``replacement-by-time-average" step:
\small\begin{align}
N^{1/2}\|\mathscr{A}^{\mathfrak{t}_{1,1};\mathbf{T},+}\|_{\infty}\mathfrak{t}_{1,2}^{1/4} \ \lesssim \ N^{-1/2-\beta_{X}/2+\e}\mathfrak{t}_{1,1}^{-1/2}\mathfrak{t}_{1,2}^{1/4} \ \lesssim \ N^{-2/3+\e}\mathfrak{t}_{1,1}^{-1/4}\mathfrak{t}_{1,2}^{1/4} \cdot \mathfrak{t}_{1,1}^{-1/4} \ = \ N^{-2/3+\e+\e_{1}}\mathfrak{t}_{1,1}^{-1/4} \ \lesssim \ N^{-1/6+\e+\e_{1}}. \label{eq:S32}
\end{align}\normalsize\normalsize
\item The rest of the multiscale procedure is then replacing $\mathscr{A}^{\mathfrak{t}_{1,n};\mathbf{T},+}$ with $\mathscr{A}^{\mathfrak{t}_{1,n+1};\mathbf{T},+}$, in which $\mathfrak{t}_{1,n+1}=N^{\e_{1}}\mathfrak{t}_{1,n}$, until we arrive at the final time-scale $\mathfrak{t}_{\max}=N^{-1}$. As $\mathfrak{t}_{1,1}=N^{-2+\e_{1}}$, we only require a $\e_{1}$-dependent number of steps to get to $\mathfrak{t}_{\max}=N^{-1}$.
\end{itemize}
\subsection{Pseudo-Proposition \ref{pprop:S4}}
Lemma \ref{lemma:LE2} with $\mathfrak{t}=\mathfrak{t}_{\max}=N^{-1}$ yields $N^{1/2}|\mathscr{A}^{\mathfrak{t}_{\max};\mathbf{T},+}| \lesssim N^{-\beta_{X}/2+\e}$, so the proposed bound in Pseudo-Proposition \ref{pprop:S4} holds if the exclusion process starts at a canonical ensemble invariant measure. We would then like to perform a reduction to local equilibrium since $\mathscr{A}^{\mathfrak{t}_{\max};\mathbf{T},+}$ is a mesoscopic statistic. However, there is a serious obstruction that cannot be easily circumvented. To explain this, let us observe that for longer time-scales $\mathfrak{t}\in\R_{\geq0}$, the statistic $\mathscr{A}^{\mathfrak{t};\mathbf{T},+}$ depends on spins on a larger $\mathfrak{t}$-dependent block. For example, the LSI for exclusion processes from \cite{Yau} grows in the length-scale of the set on which the exclusion process lives. For the time-scale $\mathfrak{t}_{\max}=N^{-1}$, the associated length-scale is too large for us to apply local equilibrium. We resolve this via the following multiscale method that makes local equilibrium accessible by slowly boosting a priori large-deviations bounds for time-averages. Recall better large-deviations estimates help with local equilibrium.
\begin{itemize}[leftmargin=*]
\item The issue with local equilibrium was that the time-scale $\mathfrak{t}_{\max}=N^{-1}$ was too long. With this in mind, we first consider shorter time-scales. We write $\mathscr{A}^{\mathfrak{t}_{\max};\mathbf{T},+}$ as an average of time-shifted time-averages on a time-scale $0<\mathfrak{t}_{2,1}\leq\mathfrak{t}_{\max}$:
\small\begin{align}
\mathscr{A}^{\mathfrak{t}_{\max};\mathbf{T},+}_{S,y} \ &= \ \wt{\sum}_{\mathfrak{l}=0}^{\mathfrak{t}_{\max}\mathfrak{t}_{2,1}^{-1}-1}\mathscr{A}^{\mathfrak{t}_{2,1};\mathbf{T},+}_{S+\mathfrak{l}\mathfrak{t}_{2,1},y}. \label{eq:S41}
\end{align}\normalsize\normalsize
The identity \eqref{eq:S41} follows by noting that averaging on a large time-scale is the same as averages on a smaller time-scale and averaging all the small-scale averages together. If $\mathfrak{t}_{2,1}\in\R_{>0}$ is a ``short enough" time-scale, we may analyze every summand within the RHS of \eqref{eq:S41} using Lemma \ref{lemma:LE2} and a reduction to local equilibrium. However, if we choose $\mathfrak{t}_{2,1}\in\R_{>0}$ too small our estimates for the summands on the RHS of \eqref{eq:S41} will not vanish in the large-$N$ limit when equipped with the additional factor of $N^{1/2}$ in the proposed estimate in the statement of Pseudo-Proposition \ref{pprop:S4}. Because we cannot pick $\mathfrak{t}_{2,1}\in\R_{>0}$ large enough to get a vanishing estimate for the summands on the RHS of \eqref{eq:S41} while still being able to employ local equilibrium, we instead do the following with motivation given shortly. Take $\beta_{2,1} = \frac12\beta_{X}-\e+\e_{2}$; building on \eqref{eq:S41}, we introduce cutoff:
\small\begin{align}
\mathscr{A}^{\mathfrak{t}_{\max};\mathbf{T},+}_{S,y} \ = \ \wt{\sum}_{\mathfrak{l}=0}^{\mathfrak{t}_{\max}\mathfrak{t}_{2,1}^{-1}-1}\mathscr{A}^{\mathfrak{t}_{2,1};\mathbf{T},+}_{S+\mathfrak{l}\mathfrak{t}_{2,1},y} \ &= \ \wt{\sum}_{\mathfrak{l}=0}^{\mathfrak{t}_{\max}\mathfrak{t}_{2,1}^{-1}-1}\mathscr{A}^{\mathfrak{t}_{2,1};\mathbf{T},+}_{S+\mathfrak{l}\mathfrak{t}_{2,1},y}\mathbf{1}_{|\mathscr{A}^{\mathfrak{t}_{2,1};\mathbf{T},+}_{S+\mathfrak{l}\mathfrak{t}_{2,1},y}|\lesssim N^{-\beta_{2,1}}} \label{eq:S42} \\
&+ \ \wt{\sum}_{\mathfrak{l}=0}^{\mathfrak{t}_{\max}\mathfrak{t}_{2,1}^{-1}-1}\mathscr{A}^{\mathfrak{t}_{2,1};\mathbf{T},+}_{S+\mathfrak{l}\mathfrak{t}_{2,1},y}\mathbf{1}_{|\mathscr{A}^{\mathfrak{t}_{2,1};\mathbf{T},+}_{S+\mathfrak{l}\mathfrak{t}_{2,1},y}|\gtrsim N^{-\beta_{2,1}}}. \label{eq:S42}
\end{align}\normalsize\normalsize
The first sum on the far RHS of the identity \eqref{eq:S42} gives each $\mathscr{A}^{\mathfrak{t}_{2,1};\mathbf{T},+}$-summand therein an \emph{improved} a priori upper bound of $N^{-\beta_{2,1}}$ which is $N^{-\e_{2}}$ better than the a priori bound of $N^{-\beta_{X}/2+\e}$ for $\mathscr{A}^{\mathfrak{t}_{2,1};\mathbf{T},+}$ that is inherited from the $\mathscr{C}^{\mathbf{X},-}$ it averages. For the second sum on the far RHS of \eqref{eq:S42}, we pick $\mathfrak{t}_{2,1}\in\R_{>0}$ small enough to apply local equilibrium but large enough so that the summands are negligible with high-probability with respect to canonical ensembles. To see where this negligible feature of the summands in this second sum on the far RHS of \eqref{eq:S42} comes from, we observe that if $\mathfrak{t}_{2,1}\in\R_{>0}$ is sufficiently large then Lemma \ref{lemma:LE2} tells us the event $|\mathscr{A}^{\mathfrak{t}_{2,1};\mathbf{T},+}|\gtrsim N^{-\beta_{2,1}}$ happens with low-probability. We emphasize that the point for such step is to accomplish a lesser goal. We will not estimate $\mathscr{A}^{\mathfrak{t}_{2,1};\mathbf{T},+}$ by $N^{-1/2}$ but rather by $N^{-\beta_{2,1}}$. We want a less sharp estimate, so we may pick a shorter time-scale $0<\mathfrak{t}_{2,1}\leq\mathfrak{t}_{\max}$ that does the job and for which we can apply local equilibrium.
\item At the end of the previous bullet point, we are now left with the first sum on the far RHS of \eqref{eq:S42}. This is an average of time-averages $\mathscr{A}^{\mathfrak{t}_{2,1};\mathbf{T},+}$ with a priori upper bound cutoff of $N^{-\beta_{2,1}}$ that improves on the $N^{-\beta_{X}/2+\e}$-cutoff on $\mathscr{C}^{\mathbf{X},-}$ by a factor of $N^{-\e_{2}}$. As each of these time-averages comes with the same upper bound cutoff of $N^{-\beta_{2,1}}$ we can group these time-averages on time-scale $\mathfrak{t}_{2,1}\in\R_{>0}$ into $\mathfrak{t}_{\max}\mathfrak{t}_{2,2}^{-1}$ many time-averages on a to-be-determined time-scale $\mathfrak{t}_{2,2}$ that is larger than $\mathfrak{t}_{2,1}$, and each scale $\mathfrak{t}_{2,2}$-averages also inherits the a priori upper bound cutoff of $N^{-\beta_{2,1}}$ as it averages terms with such cutoff. So
\small\begin{align}
\wt{\sum}_{\mathfrak{l}=0}^{\mathfrak{t}_{\max}\mathfrak{t}_{2,1}^{-1}-1}\mathscr{A}^{\mathfrak{t}_{2,1};\mathbf{T},+}_{S+\mathfrak{l}\mathfrak{t}_{2,1},y}\mathbf{1}_{|\mathscr{A}^{\mathfrak{t}_{2,1};\mathbf{T},+}_{S+\mathfrak{l}\mathfrak{t}_{2,1},y}|\lesssim N^{-\beta_{2,1}}} \ \approx \ \wt{\sum}_{\mathfrak{l}=0}^{\mathfrak{t}_{\max}\mathfrak{t}_{2,2}^{-1}-1}\mathscr{A}^{\mathfrak{t}_{2,2};\mathbf{T},+}_{S+\mathfrak{l}\mathfrak{t}_{2,2},y}\mathbf{1}_{|\mathscr{A}^{\mathfrak{t}_{2,2};\mathbf{T},+}_{S+\mathfrak{l}\mathfrak{t}_{2,2},y}|\lesssim N^{-\beta_{2,1}}}. \label{eq:S43}
\end{align}\normalsize\normalsize
The error terms corresponding to the approximation \eqref{eq:S43} look sufficiently like the second sum on the far RHS of \eqref{eq:S42}, it turns out after unfolding definitions. To be just a little more precise, the error terms are given by scale-$\mathfrak{t}_{2,1}$ time-averages with both upper and lower bound cutoffs. We then pick $\beta_{2,2} = \beta_{2,1} + \e_{2}$ and improve our a priori $N^{-\beta_{2,1}}$-bound similar to \eqref{eq:S42}:
\small\begin{align}
\wt{\sum}_{\mathfrak{l}=0}^{\mathfrak{t}_{\max}\mathfrak{t}_{2,2}^{-1}-1}\mathscr{A}^{\mathfrak{t}_{2,2};\mathbf{T},+}_{S+\mathfrak{l}\mathfrak{t}_{2,2},y}\mathbf{1}_{|\mathscr{A}^{\mathfrak{t}_{2,2};\mathbf{T},+}_{S+\mathfrak{l}\mathfrak{t}_{2,2},y}|\lesssim N^{-\beta_{2,1}}} \ &= \ \wt{\sum}_{\mathfrak{l}=0}^{\mathfrak{t}_{\max}\mathfrak{t}_{2,2}^{-1}-1}\mathscr{A}^{\mathfrak{t}_{2,2};\mathbf{T},+}_{S+\mathfrak{l}\mathfrak{t}_{2,2},y}\mathbf{1}_{|\mathscr{A}^{\mathfrak{t}_{2,2};\mathbf{T},+}_{S+\mathfrak{l}\mathfrak{t}_{2,2},y}|\lesssim N^{-\beta_{2,2}}}\nonumber\\
&+\wt{\sum}_{\mathfrak{l}=0}^{\mathfrak{t}_{\max}\mathfrak{t}_{2,2}^{-1}-1}\mathscr{A}^{\mathfrak{t}_{2,2};\mathbf{T},+}_{S+\mathfrak{l}\mathfrak{t}_{2,2},y}\mathbf{1}_{N^{-\beta_{2,2}}\lesssim|\mathscr{A}^{\mathfrak{t}_{2,2};\mathbf{T},+}_{S+\mathfrak{l}\mathfrak{t}_{2,2},y}|\lesssim N^{-\beta_{2,1}}}. \nonumber
\end{align}\normalsize\normalsize
Again, if we pick $\mathfrak{t}_{2,2}\in\R_{>0}$ sufficiently large but not too much larger than $\mathfrak{t}_{2,1}\in\R_{>0}$, then we can use Lemma \ref{lemma:LE2} and local equilibrium to argue the second sum within the RHS of the previous display is negligible with high-probability similar to our reasoning to control the second sum within the far RHS of \eqref{eq:S42}. We now discuss why we can reduce to a local equilibrium on this larger time-scale $\mathfrak{t}_{2,2}\in\R_{>0}$. The indicator functions attached to summands in the second sum within the RHS of the previous display come equipped with a priori $N^{-\beta_{2,1}}$-upper bounds that are $N^{-\e_{2}}$-\emph{better} than our a priori upper bounds for the summands in the second sum on the far RHS of \eqref{eq:S42}. The a priori upper bounds are deterministic and at the level of large-deviations trivially. Because we have better a priori upper bounds of $N^{-\beta_{2,1}}$ for the $\mathscr{A}^{\mathfrak{t}_{2,2};\mathbf{T},+}$-functional we are estimating in expectation, we can reduce bounds of scale $\mathfrak{t}_{2,2}$-data to local equilibrium as $\mathfrak{t}_{2,2}\in\R_{>0}$ is only slightly larger than the previous time-scale $\mathfrak{t}_{2,1}\in\R_{>0}$. We emphasize the error terms in the second sum on the RHS of the last display are time-averages with a priori upper bound and lower bound cutoffs that are $N^{\e_{2}}$-off from each other.
\item We iterate these improvements on time-scale and a priori upper bounds until we arrive at the a priori upper bound $N^{-1/2-\e}$. Because we improve our a priori upper bounds by $N^{-\e_{2}}$ at every step, we require only a $\e_{2}$-dependent number of iterations. All error terms in such scheme are time-averages with a priori upper and lower bound cutoffs which differ by a factor of $N^{\e_{2}}$. Moreover, all time-scales on which we perform a time-average will be at most the $\mathfrak{t}_{\max}$-scale from Pseudo-Proposition \ref{pprop:S3}.
\end{itemize}
\subsection{Technical Comments}
We start with why/in what way Pseudo-Propositions \ref{pprop:S2}, \ref{pprop:S3}, and \ref{pprop:S4} are \emph{pseudo}-results.
\begin{itemize}[leftmargin=*]
\item For technical reasons it will be convenient to ``compactify" the microscopic Cole-Hopf transform $\mathbf{Z}^{N}$. The current microscopic Cole-Hopf transform solves a stochastic equation on the infinite set $\R_{\geq0}\times\Z$. We will replace it by the solution of the same stochastic equation but ``periodized-in-space" onto a set $\R_{\geq0}\times\mathbb{T}_{N}$ as the $\|\|_{1;\mathbb{T}_{N}}$-norm is easier to work with than $\|\|_{1;\Z}$. Here $\mathbb{T}_{N}\subseteq\Z$ is a torus that is much larger than the macroscopic length-scale $N$. Thus, it should not change any scaling limits. 
\item In view of the previous bullet point, we will only prove estimates for $\Phi^{N,2}$ with respect to $\|\|_{1;\mathbb{T}_{N}}$-norms and not $\|\|_{1;\Z}$-norms.
\item The details behind the version of Pseudo-Proposition \ref{pprop:S1} that we actually get are different at a technical level. In particular, we will actually take a different, but still morally similar, value for $\mathfrak{t}_{\max}$ from Pseudo-Proposition \ref{pprop:S3}.
\item The stochastic equation for $\mathbf{Z}^{N}$ in Proposition \ref{prop:Duhamel} is multiplicative in $\mathbf{Z}^{N}$, so it is not enough to study only local functionals . We must also control $\|\mathbf{Z}^{N}\|_{1;\mathbb{T}_{N}}$. This ends up being a technical point we treat with a continuity argument from PDE.
\end{itemize}
Let us now give an outline of the rest of this paper.
\begin{itemize}[leftmargin=*]
\item In Section \ref{section:HydroStuff} we construct local equilibrium and give invariant measure estimates. In Section \ref{section:Ctify} we ``compactify" $\Z\to\mathbb{T}_{N}$.
\item In Section \ref{section:D1B}, we get the spatial/time-average estimates alluded to in the would-be-proofs of Pseudo-Propositions \ref{pprop:S2}, \ref{pprop:S3}, and \ref{pprop:S4}. We do this via local equilibrium and a ``dynamical one-block step"/dynamic version of the one-block step in \cite{GPV}.
\item In Section \ref{section:KPZ1} we establish time-regularity estimates for $\mathbf{Z}^{N}$ using mostly standard ideas that are present in \cite{DT}, for example.
\item We establish a strong version of Pseudo-Proposition \ref{pprop:S1} in Section \ref{section:KPZ2} following basically the previously outlined strategy. In Section \ref{section:KPZ3} we apply this strong estimate to pretend $\Phi^{N,2}\approx0$ with high probability. We then follow the arguments in \cite{DT}.
\end{itemize}
\begin{itemize}[leftmargin=*]
\item Section \ref{section:HydroStuff}, Section \ref{section:D1B}, Section \ref{section:KPZ1}, and Section \ref{section:KPZ2} are all technical. The reader is invited to skim these sections and first take their results for granted and then read the paper in its written order to get a ``blackbox" proof of Theorem \ref{theorem:KPZ}. For technical aspects behind showing the important ingredients within Section \ref{section:HydroStuff}, Section \ref{section:D1B}, Section \ref{section:KPZ1}, and Section \ref{section:KPZ2}, the reader is invited to read Section \ref{section:KPZ2} followed by Section \ref{section:HydroStuff}, Section \ref{section:D1B}, then Section \ref{section:KPZ1}. We have organized Section \ref{section:HydroStuff}, Section \ref{section:D1B}, Section \ref{section:KPZ1}, and Section \ref{section:KPZ2} to provide proofs of main results first, deferring proofs of technical manipulations/ingredients until the end of the section.
\end{itemize}
%
%
%
\section{Local Equilibrium Estimates}\label{section:HydroStuff}
This section is a preliminary discussion consisting of tools from hydrodynamic limits. We organize it as follows.
\begin{itemize}[leftmargin=*]
\item We will first introduce the key invariant measures for relevant exclusion processes. These are the canonical ensembles and grand-canonical ensembles that we used to define the three classes of functionals at the beginning of Section \ref{section:Framework}.
\item We move to entropy production in Proposition \ref{prop:EProd}. We use it to perform ``reduction to local equilibrium" in Lemma \ref{lemma:LE}.
\item We then move to invariant measure calculations that will be important \emph{after} our reduction to local equilibrium. These tools are given in Lemma \ref{lemma:KV}, Lemma \ref{lemma:SpectralH-1}, Lemma \ref{lemma:H-1SpectralPGF}, Lemma \ref{lemma:LDP}, and Corollary \ref{corollary:LDP}.
\end{itemize}
\begin{definition}\label{definition:ensembles}
For $\mathfrak{I} \subseteq \Z$ and $\varrho \in \R$, let the canonical ensemble/measure $\mu_{\varrho,\mathfrak{I}}^{\mathrm{can}}$ be the uniform measure on the hyperplane
\small\begin{align}
\Omega_{\varrho,\mathfrak{I}} \ &\overset{\bullet}= \ \left\{\eta\in\Omega_{\mathfrak{I}}: \ \wt{\sum}_{x\in\mathfrak{I}}\eta_{x} = \varrho \right\} \ \subseteq \ \Omega_{\mathfrak{I}}.
\end{align}\normalsize\normalsize
We define the grand-canonical ensemble/measure $\mu_{\varrho,\mathfrak{I}}$ as a product measure on $\Omega_{\mathfrak{I}} = \{\pm1\}^{\mathfrak{I}}$ such that $\E\eta_{x} = \varrho$ for all $x\in\mathfrak{I}$. We note that $\varrho\in\R$ denotes \emph{spin density} and not \emph{particle density} in our conventions.
\end{definition}
\begin{definition}\label{definition:REDF}
Consider any subset $\mathfrak{I} \subseteq \Z$ and parameter $\varrho \in \R$, and suppose that $\mathfrak{f}:\Omega_{\mathfrak{I}}\to\R_{\geq0}$ is a probability density with respect to either the canonical ensemble $\mu_{\varrho,\mathfrak{I}}^{\mathrm{can}}$ or the grand-canonical ensemble $\mu_{\varrho,\mathfrak{I}}$ depending on the context below.
\begin{itemize}[leftmargin=*]
\item The \emph{relative entropy} with respect to $\mu_{\varrho,\mathfrak{I}}^{\mathrm{can}}$ and $\mu_{\varrho,\mathfrak{I}}$, respectively, are the functionals
\small\begin{align}
\mathfrak{H}_{\varrho,\mathfrak{I}}^{\mathrm{can}}(\mathfrak{f}) \ \overset{\bullet}= \ \E^{\mu_{\varrho,\mathfrak{I}}^{\mathrm{can}}}\mathfrak{f}\log\mathfrak{f} \quad \mathrm{and} \quad \mathfrak{H}_{\varrho,\mathfrak{I}}(\mathfrak{f}) \ &\overset{\bullet}= \ \E^{\mu_{\varrho,\mathfrak{I}}}\mathfrak{f}\log\mathfrak{f}.
\end{align}\normalsize\normalsize
\item The \emph{Dirichlet form} with respect to $\mu_{\varrho,\mathfrak{I}}^{\mathrm{can}}$ and $\mu_{\varrho,\mathfrak{I}}$, respectively, are defined as follows in which we recall that $\mathfrak{S}_{x,y}$ denotes the generator for the symmetric exclusion process with speed 1 on the bond $\{x,y\}\subseteq\Z$. Recall the set $\mathrm{A}$ of symmetric speeds $\alpha_{\bullet}$:
\begin{subequations}
\small\begin{align}
\mathfrak{D}_{\varrho,\mathfrak{I}}^{\mathrm{can}}(\mathfrak{f}) \ &\overset{\bullet}= \ - \E^{\mu_{\varrho,\mathfrak{I}}^{\mathrm{can}}}\mathfrak{f}^{\frac12} \cdot {\sum}_{x,y \in \mathfrak{I}} \alpha_{|x-y|} \mathfrak{S}_{x,y} \mathfrak{f}^{\frac12} \ = \ -{\sum}_{x,y\in\mathfrak{I}}\alpha_{|x-y|} \E^{\mu_{\varrho,\mathfrak{I}}^{\mathrm{can}}}\mathfrak{f}^{\frac12}\mathfrak{S}_{x,y}\mathfrak{f}^{\frac12} \\
\mathfrak{D}_{\varrho,\mathfrak{I}}(\mathfrak{f}) \ &\overset{\bullet}= \ - \E^{\mu_{\varrho,\mathfrak{I}}}\mathfrak{f}^{\frac12} \cdot {\sum}_{x,y \in \mathfrak{I}} \alpha_{|x-y|} \mathfrak{S}_{x,y} \mathfrak{f}^{\frac12} \ = \ -{\sum}_{x,y\in\mathfrak{I}}\alpha_{|x-y|} \E^{\mu_{\varrho,\mathfrak{I}}}\mathfrak{f}^{\frac12}\mathfrak{S}_{x,y}\mathfrak{f}^{\frac12}.
\end{align}\normalsize\normalsize
\end{subequations}
\end{itemize}
\end{definition}
\begin{remark}
The Dirichlet form is usually defined without a square root of $\mathfrak{f}$, but we adopt it as it appears frequently here.
\end{remark}
\subsection{Entropy Production}
We present here an entropy production bound that will be the key ingredient for local equilibrium that is not already established somewhere else. First, we introduce notation to be used both in this section and beyond.
\begin{definition}
Provided any $T \in \R_{\geq0}$, let us define $\mu_{T,N}$ as the law of the particle system on $\Omega_{\Z}$ after time-$T$ evolution with initial measure $\mu_{0,N}$. Define the Radon-Nikodym derivative $\mathfrak{f}_{T,N}\d\mu_{0,\Z} = \d\mu_{T,N}$. 
\end{definition}
\begin{definition}
For any random variable or probability measure $\phi$, define $\phi^{\mathfrak{I}} \overset{\bullet}= \E^{\mu_{0,\Z}}_{\mathfrak{I}}\phi$ as conditional expectation conditioning on $\eta_{x}$ for $x \in \mathfrak{I} \subseteq \Z$. If $\phi$ is a measure, this conditional expectation is its marginal on the joint spins $\eta_{x}$ for $x\in\mathfrak{I}\subseteq\Z$.
\end{definition}
\begin{prop}\label{prop:EProd}
 Take any $T \in \R_{\geq0}$ uniformly bounded, and consider any arbitrarily small though universal constant $\e \in \R_{>0}$. Uniformly over all initial probability measures $\mu_{0,N}$ on $\Omega_{\Z}$, we get the time-averaged grand-canonical Dirichlet form bound
\small\begin{align}
T^{-1}\int_{0}^{T} \mathfrak{D}_{0,\mathfrak{I}_{N}}(\mathfrak{f}_{S,N}^{\mathfrak{I}_{N}}) \ \d S \ &\lesssim_{\e} \ T^{-1}N^{-\frac34 + 5\e} + N^{-\frac34+5\e} \ \lesssim \ T^{-1}N^{-\frac34+5\e}.
\end{align}\normalsize\normalsize
We introduced $\mathfrak{I}_{N} \overset{\bullet}= \llbracket -N^{\frac54 + \e}, N^{\frac54 + \e} \rrbracket \subseteq \Z$. The second bound above follows as $T\in\R_{\geq0}$ is uniformly bounded.
\end{prop}
Proposition \ref{prop:EProd} is referred to as an ``entropy production bound" even though it estimates a time-averaged Dirichlet form. This is because the upper bound comes from estimating terms appearing in the time derivative, or growth, of relative entropy.

Without the $\e$-dependent terms in the exponents in the statement of Proposition \ref{prop:EProd}, the conclusion is entropy production estimates that would match that of a \emph{symmetric} simple exclusion process on a torus of length $N^{5/4}$ roughly speaking. Thus one interpretation of Proposition \ref{prop:EProd} is somehow controlling the contribution of the asymmetry on the length-scale $N^{5/4}$ by that of the symmetric part of the exclusion process. We note that doing so upon replacing the length-scale $N^{5/4}\to N^{3/2}$ is not difficult as we show in Lemma \ref{lemma:EProdSoP} for reasons which amount to $\frac32=2-\frac12$ and the relative strength of the asymmetry is $N^{-1/2}$.
\subsection{Local Equilibrium}
First, we introduce notation that will be used both in and beyond the current section. The following construction is of a space-time averaged probability density in spirit of the one-block scheme in \cite{GPV}.
\begin{definition}\label{definition:AvMeasures}
Recall the law of the particle system $\mu_{\bullet,N}$ with initial measure $\mu_{0,N}$. Recall $\mathfrak{f}_{\bullet,N}\d\mu_{0,\Z} \overset{\bullet}= \d\mu_{\bullet,N}$. Provided times $\mathfrak{t}_{0},T \in \R_{\geq 0}$ and a subset $\mathfrak{I} \subseteq \Z$, define the following space-time averaged probability density where $\tau_{x}\mathfrak{f}_{\mathfrak{t},N}(\eta) \overset{\bullet}= \mathfrak{f}_{\mathfrak{t},N}(\tau_{x}\eta)$:
\small\begin{align}
\bar{\mathfrak{f}}_{\mathfrak{t}_{0},T,\mathfrak{I}} \ &\overset{\bullet}= \ T^{-1} \int_{0}^{T} \wt{\sum}_{x \in \mathfrak{I}} \tau_{x}\mathfrak{f}_{\mathfrak{t}_{0}+S,N} \ \d S.
\end{align}\normalsize\normalsize
We also define $\d\bar{\mu}_{\mathfrak{t}_{0},T,\mathfrak{I}} \overset{\bullet}= \bar{\mathfrak{f}}_{\mathfrak{t}_{0},T,\mathfrak{I}}\d\mu_{0,\Z}$ and $\d\bar{\mu}_{T,\mathfrak{I}} \overset{\bullet}= \d\bar{\mu}_{0,T,\mathfrak{I}}$ and $\bar{\mathfrak{f}}_{T,\mathfrak{I}} \overset{\bullet}= \bar{\mathfrak{f}}_{0,T,\mathfrak{I}}$. Provided any subset $\mathfrak{I}_{2} \subseteq \Z$ and any $\varrho \in \R$, define ``spin-density" probabilities and the ``$\mathfrak{I}_{2}$-local" relative entropy and Dirichlet form that we use in proving local equilibrium:
\begin{subequations}
\small\begin{align}
\mathfrak{p}_{\mathfrak{t}_{0},T,\mathfrak{I}}^{\mathfrak{I}_{2},\varrho} \ &\overset{\bullet}= \ \E^{\mu_{0,\mathfrak{I}_{2}}} \left( \bar{\mathfrak{f}}_{\mathfrak{t}_{0},T,\mathfrak{I}}^{\mathfrak{I}_{2}}\mathbf{1}_{\wt{\sum}_{x \in \mathfrak{I}_{2}}\eta_{x} = \varrho} \right) \\
\bar{\mathfrak{H}}_{\varrho,\mathfrak{I}_{2}}^{\mathrm{can}}\left( \bar{\mathfrak{f}}_{\mathfrak{t}_{0},T,\mathfrak{I}}^{\mathfrak{I}_{2}} \right) \ &\overset{\bullet}= \ \mathfrak{H}_{\varrho,\mathfrak{I}_{2}}^{\mathrm{can}} \left( \frac{\d\bar{\mu}_{\mathfrak{t}_{0},T,\mathfrak{I}}^{\mathfrak{I}_{2}}}{\d\mu_{\varrho,\mathfrak{I}_{2}}^{\mathrm{can}}} \left(\mathfrak{p}_{\mathfrak{t}_{0},T,\mathfrak{I}}^{\mathfrak{I}_{2},\varrho}\right)^{-1}\mathbf{1}_{\wt{\sum}_{x \in \mathfrak{I}_{2}}\eta_{x} = \varrho}\right) \\
\bar{\mathfrak{D}}_{\varrho,\mathfrak{I}_{2}}^{\mathrm{can}}\left( \bar{\mathfrak{f}}_{\mathfrak{t}_{0},T,\mathfrak{I}}^{\mathfrak{I}_{2}} \right) \ &\overset{\bullet}= \ \mathfrak{D}_{\varrho,\mathfrak{I}_{2}}^{\mathrm{can}}\left( \frac{\d\bar{\mu}_{\mathfrak{t}_{0},T,\mathfrak{I}}^{\mathfrak{I}_{2}}}{\d\mu_{\varrho,\mathfrak{I}_{2}}^{\mathrm{can}}} \left(\mathfrak{p}_{\mathfrak{t}_{0},T,\mathfrak{I}}^{\mathfrak{I}_{2},\varrho}\right)^{-1}\mathbf{1}_{\wt{\sum}_{x \in \mathfrak{I}_{2}}\eta_{x} = \varrho}\right).
\end{align}\normalsize\normalsize
\end{subequations}
The first term is the probability under $\d\bar{\mu}_{\mathfrak{t}_{0},T,\mathfrak{I}}$ of the hyperplane in $\Omega_{\mathfrak{I}_{2}}$ with spin density $\varrho\in\R$. The second term is the relative entropy of $\d\bar{\mu}_{\mathfrak{t}_{0},T,\mathfrak{I}}$ with respect to a canonical ensemble after projecting to $\mathfrak{I}_{2}$-marginals and conditioning on the hyperplane with appropriate spin density $\varrho\in\R$. The third term is the same as the second but Dirichlet form instead of relative entropy.
\end{definition}
\begin{lemma}\label{lemma:LE}
 Suppose the functional $\varphi$ has support $\mathfrak{I}_{\varphi} \subseteq \mathfrak{I}_{N}$ where $\mathfrak{I}_{N} \subseteq \Z$ is the block defined above in \emph{Proposition \ref{prop:EProd}} but with possibly different but still arbitrarily small and universal exponent $\e\in\R_{>0}$. Given any $\kappa \in \R_{>0}$ and again for $\e \in \R_{>0}$ arbitrarily small but universal, we have the following ``reduction to equilibrium" on $\mathfrak{I}_{\varphi}$ uniformly over $\mathfrak{t}_{0},T\lesssim1$:
\small\begin{align}
\E^{\mu_{0,\Z}} \left(\bar{\mathfrak{f}}_{\mathfrak{t}_{0},T,\mathfrak{I}_{N}}^{\mathfrak{I}_{\varphi}}\cdot\varphi\right) \ &\lesssim_{\e} \ T^{-1} \kappa^{-1} N^{-\frac34+10\e} |\mathfrak{I}_{N}|^{-1} |\mathfrak{I}_{\varphi}|^{3} \ + \ \kappa^{-1} \sup_{\varrho \in \R} \log \E^{\mu_{\varrho,\mathfrak{I}_{\varphi}}^{\mathrm{can}}} \exp\left(\kappa \varphi\right).
\end{align}\normalsize\normalsize
\end{lemma}
We provide the proof of Lemma \ref{lemma:LE} towards the end of the current section as with the proof of Proposition \ref{prop:EProd}. The proof of Lemma \ref{lemma:LE} is an application of the classical entropy inequality in Appendix 1.8 of \cite{KL}, the log-Sobolev inequality of \cite{Yau}, and the entropy production bounds in Proposition \ref{prop:EProd}. We point out that the dependence on the support of the functional $\varphi$ we take expectation of in the statement of Lemma \ref{lemma:LE} is cubic on the RHS of the estimate therein. This reflects additional difficulty in comparison with local equilibrium on bigger length-scales. We also point out that if we knew $|\varphi|\lesssim\kappa^{-1}$ with universal implied constant at least at a level of large deviations, the second term on the RHS of the estimate in the statement of Lemma \ref{lemma:LE} could be reduced to just an expectation of $|\varphi|$ times universal factors courtesy of uniform sublinearity of the exponential on compact sets and concavity of the logarithm. We use this feature of the estimate in the statement of Lemma \ref{lemma:LE} later in Section \ref{section:D1B}.
\subsection{Invariant Measure Estimates}
We begin this discussion with the following local construction.
\begin{definition}\label{definition:PeriodicGenerator}
Consider a subset $\mathfrak{I} \subseteq \Z$ and define the following infinitesimal generator in which $+_{\mathfrak{I}}$ denotes addition on $\mathfrak{I}$ equipped with periodic boundary conditions $\inf\mathfrak{I} = \sup\mathfrak{I}+1$ and thus realized as a torus:
\small\begin{align}
\mathfrak{S}_{\mathfrak{I}}^{N,!!}\mathfrak{f}(\eta) \ &\overset{\bullet}= \ N^{2} \sum_{k = 1}^{\mathfrak{m}} \sum_{x \in \mathfrak{I}} \alpha_{k} \left( 1 + \frac12N^{-\frac12}\gamma_{k} \frac{1-\eta_{x}}{2}\frac{1 + \eta_{x+_{\mathfrak{I}}k}}{2} - \frac12N^{-\frac12}\gamma_{k}\frac{1+\eta_{x}}{2}\frac{1-\eta_{x+_{\mathfrak{I}}k}}{2} \right) \mathfrak{S}_{x,x+_{\mathfrak{I}}k}\mathfrak{f}(\eta).
\end{align}\normalsize\normalsize
We denote by $\eta_{\mathfrak{I};\bullet}$ the $\mathfrak{I}$-local/$\mathfrak{I}$-periodic stochastic process valued in $\Omega_{\mathfrak{I}}$ with infinitesimal generator $\mathfrak{S}_{\mathfrak{I}}^{N,!!}$. It is an exclusion process of particles that jump by the same local rules as the original exclusion process but with periodic boundary on the set $\mathfrak{I}$.
\end{definition}
We first give a general Kipnis-Varadhan inequality to control time-averages for stationary exclusion processes by a Sobolev norm. We then control the Sobolev norm. Lastly, we present martingale LDP-type bounds to study spatial mixing of canonical ensembles in the context of pseudo-gradients. We note that these results are all basically classical.
\begin{lemma}\label{lemma:KV}
Fix any $\mathfrak{I} \subseteq \Z$ and $\varrho \in \R$. For any $\varphi: \Omega_{\mathfrak{I}} \to \R$ and $\mathfrak{t}_{\mathrm{av}} \in \R_{\geq 0}$, we have the following that we explain after:
\small\begin{align}
\E^{\mu_{\varrho,\mathfrak{I}}^{\mathrm{can}}}|\mathscr{A}_{\mathfrak{I};\mathfrak{t}_{\mathrm{av}}}^{\mathbf{T},+}(\varphi(\eta_{\mathfrak{I};\bullet}))|^{2} + \E^{\mu_{\varrho,\mathfrak{I}}^{\mathrm{can}}}\sup_{0\leq\mathfrak{t}\leq\mathfrak{t}_{\mathrm{av}}}\mathfrak{t}^{2}\mathfrak{t}_{\mathrm{av}}^{-2} |\mathscr{A}_{\mathfrak{I};\mathfrak{t}}^{\mathbf{T},+}(\varphi(\eta_{\mathfrak{I};\bullet}))|^{2} \ &\lesssim \ \mathfrak{t}_{\mathrm{av}}^{-1} \|\varphi\|_{\dot{\mathbf{H}}_{\varrho,\mathfrak{I}}^{-1}}^{2}.
\end{align}\normalsize\normalsize
Above, we have introduced the following time-averages and Sobolev norms adapted to the $\mathfrak{I}$-local stochastic process:
\begin{itemize}[leftmargin=*]
\item Define the following time/dynamic-average operator acting on $\varphi$ with initial time $\bullet \in \R_{\geq 0}$ and time-scale $\mathfrak{t}\in\R_{\geq0}$:
\small\begin{align}
\mathscr{A}_{\mathfrak{I};\mathfrak{t}}^{\mathbf{T},+}(\varphi(\eta_{\mathfrak{I};\bullet})) \ &\overset{\bullet}= \ \mathbf{1}_{\mathfrak{t}\neq0}\mathfrak{t}^{-1}\int_{0}^{\mathfrak{t}}\varphi(\eta_{\mathfrak{I};\bullet+\mathfrak{r}}) \ \d\mathfrak{r} \ + \ \mathbf{1}_{\mathfrak{t}=0}\varphi(\eta_{\mathfrak{I};\bullet}).
\end{align}\normalsize\normalsize
The superscript ``$\mathbf{T},+$" denotes time-average in the positive time-direction. We will use and introduce this notation again later throughout the paper; see \emph{Section \ref{section:D1B}}, for example. We also clarify that the maximal process on the LHS of the proposed estimate is actually the supremum over time-scales $0\leq\mathfrak{t}\leq\mathfrak{t}_{\mathrm{av}}$ of the integral of $\varphi(\eta_{\mathfrak{I};\bullet})$ on the interval $[0,\mathfrak{t}]$ then weighted by $\mathfrak{t}_{\mathrm{av}}^{-1}$, and then squaring this integral and $\mathfrak{t}_{\mathrm{av}}$-weight. In particular, we only get a squared time-average if $\mathfrak{t}=\mathfrak{t}_{\mathrm{av}}$.
\item The Sobolev norm $\dot{\mathbf{H}}^{-1}$ is defined by the following, where $\bar{\mathfrak{S}}_{\mathfrak{I}}^{N,!!}$ denotes $\mathfrak{S}_{\mathfrak{I}}^{N,!!}$ without $\mathfrak{S}_{x,x+_{\mathfrak{I}}k}$-terms for which $x+_{\mathfrak{I}}k \neq x+k$ and also sets $\gamma_{k} = 0$, so $\bar{\mathfrak{S}}^{N,!!}_{\mathfrak{I}}$ is the infinitesimal generator for a finite-range symmetric exclusion process on $\mathfrak{I}\subseteq\Z$ suppressing all jumps outside the block $\mathfrak{I}\subseteq\Z$. We note that this finite-range symmetric exclusion process is also $N^{2}$ speed:
\small\begin{align}
\|\varphi\|_{\dot{\mathbf{H}}_{\varrho,\mathfrak{I}}^{-1}}^{2} \ &\overset{\bullet}= \ \sup_{\psi: \Omega_{\mathfrak{I}}\to\R} \left( 2 \E^{\mu_{\varrho,\mathfrak{I}}^{\mathrm{can}}}(\varphi \psi) \ + \ \E^{\mu_{\varrho,\mathfrak{I}}^{\mathrm{can}}}\psi\bar{\mathfrak{S}}^{N,!!}_{\mathfrak{I}}\psi \right).
\end{align}\normalsize\normalsize
\item The expectation $\E^{\mu_{\varrho,\mathfrak{I}}^{\mathrm{can}}}$ of the ``path-space" functional $|\mathscr{A}_{\mathfrak{I};\mathfrak{t}_{\mathrm{av}}}^{\mathbf{T},+}\varphi(\eta_{\mathfrak{I};\bullet})|^{2}$ and its running sup is equal to $\E^{\mu_{\varrho,\mathfrak{I}}^{\mathrm{can}}}\E_{\eta_{\mathfrak{I};\bullet}}$, where 
\begin{itemize}[leftmargin=*]
\item The inner expectation is with respect to the path-space measure induced by the $\mathfrak{S}_{\mathfrak{I}}^{N,!!}$-dynamic with initial spins $\eta_{\mathfrak{I};\bullet} \in \Omega_{\mathfrak{I}}$.
\item The outer expectation is over the above initial spin configuration $\eta_{\mathfrak{I};\bullet} \in \Omega_{\mathfrak{I}}$ with respect to the canonical ensemble $\mu_{\varrho,\mathfrak{I}}^{\mathrm{can}}$.
\end{itemize}
\end{itemize}
\end{lemma}
\begin{proof}
The first term in the claimed bound is controlled by the second by taking the latter supremum at $\mathfrak{t} = \mathfrak{t}_{\mathrm{av}}$. We note $\mu_{\varrho,\mathfrak{I}}^{\mathrm{can}}$ is an invariant measure for the $\eta_{\mathfrak{I}}$-process, which can be checked by standard procedure, and we cite Lemma 2.4 in \cite{KLO}.
\end{proof}
To use Lemma \ref{lemma:KV} we use the next set of estimates. The first is an orthogonality estimate that is certainly true for variance and comes by the spatial mixing of the canonical ensemble. The point is that orthogonality holds for the Sobolev norm too. The second estimate is a spectral gap. It estimates the Sobolev norm by a variance with constant adapted to diffusive speed-scaling in the process and the support of the functional at hand. We cite Proposition 3.3 and Proposition 3.4 in \cite{GJ15} for a proof.
\begin{lemma}\label{lemma:SpectralH-1}
Fix any $\mathfrak{I} \subseteq \Z$ and $\varrho \in \R$. Suppose we have a collection of bounded functions $\varphi_{1},\ldots,\varphi_{J}: \Omega_{\mathfrak{I}} \to \R$ so that
\begin{itemize}[leftmargin=*]
\item The functional $\varphi_{j}$ has support $\mathfrak{I}_{j} \subseteq \mathfrak{I}$, and the supports $\mathfrak{I}_{1},\ldots,\mathfrak{I}_{J} \subseteq \mathfrak{I}$ are mutually disjoint.
\item Provided any $j \in \llbracket 1,J \rrbracket$ and any $\wt{\varrho} \in \R$, we have the canonical ensemble mean zero condition $\E^{\mu_{\wt{\varrho},\mathfrak{I}_{j}}^{\mathrm{can}}} \varphi_{j} = 0$.
\end{itemize}
We define the average $\Phi_{J,N} \overset{\bullet}= \wt{\sum}_{j=1}^{J} \varphi_{j}$. We recall $\wt{\sum}$ denotes an averaged sum. We have
\small\begin{align}
\sup_{\varrho\in\R}\|\Phi_{J,N}\|_{\dot{\mathbf{H}}_{\varrho,\mathfrak{I}}^{-1}}^{2} \ &\lesssim \ J^{-1}\sup_{j=1,\ldots,J}\sup_{\varrho_{j}\in\R} \| \varphi_{j} \|_{\dot{\mathbf{H}}_{\varrho_{j},\mathfrak{I}_{j}}^{-1}}^{2} \\
\sup_{\varrho_{j}\in\R} \| \varphi_{j} \|_{\dot{\mathbf{H}}_{\varrho_{j},\mathfrak{I}_{j}}^{-1}}^{2}  \ &\lesssim \ N^{-2} |\mathfrak{I}_{j}|^{2}\sup_{\varrho_{j} \in \R} \E^{\mu_{\varrho_{j},\mathfrak{I}_{j}}^{\mathrm{can}}} |\varphi_{j}|^{2}.
\end{align}\normalsize
\end{lemma}
The orthogonality bound will be useful as we will take $\varphi_{j} = \tau_{-7j\mathfrak{m}}\mathfrak{g}$ in a future application; recall in Proposition \ref{prop:Duhamel} that these $\varphi_{j}$-terms have disjoint and uniformly bounded support. Let us also point out the double supremum on the RHS of the first estimate in the statement of Lemma \ref{lemma:SpectralH-1}. Orthogonality would usually have an average over $j\in\llbracket1,J\rrbracket$ instead of the supremum over these indices. Propositions 3.3 and 3.4 in \cite{GJ15} allow for an average instead of supremum on the RHS of the first estimate in the statement of Lemma \ref{lemma:SpectralH-1}. However, we will not need an average, and the average certainly implies an upper bound in terms of the sup as well. The $N^{-2}$-factor in the second bound of Lemma \ref{lemma:SpectralH-1} reflects time-scaling and speed to invariant measure.

Our last bound treats pseudo-gradients of large supports but whose pseudo-gradient factors have small supports, for which Lemma \ref{lemma:SpectralH-1} is suboptimal in support length.
\begin{lemma}\label{lemma:H-1SpectralPGF}
 If $\varphi: \Omega_{\Z} \to \R$ has support $\mathfrak{I}_{\varphi} \subseteq \Z$ and admits pseudo-gradient factor $\mathfrak{g}: \Omega_{\Z} \to \R$ with support $\mathfrak{I} \subseteq \Z$, then the Sobolev norm of the functional $\varphi$ is controlled by that of its pseudo-gradient factor. Precisely, with a universal implied constant
\small\begin{align}
\sup_{\varrho\in\R}\| \varphi \|_{\dot{\mathbf{H}}^{-1}_{\varrho,\mathfrak{I}_{\varphi}}} \ &\lesssim \ \sup_{\varrho\in\R}\| \mathfrak{g} \|_{\dot{\mathbf{H}}^{-1}_{\varrho,\mathfrak{I}}}.
\end{align}\normalsize\normalsize
\end{lemma}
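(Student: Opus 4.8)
The plan is to combine the variational (dual) description of the $\dot{\mathscr{H}}^{-1}$-norm with a conditioning argument that freezes every spin lying outside the support $\mathfrak{I}$ of the pseudo-gradient factor. Write $\varphi = \mathfrak{g}\cdot\mathfrak{f}$ as in the definition of pseudo-gradient factor, so $\mathfrak{g}$ is supported on $\mathfrak{I}$, the factor $\mathfrak{f}$ is supported on a sub-lattice disjoint from $\mathfrak{I}$, and $|\mathfrak{f}|\lesssim1$ uniformly. As a preliminary remark, since the supports of $\mathfrak{g}$ and $\mathfrak{f}$ are disjoint, every site on which $\mathfrak{g}$ depends is also a site on which $\varphi=\mathfrak{g}\mathfrak{f}$ depends (and likewise for $\mathfrak{f}$); hence $\mathfrak{I}\subseteq\mathfrak{I}_{\varphi}$, and setting $\mathfrak{J}\overset{\bullet}=\mathfrak{I}_{\varphi}\setminus\mathfrak{I}$ the factor $\mathfrak{f}$ depends only on the spins in $\mathfrak{J}$. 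In particular no enlargement of the ambient lattice $\mathfrak{I}_{\varphi}$ is needed.

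Because $-\bar{\mathfrak{S}}^{N,!!}_{\mathfrak{I}_{\varphi}}$ is positive semi-definite, the definition rewrites as $\|\varphi\|^{2}_{\dot{\mathscr{H}}^{-1}_{\varrho,\mathfrak{I}_{\varphi}}}=\sup_{\psi:\Omega_{\mathfrak{I}_{\varphi}}\to\R}\big(2\E^{\mu_{\varrho,\mathfrak{I}_{\varphi}}^{\mathrm{can}}}(\varphi\psi)-\mathscr{E}_{\varrho,\mathfrak{I}_{\varphi}}(\psi)\big)$, where the penalty $\mathscr{E}_{\varrho,\mathfrak{I}_{\varphi}}(\psi)\overset{\bullet}=-\E^{\mu_{\varrho,\mathfrak{I}_{\varphi}}^{\mathrm{can}}}\psi\bar{\mathfrak{S}}^{N,!!}_{\mathfrak{I}_{\varphi}}\psi$ is, using $\sigma_{x,y}$-invariance of the canonical ensemble, a sum over the bonds $\{x,x+k\}\subseteq\mathfrak{I}_{\varphi}$ of the nonnegative contributions $\tfrac12 N^{2}\alpha_{k}\E^{\mu_{\varrho,\mathfrak{I}_{\varphi}}^{\mathrm{can}}}(\psi\circ\sigma_{x,x+k}-\psi)^{2}$. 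Fixing $\varrho$ and $\psi$, I would bound $2\E^{\mu_{\varrho,\mathfrak{I}_{\varphi}}^{\mathrm{can}}}(\varphi\psi)-\mathscr{E}_{\varrho,\mathfrak{I}_{\varphi}}(\psi)$ as follows: discard from $\mathscr{E}_{\varrho,\mathfrak{I}_{\varphi}}(\psi)$ every bond except those with both endpoints in $\mathfrak{I}$, which only lowers it by nonnegativity, and then condition $\mu_{\varrho,\mathfrak{I}_{\varphi}}^{\mathrm{can}}$ on the configuration $\zeta$ of the $\mathfrak{J}$-spins. The conditional law of the $\mathfrak{I}$-spins is a canonical ensemble $\mu^{\mathrm{can}}_{\varrho_{\zeta},\mathfrak{I}}$ (with $\varrho_{\zeta}$ fixed by matching the total $\mathfrak{I}$-spin), the retained bonds reassemble into exactly $\bar{\mathfrak{S}}^{N,!!}_{\mathfrak{I}}$ with the right $N^{2}\alpha_{k}$ weights, and $\mathfrak{f}$ pulls out of the inner expectation as the scalar $\mathfrak{f}(\zeta)$. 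Writing $\E_{\zeta}$ for the $\mathfrak{J}$-marginal expectation and $\psi(\cdot,\zeta):\Omega_{\mathfrak{I}}\to\R$ for $\psi$ with its $\mathfrak{J}$-spins frozen at $\zeta$, this gives $2\E^{\mu_{\varrho,\mathfrak{I}_{\varphi}}^{\mathrm{can}}}(\varphi\psi)-\mathscr{E}_{\varrho,\mathfrak{I}_{\varphi}}(\psi)\le\E_{\zeta}\big[2\mathfrak{f}(\zeta)\E^{\mu^{\mathrm{can}}_{\varrho_{\zeta},\mathfrak{I}}}(\mathfrak{g}\,\psi(\cdot,\zeta))-\mathscr{E}_{\varrho_{\zeta},\mathfrak{I}}(\psi(\cdot,\zeta))\big]$.

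To finish, for each fixed $\zeta$ the quantity inside the expectation is at most the supremum of $2\E^{\mu^{\mathrm{can}}_{\varrho_{\zeta},\mathfrak{I}}}(\mathfrak{f}(\zeta)\mathfrak{g}\cdot\phi)-\mathscr{E}_{\varrho_{\zeta},\mathfrak{I}}(\phi)$ over all $\phi:\Omega_{\mathfrak{I}}\to\R$, which is precisely $\|\mathfrak{f}(\zeta)\mathfrak{g}\|^{2}_{\dot{\mathscr{H}}^{-1}_{\varrho_{\zeta},\mathfrak{I}}}$. As $\mathfrak{f}(\zeta)$ is a scalar, homogeneity of the norm yields $\|\mathfrak{f}(\zeta)\mathfrak{g}\|^{2}_{\dot{\mathscr{H}}^{-1}_{\varrho_{\zeta},\mathfrak{I}}}=\mathfrak{f}(\zeta)^{2}\|\mathfrak{g}\|^{2}_{\dot{\mathscr{H}}^{-1}_{\varrho_{\zeta},\mathfrak{I}}}\lesssim\sup_{\varrho'}\|\mathfrak{g}\|^{2}_{\dot{\mathscr{H}}^{-1}_{\varrho',\mathfrak{I}}}$ by $|\mathfrak{f}|\lesssim1$. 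It is exactly here that the defining property $\E^{\mu^{\mathrm{can}}_{\varrho_{\zeta},\mathfrak{I}}}\mathfrak{g}=0$ of a pseudo-gradient enters: without it the supremum defining $\|\mathfrak{g}\|_{\dot{\mathscr{H}}^{-1}_{\varrho_{\zeta},\mathfrak{I}}}$ would be infinite upon testing against constants. Averaging over $\zeta$, then taking the supremum over $\psi$ and over $\varrho$, and finally taking square roots, produces the claimed inequality with implied constant controlled by the uniform bound on $|\mathfrak{f}|$.

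I do not anticipate a genuinely difficult step. The only point demanding care is the conditioning bookkeeping: checking that conditioning $\mu_{\varrho,\mathfrak{I}_{\varphi}}^{\mathrm{can}}$ on the exterior configuration really does yield a canonical ensemble on $\mathfrak{I}$, that the bonds surviving the discard step are exactly those of $\bar{\mathfrak{S}}^{N,!!}_{\mathfrak{I}}$ (with its diffusive $N^{2}$ speed), and the elementary containment $\mathfrak{I}\subseteq\mathfrak{I}_{\varphi}$ which makes the conditioning well-posed in the first place. It is worth noting that the detailed structure of $\mathfrak{f}$ as a sum of averaged products of spins plays no role in this proof; only its uniform boundedness and the disjointness of its support from that of $\mathfrak{g}$ are used.
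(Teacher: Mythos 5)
Your proof is correct, and it is essentially the same argument as the paper's, built around the same three moves: the variational characterization of $\dot{\mathscr{H}}^{-1}$, nonnegativity of each Dirichlet-form summand to throw away all bonds not contained in $\mathfrak{I}$, and a decomposition of $\mu_{\varrho,\mathfrak{I}_{\varphi}}^{\mathrm{can}}$ into canonical ensembles on $\mathfrak{I}$. The one place you deviate in flavor rather than substance is how the factor $\mathfrak{f}$ is stripped off. The paper first reduces, via the triangle inequality, to the special case $\mathfrak{f}=\prod_{i}\eta_{x_{i}}$ and then exploits the involution $\psi\mapsto\wt{\psi}\overset{\bullet}=\mathfrak{f}\psi$ together with $\mathfrak{f}^{2}=1$ to simultaneously rewrite $\E(\varphi\psi)=\E(\mathfrak{g}\wt{\psi})$ and leave the $\mathfrak{I}$-restricted Dirichlet form invariant, before finally applying conditional expectation and convexity to land on functions of $\Omega_{\mathfrak{I}}$ alone. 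You instead condition on the exterior configuration $\zeta$ once and for all, which freezes $\mathfrak{f}$ to a scalar $\mathfrak{f}(\zeta)$, reassembles the retained bonds into $\bar{\mathfrak{S}}_{\mathfrak{I}}^{N,!!}$, and lets homogeneity of the norm plus $|\mathfrak{f}|\lesssim1$ finish the job. Your route has the mild advantage that it never needs the reduction to a product of spins --- your own closing remark that only boundedness and disjointness of support are used is accurate --- whereas the paper's algebraic trick is tied to the $\pm1$-valuedness of $\prod\eta_{x_i}$. Either way, both arrive at $\sup_{\varrho'}\|\mathfrak{g}\|_{\dot{\mathscr{H}}^{-1}_{\varrho',\mathfrak{I}}}^{2}$ through a convex combination of canonical ensembles on $\mathfrak{I}$, and your identification of where the pseudo-gradient mean-zero hypothesis is genuinely needed (finiteness of $\|\mathfrak{g}\|_{\dot{\mathscr{H}}^{-1}_{\varrho',\mathfrak{I}}}$) is correct.
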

\begin{proof}
First observe that it suffices to assume $\varphi \overset{\bullet}= \mathfrak{g} \cdot \mathfrak{f}$ with $\mathfrak{f} = \prod_{i=1}^{\ell}\eta_{x_{i}}$. Indeed, we generally have $\varphi = \mathfrak{g} \cdot \mathfrak{f}$ for some local functional $\mathfrak{f}: \Omega_{\Z} \to \R$ that is an average of such products times uniformly bounded and deterministic constants. It then suffices to apply the triangle inequality for the Sobolev norm which is elementary to deduce via its variational definition. We also note that $\mathfrak{I}\subseteq\Z$ is disjoint from the support $\{x_{i}\}_{i=1}^{\ell}$ of $\mathfrak{f}$ by definition of admitting a pseudo-gradient factor. We start as follows:
\begin{itemize}[leftmargin=*]
\item First, for any local $\psi: \Omega_{\Z} \to \R$, define $\hat{\psi} \overset{\bullet}= \psi \cdot \prod_{i = 1}^{\ell} \eta_{x_{i}} = \psi \cdot \mathfrak{f}$, so in particular $\varphi = \hat{\mathfrak{g}}$ under our notation in this proof.
\item We recall the local $\bar{\mathfrak{S}}^{N,!!}$-generators and Sobolev norms in Lemma \ref{lemma:KV}. By negative semi-definiteness of $\mathfrak{S}_{x,y}$-generators, we have the following inequality of quadratic forms for any $\psi:\Omega_{\mathfrak{I}_{\varphi}}\to\R$; recall the containment $\mathfrak{I}\subseteq\mathfrak{I}_{\varphi}$:
\small\begin{align}
\E^{\mu_{\varrho,\mathfrak{I}_{\varphi}}^{\mathrm{can}}} \psi\bar{\mathfrak{S}}_{\mathfrak{I}_{\varphi}}^{N,!!}\psi \ \leq \ \E^{\mu_{\varrho,\mathfrak{I}_{\varphi}}^{\mathrm{can}}}\psi\bar{\mathfrak{S}}_{\mathfrak{I}}^{N,!!}\psi \ \leq \ \E^{\mu_{\varrho,\mathfrak{I}_{\varphi}}^{\mathrm{can}}} \hat{\psi}\bar{\mathfrak{S}}_{\mathfrak{I}}^{N,!!}\hat{\psi}.
\end{align}\normalsize\normalsize
The last bound follows as $\bar{\mathfrak{S}}_{\mathfrak{I}}^{N,!!}$ does not act on spins outside $\mathfrak{I} \subseteq \Z$, as $\eta_{x}^{2} = 1$ for all $x \in \Z$, and as $\{x_{i}\}_{i=1}^{\ell}$ is disjoint from $\mathfrak{I}$.
\end{itemize}
With the above notation and observation, we get the following with explanation given afterwards:
\small\begin{align}
\| \varphi \|_{\dot{\mathbf{H}}^{-1}_{\varrho,\mathfrak{I}_{\varphi}}}^{2} \ \leq \ \sup_{\psi:\Omega_{\mathfrak{I}_{\varphi}}\to\R} \left( 2 \E^{\mu_{\varrho,\mathfrak{I}_{\varphi}}^{\mathrm{can}}} (\varphi \psi) \ + \ \E^{\mu_{\varrho,\mathfrak{I}_{\varphi}}^{\mathrm{can}}} \psi\bar{\mathfrak{S}}_{\mathfrak{I}_{\varphi}}^{N,!!}\psi \right) \ &\leq \ \sup_{\psi:\Omega_{\mathfrak{I}_{\varphi}}\to\R} \left(2 \E^{\mu_{\varrho,\mathfrak{I}_{\varphi}}^{\mathrm{can}}} (\mathfrak{g}\hat{\psi}) \ + \ \E^{\mu_{\varrho,\mathfrak{I}_{\varphi}}^{\mathrm{can}}} \hat{\psi}\bar{\mathfrak{S}}_{\mathfrak{I}}^{N,!!}\hat{\psi} \right).
\end{align}\normalsize\normalsize
In the final bound, we note $\varphi\psi = \mathfrak{g}\hat{\psi}$ again because $\eta_{x}^{2}=1$. This last bound is controlled by $\|\mathfrak{g}\|^{2}$ via convexity of the Dirichlet form and replacing $\hat{\psi}$ on the far RHS of the above by its conditional expectation/projection on $\mathfrak{I} \subseteq \Z$. The result follows.
\end{proof}
Let us now leverage spatial mixing of canonical ensembles in the context of pseudo-gradients supported on disjoint subsets to get LDP estimates for spatial-averages of pseudo-gradients. This will only be relevant for studying order $N^{1/2}$-terms in $\Phi^{N,2}$ in Proposition \ref{prop:Duhamel}; recall there was no analog for order $N^{\beta_{X}}$-terms in $\Phi^{N,2}$ for Pseudo-Proposition \ref{pprop:S2}. 
\begin{lemma}\label{lemma:LDP}
 Retain the setting of \emph{Lemma \ref{lemma:SpectralH-1}}, and additionally assume the following a priori estimate:
\begin{itemize}[leftmargin=*]
\item We have the deterministic estimate $|\varphi_{j}| \lesssim 1$ uniformly in $\eta \in \Omega_{\Z}$ and $j\in\llbracket1,J\rrbracket$ with universal implied constant.
\end{itemize}
For any $\mathfrak{C} \in \R_{>0}$ and $\varrho \in \R$, we have the following sub-Gaussian tail bound with universal implied constants:
\small\begin{align}
\mathbf{P}^{\mu_{\varrho,\mathfrak{I}}^{\mathrm{can}}} [|\Phi_{J,N}| \gtrsim \mathfrak{C}] \ &\lesssim \ \exp\left(-J\mathfrak{C}^{2}\right).
\end{align}\normalsize\normalsize
\end{lemma}
\begin{proof}
Consider the filtration of $\sigma$-algebras $\sigma(\mathfrak{I}_{1},\ldots,\mathfrak{I}_{j})$ for $j \in \llbracket 1,J \rrbracket$. Under the canonical ensemble $\mu_{\varrho,\mathfrak{I}}^{\mathrm{can}}$ the $\varphi_{j}$-quantities are uniformly bounded martingale increments since the projection of any canonical ensemble onto a smaller block is a convex combination of canonical ensembles. We then use the Azuma martingale inequality.
\end{proof}
\begin{corollary}\label{corollary:LDP}
 Retain the setting of \emph{Lemma \ref{lemma:LDP}}. For any $\e_{1},\e_{2},C\in\R_{>0}$, with universal implied constants we have
\small\begin{align}
\E^{\mu_{\varrho,\mathfrak{I}}^{\mathrm{can}}} \exp\left(N^{-\e_{1}}J|\Phi_{J,N}|^{2}\right) \cdot \mathbf{1}\left(|\Phi_{J,N}| \gtrsim N^{\e_{2}}J^{-\frac12}\right) \ &\lesssim_{\e_{1},\e_{2},C} \ N^{-C}.
\end{align}\normalsize\normalsize
\end{corollary}
\begin{proof}
By Lemma \ref{lemma:LDP} $\Phi_{J,N}$ has zero mean and is sub-Gaussian of variance of order $J^{-1}$, so it suffices to pretend $\Phi_{J,N}$ is indeed Gaussian with zero mean and variance of order $J^{-1}$. The result then follows by straightforward calculations for Gaussians.
\end{proof}
The rest of the section is dedicated to proof of Proposition \ref{prop:EProd} and Lemma \ref{lemma:LE}. We invite the reader to skip to the beginning of the upcoming Section \ref{section:Ctify} as the proof of Proposition \ref{prop:EProd}, especially, is quite technical.
\subsection{Proof of Proposition \ref{prop:EProd}}
We first give ingredients needed for proof of Proposition \ref{prop:EProd} with brief descriptions of what they say and why they are true. We then combine these ingredients to get Proposition \ref{prop:EProd} and close with their proofs. For convenience, let us declare all expectations in this subsection are taken with respect to the grand-canonical measure $\mu_{0,\Z}$.

The first step we take is the following a priori estimate that serves as an easier but weaker version of Proposition \ref{prop:EProd}. Roughly speaking, such a priori estimate reduces to an exclusion process on a torus of length-scale which is basically $N^{3/2}$. The explain the exponent $3/2$, the particles in the exclusion process perform symmetric random walks of speed $N^{2}$ and a speed $N^{3/2}$ asymmetry. Thus, we expect only information at length-scale $(N^{2})^{1/2}+N^{3/2}\lesssim N^{3/2}$ to matter.
\begin{lemma}\label{lemma:EProdSoP}
 For $\e\in\R_{>0}$ arbitrarily small but universal, define $\mathfrak{I}_{N,+}\overset{\bullet}= \llbracket-N^{\frac32+\e},N^{\frac32+\e}\rrbracket\subseteq\Z$. We also define the following:
\begin{itemize}[leftmargin=*]
\item Consider the perturbation $\wt{\mu}_{0,N}$ as a probability measure on the product $\Omega_{\Z} = \Omega_{\Z\setminus\mathfrak{I}_{N,+}}\times\Omega_{\mathfrak{I}_{N,+}}$. In words, the measure $\wt{\mu}_{0,N}$ is defined by taking the measure $\mu_{0,N}$, cutting it off outside $\mathfrak{I}_{N,+}$, and gluing the grand-canonical measure $\mu_{0,\Z}$ outside $\mathfrak{I}_{N,+}$:
\small\begin{align}
\d\wt{\mu}_{0,N} \ \overset{\bullet}= \ \d\mu_{0,\Z\setminus\mathfrak{I}_{N,+}} \otimes \d\mu_{0,N}^{\mathfrak{I}_{N,+}}.
\end{align}\normalsize\normalsize
\item Analogously define $\wt{\mu}_{T,N}$ as the measure/law after time-$T$ evolution with initial measure $\wt{\mu}_{0,N}$, and define $\wt{\mathfrak{f}}_{T,N}\d\mu_{0,\Z}\overset{\bullet}=\d\wt{\mu}_{T,N}$. 
\end{itemize}
We first have the following a priori global entropy production estimate for the perturbation initial measure $\wt{\mu}_{0,N}$:
\small\begin{align}
\int_{0}^{T} \mathfrak{D}_{0,\Z}(\wt{\mathfrak{f}}_{S,N}) \ \d S \ &\lesssim \ N^{-\frac12+\e}. \label{eq:EProdSoP1}
\end{align}\normalsize\normalsize
We have comparison of Dirichlet forms on $\mathfrak{I}_{N} \overset{\bullet}= \llbracket -N^{\frac54 + \e}, N^{\frac54 + \e} \rrbracket$ in \emph{Proposition \ref{prop:EProd}} for any $C>0$ and $0\leq T\leq100$:
\small\begin{align}
|\mathfrak{D}_{0,\mathfrak{I}_{N}}(\mathfrak{f}_{T,N}^{\mathfrak{I}_{N}}) - \mathfrak{D}_{0,\mathfrak{I}_{N}}(\wt{\mathfrak{f}}_{T,N}^{\mathfrak{I}_{N}})| \ &\lesssim_{C,\e} \ N^{-C}. \label{eq:EProdSoP0}
\end{align}\normalsize\normalsize
\end{lemma}
The second bound \eqref{eq:EProdSoP0} is the assertion that information beyond the aforementioned length-scale $N^{3/2}$, up to $N^{\e}$-factors, will not matter for Dirichlet forms. The first estimate above exploits the ``stationary cutoff" beyond the length-scale $N^{3/2+\e}$.

We now implement the multiscale strategy in the proofs of Lemma 4.1 in \cite{DT} and of Theorem 2.1 in \cite{LM}. It relates entropy production on a block to that on a slightly-larger block. To explain this in more detail, we require the following constructions.
\begin{definition}\label{definition:EProdProof}
During the proof of Proposition \ref{prop:EProd} given in this subsection, we use the following constructions.
\begin{itemize}[leftmargin=*]
\item Given $\mathfrak{l} \in \Z_{\geq0}$, define $e_{N,\mathfrak{l}} \overset{\bullet}= \exp(-N^{-5/4-\e}\mathfrak{l})$ and $\mathfrak{I}_{N,\mathfrak{l}} \overset{\bullet}= \llbracket-N^{\frac54+\e} - \mathfrak{l}\cdot N^{\e}, N^{\frac54+\e}+\mathfrak{l}\cdot N^{\e}\rrbracket \subseteq \Z$. This family of subsets is a progressive ``fattening" of the original $\mathfrak{I}_{N}$-subset in Proposition \ref{prop:EProd}. The weights $e_{N,\mathfrak{l}}$ will be pieces of a multiscale analysis that controls entropy production on the block $\mathfrak{I}_{N,\mathfrak{l}}$ by that on $\mathfrak{I}_{N,\mathfrak{l}+\mathfrak{j}}$; see proofs of Lemma 4.1 in \cite{DT} and Theorem 2.1 in \cite{LM}.
\item For any $x,y \in \Z$, define the projected densities $\mathfrak{f}_{\mathfrak{l}}\overset{\bullet}=\mathfrak{f}_{T,N}^{\mathfrak{I}_{N,\mathfrak{l}}}$ and $\mathfrak{f}_{\mathfrak{l},x,y}\overset{\bullet}=\mathfrak{f}_{T,N}^{\mathfrak{I}_{N,\mathfrak{l}}\cup\{x,y\}}$. The former has entropy production related to Dirichlet forms of the latter. The latter is not in the proof of Lemma 4.1 in \cite{DT}, but it is in the proof of Theorem 2.1 in \cite{LM}.
\item Define the boundary $\partial\mathfrak{I}_{N,\mathfrak{l}} \overset{\bullet}= \{(x,y) \in \Z: \ x\in\mathfrak{I}_{N,\mathfrak{l}}, y\not\in\mathfrak{I}_{N,\mathfrak{l}}, x < y\} \cup \{(x,y) \in \Z: \ x\not\in\mathfrak{I}_{N,\mathfrak{l}}, y\in\mathfrak{I}_{N,\mathfrak{l}}, x < y\}$. Note that pairs of points in the boundary $\partial\mathfrak{I}_{N,\mathfrak{l}}$ do \emph{not} have to be nearest neighbors. Roughly, entropy production on $\mathfrak{I}_{N,\mathfrak{l}}$ forces us to look at the boundary $\partial\mathfrak{I}_{N,\mathfrak{l}}$ as particles can jump in/out of $\mathfrak{I}_{N,\mathfrak{l}}$; see proofs of Theorem 2.1 in \cite{LM} and Lemma 4.1 of \cite{DT}.
\end{itemize}
\end{definition}
As mentioned earlier, we relate entropy production on block $\mathfrak{I}_{N,\mathfrak{l}}$ and entropy production on a small neighborhood $\mathfrak{I}_{N,\mathfrak{l}+\mathfrak{j}}$. This is the lemma below, which introduces boundary terms $\Phi_{\mathrm{Sym}},\Phi_{\wedge,1},\Phi_{\wedge,2}$ that we treat with another pair of lemmas.
\begin{lemma}\label{lemma:EProd2}
Provided any $\mathfrak{l}\in\Z_{\geq1}$, we have the following entropy production differential inequality:
\small\begin{align}
\partial_{T}\mathfrak{H}_{0,\mathfrak{I}_{N,\mathfrak{l}}}(\mathfrak{f}_{\mathfrak{l}}) + 2N^{2}\mathfrak{D}_{0,\mathfrak{I}_{N,\mathfrak{l}}}(\mathfrak{f}_{\mathfrak{l}}) \ &\leq \ \Phi_{\mathrm{Sym}} + \Phi_{\wedge,1} + \Phi_{\wedge,2}. \label{eq:EProd21}
\end{align}\normalsize\normalsize
The RHS comes from jumps at $\partial\mathfrak{I}_{N,\mathfrak{l}}$. In particular $\Phi_{\mathrm{Sym}}$ comes from jumps at $\partial\mathfrak{I}_{N,\mathfrak{l}}$ in the symmetric part of the process and $\Phi_{\wedge,1},\Phi_{\wedge,2}$ come from asymmetric such jumps. Recall $\mathfrak{S}_{x,y}$ is the generator of the speed 1 symmetric exclusion on $\{x,y\}\subseteq\Z$:
\begin{subequations}
\small\begin{align}
\Phi_{\mathrm{Sym}} \ &\overset{\bullet}= \ 2N^{2}\E\sum_{(x,y)\in\partial\mathfrak{I}_{N,\mathfrak{l}}} \alpha_{|y-x|}\left((\mathfrak{f}_{\mathfrak{l},x,y}-\mathfrak{f}_{\mathfrak{l}})\mathfrak{f}_{\mathfrak{l}}^{-1/2} \mathfrak{S}_{x,y}(\mathfrak{f}_{\mathfrak{l}}^{1/2})\right) \\
\Phi_{\wedge,1} \ &\overset{\bullet}= \ N^{\frac32}\E\sum_{(x,y)\in\partial\mathfrak{I}_{N,\mathfrak{l}}}\mathbf{1}_{x\leq y}\alpha_{|y-x|}\gamma_{|y-x|}\mathbf{1}_{\eta_{x}=-1}\mathbf{1}_{\eta_{y}=1}\left((\mathfrak{f}_{\mathfrak{l},x,y}-\mathfrak{f}_{\mathfrak{l}})\mathfrak{f}_{\mathfrak{l}}^{-1/2} \mathfrak{S}_{x,y}(\mathfrak{f}_{\mathfrak{l}}^{1/2})\right) \\
\Phi_{\wedge,2} \ &\overset{\bullet}= \ -N^{\frac32}\E\sum_{(x,y)\in\partial\mathfrak{I}_{N,\mathfrak{l}}}\mathbf{1}_{x\leq y}\alpha_{|y-x|}\gamma_{|y-x|}\mathbf{1}_{\eta_{x}=1}\mathbf{1}_{\eta_{y}=-1}\left((\mathfrak{f}_{\mathfrak{l},x,y}-\mathfrak{f}_{\mathfrak{l}})\mathfrak{f}_{\mathfrak{l}}^{-1/2} \mathfrak{S}_{x,y}(\mathfrak{f}_{\mathfrak{l}}^{1/2})\right).
\end{align}\normalsize\normalsize
\end{subequations}
\end{lemma}
\begin{lemma}\label{lemma:EProdSym}
Retaining the context of the current subsection, with a universal implied constant we have the following control of boundary-terms in $\Phi_{\mathrm{Sym}}$ coming from symmetric jumps in the particle system in terms of data on larger $\mathfrak{I}_{N,\mathfrak{l}+\mathfrak{j}}$ blocks:
\small\begin{align}
|\Phi_{\mathrm{Sym}}| \ &\lesssim \ N^{\frac32}{\sum}_{(x,y)\in\partial\mathfrak{I}_{N,\mathfrak{l}}}\alpha_{|x-y|}{\sum}_{\mathfrak{j}=1}^{N^{5/4}} \E |\mathfrak{S}_{x,y}(\mathfrak{f}_{\mathfrak{l}+\mathfrak{j}}^{\frac12})|^{2} \ + \ N^{\frac54}.
\end{align}\normalsize\normalsize
\end{lemma}
\begin{lemma}\label{lemma:EProdWedge}
Retaining the context of the current subsection, we have $|\Phi_{\wedge,1}| + |\Phi_{\wedge,2}|\lesssim N^{3/2}(\mathfrak{D}_{0,\Z}(\mathfrak{f}_{T,N}))^{1/2}$.
\end{lemma}
\begin{proof}[Proof of \emph{Proposition \ref{prop:EProd}}]
By the second estimate in Lemma \ref{lemma:EProdSoP}, we may take $\mathfrak{f}=\wt{\mathfrak{f}}$ with the notation therein. For convenience we take $T=1$. The argument for general uniformly bounded $T\in\R_{\geq0}$ follows identically. Lemmas \ref{lemma:EProd2}, \ref{lemma:EProdSym}, and \ref{lemma:EProdWedge} give
\small\begin{align}
\partial_{T}\mathfrak{H}_{0,\mathfrak{I}_{N,\mathfrak{l}}}(\mathfrak{f}_{\mathfrak{l}}) \ + \ 2N^{2}\mathfrak{D}_{0,\mathfrak{I}_{N,\mathfrak{l}}}(\mathfrak{f}_{\mathfrak{l}}) \ &\lesssim \ N^{\frac32}\sum_{(x,y)\in\partial\mathfrak{I}_{N,\mathfrak{l}}}\alpha_{|x-y|}{\sum}_{\mathfrak{j}=1}^{N^{5/4}} \E |\mathfrak{S}_{x,y}(\mathfrak{f}_{\mathfrak{l}+\mathfrak{j}}^{\frac12})|^{2} \ + \ N^{\frac54} \ + \ N^{\frac32}(\mathfrak{D}_{0,\Z}(\mathfrak{f}_{T,N}))^{\frac12}. \label{eq:EProd1}
\end{align}\normalsize\normalsize
We integrate \eqref{eq:EProd1} in time while recalling the ``worst-case" relative entropy bound $\mathfrak{H}_{0,\mathfrak{I}_{N,\mathfrak{l}}}(\mathfrak{f}_{\mathfrak{l}}) \lesssim |\mathfrak{I}_{N,\mathfrak{l}}| \lesssim N^{5/4+\e}+\mathfrak{l}\cdot N^{\e}$ to get the following integrated estimate, from which the second line follows from the Cauchy-Schwarz inequality applied to the last integral on the RHS of the first line. The last line follows by the global bound/the first estimate of Lemma \ref{lemma:EProdSoP}:
\small\begin{align}
\int_{0}^{1}\mathfrak{D}_{0,\mathfrak{I}_{N,\mathfrak{l}}}(\mathfrak{f}_{\mathfrak{l}}) \ \d T \ &\lesssim \ N^{-\frac34+\e}+\mathfrak{l}\cdot N^{-2+\e} \ + \ \int_{0}^{1}\left(N^{-\frac12}\sum_{(x,y)\in\partial\mathfrak{I}_{N,\mathfrak{l}}}\alpha_{|x-y|}{\sum}_{\mathfrak{j}=1}^{N^{5/4}} \E |\mathfrak{S}_{x,y}(\mathfrak{f}_{\mathfrak{l}+\mathfrak{j}}^{\frac12})|^{2}\right)\d T \ + \ N^{-\frac12}\int_{0}^{1}(\mathfrak{D}_{0,\Z}(\mathfrak{f}_{T,N}))^{\frac12} \ \d T \nonumber \\
&\lesssim \ N^{-\frac34+\e}+\mathfrak{l}\cdot N^{-2+\e} + \int_{0}^{1}\left(N^{-\frac12}\sum_{(x,y)\in\partial\mathfrak{I}_{N,\mathfrak{l}}}\alpha_{|x-y|}{\sum}_{\mathfrak{j}=1}^{N^{5/4}} \E |\mathfrak{S}_{x,y}(\mathfrak{f}_{\mathfrak{l}+\mathfrak{j}}^{\frac12})|^{2}\right)\d T  +  N^{-\frac12}\left(\int_{0}^{1}\mathfrak{D}_{0,\Z}(\mathfrak{f}_{T,N}) \ \d T\right)^{\frac12} \nonumber \\
&\lesssim \ N^{-\frac34+\e}+\mathfrak{l}\cdot N^{-2+\e} + \int_{0}^{1}\left(N^{-\frac12}\sum_{(x,y)\in\partial\mathfrak{I}_{N,\mathfrak{l}}}\alpha_{|x-y|}{\sum}_{\mathfrak{j}=1}^{N^{5/4}} \E |\mathfrak{S}_{x,y}(\mathfrak{f}_{\mathfrak{l}+\mathfrak{j}}^{\frac12})|^{2}\right)\d T. \label{eq:EProd2}
\end{align}\normalsize\normalsize
We average over all scales $\mathfrak{l}\in\Z_{\geq1}$ against $e_{N,\mathfrak{l}}$-weights like the proofs of Theorem 2.1 in \cite{LM} and Lemma 4.1 in \cite{DT}. Because we average, the first two terms in \eqref{eq:EProd2} are okay. The proofs of Theorem 2.1 in \cite{LM} and Lemma 4.1 in \cite{DT} tell us the following for the third term in \eqref{eq:EProd2}. This double sum is given by ``local" Dirichlet forms/jumps at the boundary $\partial\mathfrak{I}_{N,\mathfrak{l}}$ of projections $\mathfrak{f}_{\mathfrak{l}+\mathfrak{j}}$ corresponding to $\mathfrak{I}_{N,\mathfrak{l}+\mathfrak{j}}$ subsets which are no more than slightly larger than the length-scale $|\mathfrak{I}_{N,\mathfrak{l}}|$. When we sum over all scales $\mathfrak{l}\in\Z_{\geq1}$ as in proof of Theorem 2.1 in \cite{LM}/Lemma 4.1 in \cite{DT} against exponential weights $e_{N,\mathfrak{l}}$, the ``local Dirichlet forms"/third term in \eqref{eq:EProd2} gets washed out by the $\mathfrak{D}_{0,\mathfrak{I}_{N,\mathfrak{l}+\mathfrak{j}}}(\mathfrak{f}_{\mathfrak{l}+\mathfrak{j}})$-terms from the $\mathfrak{I}_{N,\mathfrak{l}+\mathfrak{j}}$-equation (\eqref{eq:EProd1} for $\mathfrak{l}+\mathfrak{j}$ in place of $\mathfrak{l}$). Upon recalling the $e_{N,\mathfrak{l}}$-weights defined in Definition \ref{definition:EProdProof}, this paragraph combined with the previous estimate \eqref{eq:EProd2} provides 
\begin{align*}
N^{-\frac54}{\sum}_{\mathfrak{l}=1}^{\infty}e_{N,\mathfrak{l}} \int_{0}^{1}\mathfrak{D}_{0,\mathfrak{I}_{N,\mathfrak{l}}}(\mathfrak{f}_{\mathfrak{l}}) \ \d T \ &\lesssim \ N^{-\frac54}{\sum}_{\mathfrak{l}=1}^{\infty}e_{N,\mathfrak{l}}\left( N^{-\frac34+\e}+\mathfrak{l}\cdot N^{-2+\e}\right) \ \lesssim \ N^{-\frac34+5\e}.
\end{align*}
The last bound uses the elementary bounds $\sum_{\mathfrak{l}=1}^{\infty}e_{N,\mathfrak{l}} \lesssim N^{5/4+\e}$ and $\sum_{\mathfrak{l}=1}^{\infty}e_{N,\mathfrak{l}}\mathfrak{l}\lesssim N^{5/2+2\e}$. Like Lemma 4.1 in \cite{DT} and Theorem 2.1 in \cite{LM}, this last estimate combined with convexity of the Dirichlet form completes the proof of Proposition \ref{prop:EProd}.
\end{proof}
It remains to get Lemma \ref{lemma:EProdSoP}, Lemma \ref{lemma:EProd2}, Lemma \ref{lemma:EProdSym}, and Lemma \ref{lemma:EProdWedge}. We defer their proofs to the end of this section.
\subsection{Proof of Lemma \ref{lemma:LE}}
We will first decompose the post-projection space-time averaged measure $\bar{\mu}_{\mathfrak{t}_{0},T,\mathfrak{I}_{N}}^{\mathfrak{I}_{\varphi}}$ into conditional expectations onto the hyperplanes of Definition \ref{definition:ensembles}, write them as densities with respect to appropriate canonical ensembles, and apply the entropy inequality in Appendix 1.8 of \cite{KL} for constant $\kappa \in \R_{>0}$ for these densities with respect to the canonical ensembles. This gives the following preliminary identity followed by an entropy inequality estimate:
\small\begin{align}
\E^{\mu_{0,\Z}} \left(\bar{\mathfrak{f}}_{\mathfrak{t}_{0},T,\mathfrak{I}_{N}}^{\mathfrak{I}_{\varphi}} \cdot \varphi\right) \ = \ \E^{\bar{\mu}_{\mathfrak{t}_{0},T,\mathfrak{I}_{N}}^{\mathfrak{I}_{\varphi}}}\varphi \ &= \ \sum_{\varrho \in \R} \mathfrak{p}_{\mathfrak{t}_{0},T,\mathfrak{I}_{N}}^{\mathfrak{I}_{\varphi},\varrho}\E^{\mu_{\varrho,\mathfrak{I}_{\varphi}}^{\mathrm{can}}} \left(\varphi \cdot \frac{\d\bar{\mu}_{\mathfrak{t}_{0},T,\mathfrak{I}_{N}}^{\mathfrak{I}_{\varphi}}}{\d\mu_{\varrho,\mathfrak{I}_{\varphi}}^{\mathrm{can}}}[\mathfrak{p}_{\mathfrak{t}_{0},T,\mathfrak{I}_{N}}^{\mathfrak{I}_{\varphi},\varrho}]^{-1}\mathbf{1}_{\wt{\sum}_{x\in\mathfrak{I}_{\varphi}}\eta_{x}=\varrho}\right)\\
&\lesssim \ \kappa^{-1}\sum_{\varrho \in \R} \mathfrak{p}_{\mathfrak{t}_{0},T,\mathfrak{I}_{N}}^{\mathfrak{I}_{\varphi},\varrho} \bar{\mathfrak{H}}_{\varrho,\mathfrak{I}_{\varphi}}^{\mathrm{can}}( \bar{\mathfrak{f}}_{\mathfrak{t}_{0},T,\mathfrak{I}_{N}}^{\mathfrak{I}_{\varphi}}) \ + \ \kappa^{-1}\sum_{\varrho \in \R} \mathfrak{p}_{\mathfrak{t}_{0},T,\mathfrak{I}_{N}}^{\mathfrak{I}_{\varphi},\varrho} \log \E^{\mu_{\varrho,\mathfrak{I}_{\varphi}}^{\mathrm{can}}}\exp\left(\kappa \varphi\right).
\end{align}\normalsize\normalsize
Sums over $\R$ are over finite sets of possible spin densities on $\mathfrak{I}_{\varphi}$. As $\mathfrak{p}$-terms sum to 1 on the finite set of possible $\varrho \in \R$ because they are probabilities under the same probability measure of disjoint hyperplanes, we get 
\small\begin{align}
\kappa^{-1}{\sum}_{\varrho \in \R} \mathfrak{p}_{\mathfrak{t}_{0},T,\mathfrak{I}_{N}}^{\mathfrak{I}_{\varphi},\varrho} \log \E^{\mu_{\varrho,\mathfrak{I}_{\varphi}}^{\mathrm{can}}}\exp\left(\kappa \varphi\right) \ &\leq \ \kappa^{-1}{\sup}_{\varrho \in \R} \log \E^{\mu_{\varrho,\mathfrak{I}_{\varphi}}^{\mathrm{can}}}\exp\left(\kappa \varphi\right).
\end{align}\normalsize\normalsize
Concerning the relative entropy, we apply the LSI within Theorem A in \cite{Yau} to introduce a $\mathfrak{I}_{\varphi}$-local Dirichlet form estimate in terms of canonical ensemble Dirichlet forms which glue to a grand-canonical Dirichlet form:
\small\begin{align}
\kappa^{-1}{\sum}_{\varrho \in \R} \mathfrak{p}_{\mathfrak{t}_{0},T,\mathfrak{I}_{N}}^{\mathfrak{I}_{\varphi},\varrho}\bar{\mathfrak{H}}_{\varrho,\mathfrak{I}_{\varphi}}^{\mathrm{can}}\left(\bar{\mathfrak{f}}_{\mathfrak{t}_{0},T,\mathfrak{I}_{N}}^{\mathfrak{I}_{\varphi}}\right) \ \lesssim \ \kappa^{-1} |\mathfrak{I}_{\varphi}|^{2} {\sum}_{\varrho \in \R} \mathfrak{p}_{\mathfrak{t}_{0},T,\mathfrak{I}_{N}}^{\mathfrak{I}_{\varphi},\varrho} \bar{\mathfrak{D}}_{\varrho,\mathfrak{I}_{\varphi}}^{\mathrm{can}}\left(\bar{\mathfrak{f}}_{\mathfrak{t}_{0},T,\mathfrak{I}_{N}}^{\mathfrak{I}_{\varphi}}\right) \ &= \ \kappa^{-1} |\mathfrak{I}_{\varphi}|^{2} \mathfrak{D}_{0,\mathfrak{I}_{\varphi}}\left(\bar{\mathfrak{f}}_{\mathfrak{t}_{0},T,\mathfrak{I}_{N}}^{\mathfrak{I}_{\varphi}}\right)
\end{align}\normalsize\normalsize
where the last identity follows from the definition of the $\bar{\mathfrak{D}}$-functional, or equivalently by definition of conditional probability and conditional expectation. Via convexity of the Dirichlet form in Corollary 10.3 in Appendix 1.10 of \cite{KL}, as in the classical one-block estimate in Theorem 2.4 in \cite{GPV}, the final Dirichlet form is controlled by the LHS of the bound of Proposition \ref{prop:EProd} but with an additional factor of $|\mathfrak{I}_{N}|^{-1}$ from spatial averaging and $|\mathfrak{I}_{\varphi}|$ from redundancies in counting contributions from jumps. Here in our application of Proposition \ref{prop:EProd}, we choose $\e\in\R_{>0}$ therein to be $2\e$ where the $\e\in\R_{>0}$-factor in $2\e$ is the $\e$-value we picked in the current Lemma \ref{lemma:LE}. Equivalently, we use Proposition \ref{prop:EProd} for twice the $\e$-exponent we are using here. This gives
\small\begin{align}
\kappa^{-1} |\mathfrak{I}_{\varphi}|^{2} \mathfrak{D}_{0,\mathfrak{I}_{\varphi}}(\bar{\mathfrak{f}}_{\mathfrak{t}_{0},T,\mathfrak{I}_{N}}^{\mathfrak{I}_{\varphi}}) \ &\lesssim_{\e} \ T^{-1} \kappa^{-1} N^{-\frac34+10\e} |\mathfrak{I}_{N}|^{-1} |\mathfrak{I}_{\varphi}|^{3}.
\end{align}\normalsize\normalsize
We clarify that we use Proposition \ref{prop:EProd} with the initial measure therein to be $\mu_{\mathfrak{t}_{0},N}$ here. This completes the proof. \qed
\subsection{Proofs of Technical Estimates}
In this last subsection, we provide proofs of the technical results Lemma \ref{lemma:EProdSoP}, Lemma \ref{lemma:EProd2}, Lemma \ref{lemma:EProdSym}, and Lemma \ref{lemma:EProdWedge} that we used in the proof of Proposition \ref{prop:EProd}. We prove them in the order they are written.
\begin{proof}[Proof of \emph{Lemma \ref{lemma:EProdSoP}}]
The first estimate \eqref{eq:EProdSoP1} is standard. We cite Theorem 9.2 in Appendix 1 of \cite{KL} and use the observation that $\mathfrak{H}_{0,\Z}(\wt{\mathfrak{f}}_{0,N}) \lesssim N^{3/2+\e}$ and use the diffusive scaling of $N^{2}$ in the infinitesimal generator $\mathfrak{S}^{N,!!}$. To obtain \eqref{eq:EProdSoP0}, we construct a coupling via two-species process to relate $\mathfrak{f}$ and $\wt{\mathfrak{f}}$ from which we get a Dirichlet form comparison by a total variation bound.
\begin{itemize}[leftmargin=*]
\item Consider a coupling of initial spin configurations distributed via $\mu_{0,N}$ and $\wt{\mu}_{0,N}$ respectively by sampling a first configuration via $\mu_{0,N}$, and then sampling a configuration via $\wt{\mu}_{0,N}$ by taking the projection of the first configuration via $\mu_{0,N}$ onto $\mathfrak{I}_{N,+} \subseteq \Z$ and sampling the remaining spins via the grand-canonical ensemble $\mu_{0,\Z\setminus\mathfrak{I}_{N,+}}$. Recall $\mathfrak{I}_{N,+}$ in the statement of Lemma \ref{lemma:EProdSoP}.
\item We couple the above configurations in the following fashion. First, we observe the underlying symmetric exclusion process may be realized as attaching Poisson clocks to every \emph{bond} connecting points inside $\Z$. Every step in the symmetric exclusion process, or equivalently every ringing of any Poisson clock, then corresponds to swapping the spins at those two points in the corresponding bond. We will first couple the dynamics of the symmetric exclusion model underlying the total dynamic of the two configurations by coupling these ``spin-swap" dynamics. We emphasize that this is not the basic coupling. In particular, one configuration swaps possibly identical spins along a bond via this ``spin-swap" dynamic whenever the other one does.
\item The remaining Poisson clocks associated to the totally asymmetric finite-range exclusion process are equipped with the basic coupling, so particles in the two configurations jump together when possible when the totally asymmetric part rings.
\item Initially, all discrepancies between these two configurations are supported outside $\mathfrak{I}_{N,+} \subseteq \Z$. We then consider the following decomposition $\Z\setminus\mathfrak{I}_{N,+} = \cup_{\mathfrak{l} = 1} \mathfrak{I}_{N,\mathfrak{l}}$, where each sub-lattice $\mathfrak{I}_{N,\mathfrak{l}}$ satisfies $|\mathfrak{I}_{N,\mathfrak{l}}| \lesssim N^{2}$ with universal implied constant and the distance between $\mathfrak{I}_{N,\mathfrak{l}}$ and $\mathfrak{I}_{N}$ is at least $N^{3/2+\e}\mathfrak{l}$ times universal factors. Discrepancies among the two initial configurations originally in $\mathfrak{I}_{N,\mathfrak{l}}$ must then travel a distance of at least $N^{3/2+\e}\mathfrak{l}$, times universal factors, to land in $\mathfrak{I}_{N}\overset{\bullet}= \llbracket -N^{5/4+\e}, N^{5/4+\e} \rrbracket$.

\item Under the above semi-basic coupling, the dynamics of any tagged discrepancy are a \emph{free and unsuppressed} symmetric finite-range random walk of speed $N^{2}$ and randomly suppressed/killed asymmetric finite-range random walk of speed $N^{3/2}$. We emphasize the free/unsuppressed nature of the symmetric random walk with speed $N^{2}$ comes from coupling the symmetric parts of the two-species exclusion process via spin-swap bond dynamics rather than the basic coupling.

\item So, the probability we find a discrepancy among the respective particle configurations originally in $\mathfrak{I}_{N,\mathfrak{l}}\subseteq\Z$ that propagates into $\mathfrak{I}_{N} \subseteq \mathfrak{I}_{N,+} \subseteq \Z$ before time 100 is bounded above by the probability that a symmetric finite-range random walk with an asymmetry of speed of order $N^{3/2}$ traveled a distance $N^{3/2+\e}\mathfrak{l}$. Because each $\mathfrak{I}_{N,\mathfrak{l}}$ subset has at most $N^{2}$-many discrepancies by considering the size of this block, the probability that we find any discrepancy appearing in $\mathfrak{I}_{N} \subseteq \mathfrak{I}_{N,+} \subseteq \Z$ that originally came from $\mathfrak{I}_{N,\mathfrak{l}}$ is at most the previous exit probability for random walks with an additional factor $N^{2}$ coming from the union bound over all possible discrepancies originally in $\mathfrak{I}_{N,\mathfrak{l}}$. We then take another union bound and sum over all subsets $\mathfrak{I}_{N,\mathfrak{l}}$ with $\mathfrak{l} \in \Z_{\geq 1}$. This probability also provides an estimate for total variation distance between $\wt{\mathfrak{f}}^{\mathfrak{I}_{N}}$ and $\mathfrak{f}^{\mathfrak{I}_{N}}$ as is usual for couplings, because each is a probability density with respect to $\mu_{0,\mathfrak{I}_{N}}$ for the law of one of the two configurations at time $T\in\R_{\geq0}$. If $\Sigma$ is the previous random walk with $\Sigma_{0} = 0$, the Doob maximal inequality then gives, for any $C\in\R_{>1}$, the total variation bound
\small\begin{align}
\sup_{0\leq T\leq100} \E^{\mu_{0,\mathfrak{I}_{N}}}|\wt{\mathfrak{f}}_{T,N}^{\mathfrak{I}_{N}} - \mathfrak{f}_{T,N}^{\mathfrak{I}_{N}}| \ \lesssim \ {\sum}_{\mathfrak{l}=1}^{\infty} \ N^{2} \mathbf{P}\left(\sup_{0\leq\mathfrak{t}\leq200}|\Sigma_{\mathfrak{t}}| \gtrsim N^{\frac32+\e}\mathfrak{l}\right) \ \lesssim \ {\sum}_{\mathfrak{l}=1}^{\infty} \ \mathfrak{l}^{-C} N^{2} N^{-C\e} N^{-\frac32C}\E|\Sigma_{200}|^{C} \ &\lesssim_{C} \ N^{2-C\e}.\nonumber
\end{align}\normalsize\normalsize
\end{itemize}
Above, we observe $\E|\Sigma_{200}|^{C}\lesssim_{C}N^{\frac32C}$ by standard random walk estimates because it is symmetric of speed $N^{2}$ with speed-$N^{3/2}$ asymmetry. We now estimate the Dirichlet form in terms of the LHS. In what follows all expectations are taken with respect to the invariant measure $\mu_{0,\Z}$. Note $\mu_{0,\Z}$ is invariant under spin-swaps; thus it is an invariant measure under the $\mathfrak{S}_{x,y}$-generators. We now make a list of elementary but useful observations for the next step.
\begin{itemize}[leftmargin=*]
\item First observe the deterministic identities $a^{2}-b^{2}=(a-b)(a+b)$ and $(a+b)^{2}\lesssim a^{2}+b^{2}$ with $a,b\in\R_{\geq0}$. Given $a,b\in\R_{\geq0}$, we also have $(a-b)^{2}\lesssim|a^{2}-b^{2}|$. If $a,b=0$  this is trivial, and otherwise $(a-b)^{2}\lesssim|a-b|(a+b)\cdot|a-b||a+b|^{-1} \lesssim |a^{2}-b^{2}|$.
\item Let $\sigma_{x,y}:\Omega_{\Z}\to\Omega_{\Z}$ denote the involution that swaps spins at the deterministic points $x,y\in\Z$. In particular, we have the identity $\mathfrak{S}_{x,y}\varphi = \varphi\circ\sigma_{x,y} - \varphi$ for any functional $\varphi:\Omega_{\Z}\to\R$. We additionally note that the measure $\mu_{0,\Z}$ is $\sigma_{x,y}$-invariant for the same reason it is $\mathfrak{S}_{x,y}$-invariant. Thus, we have $\E|\mathfrak{S}_{x,y}\varphi|^{2}\lesssim \E|\varphi\circ\sigma_{x,y}|^{2} + \E|\varphi|^{2} \lesssim \E|\varphi|^{2}$.
\item As $\mathfrak{f}$ and $\wt{\mathfrak{f}}$ are probability densities, the previous bullet point gives $\E|\mathfrak{S}_{x,y}\mathfrak{f}^{1/2}|^{2} \lesssim \E\mathfrak{f}=1$ and $\E|\mathfrak{S}_{x,y}\wt{\mathfrak{f}}^{1/2}|^{2} \lesssim \E\wt{\mathfrak{f}}=1$. 
\item The first bullet point and an elementary calculation give $|\mathfrak{S}_{x,y}\varphi-\mathfrak{S}_{x,y}\psi|^{2} = |\varphi\circ\sigma_{x,y}-\psi\circ\sigma_{x,y}|^{2} + |\varphi-\psi|^{2}$.
\end{itemize}
Using these bullet points and the Cauchy-Schwarz inequality gives the following with more explanation given after:
\small\begin{align}
\E|\mathfrak{S}_{x,y}(\mathfrak{f}_{T,N}^{\mathfrak{I}_{N}})^{\frac12}|^{2} - \E|\mathfrak{S}_{x,y}(\wt{\mathfrak{f}}_{T,N}^{\mathfrak{I}_{N}})^{\frac12}|^{2} \ &\lesssim \ \E(|\mathfrak{S}_{x,y}(\mathfrak{f}_{T,N}^{\mathfrak{I}_{N}})^{\frac12}-\mathfrak{S}_{x,y}(\wt{\mathfrak{f}}_{T,N}^{\mathfrak{I}_{N}})^{\frac12}| \cdot|\mathfrak{S}_{x,y}(\mathfrak{f}_{T,N}^{\mathfrak{I}_{N}})^{\frac12}+\mathfrak{S}_{x,y}(\wt{\mathfrak{f}}_{T,N}^{\mathfrak{I}_{N}})^{\frac12}|) \\
&\lesssim \ \left( \E |\mathfrak{S}_{x,y}(\mathfrak{f}_{T,N}^{\mathfrak{I}_{N}})^{\frac12} - \mathfrak{S}_{x,y}(\wt{\mathfrak{f}}_{T,N}^{\mathfrak{I}_{N}})^{\frac12}|^{2}\right)^{\frac12}\left(\E|\mathfrak{S}_{x,y}(\mathfrak{f}_{T,N}^{\mathfrak{I}_{N}})^{\frac12} + \mathfrak{S}_{x,y}(\wt{\mathfrak{f}}_{T,N}^{\mathfrak{I}_{N}})^{\frac12}|^{2}\right)^{\frac12} \\
&\lesssim \ \left( \E |\mathfrak{S}_{x,y}(\mathfrak{f}_{T,N}^{\mathfrak{I}_{N}})^{\frac12} - \mathfrak{S}_{x,y}(\wt{\mathfrak{f}}_{T,N}^{\mathfrak{I}_{N}})^{\frac12}|^{2}\right)^{\frac12}\left(\E|\mathfrak{S}_{x,y}(\mathfrak{f}_{T,N}^{\mathfrak{I}_{N}})^{\frac12}|^{2} + \E|\mathfrak{S}_{x,y}(\wt{\mathfrak{f}}_{T,N}^{\mathfrak{I}_{N}})^{\frac12}|^{2}\right)^{\frac12}\\
&\lesssim \ \left(\E|(\mathfrak{f}_{T,N}^{\mathfrak{I}_{N}})^{\frac12}-(\wt{\mathfrak{f}}_{T,N}^{\mathfrak{I}_{N}})^{\frac12}|^{2} +\E|(\mathfrak{f}_{T,N}^{\mathfrak{I}_{N}}\circ\sigma_{x,y})^{\frac12}-(\wt{\mathfrak{f}}_{T,N}^{\mathfrak{I}_{N}}\circ\sigma_{x,y})^{\frac12}|^{2} \right)^{\frac12}\\
&\lesssim \ \left(\E|\mathfrak{f}_{T,N}^{\mathfrak{I}_{N}}-\wt{\mathfrak{f}}_{T,N}^{\mathfrak{I}_{N}}| + \E|\mathfrak{f}_{T,N}^{\mathfrak{I}_{N}}\circ\sigma_{x,y}-\wt{\mathfrak{f}}_{T,N}^{\mathfrak{I}_{N}}\circ\sigma_{x,y}|\right)^{\frac12} \\
&\lesssim \ \left(\E|\mathfrak{f}_{T,N}^{\mathfrak{I}_{N}}-\wt{\mathfrak{f}}_{T,N}^{\mathfrak{I}_{N}}|\right)^{\frac12}.
\end{align}\normalsize\normalsize
The Dirichlet form sums at most $N^{100}$ such terms, so the last two displays and elementary asymptotics yield \eqref{eq:EProdSoP0}. We justify the previous list of estimates in case of interest. The first estimate above follows from applying the difference of squares in the first bullet point from our list with $a=\mathfrak{S}_{x,y}\mathfrak{f}^{1/2}$ and $b=\mathfrak{S}_{x,y}\wt{\mathfrak{f}}^{1/2}$. The second line follows from the Cauchy-Schwarz inequality. The third line follows from applying the second bound in the first bullet point preceding the above list of estimates for $a=\mathfrak{S}_{x,y}\mathfrak{f}$ and $b=\mathfrak{S}_{x,y}\wt{\mathfrak{f}}$ within the second expectation factor. The fourth line follows by the third bullet point from the list preceding the above estimates applied for the second expectation factor and applying the final bullet point in said list for the first expectation factor. The fifth bound follows by $(a-b)^{2}\lesssim|a^{2}-b^{2}|$ we gave in that list. The last bound follows by $\sigma_{x,y}$-invariance of $\mu_{0,\Z}$.
\end{proof}
\begin{proof}[Proof of \emph{Lemma \ref{lemma:EProd2}}]
The proposed inequality follows immediately from (4.14) and (4.15) within the proof of Lemma 4.1 in \cite{DT}. We emphasize that the proof of Lemma 4.1 in \cite{DT} does not depend on the maximal jump-length $\mathfrak{m}\in\Z_{\geq1}$.
\end{proof}
\begin{proof}[Proof of \emph{Lemma \ref{lemma:EProdSym}}]
By Cauchy-Schwarz $ab \lesssim N^{3/4}a^{2} + N^{-3/4}b^{2}$ for $a = \mathfrak{S}_{x,y}\mathfrak{f}_{\mathfrak{l}}^{1/2}$ and $b=(\mathfrak{f}_{\mathfrak{l},x,y}-\mathfrak{f}_{\mathfrak{l}})\mathfrak{f}_{\mathfrak{l}}^{-1/2}$, we have
\small\begin{align}
N^{2}\E\sum_{(x,y)\in\partial\mathfrak{I}_{N,\mathfrak{l}}} \alpha_{|y-x|} (\mathfrak{f}_{\mathfrak{l},x,y}-\mathfrak{f}_{\mathfrak{l}})\mathfrak{f}_{\mathfrak{l}}^{-\frac12} \mathfrak{S}_{x,y}\mathfrak{f}_{\mathfrak{l}}^{\frac12} \ &\lesssim \ N^{\frac{11}{4}}\E\sum_{(x,y)\in\partial\mathfrak{I}_{N,\mathfrak{l}}} \alpha_{|y-x|} |\mathfrak{S}_{x,y}\mathfrak{f}_{\mathfrak{l}}^{\frac12}|^{2} \ + \ N^{\frac54}\E\sum_{(x,y)\in\partial\mathfrak{I}_{N,\mathfrak{l}}} \alpha_{|y-x|}|\mathfrak{f}_{\mathfrak{l},x,y}-\mathfrak{f}_{\mathfrak{l}}|^{2} \mathfrak{f}_{\mathfrak{l}}^{-1}. \label{eq:EProdSym1}
\end{align}\normalsize\normalsize
We first bound the first term on the RHS of \eqref{eq:EProdSym1} by convexity of the Dirichlet form; this lets us replace the probability density $\mathfrak{f}_{\mathfrak{l}}$ in this term by projections onto larger subsets. Combining this with the observation $\frac{11}{4}=\frac32+\frac54$ gives
\small\begin{align}
N^{\frac{11}{4}}\E\sum_{(x,y)\in\partial\mathfrak{I}_{N,\mathfrak{l}}} \alpha_{|y-x|} |\mathfrak{S}_{x,y}\mathfrak{f}_{\mathfrak{l}}^{\frac12}|^{2} \ = \ N^{\frac{11}{4}}\sum_{(x,y)\in\partial\mathfrak{I}_{N,\mathfrak{l}}} \alpha_{|y-x|} \E|\mathfrak{S}_{x,y}\mathfrak{f}_{\mathfrak{l}}^{\frac12}|^{2} \ &\lesssim \ N^{\frac32}\sum_{(x,y)\in\partial\mathfrak{I}_{N,\mathfrak{l}}}\alpha_{|y-x|}{\sum}_{\mathfrak{j}=1}^{N^{\frac54}} \E |\mathfrak{S}_{x,y}\mathfrak{f}_{\mathfrak{l}+\mathfrak{j}}^{\frac12}|^{2}. \label{eq:EProdSym2}
\end{align}\normalsize\normalsize
The RHS of \eqref{eq:EProdSym2} appears in the upper bound of the result Lemma \ref{lemma:EProdSym} we are currently proving. We move to the second term on the RHS of \eqref{eq:EProdSym1}. For this, we first observe the inequality $\mathfrak{f}_{\mathfrak{l},x,y}\mathfrak{f}_{\mathfrak{l}}^{-1}\lesssim1$. This follows from the fact that $\mathfrak{f}_{\mathfrak{l}}$ is the conditional expectation of $\mathfrak{f}_{\mathfrak{l},x,y}$ over a uniformly bounded set with uniformly positive probabilities. This was used in the proof of Lemma 4.1 in \cite{DT} for example. As $\mathfrak{f}_{\mathfrak{l},x,y}$ and $\mathfrak{f}_{\mathfrak{l}}$ are probability densities, we use $|a-b|^{2}\lesssim a^{2}+b^{2}$ with $a=\mathfrak{f}_{\mathfrak{l},x,y}$ and $b=\mathfrak{f}_{\mathfrak{l}}$ to get
\small\begin{align}
N^{\frac54}\E{\sum}_{(x,y)\in\partial\mathfrak{I}_{N,\mathfrak{l}}} \alpha_{|y-x|}|\mathfrak{f}_{\mathfrak{l},x,y}-\mathfrak{f}_{\mathfrak{l}}|^{2} \mathfrak{f}_{\mathfrak{l}}^{-1} \ &\lesssim \ N^{\frac54}{\sum}_{(x,y)\in\partial\mathfrak{I}_{N,\mathfrak{l}}} \alpha_{|y-x|}\E|\mathfrak{f}_{\mathfrak{l},x,y}|^{2} \mathfrak{f}_{\mathfrak{l}}^{-1} \ + \ N^{\frac54}{\sum}_{(x,y)\in\partial\mathfrak{I}_{N,\mathfrak{l}}} \alpha_{|y-x|} \E\mathfrak{f}_{\mathfrak{l}} \ \lesssim \ N^{\frac54}. \label{eq:EProdSym3}
\end{align}\normalsize\normalsize
The last bound follows by finite support of $\alpha$-coefficients and inequalities $\E|\mathfrak{f}_{\mathfrak{l},x,y}|^{2} \mathfrak{f}_{\mathfrak{l}}^{-1} \lesssim \E\mathfrak{f}_{\mathfrak{l},x,y}=1$ and $\E\mathfrak{f}_{\mathfrak{l}}=1$.
\end{proof}
\begin{proof}[Proof of \emph{Lemma \ref{lemma:EProdWedge}}]
We study $\Phi_{\wedge}\overset{\bullet}=\Phi_{\wedge,1}$. The proof to estimate $\Phi_{\wedge,2}$ follows from identical considerations because the sign and the direction of the asymmetric jump will not matter in what follows. Via the finite support of $\alpha,\gamma$ coefficients, we get
\small\begin{align}
|\Phi_{\wedge}| \ &\lesssim \ N^{\frac32} \sum_{(x,y)\in\partial\mathfrak{I}_{N,\mathfrak{l}}} \alpha_{|x-y|}\gamma_{|x-y|} \E |\mathfrak{f}_{\mathfrak{l},x,y}+\mathfrak{f}_{\mathfrak{l}}|\cdot(\mathfrak{f}_{\mathfrak{l}})^{-\frac12}\cdot|\mathfrak{S}_{x,y}(\mathfrak{f}_{\mathfrak{l}}^{\frac12})| \ \lesssim \ N^{\frac32}\sup_{(x,y)\in\partial\mathfrak{I}_{N,\mathfrak{l}}} \E |\mathfrak{f}_{\mathfrak{l},x,y}+\mathfrak{f}_{\mathfrak{l}}|\cdot(\mathfrak{f}_{\mathfrak{l}})^{-\frac12}\cdot|\mathfrak{S}_{x,y}(\mathfrak{f}_{\mathfrak{l}}^{\frac12})|. \label{eq:EProdWedge1}
\end{align}\normalsize\normalsize
We now apply the Cauchy-Schwarz inequality with respect to the expectation to get
\small\begin{align}
N^{\frac32}\E |\mathfrak{f}_{\mathfrak{l},x,y}+\mathfrak{f}_{\mathfrak{l}}|\cdot(\mathfrak{f}_{\mathfrak{l}})^{-1/2}\cdot|\mathfrak{S}_{x,y}(\mathfrak{f}_{\mathfrak{l}}^{1/2})| \ &\lesssim \ N^{\frac32}\left(\E |\mathfrak{f}_{\mathfrak{l},x,y}+\mathfrak{f}_{\mathfrak{l}}|^{2}\mathfrak{f}_{\mathfrak{l}}^{-1}\right)^{1/2}\left(\E|\mathfrak{S}_{x,y}(\mathfrak{f}_{\mathfrak{l}}^{\frac12})|^{2}\right)^{1/2} \ \lesssim \ N^{\frac32}\left(\E|\mathfrak{S}_{x,y}(\mathfrak{f}_{\mathfrak{l}}^{\frac12})|^{2}\right)^{1/2}. \label{eq:EProdWedge2}
\end{align}\normalsize\normalsize
The last estimate \eqref{eq:EProdWedge2} follows by the three bounds $|\mathfrak{f}_{\mathfrak{l},x,y}+\mathfrak{f}_{\mathfrak{l}}|^{2}\mathfrak{f}_{\mathfrak{l}}^{-1} \lesssim \mathfrak{f}_{\mathfrak{l},x,y}^{2}\mathfrak{f}_{\mathfrak{l}}^{-1} + \mathfrak{f}_{\mathfrak{l}}$ and $\E|\mathfrak{f}_{\mathfrak{l},x,y}|^{2} \mathfrak{f}_{\mathfrak{l}}^{-1} \lesssim \E\mathfrak{f}_{\mathfrak{l},x,y}=1$ and $\E\mathfrak{f}_{\mathfrak{l}}=1$ in the proof of Lemma \ref{lemma:EProdSym}. The last expectation on the far RHS of \eqref{eq:EProdWedge2} is bounded by the global $\mathfrak{D}_{0,\Z}$-Dirichlet form of $\mathfrak{f}_{\mathfrak{l}}$ as each bond in the Dirichlet form contributes non-negative amount and $\mathfrak{S}_{x,y}$ corresponds to just one bond in the $\mathfrak{D}_{0,\Z}$-form. Last, the global $\mathfrak{D}_{0,\Z}$-Dirichlet form of $\mathfrak{f}_{\mathfrak{l}}$ is bounded by that of the unprojected global Radon-Nikodym derivative $\mathfrak{f}_{T,N}$ of the time-$T$ probability measure $\mu_{T,N}$ by convexity of the Dirichlet form. We combine this with \eqref{eq:EProdWedge1} and \eqref{eq:EProdWedge2} to finish the proof.
\end{proof}
%
%
%
\section{Analytic Compactification}\label{section:Ctify}
We replace the microscopic Cole-Hopf transform with a ``compactified" version at the level of stochastic heat-operator-type equations such as \eqref{eq:IntGT}. We start by ``compactifying" the heat operators. Below $\e_{\mathrm{cpt}} \in \R_{>0}$ is arbitrarily small but universal.
\begin{definition}\label{definition:HEATCPT}
Define $\mathbb{T}_{N} \overset{\bullet}= \llbracket-N^{\frac54+\e_{\mathrm{cpt}}},N^{\frac54+\e_{\mathrm{cpt}}}\rrbracket$ and let $\bar{\mathbf{H}}^{N}$ be the heat kernel on $\R_{\geq0}^{2}\times\mathbb{T}_{N}^{2}$ satisfying
\small\begin{align}
\partial_{T}\bar{\mathbf{H}}_{S,T,x,y}^{N} \ &= \ \bar{\mathscr{L}}^{!!}\bar{\mathbf{H}}_{S,T,x,y}^{N} \quad\mathrm{and}\quad \bar{\mathbf{H}}_{S,S,x,y} \ = \ \mathbf{1}_{x=y}.
\end{align}\normalsize
Above, we have introduced $\bar{\mathscr{L}}^{!!} \overset{\bullet}= 2^{-1}\sum_{k=1}^{\mathfrak{m}}\wt{\alpha}_{k}\bar{\Delta}_{k}^{!!}$, in which $\bar{\Delta}_{k}^{!!} \overset{\bullet}= N^{2}\bar{\Delta}_{k}$ and $\bar{\Delta}_{k}\varphi_{x} \overset{\bullet}= \varphi_{x+k}+\varphi_{x-k}-2\varphi_{x}$ given any test function $\varphi:\mathbb{T}_{N} \to \R$. Here, the barred discrete differential operators mean that addition/subtraction are both defined on $\mathbb{T}_{N}$ with periodic boundary conditions $\inf\mathbb{T}_{N} = \sup\mathbb{T}_{N} + 1$, thus realizing $\mathbb{T}_{N}$ as a torus. For $\varphi:\R_{\geq0}\times\mathbb{T}_{N}\to\R$, we also define
\begin{subequations}
\small\begin{align}
\bar{\mathbf{H}}_{T,x}^{N}\varphi \ &\overset{\bullet}= \ \bar{\mathbf{H}}_{T,x}^{N}\varphi_{S,y} \ \overset{\bullet}= \ \bar{\mathbf{H}}_{T,x}^{N}\varphi_{\bullet,\bullet} \ \overset{\bullet}= \ \int_{0}^{T}{\sum}_{y\in\mathbb{T}_{N}}\bar{\mathbf{H}}_{S,T,x,y}^{N} \cdot \varphi_{S,y} \ \d S \\
\bar{\mathbf{H}}_{T,x}^{N,\mathbf{X}}\varphi \ &\overset{\bullet}= \ \bar{\mathbf{H}}_{T,x}^{N,\mathbf{X}}\varphi_{0,y} \ \overset{\bullet}= \ \bar{\mathbf{H}}_{T,x}^{N,\mathbf{X}}\varphi_{0,\bullet} \ \overset{\bullet}= \ {\sum}_{y\in\mathbb{T}_{N}}\bar{\mathbf{H}}_{0,T,x,y}^{N} \cdot \varphi_{0,y}.
\end{align}\normalsize\normalsize
\end{subequations}
\end{definition}
\begin{remark}\label{remark:ch2HKE}
Although $\bar{\mathbf{H}}^{N}$ is not the full-line heat kernel $\mathbf{H}^{N}$ for which heat kernel estimates are established in Proposition A.1 in \cite{DT}, the heat kernel $\bar{\mathbf{H}}^{N}$ admits a classical explicit representation given by spatial translations of $\mathbf{H}^{N}$. Thus, Proposition A.1 in \cite{DT} holds for $\bar{\mathbf{H}}^{N}$ for uniformly bounded times upon replacing the distance on $\Z$ with the geodesic distance on $\mathbb{T}_{N}$. The same is true if we replace ``Proposition A.1" with ``Corollary A.2" in \cite{DT} for the same reason. We organize this in Lemma \ref{lemma:HKE}.
\end{remark}
The aforementioned ``compactification" of the microscopic Cole-Hopf transform defined here is basically given by replacing heat operators in \eqref{eq:IntGT} by their compactifications above. We also perturb the initial data; see Remark \ref{remark:ch2ChiT=0}. Roughly speaking, like \cite{DT} our analysis of stochastic heat-type equations will use regularity of their initial data, and our perturbation of the initial data in Definition \ref{definition:ChiTorus} below ensures such regularity with respect to the geodesic distance on the corresponding geometry $\mathbb{T}_{N}$. However, the perturbations are not detectable in any $\mathbf{C}(\mathbb{K})$ in the large-$N$ limit so the details of these perturbations are not delicate.
\begin{definition}\label{definition:ChiTorus}
Define $\wt{\mathbf{Z}}^{N}\overset{\bullet}=\mathbf{Z}^{N}-\bar{\mathbf{Z}}^{N}$ where $\bar{\mathbf{Z}}^{N}$ is the solution to the following equation with terms to be defined afterwards:
\small\begin{align}
\bar{\mathbf{Z}}_{T,x}^{N} \ &\overset{\bullet}= \ \bar{\mathbf{H}}_{T,x}^{N,\mathbf{X}}(\chi_{\bullet}\mathbf{Z}_{0,\bullet}^{N}) + \bar{\mathbf{H}}_{T,x}^{N}(\bar{\mathbf{Z}}^{N}\d\xi^{N}) + \bar{\mathbf{H}}_{T,x}^{N}(\bar{\Phi}^{N,2}) + \bar{\mathbf{H}}_{T,x}^{N}(\bar{\Phi}^{N,3}). \label{eq:QBarEquation}
\end{align}\normalsize\normalsize
Above $\chi:\Z\to\R_{\geq0}$ is a cutoff function that satisfies the following support and derivative-type constraints:
\begin{itemize}[leftmargin=*]
\item We have $\chi_{y}=1$ for $|y|\leq\frac13\sup\mathbb{T}_{N}$ and $\chi_{y}=0$ for $|y|\geq\frac23\sup\mathbb{T}_{N}$, so the support of $\chi$ is contained \emph{in the interior of} $\mathbb{T}_{N}$.
\item We have a macroscopic-length-scale Lipschitz bound $|\chi_{y}-\chi_{y'}|\leq N^{-1}|y-y'|$ and a uniform bound $|\chi_{y}|\leq1$ for all $y,y'\in\Z$.
\end{itemize}
Defining $N^{-1}\bar{\grad}_{k}^{!}\varphi = \varphi_{x+k} - \varphi_{x}$, where barred-gradients are gradients on $\mathbb{T}_{N}$ with periodic boundary conditions, we have
\small\begin{align}
\bar{\Phi}^{N,2}_{T,x} \ &\overset{\bullet}= \ N^{\frac12}\mathscr{A}_{N^{\beta_{X}}}^{\mathbf{X},-}(\mathfrak{g}_{T,x}) \cdot \bar{\mathbf{Z}}_{T,x}^{N} + N^{\beta_{X}}\left(\wt{\sum}_{\mathfrak{l}=1,\ldots,N^{\beta_{X}}} \wt{\mathfrak{g}}_{T,x}^{\mathfrak{l}}\right)\bar{\mathbf{Z}}_{T,x}^{N} + N^{-\frac12}\wt{\sum}_{\mathfrak{l}=1,\ldots,N^{\beta_{X}}} \bar{\grad}_{-7\mathfrak{l}\mathfrak{m}}^{!}(\mathfrak{b}_{T,x}^{\mathfrak{l}}\bar{\mathbf{Z}}_{T,x}^{N}).
\end{align}\normalsize\normalsize
The term $\bar{\Phi}^{N,3}$ is the compactification of $\Phi^{N,3}$ with weakly vanishing content defined in the statement of Proposition \ref{prop:Duhamel}:
\small\begin{align}
\bar{\Phi}^{N,3}_{T,x} \ &\overset{\bullet}= \ \mathfrak{w}_{T,x}\bar{\mathbf{Z}}_{T,x}^{N} + {\sum}_{k=-2\mathfrak{m}}^{2\mathfrak{m}}c_{k}\bar{\grad}_{k}^{!}(\mathfrak{w}_{T,x}^{k}\bar{\mathbf{Z}}_{T,x}^{N}).
\end{align}\normalsize\normalsize
We recall the functionals inside $\bar{\Phi}^{N,2}$ and $\bar{\Phi}^{N,3}$, namely upon dropping all $\bar{\mathbf{Z}}^{N}$-terms therein, are defined in Proposition \ref{prop:Duhamel}. We emphasize that although $\bar{\mathbf{Z}}^{N}$ evolves on $\mathbb{T}_{N}$, the particle system and its functionals evolve via the global $\mathfrak{S}^{N,!!}$-dynamic on $\Omega_{\Z}$.
\end{definition}
\begin{remark}\label{remark:ch2Jumps}
The term $\bar{\mathbf{Z}}^{N}\d\xi^{N}$ is the martingale associated to the Poisson process whose jumps are those of $\d M$ in (2.4) of \cite{DT} then scaled by $\bar{\mathbf{Z}}^{N}$ at the same space-time point. We will make similar constructions later in the paper.
\end{remark}
\begin{remark}\label{remark:ch2ChiT=0}
The $\chi$-cutoff guarantees the initial data of $\bar{\mathbf{Z}}^{N}$ has spatial regularity with respect to geodesic distance on $\mathbb{T}_{N}$, inherited via a priori near-stationary regularity and $\chi$-regularity, \emph{but without changing initial data in a fashion that is detectable at macroscopic length-scales}. Indeed, this $\chi$-cutoff allows us to forget about any boundary conditions, given its support, while doing nothing at lengths of order $|\mathbb{T}_{N}|$, which is order $N^{5/4+\e_{\mathrm{cpt}}}$, about the origin; see Definition \ref{definition:ChiTorus}.
\end{remark}
The primary goal of this section is the following estimate which compares $\mathbf{Z}^{N}$ and $\bar{\mathbf{Z}}^{N}$. In particular, one consequence of this next result is a comparison of these two processes on compact space-time sets which are the sets of interest in Theorem \ref{theorem:KPZ}.
\begin{prop}\label{prop:Ctify}
 Take any $C_{1}>0$ arbitrarily large but universal. We consider the following data for any $\mathfrak{t}\in\R_{\geq0}$ and $\mathbb{X}\subseteq\Z$.
\begin{itemize}[leftmargin=*]
\item Define an ``interior" $\wt{\mathbb{T}}_{N} \overset{\bullet}= \llbracket-N^{\frac54+\frac12\e_{\mathrm{cpt}}},N^{\frac54+\frac12\e_{\mathrm{cpt}}}\rrbracket\subseteq\mathbb{T}_{N} = \llbracket-N^{\frac54+\e_{\mathrm{cpt}}},N^{\frac54+\e_{\mathrm{cpt}}}\rrbracket$ far from the boundary; $|\wt{\mathbb{T}}_{N}|\ll|\mathbb{T}_{N}|$.
\item Define the time-discretized norm $[\varphi]_{\mathfrak{t};\mathbb{X}} \overset{\bullet}= \sup_{\mathfrak{I}_{\mathfrak{t}}\times\mathbb{X}}|\varphi_{T,x}|$ with time-discretization $\mathfrak{I}_{\mathfrak{t}} \overset{\bullet}= \{\mathfrak{t}\mathfrak{j}N^{-100}\}_{\mathfrak{j}=0}^{N^{100}}$.
\end{itemize}
There exists $C_{2}>0$ depending only on $C_{1}$ satisfying $C_{2}\gtrsim C_{1}$ with universal implied constant if $C_{1}>0$ is sufficiently big such that for any deterministic time-horizon $0\leq\mathfrak{t}_{\mathfrak{f}} \leq 1$, we have the following outside an event of probability at most $\kappa_{C_{1},C_{2}}N^{-C_{2}}$:
\small\begin{align}
[\wt{\mathbf{Z}}^{N}]_{\mathfrak{t}_{\mathfrak{f}};\wt{\mathbb{T}}_{N}} \ &\lesssim \  N^{-C_{1}}. \label{eq:Ctify}
\end{align}\normalsize\normalsize
\end{prop}
\begin{remark}
We eventually upgrade the estimate in Proposition \ref{prop:Ctify} to one on the supremum over the semi-discrete set $[0,1]\times\wt{\mathbb{T}}_{N}$ as opposed to the supremum over a fully discrete set. This is plausible given that for time-scales well below the microscopic time-scale we expect to see very little occur. However, because the stochastic equation for $\bar{\mathbf{Z}}^{N}$ is multiplicative in $\bar{\mathbf{Z}}^{N}$, this will require a priori estimates for $\bar{\mathbf{Z}}^{N}$ which we are not quite ready to establish. We mention this, however, to explain the eventual utility behind Proposition \ref{prop:Ctify}. Such upgrade from the supremum over a totally discrete set to the supremum over a semi-discrete set will all be done in this final section of this paper concerning the proof of Theorem \ref{theorem:KPZ}; see Lemma \ref{lemma:KPZ11} and its proof for details. In particular, the reader is invited to treat this section as a reason to study the compactification of $\bar{\mathbf{Z}}^{N}$ until we return to upgrading the comparison in Proposition \ref{prop:Ctify} to an estimate for the supremum over the aforementioned semi-discrete set.
\end{remark}
\begin{remark}\label{remark:ch2Ctify1}
To illustrate the ideas behind Proposition \ref{prop:Ctify} and where the proof comes from, let us pretend
\small\begin{align}
\mathbf{Z}^{N} \ = \ \mathbf{H}^{N,\mathbf{X}}\mathbf{Z}_{0,\bullet}^{N} \quad\text{and}\quad \bar{\mathbf{Z}}^{N}\ = \ \bar{\mathbf{H}}^{N,\mathbf{X}}\mathbf{Z}_{0,\bullet}^{N}. 
\end{align}\normalsize\normalsize
In this case, the difference $\mathbf{Z}^{N}-\bar{\mathbf{Z}}^{N}$ on $\wt{\mathbb{T}}_{N}$ is controlled by comparing the pair of heat kernels $\mathbf{H}^{N}$ and $\bar{\mathbf{H}}^{N}$ on uniformly bounded time sets with backwards spatial variable in $\wt{\mathbb{T}}_{N}$. These heat kernels correspond to random walks that differ with exponentially small probability when the backwards spatial variable/initial starting point is in the set $\wt{\mathbb{T}}_{N}$. This basically gives Proposition \ref{prop:Ctify} in this ``idealized" case. We will deal with the other terms in the stochastic equations for $\mathbf{Z}^{N}$ and $\bar{\mathbf{Z}}^{N}$ perturbatively.
\end{remark}
\subsection{Stochastic Fundamental Solutions}
In spirit of Remark \ref{remark:ch2Ctify1}, we first consider the following fundamental solutions.
\begin{definition}\label{definition:CtifySFS}
We define $\mathbf{J}^{N}$ as a space-time process on $\R_{\geq0}^{2}\times\Z^{2}$ via $\mathbf{J}_{S,S,x,y}^{N} = \mathbf{1}_{x=y}$ and, for any $T\in\R_{\geq0}$ satisfying $T\geq S$,
\small\begin{align}
\mathbf{J}_{S,T+\mathfrak{t},x,y}^{N} \ &= \ \mathbf{H}_{\mathfrak{t},x}^{N,\mathbf{X}}\mathbf{J}_{S,T,\bullet,y}^{N} + \mathbf{H}_{\mathfrak{t},x}^{N}(\mathbf{J}_{S,T+\bullet,\bullet,y}^{N}\d\xi^{N}) + \mathbf{H}_{\mathfrak{t},x}^{N}(\Phi^{N,2}_{S,T+\bullet,\bullet,y}) + \mathbf{H}_{\mathfrak{t},x}^{N}(\Phi^{N,3}_{S,T+\bullet,\bullet,y}).
\end{align}\normalsize\normalsize
Above, we introduced the following $\Phi^{N,2}$/pseudo-gradient content from Proposition \ref{prop:Duhamel}, but adapted to $\mathbf{J}^{N}$, for any $T\geq S$:
\small\begin{align}
\Phi_{S,T,x,y}^{N,2} \ &\overset{\bullet}= \ N^{\frac12}\mathscr{A}_{N^{\beta_{X}}}^{\mathbf{X},-}(\mathfrak{g}_{T,x}) \cdot \mathbf{J}_{S,T,x,y}^{N} + N^{\beta_{X}}\left(\wt{\sum}_{\mathfrak{l}=1,\ldots,N^{\beta_{X}}} \wt{\mathfrak{g}}_{T,x}^{\mathfrak{l}}\right)\mathbf{J}_{S,T,x,y}^{N} + N^{-\frac12}\wt{\sum}_{\mathfrak{l}=1,\ldots,N^{\beta_{X}}} \grad_{-7\mathfrak{l}\mathfrak{m}}^{!}(\mathfrak{b}_{T,x}^{\mathfrak{l}}\mathbf{J}_{S,T,x,y}^{N}).
\end{align}\normalsize\normalsize
Again the $\Phi^{N,3}$-term contains weakly vanishing content from Proposition \ref{prop:Duhamel} but with a multiplicative $\mathbf{J}^{N}$-factor:
\small\begin{align}
\Phi_{S,T,x,y}^{N,3} \ &\overset{\bullet}= \ \mathfrak{w}_{T,x}\mathbf{J}_{S,T,x,y}^{N} + {\sum}_{k=-2\mathfrak{m}}^{2\mathfrak{m}}c_{k}\grad_{k}^{!}(\mathfrak{w}_{T,x}^{k}\mathbf{J}_{S,T,x,y}^{N}).
\end{align}\normalsize\normalsize
We analogously define $\bar{\mathbf{J}}^{N}$ as a space-time field on $\R_{\geq0}^{2}\times\mathbb{T}_{N}^{2}$ via $\bar{\mathbf{J}}_{S,S,x,y}^{N} = \mathbf{1}_{x=y}$ and, for any $T\in\R_{\geq0}$ satisfying $T\geq S$,
\small\begin{align}
\bar{\mathbf{J}}_{S,T+\mathfrak{t},x,y}^{N} \ &= \ \bar{\mathbf{H}}_{\mathfrak{t},x}^{N,\mathbf{X}}\bar{\mathbf{J}}_{S,T,\bullet,y}^{N} + \bar{\mathbf{H}}_{\mathfrak{t},x}^{N}(\bar{\mathbf{J}}_{S,T+\bullet,\bullet,y}^{N}\d\xi^{N}) + \bar{\mathbf{H}}_{\mathfrak{t},x}^{N}(\bar{\Phi}^{N,2}_{S,T+\bullet,\bullet,y}) + \bar{\mathbf{H}}_{\mathfrak{t},x}^{N}(\bar{\Phi}^{N,3}_{S,T+\bullet,\bullet,y}).
\end{align}\normalsize\normalsize
We introduced the following data for any $T\geq S$ where gradients $\bar{\grad}$ are defined with respect to addition on the torus $\mathbb{T}_{N}$:
\small\begin{align}
\bar{\Phi}_{S,T,x,y}^{N,2} \ &\overset{\bullet}= \ N^{\frac12}\mathscr{A}_{N^{\beta_{X}}}^{\mathbf{X},-}(\mathfrak{g}_{T,x}) \cdot \bar{\mathbf{J}}_{S,T,x,y}^{N} + N^{\beta_{X}}\left(\wt{\sum}_{\mathfrak{l}=1,\ldots,N^{\beta_{X}}} \wt{\mathfrak{g}}_{T,x}^{\mathfrak{l}}\right)\bar{\mathbf{J}}_{S,T,x,y}^{N} + N^{-\frac12}\wt{\sum}_{\mathfrak{l}=1,\ldots,N^{\beta_{X}}} \bar{\grad}_{-7\mathfrak{l}\mathfrak{m}}^{!}(\mathfrak{b}_{T,x}^{\mathfrak{l}}\bar{\mathbf{J}}_{S,T,x,y}^{N}).
\end{align}\normalsize\normalsize
The following $\bar{\Phi}^{N,3}$ is the compactification of the $\Phi^{N,3}$ in the $\mathbf{J}^{N}$-equation introduced above:
\small\begin{align}
\bar{\Phi}_{S,T,x,y}^{N,3} \ &\overset{\bullet}= \ \mathfrak{w}_{T,x}\bar{\mathbf{J}}_{S,T,x,y}^{N} + {\sum}_{k=-2\mathfrak{m}}^{2\mathfrak{m}}c_{k}\bar{\grad}_{k}^{!}(\mathfrak{w}_{T,x}^{k}\bar{\mathbf{J}}_{S,T,x,y}^{N}).
\end{align}\normalsize\normalsize
We conclude by defining the difference in fundamental solutions $\wt{\mathbf{J}}^{N}\overset{\bullet}=\mathbf{J}^{N}-\bar{\mathbf{J}}^{N}$.
\end{definition}
Elementary calculation of the forwards-time-differential of $\mathbf{J}^{N}$ and $\bar{\mathbf{J}}^{N}$ gives the following SDE-type equations:
\begin{subequations}
\small\begin{align}
\d_{\mathfrak{t}}\mathbf{J}_{S,\mathfrak{t},x,y}^{N} \ &= \ \mathscr{L}^{!!}\mathbf{J}_{S,\mathfrak{t},x,y}^{N}\d\mathfrak{t} \ + \ \mathbf{J}_{S,\mathfrak{t},x,y}^{N}\d\xi_{\mathfrak{t},x}^{N} \ + \ \Phi_{S,\mathfrak{t},x,y}^{N,2}\d\mathfrak{t} \ + \ \Phi_{S,\mathfrak{t},x,y}^{N,3}\d\mathfrak{t} \\
\d_{\mathfrak{t}}\bar{\mathbf{J}}_{S,\mathfrak{t},x,y}^{N} \ &= \ \bar{\mathscr{L}}^{!!}\bar{\mathbf{J}}_{S,\mathfrak{t},x,y}^{N}\d\mathfrak{t} \ + \ \bar{\mathbf{J}}_{S,\mathfrak{t},x,y}^{N}\d\xi_{\mathfrak{t},x}^{N} \ + \ \bar{\Phi}_{S,\mathfrak{t},x,y}^{N,2}\d\mathfrak{t} \ + \ \bar{\Phi}_{S,\mathfrak{t},x,y}^{N,3}\d\mathfrak{t}.
\end{align}\normalsize\normalsize
\end{subequations}
%
\begin{itemize}[leftmargin=*]
\item The operators $\mathscr{L}^{!!}$ and $\bar{\mathscr{L}}^{!!}$ are operators from the construction of the pair of heat operators $\mathbf{H}^{N}$ and $\bar{\mathbf{H}}^{N}$, respectively. These operators act on $\mathbf{J}^{N}$ and $\bar{\mathbf{J}}^{N}$ through the backwards spatial variable $x\in\Z$ and $x\in\mathbb{T}_{N}$, respectively.
\end{itemize}
Provided Remark \ref{remark:ch2Ctify1}, we aim to get diffusive tail estimates for fundamental solutions $\mathbf{J}^{N}$ and $\bar{\mathbf{J}}^{N}$. To this end, throughout this section we use the following exponential weights, which are exponentials of non-negative quantities, to quantify diffusive tails.
\begin{definition}\label{definition:tShortweights}
Define $e_{N,x,y} \overset{\bullet}= \exp(N^{-3/4}\mathbf{d}_{x,y})$ and $e_{N,x,y}^{\mathrm{cpt}}\overset{\bullet}=\exp(N^{-3/4}\mathbf{d}_{x,y}^{\mathrm{cpt}})$ where $\mathbf{d}^{\mathrm{cpt}}$ is geodesic distance on the torus $\mathbb{T}_{N}$ and $\mathbf{d}$ is the usual absolute-value-distance on $\Z$. We also define $\|\|_{\omega;p}$ as the $p$-norm with respect to all randomness.
 \end{definition}
\begin{prop}\label{prop:CtifySFS}
 Provided any $p \in \R_{\geq 1}$, uniformly in $S,T \in \R_{\geq 0}$ such that $S \leq T \leq 1$, we have
\small\begin{align}
{\sup}_{x,y\in\Z}e_{N,x,y}\|\mathbf{J}_{S,T,x,y}^{N}\|_{\omega;2p} \ + \ {\sup}_{x,y\in\mathbb{T}_{N}}(e_{N,x,y}^{\mathrm{cpt}})\|\bar{\mathbf{J}}_{S,T,x,y}^{N}\|_{\omega;2p} \ &\lesssim \ \exp\left(\kappa_{p}N^{1/2}\right).\label{eq:CtifySFSI}
\end{align}\normalsize\normalsize
Recall $\wt{\mathbb{T}}_{N} \overset{\bullet}= \llbracket-N^{\frac54+\frac12\e_{\mathrm{cpt}}},N^{\frac54+\frac12\e_{\mathrm{cpt}}}\rrbracket$ in \emph{Proposition \ref{prop:Ctify}}. Uniformly in $x \in \wt{\mathbb{T}}_{N}$ and $y \in \mathbb{T}_{N}$, we have the comparison
\small\begin{align}
\|\wt{\mathbf{J}}_{S,T,x,y}^{N}\|_{\omega;2p} \ \overset{\bullet}= \ \|\mathbf{J}_{S,T,x,y}^{N}-\bar{\mathbf{J}}_{S,T,x,y}^{N}\|_{\omega;2p} \ &\lesssim_{p} \ \exp\left(-N^{1/2}\right).\label{eq:CtifySFSIII}
\end{align}\normalsize\normalsize
\end{prop}
\begin{remark}
We note \eqref{eq:CtifySFSI} and \eqref{eq:CtifySFSIII} agree with Laplace transforms for $\mathbf{H}^{N}$ and $\bar{\mathbf{H}}^{N}$ and Gaussian kernels of time-scale $N^{2}$.
\end{remark}
\begin{proof}[Proof of \emph{Proposition \ref{prop:Ctify}}]
For convenience let us first ignore $\chi$ from the initial data of $\bar{\mathbf{Z}}^{N}$. By uniqueness of solutions to defining linear equations of $\mathbf{Z}^{N}$ and $\bar{\mathbf{Z}}^{N}$, we obtain the following fundamental solution representation for the difference $\wt{\mathbf{Z}}^{N}=\mathbf{Z}^{N} - \bar{\mathbf{Z}}^{N}$:
\small\begin{align}
\wt{\mathbf{Z}}_{T,x}^{N} \ = \ {\sum}_{y\in\Z}\mathbf{J}_{0,T,x,y}^{N}\mathbf{Z}_{0,y}^{N} - {\sum}_{y\in\mathbb{T}_{N}}\bar{\mathbf{J}}_{0,T,x,y}^{N}\mathbf{Z}_{0,y}^{N} \ &= \ {\sum}_{y\in\mathbb{T}_{N}}\wt{\mathbf{J}}_{0,T,x,y}^{N}\mathbf{Z}_{0,y}^{N} \ + \ {\sum}_{y\in\Z\setminus\mathbb{T}_{N}}\mathbf{J}_{0,T,x,y}^{N}\mathbf{Z}_{0,y}^{N}. \label{eq:Ctify21}
\end{align}\normalsize\normalsize
By Proposition \ref{prop:CtifySFS}, the Holder inequality, a priori bounds for near-stationary data, and $|\mathbb{T}_{N}|\lesssim N^{2}$, for $p \in \R_{\geq 1}$ and $C_{1} \in \R_{>0}$ and $x\in\wt{\mathbb{T}}_{N}$ we get the following pair of upper bounds with further explanation that we detail afterwards:
\small\begin{align}
\|\left({\sum}_{y\in\mathbb{T}_{N}} \wt{\mathbf{J}}_{0,T,x,y}^{N}\mathbf{Z}_{0,y}^{N}\right)\|_{\omega;p} \ \leq \ {\sum}_{y\in\mathbb{T}_{N}}\|\wt{\mathbf{J}}_{0,T,x,y}^{N}\|_{\omega;2p}\|\mathbf{Z}_{0,y}^{N}\|_{\omega;2p} \ \lesssim_{p} \ {\sum}_{y\in\mathbb{T}_{N}}\|\wt{\mathbf{J}}_{0,T,x,y}^{N}\|_{\omega;2p} \ &\lesssim_{p,C_{1}} \ N^{-2C_{1}}. \label{eq:Ctify!}
\end{align}\normalsize\normalsize
The first bound in \eqref{eq:Ctify!} follows by the triangle inequality and the Holder inequality $\|\varphi\psi\|_{\omega;p}\leq\|\varphi\|_{\omega;2p}\|\psi\|_{\omega;2p}$. The second inequality follows from the uniform bound on $\|\mathbf{Z}_{0,\bullet}^{N}\|_{\omega;2p}$ from the definition of near-stationary initial data. In the third step, we apply the second estimate in Proposition \ref{prop:CtifySFS} and then we multiply by $|\mathbb{T}_{N}|\lesssim N^{2}$ to account for the sum over $y\in\mathbb{T}_{N}$. Because the exponentially small upper bound from the second estimate in Proposition \ref{prop:CtifySFS} beats out any power of $N\in\Z_{>0}$, we get the far RHS of \eqref{eq:Ctify!} as an upper bound for the far LHS. In particular the aforementioned $|\mathbb{T}_{N}|$-factor can be ignored to get \eqref{eq:Ctify!}.

As $|\mathbb{T}_{N}|\gg|\wt{\mathbb{T}}_{N}|$, for $x\in\wt{\mathbb{T}}_{N}$ and $y\not\in\mathbb{T}_{N}$, note $\mathbf{d}_{x,y} \geq\kappa N^{5/4+\e_{\mathrm{cpt}}}$ with $\kappa>0$ universal, so $e_{N,x,y}^{-1}\leq\exp(-\kappa N^{1/2+\e_{\mathrm{cpt}}})$. Thus, we get the same upper bound for the second term on the far RHS of \eqref{eq:Ctify21} by using \eqref{eq:CtifySFSI} which we explain afterwards:
\small\begin{align}
\|\left({\sum}_{y\in\Z\setminus\mathbb{T}_{N}}\mathbf{J}_{0,T,x,y}^{N}\mathbf{Z}_{0,y}^{N}\right)\|_{\omega;p} \ \lesssim_{p} \ {\sum}_{y\in\Z\setminus\mathbb{T}_{N}}\|\mathbf{J}_{0,T,x,y}^{N}\|_{\omega;2p} \ \lesssim \ {\sum}_{y\in\Z\setminus\mathbb{T}_{N}}\exp\left(\kappa_{p}N^{1/2}\right)e_{N,x,y}^{-1} \ \lesssim_{p,C_{1}} \ N^{-2C_{1}}. \label{eq:Ctify!!}
\end{align}\normalsize\normalsize
The first bound follows again by the triangle inequality for the $\|\|_{\omega;p}$-norm, the Holder inequality $\|\varphi\psi\|_{\omega;p}\leq\|\varphi\|_{\omega;2p}\|\psi\|_{\omega;2p}$, and moment bounds for initial data. The second/third bounds follow by the diffusive tail in the first bound in Proposition \ref{prop:CtifySFS} combined with an elementary summation estimate resembling tail probabilities of Brownian motion. This bound also requires the observation that the negative exponent in such tail estimate dwarfs the positive exponent $N^{1/2}$ on the RHS of the first bound in Proposition \ref{prop:CtifySFS} when $x\in\wt{\mathbb{T}}_{N}$ and we sum outside $\mathbb{T}_{N}$, again since for $x\in\wt{\mathbb{T}}_{N}$ and $y\not\in\mathbb{T}_{N}$, we have $\mathbf{d}_{x,y} \gtrsim N^{5/4+\e_{\mathrm{cpt}}}$. Using the last display with \eqref{eq:Ctify21} and \eqref{eq:Ctify!} gives the following where we control a sup by an \emph{unaveraged} sum of absolute values:
\small\begin{align}
\|\|\wt{\mathbf{Z}}_{T,x}^{N}\|_{\mathfrak{t}_{\mathfrak{f}};\wt{\mathbb{T}}_{N}}\|_{\omega;p}\ \leq \ \|\left({\sum}_{\mathfrak{I}_{\mathfrak{t}_{\mathfrak{f}}}\times\wt{\mathbb{T}}_{N}}|\wt{\mathbf{Z}}_{T,x}^{N}|\right)\|_{\omega;p} \ \leq \ {\sum}_{\mathfrak{I}_{\mathfrak{t}_{\mathfrak{f}}}\times\wt{\mathbb{T}}_{N}}\|\wt{\mathbf{Z}}_{T,x}^{N}\|_{\omega;p} \ &\lesssim_{p,C_{1}} \ |\mathfrak{I}_{\mathfrak{t}_{\mathfrak{f}}}||\wt{\mathbb{T}}_{N}|N^{-2C_{1}}. \label{eq:Ctify22}
\end{align}\normalsize\normalsize
Combining the estimate \eqref{eq:Ctify22} with the Chebyshev inequality gives the desired estimate \eqref{eq:Ctify} if we choose $C_{2} = C_{1}-103$ and then choose $C_{1}\in\R_{>0}$ sufficiently/arbitrarily large but universal for example. To make this precise, the Markov inequality and \eqref{eq:Ctify22} with $p=1$ gives the following estimate in which all implied constants depend only on $C_{1}$ and $C_{2}=C_{1}-103$:
\small\begin{align}
\mathbf{P}\left(\|\wt{\mathbf{Z}}_{T,x}^{N}\|_{\mathfrak{t}_{\mathfrak{f}};\wt{\mathbb{T}}_{N}}\ \gtrsim \ N^{-C_{1}}\right) \ \lesssim \ N^{C_{1}}\|\|\wt{\mathbf{Z}}_{T,x}^{N}\|_{\mathfrak{t}_{\mathfrak{f}};\wt{\mathbb{T}}_{N}}\|_{\omega;1} \ \lesssim \ N^{C_{1}}|\mathfrak{I}_{\mathfrak{t}_{\mathfrak{f}}}||\wt{\mathbb{T}}_{N}|N^{-2C_{1}} \ \lesssim \ N^{-C_{1}}N^{100+2} \ \lesssim \ N^{-C_{2}}.
\end{align}\normalsize\normalsize
We choose $C_{1}\in\R_{>0}$ sufficiently/arbitrarily large but universal to complete the proof in the case where we ignore $\chi$/set $\chi\equiv1$. The error we get when we ignore $\chi$ comes from the initial data $\bar{\mathbf{Z}}^{N}_{0,\bullet}$. It is ultimately the following error in replacing $\chi$ by 1:
\small\begin{align}
|\mathrm{Error}| \ &\leq \ {\sum}_{y\in\mathbb{T}_{N}}|\bar{\mathbf{J}}_{0,T,x,y}^{N}| |\chi_{y}-1|\mathbf{Z}_{0,y}^{N}.
\end{align}\normalsize\normalsize
By $|\mathbb{T}_{N}|\gg|\wt{\mathbb{T}}_{N}|$ and definition of $\chi$ in Definition \ref{definition:ChiTorus}, the support of $\chi_{y}-1$ is $\gtrsim N^{5/4+\e_{\mathrm{cpt}}}$ away from $x\in\wt{\mathbb{T}}_{N}$; we had a similar observation prior to \eqref{eq:Ctify!!}. We also have $|\chi_{y}-1|\leq2$, so we can use tail bounds for $\bar{\mathbf{J}}^{N}$ in the first bound in Proposition \ref{prop:CtifySFS} and a priori bounds for near-stationary initial data, like in our proof of \eqref{eq:Ctify!!}, to account for ignoring $\chi$.
\end{proof} 
\subsection{Proof of Proposition \ref{prop:CtifySFS}}
The first step is a short-time moment estimate with a proof almost identical to that of Proposition 3.2 in \cite{DT}. The short-time nature allows us to control both $\mathbf{J}^{N}$ and $\bar{\mathbf{J}}^{N}$ on exponential scales $e_{N,x,y}$ and $e_{N,x,y}^{\mathrm{cpt}}$, respectively.
\begin{lemma}\label{lemma:CtifySFS1}
 Take $S,T \in \R_{\geq 0}$ with $S \leq T \leq 1$ and define $\mathfrak{t}_{\mathrm{short}} \overset{\bullet}= N^{-1/2}$. For $p \in \R_{\geq 1}$, we have, recalling \emph{Definition \ref{definition:tShortweights}},
\begin{subequations}
\small\begin{align}
\sup_{0 \leq \mathfrak{t} \leq \mathfrak{t}_{\mathrm{short}}} \sup_{x,y \in \Z} e_{N,x,y}\|\mathbf{J}_{S,T+\mathfrak{t},x,y}^{N}\|_{\omega;2p} \ &\lesssim_{p} \ \sup_{x,y\in\Z}e_{N,x,y}\|\mathbf{J}_{S,T,x,y}^{N}\|_{\omega;2p} \label{eq:CtifySFS1I} \\
\sup_{0 \leq \mathfrak{t} \leq \mathfrak{t}_{\mathrm{short}}} \sup_{x,y \in \mathbb{T}_{N}} e_{N,x,y}^{\mathrm{cpt}}\|\bar{\mathbf{J}}_{S,T+\mathfrak{t},x,y}^{N}\|_{\omega;2p} \ &\lesssim_{p} \ \sup_{x,y\in\mathbb{T}_{N}}e_{N,x,y}^{\mathrm{cpt}}\|\bar{\mathbf{J}}_{S,T,x,y}^{N}\|_{\omega;2p}. \label{eq:CtifySFS1II}
\end{align}\normalsize\normalsize
\end{subequations}
\end{lemma}
We defer proof of Lemma \ref{lemma:CtifySFS1} to the end of the section. The first bound in Proposition \ref{prop:CtifySFS} will come from iterating Lemma \ref{lemma:CtifySFS1}. We now turn to the second ingredient for the proof of Proposition \ref{prop:CtifySFS}, a Duhamel-type formula.
\begin{lemma}\label{lemma:CtifySFS2}
 Provided any $S,T \in \R_{\geq 0}$ satisfying $S \leq T$ and provided any $x,y \in \mathbb{T}_{N}$, we have, with notation explained after,
\small\begin{align}
\bar{\mathbf{J}}_{S,T,x,y}^{N} \ &= \ \mathbf{J}_{S,T,x,y}^{N} \ + \ \int_{S}^{T}{\sum}_{w\in\mathbb{T}_{N}}\bar{\mathbf{J}}_{R,T,x,w}^{N} \cdot N^{2} {\sum}_{|\mathfrak{l}| \leq 10N^{\beta_{X}}\mathfrak{m}} \left(\mathfrak{f}_{R,w}^{\mathfrak{l},1}\mathbf{J}_{S,R,w+\mathfrak{l},y}^{N} + \mathfrak{f}_{R,w}^{\mathfrak{l},2}\mathbf{J}_{S,R,w\wt{+}\mathfrak{l},y}^{N}\right) \ \d R.
\end{align}\normalsize\normalsize
The functionals $\mathfrak{f}^{\mathfrak{l},i}$ have supports outside the ball $\mathfrak{B}$ of radius $N^{\frac54+\frac23\e_{\mathrm{cpt}}}$ around $0 \in \Z$ and they are uniformly bounded. Moreover $+$ denotes addition on $\Z$ and $\wt{+}$ denotes addition on the torus $\mathbb{T}_{N}$ in the statement and proof of this result.
\end{lemma}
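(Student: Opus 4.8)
The plan is to treat this as a stochastic variation-of-constants identity in which $\bar{\mathfrak{J}}^{N}$ is the fundamental solution of the torus dynamics and the discrepancy between the full-line and torus spatial operators acts as an external source. First I would record that $\mathfrak{J}^{N}$ and $\bar{\mathfrak{J}}^{N}$ solve the SDE-dynamics obtained by differentiating their defining integral equations, exactly as in Proposition \ref{prop:Duhamel}: for $x\in\mathbb{T}_{N}$ one has $\d_{T}\mathfrak{J}_{S,T,x,y}^{N}=\mathscr{L}^{!!}\mathfrak{J}_{S,T,\cdot,y}^{N}(x)\,\d T+\mathfrak{J}_{S,T,x,y}^{N}\,\d\xi_{T,x}^{N}+\Phi_{S,T,x,y}^{N,2}\,\d T+\Phi_{S,T,x,y}^{N,3}\,\d T$, and likewise with bars. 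The structural point I would emphasize is that the martingale term $\mathfrak{J}\,\d\xi^{N}$, the two pseudo-gradient spatial-average terms in $\Phi^{N,2}$, and the non-gradient weakly vanishing term in $\Phi^{N,3}$ are literally the same operators for both equations, because the particle-system functionals $\mathfrak{A}^{X}\mathfrak{g}^{N},\wt{\mathfrak{g}}^{N,\mathfrak{l}},\mathfrak{w}^{N}$ live on $\Omega_{\Z}$ and are untouched by the compactification. Hence, letting $\mathcal{L}_{T}$ (resp.\ $\bar{\mathcal{L}}_{T}$) denote the linear operator collecting $\mathscr{L}^{!!}$ together with the gradient parts of $\Phi^{N,2},\Phi^{N,3}$ (resp.\ the barred versions), the restriction of $(T,x)\mapsto\mathfrak{J}_{S,T,x,y}^{N}$ to $x\in\mathbb{T}_{N}$ solves the same linear SDE that $\bar{\mathfrak{J}}_{S,\cdot,\cdot,y}^{N}$ does, with identical initial datum $\mathbf{1}_{\bullet=y}$ at time $S$, except for an additional drift source $(\mathcal{L}_{T}-\bar{\mathcal{L}}_{T})\mathfrak{J}_{S,T,\cdot,y}^{N}(x)\,\d T$, this source being evaluated using $\mathfrak{J}^{N}$ on all of $\Z$.

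Next I would identify the source explicitly. Since $\bar{\Delta}_{k}\varphi(w)=\Delta_{k}\varphi(w)$ and $\bar{\grad}_{\mathfrak{l}}\varphi(w)=\grad_{\mathfrak{l}}\varphi(w)$ whenever $w$ lies at geodesic distance more than $|k|$, resp.\ $|\mathfrak{l}|$, from $\partial\mathbb{T}_{N}$, and since the jump-ranges occurring are $\leq\mathfrak{m}$ in $\mathscr{L}^{!!}$, $\leq 3N^{\beta_{X}}\mathfrak{m}$ in the $\mathfrak{b}^{N,\mathfrak{l}}$-gradients, and $\leq 2\mathfrak{m}$ in the $\mathfrak{w}^{N,k}$-gradients, the operator $\bar{\mathcal{L}}_{T}-\mathcal{L}_{T}$ is supported in $\{w\in\mathbb{T}_{N}:\ \mathrm{dist}(w,\partial\mathbb{T}_{N})\leq 3N^{\beta_{X}}\mathfrak{m}\}$; because $3N^{\beta_{X}}\mathfrak{m}\leq 3\mathfrak{m}N^{\frac12}$ and $N^{\frac54+\e_{\mathfrak{cpt}}}-3\mathfrak{m}N^{\frac12}\geq N^{\frac54+\frac23\e_{\mathfrak{cpt}}}$ for all large $N$, this support lies inside $\{|w|\geq N^{\frac54+\frac23\e_{\mathfrak{cpt}}}\}$, as asserted. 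Expanding $\bar{\Delta}_{k}-\Delta_{k}$ and $\bar{\grad}_{\mathfrak{l}}-\grad_{\mathfrak{l}}$ termwise, each such difference simply replaces a value $\varphi(w+\mathfrak{l})$ at a full-line translate by the value $\varphi(w\wt{+}\mathfrak{l})$ at the corresponding torus translate, so $(\bar{\mathcal{L}}_{T}-\mathcal{L}_{T})\varphi(w)$ is a finite sum over $|\mathfrak{l}|\leq 3N^{\beta_{X}}\mathfrak{m}$ of terms $N^{2}\mathfrak{f}_{T,w}^{N,\mathfrak{l},1}\varphi(w+\mathfrak{l})+N^{2}\mathfrak{f}_{T,w}^{N,\mathfrak{l},2}\varphi(w\wt{+}\mathfrak{l})$, where $\mathfrak{f}^{N,\mathfrak{l},\star}$ equals one of $\tfrac12\wt{\alpha}_{|\mathfrak{l}|}$, $N^{-\frac32-\beta_{X}}\mathfrak{b}^{N,\mathfrak{l}}$, or $N^{-1}\mathfrak{c}_{\mathfrak{l}}\mathfrak{w}^{N,\mathfrak{l}}$ times the indicator that $w$ is within the relevant jump-range of $\partial\mathbb{T}_{N}$ (the powers of $N$ below $N^{2}$ having been absorbed into $\mathfrak{f}$); since $\wt{\alpha}_{k},\mathfrak{b}^{N,\mathfrak{l}},\mathfrak{c}_{k},\mathfrak{w}^{N,k}$ are uniformly bounded, $|\mathfrak{f}^{N,\mathfrak{l},\star}|\lesssim 1$ with the stated supports.

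Finally I would solve the inhomogeneous torus SDE. By linearity of the torus equation and the defining property of its fundamental solution, the variation-of-constants representation $\mathfrak{J}_{S,T,x,y}^{N}=\bar{\mathfrak{J}}_{S,T,x,y}^{N}+\int_{S}^{T}\sum_{w\in\mathbb{T}_{N}}\bar{\mathfrak{J}}_{R,T,x,w}^{N}\,(\mathcal{L}_{R}-\bar{\mathcal{L}}_{R})\mathfrak{J}_{S,R,\cdot,y}^{N}(w)\,\d R$ holds; rearranging and inserting the expansion of $\bar{\mathcal{L}}_{R}-\mathcal{L}_{R}$ from the previous step yields precisely the claimed identity. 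The hard part, and essentially the only nonroutine point, is the rigorous justification of this representation: it follows from uniqueness of the torus fixed-point problem (already invoked to define $\bar{\mathfrak{J}}^{N}$) together with a stochastic Fubini exchange between the $\d R$-integral of the drift source and the stochastic integrals against $\d\xi^{N}$ hidden in $\bar{\mathfrak{J}}_{R,T,x,w}^{N}$ and $\mathfrak{J}_{S,R,\cdot,y}^{N}$, which is licit because the $L^{2p}$-bounds \eqref{eq:CtifySFSI} and \eqref{eq:CtifySFSII} established above make the relevant integrands square-integrable; for this step only boundedness in $N$ is used, whereas the exponential decay of $\mathfrak{J}^{N}$ at the far-away support of the $\mathfrak{f}^{N,\mathfrak{l},\star}$ is what will later supply the quantitative bound \eqref{eq:CtifySFSIII}.
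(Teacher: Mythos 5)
Your proof is correct and follows essentially the same route as the paper: both arguments reduce the identity to the observation that, on $\mathbb{T}_N$, $\mathfrak{J}^N$ and $\bar{\mathfrak{J}}^N$ are driven by the same noise and the same local particle-system functionals, with the only discrepancy coming from the gradient/Laplacian operators near $\partial\mathbb{T}_N$, and both close the argument by uniqueness of the linear torus SDE. The only difference is presentational — the paper verifies directly that the right-hand side of the claimed identity solves the defining equation for $\bar{\mathfrak{J}}^N$, whereas you write the same content as a Duhamel formula for $\mathfrak{J}^N$ viewed as an inhomogeneous torus solution, and you make explicit the stochastic Fubini step that the paper's differentiation of the time integral leaves implicit.
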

\begin{proof}[Proof of \emph{Proposition \ref{prop:CtifySFS}}]
As we noted after the statement of Lemma \ref{lemma:CtifySFS1}, the estimate \eqref{eq:CtifySFSI} follows by its validity at $T = S$, which follows almost entirely by definition, then iterating Lemma \ref{lemma:CtifySFS1} and gathering factors a total of order $N^{1/2}$-many times. To prove \eqref{eq:CtifySFSIII}, recall $\beta_{X}\leq1$ and $S,T\leq1$. By Lemma \ref{lemma:CtifySFS2} and the Holder inequality $\|\varphi\psi\|_{\omega;p}\leq\|\varphi\|_{\omega;2p}\|\psi\|_{\omega;2p}$, we get
\small\begin{align}
\|\wt{\mathbf{J}}_{S,T,x,y}^{N}\|_{\omega;p} \ &\lesssim_{\mathfrak{m}} \ N^{3}{\sup}_{S\leq R\leq T}\left({\sum}_{w\in\mathbb{T}_{N}}\mathbf{1}_{w\not\in\mathfrak{B}} \|\bar{\mathbf{J}}_{R,T,x,w}^{N}\|_{\omega;2p} \cdot {\sup}_{w\in\Z}\|\mathbf{J}^{N}_{S,R,w,y}\|_{\omega;2p}\right). \label{eq:0}
\end{align}\normalsize\normalsize
We emphasize that the cutoff indicator function inside the supremum on the RHS of the previous display follows by the support constraints on the $\mathfrak{f}^{\mathfrak{l},1}$ and $\mathfrak{f}^{\mathfrak{l},2}$ functionals from the statement of Lemma \ref{lemma:CtifySFS2}. We assume $S,T \in \R_{\geq 0}$ satisfy $S \leq T \leq 1$ and take any $x \in \wt{\mathbb{T}}_{N}$ and $y \in \mathbb{T}_{N}$. For this case, we observe if $w\not\in\mathfrak{B}$ then it satisfies $|w| \gtrsim N^{5/4+2\e_{\mathrm{cpt}}/3}$, thus $\mathbf{d}_{x,w}^{\mathrm{cpt}}\gtrsim N^{5/4+\e_{\mathrm{cpt}}/2}$, so for these points $w\not\in\mathfrak{B}$ the diffusive tail estimate \eqref{eq:CtifySFSI} yields the following estimate which says the diffusive tails beat the exponentially growing factor on the RHS of \eqref{eq:CtifySFSI}, rendering such exponentially factor asymptotically irrelevant. Here $\kappa\gtrsim1$:
\small\begin{align}
\mathbf{1}_{w\not\in\mathfrak{B}}\|\bar{\mathbf{J}}_{R,T,x,w}^{N}\|_{\omega;2p} \ \lesssim \ \exp\left(\kappa_{p}N^{1/2}\right)\mathbf{1}_{w\not\in\mathfrak{B}}e_{N,x,y}^{-1} \ \lesssim \ \exp\left(\kappa_{p}N^{1/2}\right)\exp\left(-\kappa N^{1/2+\e_{\mathrm{cpt}}/2}\right). \label{eq:1}
\end{align}\normalsize\normalsize
We now use \eqref{eq:CtifySFSI} and uniform boundedness of $e_{N,\bullet,\bullet}^{-1}$-factors to obtain the following estimate with universal implied constant. Roughly speaking it forgets the diffusive off-diagonal/tail behavior of $\mathbf{J}^{N}$ and uses only the exponential control for its growth. As we note again shortly such exponential control is dwarfed by the negative-exponent factor on the far RHS of \eqref{eq:1}:
\small\begin{align}
\|\mathbf{J}^{N}_{S,R,w,y}\|_{\omega;2p} \ \lesssim \ \exp\left(\kappa_{p}N^{1/2}\right). \label{eq:2}
\end{align}\normalsize\normalsize
We plug \eqref{eq:1} and \eqref{eq:2} into \eqref{eq:0} to get the following for which we recall $|\mathbb{T}_{N}|\lesssim N^{2}$ and then bound the supremum on the RHS \eqref{eq:0} by $|\mathbb{T}_{N}|$ times the space-time supremum of the product of $\|\|_{\omega;2p}$-norms. We ultimately obtain the estimate below in which the final conclusion is that the negative-exponent factor on the far RHS of \eqref{eq:1} dwarfs all other factors:
\small\begin{align}
\|\wt{\mathbf{J}}_{S,T,x,y}^{N}\|_{\omega;p} \ \lesssim_{\mathfrak{m}} \ N^{3}|\mathbb{T}_{N}|\exp\left(2\kappa_{p}N^{1/2}\right)\exp\left(-\kappa N^{1/2+\e_{\mathrm{cpt}}/2}\right) \ \lesssim_{p} \ \exp\left(-N^{1/2+\e_{\mathrm{cpt}}/3}\right) \ \leq \ \exp\left(-N^{1/2}\right).
\end{align}\normalsize\normalsize
The second estimate on the far RHS of the previous display follows by elementary asymptotics in $N\in\Z_{>0}$ as $\kappa_{p}\lesssim_{p}1$.
\end{proof}
We start the proof of the auxiliary results in Lemma \ref{lemma:CtifySFS1} and Lemma \ref{lemma:CtifySFS2} with notation that captures diffusive tail behavior of the heat kernels $\mathbf{H}^{N}$ and $\bar{\mathbf{H}}^{N}$ and that will be used in the proof of Lemma \ref{lemma:CtifySFS1}. Let us first recall Definition \ref{definition:tShortweights}.
\begin{definition}\label{definition:Exps}
Given any $\kappa \in \R_{>0}$, any $S,T \in \R_{\geq 0}$ so that $S \leq T$, and any $x,y \in \Z$ or $\mathbb{T}_{N}$, define the following exponential weights in which $\mathbf{d}_{x,y}=|x-y|$ is usual distance on $\Z$, in which $\mathfrak{s}_{S,T}=|T-S|$, and in which $\mathbf{d}^{\mathrm{cpt}}$ is geodesic distance on $\mathbb{T}_{N}$:
\small\begin{align}
e_{S,T,x,y}^{N,\kappa} \ \overset{\bullet}= \ \exp\left(\kappa\frac{\mathbf{d}_{x,y}}{N\mathfrak{s}_{S,T}^{1/2}\vee1}\right), \quad e_{S,T,x,y}^{N,\kappa,\mathrm{cpt}} \ \overset{\bullet}= \ \exp\left(\kappa\frac{\mathbf{d}_{x,y}^{\mathrm{cpt}}}{N\mathfrak{s}_{S,T}^{1/2}\vee1}\right).
\end{align}\normalsize\normalsize
We will require later for the proof of Lemma \ref{lemma:CtifySFS1} the following elementary summation estimate, which follows by a geometric series bound, and the following elementary control on $e_{N,x,y}$ and $e_{N,x,y}^{\mathrm{cpt}}$ for short times. We take $\kappa\in\R_{>0}$ strictly positive:
\begin{subequations}
\small\begin{align}
(N\mathfrak{s}_{S,T}^{1/2} \vee 1)^{-1}\left({\sum}_{w\in\Z}e_{S,T,x,w}^{N,-\kappa} \ + \ {\sum}_{w\in\mathbb{T}_{N}}e_{S,T,x,w}^{N,-\kappa,\mathrm{cpt}}\right) \ &\lesssim_{\kappa} \ 1 \label{eq:Exps1} \\
\mathbf{1}_{\mathfrak{t}\leq N^{-1/2}}\left(e_{0,\mathfrak{t},x,y}^{N,-2}e_{N,x,y}^{2} \ + \ e_{0,\mathfrak{t},x,y}^{N,-2,\mathrm{cpt}}(e_{N,x,y}^{\mathrm{cpt}})^{2}\right)\ &\lesssim \ 1. \label{eq:Exps2}
\end{align}\normalsize\normalsize
\end{subequations}
\end{definition}
\begin{proof}[Proof of \emph{Lemma \ref{lemma:CtifySFS1}}]
We use notation of Definitions \ref{definition:tShortweights}, \ref{definition:Exps}; recall $\mathfrak{t}_{\mathrm{short}}=N^{-1/2}$. We note the following estimates for both $\mathbf{H}^{N}$ and $\bar{\mathbf{H}}^{N}$ which follow by Lemma \ref{lemma:HKE} as we explain soon. Just for this proof, we define $\mathfrak{t}_{N}\overset{\bullet}= N^{2}\mathfrak{t} \vee 1$ as a function of $\mathfrak{t}\in\R_{\geq0}$. The following estimates effectively give a short-time scaling of heat kernels for random walks with speed $N^{2}$ with exponential off-diagonal weight that is of diffusive type. In the estimates $8$ and $2$ are just arbitrary positive numbers:
\begin{subequations}
\small\begin{align}
\sup_{0\leq\mathfrak{t}\leq\mathfrak{t}_{\mathrm{short}}}\sup_{x,y\in\Z} \mathfrak{t}_{N}^{\frac12}e_{N,x,y}^{2} e_{0,\mathfrak{t},x,y}^{N,8}\mathbf{H}_{0,\mathfrak{t},x,y}^{N} \ + \ \sup_{0\leq\mathfrak{t}\leq\mathfrak{t}_{\mathrm{short}}}\sup_{x,y\in\mathbb{T}_{N}} \mathfrak{t}_{N}^{\frac12} (e_{N,x,y}^{\mathrm{cpt}})^{2} e_{0,\mathfrak{t},x,y}^{N,8,\mathrm{cpt}}\bar{\mathbf{H}}_{0,\mathfrak{t},x,y}^{N} \ &\lesssim \ 1 \label{eq:CtifySFS11} \\
\sup_{0\leq\mathfrak{t}\leq\mathfrak{t}_{\mathrm{short}}}\sup_{x,y\in\Z} \mathfrak{t}_{N}e_{N,x,y}^{2} e_{0,\mathfrak{t},x,y}^{N,8}\sup_{|k|\lesssim\mathfrak{m}}|\grad_{k}^{!}\mathbf{H}_{0,\mathfrak{t},x,y}^{N}| \ + \ \sup_{0\leq\mathfrak{t}\leq\mathfrak{t}_{\mathrm{short}}}\sup_{x,y\in\mathbb{T}_{N}} \mathfrak{t}_{N} (e_{N,x,y}^{\mathrm{cpt}})^{2} e_{0,\mathfrak{t},x,y}^{N,8,\mathrm{cpt}}\sup_{|k|\lesssim\mathfrak{m}}|\bar{\grad}_{k}^{!}\bar{\mathbf{H}}_{0,\mathfrak{t},x,y}^{N}| \ &\lesssim_{\mathfrak{m}} \ N. \label{eq:CtifySFS11a}
\end{align}\normalsize\normalsize
\end{subequations}
Indeed, for $0\leq\mathfrak{t}\leq\mathfrak{t}_{\mathrm{short}}$, by the off-diagonal bound \eqref{eq:HKENash} for $\kappa=10$ and the bound $e_{0,\mathfrak{t},x,y}^{N,-2} \lesssim e_{N,x,y}^{-2}$ from Definition \ref{definition:Exps},
\small\begin{align}
\mathfrak{t}_{N}^{1/2}\mathbf{H}^{N}_{0,\mathfrak{t},x,y} \ \lesssim \ e_{0,\mathfrak{t},x,y}^{N,-10} \ \lesssim \ e_{N,x,y}^{-2}e_{0,\mathfrak{t},x,y}^{N,-8}.
\end{align}\normalsize\normalsize
This provides \eqref{eq:CtifySFS11}; for \eqref{eq:CtifySFS11a} we instead use \eqref{eq:HKEXR} in Lemma \ref{lemma:HKE} and $\mathfrak{m}\lesssim1$.

We will now prove \eqref{eq:CtifySFS1I}. The proof of \eqref{eq:CtifySFS1II} follows by identical methods upon elementary replacements like replacing $\Z$ by $\mathbb{T}_{N}$. We will basically copy the proof of Proposition 3.2 in \cite{DT}. By definition of $\mathbf{J}^{N}$ in Definition \ref{definition:CtifySFS}, we get, for $\|\|=\|\|_{\omega;2p}$,
\small\begin{align}
\|\mathbf{J}_{S,T+\mathfrak{t},x,y}^{N}\|^{2} \ &\lesssim \ \|\mathbf{H}_{\mathfrak{t},x}^{N,\mathbf{X}}\mathbf{J}_{S,T,\bullet,y}^{N}\|^{2} + \|\mathbf{H}_{\mathfrak{t},x}^{N}(\mathbf{J}_{S,T+\bullet,\bullet,y}^{N}\d\xi^{N})\|^{2} + \|\mathbf{H}_{\mathfrak{t},x}^{N}(\Phi^{N,2}_{S,T+\bullet,\bullet,y})\|^{2} + \|\mathbf{H}_{\mathfrak{t},x}^{N}(\Phi^{N,3}_{S,T+\bullet,\bullet,y})\|^{2}. \label{eq:CtifySFS12}
\end{align}\normalsize\normalsize
Following the proof of Proposition 3.2 in \cite{DT}, \eqref{eq:CtifySFS11} and the elementary inequality $e_{N,x,y}\leq e_{N,x,w}e_{N,w,y}$, which follows by the triangle inequality, and the $\Z$-sum bound \eqref{eq:Exps1} for $S=0$ and $\kappa=8$ from Definition \ref{definition:Exps} and convexity of $\mathbf{H}^{N,\mathbf{X}}$ give the following spatial-heat-operator bound; recall $\mathfrak{t}_{N}=N^{2}\mathfrak{t}\vee1$ from earlier in this proof and $e$-weights in Definition \ref{definition:tShortweights}:
\small\begin{align}
e_{N,x,y}^{2}\|\mathbf{H}_{\mathfrak{t},x}^{N,\mathbf{X}}\mathbf{J}_{S,T,\bullet,y}^{N}\|_{\omega;2p}^{2} \ \leq \ e_{N,x,y}^{2}{\sum}_{w\in\Z}\mathbf{H}_{0,\mathfrak{t},x,w}^{N} \cdot \|\mathbf{J}_{S,T,w,y}^{N}\|_{\omega;2p}^{2} \ &\lesssim \ {\sum}_{w\in\Z}e_{N,x,w}^{2}\mathbf{H}_{0,\mathfrak{t},x,w}^{N} \cdot e_{N,w,y}^{2}\|\mathbf{J}^{N}_{S,T,w,y}\|_{\omega;2p}^{2} \\
&\lesssim \ \mathfrak{t}_{N}^{-1/2}{\sum}_{w\in\Z}e_{0,\mathfrak{t},x,w}^{N,-8} \cdot {\sup}_{w\in\Z}e_{N,w,y}^{2}\|\mathbf{J}_{S,T,w,y}^{N}\|_{\omega;2p}^{2} \\
&\lesssim \ {\sup}_{w\in\Z}e_{N,w,y}^{2}\|\mathbf{J}_{S,T,w,y}^{N}\|_{\omega;2p}^{2}. \label{eq:CtifySFS13}
\end{align}\normalsize\normalsize
We now want to use the martingale bound Lemma 3.1 in \cite{DT} as with the proof of Proposition 3.2 in \cite{DT}. However, since the martingale inequality of Lemma 3.1 from \cite{DT} is specific to the microscopic Cole-Hopf transform $\mathbf{Z}^{N}$, and in particular does not generalize to arbitrary products between any adapted process and the martingale increment $\d\xi^{N}$ in (2.4) of \cite{DT}, we instead use Lemma \ref{lemma:MG} to get the first line below; the lines after follow by $e_{N,x,y}^{2}e_{N,w,y}^{-2} \leq e_{N,x,w}^{2}$ and \eqref{eq:CtifySFS11} and $\mathfrak{t}_{N}^{-1/2}\sum_{w\in\Z}e_{0,\mathfrak{t},x,w}^{N,-8}\lesssim1$:
\small\begin{align}
e_{N,x,y}^{2}\|\mathbf{H}_{\mathfrak{t},x}^{N}(\mathbf{J}_{S,T+\bullet,\bullet,y}^{N}\d\xi^{N})\|_{\omega;2p}^{2} \ &\lesssim_{p} \ e_{N,x,y}^{2}\int_{0}^{\mathfrak{t}} \left(\sup_{0\leq\mathfrak{r}\leq\mathfrak{t}}\sup_{w\in\Z}e_{N,w,y}^{2}\| \mathbf{J}_{S,T+\mathfrak{r},w,y}^{N}\|_{\omega;2p}^{2}\right)\mathfrak{s}_{R,\mathfrak{t}}^{-\frac12}{\sum}_{w\in\Z}\mathbf{H}_{R,\mathfrak{t},x,w}^{N} e_{N,w,y}^{-2} \ \d R \\
&\lesssim_{p} \ \int_{0}^{\mathfrak{t}}\mathfrak{s}_{R,\mathfrak{t}}^{-\frac12}\left(\sup_{0\leq\mathfrak{r}\leq\mathfrak{t}}\sup_{w\in\Z}e_{N,w,y}^{2}\|\mathbf{J}_{S,T+\mathfrak{r},w,y}^{N}\|_{\omega;2p}^{2}\right){\sum}_{w\in\Z}e_{N,x,w}^{2}\mathbf{H}_{R,\mathfrak{t},x,w}^{N} \ \d R \\
&\lesssim_{p} \ \int_{0}^{\mathfrak{t}}\mathfrak{s}_{R,\mathfrak{t}}^{-\frac12}\left(\sup_{0\leq\mathfrak{r}\leq\mathfrak{t}}\sup_{w\in\Z}e_{N,w,y}^{2}\|\mathbf{J}_{S,T+\mathfrak{r},w,y}^{N}\|_{\omega;2p}^{2}\right)(\mathfrak{s}_{R,\mathfrak{t}})_{N}^{-\frac12}{\sum}_{w\in\Z}e_{R,\mathfrak{t},x,w}^{N,-8} \ \d R \\
&\lesssim \ \int_{0}^{\mathfrak{t}}\mathfrak{s}_{R,\mathfrak{t}}^{-\frac12}\left(\sup_{0\leq\mathfrak{r}\leq\mathfrak{t}}\sup_{w\in\Z}e_{N,w,y}^{2}\|\mathbf{J}_{S,T+\mathfrak{r},w,y}^{N}\|_{\omega;2p}^{2}\right) \ \d R. \label{eq:CtifySFS14}
\end{align}\normalsize\normalsize
We have used the notation $\mathfrak{s}_{S,R}^{-1/2} \overset{\bullet}= |S-R|^{-1/2}$ within Definition \ref{definition:Exps}. We also clarify our earlier notation $(\mathfrak{s}_{R,\mathfrak{t}})_{N} = N^{2}\mathfrak{s}_{R,\mathfrak{t}}\vee1$. For the third term on the RHS of \eqref{eq:CtifySFS12}, we first observe that the heat kernel $\mathbf{H}^{N}$ is a probability measure on $\Z$ with respect to the forwards spatial-variable. We then apply the Cauchy-Schwarz inequality with respect to the space-time convolution ``integral" via heat kernel. As $\mathfrak{t}\lesssim N^{-1/2}$, calculations similar to the derivations of \eqref{eq:CtifySFS13} and \eqref{eq:CtifySFS14} give, for $\Psi$ defined shortly, the bounds
\small\begin{align}
e_{N,x,y}^{2} \|\mathbf{H}_{\mathfrak{t},x}^{N}(\Phi^{N,2}_{S,T+\bullet,\bullet,y})\|_{\omega;2p}^{2} \ &\lesssim \ e_{N,x,y}^{2} N\mathfrak{t} \int_{0}^{\mathfrak{t}}\sum_{w\in\Z}\mathbf{H}_{R,\mathfrak{t},x,w}^{N} \cdot \|\mathbf{J}^{N}_{S,T+R,w,y}\|_{\omega;2p}^{2} \ \d R + \Psi \\
&\lesssim \ N^{\frac12}\int_{0}^{\mathfrak{t}}\sum_{w\in\Z}e_{N,x,w}^{2}\mathbf{H}_{R,\mathfrak{t},x,w}^{N} \cdot e_{N,w,y}^{2}\|\mathbf{J}_{S,T+R,w,y}^{N}\|_{\omega;2p}^{2} \ \d R + \Psi \\
&\lesssim \ N^{\frac12}\int_{0}^{\mathfrak{t}}(\mathfrak{s}_{R,\mathfrak{t}})_{N}^{-\frac12} \sum_{w\in\Z}e_{R,\mathfrak{t},x,w}^{N,-8} \cdot e_{N,w,y}^{2}\|\mathbf{J}_{S,T+R,w,y}^{N}\|_{\omega;2p}^{2} \ \d R + \Psi \\
&\lesssim \ N^{\frac12}\int_{0}^{\mathfrak{t}} \sup_{w\in\Z}e_{N,w,y}^{2}\|\mathbf{J}_{S,T+R,w,y}^{N}\|_{\omega;2p}^{2} \ \d R + \Psi. \label{eq:CtifySFS15}
\end{align}\normalsize\normalsize
The last quantity $\Psi$ is slightly more subtle and in particular unaddressed in Proposition 3.2 in \cite{DT}. It is defined as
\small\begin{align}
\Psi \ &\overset{\bullet}= \ e_{N,x,y}^{2}\mathfrak{t}\int_{0}^{\mathfrak{t}}\sum_{w\in\Z}\mathbf{H}_{R,\mathfrak{t},x,w}^{N} \cdot \wt{\sum}_{\mathfrak{l}=1}^{N^{\beta_{X}}} N \|\grad_{-7\mathfrak{l}\mathfrak{m}}(\mathfrak{b}^{\mathfrak{l}}_{T+R,w}\mathbf{J}_{S,T+R,w,y}^{N})\|_{\omega;2p}^{2} \ \d S.
\end{align}\normalsize\normalsize
We will not use summation-by-parts for $\Psi$. We instead expand the gradient while recalling the $\mathfrak{b}^{\mathfrak{l}}$-functionals are uniformly bounded. This bounds the $\|\|_{\omega;2p}$-term in the $\Psi$-definition by the same norm of $\mathbf{J}^{N}$ evaluated at points $7\mathfrak{l}\mathfrak{m}$-apart. Noting $e_{N,w,y}^{2} \lesssim e_{N,w,w+\mathfrak{k}}^{2}e_{N,w+\mathfrak{k},y}^{2}\lesssim e_{N,w+\mathfrak{k},y}^{2}$ for all $\mathfrak{k}\in\Z$ with $|\mathfrak{k}| \lesssim N^{\beta_{X}}$, where $\beta_{X} = \frac13+\e_{X,1} \leq \frac12$, and recalling $\mathfrak{t}\lesssim N^{-1/2}$, this paragraph gives the following when combined with the same considerations as in derivations of \eqref{eq:CtifySFS13}, \eqref{eq:CtifySFS14}, and \eqref{eq:CtifySFS15}:
\small\begin{align}
\Psi \ &\lesssim \ N\mathfrak{t}\int_{0}^{\mathfrak{t}}\sum_{w\in\Z}e_{N,x,w}^{2}\mathbf{H}_{R,\mathfrak{t},x,w}^{N} \cdot \wt{\sum}_{\mathfrak{l}=1}^{N^{\beta_{X}}}e_{N,w,y}^{2}\||\mathbf{J}_{S,T+R,w,y}^{N}| + |\mathbf{J}_{S,T+R,w-7\mathfrak{l}\mathfrak{m},y}^{N}|\|_{\omega;2p}^{2} \ \d R \\
&\lesssim \ N^{\frac12}\int_{0}^{\mathfrak{t}}(\mathfrak{s}_{R,\mathfrak{t}})_{N}^{-\frac12} \sum_{w\in\Z}e_{R,\mathfrak{t},x,w}^{N,-8} \cdot \wt{\sum}_{\mathfrak{l}=1}^{N^{\beta_{X}}}e_{N,w,y}^{2}\||\mathbf{J}_{S,T+R,w,y}^{N}| + |\mathbf{J}_{S,T+R,w-7\mathfrak{l}\mathfrak{m},y}^{N}|\|_{\omega;2p}^{2} \ \d R \\ 
&\lesssim \ N^{\frac12}\int_{0}^{\mathfrak{t}}(\mathfrak{s}_{R,\mathfrak{t}})_{N}^{-\frac12} \sum_{w\in\Z}e_{R,\mathfrak{t},x,w}^{N,-8} \cdot \wt{\sum}_{\mathfrak{l}=1}^{N^{\beta_{X}}}\left(e_{N,w,y}^{2}\|\mathbf{J}_{S,T+R,w,y}^{N}\|_{\omega;2p} + e_{N,w-7\mathfrak{l}\mathfrak{m},y}^{2}\|\mathbf{J}_{S,T+R,w-7\mathfrak{l}\mathfrak{m},y}^{N}\|_{\omega;2p}^{2}\right) \ \d R \\
&\lesssim \ N^{\frac12} \int_{0}^{\mathfrak{t}} \sup_{w\in\Z}e_{N,w,y}^{2}\|\mathbf{J}_{S,T+R,w,y}^{N}\|_{\omega;2p}^{2} \ \d R. \label{eq:CtifySFS17}
\end{align}\normalsize\normalsize
Lastly, we treat the final term from the RHS of \eqref{eq:CtifySFS12}. Following the weakly vanishing-term and gradient-term estimates from the proof of Proposition 3.2 in \cite{DT}, which resemble the proof of \eqref{eq:CtifySFS15} but with an additional summation-by-parts, we get
\small\begin{align}
e_{N,x,y}^{2}\|\mathbf{H}_{\mathfrak{t},x}^{N}(\Phi^{N,3}_{S,T+\bullet,\bullet,y})\|_{\omega;2p}^{2} \ &\lesssim \ \int_{0}^{\mathfrak{t}}\sum_{w\in\Z}e_{N,x,w}^{2}\left(\mathbf{H}_{R,\mathfrak{t},x,w}^{N} + \sup_{|k|\lesssim\mathfrak{m}}|\grad_{k}^{!}\mathbf{H}_{R,\mathfrak{t},x,w}^{N}| \right)\cdot e_{N,w,y}^{2}\|\mathbf{J}_{S,T+R,w,y}^{N}\|_{\omega;2p}^{2} \ \d R \\
&\lesssim \ \int_{0}^{\mathfrak{t}}\sup_{w\in\Z}e_{N,w,y}^{2}\|\mathbf{J}_{S,T+R,w,y}^{N}\|_{\omega;2p}^{2} \sum_{w\in\Z}e_{N,x,w}^{2}\left(\mathbf{H}_{R,\mathfrak{t},x,w}^{N} + \sup_{|k|\lesssim\mathfrak{m}}|\grad_{k}^{!}\mathbf{H}_{R,\mathfrak{t},x,w}^{N}| \right) \ \d R \\
&\lesssim \  \int_{0}^{\mathfrak{t}}\sup_{w\in\Z}e_{N,w,y}^{2}\|\mathbf{J}_{S,T+R,w,y}^{N}\|_{\omega;2p}^{2} \cdot \left( (\mathfrak{s}_{R,\mathfrak{t}})_{N}^{-\frac12} \sum_{w\in\Z}e_{R,\mathfrak{t},x,w}^{N,-8} \ + \ N(\mathfrak{s}_{R,\mathfrak{t}})_{N}^{-1} \sum_{w\in\Z}e_{R,\mathfrak{t},x,w}^{N,-8} \right) \ \d R \\
&\lesssim \ \int_{0}^{\mathfrak{t}}\left(1+N(\mathfrak{s}_{R,\mathfrak{t}})_{N}^{-\frac12}\right) \sup_{w\in\Z}e_{N,w,y}^{2}\|\mathbf{J}_{S,T+R,w,y}^{N}\|_{\omega;2p}^{2} \ \d R \\
&\lesssim \ \int_{0}^{\mathfrak{t}}\mathfrak{s}_{R,\mathfrak{t}}^{-\frac12} \sup_{w\in\Z}e_{N,w,y}^{2}\|\mathbf{J}_{S,T+R,w,y}^{N}\|_{\omega;2p}^{2} \ \d R. \label{eq:CtifySFS16}
\end{align}\normalsize\normalsize
The previous display additionally requires \eqref{eq:CtifySFS11a} to treat the gradient of the heat kernel that we have not touched yet for this proof along with the estimate $\mathfrak{t}_{N}^{-1/2}\sum_{w\in\Z}e_{0,\mathfrak{t},x,w}^{N,-8}\lesssim1$ that we have been using frequently for this proof. We also use the estimate $(\mathfrak{s}_{R,\mathfrak{t}})_{N} \gtrsim N^{2}\mathfrak{s}_{R,\mathfrak{t}}$ which follows by definition and the estimate $1\geq\mathfrak{s}_{R,\mathfrak{t}}$ for $R\leq\mathfrak{t}\leq1$ to get the last bound \eqref{eq:CtifySFS16}. Otherwise, we proceed as in the derivations of \eqref{eq:CtifySFS13}, \eqref{eq:CtifySFS14}, \eqref{eq:CtifySFS15}, and \eqref{eq:CtifySFS17} to arrive at \eqref{eq:CtifySFS16} above. We now define 
\small\begin{align}
\Phi_{R} \ \overset{\bullet}= \ \sup_{w\in\Z}e_{N,w,y}^{2}\|\mathbf{J}_{S,T+R,w,y}^{N}\|_{\omega;2p}^{2}.
\end{align}\normalsize\normalsize
We gather estimates in \eqref{eq:CtifySFS12}, \eqref{eq:CtifySFS13}, \eqref{eq:CtifySFS14}, \eqref{eq:CtifySFS15}, \eqref{eq:CtifySFS17}, and \eqref{eq:CtifySFS16} to deduce
\small\begin{align}
\Phi_{\mathfrak{t}} \ &\lesssim_{p} \ \Phi_{0} + \int_{0}^{\mathfrak{t}}\mathfrak{s}_{R,\mathfrak{t}}^{-\frac12}\sup_{0\leq\mathfrak{r}\leq\mathfrak{t}}\Phi_{\mathfrak{r}} \ \d R + \int_{0}^{\mathfrak{t}}\mathfrak{s}_{R,\mathfrak{t}}^{-\frac12}\Phi_{R} \ \d R + N^{\frac12}\int_{0}^{\mathfrak{t}}\Phi_{R} \ \d R \ \lesssim \ \Phi_{0} + \mathfrak{t}^{\frac12}\sup_{0\leq\mathfrak{r}\leq\mathfrak{t}}\Phi_{\mathfrak{r}} + \int_{0}^{\mathfrak{t}}\mathfrak{s}_{R,\mathfrak{t}}^{-\frac12}\Phi_{R} \ \d R + N^{\frac12}\int_{0}^{\mathfrak{t}}\Phi_{R} \ \d R. \label{eq:CtifySFS18}
\end{align}\normalsize\normalsize
The first integral on the far RHS of \eqref{eq:CtifySFS18} has a singular factor in the integrand, but it is integrable so that we can still apply the Gronwall inequality. The Gronwall inequality ultimately gives the following uniformly in $0\leq\mathfrak{t}\leq N^{-1/2}$:
\small\begin{align}
\sup_{0\leq\mathfrak{t}\leq N^{-1/2}}\Phi_{\mathfrak{t}} \ \lesssim_{p} \ \Phi_{0} + \sup_{0\leq\mathfrak{t}\leq N^{-1/2}}\mathfrak{t}^{\frac12}\sup_{0\leq\mathfrak{r}\leq \mathfrak{t}}\Phi_{\mathfrak{r}} \ \lesssim \ \Phi_{0} + N^{-\frac14}\sup_{0\leq\mathfrak{r}\leq N^{-1/2}}\Phi_{\mathfrak{r}}.
\end{align}\normalsize\normalsize
We complete the proof upon moving the second term on the far RHS to the far LHS. 
\end{proof}
\begin{proof}[Proof of \emph{Lemma \ref{lemma:CtifySFS2}}]
We proceed as with classical parabolic semigroups and check the RHS of the claimed identity solves the defining SDE-type stochastic equation for $\bar{\mathbf{J}}^{N}$ given an appropriate choice of functionals. This would provide the desired result as the initial data at $T = S$ match on both sides of the proposed identity, from which we employ the elementary uniqueness for solutions to the linear SDE-type equation for $\bar{\mathbf{J}}^{N}$. In particular, all of our reasoning besides tracking the support of both $\mathfrak{f}^{\mathfrak{l},1}$ and $\mathfrak{f}^{\mathfrak{l},2}$ functionals will be algebraic. Defining $\Phi_{T}$ as the entire RHS of the proposed identity and $\d_{T}$ as the $T$-time differential,
\small\begin{align}
\d_{T}\Phi_{T} \ &= \ \d_{T}\mathbf{J}_{S,T,x,y}^{N} \ + \ \d_{T}\int_{S}^{T}\sum_{w\in\mathbb{T}_{N}}\bar{\mathbf{J}}_{R,T,x,w}^{N} \cdot N^{2}\sum_{|\mathfrak{l}|\leq10N^{\beta_{X}}\mathfrak{m}}\left(\mathfrak{f}_{R,w}^{\mathfrak{l},1}\mathbf{J}_{S,R,w+\mathfrak{l},y}^{N} + \mathfrak{f}_{R,w}^{\mathfrak{l},2}\mathbf{J}_{S,R,w\wt{+}\mathfrak{l},y}^{N}\right) \ \d R. \label{eq:CtifySFS21}
\end{align}\normalsize\normalsize
Denote the first term on the RHS of \eqref{eq:CtifySFS21} by $\Phi_{1}$ and the second term by $\Phi_{2}$. Now define $\Psi_{R,w}^{N,\mathfrak{l}} \overset{\bullet}= \mathfrak{f}_{R,w}^{\mathfrak{l},1}\mathbf{J}_{S,R,w+\mathfrak{l},y}^{N} + \mathfrak{f}_{R,w}^{\mathfrak{l},2}\mathbf{J}_{S,R,w\wt{+}\mathfrak{l},y}^{N}$ as what is integrated in $\Phi_{2}$ on the RHS of \eqref{eq:CtifySFS21}. We start by computing $\Phi_{2}$ using the chain rule, where the first term on the RHS of \eqref{eq:CtifySFS22} comes by applying $\d_{T}$ to $\bar{\mathbf{J}}^{N}$ and the second term comes by applying $\d_{T}$ to the integral while recalling $\bar{\mathbf{J}}^{N}_{T,T,x,y} = \mathbf{1}_{x=y}$:
\small\begin{align}
\Phi_{2} \ &= \ \int_{S}^{T}\sum_{w\in\mathbb{T}_{N}}\d_{T}\bar{\mathbf{J}}_{R,T,x,w}^{N} \cdot N^{2}\sum_{|\mathfrak{l}|\leq10N^{\beta_{X}}\mathfrak{m}}\Psi_{R,w}^{N,\mathfrak{l}} \ \d R \ + \ N^{2} \sum_{|\mathfrak{l}| \leq 10N^{\beta_{X}}\mathfrak{m}}\Psi_{T,x}^{N,\mathfrak{l}} \ \d T. \label{eq:CtifySFS22}
\end{align}\normalsize\normalsize
Let $\bar{\mathscr{T}}$ be the linear operator acting on functions $\varphi:\R_{\geq0}\times\mathbb{T}_{N} \to \R$ so $\d_{T}\bar{\mathbf{J}}^{N}_{S,T,x,y} = \bar{\mathscr{T}}\bar{\mathbf{J}}^{N}_{S,T,x,y} \d T$. By \eqref{eq:CtifySFS21} and \eqref{eq:CtifySFS22} and recalling the definition of $\Phi_{T}$ as the RHS of the proposed estimate, we get the following which we explain afterwards:
\small\begin{align}
\d_{T}\Phi_{T} \ &= \ \d_{T}\mathbf{J}_{S,T,x,y}^{N} + \left(\int_{S}^{T}\sum_{w\in\mathbb{T}_{N}}\bar{\mathscr{T}}\bar{\mathbf{J}}_{R,T,x,w}^{N} \cdot N^{2}\sum_{|\mathfrak{l}|\leq10N^{\beta_{X}}\mathfrak{m}}\Psi_{R,w}^{N,\mathfrak{l}} \ \d R\right)\d T + N^{2} \sum_{|\mathfrak{l}|\leq 10N^{\beta_{X}}\mathfrak{m}}\Psi_{T,x}^{N,\mathfrak{l}} \d T \nonumber \\
&= \ \d_{T}\mathbf{J}_{S,T,x,y}^{N} + \bar{\mathscr{T}}\left(\int_{S}^{T}\sum_{w\in\mathbb{T}_{N}}\bar{\mathbf{J}}_{R,T,x,w}^{N} \cdot N^{2}\sum_{|\mathfrak{l}|\leq10N^{\beta_{X}}\mathfrak{m}}\Psi_{R,w}^{N,\mathfrak{l}} \ \d R\right)\d T + N^{2} \sum_{|\mathfrak{l}|\leq 10N^{\beta_{X}}\mathfrak{m}}\Psi_{T,x}^{N,\mathfrak{l}} \d T \nonumber \\
&= \ \bar{\mathscr{T}}\left(\mathbf{J}_{S,T,x,y}^{N} + \int_{S}^{T}\sum_{w\in\mathbb{T}_{N}}\bar{\mathbf{J}}_{R,T,x,w}^{N} \cdot N^{2}\sum_{|\mathfrak{l}|\leq10N^{\beta_{X}}\mathfrak{m}}\Psi_{R,w}^{N,\mathfrak{l}} \ \d R\right)\d T + \left(\d_{T}\mathbf{J}_{S,T,x,y}^{N} - \bar{\mathscr{T}}\mathbf{J}_{S,T,x,y}^{N}\d T\right) + N^{2} \sum_{|\mathfrak{l}|\leq 10N^{\beta_{X}}\mathfrak{m}}\Psi_{T,x}^{N,\mathfrak{l}} \d T \nonumber \\ 
&= \ \bar{\mathscr{T}}\Phi_{T}\d T + \left(\d_{T}\mathbf{J}_{S,T,x,y}^{N} - \bar{\mathscr{T}}\mathbf{J}_{S,T,x,y}^{N}\d T\right) + N^{2}\sum_{|\mathfrak{l}|\leq 10N^{\beta_{X}}\mathfrak{m}}\Psi_{T,x}^{N,\mathfrak{l}} \d T. \label{eq:CtifySFS23}
\end{align}\normalsize\normalsize
To be precise, the first line above follows from combining \eqref{eq:CtifySFS21} and \eqref{eq:CtifySFS22}. The second identity above follows by pulling out the time-independent $\bar{\mathscr{T}}$-operator, acting on $\bar{\mathbf{J}}^{N}$ through the $x$-variable, outside the integral. The third line replaces $\d_{T}\mathbf{J}^{N}$ with $\bar{\mathscr{T}}\mathbf{J}^{N}$ and isolates the resulting error in such replacement. The last line follows by definition of $\Phi_{T}$.

Recalling the definition of $\Psi^{N,\mathfrak{l}}$, it is left to show that the second and third terms within \eqref{eq:CtifySFS23} cancel for appropriate choices of functionals $\mathfrak{f}^{\mathfrak{l}}$. To this end, first observe that the second term in \eqref{eq:CtifySFS23} vanishes as long as the usual distance between $x \in \mathbb{T}_{N}$ and the boundary of $\mathbb{T}_{N}$ is at least $10N^{\beta_{X}}\mathfrak{m}$, thus for these points we may choose $\mathfrak{f}^{\mathfrak{l},i} = 0$. On the other hand, for points within $10N^{\beta_{X}}\mathfrak{m}$ of the boundary, the second term involves contributions from $\mathbf{J}^{N}$ evaluated at sites in a neighborhood in $\mathbb{T}_{N}$ of radius $10N^{\beta_{X}}\mathfrak{m}$ under the geodesic distance on $\mathbb{T}_{N}$ and sites in a neighborhood in $\Z$ of the same radius of $10N^{\beta_{X}}\mathfrak{m}$ but under the usual distance on $\Z$, all afterwards scaled by $N^{2}$ and additional uniformly bounded functionals. This paragraph may be checked just by unfolding definition of the local terms/operators $\d_{T}\mathbf{J}^{N}$ and $\bar{\mathscr{T}}$. The second set of sites where we pick $\mathfrak{f}^{\mathfrak{l}} \neq 0$ are necessarily bounded below by $N^{5/4+2\e_{\mathrm{cpt}}/3}$ in absolute value by required proximity to the boundary of $\mathbb{T}_{N}$. This completes the proof.
\end{proof}
%
%
%
\section{Dynamical One-Block Analysis}\label{section:D1B}
We study the first two/non-gradient terms in $\bar{\Phi}^{N,2}$ of Definition \ref{definition:ChiTorus} by ``local equilibrium" in Lemma \ref{lemma:LE}, invariant measure bounds in Lemmas \ref{lemma:KV}, \ref{lemma:SpectralH-1}, \ref{lemma:H-1SpectralPGF}, and Corollary \ref{corollary:LDP}, then combined with analytic inputs of heat kernel estimates for $\bar{\mathbf{H}}^{N}$.
\begin{itemize}[leftmargin=*]
\item We first introduce estimates of main interest in this section.  The proofs for all results will be deferred until after statements of all main results; we invite the reader, at least in a first reading, to skip these technical proofs and go to Section \ref{section:KPZ1}. We clarify that there are five propositions in this current section, the last four of which carry corollaries that will be crucial to the proof of Theorem \ref{theorem:KPZ}. We give the proof of every corollary after its statement as each such proof will be almost trivial.
\item The proof of each main result in this section will each require a few preliminary ingredients. We present these preliminaries for each result and combine them to give the proof of the corresponding result. We defer the proof of \emph{all} preliminary lemmas to the end of this section to avoid obscuring the idea behind proofs of main results with a host of technical manipulations.
\item The results are given in the following order. First, we will introduce a mechanism to replace the order $N^{1/2}$ spatial average of pseudo-gradients by a CLT-type cutoff that resembles Pseudo-Proposition \ref{pprop:S2}. We then give an estimate that we eventually apply to estimate errors in the multiscale time-replacement strategy discussed in the would-be-proof for Pseudo-Proposition \ref{pprop:S3}. Third, we present estimates to analyze the error terms, which we recall are time-averages of pseudo-gradient content with two-sided a priori bounds, from the multiscale ``gluing" that we discussed in the would-be-proof of Pseudo-Proposition \ref{pprop:S4}. We will write versions of the estimates for the first order $N^{1/2}$ term in $\bar{\Phi}^{N,2}$ and then for the order $N^{\beta_{X}}$-terms in $\bar{\Phi}^{N,2}$. We then organize resulting corollaries that turn the aforementioned estimates into multiscale results. Lastly, we give proofs.
\end{itemize}
We inherit notation of Proposition \ref{prop:Duhamel}. We invite the reader to go back to Proposition \ref{prop:Duhamel} for notation throughout this section. 
\subsection{Static Analysis}
Let us first introduce the following cutoff basically from the statement of Pseudo-Proposition \ref{pprop:S2}.
\begin{definition}\label{definition:S1B}
Recall $\beta_{X}=\frac13+\e_{X,1}$ and $\mathscr{A}^{\mathbf{X},-}$ in Proposition \ref{prop:Duhamel}. Set $\mathbf{1}[\mathscr{E}_{S,y}^{\mathbf{X},-}]\overset{\bullet}=\mathbf{1}[|\mathscr{A}_{N^{\beta_{X}}}^{\mathbf{X},-}(\mathfrak{g}_{S,y})|\lesssim N^{-\frac12\beta_{X}+\frac12\e_{X,1}}]$ and 
\small\begin{align}
\mathscr{C}_{N^{\beta_{X}}}^{\mathbf{X},-}(\mathfrak{g}_{S,y}) \ &\overset{\bullet}= \ \mathscr{A}_{N^{\beta_{X}}}^{\mathbf{X},-}(\mathfrak{g}_{S,y}) \cdot \mathbf{1}[\mathscr{E}_{S,y}^{\mathbf{X},-}].
\end{align}\normalsize\normalsize
\end{definition}
The following result is a quantitative and $\mathbb{T}_{N}$-adapted version of Pseudo-Proposition \ref{pprop:S2}. Its proof, as we will see towards the second half of this section, depends on the observation that the introduction of the CLT-type cutoff at canonical ensembles does something only with \emph{exponentially low} probability. We then apply the local equilibrium reduction in Lemma \ref{lemma:LE}.
\begin{prop}\label{prop:S1B}
 With $\mathscr{A}^{\mathbf{X},-}_{N^{\beta_{X}}}(\mathfrak{g})$ in \emph{Proposition \ref{prop:Duhamel}} and $\mathscr{C}_{N^{\beta_{X}}}^{\mathbf{X},-}(\mathfrak{g})$ in \emph{Definition \ref{definition:S1B}}, there exists $\beta_{\mathrm{univ}} \in \R_{>0}$ universal so that
\small\begin{align}
\E\|\bar{\mathbf{H}}_{T,x}^{N}(N^{\frac12}|\mathscr{A}_{N^{\beta_{X}}}^{\mathbf{X},-}(\mathfrak{g}_{S,y}) - \mathscr{C}_{N^{\beta_{X}}}^{\mathbf{X},-}(\mathfrak{g}_{S,y})|)\|_{1;\mathbb{T}_{N}} \ &\lesssim \ N^{-\beta_{\mathrm{univ}}}.
\end{align}\normalsize\normalsize
\end{prop}
The utility of Proposition \ref{prop:S1B} is in a priori bounds on time-averages of $\mathscr{C}^{\mathbf{X},-}$. This is our only advantage of $\mathscr{C}^{\mathbf{X},-}$.
\subsection{Dynamic Analysis IA}
We move onto estimates behind implementation of a quantitative $\mathbb{T}_{N}$-adapted version of Pseudo-Proposition \ref{pprop:S3}. We recall that this pseudo-proposition justifies the replacement of the spatial-average-with-cutoff $\mathscr{C}^{\mathbf{X},-}$ from Definition \ref{definition:S1B} with its time-average on some mesoscopic time-scale. We additionally recall that the strategy behind the would-be-proof of Pseudo-Proposition \ref{pprop:S3} amounts to a multiscale scheme given by a replacement-by-time-average on progressively larger time-scales until we reach the ``maximal" mesoscopic time-scale. We start with notation for time-averages.
\begin{definition}\label{definition:D1B1A}
Take $\mathfrak{t}_{\mathrm{av}}\in\R_{\geq0}$ and define the following time-average $\mathscr{A}^{\mathbf{T},+}$ acting on the spatial-average and its cutoff:
\begin{subequations}
\small\begin{align}
\mathscr{A}_{\mathfrak{t}_{\mathrm{av}}}^{\mathbf{T},+}\mathscr{A}_{N^{\beta_{X}}}^{\mathbf{X},-}(\mathfrak{g}_{S,y}) \ &\overset{\bullet}= \ \mathbf{1}_{\mathfrak{t}_{\mathrm{av}}\neq0}\mathfrak{t}_{\mathrm{av}}^{-1}\int_{0}^{\mathfrak{t}_{\mathrm{av}}}\mathscr{A}_{N^{\beta_{X}}}^{\mathbf{X},-}(\mathfrak{g}_{S+\mathfrak{r},y}) \d\mathfrak{r} \ + \ \mathbf{1}_{\mathfrak{t}_{\mathrm{av}}=0}\mathscr{A}_{N^{\beta_{X}}}^{\mathbf{X},-}(\mathfrak{g}_{S,y}) \\
\mathscr{A}_{\mathfrak{t}_{\mathrm{av}}}^{\mathbf{T},+}\mathscr{C}_{N^{\beta_{X}}}^{\mathbf{X},-}(\mathfrak{g}_{S,y}) \ &\overset{\bullet}= \ \mathbf{1}_{\mathfrak{t}_{\mathrm{av}}\neq0}\mathfrak{t}_{\mathrm{av}}^{-1}\int_{0}^{\mathfrak{t}_{\mathrm{av}}}\mathscr{C}_{N^{\beta_{X}}}^{\mathbf{X},-}(\mathfrak{g}_{S+\mathfrak{r},y}) \d\mathfrak{r} \ + \ \mathbf{1}_{\mathfrak{t}_{\mathrm{av}}=0}\mathscr{C}_{N^{\beta_{X}}}^{\mathbf{X},-}(\mathfrak{g}_{S,y}).
\end{align}\normalsize\normalsize
\end{subequations}
Let us clarify that the first time-average $\mathscr{A}^{\mathbf{T},+}\mathscr{A}^{\mathbf{X},-}$ will not be used until later in a future subsection concerning the proof for a result dedicated towards the would-be-proof of Pseudo-Proposition \ref{pprop:S4}. We think of $\mathscr{A}^{\mathbf{T},+}$ as a time-averaging operator.
\end{definition}
\begin{prop}\label{prop:D1B1A}
 Take any deterministic $\mathfrak{t}_{\mathrm{av}} \in \R_{\geq0}$ satisfying $N^{-2} \lesssim \mathfrak{t}_{\mathrm{av}} \lesssim N^{-1}$. There exists $\beta_{\mathrm{univ}} \in \R_{>0}$ universal so that
\small\begin{align}
\mathfrak{t}_{\mathrm{av}}^{1/4} \E\|\bar{\mathbf{H}}_{T,x}^{N}(N^{\frac12}|\mathscr{A}_{\mathfrak{t}_{\mathrm{av}}}^{\mathbf{T},+}\mathscr{C}_{N^{\beta_{X}}}^{\mathbf{X},-}(\mathfrak{g}_{S,y})|)\|_{1;\mathbb{T}_{N}} \ &\lesssim \ N^{-\beta_{\mathrm{univ}}}.
\end{align}\normalsize\normalsize
\end{prop}
The factor $\mathfrak{t}_{\mathrm{av}}^{1/4}$ will come from time-regularity of the $\bar{\mathbf{Z}}^{N}$-process, so the nature of Proposition \ref{prop:D1B1A} matches that of \eqref{eq:S32}.
\subsection{Dynamic Analysis IIA}
We now focus on estimates for the multiscale idea in the would-be-proof for Pseudo-Proposition \ref{pprop:S4}. As discussed earlier in the would-be-proof of Pseudo-Proposition \ref{pprop:S4}, this means we will study time-averages of cutoffs $\mathscr{C}^{\mathbf{X},-}$ of spatial averages $\mathscr{A}^{\mathbf{X},-}$ of pseudo-gradients by analyzing two-sided cutoffs for these time-averages. First, some notation to make presenting these two-sided cutoff estimates more convenient.
\begin{definition}\label{definition:D1B2A}
Recall the constructions of time-averages from Definition \ref{definition:D1B1A}. Define the following pair of events controlling maximal processes and time-averages. We clarify that in these following events, the process of interest is the supremum of the \emph{integral} of $\mathscr{C}^{\mathbf{X},-}$-terms over all time-scales $0\leq\mathfrak{t}\leq\mathfrak{t}_{\mathrm{av}}$ that are then weighted by $\mathfrak{t}_{\mathrm{av}}^{-1}$. These are only time-averages if $\mathfrak{t}=\mathfrak{t}_{\mathrm{av}}$:
\begin{subequations}
\small\begin{align}
\mathbf{1}[\mathscr{G}_{\lesssim;S,y}^{\beta_{-}}] \ &\overset{\bullet}= \ \mathbf{1}\left({\sup}_{0\leq\mathfrak{t}\leq\mathfrak{t}_{\mathrm{av}}}\mathfrak{t}\cdot\mathfrak{t}_{\mathrm{av}}^{-1}|\mathscr{A}_{\mathfrak{t}}^{\mathbf{T},+}\mathscr{C}_{N^{\beta_{X}}}^{\mathbf{X},-}(\mathfrak{g}_{S,y})|\lesssim N^{-\beta_{-}}\right) \\
\mathbf{1}[\mathscr{G}_{\geq;S,y}^{\beta_{+}}] \ &\overset{\bullet}= \ \mathbf{1}\left({\sup}_{0\leq\mathfrak{t}\leq\mathfrak{t}_{\mathrm{av}}}\mathfrak{t}\cdot\mathfrak{t}_{\mathrm{av}}^{-1}|\mathscr{A}_{\mathfrak{t}}^{\mathbf{T},+}\mathscr{C}_{N^{\beta_{X}}}^{\mathbf{X},-}(\mathfrak{g}_{S,y})|\geq N^{-\beta_{+}}\right).
\end{align}\normalsize\normalsize
\end{subequations}
We will eventually take $\beta_{-},\beta_{+}\in\R_{\geq0}$ deterministic such that $\beta_{-}\leq\beta_{+}$. The implied constant in the first event is universal.

Take any deterministic $\mathfrak{t}_{N,\e}\in\R_{\geq0}$ and define the following ``two-sided cutoffs" for the time-averages in Definition \ref{definition:D1B1A}:
\begin{subequations}
\small\begin{align}
\mathscr{C}_{\mathfrak{t}_{\mathrm{av}};\beta_{-},\beta_{+}}^{\mathbf{T},+,\mathfrak{t}_{N,\e},1}\mathscr{C}_{N^{\beta_{X}}}^{\mathbf{X},-}(\mathfrak{g}_{S,y}) \ &\overset{\bullet}= \ \mathscr{A}_{\mathfrak{t}_{\mathrm{av}}}^{\mathbf{T},+}\mathscr{C}_{N^{\beta_{X}}}^{\mathbf{X},-}(\mathfrak{g}_{S+\mathfrak{t}_{N,\e},y})  \cdot \mathbf{1}[\mathscr{G}_{\lesssim;S+\mathfrak{t}_{N,\e},y}^{\beta_{-}}]\mathbf{1}[\mathscr{G}_{\geq;S,y}^{\beta_{+}}] \\
\mathscr{C}_{\mathfrak{t}_{\mathrm{av}};\beta_{-},\beta_{+}}^{\mathbf{T},+,\mathfrak{t}_{N,\e},2}\mathscr{C}_{N^{\beta_{X}}}^{\mathbf{X},-}(\mathfrak{g}_{S,y}) \ &\overset{\bullet}= \  \mathscr{A}_{\mathfrak{t}_{\mathrm{av}}}^{\mathbf{T},+}\mathscr{C}_{N^{\beta_{X}}}^{\mathbf{X},-}(\mathfrak{g}_{S,y})  \cdot \mathbf{1}[\mathscr{G}_{\lesssim;S,y}^{\beta_{-}}]\mathbf{1}[\mathscr{G}_{\geq;S+\mathfrak{t}_{N,\e},y}^{\beta_{+}}].
\end{align}\normalsize\normalsize
\end{subequations}
In words, the first $\mathscr{C}^{\mathbf{T},+,1}$-operator adjusts the $\mathscr{A}^{\mathbf{T},+}$-operator with forward time-shift by $\mathfrak{t}_{N,\e}\in\R_{\geq0}$. It also introduces an upper bound cutoff of order $N^{-\beta_{-}}$ for this shifted time-average, and then it introduces the lower bound cutoff of $N^{-\beta_{+}}$ for the \emph{unshifted} time-average. Meanwhile the second $\mathscr{C}^{\mathbf{T},+,2}$-operator adjusts the $\mathscr{A}^{\mathbf{T},+}$-operator by introducing an upper bound cutoff of order $N^{-\beta_{-}}$ and then a lower bound cutoff of $N^{-\beta_{+}}$ for a $\mathfrak{t}_{N,\e}$-\emph{shifted} time-average. Note that by taking $\mathfrak{t}=\mathfrak{t}_{\mathrm{av}}$ in the suprema defining $\mathscr{G}^{\beta_{-}}$-events in $\mathscr{C}^{\mathbf{T},+,1}$ and $\mathscr{C}^{\mathbf{T},+,2}$ above, we get the following deterministic bound with a universal implied constant:
\small\begin{align}
|\mathscr{C}_{\mathfrak{t}_{\mathrm{av}};\beta_{-},\beta_{+}}^{\mathbf{T},+,\mathfrak{t}_{N,\e},1}\mathscr{C}_{N^{\beta_{X}}}^{\mathbf{X},-}(\mathfrak{g}_{S,y})| + |\mathscr{C}_{\mathfrak{t}_{\mathrm{av}};\beta_{-},\beta_{+}}^{\mathbf{T},+,\mathfrak{t}_{N,\e},2}\mathscr{C}_{N^{\beta_{X}}}^{\mathbf{X},-}(\mathfrak{g}_{S,y})| \ \lesssim \ N^{-\beta_{-}}.
\end{align}\normalsize\normalsize
\end{definition}
We emphasize that these $\mathscr{C}^{\mathbf{T},+,\mathfrak{i}}$-cutoffs will be error terms that we get as the result of starting with an a priori cutoff for time-averages of $\mathscr{C}^{\mathbf{X},-}$-terms, improving cutoffs slightly, gluing time-averages with slightly improved estimates to time-averages on slightly bigger time-scales with the same slightly improved estimates, and iterating these steps starting with, again, improving slightly the last a priori estimates. This is the multiscale procedure in the would-be-proof of Pseudo-Proposition \ref{pprop:S4}. The only difference with these $\mathscr{C}^{\mathbf{T},+,\mathfrak{i}}$-cutoffs and the error terms presented in the would-be-proof for Pseudo-Proposition \ref{pprop:S4} is that the previous $\mathscr{C}^{\mathbf{T},+,\mathfrak{i}}$-cutoffs are defined with cutoffs for \emph{maximal processes} associated to the time-averages in Definition \ref{definition:D1B1A}. Also the additional time-shift $\mathfrak{t}_{N,\e}\in\R_{\geq0}$ is absent from the would-be-proof of Pseudo-Proposition \ref{pprop:S4}. This point/difference is mostly technical, especially because the time-shift $\mathfrak{t}_{N,\e}$ will basically be of the same order as the time-average scale $\mathfrak{t}_{\mathrm{av}}$.

The main result for this current subsection is the following control for these $\mathscr{C}^{\mathbf{T},+,\mathfrak{i}}$-cutoffs against the heat operator $\bar{\mathbf{H}}^{N}$ with an appropriate $\mathbb{T}_{N}$-adapted norm. We will emphasize here only the relationship between the time-scales for time-averages and the exponents for the $\mathscr{C}^{\mathbf{T},+,\mathfrak{i}}$-cutoffs in the following result: given better a priori upper bounds, and thus for $\beta_{-}\in\R_{\geq0}$ bigger, we can reduce to local equilibrium with Lemma \ref{lemma:LE} for a longer time-scale $\mathfrak{t}_{\mathrm{av}}\in\R_{\geq0}$ for the time-average. With a longer time-scale $\mathfrak{t}_{\mathrm{av}}\in\R_{\geq0}$ for the time-average, we can upgrade the exponent $\beta_{-}\in\R_{\geq0}$ to a slightly bigger $\beta_{+}\in\R_{\geq0}$ and then control the error since \emph{at invariant measures/statistical equilibrium} the lower-bound cutoff of $N^{-\beta_{+}}$ is negligible with sufficiently high probability if we pick a sufficiently large time-scale $\mathfrak{t}_{\mathrm{av}}\in\R_{\geq0}$ for the time-average.
\begin{prop}\label{prop:D1B2A}
 Consider deterministic data $\beta_{\pm} \in \R_{\geq0}$ and $\mathfrak{t}_{\mathrm{av}} \in \R_{\geq0}$ and $\mathfrak{t}_{N,\e}\in\R_{\geq0}$ satisfying the following constraints.
\begin{itemize}[leftmargin=*]
\item We have $\beta_{-} \geq \frac12\beta_{X} - \e_{X,2}$ and $\beta_{+} = \beta_{-} +\e_{X,3}$ for $\e_{X,2},\e_{X,3}>0$ sufficiently small but universal. We have $\beta_{+} \leq \frac23$.
\item Define $\mathfrak{t}_{\mathrm{av}} \overset{\bullet}= N^{-1 - \beta_{X} - \frac12 \beta_{-} + \beta_{+} + \e_{X,4}}$ for $\e_{X,4} \in \R_{>0}$ arbitrarily small but uniformly bounded below.
\item Consider $\mathfrak{t}_{N,\e}\in\R_{\geq0}$ such that $0\leq\mathfrak{t}_{N,\e}\lesssim N^{\e}\mathfrak{t}_{\mathrm{av}}$ with $\e\in\R_{>0}$ arbitrarily small but universal.
\end{itemize}
Define $\|\varphi\|_{\bullet;\mathbb{X}} = \|\varphi_{\mathfrak{t},x}\|_{\bullet;\mathbb{X}} = \sup_{0\leq\mathfrak{t}\leq\bullet}\sup_{x\in\mathbb{X}}|\varphi_{\mathfrak{t},x}|$. There exists $\beta_{\mathrm{univ}} \in \R_{>0}$ universal so for any deterministic $0\leq\mathfrak{t}_{\mathfrak{s}},\mathfrak{t}_{\mathfrak{f}} \leq 2$, we have the following estimate, in which $\e_{X}=\max_{i}\e_{X,i}\vee\e$, whose statement we clarify afterwards:
\small\begin{align}
{\sup}_{\mathfrak{i}=1,2} \E\|\bar{\mathbf{H}}_{T,x}^{N}(N^{\frac12}|\mathscr{C}_{\mathfrak{t}_{\mathrm{av}};\beta_{-},\beta_{+}}^{\mathbf{T},+,\mathfrak{t}_{N,\e},\mathfrak{i}}\mathscr{C}_{N^{\beta_{X}}}^{\mathbf{X},-}(\mathfrak{g}_{S+\mathfrak{t}_{\mathfrak{s}},y})|)\|_{\mathfrak{t}_{\mathfrak{f}};\mathbb{T}_{N}} \ &\lesssim \ N^{-\beta_{\mathrm{univ}}} + N^{-\frac34\beta_{-}+10\e_{X}}+N^{-\frac12+10\e_{X}}+N^{-\beta_{\mathrm{univ}}}\mathfrak{t}_{\mathfrak{f}}.
\end{align}\normalsize\normalsize
We emphasize the norm on the LHS is a supremum over the $(T,x)$-variables  in the heat operator. The space-time variables $(S,y)$ are the integration variables in the heat operator and the constant $\mathfrak{t}_{\mathfrak{s}}\in\R_{\geq0}$ is a deterministic time-shift for the particle system data. Lastly, the sup over $\mathfrak{i}\in\{1,2\}$ is over the two choices of time-average cutoffs $\mathscr{C}^{\mathbf{T},+,1}$ and $\mathscr{C}^{\mathbf{T},+,2}$ from \emph{Definition \ref{definition:D1B2A}}.
\end{prop}
\begin{remark}
We could have given the previous result for $\mathfrak{t}_{\mathfrak{f}}=1$ and its utility for the current paper would not have changed. We stated it for general $\mathfrak{t}_{\mathfrak{f}}\in\R_{\geq0}$ as this will be important for the narrow-wedge initial measure to be addressed in a future article. Similarly, the time-shift $\mathfrak{t}_{\mathfrak{s}}\in\R_{\geq0}$ will play almost no role in the proof of Proposition \ref{prop:D1B2A} and is there because of what the terms that we will eventually need to control with Proposition \ref{prop:D1B2A} actually are. See Section \ref{section:KPZ2}/the proof of Lemma \ref{lemma:Step3A} for details.
\end{remark}
We eventually take $\e_{X,4}$ in the statement of Proposition \ref{prop:D1B2A} $N$-dependent, but bounded below uniformly, for presentational convenience in Section \ref{section:KPZ2}. We note this again in Corollary \ref{corollary:D1B2A} which contains a version of Proposition \ref{prop:D1B2A} we will use later.
\subsection{Dynamic Analysis IB}
This current subsection and the next are all basically versions of Proposition \ref{prop:D1B1A}/Proposition \ref{prop:D1B2A} but for order $N^{\beta_{X}}$-terms in $\bar{\Phi}^{N,2}$. The clarifying remarks concerning Proposition \ref{prop:D1B1A} and Proposition \ref{prop:D1B2A} apply here as well. We start by basically replicating Proposition \ref{prop:D1B1A} but for order $N^{\beta_{X}}$-terms in $\bar{\Phi}^{N,2}$. In particular, we justify replacement of $\wt{\mathfrak{g}}^{\mathfrak{l}}$-terms of order $N^{\beta_{X}}$ with their time-averages on a mesoscopic time-scale, and the key estimate to this end is given in Proposition \ref{prop:D1B1B}.
\begin{definition}\label{definition:D1B1B}
 Consider any $\mathfrak{t}_{\mathrm{av}}\in\R_{\geq0}$ and define  the following time-average-operator $\mathscr{A}^{\mathbf{T},+}$ acting on the functional $\wt{\mathfrak{g}}^{\mathfrak{l}}$ from Proposition \ref{prop:Duhamel} that admits a uniformly bounded pseudo-gradient factor:
\small\begin{align}
\mathscr{A}_{\mathfrak{t}_{\mathrm{av}}}^{\mathbf{T},+}(\wt{\mathfrak{g}}_{S,y}^{\mathfrak{l}}) \ &\overset{\bullet}= \ \mathbf{1}_{\mathfrak{t}_{\mathrm{av}}\neq0}\mathfrak{t}_{\mathrm{av}}^{-1}\int_{0}^{\mathfrak{t}_{\mathrm{av}}}\wt{\mathfrak{g}}_{S+\mathfrak{r},y}^{\mathfrak{l}} \d\mathfrak{r} \ + \ \mathbf{1}_{\mathfrak{t}_{\mathrm{av}}=0}\wt{\mathfrak{g}}_{S,y}^{\mathfrak{l}}.
\end{align}\normalsize\normalsize
\end{definition}
\begin{prop}\label{prop:D1B1B}
 Consider any deterministic time-scale $\mathfrak{t}_{\mathrm{av}}>0$ satisfying $N^{-2} \lesssim \mathfrak{t}_{\mathrm{av}} \lesssim N^{-1}$. There exists a universal constant $\beta_{\mathrm{univ}}>0$ so we have the following expectation bound for a $\|\|_{1;\mathbb{T}_{N}}$-norm uniformly over terms-with-pseudo-gradient-factors: 
\small\begin{align}
{\sup}_{\mathfrak{l}=1,\ldots,N^{\beta_{X}}}\mathfrak{t}_{\mathrm{av}}^{1/4} \E\|\bar{\mathbf{H}}_{T,x}^{N}(N^{\beta_{X}}|\mathscr{A}_{\mathfrak{t}_{\mathrm{av}}}^{\mathbf{T},+}(\wt{\mathfrak{g}}_{S,y}^{\mathfrak{l}})|)\|_{1;\mathbb{T}_{N}} \ &\lesssim \ N^{-\beta_{\mathrm{univ}}}.
\end{align}\normalsize\normalsize
\end{prop}
\subsection{Dynamic Analysis IIB}
We basically replicate Proposition \ref{prop:D1B2A} for order $N^{\beta_{X}}$-terms in $\bar{\Phi}^{N,2}$ in this subsection.
\begin{definition}\label{definition:D1B2B}
Recall the constructions of time-averages in Definition \ref{definition:D1B1B}. We define the following pair of events controlling maximal processes and time-averages. Again, like Definition \ref{definition:D1B2A} the following events are concerned with suprema of \emph{integrals} of $\wt{\mathfrak{g}}^{\mathfrak{l}}$-terms over time-scales $0\leq\mathfrak{t}\leq\mathfrak{t}_{\mathrm{av}}$ that are then weighted by $\mathfrak{t}_{\mathrm{av}}^{-1}$. These are only time-averages if $\mathfrak{t}=\mathfrak{t}_{\mathrm{av}}$:
\begin{subequations}
\small\begin{align}
\mathbf{1}[\mathscr{F}_{\lesssim;S,y}^{\beta_{-}}] \ &\overset{\bullet}= \ \mathbf{1}\left({\sup}_{0\leq\mathfrak{t}\leq\mathfrak{t}_{\mathrm{av}}}\mathfrak{t}\cdot\mathfrak{t}_{\mathrm{av}}^{-1}|\mathscr{A}_{\mathfrak{t}}^{\mathbf{T},+}(\wt{\mathfrak{g}}_{S,y}^{\mathfrak{l}})|\lesssim N^{-\beta_{-}}\right)\\
\mathbf{1}[\mathscr{F}_{\geq;S,y}^{\beta_{+}}] \ &\overset{\bullet}= \  \mathbf{1}\left({\sup}_{0\leq\mathfrak{t}\leq\mathfrak{t}_{\mathrm{av}}}\mathfrak{t}\cdot\mathfrak{t}_{\mathrm{av}}^{-1}|\mathscr{A}_{\mathfrak{t}}^{\mathbf{T},+}(\wt{\mathfrak{g}}_{S,y}^{\mathfrak{l}})|\geq N^{-\beta_{+}}\right).
\end{align}\normalsize\normalsize
\end{subequations}
Take any deterministic $\mathfrak{t}_{N,\e}\in\R_{\geq0}$ and define the following ``two-sided cutoffs" for the time-averages in Definition \ref{definition:D1B1B}:
\begin{subequations}
\small\begin{align}
\mathscr{C}_{\mathfrak{t}_{\mathrm{av}};\beta_{-},\beta_{+}}^{\mathbf{T},+,\mathfrak{t}_{N,\e},1}(\wt{\mathfrak{g}}_{S,y}^{\mathfrak{l}}) \ &\overset{\bullet}= \ \mathscr{A}_{\mathfrak{t}_{\mathrm{av}}}^{\mathbf{T},+}(\wt{\mathfrak{g}}_{S+\mathfrak{t}_{N,\e},y}^{\mathfrak{l}})\cdot\mathbf{1}[\mathscr{F}_{\lesssim;S+\mathfrak{t}_{N,\e},y}^{\beta_{-}}]\mathbf{1}[\mathscr{F}_{\geq;S,y}^{\beta_{+}}] \\
\mathscr{C}_{\mathfrak{t}_{\mathrm{av}};\beta_{-},\beta_{+}}^{\mathbf{T},+,\mathfrak{t}_{N,\e},2}(\wt{\mathfrak{g}}_{S,y}^{\mathfrak{l}}) \ &\overset{\bullet}= \ \mathscr{A}_{\mathfrak{t}_{\mathrm{av}}}^{\mathbf{T},+}(\wt{\mathfrak{g}}_{S,y}^{\mathfrak{l}})\cdot\mathbf{1}[\mathscr{F}_{\lesssim;S,y}^{\beta_{-}}]\mathbf{1}[\mathscr{F}_{\geq;S+\mathfrak{t}_{N,\e},y}^{\beta_{+}}].
\end{align}\normalsize\normalsize
\end{subequations}
\end{definition}
\begin{prop}\label{prop:D1B2B}
 Consider deterministic data $\beta_{\pm} \in \R_{\geq0}$ and $\mathfrak{t}_{\mathrm{av}} \in \R_{\geq0}$ and $\mathfrak{t}_{N,\e}\in\R_{\geq0}$ satisfying the following constraints.
\begin{itemize}[leftmargin=*]
\item We have $\beta_{+} = \beta_{-} + \e_{X,2}$ for $\e_{X,2} \in \R_{>0}$ arbitrarily small but universal. We also have $\beta_{+} \leq 5/12$.
\item Define $\mathfrak{t}_{\mathrm{av}} \overset{\bullet}= \ N^{-\frac54-\frac12\beta_{-}+\beta_{+}+\e_{X,3}}$ for $\e_{X,3} \in \R_{>0}$ arbitrarily small but uniformly bounded below.
\item Consider $\mathfrak{t}_{N,\e}\in\R_{\geq0}$ such that $0\leq\mathfrak{t}_{N,\e}\lesssim N^{\e}\mathfrak{t}_{\mathrm{av}}$ with $\e\in\R_{>0}$ arbitrarily small but universal.
\end{itemize}
There exists $\beta_{\mathrm{univ}}>0$ universal so that for any deterministic $0\leq\mathfrak{t}_{\mathfrak{s}},\mathfrak{t}_{\mathfrak{f}} \leq 2$, we have the following in which $\e_{X}=\max_{i}\e_{X,i}\vee\e$:
\small\begin{align}
{\sup}_{\mathfrak{i}=1,2}{\sup}_{\mathfrak{l}=1,\ldots,N^{\beta_{X}}}\E\|\bar{\mathbf{H}}_{T,x}^{N}(N^{\beta_{X}}|\mathscr{C}_{\mathfrak{t}_{\mathrm{av}};\beta_{-},\beta_{+}}^{\mathbf{T},+,\mathfrak{t}_{N,\e},\mathfrak{i}}(\wt{\mathfrak{g}}_{S+\mathfrak{t}_{\mathfrak{s}},y}^{\mathfrak{l}})|)\|_{\mathfrak{t}_{\mathfrak{f}};\mathbb{T}_{N}} \ &\lesssim \ N^{-\beta_{\mathrm{univ}}} + N^{-\frac18-\frac34\beta_{-}+10\e_{X}} + N^{-\frac12+10\e_{X}} + \mathfrak{t}_{\mathfrak{f}}N^{-\beta_{\mathrm{univ}}}. \nonumber
\end{align}\normalsize
\end{prop}
\subsection{Consequences}
We first present the corollary corresponding to Proposition \ref{prop:D1B2A} as it tells us the ``maximal" time-scale for the time-average we want to replace the spatial-average-with-cutoff $\mathscr{C}^{\mathbf{X},-}$ by while being able to estimate each error term from the multiscale idea for Pseudo-Proposition \ref{pprop:S4}. This will be the maximal time-scale we can study with local equilibrium.
\begin{corollary}\label{corollary:D1B2A}
 Consider deterministic data $\{\beta_{m}\}_{m=0}^{\infty}$ and $\{\mathfrak{t}_{\mathrm{av},m}\}_{m=1}^{\infty}$ and $\{\mathfrak{t}_{N,\e,m}\}_{m=1}^{\infty}$ and $\mathfrak{m}_{+} \in \Z_{\geq0}$ defined as follows.
\begin{itemize}[leftmargin=*]
\item Define $\beta_{0} \overset{\bullet}= \frac12\beta_{X} - \e_{X,2}$ and $\beta_{m} \overset{\bullet}= \beta_{m-1} + \e_{X,3}$ for $\e_{X,2},\e_{X,3}>0$ sufficiently small but universal and $m \in \Z_{\geq1}$.
\item Define $\mathfrak{t}_{\mathrm{av},m} \overset{\bullet}= N^{-1 - \beta_{X} - \frac12 \beta_{m-1} + \beta_{m} + \e_{X,4}}$ for $m \in \Z_{\geq 1}$ and $\e_{X,4}>0$ arbitrarily small but uniformly bounded below and chosen so that $\mathfrak{t}_{\mathrm{av},m+1}$ is an integer multiple of $\mathfrak{t}_{\mathrm{av},m}$. The integer-multiple-constraint will only be useful for convenience in \emph{Section \ref{section:KPZ2}}. This may force $\e_{X,4}$ to be $N$-dependent; this will not matter as long as $\e_{X,4}$ is small but uniformly bounded below.
\item Take $\mathfrak{t}_{N,\e,m}\geq0$ to be any deterministic time satisfying $0\leq\mathfrak{t}_{N,\e,m} \lesssim N^{\e}\mathfrak{t}_{\mathrm{av},m}$ with $\e>0$ arbitrarily small but universal.
\item Define $\mathfrak{m}_{+} \in \Z_{\geq 0}$ as the smallest positive integer for which $\beta_{\mathfrak{m}_{+}} > \frac12+3\e_{X,3}$. 
\end{itemize}
For some universal constant $\beta_{\mathrm{univ}} \in \R_{>0}$ and for any deterministic $0\leq\mathfrak{t}_{\mathfrak{s}}\leq2$, we have the estimate
\small\begin{align}
{\sup}_{m=1,\ldots,\mathfrak{m}_{+}} {\sup}_{\mathfrak{i}=1,2} \E\|\bar{\mathbf{H}}_{T,x}^{N}(N^{\frac12}|\mathscr{C}_{\mathfrak{t}_{\mathrm{av},m};\beta_{m-1},\beta_{m}}^{\mathbf{T},+,\mathfrak{t}_{N,\e,m},\mathfrak{i}}\mathscr{C}_{N^{\beta_{X}}}^{\mathbf{X},-}(\mathfrak{g}_{S+\mathfrak{t}_{\mathfrak{s}},y})|)\|_{2;\mathbb{T}_{N}} \ &\lesssim \ N^{-\beta_{\mathrm{univ}}}.
\end{align}\normalsize\normalsize
We have $\mathfrak{m}_{+}\lesssim1$ and $1 \leq \mathfrak{t}_{\mathrm{av},m}\mathfrak{t}_{\mathrm{av},m-1}^{-1} \leq N^{\e_{X,3}}$ and $\mathfrak{t}_{\mathrm{av},\mathfrak{m}_{+}} \lesssim N^{-13/12+\e_{X,5}}$ for $\e_{X,5}>0$ arbitrarily small but universal.
\end{corollary}
\begin{proof}
Each expectation in the proposed bound satisfies constraints of Proposition \ref{prop:D1B2A} for $\mathfrak{t}_{\mathfrak{f}}=2$, so it suffices to show $\mathfrak{m}_{+} \lesssim 1$ and verify the claims about $\mathfrak{t}_{\mathrm{av},m}$. As $\beta_{m} - \beta_{m-1} = \e_{X,3}$, we get the bound $\mathfrak{m}_{+} \lesssim \e_{X,3}^{-1}$. This also gives $\mathfrak{t}_{\mathrm{av},m}\mathfrak{t}_{\mathrm{av},m-1}^{-1} \leq N^{\e_{X,3}}$ immediately in addition to monotonicity in $m$ by slightly further inspection of the definition of $\mathfrak{t}_{\mathrm{av},m}$. To check the $\mathfrak{t}_{\mathrm{av},\mathfrak{m}_{+}}$-bound, we note $\beta_{\mathfrak{m}_{+}}-\frac12\beta_{\mathfrak{m}_{+}-1} = \frac12\beta_{\mathfrak{m}_{+}}+\frac12\e_{X,3} \leq \frac14+3\e_{X,3}$ and $\beta_{X}\geq\frac13$, and thus $-\beta_{X}-\frac12\beta_{\mathfrak{m}_{+}-1}+\beta_{\mathfrak{m}_{+}} \leq -\frac{1}{12}+3\e_{X,3}$.
\end{proof}
\begin{corollary}\label{corollary:D1B2B}
Fix deterministic data $\{\beta_{m}^{\sim}\}_{m=0}^{\infty}$ and $\{\mathfrak{t}_{\mathrm{av},m}^{\sim}\}_{m=1}^{\infty}$ and $\{\mathfrak{t}_{N,\e,m}^{\sim}\}_{m=1}^{\infty}$ and $\mathfrak{m}_{+}^{\sim} \in \Z_{>0}$ defined as follows.
\begin{itemize}[leftmargin=*]
\item Define $\beta_{0}^{\sim} \overset{\bullet}= 0$ and define $\beta_{m}^{\sim} \overset{\bullet}= \beta_{m-1}^{\sim} + \e_{X,2}$ for $\e_{X,2} \in \R_{>0}$ arbitrarily small but universal and $m \in \Z_{\geq1}$.
\item Define $\mathfrak{t}_{\mathrm{av},m}^{\sim} \overset{\bullet}= N^{-\frac54- \frac12 \beta_{m-1}^{\sim} + \beta_{m}^{\sim} + \e_{X,3}}$ for $m \in \Z_{\geq 1}$ and $\e_{X,3}>0$ arbitrarily small but uniformly bounded below and chosen so $\mathfrak{t}_{\mathrm{av},m+1}^{\sim}$ is an integer multiple of $\mathfrak{t}_{\mathrm{av},m}^{\sim}$. The integer-multiple-constraint will only be useful for convenience in \emph{Section \ref{section:KPZ2}}.
\item Take $\mathfrak{t}_{N,\e,m}^{\sim}\geq0$ to be any deterministic time satisfying $0\leq\mathfrak{t}_{N,\e,m}^{\sim}\lesssim N^{\e}\mathfrak{t}_{\mathrm{av},m}^{\sim}$ with $\e>0$ sufficiently small but universal. 
\item Define $\mathfrak{m}_{+}^{\sim} \in \Z_{\geq 0}$ as the smallest positive integer for which $\beta_{\mathfrak{m}_{+}^{\sim}}^{\sim} > \beta_{X} + 3\e_{X,2}$.
\end{itemize}
For some universal constant $\beta_{\mathrm{univ}} \in \R_{>0}$ and for any deterministic $0\leq\mathfrak{t}_{\mathfrak{s}}\leq2$, we have the estimate
\small\begin{align}
{\sup}_{m=1,\ldots,\mathfrak{m}_{+}^{\sim}}{\sup}_{\mathfrak{i}=1,2}{\sup}_{\mathfrak{l}=1,\ldots,N^{\beta_{X}}}\E\|\bar{\mathbf{H}}_{T,x}^{N}(N^{\beta_{X}}|\mathscr{C}_{\mathfrak{t}_{\mathrm{av},m}^{\sim};\beta_{m-1}^{\sim},\beta_{m}^{\sim}}^{\mathbf{T},+,\mathfrak{t}_{N,\e,m}^{\sim},\mathfrak{i}}(\wt{\mathfrak{g}}^{\mathfrak{l}}_{S+\mathfrak{t}_{\mathfrak{s}},y})|)\|_{2;\mathbb{T}_{N}} \ &\lesssim \ N^{-\beta_{\mathrm{univ}}}.
\end{align}\normalsize\normalsize
We have $\mathfrak{m}_{+}^{\sim}\lesssim1$ and $1 \leq \mathfrak{t}_{\mathrm{av},m}^{\sim}(\mathfrak{t}_{\mathrm{av},m-1}^{\sim})^{-1} \leq N^{\e_{X,2}}$ and $\mathfrak{t}_{\mathrm{av},\mathfrak{m}_{+}^{\sim}}^{\sim} \lesssim N^{-\frac{13}{12}+\e_{X,4}}$ for $\e_{X,4}>0$ arbitrarily small but universal.
\end{corollary}
\begin{proof}
This follows by Proposition \ref{prop:D1B2B} how Corollary \ref{corollary:D1B2A} follows by Proposition \ref{prop:D1B2A}.
 \end{proof}
With Corollary \ref{corollary:D1B2A}, we may control time-averages of $\mathscr{C}^{\mathbf{X},-}$ with respect to the ``maximal" time-scale $\mathfrak{t}_{\mathrm{av},\mathfrak{m}_{+}}\in\R_{\geq0}$ defined in the statement of Corollary \ref{corollary:D1B2A} by the multiscale procedure outlined in the would-be-proof of Pseudo-Proposition \ref{pprop:S4}. Thus, we replace the original spatial-average-with-cutoff $\mathscr{C}^{\mathbf{X},-}$ with its time-average on this time-scale $\mathfrak{t}_{\mathrm{av},\mathfrak{m}_{+}}\in\R_{\geq0}$ via step-by-step replacement as in the would-be-proof of Pseudo-Proposition \ref{pprop:S3}. We then estimate the errors with the following corollary.
\begin{corollary}\label{corollary:D1B1A}
 Consider a sequence $\{\mathfrak{t}_{\mathrm{av},\ell}\}_{\ell=0}^{\infty}$ and $\mathfrak{l}_{+}\in\Z_{\geq0}$ defined below, where $\e_{X,2}>0$ is arbitrarily small but universal.
\begin{itemize}[leftmargin=*]
\item Define $\mathfrak{t}_{\mathrm{av},\ell+1} = \lfloor N^{\e_{X,2}}\rfloor\mathfrak{t}_{\mathrm{av},\ell}\wedge\mathfrak{t}_{\mathrm{av},\mathfrak{m}_{+}}$ with $\mathfrak{t}_{\mathrm{av},\mathfrak{m}_{+}}$ in \emph{Corollary \ref{corollary:D1B2A}}. Let $\mathfrak{l}_{+}$ be the first index $\ell$ for which $\mathfrak{t}_{\mathrm{av},\ell}=\mathfrak{t}_{\mathrm{av},\mathfrak{m}_{+}}$.
\item Choose $\mathfrak{t}_{\mathrm{av},0}$ so $N^{-2}\lesssim\mathfrak{t}_{\mathrm{av},0}\lesssim N^{-2+\e_{X,2}}$ and $\mathfrak{t}_{\mathrm{av},\ell+1}\mathfrak{t}_{\mathrm{av},\ell}^{-1}\in\Z_{>0}$ for all $\ell$. Our choice of $\mathfrak{t}_{\mathrm{av},0}$ depends on $\e_{X,2},\mathfrak{t}_{\mathrm{av},\mathfrak{m}_{+}}$.
\end{itemize}
There exists $\beta_{\mathrm{univ}} \in \R_{>0}$ universal so that if $\e_{X,2} \in \R_{>0}$ is sufficiently small, we have uniform boundedness $\mathfrak{l}_{+} \lesssim_{\e_{X,2}} 1$ and
\small\begin{align}
{\sup}_{\ell=0,\ldots,\mathfrak{l}_{+}} \mathfrak{t}_{\mathrm{av},\ell+1}^{1/4} \E\|\bar{\mathbf{H}}_{T,x}^{N}(N^{\frac12}|\mathscr{A}_{\mathfrak{t}_{\mathrm{av},\ell}}^{\mathbf{T},+}\mathscr{C}_{N^{\beta_{X}}}^{\mathbf{X},-}(\mathfrak{g}_{S,y})|)\|_{1;\mathbb{T}_{N}} \ &\lesssim \ N^{-\beta_{\mathrm{univ}}}.
\end{align}\normalsize\normalsize
\end{corollary}
\begin{proof}
If we pick $\e_{X,2} \in \R_{>0}$ sufficiently small, we may replace the prefactor $\mathfrak{t}_{\mathrm{av},\ell+1}^{1/4} \to \mathfrak{t}_{\mathrm{av},\ell}^{1/4}$ for the $\ell$-quantity in the supremum while changing $\beta_{\mathrm{univ}}\in\R_{>0}$ on the RHS of the proposed estimate by a factor of $1/2$. At this point, we observe that all $\mathfrak{t}_{\mathrm{av},\ell} \in \R_{>0}$ in question satisfy constraints assumed for $\mathfrak{t}_{\mathrm{av}} \in \R_{>0}$ in Proposition \ref{prop:D1B1A}. Indeed, the times $\mathfrak{t}_{\mathrm{av},0},\mathfrak{t}_{\mathrm{av},\mathfrak{l}_{+}} \in \R_{>0}$ with maximal and minimal indices do by definition and by Corollary \ref{corollary:D1B2A}, and the times $\mathfrak{t}_{\mathrm{av},\ell} \in \R_{>0}$ are monotone in the index. Thus, the estimate on the sup follows by Proposition \ref{prop:D1B1A}. The bound $\mathfrak{l}_{+} \lesssim_{\e_{X,2}} 1$ follows by $\lfloor N^{\e_{X,2}}\rfloor\gtrsim N^{\e_{X,2}/2}$ and elementary considerations.
\end{proof}
\begin{remark}
Like Corollary \ref{corollary:D1B2A}, the integer-multiple-constraint for time-scales in Corollary \ref{corollary:D1B1A} is for later convenience.
\end{remark}
The next/last result follows by Proposition \ref{prop:D1B1B} how Corollary \ref{corollary:D1B1A} follows by Proposition \ref{prop:D1B1A}.
\begin{corollary}\label{corollary:D1B1B}
 Consider the sequence $\{\mathfrak{t}_{\mathrm{av},\ell}^{\sim}\}_{\ell=0}^{\infty}$ and $\mathfrak{l}_{+}^{\sim}\in\Z_{\geq0}$ defined below; $\e_{X,2}>0$ is arbitrarily small but universal.
\begin{itemize}[leftmargin=*]
\item Define $\mathfrak{t}_{\mathrm{av},\ell+1}^{\sim} = \lfloor N^{\e_{X,2}}\rfloor\mathfrak{t}_{\mathrm{av},\ell}^{\sim}\wedge\mathfrak{t}_{\mathrm{av},\mathfrak{m}_{+}^{\sim}}^{\sim}$ with $\mathfrak{t}_{\mathrm{av},\mathfrak{m}_{+}^{\sim}}^{\sim}$ from \emph{Corollary \ref{corollary:D1B2B}}. Let $\mathfrak{l}_{+}^{\sim}$ be the first index $\ell$ for which $\mathfrak{t}_{\mathrm{av},\ell}^{\sim}=\mathfrak{t}_{\mathrm{av},\mathfrak{m}_{+}^{\sim}}^{\sim}$.
\item Choose $\mathfrak{t}_{\mathrm{av},0}^{\sim}$ such that $N^{-2}\lesssim\mathfrak{t}_{\mathrm{av},0}^{\sim}\lesssim N^{-2+\e_{X,2}}$ and $\mathfrak{t}_{\mathrm{av},\ell+1}^{\sim}(\mathfrak{t}_{\mathrm{av},\ell}^{\sim})^{-1}\in\Z_{>0}$. Our choice of $\mathfrak{t}_{\mathrm{av},0}^{\sim}$ depends on $\e_{X,2},\mathfrak{t}_{\mathrm{av},\mathfrak{m}_{+}^{\sim}}^{\sim}$.
\end{itemize}
There exists $\beta_{\mathrm{univ}} \in \R_{>0}$ universal so that if $\e_{X,2} \in \R_{>0}$ is sufficiently small, we have uniform boundedness $\mathfrak{l}_{+}^{\sim} \lesssim_{\e_{X,2}} 1$ and
\small\begin{align}
{\sup}_{\ell=0,\ldots,\mathfrak{l}_{+}^{\sim}}{\sup}_{\mathfrak{l}=1,\ldots,N^{\beta_{X}}} (\mathfrak{t}_{\mathrm{av},\ell+1}^{\sim})^{1/4} \E\|\bar{\mathbf{H}}_{T,x}^{N}(N^{\beta_{X}}|\mathscr{A}_{\mathfrak{t}_{\mathrm{av},\ell}^{\sim}}^{\mathbf{T},+}(\wt{\mathfrak{g}}_{S,y}^{\mathfrak{l}})|)\|_{1;\mathbb{T}_{N}} \ &\lesssim \ N^{-\beta_{\mathrm{univ}}}.
\end{align}\normalsize\normalsize
\end{corollary}
The rest of this section is dedicated to proofs of Propositions \ref{prop:S1B}, \ref{prop:D1B1A}, \ref{prop:D1B2A}, \ref{prop:D1B1B}, and \ref{prop:D1B2B}. We again invite the reader to go to Section \ref{section:KPZ1} at least upon a first reading as the proofs of the aforementioned results are on the technical side. For proofs themselves, we replace mesoscopic space-time averages by ``local" versions by coupling arguments, and then we analyze these ``local" versions by a reduction to equilibrium estimates in Lemmas \ref{lemma:KV}, \ref{lemma:SpectralH-1}, and \ref{lemma:H-1SpectralPGF}. As for Proposition \ref{prop:S1B}, we will do the same but instead of Lemma \ref{lemma:KV}, Lemma \ref{lemma:SpectralH-1}, and Lemma \ref{lemma:H-1SpectralPGF} we will apply static LDP estimates in Lemma \ref{lemma:LDP} and Corollary \ref{corollary:LDP}. Every result will also require some technical gymnastics we explain below; see Lemma \ref{lemma:D1B2A1}, for example. We will \emph{not} present the proofs in the written order. We start with proofs of Proposition \ref{prop:D1B2A} and Proposition \ref{prop:D1B2B}; these proofs contain almost all ideas necessary to get the rest. We then move to proofs for Propositions \ref{prop:D1B1A} and \ref{prop:D1B1B}. We finish with a proof of Proposition \ref{prop:S1B}.
\subsection{Proof of Proposition \ref{prop:D1B2A}}
We will adopt notation from the statement of Proposition \ref{prop:D1B2A} throughout this subsection. We delay the proofs of preliminary lemmas until the last subsection of this section to avoid obscuring the proof of Proposition \ref{prop:D1B2A}.

We will depend on several preliminary ingredients starting with aforementioned technical gymnastics. Roughly speaking, it allows us to replace the heat operator within the LHS of the proposed estimate in Proposition \ref{prop:D1B2A} with averaging against the constant function on $[0,\mathfrak{t}_{\mathfrak{f}}]\times\mathbb{T}_{N}$ free of short-time singularities of the heat kernel. We do this by ``direct" convolution bound.
\begin{lemma}\label{lemma:D1B2A1}
 Take any possibly random function $\varphi_{S,y}:\R_{\geq0}\times\mathbb{T}_{N}\to\R$ and suppose $\|\varphi\|_{\infty;\mathbb{T}_{N}}\lesssim N^{-\beta_{-}}$ with a universal implied constant. Given any $\e>0$ and $\mathfrak{t}_{\mathfrak{s}},\mathfrak{t}_{\mathfrak{f}}\geq0$, we have the following deterministic estimate with universal implied constant:
\small\begin{align}
\|\bar{\mathbf{H}}_{T,x}^{N}(N^{\frac12}|\varphi_{S+\mathfrak{t}_{\mathfrak{s}},y}|)\|_{\mathfrak{t}_{\mathfrak{f}};\mathbb{T}_{N}} \ &\lesssim \ N^{1+\e_{\mathrm{cpt}}+\frac12\e-\frac12\beta_{-}}\int_{0}^{\mathfrak{t}_{\mathfrak{f}}}\wt{\sum}_{y\in\mathbb{T}_{N}}|\varphi_{S+\mathfrak{t}_{\mathfrak{s}},y}| \ \d S \ + \ N^{-\e}.
\end{align}\normalsize\normalsize
\end{lemma}
From this last gymnastics-estimate, we get the following. It roughly follows by taking expectation of the RHS in the bound of Lemma \ref{lemma:D1B2A1} with $\varphi$ the iterated cutoff $\mathscr{C}^{\mathbf{T},+}\mathscr{C}^{\mathbf{X},-}$ inside the heat operator on the LHS of the proposed estimate in Proposition \ref{prop:D1B2A}. The iterated expectation on the RHS of the proposed estimate in this next result Lemma \ref{lemma:D1B2A2} follows by decomposing the expectation of a dynamic functional in terms of an expectation over the path-space measure and then an expectation over the initial configuration, the latter of which we then average over the integral-sum of the RHS of the estimate in Lemma \ref{lemma:D1B2A1}.
\begin{lemma}\label{lemma:D1B2A2}
 Recall \emph{Definition \ref{definition:AvMeasures}}. Provided any $\e\in\R_{>0}$ and $\mathfrak{t}_{\mathfrak{f}}\in\R_{\geq0}$ and $\mathfrak{i}\in\{1,2\}$, we have, with notation explained after,
\small\begin{align}
 \E\|\bar{\mathbf{H}}_{T,x}^{N}(N^{\frac12}|\mathscr{C}_{\mathfrak{t}_{\mathrm{av}};\beta_{-},\beta_{+}}^{\mathbf{T},+,\mathfrak{t}_{N,\e},\mathfrak{i}}\mathscr{C}_{N^{\beta_{X}}}^{\mathbf{X},-}(\mathfrak{g}_{S+\mathfrak{t}_{\mathfrak{s}},y})|)\|_{\mathfrak{t}_{\mathfrak{f}};\mathbb{T}_{N}} \ &\lesssim \ N^{1+\e_{\mathrm{cpt}}+\frac12\e-\frac12\beta_{-}}\mathfrak{t}_{\mathfrak{f}} \E^{\mu_{0,\Z}}\bar{\mathfrak{f}}_{\mathfrak{t}_{\mathfrak{s}},\mathfrak{t}_{\mathfrak{f}},\mathbb{T}_{N}}\E_{\eta_{0}}^{\mathrm{path}}|\mathscr{C}_{\mathfrak{t}_{\mathrm{av}};\beta_{-},\beta_{+}}^{\mathbf{T},+,\mathfrak{t}_{N,\e},\mathfrak{i}}\mathscr{C}_{N^{\beta_{X}}}^{\mathbf{X},-}(\mathfrak{g}_{0,0})| \ + \ N^{-\e}. \nonumber
\end{align}\normalsize
Above the expectation $\E_{\eta_{0}}^{\mathrm{path}}$ is an expectation with respect to the path-space measure induced by the original $\Omega_{\Z}$-valued exclusion process with an initial configuration $\eta_{0}$ that is then being averaged via $\E^{\mu_{0,\Z}}$ against the space-time averaged density $\bar{\mathfrak{f}}_{\mathfrak{t}_{\mathfrak{s}},\mathfrak{t}_{\mathfrak{f}},\mathbb{T}_{N}}$.
\end{lemma}
We observe the path-space expectation of the iterated cutoff $\mathscr{C}^{\mathbf{T},+}\mathscr{C}^{\mathbf{X},-}$ is a \emph{global} statistic. It depends on the entire configuration $\eta_{0}\in\Omega_{\Z}$. However, because we integrate the mesoscopic spatial-cutoff $\mathscr{C}^{\mathbf{X},-}$ on a mesoscopic time-scale $\mathfrak{t}_{N,\e}\in\R_{\geq0}$, it ``looks like" a mesoscopic statistic. The next ingredient localizes these space-time averages of interest in Lemma \ref{lemma:D1B2A2}. To state it, we require some notation. First let us recall the $\mathfrak{I}$-local exclusion processes introduced in Definition \ref{definition:PeriodicGenerator}.
\begin{definition}\label{definition:tAvBlock}
For $\mathfrak{t}_{\mathrm{av}} \in \R_{\geq0}$, we set 
\begin{align}
\mathfrak{l}_{\mathfrak{t}_{\mathrm{av}}} \overset{\bullet}= (N(2\mathfrak{t}_{\mathrm{av}}+2\mathfrak{t}_{N,\e})^{\frac12} + N^{\frac32}(2\mathfrak{t}_{\mathrm{av}}+2\mathfrak{t}_{N,\e}) + N^{\beta_{X}})\log^{100}N \quad\mathrm{and}\quad \mathfrak{I}_{\mathfrak{t}_{\mathrm{av}}} \overset{\bullet}= \llbracket-\mathfrak{l}_{\mathfrak{t}_{\mathrm{av}}},\mathfrak{l}_{\mathfrak{t}_{\mathrm{av}}}\rrbracket \subseteq \Z.
\end{align}\normalsize
\end{definition}
\begin{remark}\label{remark:ch2SoP}
Consider the particle random walk for the $\mathfrak{S}^{N,!!}$-process on $\Omega_{\Z}$, and suppose it starts in the ``middle" $N^{\beta_{X}}$ of $\mathfrak{I}_{\mathfrak{t}_{\mathrm{av}}}$. The probability this random walk exits $\mathfrak{I}_{\mathfrak{t}_{\mathrm{av}}}$ before time $2\mathfrak{t}_{\mathrm{av}}+2\mathfrak{t}_{N,\e}$ is order $\exp(-\log^{100}N)\lesssim_{C}N^{-C}$ for any $C\in\R_{>0}$. To prove this, we may first remove the drift because $|\mathfrak{I}_{\mathfrak{t}_{\mathrm{av}}}|$ is much larger than the drift on time-scale $2\mathfrak{t}_{\mathrm{av}}+2\mathfrak{t}_{N,\e}$. For the symmetric part of the particle random walk, we use its sub-Gaussian property and a Doob maximal inequality to control its maximal process.
\end{remark}
The following construction is a localization of space-time averages evaluated on $\mathfrak{I}_{\mathfrak{t}_{\mathrm{av}}}$-exclusion processes. Everything here is basically Definition \ref{definition:S1B}, Definition \ref{definition:D1B1A}, and Definition \ref{definition:D1B2A} but for $\mathfrak{I}_{\mathfrak{t}_{\mathrm{av}}}$-local processes; it is blind to anything outside $\mathfrak{I}_{\mathfrak{t}_{\mathrm{av}}}\subseteq\Z$.
\begin{definition}\label{definition:averagesAlocal}
Consider any $\mathfrak{t}_{\mathrm{av}}\in\R_{\geq0}$. We define $\eta_{\mathfrak{t}_{\mathrm{av}};\bullet}$ as the $\Omega_{\mathfrak{I}_{\mathfrak{t}_{\mathrm{av}}}}$-valued $\mathfrak{I}_{\mathfrak{t}_{\mathrm{av}}}$-local exclusion process defined in Definition \ref{definition:PeriodicGenerator} with $\mathfrak{I}_{\mathfrak{t}_{\mathrm{av}}}\subseteq\Z$ defined earlier and the initial configuration $\eta_{\mathfrak{t}_{\mathrm{av}};0}= \Pi_{\mathfrak{I}_{\mathfrak{t}_{\mathrm{av}}}}\eta_{0} \in \Omega_{\mathfrak{I}_{\mathfrak{t}_{\mathrm{av}}}}$, namely the projection on $\mathfrak{I}_{\mathfrak{t}_{\mathrm{av}}}\subseteq\Z$ of the initial configuration of the \emph{original} $\Omega_{\Z}$-exclusion process. We now define local versions of Definitions \ref{definition:S1B}, \ref{definition:D1B1A}, and \ref{definition:D1B2A}.
\begin{itemize}[leftmargin=*]
\item Define $\mathfrak{g}_{\mathfrak{t}_{\mathrm{av}};S,0} \overset{\bullet}= \mathfrak{g}_{0,0}(\eta_{\mathfrak{t}_{\mathrm{av}};S})$ as the pseudo-gradient in Proposition \ref{prop:Duhamel} but evaluated at the $\mathfrak{I}_{\mathfrak{t}_{\mathrm{av}}}$-local process $\eta_{\mathfrak{t}_{\mathrm{av}};\bullet}$ at $S\in\R_{\geq0}$. Observe this is well-defined because $\mathfrak{I}_{\mathfrak{t}_{\mathrm{av}}}$ contains the support of $\mathfrak{g}_{0,0}$ if $N\in\Z_{>0}$ is sufficiently large but universal.
\item Recall $\beta_{X}=\frac13+\e_{X,1}$ in Proposition \ref{prop:Duhamel}. We define the event $\mathbf{1}[\mathscr{E}_{\mathfrak{t}_{\mathrm{av}};S}^{\mathbf{X},-}] = \mathbf{1}[|\mathscr{A}_{N^{\beta_{X}}}^{\mathbf{X},-}(\mathfrak{g}_{\mathfrak{t}_{\mathrm{av}};S,0})|\lesssim N^{-\frac12\beta_{X}+\frac12\e_{X,1}}]$ where
\small\begin{align}
\mathscr{A}_{N^{\beta_{X}}}^{\mathbf{X},-}(\mathfrak{g}_{\mathfrak{t}_{\mathrm{av}};S,0}) \ &= \ \wt{\sum}_{\mathfrak{l}=1,\ldots,N^{\beta_{X}}}\tau_{-7\mathfrak{l}\mathfrak{m}}\mathfrak{g}_{\mathfrak{t}_{\mathrm{av}};S,0}.
\end{align}\normalsize\normalsize
We clarify that $\tau_{-7\mathfrak{l}\mathfrak{m}}\mathfrak{g}_{\mathfrak{t}_{\mathrm{av}};S,0}$ corresponds to shifting the $\eta_{\mathfrak{t}_{\mathrm{av}};S}$-configuration by $-7\mathfrak{l}\mathfrak{m}$, which yields a particle configuration in $\Omega_{\mathfrak{I}_{\mathfrak{t}_{\mathrm{av}}}-7\mathfrak{l}\mathfrak{m}}$, and then evaluating $\mathfrak{g}_{0,0}$ at the particle configuration obtained by projecting the shifted configuration in $\Omega_{\mathfrak{I}_{\mathfrak{t}_{\mathrm{av}}}-7\mathfrak{l}\mathfrak{m}}$ onto the support of $\mathfrak{g}_{0,0}$ which is a \emph{fixed} subset in $\Z$. Indeed, observe the support of $\mathfrak{g}_{0,0}$ is contained properly in $\mathfrak{I}_{\mathfrak{t}_{\mathrm{av}}}-7\mathfrak{l}\mathfrak{m}\subseteq\Z$ for all $\mathfrak{l}\in\llbracket0,N^{\beta_{X}}\rrbracket$ for $N\in\Z_{>0}$ sufficiently large depending only on $\mathfrak{m}\in\Z_{>0}$ because of definition of $\mathfrak{I}_{\mathfrak{t}_{\mathrm{av}}}\subseteq\Z$. We emphasize the supports of the summands defining the spatial-average in the last display are disjoint subsets of $\mathfrak{I}_{\mathfrak{t}_{\mathrm{av}}}\subseteq\Z$ as was the case in Proposition \ref{prop:Duhamel}. We additionally define the following cutoff of the previous spatial-average via CLT considerations:
\small\begin{align}
\mathscr{C}_{N^{\beta_{X}}}^{\mathbf{X},-}(\mathfrak{g}_{\mathfrak{t}_{\mathrm{av}};S,0}) \ &\overset{\bullet}= \ \mathscr{A}_{N^{\beta_{X}}}^{\mathbf{X},-}(\mathfrak{g}_{\mathfrak{t}_{\mathrm{av}};S,0}) \cdot \mathbf{1}[\mathscr{E}_{\mathfrak{t}_{\mathrm{av}};S}^{\mathbf{X},-}].
\end{align}\normalsize\normalsize
\item We first recall the time-scale $\mathfrak{t}_{\mathrm{av}}\in\R_{\geq0}$. Define the following time-averages acting on both spatial-averages and their cutoffs of the pseudo-gradient functional $\mathfrak{g}_{\mathfrak{t}_{\mathrm{av}};}$ from the first bullet point that is localized to the $\mathfrak{I}_{\mathfrak{t}_{\mathrm{av}}}$-local process:
\begin{subequations}
\small\begin{align}
\mathscr{A}_{\mathfrak{t}_{\mathrm{av}}}^{\mathbf{T},+}\mathscr{A}_{N^{\beta_{X}}}^{\mathbf{X},-}(\mathfrak{g}_{\mathfrak{t}_{\mathrm{av}};S,0}) \ &\overset{\bullet}= \ \mathbf{1}_{\mathfrak{t}_{\mathrm{av}}\neq0}\mathfrak{t}_{\mathrm{av}}^{-1}\int_{0}^{\mathfrak{t}_{\mathrm{av}}}\mathscr{A}_{N^{\beta_{X}}}^{\mathbf{X},-}(\mathfrak{g}_{\mathfrak{t}_{\mathrm{av}};S+\mathfrak{r},0})\d\mathfrak{r} \ + \ \mathbf{1}_{\mathfrak{t}_{\mathrm{av}}=0}\mathscr{A}_{N^{\beta_{X}}}^{\mathbf{X},-}(\mathfrak{g}_{\mathfrak{t}_{\mathrm{av}};S,0}) \\
\mathscr{A}_{\mathfrak{t}_{\mathrm{av}}}^{\mathbf{T},+}\mathscr{C}_{N^{\beta_{X}}}^{\mathbf{X},-}(\mathfrak{g}_{\mathfrak{t}_{\mathrm{av}};S,0}) \ &\overset{\bullet}= \ \mathbf{1}_{\mathfrak{t}_{\mathrm{av}}\neq0}\mathfrak{t}_{\mathrm{av}}^{-1}\int_{0}^{\mathfrak{t}_{\mathrm{av}}}\mathscr{C}_{N^{\beta_{X}}}^{\mathbf{X},-}(\mathfrak{g}_{\mathfrak{t}_{\mathrm{av}};S+\mathfrak{r},0})\d\mathfrak{r} \ + \ \mathbf{1}_{\mathfrak{t}_{\mathrm{av}}=0}\mathscr{C}_{N^{\beta_{X}}}^{\mathbf{X},-}(\mathfrak{g}_{\mathfrak{t}_{\mathrm{av}};S,0}).
\end{align}\normalsize\normalsize
\end{subequations}
The above space-time averages only care about what happens/are functionals along the $\eta_{\mathfrak{t}_{\mathrm{av}}}$-exclusion process. Let us define the following pair of events for maximal processes and time-averages. The same comment about maximal processes that we made in Definition \ref{definition:D1B2A} applies here upon replacing $\mathfrak{g}_{S,0}$ of the original $\Omega_{\Z}$-valued process by $\mathfrak{g}_{\mathfrak{t}_{\mathrm{av}};S,0}$ of the $\mathfrak{I}_{\mathfrak{t}_{\mathrm{av}}}$-local process:
\begin{subequations}
\small\begin{align}
\mathbf{1}[\mathscr{G}_{\mathfrak{t}_{\mathrm{av}};\lesssim;S,0}^{\beta_{-}}] \ &\overset{\bullet}= \ \mathbf{1}\left({\sup}_{0\leq\mathfrak{t}\leq\mathfrak{t}_{\mathrm{av}}}\mathfrak{t}\cdot\mathfrak{t}_{\mathrm{av}}^{-1}|\mathscr{A}_{\mathfrak{t}}^{\mathbf{T},+}\mathscr{C}_{N^{\beta_{X}}}^{\mathbf{X},-}(\mathfrak{g}_{\mathfrak{t}_{\mathrm{av}};S,0})|\lesssim N^{-\beta_{-}}\right) \\
\mathbf{1}[\mathscr{G}_{\mathfrak{t}_{\mathrm{av}};\geq;S,0}^{\beta_{+}}]  \ &\overset{\bullet}= \ \mathbf{1}\left({\sup}_{0\leq\mathfrak{t}\leq\mathfrak{t}_{\mathrm{av}}}\mathfrak{t}\cdot\mathfrak{t}_{\mathrm{av}}^{-1}|\mathscr{A}_{\mathfrak{t}}^{\mathbf{T},+}\mathscr{C}_{N^{\beta_{X}}}^{\mathbf{X},-}(\mathfrak{g}_{\mathfrak{t}_{\mathrm{av}};S,0})|\geq N^{-\beta_{+}}\right).
\end{align}\normalsize\normalsize
\end{subequations}
where the exponents $\beta_{-},\beta_{+}\in\R_{\geq0}$ are universal and satisfy $\beta_{-}\leq\beta_{+}$. Recalling $\mathfrak{t}_{N,\e}\geq0$ from the statement of Proposition \ref{prop:D1B2A}, we then define the following two-sided cutoffs of the previous $\mathfrak{I}_{\mathfrak{t}_{\mathrm{av}}}$-local space-time averages:
\begin{subequations}
\small\begin{align}
\mathscr{C}_{\mathfrak{t}_{\mathrm{av}};\beta_{-},\beta_{+}}^{\mathbf{T},+,\mathfrak{t}_{N,\e},1}\mathscr{C}_{N^{\beta_{X}}}^{\mathbf{X},-}(\mathfrak{g}_{\mathfrak{t}_{\mathrm{av}};S,0}) \ &\overset{\bullet}= \ \mathscr{A}_{\mathfrak{t}_{\mathrm{av}}}^{\mathbf{T},+}\mathscr{C}_{N^{\beta_{X}}}^{\mathbf{X},-}(\mathfrak{g}_{\mathfrak{t}_{\mathrm{av}};S+\mathfrak{t}_{N,\e},0})  \cdot \mathbf{1}[\mathscr{G}_{\mathfrak{t}_{\mathrm{av}};\lesssim;S+\mathfrak{t}_{N,\e},0}^{\beta_{-}}]\mathbf{1}[\mathscr{G}_{\mathfrak{t}_{\mathrm{av}};\geq;S,0}^{\beta_{+}}] \\
\mathscr{C}_{\mathfrak{t}_{\mathrm{av}};\beta_{-},\beta_{+}}^{\mathbf{T},+,\mathfrak{t}_{N,\e},2}\mathscr{C}_{N^{\beta_{X}}}^{\mathbf{X},-}(\mathfrak{g}_{\mathfrak{t}_{\mathrm{av}};S,0}) \ &\overset{\bullet}= \  \mathscr{A}_{\mathfrak{t}_{\mathrm{av}}}^{\mathbf{T},+}\mathscr{C}_{N^{\beta_{X}}}^{\mathbf{X},-}(\mathfrak{g}_{\mathfrak{t}_{\mathrm{av}};S,0})  \cdot \mathbf{1}[\mathscr{G}_{\mathfrak{t}_{\mathrm{av}};\lesssim;S,0}^{\beta_{-}}]\mathbf{1}[\mathscr{G}_{\mathfrak{t}_{\mathrm{av}};\geq;S+\mathfrak{t}_{N,\e},0}^{\beta_{+}}].
\end{align}\normalsize\normalsize
\end{subequations}
The clarifying remarks we made for cutoffs of space-time averages in Definition \ref{definition:D1B2A} hold for the above $\mathfrak{I}_{\mathfrak{t}_{\mathrm{av}}}$-local averages.
\end{itemize}
\end{definition}
By speed-of-propagation estimates and a coupling very similar to that used in the proof of Lemma \ref{lemma:EProdSoP} we get the following replacement of mesoscopic dynamic functionals of the original $\Omega_{\Z}$-valued exclusion process with their local versions.
\begin{lemma}\label{lemma:SoP}
 We have the following for $C \in \R_{>0}$ arbitrarily large but universal, $\mathfrak{i}\in\{1,2\}$, and any deterministic $\mathfrak{t}_{\mathrm{av}} \lesssim N^{10}$:
\small\begin{align}
\E_{\eta_{0}}^{\mathrm{path}}|\mathscr{A}_{\mathfrak{t}_{\mathrm{av}}}^{\mathbf{T},+}\mathscr{C}_{N^{\beta_{X}}}^{\mathbf{X},-}(\mathfrak{g}_{0,0})|^{2} \ &= \ \E_{\Pi_{\mathfrak{I}_{\mathfrak{t}_{\mathrm{av}}}}\eta_{0}}^{\mathrm{loc}}|\mathscr{A}_{\mathfrak{t}_{\mathrm{av}}}^{\mathbf{T},+}\mathscr{C}_{N^{\beta_{X}}}^{\mathbf{X},-}(\mathfrak{g}_{\mathfrak{t}_{\mathrm{av}};0,0})|^{2} \ + \ \mathscr{O}_{C}(N^{-C}) \label{eq:SoP1} \\
\E_{\eta_{0}}^{\mathrm{path}}|\mathscr{C}_{\mathfrak{t}_{\mathrm{av}};\beta_{-},\beta_{+}}^{\mathbf{T},+,\mathfrak{t}_{N,\e},\mathfrak{i}}\mathscr{C}_{N^{\beta_{X}}}^{\mathbf{X},-}(\mathfrak{g}_{0,0})| \ &= \ \E_{\Pi_{\mathfrak{I}_{\mathfrak{t}_{\mathrm{av}}}}\eta_{0}}^{\mathrm{loc}}|\mathscr{C}_{\mathfrak{t}_{\mathrm{av}};\beta_{-},\beta_{+}}^{\mathbf{T},+,\mathfrak{t}_{N,\e},\mathfrak{i}}\mathscr{C}_{N^{\beta_{X}}}^{\mathbf{X},-}(\mathfrak{g}_{\mathfrak{t}_{\mathrm{av}};0,0})| \ + \ \mathscr{O}_{C}(N^{-C}). \label{eq:SoP2}
\end{align}\normalsize\normalsize
%
\begin{itemize}[leftmargin=*]
\item The expectation $\E_{\eta_{0}}^{\mathrm{path}}$ is with respect to the path-space measure induced by the $\mathfrak{S}^{N,!!}$-dynamic on $\Omega_{\Z}$ with initial spins $\eta_{0}$.
\item The expectation $\E_{\Pi_{\mathfrak{I}_{\mathfrak{t}_{\mathrm{av}}}}\eta_{0}}^{\mathrm{loc}}$ is with respect to the path-space measure of the $\mathfrak{I}_{\mathfrak{t}_{\mathrm{av}}}$-process on $\Omega_{\mathfrak{I}_{\mathfrak{t}_{\mathrm{av}}}}$ with initial spins $\Pi_{\mathfrak{I}_{\mathfrak{t}_{\mathrm{av}}}}\eta_{0}$.
\item The functionals inside expectations depend only on a finite time-interval, but the two expectations are defined for infinite time.
\end{itemize}
\end{lemma}
We take advantage of localization via local equilibrium Lemma \ref{lemma:LE} and equilibrium calculations in Lemmas \ref{lemma:KV} and \ref{lemma:SpectralH-1}.
\begin{lemma}\label{lemma:D1B2A3}
 With a universal implied constant, we have the following estimate in which $\e_{X}=\max_{i}\e_{X,i}$:
\small\begin{align}
N^{1+\e_{\mathrm{cpt}}+\frac12\e-\frac12\beta_{-}}\mathfrak{t}_{\mathfrak{f}} \E^{\mu_{0,\Z}}\bar{\mathfrak{f}}_{\mathfrak{t}_{\mathfrak{s}},\mathfrak{t}_{\mathfrak{f}},\mathbb{T}_{N}}^{\mathfrak{I}_{\mathfrak{t}_{\mathrm{av}}}} \E_{\Pi_{\mathfrak{I}_{\mathfrak{t}_{\mathrm{av}}}}\eta_{0}}^{\mathrm{loc}}|\mathscr{C}_{\mathfrak{t}_{\mathrm{av}};\beta_{-},\beta_{+}}^{\mathbf{T},+,\mathfrak{t}_{N,\e},\mathfrak{i}}\mathscr{C}_{N^{\beta_{X}}}^{\mathbf{X},-}(\mathfrak{g}_{\mathfrak{t}_{\mathrm{av}};0,0})| \ &\lesssim \ N^{-\frac34\beta_{-}+10\e_{X}}+N^{-\frac12+10\e_{X}}+N^{-\beta_{\mathrm{univ}}}\mathfrak{t}_{\mathfrak{f}}.
\end{align}\normalsize\normalsize
\end{lemma}
\begin{proof}[Proof of \emph{Proposition \ref{prop:D1B2A}}]
We use Lemma \ref{lemma:D1B2A2} to get the first line and the second bound in Lemma \ref{lemma:SoP} to get the second line:
\small\begin{align}
 \E\|\bar{\mathbf{H}}_{T,x}^{N}(N^{\frac12}|\mathscr{C}_{\mathfrak{t}_{\mathrm{av}};\beta_{-},\beta_{+}}^{\mathbf{T},+,\mathfrak{t}_{N,\e},\mathfrak{i}}\mathscr{C}_{N^{\beta_{X}}}^{\mathbf{X},-}(\mathfrak{g}_{S+\mathfrak{t}_{\mathfrak{s}},y})|)\|_{\mathfrak{t}_{\mathfrak{f}};\mathbb{T}_{N}} \ &\lesssim \ N^{1+\e_{\mathrm{cpt}}+\frac12\e-\frac12\beta_{-}}\mathfrak{t}_{\mathfrak{f}} \E^{\mu_{0,\Z}}\bar{\mathfrak{f}}_{\mathfrak{t}_{\mathfrak{s}},\mathfrak{t}_{\mathfrak{f}},\mathbb{T}_{N}} \E_{\eta_{0}}^{\mathrm{path}}|\mathscr{C}_{\mathfrak{t}_{\mathrm{av}};\beta_{-},\beta_{+}}^{\mathbf{T},+,\mathfrak{t}_{N,\e},\mathfrak{i}}\mathscr{C}_{N^{\beta_{X}}}^{\mathbf{X},-}(\mathfrak{g}_{0,0})| \ + \ N^{-\e} \nonumber\\
 &\lesssim \ N^{1+\e_{\mathrm{cpt}}+\frac12\e-\frac12\beta_{-}}\mathfrak{t}_{\mathfrak{f}} \E^{\mu_{0,\Z}}\bar{\mathfrak{f}}_{\mathfrak{t}_{\mathfrak{s}},\mathfrak{t}_{\mathfrak{f}},\mathbb{T}_{N}}\E_{\Pi_{\mathfrak{I}_{\mathfrak{t}_{\mathrm{av}}}}\eta_{0}}^{\mathrm{loc}}|\mathscr{C}_{\mathfrak{t}_{\mathrm{av}};\beta_{-},\beta_{+}}^{\mathbf{T},+,\mathfrak{t}_{N,\e},\mathfrak{i}}\mathscr{C}_{N^{\beta_{X}}}^{\mathbf{X},-}(\mathfrak{g}_{\mathfrak{t}_{\mathrm{av}};0,0})| \ + \ N^{-\e} \nonumber \\
 &= \ N^{1+\e_{\mathrm{cpt}}+\frac12\e-\frac12\beta_{-}}\mathfrak{t}_{\mathfrak{f}} \E^{\mu_{0,\Z}}\bar{\mathfrak{f}}_{\mathfrak{t}_{\mathfrak{s}},\mathfrak{t}_{\mathfrak{f}},\mathbb{T}_{N}}^{\mathfrak{I}_{\mathfrak{t}_{\mathrm{av}}}} \E_{\Pi_{\mathfrak{I}_{\mathfrak{t}_{\mathrm{av}}}}\eta_{0}}^{\mathrm{loc}}|\mathscr{C}_{\mathfrak{t}_{\mathrm{av}};\beta_{-},\beta_{+}}^{\mathbf{T},+,\mathfrak{t}_{N,\e},\mathfrak{i}}\mathscr{C}_{N^{\beta_{X}}}^{\mathbf{X},-}(\mathfrak{g}_{\mathfrak{t}_{\mathrm{av}};0,0})| \ + \ N^{-\e}. \label{eq:D1B2A1}
\end{align}\normalsize\normalsize
The last line above follows by realizing that after the replacement-by-local expectation via Lemma \ref{lemma:SoP}, the remaining inner expectation depends only on the particle configuration in the subset $\mathfrak{I}_{\mathfrak{t}_{\mathrm{av}}}\subseteq\Z$. In particular, we may project the density $\bar{\mathfrak{f}}$ from the line before onto this block in the outer expectation. It then suffices to apply Lemma \ref{lemma:D1B2A3} to the first term in \eqref{eq:D1B2A1}.
\end{proof}
\subsection{Proof of Proposition \ref{prop:D1B2B}}
Proof of Proposition \ref{prop:D1B2B} follows by almost identical considerations as the proof of Proposition \ref{prop:D1B2A}. We list here a few differences. First we note Proposition \ref{prop:D1B2B} is concerned with order $N^{\beta_{X}}$ terms as opposed to order $N^{1/2}$ terms in Proposition \ref{prop:D1B2A}. This changes the relevant power-counting for the proof of Proposition \ref{prop:D1B2B} ``in our favor". Second, the $\wt{\mathfrak{g}}^{\mathfrak{l}}$-terms in Proposition \ref{prop:D1B2B} only have time-averages with cutoff; they are not spatially-averaged via $\mathscr{A}^{\mathbf{X},-}$ operators. This works ``against our favor" as we cannot leverage improved a priori estimates that we depended on in the proof of Proposition \ref{prop:D1B2A}. These two changes cooperate well enough for our power-counting to give us the upper bound in the proposed estimate. We clarify that in our local equilibrium calculations for time-averages, the support of $\wt{\mathfrak{g}}^{\mathfrak{l}}$-terms grows too fast to only apply Lemma \ref{lemma:SpectralH-1} to estimate Sobolev norms. We also require Lemma \ref{lemma:H-1SpectralPGF} to reduce Sobolev estimates of $\wt{\mathfrak{g}}^{\mathfrak{l}}$ to those of a pseudo-gradient factor with uniformly bounded support. We now inherit notation of Proposition \ref{prop:D1B2B} for this subsection.

Similar to the proof for Proposition \ref{prop:D1B2A}, we will present preliminary ingredients, use them to get Proposition \ref{prop:D1B2B}, and then defer the proof of the preliminary ingredients to the last subsection. The first preliminary is an analog of gymnastics in Lemma \ref{lemma:D1B2A1} and Lemma \ref{lemma:D1B2A2} with an almost identical proof. Note the below recovers Lemma \ref{lemma:D1B2A1} if $\beta_{X}$ were actually equal to $\frac12$.
\begin{lemma}\label{lemma:D1B2B1}
 Provided the setting of \emph{Lemma \ref{lemma:D1B2A1}}, for any $\mathfrak{i}\in\{1,2\}$, we have deterministic and expectation estimates: 
\small\begin{align}
\|\bar{\mathbf{H}}_{T,x}^{N}(N^{\beta_{X}}|\varphi_{S+\mathfrak{t}_{\mathfrak{s}},y}|)\|_{\mathfrak{t}_{\mathfrak{f}};\mathbb{T}_{N}} \ &\lesssim \ N^{\frac32\beta_{X}+\frac14+\e_{\mathrm{cpt}}+\frac12\e-\frac12\beta_{-}}\int_{0}^{\mathfrak{t}_{\mathfrak{f}}}\wt{\sum}_{y\in\mathbb{T}_{N}}|\varphi_{S+\mathfrak{t}_{\mathfrak{s}},y}| \ \d S \ + \ N^{-\e} \\
\E\|\bar{\mathbf{H}}_{T,x}^{N}(N^{\beta_{X}}|\mathscr{C}_{\mathfrak{t}_{\mathrm{av}};\beta_{-},\beta_{+}}^{\mathbf{T},+,\mathfrak{t}_{N,\e},\mathfrak{i}}(\wt{\mathfrak{g}}_{S+\mathfrak{t}_{\mathfrak{s}},y}^{\mathfrak{l}})|)\|_{\mathfrak{t}_{\mathfrak{f}};\mathbb{T}_{N}} \ &\lesssim \ N^{\frac32\beta_{X}+\frac14+\e_{\mathrm{cpt}}+\frac12\e-\frac12\beta_{-}}\mathfrak{t}_{\mathfrak{f}} \E^{\mu_{0,\Z}}\bar{\mathfrak{f}}_{\mathfrak{t}_{\mathfrak{s}},\mathfrak{t}_{\mathfrak{f}},\mathbb{T}_{N}} \E_{\eta_{0}}^{\mathrm{path}}|\mathscr{C}_{\mathfrak{t}_{\mathrm{av}};\beta_{-},\beta_{+}}^{\mathbf{T},+,\mathfrak{t}_{N,\e},\mathfrak{i}}(\wt{\mathfrak{g}}_{0,0}^{\mathfrak{l}})| \ + \ N^{-\e}.
\end{align}\normalsize\normalsize
The expectation $\E_{\eta_{0}}^{\mathrm{path}}$ is with respect to the path-space measure induced by the original $\Omega_{\Z}$-valued exclusion process with initial configuration $\eta_{0}\in\Omega_{\Z}$ that is then being averaged via $\E^{\mu_{0,\Z}}$ against the space-time averaged density $\bar{\mathfrak{f}}_{\mathfrak{t}_{\mathfrak{s}},\mathfrak{t}_{\mathfrak{f}},\mathbb{T}_{N}}$.
\end{lemma}
Parallel to the proof of Proposition \ref{prop:D1B2A}, we will now localize the inner expectation on the RHS in the second estimate in Lemma \ref{lemma:D1B2B1} to reduce our estimates for it to local equilibrium calculations. The localization mechanism follows similar constructions and procedure that we used in Lemma \ref{lemma:SoP}. This starts with the following local versions of Definition \ref{definition:D1B1B} and Definition \ref{definition:D1B2B}.
\begin{definition}\label{definition:averagesBlocal}
We recall the $\mathfrak{I}_{\mathfrak{t}_{\mathrm{av}}}$ block in Definition \ref{definition:tAvBlock} and the $\eta_{\mathfrak{t}_{\mathrm{av}};\bullet}$-process of the first paragraph of Definition \ref{definition:averagesAlocal}.
\begin{itemize}[leftmargin=*]
\item Define $\wt{\mathfrak{g}}_{\mathfrak{t}_{\mathrm{av}};S,0}^{\mathfrak{l}} \overset{\bullet}= \wt{\mathfrak{g}}_{0,0}^{\mathfrak{l}}(\eta_{\mathfrak{t}_{\mathrm{av}};S})$ with pseudo-gradient factor as in Proposition \ref{prop:Duhamel} but evaluated along the $\mathfrak{I}_{\mathfrak{t}_{\mathrm{av}}}$-local process.
\item We now define the following $\mathfrak{I}_{\mathfrak{t}_{\mathrm{av}}}$-local time-average that only sees data of the $\eta_{\mathfrak{t}_{\mathrm{av}};\bullet}$-exclusion process:
\small\begin{align}
\mathscr{A}_{\mathfrak{t}_{\mathrm{av}}}^{\mathbf{T},+}(\wt{\mathfrak{g}}_{\mathfrak{t}_{\mathrm{av}};S,0}^{\mathfrak{l}}) \ &\overset{\bullet}= \ \mathbf{1}_{\mathfrak{t}_{\mathrm{av}}\neq0}\mathfrak{t}_{\mathrm{av}}^{-1}\int_{0}^{\mathfrak{t}_{\mathrm{av}}} \wt{\mathfrak{g}}_{\mathfrak{t}_{\mathrm{av}};S+\mathfrak{r},0}^{\mathfrak{l}}\d\mathfrak{r} \ + \ \mathbf{1}_{\mathfrak{t}_{\mathrm{av}}=0}\wt{\mathfrak{g}}_{\mathfrak{t}_{\mathrm{av}};S,0}^{\mathfrak{l}}.
\end{align}\normalsize\normalsize
\item Let us define the following events for maximal processes and time-averages. We will again adopt the same comment about maximal processes we gave in Definition \ref{definition:D1B2B} upon the replacement of the functional $\wt{\mathfrak{g}}^{\mathfrak{l}}$ of the original $\Omega_{\Z}$-valued exclusion process by the functional $\wt{\mathfrak{g}}^{\mathfrak{l}}_{\mathfrak{t}_{\mathrm{av}};S,0}$ of the $\mathfrak{I}_{\mathfrak{t}_{\mathrm{av}}}$-local exclusion process:
\begin{subequations}
\small\begin{align}
\mathbf{1}[\mathscr{F}_{\mathfrak{t}_{\mathrm{av}};\lesssim;S,0}^{\beta_{-}}] \ &\overset{\bullet}= \ \mathbf{1}\left({\sup}_{0\leq\mathfrak{t}\leq\mathfrak{t}_{\mathrm{av}}}\mathfrak{t}\cdot\mathfrak{t}_{\mathrm{av}}^{-1}|\mathscr{A}_{\mathfrak{t}}^{\mathbf{T},+}(\wt{\mathfrak{g}}_{\mathfrak{t}_{\mathrm{av}};S,0}^{\mathfrak{l}})|\lesssim N^{-\beta_{-}}\right) \\
\mathbf{1}[\mathscr{F}_{\mathfrak{t}_{\mathrm{av}};\geq;S,0}^{\beta_{+}}] \ &\overset{\bullet}= \  \mathbf{1}\left({\sup}_{0\leq\mathfrak{t}\leq\mathfrak{t}_{\mathrm{av}}}\mathfrak{t}\cdot\mathfrak{t}_{\mathrm{av}}^{-1}|\mathscr{A}_{\mathfrak{t}}^{\mathbf{T},+}(\wt{\mathfrak{g}}_{\mathfrak{t}_{\mathrm{av}};S,0}^{\mathfrak{l}})|\geq N^{-\beta_{+}}\right).
\end{align}\normalsize\normalsize
\end{subequations}
Above the exponents $\beta_{-},\beta_{+}\in\R_{\geq0}$ are deterministic, universal, and assumed to satisfy $\beta_{-}\leq\beta_{+}$. Recalling $\mathfrak{t}_{N,\e}\in\R_{\geq0}$ from the statement of Proposition \ref{prop:D1B2B}, we define the following two-sided cutoffs of the previous $\mathfrak{I}_{\mathfrak{t}_{\mathrm{av}}}$-local time-average:
\begin{subequations}
\small\begin{align}
\mathscr{C}_{\mathfrak{t}_{\mathrm{av}};\beta_{-},\beta_{+}}^{\mathbf{T},+,\mathfrak{t}_{N,\e},1}(\wt{\mathfrak{g}}_{\mathfrak{t}_{\mathrm{av}};S,0}^{\mathfrak{l}}) \ &\overset{\bullet}= \ \mathscr{A}_{\mathfrak{t}_{\mathrm{av}}}^{\mathbf{T},+}(\wt{\mathfrak{g}}_{\mathfrak{t}_{\mathrm{av}};S+\mathfrak{t}_{N,\e},0}^{\mathfrak{l}})\cdot\mathbf{1}[\mathscr{F}_{\mathfrak{t}_{\mathrm{av}};\lesssim;S+\mathfrak{t}_{N,\e},0}^{\beta_{-}}]\mathbf{1}[\mathscr{F}_{\mathfrak{t}_{\mathrm{av}};\geq;S,0}^{\beta_{+}}] \\
\mathscr{C}_{\mathfrak{t}_{\mathrm{av}};\beta_{-},\beta_{+}}^{\mathbf{T},+,\mathfrak{t}_{N,\e},2}(\wt{\mathfrak{g}}_{\mathfrak{t}_{\mathrm{av}};S,0}^{\mathfrak{l}}) \ &\overset{\bullet}= \ \mathscr{A}_{\mathfrak{t}_{\mathrm{av}}}^{\mathbf{T},+}(\wt{\mathfrak{g}}_{\mathfrak{t}_{\mathrm{av}};S,0}^{\mathfrak{l}})\cdot\mathbf{1}[\mathscr{F}_{\mathfrak{t}_{\mathrm{av}};\lesssim;S,0}^{\beta_{-}}]\mathbf{1}[\mathscr{F}_{\mathfrak{t}_{\mathrm{av}};\geq;S+\mathfrak{t}_{N,\e},0}^{\beta_{+}}].
\end{align}\normalsize\normalsize
\end{subequations}
\end{itemize}
\end{definition}
The following preliminary ingredient to the proof of Proposition \ref{prop:D1B2B} is the parallel to Lemma \ref{lemma:SoP} but for time-averages of $\wt{\mathfrak{g}}^{\mathfrak{l}}$ and their local versions. Its proof is exactly the same as proof of Lemma \ref{lemma:SoP} because we are still comparing a time-average on the time-scale $\mathfrak{t}_{\mathrm{av}}\in\R_{\geq0}$ of a functional with support of size $N^{\beta_{X}}$ times universal factors to its $\mathfrak{I}_{\mathfrak{t}_{\mathrm{av}}}$-local version. We write the following localization-via-coupling bound precisely but it is just Lemma \ref{lemma:SoP} upon replacing $\mathscr{C}^{\mathbf{X},-}(\mathfrak{g})$-content by $\wt{\mathfrak{g}}^{\mathfrak{l}}$-content.
\begin{lemma}\label{lemma:SoP2}
 We have the following for $C \in \R_{>0}$ arbitrarily large but universal, $\mathfrak{i}\in\{1,2\}$, and any deterministic $\mathfrak{t}_{\mathrm{av}} \lesssim N^{10}$:
\small\begin{align}
\E_{\eta_{0}}^{\mathrm{path}}|\mathscr{A}_{\mathfrak{t}_{\mathrm{av}}}^{\mathbf{T},+}(\wt{\mathfrak{g}}_{0,0}^{\mathfrak{l}})|^{2} \ &= \ \E_{\Pi_{\mathfrak{I}_{\mathfrak{t}_{\mathrm{av}}}}\eta_{0}}^{\mathrm{loc}}|\mathscr{A}_{\mathfrak{t}_{\mathrm{av}}}^{\mathbf{T},+}(\wt{\mathfrak{g}}_{\mathfrak{t}_{\mathrm{av}};0,0}^{\mathfrak{l}})|^{2} \ + \ \mathscr{O}_{C}(N^{-C}) \label{eq:SoP21} \\
\E_{\eta_{0}}^{\mathrm{path}}|\mathscr{C}_{\mathfrak{t}_{\mathrm{av}};\beta_{-},\beta_{+}}^{\mathbf{T},+,\mathfrak{t}_{N,\e},\mathfrak{i}}(\wt{\mathfrak{g}}_{0,0}^{\mathfrak{l}})| \ &= \ \E_{\Pi_{\mathfrak{I}_{\mathfrak{t}_{\mathrm{av}}}}\eta_{0}}^{\mathrm{loc}}|\mathscr{C}_{\mathfrak{t}_{\mathrm{av}};\beta_{-},\beta_{+}}^{\mathbf{T},+,\mathfrak{t}_{N,\e},\mathfrak{i}}(\wt{\mathfrak{g}}_{\mathfrak{t}_{\mathrm{av}};0,0}^{\mathfrak{l}})| \ + \ \mathscr{O}_{C}(N^{-C}). \label{eq:SoP22}
\end{align}\normalsize\normalsize
%
\begin{itemize}[leftmargin=*]
\item The expectation $\E_{\eta_{0}}^{\mathrm{path}}$ is with respect to the path-space measure induced by the $\mathfrak{S}^{N,!!}$-dynamic on $\Omega_{\Z}$ with initial spins $\eta_{0}$.
\item The expectation $\E_{\Pi_{\mathfrak{I}_{\mathfrak{t}_{\mathrm{av}}}}\eta_{0}}^{\mathrm{loc}}$ is with respect to the path-space measure of the $\mathfrak{I}_{\mathfrak{t}_{\mathrm{av}}}$-process on $\Omega_{\mathfrak{I}_{\mathfrak{t}_{\mathrm{av}}}}$ with initial spins $\Pi_{\mathfrak{I}_{\mathfrak{t}_{\mathrm{av}}}}\eta_{0}$.
\item The functionals inside expectations depend only on a finite time-interval, but the two expectations are defined for infinite time.
\end{itemize}
\end{lemma}
Parallel to Lemma \ref{lemma:D1B2A3} we used in the proof of Proposition \ref{prop:D1B2A}, we take advantage of localization in Lemma \ref{lemma:SoP2} with the following which ultimately follows by reduction to local equilibrium via Lemma \ref{lemma:LE} and local equilibrium calculations.
\begin{lemma}\label{lemma:D1B2B3}
 With a universal implied constant, we have the following uniformly in $\mathfrak{l}\in\llbracket1,N^{\beta_{X}}\rrbracket$ in which $\e_{X}=\max_{i}\e_{X,i}$:
\small\begin{align}
N^{\frac32\beta_{X}+\frac14+\e_{\mathrm{cpt}}+\frac12\e-\frac12\beta_{-}}\mathfrak{t}_{\mathfrak{f}} \E^{\mu_{0,\Z}}\bar{\mathfrak{f}}_{\mathfrak{t}_{\mathfrak{s}},\mathfrak{t}_{\mathfrak{f}},\mathbb{T}_{N}}^{\mathfrak{I}_{\mathfrak{t}_{\mathrm{av}}}}\E_{\Pi_{\mathfrak{I}_{\mathfrak{t}_{\mathrm{av}}}}\eta_{0}}^{\mathrm{loc}}|\mathscr{C}_{\mathfrak{t}_{\mathrm{av}};\beta_{-},\beta_{+}}^{\mathbf{T},+,\mathfrak{t}_{N,\e},\mathfrak{i}}(\wt{\mathfrak{g}}_{\mathfrak{t}_{\mathrm{av}};0,0}^{\mathfrak{l}})| \ &\lesssim \ N^{-\frac18-\frac34\beta_{-}+10\e_{X}} + N^{-\frac12+10\e_{X}} + \mathfrak{t}_{\mathfrak{f}}N^{-\beta_{\mathrm{univ}}}.
\end{align}\normalsize\normalsize
\end{lemma}
\begin{proof}[Proof of \emph{Proposition \ref{prop:D1B2B}}]
Following the proof of Proposition \ref{prop:D1B2A}, we use Lemma \ref{lemma:D1B2B1} and the second bound in Lemma \ref{lemma:SoP2}:
\small\begin{align}
\E\|\bar{\mathbf{H}}_{T,x}^{N}(N^{\frac12}|\mathscr{C}_{\mathfrak{t}_{\mathrm{av}};\beta_{-},\beta_{+}}^{\mathbf{T},+,\mathfrak{t}_{N,\e},\mathfrak{i}}(\wt{\mathfrak{g}}_{S+\mathfrak{t}_{\mathfrak{s}},y}^{\mathfrak{l}})|)\|_{\mathfrak{t}_{\mathfrak{f}};\mathbb{T}_{N}} \ &\lesssim \ N^{\frac32\beta_{X}+\frac14+\e_{\mathrm{cpt}}+\frac12\e-\frac12\beta_{-}}\mathfrak{t}_{\mathfrak{f}} \E^{\mu_{0,\Z}}\bar{\mathfrak{f}}_{\mathfrak{t}_{\mathfrak{s}},\mathfrak{t}_{\mathfrak{f}},\mathbb{T}_{N}} \E_{\eta_{0}}^{\mathrm{path}}|\mathscr{C}_{\mathfrak{t}_{\mathrm{av}};\beta_{-},\beta_{+}}^{\mathbf{T},+,\mathfrak{t}_{N,\e},\mathfrak{i}}(\wt{\mathfrak{g}}_{0,0}^{\mathfrak{l}})| \ + \ N^{-\e} \nonumber \\
&\lesssim \ N^{\frac32\beta_{X}+\frac14+\e_{\mathrm{cpt}}+\frac12\e-\frac12\beta_{-}}\mathfrak{t}_{\mathfrak{f}} \E^{\mu_{0,\Z}}\bar{\mathfrak{f}}_{\mathfrak{t}_{\mathfrak{s}},\mathfrak{t}_{\mathfrak{f}},\mathbb{T}_{N}}^{\mathfrak{I}_{\mathfrak{t}_{\mathrm{av}}}}\E_{\Pi_{\mathfrak{I}_{\mathfrak{t}_{\mathrm{av}}}}\eta_{0}}^{\mathrm{loc}}|\mathscr{C}_{\mathfrak{t}_{\mathrm{av}};\beta_{-},\beta_{+}}^{\mathbf{T},+,\mathfrak{t}_{N,\e},\mathfrak{i}}(\wt{\mathfrak{g}}_{\mathfrak{t}_{\mathrm{av}};0,0}^{\mathfrak{l}})| \ + \ N^{-\e}. \label{eq:D1B2B1}
\end{align}\normalsize\normalsize
We clarify \eqref{eq:D1B2B1} also requires the observation that the inner expectation therein depends only on the initial particle configuration on the block $\mathfrak{I}_{\mathfrak{t}_{\mathrm{av}}}$. This allows us to project the probability density $\bar{\mathfrak{f}}$ in \eqref{eq:D1B2B1} on its marginal distribution on the block $\mathfrak{I}_{\mathfrak{t}_{\mathrm{av}}}$. It now suffices to apply the estimate in Lemma \ref{lemma:D1B2B3} to the first term in \eqref{eq:D1B2B1} to finish the proof.
\end{proof}
\subsection{Proof of Proposition \ref{prop:D1B1A}}
The proof of Proposition \ref{prop:D1B1A} will be easier because of the small factor $\mathfrak{t}_{\mathrm{av}}^{1/4}$ on the LHS of the proposed bound. However, it follows the same general scheme of the proof of Proposition \ref{prop:D1B2A}, for example. Precisely, we use technical gymnastics to remove the singularity in the heat kernel in the heat operator on the LHS of the proposed estimate in Proposition \ref{prop:D1B1A}. We then localize mesoscopic space-time averages, apply local equilibrium via Lemma \ref{lemma:LE}, and perform equilibrium calculations via Lemmas \ref{lemma:KV} and \ref{lemma:SpectralH-1}. Let us now inherit notation in the statement of Proposition \ref{prop:D1B1A}.

The following first ingredient is slightly different than its analog Lemma \ref{lemma:D1B2A1}. For example, it squares the test function.
\begin{lemma}\label{lemma:D1B1A1}
 Take any $\varphi:\R_{\geq0}\times\mathbb{T}_{N}\to\R$. We have a deterministic and expectation bound with universal implied constants:
\small\begin{align}
\left(\|\bar{\mathbf{H}}_{T,x}^{N}(N^{\frac12}|\varphi_{S,y}|)\|_{1;\mathbb{T}_{N}} \right)^{2} \ &\lesssim \ N^{\frac54+\e_{\mathrm{cpt}}}\int_{0}^{1}\wt{\sum}_{y\in\mathbb{T}_{N}}|\varphi_{S,y}|^{2} \ \d S \\
\left(\E\|\bar{\mathbf{H}}_{T,x}^{N}(N^{\frac12}|\mathscr{A}_{\mathfrak{t}_{\mathrm{av}}}^{\mathbf{T},+}\mathscr{C}_{N^{\beta_{X}}}^{\mathbf{X},-}(\mathfrak{g}_{S,y})|)\|_{1;\mathbb{T}_{N}}\right)^{2} \ &\lesssim \ N^{\frac54+\e_{\mathrm{cpt}}} \E^{\mu_{0,\Z}}\bar{\mathfrak{f}}_{1,\mathbb{T}_{N}} \E_{\eta_{0}}^{\mathrm{path}}|\mathscr{A}_{\mathfrak{t}_{\mathrm{av}}}^{\mathbf{T},+}\mathscr{C}_{N^{\beta_{X}}}^{\mathbf{X},-}(\mathfrak{g}_{0,0})|^{2}.
\end{align}\normalsize\normalsize
\end{lemma}
Following the proof of Proposition \ref{prop:D1B2A}, we localize the path-space expectation on the RHS of the second estimate in Lemma \ref{lemma:D1B1A1}. The error is controlled already in Lemma \ref{lemma:SoP}. Thus, parallel to Lemma \ref{lemma:D1B2A3}, we take advantage of localization below.
\begin{lemma}\label{lemma:D1B1A3}
 Recall the $\mathfrak{I}_{\mathfrak{t}_{\mathrm{av}}}$ block in \emph{Definition \ref{definition:tAvBlock}}. Provided any $\e\in\R_{>0}$, we have
\small\begin{align}
N^{\frac54+\e_{\mathrm{cpt}}} \mathfrak{t}_{\mathrm{av}}^{\frac12}\E^{\mu_{0,\Z}}\bar{\mathfrak{f}}_{1,\mathbb{T}_{N}} ^{\mathfrak{I}_{\mathfrak{t}_{\mathrm{av}}}}\E_{\Pi_{\mathfrak{I}_{\mathfrak{t}_{\mathrm{av}}}}\eta_{0}}^{\mathrm{loc}}|\mathscr{A}_{\mathfrak{t}_{\mathrm{av}}}^{\mathbf{T},+}\mathscr{C}_{N^{\beta_{X}}}^{\mathbf{X},-}(\mathfrak{g}_{\mathfrak{t}_{\mathrm{av}};0,0})|^{2} \ &\lesssim_{\e} \ N^{\frac54+\e_{\mathrm{cpt}}}\mathfrak{t}_{\mathrm{av}}^{\frac12}N^{-\frac34-\beta_{X}+\e_{X,1}+\e}|\mathbb{T}_{N}|^{-1}|\mathfrak{I}_{\mathfrak{t}_{\mathrm{av}}}|^{3} \ + \ N^{\frac54+\e_{\mathrm{cpt}}}N^{-2-\beta_{X}}\mathfrak{t}_{\mathrm{av}}^{-\frac12}. \nonumber
\end{align}\normalsize
\end{lemma}
The $\mathfrak{t}_{\mathrm{av}}^{1/2}$-factor on the LHS comes by squaring the ``time-regularity factor" $\mathfrak{t}_{\mathrm{av}}^{1/4}$ in the proposed bound of Proposition \ref{prop:D1B1A}.
\begin{proof}[Proof of \emph{Proposition \ref{prop:D1B1A}}]
Following the proof of Proposition \ref{prop:D1B2A}, we employ the second estimate in Lemma \ref{lemma:D1B1A1} along with the first estimate in Lemma \ref{lemma:SoP} that we introduced in the proof of Proposition \ref{prop:D1B2A}. As $\mathfrak{t}_{\mathrm{av}}\in\R_{\geq0}$ is uniformly bounded, we get
\small\begin{align}
\left(\mathfrak{t}_{\mathrm{av}}^{\frac14}\E\|\bar{\mathbf{H}}_{T,x}^{N}(N^{\frac12}|\mathscr{A}_{\mathfrak{t}_{\mathrm{av}}}^{\mathbf{T},+}\mathscr{C}_{N^{\beta_{X}}}^{\mathbf{X},-}(\mathfrak{g}_{S,y})|)\|_{1;\mathbb{T}_{N}}\right)^{2} \ &\lesssim \ N^{\frac54+\e_{\mathrm{cpt}}}\mathfrak{t}_{\mathrm{av}}^{\frac12}\E^{\mu_{0,\Z}}\bar{\mathfrak{f}}_{1,\mathbb{T}_{N}} \E_{\eta_{0}}^{\mathrm{path}}|\mathscr{A}_{\mathfrak{t}_{\mathrm{av}}}^{\mathbf{T},+}\mathscr{C}_{N^{\beta_{X}}}^{\mathbf{X},-}(\mathfrak{g}_{0,0})|^{2} \\
&\lesssim \ N^{\frac54+\e_{\mathrm{cpt}}}\mathfrak{t}_{\mathrm{av}}^{\frac12}\E^{\mu_{0,\Z}}\bar{\mathfrak{f}}_{1,\mathbb{T}_{N}} ^{\mathfrak{I}_{\mathfrak{t}_{\mathrm{av}}}}\E_{\Pi_{\mathfrak{I}_{\mathfrak{t}_{\mathrm{av}}}}\eta_{0}}^{\mathrm{loc}}|\mathscr{A}_{\mathfrak{t}_{\mathrm{av}}}^{\mathbf{T},+}\mathscr{C}_{N^{\beta_{X}}}^{\mathbf{X},-}(\mathfrak{g}_{\mathfrak{t}_{\mathrm{av}};0,0})|^{2} \ + \ N^{-100}. \label{eq:D1B1A1}
\end{align}\normalsize\normalsize
The same clarifying remarks we made after the estimate of \eqref{eq:D1B2A1} from the proof for Proposition \ref{prop:D1B2A} are in order here as well. Thus, it remains to estimate the first expectation from the estimate \eqref{eq:D1B1A1}. We do this by applying Lemma \ref{lemma:D1B1A3} to this first term in \eqref{eq:D1B1A1}. The proof for Proposition \ref{prop:D1B1A} then follows by power-counting upon recalling the constraint $N^{-2}\lesssim\mathfrak{t}_{\mathrm{av}}\lesssim N^{-1}$ as well as the length-scale $|\mathfrak{I}_{\mathfrak{t}_{\mathrm{av}}}| \lesssim_{\e}N^{1+\e}\mathfrak{t}_{\mathrm{av}}^{1/2} + N^{3/2+\e}\mathfrak{t}_{\mathrm{av}}+ N^{\beta_{X}+\e}$ given any $\e\in\R_{>0}$ from Definition \ref{definition:tAvBlock} and $|\mathbb{T}_{N}|\gtrsim N^{5/4+\e_{\mathrm{cpt}}}$ from the beginning of Section \ref{section:Ctify}. This power-counting is ultimately elementary and thus omitted.
\end{proof}
\subsection{Proof of Proposition \ref{prop:D1B1B}}
Let us again inherit all notation and framework of Proposition \ref{prop:D1B1B}. The proof of Proposition \ref{prop:D1B1B} resembles strongly the proof of Proposition \ref{prop:D1B1A}. We apply the following preliminary gymnastics parallel to Lemma \ref{lemma:D1B1A1} to avoid the heat kernel singularity. Again, if $\beta_{X}$ were actually $\frac12$, the first estimate below would recover that of Lemma \ref{lemma:D1B1A1}.
\begin{lemma}\label{lemma:D1B1B1}
 Take any $\varphi:\R_{\geq0}\times\mathbb{T}_{N}\to\R$. We have a deterministic and expectation bound with universal implied constants:
\small\begin{align}
\|\bar{\mathbf{H}}_{T,x}^{N}(N^{\beta_{X}}|\varphi_{S,y}|)\|_{1;\mathbb{T}_{N}} \ &\lesssim \ \left(N^{2\beta_{X}+\frac14+\e_{\mathrm{cpt}}}\int_{0}^{1}\wt{\sum}_{y\in\mathbb{T}_{N}}|\varphi_{S,y}|^{2} \ \d S \right)^{\frac12} \\
\left( \E\|\bar{\mathbf{H}}_{T,x}^{N}(N^{\beta_{X}}|\mathscr{A}_{\mathfrak{t}_{\mathrm{av}}}^{\mathbf{T},+}(\wt{\mathfrak{g}}_{S,y}^{\mathfrak{l}})|)\|_{1;\mathbb{T}_{N}}\right)^{2} \ &\lesssim \ N^{2\beta_{X}+\frac14+\e_{\mathrm{cpt}}} \E^{\mu_{0,\Z}}\bar{\mathfrak{f}}_{1,\mathbb{T}_{N}} \E_{\eta_{0}}^{\mathrm{path}}|\mathscr{A}_{\mathfrak{t}_{\mathrm{av}}}^{\mathbf{T},+}(\wt{\mathfrak{g}}_{0,0}^{\mathfrak{l}})|^{2}.
\end{align}\normalsize\normalsize
\end{lemma}
From Lemma \ref{lemma:D1B1B1} by following the proof of Proposition \ref{prop:D1B2B}, we localize the inner path-space expectation on the RHS of the second bound in Lemma \ref{lemma:D1B1B1}. Similar to the proof of Proposition \ref{prop:D1B1A}, this is actually already done in Lemma \ref{lemma:SoP2}. We now take advantage of localization via the following parallel to Lemma \ref{lemma:D1B2B3} and Lemma \ref{lemma:D1B1A3}.
\begin{lemma}\label{lemma:D1B1B3}
Provided any $\e\in\R_{>0}$, uniformly in $\mathfrak{l}\in\llbracket1,N^{\beta_{X}}\rrbracket$ we have
\small\begin{align}
N^{2\beta_{X}+\frac14+\e_{\mathrm{cpt}}}\mathfrak{t}_{\mathrm{av}}^{\frac12} \E^{\mu_{0,\Z}}\bar{\mathfrak{f}}_{1,\mathbb{T}_{N}}^{\mathfrak{I}_{\mathfrak{t}_{\mathrm{av}}}} \E_{\Pi_{\mathfrak{I}_{\mathfrak{t}_{\mathrm{av}}}}\eta_{0}}^{\mathrm{loc}}|\mathscr{A}_{\mathfrak{t}_{\mathrm{av}}}^{\mathbf{T},+}(\wt{\mathfrak{g}}_{\mathfrak{t}_{\mathrm{av}};0,0}^{\mathfrak{l}})|^{2} \ &\lesssim_{\e} \ N^{2\beta_{X}+\frac14+\e_{\mathrm{cpt}}}\mathfrak{t}_{\mathrm{av}}^{\frac12} N^{-\frac34+\e}|\mathbb{T}_{N}|^{-1}|\mathfrak{I}_{\mathfrak{t}_{\mathrm{av}}}|^{3} \ + \ N^{2\beta_{X}+\frac14+\e_{\mathrm{cpt}}}N^{-2}\mathfrak{t}_{\mathrm{av}}^{-\frac12}. \nonumber
\end{align}\normalsize
\end{lemma}
\begin{proof}[Proof of \emph{Proposition \ref{prop:D1B1B}}]
We follow somewhat the proof of Proposition \ref{prop:D1B1A}. To start, we employ the gymnastics of Lemma \ref{lemma:D1B1B1} and the localization estimate \eqref{eq:SoP21} of Lemma \ref{lemma:SoP2}. This is in parallel to the first step in the proof of Proposition \ref{prop:D1B1A}:
\small\begin{align}
\left(\mathfrak{t}_{\mathrm{av}}^{\frac14}\E\|\bar{\mathbf{H}}_{T,x}^{N}(N^{\beta_{X}}|\mathscr{A}_{\mathfrak{t}_{\mathrm{av}}}^{\mathbf{T},+}(\wt{\mathfrak{g}}_{S,y}^{\mathfrak{l}})|)\|_{1;\mathbb{T}_{N}}\right)^{2} \ &\lesssim \ N^{2\beta_{X}+\frac14+\e_{\mathrm{cpt}}}\mathfrak{t}_{\mathrm{av}}^{\frac12}\E^{\mu_{0,\Z}}\bar{\mathfrak{f}}_{1,\mathbb{T}_{N}} \E_{\eta_{0}}^{\mathrm{path}}|\mathscr{A}_{\mathfrak{t}_{\mathrm{av}}}^{\mathbf{T},+}(\wt{\mathfrak{g}}_{0,0}^{\mathfrak{l}})|^{2} \\
&\lesssim \ N^{2\beta_{X}+\frac14+\e_{\mathrm{cpt}}}\mathfrak{t}_{\mathrm{av}}^{\frac12}\E^{\mu_{0,\Z}}\bar{\mathfrak{f}}_{1,\mathbb{T}_{N}}^{\mathfrak{I}_{\mathfrak{t}_{\mathrm{av}}}} \E_{\Pi_{\mathfrak{I}_{\mathfrak{t}_{\mathrm{av}}}}\eta_{0}}^{\mathrm{loc}}|\mathscr{A}_{\mathfrak{t}_{\mathrm{av}}}^{\mathbf{T},+}(\wt{\mathfrak{g}}_{\mathfrak{t}_{\mathrm{av}};0,0}^{\mathfrak{l}})|^{2} \ + \ N^{-100}. \label{eq:D1B1B1}
\end{align}\normalsize\normalsize
The same clarifying remarks we made after the estimate of \eqref{eq:D1B2B1} from the proof of Proposition \ref{prop:D1B2B} are in order here as well. Like the proof of Proposition \ref{prop:D1B1A}, we apply the estimate in Lemma \ref{lemma:D1B1B3} to the first term in \eqref{eq:D1B1B1}. The proof for Proposition \ref{prop:D1B1B} then follows by power-counting in $N$ and recalling $N^{-2}\lesssim\mathfrak{t}_{\mathrm{av}}\lesssim N^{-1}$, again like the end of the proof of Proposition \ref{prop:D1B1A}.
\end{proof}
\subsection{Proof of Proposition \ref{prop:S1B}}
We turn to the proof for the static estimate in Proposition \ref{prop:S1B}. The idea behind the proof for this estimate in Proposition \ref{prop:S1B} is the same as every argument we have written thus far. By this we mean gymnastics to remove the heat kernel singularity in the heat operator in Proposition \ref{prop:S1B} followed by a reduction to local equilibrium and equilibrium calculations. However, there are no time-averages in Proposition \ref{prop:S1B}, so we will apply large-deviations estimates for canonical ensembles in Lemma \ref{lemma:LDP} and Corollary \ref{corollary:LDP}. We reemphasize that intuitively, Proposition \ref{prop:S1B} follows from the negligibility of the replacement $\mathscr{A}^{\mathbf{X},-}\to\mathscr{C}^{\mathbf{X},-}$ for canonical ensembles \emph{at large-deviations scale}, from which we reduce from non-stationary measures to stationary measures by local equilibrium in Lemma \ref{lemma:LE}. We begin with ``gymnastics" that resembles Lemma \ref{lemma:D1B1A1}.
\begin{lemma}\label{lemma:S1B1}
Consider the block $\mathfrak{I} = \llbracket-100\mathfrak{m}N^{\beta_{X}}, 100\mathfrak{m}N^{\beta_{X}}\rrbracket$. We have the following with universal implied constant:
\small\begin{align}
\left(\E\|\bar{\mathbf{H}}_{T,x}^{N}(N^{\frac12}|\mathscr{A}_{N^{\beta_{X}}}^{\mathbf{X},-}(\mathfrak{g}_{S,y}) - \mathscr{C}_{N^{\beta_{X}}}^{\mathbf{X},-}(\mathfrak{g}_{S,y})|)\|_{1;\mathbb{T}_{N}}\right)^{2} \ &\lesssim \ N^{\frac54+\e_{\mathrm{cpt}}}\E^{\mu_{0,\Z}}\bar{\mathfrak{f}}_{1,\mathbb{T}_{N}}^{\mathfrak{I}} |\mathscr{A}_{N^{\beta_{X}}}^{\mathbf{X},-}(\mathfrak{g}_{0,0}) - \mathscr{C}_{N^{\beta_{X}}}^{\mathbf{X},-}(\mathfrak{g}_{0,0})|^{2}. 
\end{align}\normalsize\normalsize
\end{lemma}
We observe the RHS of the estimate in Lemma \ref{lemma:S1B1} is the expectation of a $\mathfrak{I}$-local statistic against the averaged law $\bar{\mathfrak{f}}$ of the particle system. The block $\mathfrak{I}$ is that in Lemma \ref{lemma:S1B1}. Thus, we may again employ Lemma \ref{lemma:LE} to reduce estimation on the RHS of the estimate in Lemma \ref{lemma:S1B1} to a problem of equilibrium calculations. Ultimately, we get the following.
\begin{lemma}\label{lemma:S1B2}
With a universal implied constant, we have the following for the same block $\mathfrak{I} = \llbracket-100\mathfrak{m}N^{\beta_{X}}, 100\mathfrak{m}N^{\beta_{X}}\rrbracket$:
\small\begin{align}
N^{\frac54+\e_{\mathrm{cpt}}}\E^{\mu_{0,\Z}}\bar{\mathfrak{f}}_{1,\mathbb{T}_{N}}^{\mathfrak{I}} |\mathscr{A}_{N^{\beta_{X}}}^{\mathbf{X},-}(\mathfrak{g}_{0,0}) - \mathscr{C}_{N^{\beta_{X}}}^{\mathbf{X},-}(\mathfrak{g}_{0,0})|^{2} \ &\lesssim \ N^{-\beta_{\mathrm{univ}}}.
\end{align}\normalsize\normalsize
\end{lemma}
\begin{proof}[Proof of \emph{Proposition \ref{prop:S1B}}]
It suffices to combine the estimates of Lemma \ref{lemma:S1B1} and Lemma \ref{lemma:S1B2}.
\end{proof}
\subsection{Proofs of Technical Estimates}
We spend the rest of this section on proofs of all preliminary technical ingredients. 
\begin{proof}[Proof of \emph{Lemma \ref{lemma:D1B2A1}}]
Consider any $(T,x)\in[0,\mathfrak{t}_{\mathfrak{f}}]\times\mathbb{T}_{N}$ and define $\mathfrak{t}_{N}\overset{\bullet}=T-N^{-\frac12+\beta_{-}-\e}$ and $\mathfrak{t}_{N,+} = \mathfrak{t}_{N}\vee0$. Upon redefining $\varphi$ by a $\mathfrak{t}_{\mathfrak{s}}$ time-shift, it suffices to assume $\mathfrak{t}_{\mathfrak{s}}=0$. We first write
\small\begin{align}
\bar{\mathbf{H}}_{T,x}^{N}(N^{\frac12}|\varphi_{S,y}|) \ = \ \bar{\mathbf{H}}_{T,x}^{N}(N^{\frac12}|\varphi_{S,y}|\mathbf{1}_{\mathfrak{t}_{N,+}\leq S\leq T}) \ + \ \bar{\mathbf{H}}_{T,x}^{N}(N^{\frac12}|\varphi_{S,y}|\mathbf{1}_{0\leq S\leq\mathfrak{t}_{N,+}}) \ \overset{\bullet}= \ \Phi_{1} + \Phi_{2}. \label{eq:D1B2A10}
\end{align}\normalsize\normalsize
We will control $\Phi_{1}$ and $\Phi_{2}$ uniformly on the space-time set $[0,\mathfrak{t}_{\mathfrak{f}}]\times\mathbb{T}_{N}$. We bound $\Phi_{1}$ by the convolution estimate \eqref{eq:HKEConvolution} for the heat operator which controls the heat operator by the supremum of whatever test function that it acts on times the length of the time-interval that the integral in the heat operator is defined on:
\small\begin{align}
\bar{\mathbf{H}}_{T,x}^{N}(N^{\frac12}|\varphi_{S,y}|\mathbf{1}_{\mathfrak{t}_{N,+}\leq S\leq T}) \ \lesssim \ N^{\frac12}\|\varphi\|_{\infty;\mathbb{T}_{N}} |T-\mathfrak{t}_{N,+}| \ \lesssim \ N^{\frac12}\|\varphi\|_{\infty;\mathbb{T}_{N}}N^{-\frac12+\beta_{-}-\e} \ \lesssim \ N^{-\e}. \label{eq:D1B2A11}
\end{align}\normalsize\normalsize
The final estimate on the far RHS of \eqref{eq:D1B2A11} follows from recalling the a priori estimate $|\varphi|\lesssim N^{-\beta_{-}}$. To estimate $\Phi_{2}$, we use the on-diagonal heat kernel estimate implied by \eqref{eq:HKENash}. Observe that on the set $[0,\mathfrak{t}_{N,+}]$ we have the bound $\mathfrak{s}_{S,T}\gtrsim N^{-1/2+\beta_{-}-\e}$. This gives a deterministic bound that uses the aforementioned lower bound to control the short-time singularity in the heat kernel:
\small\begin{align}
\bar{\mathbf{H}}_{T,x}^{N}(N^{\frac12}|\varphi_{S,y}|\mathbf{1}_{0\leq S\leq\mathfrak{t}_{N,+}}) \ \lesssim \ N^{-1}\int_{0}^{\mathfrak{t}_{N,+}}\mathfrak{s}_{S,T}^{-1/2}{\sum}_{y\in\mathbb{T}_{N}} N^{\frac12}|\varphi_{S,y}| \ \d S \ &\lesssim \ N^{-1}N^{\frac14-\frac12\beta_{-}+\frac12\e}N^{\frac12}\int_{0}^{\mathfrak{t}_{N,+}}{\sum}_{y\in\mathbb{T}_{N}}|\varphi_{S,y}| \ \d S. \label{eq:D1B2A12a}
\end{align}\normalsize\normalsize
From the previous display, we multiply and divide by $|\mathbb{T}_{N}|$ on the far RHS to get a spatial average over $\mathbb{T}_{N}$; recall $|\mathbb{T}_{N}|\lesssim N^{5/4+\e_{\mathrm{cpt}}}$ from the beginning of Section \ref{section:Ctify}. Moreover, we may additionally extend the domain of integration $[0,\mathfrak{t}_{N,+}]\to[0,\mathfrak{t}_{\mathfrak{f}}]$ after we multiply and divide by $|\mathbb{T}_{N}|$. This gives the following estimate which identifies the sum over $\mathbb{T}_{N}$ by $|\mathbb{T}_{N}|$ times the average over $\mathbb{T}_{N}$ and then performs power-counting for the host of $N$-dependent factors that appear in the display below:
\small\begin{align}
N^{-1}N^{\frac14-\frac12\beta_{-}+\frac12\e}N^{\frac12}\int_{0}^{\mathfrak{t}_{N,+}}{\sum}_{y\in\mathbb{T}_{N}}|\varphi_{S,y}| \ \d S \ &\lesssim \ N^{-1}N^{\frac14-\frac12\beta_{-}+\frac12\e}N^{\frac12}|\mathbb{T}_{N}|\int_{0}^{\mathfrak{t}_{N,+}}\wt{\sum}_{y\in\mathbb{T}_{N}}|\varphi_{S,y}| \ \d S \\
&\lesssim \ N^{1-\frac12\beta_{-}+\frac12\e+\e_{\mathrm{cpt}}}\int_{0}^{\mathfrak{t}_{\mathfrak{f}}}\wt{\sum}_{y\in\mathbb{T}_{N}}|\varphi_{S,y}| \ \d S. \label{eq:D1B2A12b}
\end{align}\normalsize\normalsize
We reiterate that the final bound follows from power-counting on the RHS of the previous line and then extending the domain of integration $[0,\mathfrak{t}_{N,+}]\to[0,\mathfrak{t}_{\mathfrak{f}}]$. We also note that \eqref{eq:D1B2A11} and \eqref{eq:D1B2A12a} and \eqref{eq:D1B2A12b} provide estimates for $\Phi_{1}$ and $\Phi_{2}$ uniformly on $[0,\mathfrak{t}_{\mathfrak{f}}]\times\mathbb{T}_{N}$ so by the triangle inequality for $\|\|_{\mathfrak{t}_{\mathfrak{f}};\mathbb{T}_{N}}$ and by \eqref{eq:D1B2A10}, \eqref{eq:D1B2A11}, \eqref{eq:D1B2A12a} and \eqref{eq:D1B2A12b}, we finish the proof.
\end{proof}
\begin{proof}[Proof of \emph{Lemma \ref{lemma:D1B2A2}}]
Observe the $\mathscr{C}^{\mathbf{T},+,\mathfrak{t}_{N,\e},\mathfrak{i}}$ term in the heat operator on the LHS of the proposed estimate has an upper bound of order $N^{-\beta_{-}}$ by construction in Definition \ref{definition:D1B2A}. Thus, we may use Lemma \ref{lemma:D1B2A1} for $\varphi$ equal to this $\mathscr{C}^{\mathbf{T},+,\mathfrak{t}_{N,\e},\mathfrak{i}}$-term to get
\small\begin{align}
\E\|\bar{\mathbf{H}}_{T,x}^{N}(N^{\frac12}|\mathscr{C}_{\mathfrak{t}_{\mathrm{av}};\beta_{-},\beta_{+}}^{\mathbf{T},+,\mathfrak{t}_{N,\e},\mathfrak{i}}\mathscr{C}_{N^{\beta_{X}}}^{\mathbf{X},-}(\mathfrak{g}_{S+\mathfrak{t}_{\mathfrak{s}},y})|)\|_{\mathfrak{t}_{\mathfrak{f}};\mathbb{T}_{N}} \ &\lesssim \  N^{1+\e_{\mathrm{cpt}}+\frac12\e-\frac12\beta_{-}}\int_{0}^{\mathfrak{t}_{\mathfrak{f}}}\wt{\sum}_{y\in\mathbb{T}_{N}}\E|\mathscr{C}_{\mathfrak{t}_{\mathrm{av}};\beta_{-},\beta_{+}}^{\mathbf{T},+,\mathfrak{t}_{N,\e},\mathfrak{i}}\mathscr{C}_{N^{\beta_{X}}}^{\mathbf{X},-}(\mathfrak{g}_{S+\mathfrak{t}_{\mathfrak{s}},y})| \ \d S \ + \ N^{-\e}. \label{eq:D1B2A21}
\end{align}\normalsize\normalsize
We now make the next observation with explanation/justification after in which we use notation to be explained as well:
\small\begin{align}
\E|\mathscr{C}_{\mathfrak{t}_{\mathrm{av}};\beta_{-},\beta_{+}}^{\mathbf{T},+,\mathfrak{t}_{N,\e},\mathfrak{i}}\mathscr{C}_{N^{\beta_{X}}}^{\mathbf{X},-}(\mathfrak{g}_{S+\mathfrak{t}_{\mathfrak{s}},y})| \ &= \ \E^{\mu_{0,\Z}}\mathfrak{f}_{S+\mathfrak{t}_{\mathfrak{s}},N} \E_{\eta_{0}}^{\mathrm{path}}|\mathscr{C}_{\mathfrak{t}_{\mathrm{av}};\beta_{-},\beta_{+}}^{\mathbf{T},+,\mathfrak{t}_{N,\e},\mathfrak{i}}\mathscr{C}_{N^{\beta_{X}}}^{\mathbf{X},-}(\mathfrak{g}_{0,y})|. \label{eq:D1B2A22}
\end{align}\normalsize\normalsize
The previous statement \eqref{eq:D1B2A22} follows by decomposing the expectation of the \emph{path-space functional} given by the $\mathscr{C}^{\mathbf{T},+,\mathfrak{t}_{N,\e},\mathfrak{i}}$-term into the expectation with respect to the path-space measure corresponding to the original $\Omega_{\Z}$-valued process after time $S+\mathfrak{t}_{\mathfrak{s}}$ with an initial particle configuration given by the particle configuration at time $S+\mathfrak{t}_{\mathfrak{s}}$ and afterwards taking expectation over this initial configuration for the path-space expectation with respect to the law of the particle system $\mathfrak{f}_{S+\mathfrak{t}_{\mathfrak{s}},N}\d\mu_{0,\Z}$ at time $S+\mathfrak{t}_{\mathfrak{s}}$. However, this is the same thing as sampling the initial configuration for the path-space expectation according to the time-0 configuration if we pick the initial measure of $\mathfrak{f}_{S+\mathfrak{t}_{\mathfrak{s}},N}\d\mu_{0,\Z}$. Here, we emphasize that the path-space expectation after we shift the initial measure to $\mathfrak{f}_{S+\mathfrak{t}_{\mathfrak{s}},N}\d\mu_{0,\Z}$ is with respect to the path-space measure of the exclusion process starting at time 0 rather than time $S+\mathfrak{t}_{\mathfrak{s}}$. However, the law of the particle system dynamic is invariant under any time-shift as long as we pick the same initial configuration. We also get the following, which we explain afterwards, where $(\tau_{y}\eta)_{Z} = \eta_{y+Z}$ is the shift-map on $\Omega_{\Z}$:
\small\begin{align}
\E^{\mu_{0,\Z}}\mathfrak{f}_{S+\mathfrak{t}_{\mathfrak{s}},N} \E_{\eta_{0}}^{\mathrm{path}}|\mathscr{C}_{\mathfrak{t}_{\mathrm{av}};\beta_{-},\beta_{+}}^{\mathbf{T},+,\mathfrak{t}_{N,\e},\mathfrak{i}}\mathscr{C}_{N^{\beta_{X}}}^{\mathbf{X},-}(\mathfrak{g}_{0,y})| \ &= \ \E^{\mu_{0,\Z}}\mathfrak{f}_{S+\mathfrak{t}_{\mathfrak{s}},N} \E_{\tau_{-y}\eta_{0}}^{\mathrm{path}}|\mathscr{C}_{\mathfrak{t}_{\mathrm{av}};\beta_{-},\beta_{+}}^{\mathbf{T},+,\mathfrak{t}_{N,\e},\mathfrak{i}}\mathscr{C}_{N^{\beta_{X}}}^{\mathbf{X},-}(\mathfrak{g}_{0,0})| \nonumber \\
&= \ \E^{\mu_{0,\Z}}\tau_{y}\mathfrak{f}_{S+\mathfrak{t}_{\mathfrak{s}},N} \E_{\eta_{0}}^{\mathrm{path}}|\mathscr{C}_{\mathfrak{t}_{\mathrm{av}};\beta_{-},\beta_{+}}^{\mathbf{T},+,\mathfrak{t}_{N,\e},\mathfrak{i}}\mathscr{C}_{N^{\beta_{X}}}^{\mathbf{X},-}(\mathfrak{g}_{0,0})|. \nonumber
\end{align}\normalsize\normalsize
The first identity follows by the shift-invariance of the path-space expectation. In particular, on the far LHS we take the path-space expectation of particle system data supported near $y\in\Z$. This is identical to path-space expectation with the same data but shifted to be centered at $0\in\Z$,  because the path-space measure is invariant under spatial shifts if the initial configuration is kept the same. Such a ``shifted" path-space expectation is what middle of the above line encodes. The second identity follows from the invariance of the measure $\mu_{0,\Z}$ under shift-maps $\tau_{y}:\Omega_{\Z}\to\Omega_{\Z}$ and that the composition $\tau_{y}\tau_{-y}$ is the identity on $\Omega_{\Z}$. We clarify the inner expectation in the middle of the last display is a function of the shifted configuration $\tau_{-y}\eta_{0}$ which we take expectation of against $\mathfrak{f}_{S+\mathfrak{t}_{\mathfrak{s}},N}\d\mu_{0,\Z}$. By the last display, \eqref{eq:D1B2A22} and Fubini's theorem, as in the one-block step in \cite{GPV} we get the following where the fourth line follows by the path expectation in the line before being independent of integration/summation variables:
\small\begin{align}
\int_{0}^{\mathfrak{t}_{\mathfrak{f}}}\wt{\sum}_{y\in\mathbb{T}_{N}}\E|\mathscr{C}_{\mathfrak{t}_{\mathrm{av}};\beta_{-},\beta_{+}}^{\mathbf{T},+,\mathfrak{t}_{N,\e},\mathfrak{i}}\mathscr{C}_{N^{\beta_{X}}}^{\mathbf{X},-}(\mathfrak{g}_{S+\mathfrak{t}_{\mathfrak{s}},y})| \ \d S \ &= \ \int_{0}^{\mathfrak{t}_{\mathfrak{f}}}\wt{\sum}_{y\in\mathbb{T}_{N}}\E^{\mu_{0,\Z}}\mathfrak{f}_{S+\mathfrak{t}_{\mathfrak{s}},N} \E_{\eta_{0}}^{\mathrm{path}}|\mathscr{C}_{\mathfrak{t}_{\mathrm{av}};\beta_{-},\beta_{+}}^{\mathbf{T},+,\mathfrak{t}_{N,\e},\mathfrak{i}}\mathscr{C}_{N^{\beta_{X}}}^{\mathbf{X},-}(\mathfrak{g}_{0,y})| \ \d S \\
&= \ \int_{0}^{\mathfrak{t}_{\mathfrak{f}}}\wt{\sum}_{y\in\mathbb{T}_{N}}\E^{\mu_{0,\Z}}\tau_{y}\mathfrak{f}_{S+\mathfrak{t}_{\mathfrak{s}},N} \E_{\eta_{0}}^{\mathrm{path}}|\mathscr{C}_{\mathfrak{t}_{\mathrm{av}};\beta_{-},\beta_{+}}^{\mathbf{T},+,\mathfrak{t}_{N,\e},\mathfrak{i}}\mathscr{C}_{N^{\beta_{X}}}^{\mathbf{X},-}(\mathfrak{g}_{0,0})| \ \d S \\
&= \ \E^{\mu_{0,\Z}}\left(\int_{0}^{\mathfrak{t}_{\mathfrak{f}}}\wt{\sum}_{y\in\mathbb{T}_{N}}\tau_{y}\mathfrak{f}_{S+\mathfrak{t}_{\mathfrak{s}},N} \E_{\eta_{0}}^{\mathrm{path}}|\mathscr{C}_{\mathfrak{t}_{\mathrm{av}};\beta_{-},\beta_{+}}^{\mathbf{T},+,\mathfrak{t}_{N,\e},\mathfrak{i}}\mathscr{C}_{N^{\beta_{X}}}^{\mathbf{X},-}(\mathfrak{g}_{0,0})| \ \d S\right) \\
&= \ \E^{\mu_{0,\Z}}\left(\left(\int_{0}^{\mathfrak{t}_{\mathfrak{f}}}\wt{\sum}_{y\in\mathbb{T}_{N}}\tau_{y}\mathfrak{f}_{S+\mathfrak{t}_{\mathfrak{s}},N} \ \d S\right)\E_{\eta_{0}}^{\mathrm{path}}|\mathscr{C}_{\mathfrak{t}_{\mathrm{av}};\beta_{-},\beta_{+}}^{\mathbf{T},+,\mathfrak{t}_{N,\e},\mathfrak{i}}\mathscr{C}_{N^{\beta_{X}}}^{\mathbf{X},-}(\mathfrak{g}_{0,0})|\right). \label{eq:D1B2A23}
\end{align}\normalsize\normalsize
The average in parentheses in \eqref{eq:D1B2A23} is $\mathfrak{t}_{\mathfrak{f}}\bar{\mathfrak{f}}_{\mathfrak{t}_{\mathfrak{s}},\mathfrak{t}_{\mathfrak{f}},\mathbb{T}_{N}}$ given prior to Lemma \ref{lemma:LE}, so \eqref{eq:D1B2A21} and \eqref{eq:D1B2A23} finish the proof.
\end{proof}
\begin{proof}[Proof of \emph{Lemma \ref{lemma:SoP}}]
We first construct a two-species process, one evolving on $\Omega_{\Z}$ and another on $\Omega_{\mathfrak{I}_{\mathfrak{t}_{\mathrm{av}}}}$. 
\begin{itemize}[leftmargin=*]
\item Consider the original process $\eta_{\bullet}$ with the infinitesimal generator $\mathfrak{S}^{N,!!}$ evolving on $\Omega_{\Z}$. This is the first species, and we specify the symmetric component of the particle random walks is given pathwise by Poisson clocks attached to \emph{bonds}. In particular, a ringing of a bond indicates swapping spins at the two points along that bond. All the other dynamics are pieces of the totally asymmetric part of $\mathfrak{S}^{N,!!}$ and use any Poisson clocks. This construction was used in the proof of Lemma \ref{lemma:EProdSoP} for example.
\item Consider the $\mathfrak{I}_{\mathfrak{t}_{\mathrm{av}}}$-local process $\eta_{\mathfrak{I}_{\mathfrak{t}_{\mathrm{av}}};\bullet,\bullet}$ evolving on $\Omega_{\mathfrak{I}_{\mathfrak{t}_{\mathrm{av}}}}$. This is the second species. We will similarly specify the symmetric component of the particle random walks is given pathwise by Poisson clocks attached to bonds \emph{in} $\mathfrak{I}_{\mathfrak{t}_{\mathrm{av}}}\subseteq\Z$; note in particular $
\inf\mathfrak{I}_{\mathfrak{t}_{\mathrm{av}}}$ and $\sup\mathfrak{I}_{\mathfrak{t}_{\mathrm{av}}}$ are connected with the associated speed $\alpha_{1} \in \R_{>0}$. We now couple symmetric dynamics of the two species by coupling any spin-swap dynamics along any shared bonds. In particular, we will use the same Poisson clock for bonds that appear in both of $\Z$ and $\mathfrak{I}_{\mathfrak{t}_{\mathrm{av}}}$. Moreover, all Poisson clocks corresponding to totally asymmetric jumps across bonds in $\mathfrak{I}_{\mathfrak{t}_{\mathrm{av}}}$ are coupled to Poisson clocks from the first species above that are associated to the same totally asymmetric jump across the same bond in $\Z$ if such a bond is in $\Z$. In other words, we employ the basic coupling for all totally asymmetric jumps that appear in both dynamics. All other Poisson clocks, such as symmetric spin-swap clocks attached to bonds not in the first $\Z$-process and totally asymmetric jump clocks also not in the first $\Z$-process, are constructed in any arbitrary fashion.
\end{itemize}
Assume these two species above exhibit the same spins in $\mathfrak{I}_{\mathfrak{t}_{\mathrm{av}}} \subseteq \Z$ initially. We will show the probability of seeing a discrepancy between these two species inside the neighborhood of radius $N^{\beta_{X}}\log^{50}N$ around $0\in \Z$ before the time $2\mathfrak{t}_{\mathrm{av}}+2\mathfrak{t}_{N,\e}$ is bounded above by $N^{-C}$ up to $C$-dependent factors for any $C \in \R_{>0}$. Before we actually show the previous claim, observe that the claim would provide both of \eqref{eq:SoP1} and \eqref{eq:SoP2}. Indeed, the path-space functionals of interest in both \eqref{eq:SoP1} and \eqref{eq:SoP2} only depend on the trajectories of spins inside the support of $\mathscr{C}_{N^{\beta_{X}}}^{\mathbf{X},-}(\mathfrak{g})$, which is contained in the aforementioned neighborhood around $0 \in \Z$, until time $2\mathfrak{t}_{\mathrm{av}} + 2\mathfrak{t}_{N,\e}$. We clarify the paths taken by these spins in the aforementioned support certainly depend upon the global configuration of spins. Regardless, if the two species agree on these spins until time $2\mathfrak{t}_{\mathrm{av}} + 2\mathfrak{t}_{N,\e}$, then the two functionals agree on this path of the two-species process. If not, we bound the expectations via $|\mathfrak{g}|\lesssim1$ and the $N^{-C}$-probability of such an event.

We now prove the claim about discrepancies between the two species appearing in the neighborhood of radius $N^{\beta_{X}}\log^{50}N$ around $0 \in \Z$ before time $2\mathfrak{t}_{\mathrm{av}} + 2\mathfrak{t}_{N,\e}$. We will argue via large deviations for random walks as in the proof of Lemma \ref{lemma:EProdSoP}. First, for convenience we will let $\mathfrak{r}_{N}=N^{\beta_{X}}\log^{50}N$ denote the radius of the aforementioned neighborhood. Observe $|\mathfrak{I}_{\mathfrak{t}_{\mathrm{av}}}|\gg\mathfrak{r}_{N}$.
\begin{itemize}[leftmargin=*]
\item Initially, there are no discrepancies in $\mathfrak{I}_{\mathfrak{t}_{\mathrm{av}}}\subseteq\Z$ between the two species by construction. Note there certainly might be some outside this block. By construction of the coupling between the two species, discrepancies cannot be made by shared Poisson clocks, but only moved or killed. In particular, any discrepancy in $\mathfrak{I}_{\mathfrak{t}_{\mathrm{av}}}$ must be created by ringing some Poisson clock in at least one of the two species that is located within $\mathfrak{m}$ of the boundary of $\mathfrak{I}_{\mathfrak{t}_{\mathrm{av}}}$ as each particle performs range-$\mathfrak{m}$ random walks. First, we conclude any discrepancy that appears inside the neighborhood of radius $\mathfrak{r}_{N}$ around $0\in \Z$ at any time must originally be a discrepancy that is within $\mathfrak{m}$ of the boundary of $\mathfrak{I}_{\mathfrak{t}_{\mathrm{av}}}$ that propagates into $\mathfrak{r}_{N}$ of $0\in\Z$. Second, the number of discrepancies in $\mathfrak{I}_{\mathfrak{t}_{\mathrm{av}}}$ that can be made until time $2\mathfrak{t}_{\mathrm{av}}+2\mathfrak{t}_{N,\e}$ is bounded by the number of times Poisson clocks at sites within $\mathfrak{m}$ of the boundary of $\mathfrak{I}_{\mathfrak{t}_{\mathrm{av}}}$ can ring before the time $2\mathfrak{t}_{\mathrm{av}}+2\mathfrak{t}_{N,\e} \lesssim N^{20}$. As these Poisson clocks have speed $N^{2}$, this is at most $N^{C_{1}}$ outside an event of exponentially low probability $\mathscr{O}(\exp(-N^{\kappa_{C_{1}}}))$ for any $C_{1}$ arbitrarily large but universal and for $\kappa_{C_{1}}>0$.
\item Take any discrepancy in the previous bullet point, let it start within $\mathfrak{m}$ of the boundary of $\mathfrak{I}_{\mathfrak{t}_{\mathrm{av}}}$, and wait until it is within $\frac15|\mathfrak{I}_{\mathfrak{t}_{\mathrm{av}}}|$ of a radius-$\mathfrak{r}_{N}$ neighborhood of $0\in \Z$. As in the proof of Lemma \ref{lemma:EProdSoP}, the dynamics of such discrepancy is a random walk with a free unsuppressed symmetric component of speed $N^{2}$ and the maximal jump-length $\mathfrak{m} \lesssim 1$, with an environment-dependent drift of order $N^{3/2}$ of maximal jump-length $\mathfrak{m}\lesssim1$, and a random killing mechanism, \emph{at least} as long as such discrepancy stays within $\frac15|\mathfrak{I}_{\mathfrak{t}_{\mathrm{av}}}|$ of the radius-$\mathfrak{r}_{N}$ neighborhood of $0\in\Z$. We emphasize that this free and unsuppressed nature of the symmetric component of the discrepancy random walks comes from coupling bonds and symmetric spin-swaps as opposed to the basic coupling. As $|\mathfrak{I}_{\mathfrak{t}_{\mathrm{av}}}|\gg \mathfrak{r}_{N}$, the distance between the edge of the $\frac15|\mathfrak{I}_{\mathfrak{t}_{\mathrm{av}}}|$-neighborhood and the $\mathfrak{r}_{N}$-neighborhood is order $|\mathfrak{I}_{\mathfrak{t}_{\mathrm{av}}}|$, so the probability the discrepancy arrives within $\mathfrak{r}_{N}$ of $0 \in \Z$ is controlled by the probability that the aforementioned random walk exits a ball of radius order $|\mathfrak{I}_{\mathfrak{t}_{\mathrm{av}}}|$, both these statements restricted to the time-interval $[0,2\mathfrak{t}_{\mathrm{av}}+2\mathfrak{t}_{N,\e}]$. Indeed, we wait for such discrepancy to land within $\mathfrak{r}_{N}$ of $0\in\Z$ through the previous random walk dynamics, or we wait for it to get beyond $\frac15|\mathfrak{I}_{\mathfrak{t}_{\mathrm{av}}}|$ of this neighborhood. If it does the latter, we wait for it to get within $\frac15|\mathfrak{I}_{\mathfrak{t}_{\mathrm{av}}}|$ of this neighborhood again and repeat. But if it does the former, then it must have traveled distance at least order $|\mathfrak{I}_{\mathfrak{t}_{\mathrm{av}}}|$ via the aforementioned random walk dynamics.
\item Continuing with the above bullet point, the probability that this discrepancy lands in the radius-$\mathfrak{r}_{N}$ neighborhood of $0 \in \Z$ is then order $\kappa_{C}N^{-2C}$ for any $C \in \R_{>0}$ courtesy of standard random walk/large deviations estimates because of the additional log-factors in $|\mathfrak{I}_{\mathfrak{t}_{\mathrm{av}}}|$; see Remark \ref{remark:ch2SoP}. We take union bound over all $N^{C_{1}}$ possible discrepancies and choose $C \geq C_{1}$ arbitrarily large but universal to complete the proof of the high-probability discrepancy claim.
\end{itemize}
This completes the proof.
\end{proof}
\begin{proof}[Proof of \emph{Lemma \ref{lemma:D1B2A3}}]
The inner expectation on the LHS of the proposed estimate is a functional on $\Omega_{\mathfrak{I}_{\mathfrak{t}_{\mathrm{av}}}}$ because dependence on the $\mathfrak{I}_{\mathfrak{t}_{\mathrm{av}}}$-stochastic process is only through its initial configuration; we have taken expectation over the associated $\mathfrak{I}_{\mathfrak{t}_{\mathrm{av}}}$-local path-space measure conditioning on its initial configuration. Thus, we may apply the entropy inequality from Lemma \ref{lemma:LE} with $\varphi$ equal to the inner expectation on the LHS of the proposed bound with support $\mathfrak{I}_{\mathfrak{t}_{\mathrm{av}}}$ and with the choice of constant $\kappa = N^{\beta_{-}}$. We reemphasize that we choose $\mathfrak{I}_{N} = \mathbb{T}_{N}$ in Lemma \ref{lemma:LE} which is okay because $\mathbb{T}_{N}$ satisfies the constraints we assumed for the block $\mathfrak{I}_{N}$ stated in Proposition \ref{prop:EProd}. In our application of Lemma \ref{lemma:LE}, we also choose $\mathfrak{t}_{0}=\mathfrak{t}_{\mathfrak{s}}$ here and $T = \mathfrak{t}_{\mathfrak{f}}$. This all gives
\small\begin{align}
\mathfrak{t}_{\mathfrak{f}} \E^{\mu_{0,\Z}}\bar{\mathfrak{f}}_{\mathfrak{t}_{\mathfrak{s}},\mathfrak{t}_{\mathfrak{f}},\mathbb{T}_{N}}^{\mathfrak{I}_{\mathfrak{t}_{\mathrm{av}}}} \E_{\Pi_{\mathfrak{I}_{\mathfrak{t}_{\mathrm{av}}}}\eta_{0}}^{\mathrm{loc}}|\mathscr{C}_{\mathfrak{t}_{\mathrm{av}};\beta_{-},\beta_{+}}^{\mathbf{T},+,\mathfrak{t}_{N,\e},\mathfrak{i}}\mathscr{C}_{N^{\beta_{X}}}^{\mathbf{X},-}(\mathfrak{g}_{\mathfrak{t}_{\mathrm{av}};0,0})| \ &\lesssim \ N^{-\beta_{-}-\frac34+\e}|\mathbb{T}_{N}|^{-1}|\mathfrak{I}_{\mathfrak{t}_{\mathrm{av}}}|^{3} \ + \ \mathfrak{t}_{\mathfrak{f}} {\sup}_{\varrho\in\R} \Psi_{\varrho} \label{eq:D1B2A32}
\end{align}\normalsize\normalsize
where $\e\in\R_{>0}$ is arbitrarily small but universal and for convenience we defined the following equilibrium log-exp term:
\small\begin{align}
\Psi_{\varrho} \ &\overset{\bullet}= \ N^{-\beta_{-}} \log \E^{\mu_{\varrho,\mathfrak{I}_{\mathfrak{t}_{\mathrm{av}}}}^{\mathrm{can}}}\exp\left(N^{\beta_{-}} \E_{\Pi_{\mathfrak{I}_{\mathfrak{t}_{\mathrm{av}}}}\eta_{0}}^{\mathrm{loc}}|\mathscr{C}_{\mathfrak{t}_{\mathrm{av}};\beta_{-},\beta_{+}}^{\mathbf{T},+,\mathfrak{t}_{N,\e},\mathfrak{i}}\mathscr{C}_{N^{\beta_{X}}}^{\mathbf{X},-}(\mathfrak{g}_{\mathfrak{t}_{\mathrm{av}};0,0})|\right).
\end{align}\normalsize\normalsize
The first term on the RHS of \eqref{eq:D1B2A32} is bounded by power-counting. We do this later and focus on the more interesting term $\Psi_{\varrho}$. First observe that by definition of $\mathscr{C}^{\mathbf{T},+}$, in the exponential defining $\Psi_{\varrho}$, the $\E^{\mathrm{loc}}$-term is at most $N^{-\beta_{-}}$ times universal factors; see Definition \ref{definition:averagesAlocal} for  the definition of the $\mathscr{C}^{\mathbf{T},+}$-term. Thus the quantity inside the exponential is uniformly bounded. Using the inequalities $\exp(|x|)\leq 1+\kappa|x|$ for $\kappa\geq0$ and $x\in\R$ both uniformly bounded and $\log(1+|x|)\leq|x|$, we get
\small\begin{align}
\Psi_{\varrho} \ &\leq \ N^{-\beta_{-}}\log\E^{\mu_{\varrho,\mathfrak{I}_{\mathfrak{t}_{\mathrm{av}}}}^{\mathrm{can}}}\left( 1 + \kappa \cdot N^{\beta_{-}}\E_{\Pi_{\mathfrak{I}_{\mathfrak{t}_{\mathrm{av}}}}\eta_{0}}^{\mathrm{loc}}|\mathscr{C}_{\mathfrak{t}_{\mathrm{av}};\beta_{-},\beta_{+}}^{\mathbf{T},+,\mathfrak{t}_{N,\e},\mathfrak{i}}\mathscr{C}_{N^{\beta_{X}}}^{\mathbf{X},-}(\mathfrak{g}_{\mathfrak{t}_{\mathrm{av}};0,0})|\right) \\
&= \ N^{-\beta_{-}}\log\left( 1 + \kappa \cdot N^{\beta_{-}}\E^{\mu_{\varrho,\mathfrak{I}_{\mathfrak{t}_{\mathrm{av}}}}^{\mathrm{can}}}\E_{\Pi_{\mathfrak{I}_{\mathfrak{t}_{\mathrm{av}}}}\eta_{0}}^{\mathrm{loc}}|\mathscr{C}_{\mathfrak{t}_{\mathrm{av}};\beta_{-},\beta_{+}}^{\mathbf{T},+,\mathfrak{t}_{N,\e},\mathfrak{i}}\mathscr{C}_{N^{\beta_{X}}}^{\mathbf{X},-}(\mathfrak{g}_{\mathfrak{t}_{\mathrm{av}};0,0})|\right) \\
&\lesssim \ \E^{\mu_{\varrho,\mathfrak{I}_{\mathfrak{t}_{\mathrm{av}}}}^{\mathrm{can}}}\E_{\Pi_{\mathfrak{I}_{\mathfrak{t}_{\mathrm{av}}}}\eta_{0}}^{\mathrm{loc}}|\mathscr{C}_{\mathfrak{t}_{\mathrm{av}};\beta_{-},\beta_{+}}^{\mathbf{T},+,\mathfrak{t}_{N,\e},\mathfrak{i}}\mathscr{C}_{N^{\beta_{X}}}^{\mathbf{X},-}(\mathfrak{g}_{\mathfrak{t}_{\mathrm{av}};0,0})|. \label{eq:D1B2A33}
\end{align}\normalsize\normalsize
To estimate the double expectation in \eqref{eq:D1B2A33}, let us first ease notation and let $\wt{\E}$ denote the iterated expectation in \eqref{eq:D1B2A33}. We now recall definition of the $\mathscr{C}^{\mathbf{T},+}$-term in \eqref{eq:D1B2A33} within Definition \ref{definition:averagesAlocal} to be the time-average $\mathscr{A}^{\mathbf{T},+}$ multiplied by two indicator functions. For the sake of getting an upper bound, we drop the upper-bound indicator function and apply the Cauchy-Schwarz inequality to get the following. Below, we emphasize that because the canonical ensemble initial measure we take in definition of $\wt{\E}$ is an invariant measure for the $\mathfrak{I}_{\mathfrak{t}_{\mathrm{av}}}$-local process we are taking expectation with respect to in $\wt{\E}$, this tells us that time-shifts do not change the law of functionals of the $\mathfrak{I}_{\mathfrak{t}_{\mathrm{av}}}$-local process as long as the $\mathfrak{I}_{\mathfrak{t}_{\mathrm{av}}}$-local process starts with any canonical ensemble initial measure. We deduce this ``invariance-under-time-shift" upon writing the $\wt{\E}$-expectation of any time-shifted functional as an expectation with respect to the path-space measure induced by the $\mathfrak{I}_{\mathfrak{t}_{\mathrm{av}}}$-local process with an initial measure given by the law of the particle system after evolving for a time equal to the time-shift. We can do this with the Markov property for the $\mathfrak{I}_{\mathfrak{t}_{\mathrm{av}}}$-local process. But canonical ensembles are invariant measures, so the ``time-shifted" initial measure is the same canonical ensemble. This lets us forget the time-shift $\mathfrak{t}_{N,\e}\in\R_{\geq0}$ in $\wt{\E}$-expectations below until \eqref{eq:D1B2A34}. More precisely, we note the following.
\begin{itemize}[leftmargin=*]
\item The laws of $\mathbf{1}[\mathscr{G}_{\mathfrak{t}_{\mathrm{av}};\geq;S,0}^{\beta_{+}}]$ and $\mathscr{A}_{\mathfrak{t}_{\mathrm{av}}}^{\mathbf{T},+}\mathscr{C}_{N^{\beta_{X}}}^{\mathbf{X},-}(\mathfrak{g}_{\mathfrak{t}_{\mathrm{av}};S,0})$ are $S$-independent under $\wt{\E}$. See Definition \ref{definition:averagesAlocal} for their definitions.
\end{itemize}
Thus at the level of $\wt{\E}$, the time-shift $\mathfrak{t}_{N,\e}$ is irrelevant. For any $\mathfrak{i}\in\{1,2\}$, we get the following with justification given after:
\small\begin{align}
\wt{\E}|\mathscr{C}_{\mathfrak{t}_{\mathrm{av}};\beta_{-},\beta_{+}}^{\mathbf{T},+,\mathfrak{t}_{N,\e},\mathfrak{i}}\mathscr{C}_{N^{\beta_{X}}}^{\mathbf{X},-}(\mathfrak{g}_{\mathfrak{t}_{\mathrm{av}};0,0})| \ &\lesssim \ \left(\wt{\E}|\mathscr{A}_{\mathfrak{t}_{\mathrm{av}}}^{\mathbf{T},+}\mathscr{C}_{N^{\beta_{X}}}^{\mathbf{X},-}(\mathfrak{g}_{\mathfrak{t}_{\mathrm{av}};0,0})|^{2}\right)^{1/2}\left(\wt{\E}\mathbf{1}[\mathscr{G}_{\mathfrak{t}_{\mathrm{av}};\geq;0,0}^{\beta_{+}}] \right)^{1/2} \\
&\lesssim \ \left(\wt{\E}|\mathscr{A}_{\mathfrak{t}_{\mathrm{av}}}^{\mathbf{T},+}\mathscr{C}_{N^{\beta_{X}}}^{\mathbf{X},-}(\mathfrak{g}_{\mathfrak{t}_{\mathrm{av}};0,0})|^{2}\right)^{1/2}\left(N^{2\beta_{+}}\wt{\E}{\sup}_{0\leq\mathfrak{t}\leq\mathfrak{t}_{\mathrm{av}}} \mathfrak{t}^{2}\mathfrak{t}_{\mathrm{av}}^{-2}|\mathscr{A}_{\mathfrak{t}}^{\mathbf{T},+}\mathscr{C}_{N^{\beta_{X}}}^{\mathbf{X},-}(\mathfrak{g}_{\mathfrak{t}_{\mathrm{av}};0,0})|^{2}\right)^{1/2} \\
&\lesssim \ N^{\beta_{+}}\wt{\E}{\sup}_{0\leq\mathfrak{t}\leq\mathfrak{t}_{\mathrm{av}}} \mathfrak{t}^{2}\mathfrak{t}_{\mathrm{av}}^{-2}|\mathscr{A}_{\mathfrak{t}}^{\mathbf{T},+}\mathscr{C}_{N^{\beta_{X}}}^{\mathbf{X},-}(\mathfrak{g}_{\mathfrak{t}_{\mathrm{av}};0,0})|^{2}. \label{eq:D1B2A34}
\end{align}\normalsize\normalsize
To get \eqref{eq:D1B2A34} from the previous line, we control a process by its running supremum (take $\mathfrak{t}=\mathfrak{t}_{\mathrm{av}}$ on the RHS of this next bound):
\small\begin{align}
|\mathscr{A}_{\mathfrak{t}_{\mathrm{av}}}^{\mathbf{T},+}\mathscr{C}_{N^{\beta_{X}}}^{\mathbf{X},-}(\mathfrak{g}_{\mathfrak{t}_{\mathrm{av}};0,0})|^{2} \ &\leq \ {\sup}_{0\leq\mathfrak{t}\leq\mathfrak{t}_{\mathrm{av}}} \mathfrak{t}^{2}\mathfrak{t}_{\mathrm{av}}^{-2}|\ \mathscr{A}_{\mathfrak{t}}^{\mathbf{T},+}\mathscr{C}_{N^{\beta_{X}}}^{\mathbf{X},-}(\mathfrak{g}_{\mathfrak{t}_{\mathrm{av}};0,0})|^{2} \label{eq:D1B2A34.5}
\end{align}\normalsize\normalsize
The bound preceding \eqref{eq:D1B2A34} follows from the Chebyshev inequality for second moments applied to the probability of $\mathscr{G}_{\mathfrak{t}_{\mathrm{av}};\geq;0,0}^{\beta_{+}}$. To estimate the last supremum we would like to apply the Kipnis-Varadhan time-average estimate in Lemma \ref{lemma:KV}. Indeed, this would be a major advantage of reducing our estimates to equilibrium calculations. However, the functional that we are time-averaging via $\mathscr{A}^{\mathbf{T},+}$, which we emphasize has no indicator functions attached to it anymore in \eqref{eq:D1B2A34}, is not a spatial-average of pseudo-gradients with disjoint support. The obstruction to this being true is the cutoff present in $\mathscr{C}^{\mathbf{X},-}$. We will remove this cutoff. First, observe the following courtesy of definition of $\mathscr{C}^{\mathbf{X},-}$ from Definition \ref{definition:averagesAlocal} and the uniform boundedness of $\mathfrak{g}$:
\small\begin{align}
|\mathscr{C}^{\mathbf{X},-}_{N^{\beta_{X}}}(\mathfrak{g}_{\mathfrak{t}_{\mathrm{av}};\mathfrak{r},0}) - \mathscr{A}^{\mathbf{X},-}_{N^{\beta_{X}}}(\mathfrak{g}_{\mathfrak{t}_{\mathrm{av}};\mathfrak{r},0})| \ &\lesssim \ \mathbf{1}[(\mathscr{E}_{\mathfrak{t}_{\mathrm{av}};\mathfrak{r}}^{\mathbf{X},-})^{C}]. \label{eq:D1B2A35}
\end{align}\normalsize\normalsize
We will use \eqref{eq:D1B2A35} to replace the $\mathscr{C}^{\mathbf{X},-}$ term \eqref{eq:D1B2A34} by the spatial-average $\mathscr{A}^{\mathbf{X},-}$ without cutoff. In particular, we will employ the following estimate with universal implied constant uniformly over $0\leq\mathfrak{t}\leq\mathfrak{t}_{\mathrm{av}}$. We will use notation we define afterwards:
\begin{align*}
\mathfrak{t}^{2}\mathfrak{t}_{\mathrm{av}}^{-2}|\mathscr{A}_{\mathfrak{t}}^{\mathbf{T},+}\mathscr{C}_{N^{\beta_{X}}}^{\mathbf{X},-}(\mathfrak{g}_{\mathfrak{t}_{\mathrm{av}};0,0})|^{2} \ &= \ \mathfrak{t}^{2}\mathfrak{t}_{\mathrm{av}}^{-2}|\mathscr{A}_{\mathfrak{t}}^{\mathbf{T},+}\mathscr{A}_{N^{\beta_{X}}}^{\mathbf{X},-}(\mathfrak{g}_{\mathfrak{t}_{\mathrm{av}};0,0}) \ + \ \mathscr{A}_{\mathfrak{t}}^{\mathbf{T},+}\mathscr{C}_{N^{\beta_{X}}}^{\mathbf{X},-}(\mathfrak{g}_{\mathfrak{t}_{\mathrm{av}};0,0}) - \mathscr{A}_{\mathfrak{t}}^{\mathbf{T},+}\mathscr{A}_{N^{\beta_{X}}}^{\mathbf{X},-}(\mathfrak{g}_{\mathfrak{t}_{\mathrm{av}};0,0})|^{2} \\
&\lesssim \ \mathfrak{t}^{2}\mathfrak{t}_{\mathrm{av}}^{-2}|\mathscr{A}_{\mathfrak{t}}^{\mathbf{T},+}\mathscr{A}_{N^{\beta_{X}}}^{\mathbf{X},-}(\mathfrak{g}_{\mathfrak{t}_{\mathrm{av}};0,0})|^{2} \ + \ \mathfrak{t}^{2}\mathfrak{t}_{\mathrm{av}}^{-2}|\mathscr{A}_{\mathfrak{t}}^{\mathbf{T},+}\mathscr{C}_{N^{\beta_{X}}}^{\mathbf{X},-}(\mathfrak{g}_{\mathfrak{t}_{\mathrm{av}};0,0}) - \mathscr{A}_{\mathfrak{t}}^{\mathbf{T},+}\mathscr{A}_{N^{\beta_{X}}}^{\mathbf{X},-}(\mathfrak{g}_{\mathfrak{t}_{\mathrm{av}};0,0})|^{2} \\
&\lesssim \ \mathfrak{t}^{2}\mathfrak{t}_{\mathrm{av}}^{-2}|\mathscr{A}_{\mathfrak{t}}^{\mathbf{T},+}\mathscr{A}_{N^{\beta_{X}}}^{\mathbf{X},-}(\mathfrak{g}_{\mathfrak{t}_{\mathrm{av}};0,0})|^{2} \ + \ \mathfrak{t}^{2}\mathfrak{t}_{\mathrm{av}}^{-2}|\mathscr{A}_{\mathfrak{t}}^{\mathbf{T},+}(\mathbf{1}[(\mathscr{E}_{\mathfrak{t}_{\mathrm{av}};0}^{\mathbf{X},-})^{C}])|^{2}.
\end{align*}
The second bound follows by the elementary inequality $|a+b|^{2} \lesssim |a|^{2}+|b|^{2}$ for all $a,b\in\R$. The last bound follows by \eqref{eq:D1B2A35}, where in the last bound the term $\mathscr{A}_{\mathfrak{t}}^{\mathbf{T},+}(\mathbf{1}[(\mathscr{E}_{\mathfrak{t}_{\mathrm{av}};\mathfrak{r}}^{\mathbf{X},-})^{C}])$ is the time-average over $[0,\mathfrak{t}]$ of the random indicator function $\mathbf{1}[(\mathscr{E}_{\mathfrak{t}_{\mathrm{av}};\mathfrak{r}}^{\mathbf{X},-})^{C}]$. To bound the second term in the last line, we use the Cauchy-Schwarz inequality with respect to the time \emph{integral} $\mathfrak{t}\mathscr{A}_{\mathfrak{t}}^{\mathbf{T},+}$:
\begin{align*}
\mathfrak{t}^{2}\mathfrak{t}_{\mathrm{av}}^{-2}|\mathscr{A}_{\mathfrak{t}}^{\mathbf{T},+}(\mathbf{1}[(\mathscr{E}_{\mathfrak{t}_{\mathrm{av}};0}^{\mathbf{X},-})^{C}])|^{2} \ = \ \mathfrak{t}_{\mathrm{av}}^{-2}|\left(\int_{0}^{\mathfrak{t}}\mathbf{1}[(\mathscr{E}_{\mathfrak{t}_{\mathrm{av}};\mathfrak{r}}^{\mathbf{X},-})^{C}] \ \d\mathfrak{r}\right)|^{2} \ \leq \ \mathfrak{t}_{\mathrm{av}}^{-2}\mathfrak{t}\int_{0}^{\mathfrak{t}}\mathbf{1}[(\mathscr{E}_{\mathfrak{t}_{\mathrm{av}};\mathfrak{r}}^{\mathbf{X},-})^{C}] \ \d\mathfrak{r} \ &\leq \ \mathfrak{t}_{\mathrm{av}}^{-1}\int_{0}^{\mathfrak{t}_{\mathrm{av}}}\mathbf{1}[(\mathscr{E}_{\mathfrak{t}_{\mathrm{av}};\mathfrak{r}}^{\mathbf{X},-})^{C}] \ \d\mathfrak{r}.
\end{align*}
The first identity follows by cancelling the $\mathfrak{t}^{-2}$ factor in the square of the time-average on the far LHS with the $\mathfrak{t}^{2}$-factor therein. The second estimate follows from the Cauchy-Schwarz inequality with respect to the time-integral. The third estimate follows by extending the integration domain $[0,\mathfrak{t}]\to[0,\mathfrak{t}_{\mathrm{av}}]$ as $0\leq\mathfrak{t}\leq\mathfrak{t}_{\mathrm{av}}$ and thus noting $\mathfrak{t}_{\mathrm{av}}^{-1}\mathfrak{t}\leq1$. The last two displays give
\small\begin{align}
\wt{\E}{\sup}_{0\leq\mathfrak{t}\leq\mathfrak{t}_{\mathrm{av}}} \mathfrak{t}^{2}\mathfrak{t}_{\mathrm{av}}^{-2}|\mathscr{A}_{\mathfrak{t}}^{\mathbf{T},+}\mathscr{C}_{N^{\beta_{X}}}^{\mathbf{X},-}(\mathfrak{g}_{\mathfrak{t}_{\mathrm{av}};0,0})|^{2} \ &\lesssim \ \wt{\E}{\sup}_{0\leq\mathfrak{t}\leq\mathfrak{t}_{\mathrm{av}}} \mathfrak{t}^{2}\mathfrak{t}_{\mathrm{av}}^{-2}|\mathscr{A}_{\mathfrak{t}}^{\mathbf{T},+}\mathscr{A}_{N^{\beta_{X}}}^{\mathbf{X},-}(\mathfrak{g}_{\mathfrak{t}_{\mathrm{av}};0,0})|^{2} \ + \ \mathfrak{t}_{\mathrm{av}}^{-1}\int_{0}^{\mathfrak{t}_{\mathrm{av}}}\wt{\E}\mathbf{1}[(\mathscr{E}_{\mathfrak{t}_{\mathrm{av}};\mathfrak{r}}^{\mathbf{X},-})^{C}]\d\mathfrak{r}. \label{eq:D1B2A36}
\end{align}\normalsize\normalsize
We observe the final expectation in the second term of \eqref{eq:D1B2A36} is the probability of an event in $\Omega_{\mathfrak{I}_{\mathfrak{t}_{\mathrm{av}}}}$ under a canonical ensemble. Indeed, the event therein depends on only statistics at time $\mathfrak{r}$ under the $\mathfrak{I}_{\mathfrak{t}_{\mathrm{av}}}$-local exclusion process, and the canonical ensemble initial measure is invariant for this exclusion process. In particular, it is the probability under canonical ensemble measure that the spatial-average of $N^{\beta_{X}}$-many pseudo-gradients with disjoint support exceeds its natural ``CLT" bound. Combining \eqref{eq:D1B2A36} with this observation and the canonical ensemble estimate of Lemma \ref{lemma:LDP} with $\varphi_{j} = \tau_{-7j\mathfrak{m}}\mathfrak{g}_{0,0}$ and $J = N^{\beta_{X}}$ gives the bound
\small\begin{align}
\wt{\E}{\sup}_{0\leq\mathfrak{t}\leq\mathfrak{t}_{\mathrm{av}}} \mathfrak{t}^{2}\mathfrak{t}_{\mathrm{av}}^{-2}|\mathscr{A}_{\mathfrak{t}}^{\mathbf{T},+}\mathscr{C}_{N^{\beta_{X}}}^{\mathbf{X},-}(\mathfrak{g}_{\mathfrak{t}_{\mathrm{av}};0,0})|^{2} \ &\lesssim \ \wt{\E}{\sup}_{0\leq\mathfrak{t}\leq\mathfrak{t}_{\mathrm{av}}} \mathfrak{t}^{2}\mathfrak{t}_{\mathrm{av}}^{-2}|\mathscr{A}_{\mathfrak{t}}^{\mathbf{T},+}\mathscr{A}_{N^{\beta_{X}}}^{\mathbf{X},-}(\mathfrak{g}_{\mathfrak{t}_{\mathrm{av}};0,0})|^{2} \ + \ N^{-100}. \label{eq:D1B2A37}
\end{align}\normalsize\normalsize
The error term $N^{-100}$ in \eqref{eq:D1B2A37} can be replaced by something exponentially small if desired. Anyway, the first term on the RHS of \eqref{eq:D1B2A37} is now a space-time average of pseudo-gradients with disjoint support free of any indicator functions or cutoffs. Thus may use Lemma \ref{lemma:KV} with $\varphi$ the $\mathscr{A}^{\mathbf{X},-}$ spatial average and Lemma \ref{lemma:SpectralH-1} with $J=N^{\beta_{X}}$ and $\varphi_{j}=\tau_{-7j\mathfrak{m}}\mathfrak{g}_{0,0}$ with support $\mathfrak{I}_{j}$; note that in the following estimate, we use the uniform boundedness of $|\mathfrak{I}_{j}|$-cardinalities and of the shifted pseudo-gradients $\varphi_{j}$:
\small\begin{align}
\wt{\E}\sup_{0\leq\mathfrak{t}\leq\mathfrak{t}_{\mathrm{av}}} \mathfrak{t}^{2}\mathfrak{t}_{\mathrm{av}}^{-2}|\mathscr{A}_{\mathfrak{t}}^{\mathbf{T},+}\mathscr{A}_{N^{\beta_{X}}}^{\mathbf{X},-}(\mathfrak{g}_{\mathfrak{t}_{\mathrm{av}};0,0})|^{2} \ \lesssim \ \mathfrak{t}_{\mathrm{av}}^{-1} \|\mathscr{A}_{N^{\beta_{X}}}^{\mathbf{X},-}(\mathfrak{g}_{\mathfrak{t}_{\mathrm{av}};0,0})\|_{\dot{\mathbf{H}}^{-1}_{\varrho,\mathfrak{I}_{\mathfrak{t}_{\mathrm{av}}}}}^{2} \ &\lesssim \ \mathfrak{t}_{\mathrm{av}}^{-1}N^{-\beta_{X}} \sup_{j=1,\ldots,J}\sup_{\varrho_{j}\in\R}\|\varphi_{j}\|_{\dot{\mathbf{H}}^{-1}_{\varrho_{j},\mathfrak{I}_{j}}}^{2} \nonumber \\
&\lesssim \ N^{-2-\beta_{X}}\mathfrak{t}_{\mathrm{av}}^{-1}. \label{eq:D1B2A38}
\end{align}\normalsize\normalsize
We combine \eqref{eq:D1B2A32}, \eqref{eq:D1B2A33}, \eqref{eq:D1B2A34}, \eqref{eq:D1B2A37}, and \eqref{eq:D1B2A38} to get the following in which we use $\mathfrak{t}_{\mathfrak{f}}N^{\beta_{+}}N^{-100}\lesssim N^{-97}$:
\small\begin{align}
\mathfrak{t}_{\mathfrak{f}} \E^{\mu_{0,\Z}}\bar{\mathfrak{f}}_{\mathfrak{t}_{\mathfrak{s}},\mathfrak{t}_{\mathfrak{f}},\mathbb{T}_{N}}^{\mathfrak{I}_{\mathfrak{t}_{\mathrm{av}}}} \E_{\Pi_{\mathfrak{I}_{\mathfrak{t}_{\mathrm{av}}}}\eta_{0}}^{\mathrm{loc}}|\mathscr{C}_{\mathfrak{t}_{\mathrm{av}};\beta_{-},\beta_{+}}^{\mathbf{T},+,\mathfrak{t}_{N,\e},\mathfrak{i}}\mathscr{C}_{N^{\beta_{X}}}^{\mathbf{X},-}(\mathfrak{g}_{\mathfrak{t}_{\mathrm{av}};0,0})| \ &\lesssim \ N^{-\beta_{-}-\frac34+\e}|\mathbb{T}_{N}|^{-1}|\mathfrak{I}_{\mathfrak{t}_{\mathrm{av}}}|^{3} \ + \ \mathfrak{t}_{\mathfrak{f}}N^{-2-\beta_{X}+\beta_{+}}\mathfrak{t}_{\mathrm{av}}^{-1} \ + \ N^{-97}.
\end{align}\normalsize\normalsize
The LHS of this last estimate is the LHS of the proposed estimate in the current lemma but without the $N$-dependent prefactor in the proposed estimate. Putting this $N$-dependent prefactor back in the above and power-counting completes the proof upon recalling that $|\mathfrak{I}_{\mathfrak{t}_{\mathrm{av}}}| \lesssim_{\e}N^{1+\e}\mathfrak{t}_{\mathrm{av}}^{1/2} + N^{3/2+\e}\mathfrak{t}_{\mathrm{av}}+ N^{\beta_{X}+\e}$ for any $\e\in\R_{>0}$ and recalling that $|\mathbb{T}_{N}|\gtrsim N^{5/4+\e_{\mathrm{cpt}}}$. We appeal to Definition \ref{definition:tAvBlock} and the beginning of Section \ref{section:Ctify} for these length-scale estimates, respectively.
\end{proof}
\begin{proof}[Proof of \emph{Lemma \ref{lemma:D1B2B1}}]
The first estimate follows from the proof for Lemma \ref{lemma:D1B2A1} but choosing $\mathfrak{t}_{N}\overset{\bullet}=T-N^{-\beta_{X}+\beta_{-}-\e}$ therein. To prove the second estimate, we follow the proof of Lemma \ref{lemma:D1B2A2} upon replacing $\mathscr{C}^{\mathbf{X},-}(\mathfrak{g}) \to \wt{\mathfrak{g}}^{\mathfrak{l}}$ and using the same argument. We note that in following the argument of Lemma \ref{lemma:D1B2A2}, we start with the $\|\|_{\mathfrak{t}_{\mathfrak{f}};\mathbb{T}_{N}}$-bound obtained in the first estimate of the current lemma with $\varphi$ equal to the $\mathscr{C}^{\mathbf{T},+,\mathfrak{t}_{N,\e},\mathfrak{i}}$-term, that certainly satisfies the constraints for $\varphi$ in the first estimate of the current lemma, instead of the estimate of Lemma \ref{lemma:D1B2A1}. Precisely, after doing this and then taking expectation we get, for any $\e\in\R_{>0}$,
\small\begin{align}
\E\|\bar{\mathbf{H}}_{T,x}^{N}(N^{\beta_{X}}|\mathscr{C}_{\mathfrak{t}_{\mathrm{av}};\beta_{-},\beta_{+}}^{\mathbf{T},+,\mathfrak{t}_{N,\e},\mathfrak{i}}(\wt{\mathfrak{g}}_{S+\mathfrak{t}_{\mathfrak{s}},y}^{\mathfrak{l}})|)\|_{\mathfrak{t}_{\mathfrak{f}};\mathbb{T}_{N}} \ &\lesssim \ N^{\frac32\beta_{X}+\frac14+\e_{\mathrm{cpt}}+\frac12\e-\frac12\beta_{-}}\int_{0}^{\mathfrak{t}_{\mathfrak{f}}}\wt{\sum}_{y\in\mathbb{T}_{N}}\E|\mathscr{C}_{\mathfrak{t}_{\mathrm{av}};\beta_{-},\beta_{+}}^{\mathbf{T},+,\mathfrak{t}_{N,\e},\mathfrak{i}}(\wt{\mathfrak{g}}_{S+\mathfrak{t}_{\mathfrak{s}},y}^{\mathfrak{l}})| \ \d S \ + \ N^{-\e}.
\end{align}\normalsize\normalsize
At this point, we use the same procedure of decomposing expectations via path-space measure expectations and initial configuration expectations along with the space-time-shift invariance of the law of the path-space measure.
\end{proof}
\begin{proof}[Proof of \emph{Lemma \ref{lemma:SoP2}}]
This follows by the same coupling construction as was used to establish Lemma \ref{lemma:SoP}. Indeed, note the support of $\wt{\mathfrak{g}}^{\mathfrak{l}}$ is also bounded above in size by $N^{\beta_{X}}$ times universal factors, so in the proof of Lemma \ref{lemma:SoP2} we are also interested here in looking at when discrepancies in the coupling from the proof of Lemma \ref{lemma:SoP} get within $N^{\beta_{X}}\log^{50}N$ of $x\in\Z$.
\end{proof}
\begin{proof}[Proof of \emph{Lemma \ref{lemma:D1B2B3}}]
Observe the inner expectation on the LHS of the proposed estimate is a functional on the configuration space $\Omega_{\mathfrak{I}_{\mathfrak{t}_{\mathrm{av}}}}$ since dependence on the path-space measure induced by the $\mathfrak{I}_{\mathfrak{t}_{\mathrm{av}}}$-local process goes away given we take expectation over this path-space measure \emph{conditioning} on the initial particle configuration. This is actually the same observation we made at the beginning of the proof of Lemma \ref{lemma:D1B2A3}. The outer expectation on the LHS of the proposed estimate is thus an expectation of a $\mathfrak{I}_{\mathfrak{t}_{\mathrm{av}}}$-statistic with respect to the $\bar{\mathfrak{f}}$ probability density which we recall is a space-time averaged probability density. We may therefore apply the local equilibrium estimate/entropy inequality from Lemma \ref{lemma:LE} with $\kappa = N^{\beta_{-}}$, with $\mathfrak{I}_{N}=\mathbb{T}_{N}$, and with $\varphi$ the inner expectation on the LHS of the proposed estimate. We emphasize that choosing $\mathfrak{I}_{N}=\mathbb{T}_{N}$ in this application of Lemma \ref{lemma:LE} is okay as $\mathbb{T}_{N}$ satisfies assumptions of Proposition \ref{prop:EProd}. We also emphasize our choice of $\varphi$ has support $\mathfrak{I}_{\mathfrak{t}_{\mathrm{av}}}$. We finally note that in applying Lemma \ref{lemma:LE} we take $\mathfrak{t}_{0}$ therein to be $\mathfrak{t}_{\mathfrak{s}}$ here and $T$ therein to be $\mathfrak{t}_{\mathfrak{f}}$ here. We ultimately get
\small\begin{align}
\mathfrak{t}_{\mathfrak{f}} \E^{\mu_{0,\Z}}\bar{\mathfrak{f}}_{\mathfrak{t}_{\mathfrak{s}},\mathfrak{t}_{\mathfrak{f}},\mathbb{T}_{N}}^{\mathfrak{I}_{\mathfrak{t}_{\mathrm{av}}}}\E_{\Pi_{\mathfrak{I}_{\mathfrak{t}_{\mathrm{av}}}}\eta_{0}}^{\mathrm{loc}}|\mathscr{C}_{\mathfrak{t}_{\mathrm{av}};\beta_{-},\beta_{+}}^{\mathbf{T},+,\mathfrak{t}_{N,\e},\mathfrak{i}}(\wt{\mathfrak{g}}_{\mathfrak{t}_{\mathrm{av}};0,0}^{\mathfrak{l}})| \ &\lesssim \ N^{-\beta_{-}-\frac34+\e}|\mathbb{T}_{N}|^{-1}|\mathfrak{I}_{\mathfrak{t}_{\mathrm{av}}}|^{3} \ + \ \mathfrak{t}_{\mathfrak{f}}\sup_{\varrho\in\R}\wt{\Psi}_{\varrho} \label{eq:D1B2B32}
\end{align}\normalsize\normalsize
where again $\e\in\R_{>0}$ is arbitrarily small but universal and we defined the equilibrium log-exp term
\small\begin{align}
\wt{\Psi}_{\varrho} \ &\overset{\bullet}= \ N^{-\beta_{-}}\log \E^{\mu_{\varrho,\mathfrak{I}_{\mathfrak{t}_{\mathrm{av}}}}^{\mathrm{can}}}\exp\left(N^{\beta_{-}}\E_{\Pi_{\mathfrak{I}_{\mathfrak{t}_{\mathrm{av}}}}\eta_{0}}^{\mathrm{loc}}|\mathscr{C}_{\mathfrak{t}_{\mathrm{av}};\beta_{-},\beta_{+}}^{\mathbf{T},+,\mathfrak{t}_{N,\e},\mathfrak{i}}(\wt{\mathfrak{g}}_{\mathfrak{t}_{\mathrm{av}};0,0}^{\mathfrak{l}})|\right).
\end{align}\normalsize\normalsize
The first term on the RHS of \eqref{eq:D1B2B32} is handled again by power-counting which we do at the end of the proof. The analysis of the second term on the RHS of \eqref{eq:D1B2B32} is addressed in similar fashion as the $\Psi_{\varrho}$-terms from the proof of Proposition \ref{prop:D1B2A} except now it is actually a little easier as we illustrate. We recall from Definition \ref{definition:averagesBlocal} that the expectation inside the exponential defining $\wt{\Psi}$-quantities is bounded by $N^{-\beta_{-}}$ times universal factors by definition. In particular, the term inside of the exponential defining $\wt{\Psi}_{\varrho}$ is uniformly bounded. This is identical to what we observed for $\Psi_{\varrho}$-terms in the proof for Proposition \ref{prop:D1B2A}. Thus, we use the exponential and logarithm bounds given prior to \eqref{eq:D1B2A33} in the proof of Proposition \ref{prop:D1B2A} to get, with universal implied constant,
\small\begin{align}
\wt{\Psi}_{\varrho} \ &\lesssim \ \E^{\mu_{\varrho,\mathfrak{I}_{\mathfrak{t}_{\mathrm{av}}}}^{\mathrm{can}}}\E_{\Pi_{\mathfrak{I}_{\mathfrak{t}_{\mathrm{av}}}}\eta_{0}}^{\mathrm{loc}}|\mathscr{C}_{\mathfrak{t}_{\mathrm{av}};\beta_{-},\beta_{+}}^{\mathbf{T},+,\mathfrak{t}_{N,\e},\mathfrak{i}}(\wt{\mathfrak{g}}_{\mathfrak{t}_{\mathrm{av}};0,0}^{\mathfrak{l}})|. \label{eq:D1B2B33}
\end{align}\normalsize\normalsize
At this we point we will perform equilibrium calculations like with the proof for Lemma \ref{lemma:D1B2A3} after \eqref{eq:D1B2A33}. First, we follow the proof of \eqref{eq:D1B2A34} via the Cauchy-Schwarz inequality to estimate the last expectation from the RHS of \eqref{eq:D1B2B33} in terms of a second moment of the time-average $\mathscr{A}^{\mathbf{T},+}$ without any cutoff and the probability of the lower-bound-cutoff-event defining $\mathscr{F}^{\beta_{+}}$ from Definition \ref{definition:averagesBlocal}. Indeed observe that $\mathscr{C}^{\mathbf{T},+,\mathfrak{t}_{N,\e},\mathfrak{i}}$ time-averaging is, in absolute value, controlled by $\mathscr{A}^{\mathbf{T},+}$ time-averaging times the indicator function of this last $\mathscr{F}^{\beta_{+}}$-event. To estimate the probability of this lower-bound-cutoff-event $\mathscr{F}^{\beta_{+}}$ we employ the Chebyshev inequality for a second moment and observe that at the level of expectations the time-shift $\mathfrak{t}_{N,\e}\in\R_{\geq0}$ is irrelevant. This is because the canonical ensemble initial measure in our iterated expectation within the RHS of \eqref{eq:D1B2B33} is invariant for the $\mathfrak{I}_{\mathfrak{t}_{\mathrm{av}}}$-local process, so the time-shift $\mathfrak{t}_{N,\e}$ does not affect the law of $\mathscr{A}^{\mathbf{T},+}$-terms. If $\wt{\E}$ is the double expectation in\eqref{eq:D1B2B33}, we obtain the following analog of the estimate \eqref{eq:D1B2A34} from the proof of Lemma \ref{lemma:D1B2A3}:
\small\begin{align}
\wt{\E}|\mathscr{C}_{\mathfrak{t}_{\mathrm{av}};\beta_{-},\beta_{+}}^{\mathbf{T},+,\mathfrak{t}_{N,\e},\mathfrak{i}}(\wt{\mathfrak{g}}_{\mathfrak{t}_{\mathrm{av}};0,0}^{\mathfrak{l}})| \ &\lesssim \ \left(\wt{\E}|\mathscr{A}_{\mathfrak{t}_{\mathrm{av}}}^{\mathbf{T},+}(\wt{\mathfrak{g}}_{\mathfrak{t}_{\mathrm{av}};0,0}^{\mathfrak{l}})|^{2}\right)^{\frac12}\left(\wt{\E}\mathbf{1}[\mathscr{F}_{\mathfrak{t}_{\mathrm{av}};\geq;0,0}^{\beta_{+}}]\right)^{\frac12} \\
&\lesssim \ \left(\wt{\E}|\mathscr{A}_{\mathfrak{t}_{\mathrm{av}}}^{\mathbf{T},+}(\wt{\mathfrak{g}}_{\mathfrak{t}_{\mathrm{av}};0,0}^{\mathfrak{l}})|^{2}\right)^{\frac12}\left(N^{2\beta_{+}}\wt{\E}\sup_{0\leq\mathfrak{t}\leq\mathfrak{t}_{\mathrm{av}}}\mathfrak{t}^{2}\mathfrak{t}_{\mathrm{av}}^{-2}|\mathscr{A}_{\mathfrak{t}}^{\mathbf{T},+}(\wt{\mathfrak{g}}_{\mathfrak{t}_{\mathrm{av}};0,0}^{\mathfrak{l}})|^{2}\right)^{\frac12} \\
&\lesssim \ N^{\beta_{+}}\wt{\E}\sup_{0\leq\mathfrak{t}\leq\mathfrak{t}_{\mathrm{av}}}\mathfrak{t}^{2}\mathfrak{t}_{\mathrm{av}}^{-2}|\mathscr{A}_{\mathfrak{t}}^{\mathbf{T},+}(\wt{\mathfrak{g}}_{\mathfrak{t}_{\mathrm{av}};0,0}^{\mathfrak{l}})|^{2}. \label{eq:D1B2B34}
\end{align}\normalsize\normalsize
To get \eqref{eq:D1B2B34} from the the preceding line, as with the proof of \eqref{eq:D1B2A34} we used a bound for a process by a running supremum:
\small\begin{align}
|\mathscr{A}_{\mathfrak{t}_{\mathrm{av}}}^{\mathbf{T},+}(\wt{\mathfrak{g}}_{\mathfrak{t}_{\mathrm{av}};0,0}^{\mathfrak{l}})|^{2} \ \lesssim \ \sup_{0\leq\mathfrak{t}\leq\mathfrak{t}_{\mathrm{av}}}\mathfrak{t}^{2}\mathfrak{t}_{\mathrm{av}}^{-2}|\mathscr{A}_{\mathfrak{t}}^{\mathbf{T},+}(\wt{\mathfrak{g}}_{\mathfrak{t}_{\mathrm{av}};0,0}^{\mathfrak{l}})|^{2}. \label{eq:D1B2B34.5}
\end{align}\normalsize\normalsize
We now estimate the remaining $\wt{\E}$-expectation in \eqref{eq:D1B2B34}. First we apply the Kipnis-Varadhan inequality of Lemma \ref{lemma:KV} with $\varphi$ therein equal to $\wt{\mathfrak{g}}^{\mathfrak{l}}$ here and $\mathfrak{I}$ therein equal to $\mathfrak{I}_{\mathfrak{t}_{\mathrm{av}}}$ here. This controls the last $\wt{\E}$-expectation in terms of the Sobolev norm of $\wt{\mathfrak{g}}^{\mathfrak{l}}$. To control this Sobolev norm, we employ Lemma \ref{lemma:H-1SpectralPGF} to bound it by the Sobolev norm of the pseudo-gradient factor inside $\wt{\mathfrak{g}}^{\mathfrak{l}}$. This last Sobolev norm of the pseudo-gradient factor in $\wt{\mathfrak{g}}^{\mathfrak{l}}$, which is uniformly bounded and has uniformly bounded support, is then controlled by its uniform bound and a speed factor $N^{-2}$ by Lemma \ref{lemma:SpectralH-1}. We ultimately get
\small\begin{align}
\wt{\E}\sup_{0\leq\mathfrak{t}\leq\mathfrak{t}_{\mathrm{av}}}\mathfrak{t}^{2}\mathfrak{t}_{\mathrm{av}}^{-2}|\mathscr{A}_{\mathfrak{t}}^{\mathbf{T},+}(\wt{\mathfrak{g}}_{\mathfrak{t}_{\mathrm{av}};0,0}^{\mathfrak{l}})|^{2} \ \lesssim \ \mathfrak{t}_{\mathrm{av}}^{-1}\|\wt{\mathfrak{g}}^{\mathfrak{l}}\|_{\dot{\mathbf{H}}_{\varrho,\mathfrak{I}_{\mathfrak{t}_{\mathrm{av}}}}^{-1}}^{2} \ \lesssim \ N^{-2}\mathfrak{t}_{\mathrm{av}}^{-1}. \label{eq:D1B2B35}
\end{align}\normalsize\normalsize
We combine \eqref{eq:D1B2B33}, \eqref{eq:D1B2B34}, and \eqref{eq:D1B2B35} to bound $\wt{\Psi}$-terms. Combining the resulting $\wt{\Psi}$-bound with \eqref{eq:D1B2B32} then gives
\small\begin{align}
\mathfrak{t}_{\mathfrak{f}} \E^{\mu_{0,\Z}}\bar{\mathfrak{f}}_{\mathfrak{t}_{\mathfrak{s}},\mathfrak{t}_{\mathfrak{f}},\mathbb{T}_{N}}^{\mathfrak{I}_{\mathfrak{t}_{\mathrm{av}}}}\E_{\Pi_{\mathfrak{I}_{\mathfrak{t}_{\mathrm{av}}}}\eta_{0}}^{\mathrm{loc}}|\mathscr{C}_{\mathfrak{t}_{\mathrm{av}};\beta_{-},\beta_{+}}^{\mathbf{T},+,\mathfrak{t}_{N,\e},\mathfrak{i}}(\wt{\mathfrak{g}}_{\mathfrak{t}_{\mathrm{av}};0,0}^{\mathfrak{l}})| \ &\lesssim \ N^{-\beta_{-}-\frac34+\e}|\mathbb{T}_{N}|^{-1}|\mathfrak{I}_{\mathfrak{t}_{\mathrm{av}}}|^{3} \ + \ \mathfrak{t}_{\mathfrak{f}} N^{-2+\beta_{+}}\mathfrak{t}_{\mathrm{av}}^{-1}. \label{eq:D1B2B36}
\end{align}\normalsize\normalsize
We recall $|\mathbb{T}_{N}|\gtrsim N^{5/4+\e_{\mathrm{cpt}}}$ from Section \ref{section:Ctify} and the length-scale $|\mathfrak{I}_{\mathfrak{t}_{\mathrm{av}}}| \lesssim_{\e}N^{1+\e}\mathfrak{t}_{\mathrm{av}}^{1/2} + N^{3/2+\e}\mathfrak{t}_{\mathrm{av}}+ N^{\beta_{X}+\e}$; power-counting gives the following bound for the RHS of \eqref{eq:D1B2B36} in which we recall $\e_{X}=\max_{i}\e_{X,i}$ from Lemma \ref{lemma:D1B2B3}, for example:
\small\begin{align}
N^{\frac32\beta_{X}+\frac14+\e_{\mathrm{cpt}}+\frac12\e-\frac12\beta_{-}}\mathfrak{t}_{\mathfrak{f}} \E^{\mu_{0,\Z}}\bar{\mathfrak{f}}_{\mathfrak{t}_{\mathfrak{s}},\mathfrak{t}_{\mathfrak{f}},\mathbb{T}_{N}}^{\mathfrak{I}_{\mathfrak{t}_{\mathrm{av}}}}\E_{\Pi_{\mathfrak{I}_{\mathfrak{t}_{\mathrm{av}}}}\eta_{0}}^{\mathrm{loc}}|\mathscr{C}_{\mathfrak{t}_{\mathrm{av}};\beta_{-},\beta_{+}}^{\mathbf{T},+,\mathfrak{t}_{N,\e},\mathfrak{i}}(\wt{\mathfrak{g}}_{\mathfrak{t}_{\mathrm{av}};0,0}^{\mathfrak{l}})| \ &\lesssim \ N^{-\frac18-\frac34\beta_{-}+10\e_{X}} + N^{-\frac12+10\e_{X}} + \mathfrak{t}_{\mathfrak{f}}N^{-\beta_{\mathrm{univ}}}. \label{eq:D1B2B37}
\end{align}\normalsize\normalsize
This last estimate is the proposed estimate of the current lemma as it holds uniformly in $\mathfrak{l}\in\llbracket1,N^{\beta_{X}}\rrbracket$, so we are done.
\end{proof}
\begin{proof}[Proof of \emph{Lemma \ref{lemma:D1B1A1}}]
Take any $T\in[0,1]$ and $x\in\mathbb{T}_{N}$. To get the first bound, we employ the Cauchy-Schwarz inequality with respect to the space-time ``integral" against the heat operator and the fact that the heat kernel is a probability measure on $\mathbb{T}_{N}$:
\small\begin{align}
\bar{\mathbf{H}}_{T,x}^{N}(N^{\frac12}|\varphi_{S,y}|) \ = \ \int_{0}^{T}\mathfrak{s}_{S,T}^{-\frac14}\mathfrak{s}_{S,T}^{\frac14}\sum_{y\in\mathbb{T}_{N}}\bar{\mathbf{H}}_{S,T,x,y}^{N} \cdot N^{\frac12}|\varphi_{S,y}| \ \d S \ &\lesssim \ \left(\int_{0}^{T}\mathfrak{s}_{S,T}^{-\frac12}\d S\right)^{\frac12}\left(\int_{0}^{T}\mathfrak{s}_{S,T}^{\frac12}\sum_{y\in\mathbb{T}_{N}}\bar{\mathbf{H}}_{S,T,x,y}^{N} \cdot N|\varphi_{S,y}|^{2} \ \d S \right)^{\frac12} \nonumber \\
&\lesssim \ \left(\int_{0}^{T}\mathfrak{s}_{S,T}^{\frac12}\sum_{y\in\mathbb{T}_{N}}\bar{\mathbf{H}}_{S,T,x,y}^{N} \cdot N|\varphi_{S,y}|^{2} \ \d S \right)^{\frac12}. \label{eq:D1B1A11}
\end{align}\normalsize\normalsize
The final bound \eqref{eq:D1B1A11} follows by integrating the first integral in the preceding bound. We now use the on-diagonal heat kernel estimate implied by \eqref{eq:HKENash}. As $|\mathbb{T}_{N}|\lesssim N^{5/4+\e_{\mathrm{cpt}}}$, we get what follows; the last line follows by extending the integration-domain:
\small\begin{align}
&\left(\int_{0}^{T}\mathfrak{s}_{S,T}^{\frac12}\sum_{y\in\mathbb{T}_{N}}\bar{\mathbf{H}}_{S,T,x,y}^{N} \cdot N|\varphi_{S,y}|^{2} \ \d S \right)^{\frac12} \ \lesssim \ \left(N^{-1}\int_{0}^{T}\mathfrak{s}_{S,T}^{\frac12}\mathfrak{s}_{S,T}^{-\frac12}{\sum}_{y\in\mathbb{T}_{N}}N|\varphi_{S,y}|^{2} \ \d S \right)^{\frac12} \\
&\lesssim \ \left(|\mathbb{T}_{N}|\int_{0}^{T}\wt{\sum}_{y\in\mathbb{T}_{N}}|\varphi_{S,y}|^{2} \ \d S \right)^{\frac12} \ \lesssim \ \left(N^{\frac54+\e_{\mathrm{cpt}}}\int_{0}^{T}\wt{\sum}_{y\in\mathbb{T}_{N}} |\varphi_{S,y}|^{2} \ \d S \right)^{\frac12} \\
&\lesssim \ \left(N^{\frac54+\e_{\mathrm{cpt}}}\int_{0}^{1}\wt{\sum}_{y\in\mathbb{T}_{N}} |\varphi_{S,y}|^{2} \ \d S \right)^{\frac12}. \label{eq:D1B1A12}
\end{align}\normalsize\normalsize
We combine \eqref{eq:D1B1A11} and \eqref{eq:D1B1A12} to get a bound for the LHS of \eqref{eq:D1B1A11} that is uniform on $[0,1]\times\mathbb{T}_{N}$ and thus we deduce the first estimate in the current lemma. To get the second estimate in the current lemma, we employ the first estimate therein to get
\small\begin{align}
\left(\E\|\bar{\mathbf{H}}_{T,x}^{N}(N^{\frac12}|\mathscr{A}_{\mathfrak{t}_{\mathrm{av}}}^{\mathbf{T},+}\mathscr{C}_{N^{\beta_{X}}}^{\mathbf{X},-}(\mathfrak{g}_{S,y})|)\|_{1;\mathbb{T}_{N}}\right)^{2} \ &\lesssim \ \left(\E\left(N^{\frac54+\e_{\mathrm{cpt}}}\int_{0}^{1}\wt{\sum}_{y\in\mathbb{T}_{N}} |\mathscr{A}_{\mathfrak{t}_{\mathrm{av}}}^{\mathbf{T},+}\mathscr{C}_{N^{\beta_{X}}}^{\mathbf{X},-}(\mathfrak{g}_{S,y})|^{2} \ \d S \right)^{1/2}\right)^{2} \\
&\lesssim \ N^{\frac54+\e_{\mathrm{cpt}}}\int_{0}^{1}\wt{\sum}_{y\in\mathbb{T}_{N}} \E|\mathscr{A}_{\mathfrak{t}_{\mathrm{av}}}^{\mathbf{T},+}\mathscr{C}_{N^{\beta_{X}}}^{\mathbf{X},-}(\mathfrak{g}_{S,y})|^{2} \ \d S. \label{eq:D1B1A13}
\end{align}\normalsize\normalsize
The last estimate \eqref{eq:D1B1A13} follows by the Cauchy-Schwarz inequality with respect to the expectation $\E$. At this point, we follow the proof of Lemma \ref{lemma:D1B2A2} via decomposing the last expectation in \eqref{eq:D1B1A13} in terms of an expectation with respect to a path-space measure with an initial configuration that is then sampled according to the space-time averaged law of the particle system. In particular, the replacement of $\mathscr{C}^{\mathbf{T},+}\to\mathscr{A}^{\mathbf{T},+}$ and the presence of the square does not change the property of the term within the expectation in \eqref{eq:D1B1A13} being a functional of the original exclusion process on $\Omega_{\Z}$ with generator $\mathfrak{S}^{N,!!}$ which is all we needed to decompose this last expectation in \eqref{eq:D1B1A13} in terms of path-space expectation and expectation over initial configuration.
\end{proof}
\begin{proof}[Proof of \emph{Lemma \ref{lemma:D1B1A3}}]
We follow the proof of Lemma \ref{lemma:D1B2A3}. Observe the inner expectation on the LHS of the proposed estimate is a $\mathfrak{I}_{\mathfrak{t}_{\mathrm{av}}}$-local functional that is integrated against the space-time averaged law $\bar{\mathfrak{f}}$ of the particle system. We may therefore again apply local equilibrium to the $\mathfrak{I}_{\mathfrak{t}_{\mathrm{av}}}$-local inner expectation on the LHS of the proposed estimate of the current lemma via Lemma \ref{lemma:LE} and perform equilibrium calculations. To be precise, we employ Lemma \ref{lemma:LE} with $\mathfrak{I}_{N} = \mathbb{T}_{N}$, which we recall is okay because $\mathbb{T}_{N}$ satisfies constraints of Proposition \ref{prop:EProd}, with $\kappa = N^{\beta_{X}-\e_{X,1}}$, and with $\varphi$ equal to the inner expectation within the LHS of the proposed estimate of the current lemma with support $\mathfrak{I}_{\mathfrak{t}_{\mathrm{av}}}$. In this application of Lemma \ref{lemma:LE}, we will take $\mathfrak{t}_{\mathfrak{s}}$ therein to be 0 and we take $\mathfrak{t}_{\mathfrak{f}}$ to be 1. This gives the following where $\e\in\R_{>0}$ is arbitrarily small but universal:
\small\begin{align}
\E^{\mu_{0,\Z}}\bar{\mathfrak{f}}_{1,\mathbb{T}_{N}} ^{\mathfrak{I}_{\mathfrak{t}_{\mathrm{av}}}}\E_{\Pi_{\mathfrak{I}_{\mathfrak{t}_{\mathrm{av}}}}\eta_{0}}^{\mathrm{loc}}|\mathscr{A}_{\mathfrak{t}_{\mathrm{av}}}^{\mathbf{T},+}\mathscr{C}_{N^{\beta_{X}}}^{\mathbf{X},-}(\mathfrak{g}_{\mathfrak{t}_{\mathrm{av}};0,0})|^{2} \ &\lesssim \ N^{-\frac34-\beta_{X}+\e_{X,1}+\e}|\mathbb{T}_{N}|^{-1}|\mathfrak{I}_{\mathfrak{t}_{\mathrm{av}}}|^{3} \ + \ {\sup}_{\varrho\in\R}\Upsilon_{\varrho}, \label{eq:D1B1A32}
\end{align}\normalsize\normalsize
where we have introduced the following equilibrium log-exp quantities:
\small\begin{align}
\Upsilon_{\varrho} \ &\overset{\bullet}= \ N^{-\beta_{X}+\e_{X,1}}\log\E^{\mu_{\varrho,\mathfrak{I}_{\mathfrak{t}_{\mathrm{av}}}}^{\mathrm{can}}}\exp\left(N^{\beta_{X}-\e_{X,1}}\E_{\Pi_{\mathfrak{I}_{\mathfrak{t}_{\mathrm{av}}}}\eta_{0}}^{\mathrm{loc}}|\mathscr{A}_{\mathfrak{t}_{\mathrm{av}}}^{\mathbf{T},+}\mathscr{C}_{N^{\beta_{X}}}^{\mathbf{X},-}(\mathfrak{g}_{\mathfrak{t}_{\mathrm{av}};0,0})|^{2}\right).
\end{align}\normalsize\normalsize
For the $\Upsilon_{\varrho}$-terms, recall the $\mathscr{C}^{\mathbf{X},-}$-term in the exponential is bounded above by $N^{-\beta_{X}/2+\e_{X,1}/2}$ deterministically by construction in Definition \ref{definition:averagesAlocal}. Its time-average $\mathscr{A}^{\mathbf{T},+}\mathscr{C}^{\mathbf{X},-}$, therefore, satisfies the same deterministic a priori estimate, and thus the quantity in the exponential defining $\Upsilon_{\varrho}$ is uniformly bounded deterministically. Thus, the exponential and logarithm inequalities given prior to \eqref{eq:D1B2A33} in the proof of Proposition \ref{prop:D1B2A} yield the following in similar fashion as how we obtained \eqref{eq:D1B2A33}:
\small\begin{align}
\Upsilon_{\varrho} \ &\lesssim \ \E^{\mu_{\varrho,\mathfrak{I}_{\mathfrak{t}_{\mathrm{av}}}}^{\mathrm{can}}}\E_{\Pi_{\mathfrak{I}_{\mathfrak{t}_{\mathrm{av}}}}\eta_{0}}^{\mathrm{loc}}|\mathscr{A}_{\mathfrak{t}_{\mathrm{av}}}^{\mathbf{T},+}\mathscr{C}_{N^{\beta_{X}}}^{\mathbf{X},-}(\mathfrak{g}_{\mathfrak{t}_{\mathrm{av}};0,0})|^{2}. \label{eq:D1B1A33}
\end{align}\normalsize\normalsize
To bound the iterated expectation in \eqref{eq:D1B1A33}, we again employ equilibrium calculations. We have actually done this in the proof of Lemma \ref{lemma:D1B2A3}. Precisely, we follow \eqref{eq:D1B2A34.5}, \eqref{eq:D1B2A37}, and \eqref{eq:D1B2A38} in the proof for Lemma \ref{lemma:D1B2A3} to get the following in which $N^{-100}$ is ultimately negligible for the conclusions of the current lemma:
\small\begin{align}
\E^{\mu_{\varrho,\mathfrak{I}_{\mathfrak{t}_{\mathrm{av}}}}^{\mathrm{can}}}\E_{\Pi_{\mathfrak{I}_{\mathfrak{t}_{\mathrm{av}}}}\eta_{0}}^{\mathrm{loc}}|\mathscr{A}_{\mathfrak{t}_{\mathrm{av}}}^{\mathbf{T},+}\mathscr{C}_{N^{\beta_{X}}}^{\mathbf{X},-}(\mathfrak{g}_{\mathfrak{t}_{\mathrm{av}};0,0})|^{2} \ &\lesssim \ N^{-2-\beta_{X}}\mathfrak{t}_{\mathrm{av}}^{-1}+N^{-100}. \label{eq:D1B1A34} 
\end{align}\normalsize\normalsize
We now combine \eqref{eq:D1B1A32}, \eqref{eq:D1B1A33}, and \eqref{eq:D1B1A34} to deduce the desired estimate in Lemma \ref{lemma:D1B1A3} and thus complete its proof.
\end{proof}
\begin{proof}[Proof of \emph{Lemma \ref{lemma:D1B1B1}}]
The first bound follows by the proof of the first estimate in Lemma \ref{lemma:D1B1A1} by replacing $N^{1/2}$ with $N^{\beta_{X}}$ and accounting for the resulting elementary changes in power-counting. This may be screened without a calculation if we pretend $\beta_{X} = \frac12$ in which case we match the exponent $2\beta_{X}+\frac14+\e_{\mathrm{cpt}}$ in the first estimate of Lemma \ref{lemma:D1B1B1} with the exponent $\frac54+\e_{\mathrm{cpt}}$ in the first estimate of Lemma \ref{lemma:D1B1A1}. To prove the second bound in Lemma \ref{lemma:D1B1B1}, we similarly follow the proof of the second estimate in Lemma \ref{lemma:D1B1A1} after the following replacement and the same minor and elementary adjustments in power-counting which we alluded to in the proof of the first estimate of the current Lemma \ref{lemma:D1B1B1}:
\small\begin{align}
\left(N^{\frac12}, \mathscr{A}_{\mathfrak{t}_{\mathrm{av}}}^{\mathbf{T},+}\mathscr{C}_{N^{\beta_{X}}}^{\mathbf{X},-}(\mathfrak{g}_{S,y})\right) \ \to \ \left(N^{\beta_{X}}, \mathscr{A}_{\mathfrak{t}_{\mathrm{av}}}^{\mathbf{T},+}(\wt{\mathfrak{g}}_{S,y}^{\mathfrak{l}})\right).
\end{align}\normalsize\normalsize
This completes the proof.
\end{proof}
\begin{proof}[Proof of \emph{Lemma \ref{lemma:D1B1B3}}]
Looking at the beginning of the proof of Lemma \ref{lemma:D1B2B3}, we first observe the inner expectation on the LHS of the proposed estimate is a $\mathfrak{I}_{\mathfrak{t}_{\mathrm{av}}}$-local statistic, so that the iterated expectation within the LHS of the proposed estimate is that of a $\mathfrak{I}_{\mathfrak{t}_{\mathrm{av}}}$-local statistic against the space-time averaged Radon-Nikodym derivative $\bar{\mathfrak{f}}$. We may thus employ the local equilibrium estimate of Lemma \ref{lemma:LE} to reduce the problem of estimating the iterated expectation within the LHS of the proposed estimate in terms of equilibrium calculations. In particular, we employ Lemma \ref{lemma:LE} with $\mathfrak{I}_{N} = \mathbb{T}_{N}$, with $\varphi$ the inner path-space expectation on the LHS of the proposed estimate which has support $\mathfrak{I}_{\mathfrak{t}_{\mathrm{av}}}$, and with $\kappa = 1$. We clarify that this choice of $\kappa = 1$ means we will not depend on an a priori large-deviations estimate for the inner expectation that we are taking to be our local statistic $\varphi$ in this application of Lemma \ref{lemma:LE}. We also emphasize this choice $\mathfrak{I}_{N}=\mathbb{T}_{N}$ in using Lemma \ref{lemma:LE} is okay because $\mathbb{T}_{N}$ satisfies constraints on the block $\mathfrak{I}_{N}$ in Proposition \ref{prop:EProd}. Lastly, we clarify that we apply Lemma \ref{lemma:LE} with $\mathfrak{t}_{\mathfrak{s}}$ therein to be 0 and $\mathfrak{t}_{\mathfrak{f}}$ therein to be 1:
\small\begin{align}
\E^{\mu_{0,\Z}}\bar{\mathfrak{f}}_{1,\mathbb{T}_{N}}^{\mathfrak{I}_{\mathfrak{t}_{\mathrm{av}}}} \E_{\Pi_{\mathfrak{I}_{\mathfrak{t}_{\mathrm{av}}}}\eta_{0}}^{\mathrm{loc}}|\mathscr{A}_{\mathfrak{t}_{\mathrm{av}}}^{\mathbf{T},+}(\wt{\mathfrak{g}}_{\mathfrak{t}_{\mathrm{av}};0,0}^{\mathfrak{l}})|^{2} \ &\lesssim_{\e} \ N^{-\frac34+\e}|\mathbb{T}_{N}|^{-1}|\mathfrak{I}_{\mathfrak{t}_{\mathrm{av}}}|^{3} \ + \ \sup_{\varrho\in\R}\wt{\Upsilon}_{\varrho}. \label{eq:D1B1B32}
\end{align}\normalsize\normalsize
Above $\e\in\R_{>0}$ is arbitrarily small but universal. We have also introduced the following equilibrium quantities:
\small\begin{align}
\wt{\Upsilon}_{\varrho} \ &\overset{\bullet}= \ \log\E^{\mu_{\varrho,\mathfrak{I}_{\mathfrak{t}_{\mathrm{av}}}}^{\mathrm{can}}}\exp\left(\E_{\Pi_{\mathfrak{I}_{\mathfrak{t}_{\mathrm{av}}}}\eta_{0}}^{\mathrm{loc}}|\mathscr{A}_{\mathfrak{t}_{\mathrm{av}}}^{\mathbf{T},+}(\wt{\mathfrak{g}}_{\mathfrak{t}_{\mathrm{av}};0,0}^{\mathfrak{l}})|^{2}\right).
\end{align}\normalsize\normalsize
We focus on local equilibrium calculations relevant to estimating $\wt{\Upsilon}$-terms. Recall in Proposition \ref{prop:Duhamel} that the $\wt{\mathfrak{g}}^{\mathfrak{l}}$-terms are terms that admit pseudo-gradient factors, so they are uniformly bounded. The exponential and logarithm inequalities we wrote prior to the estimate \eqref{eq:D1B2A33} from the proof of Lemma \ref{lemma:D1B2A3} then give
\small\begin{align}
\wt{\Upsilon}_{\varrho} \ \lesssim \ \E^{\mu_{\varrho,\mathfrak{I}_{\mathfrak{t}_{\mathrm{av}}}}^{\mathrm{can}}}\E_{\Pi_{\mathfrak{I}_{\mathfrak{t}_{\mathrm{av}}}}\eta_{0}}^{\mathrm{loc}}|\mathscr{A}_{\mathfrak{t}_{\mathrm{av}}}^{\mathbf{T},+}(\wt{\mathfrak{g}}_{\mathfrak{t}_{\mathrm{av}};0,0}^{\mathfrak{l}})|^{2} \ \lesssim \ N^{-2}\mathfrak{t}_{\mathrm{av}}^{-1}. \label{eq:D1B1B33}
\end{align}\normalsize\normalsize
The final estimate \eqref{eq:D1B1B33} follows from the bound that is immediately to its left via \eqref{eq:D1B2B34.5} and \eqref{eq:D1B2B35} from the proof of Lemma \ref{lemma:D1B2B3}. We now combine \eqref{eq:D1B1B32} and \eqref{eq:D1B1B33} to deduce the desired estimate of Lemma \ref{lemma:D1B1B3}.
\end{proof}
\begin{proof}[Proof of \emph{Lemma \ref{lemma:S1B1}}]
We may follow the proof of the second estimate in the statement of Lemma \ref{lemma:D1B1A1} using the first estimate therein but with the function $\varphi_{S,y} = \mathscr{A}_{N^{\beta_{X}}}^{\mathbf{X},-}(\mathfrak{g}_{S,y}) - \mathscr{C}_{N^{\beta_{X}}}^{\mathbf{X},-}(\mathfrak{g}_{S,y})$. Technically this gives the desired identity but with $\bar{\mathfrak{f}}$ on the RHS of the proposed identity, not its projection onto $\mathfrak{I}$. But the inner expectation on the RHS depends only on spins in $\mathfrak{I}$.
\end{proof}
\begin{proof}[Proof of \emph{Lemma \ref{lemma:S1B2}}]
Let us observe the expectation on the LHS of the proposed bound is expectation is of a local statistic given by the squared difference against the space-time averaged probability density $\bar{\mathfrak{f}}$ of the law of the particle system. We may thus employ local equilibrium Lemma \ref{lemma:LE} with $\mathfrak{I}_{N}=\mathbb{T}_{N}$, which again is okay as $\mathbb{T}_{N}$ satisfies constraints of Proposition \ref{prop:EProd}, with $\varphi$ equal to the square within the LHS of the estimate in Lemma \ref{lemma:S1B2} with support contained in $\mathfrak{I}_{\varphi} = \mathfrak{I} = \llbracket-100\mathfrak{m}N^{\beta_{X}}, 100\mathfrak{m}N^{\beta_{X}}\rrbracket$ from Lemma \ref{lemma:S1B1}, and with the choice of constant $\kappa = N^{\beta_{X}-3\e_{X,1}}$. Again, in this application of Lemma \ref{lemma:LE} we also take $\mathfrak{t}_{\mathfrak{s}}$ therein to be 0 and $\mathfrak{t}_{\mathfrak{f}}$ therein to be 1. For $\e\in\R_{>0}$ arbitrarily small but universal, this gives
\small\begin{align}
\E^{\mu_{0,\Z}}\bar{\mathfrak{f}}_{1,\mathbb{T}_{N}}^{\mathfrak{I}} |\mathscr{A}_{N^{\beta_{X}}}^{\mathbf{X},-}(\mathfrak{g}_{0,0}) - \mathscr{C}_{N^{\beta_{X}}}^{\mathbf{X},-}(\mathfrak{g}_{0,0})|^{2} \ &\lesssim \ N^{-\frac34+\e}N^{-\beta_{X}+3\e_{X,1}}|\mathbb{T}_{N}|^{-1}|\mathfrak{I}|^{3} \ + \ \sup_{\varrho\in\R} \Gamma_{\varrho}. \label{eq:S1B1}
\end{align}\normalsize\normalsize
We have introduced the following equilibrium log-exp expectations in \eqref{eq:S1B1} above:
\small\begin{align}
\Gamma_{\varrho} \ &\overset{\bullet}= \ N^{-\beta_{X}+3\e_{X,1}} \log \E^{\mu_{\varrho,\mathfrak{I}}^{\mathrm{can}}}\exp\left(N^{\beta_{X}-3\e_{X,1}}|\mathscr{A}_{N^{\beta_{X}}}^{\mathbf{X},-}(\mathfrak{g}_{0,0}) - \mathscr{C}_{N^{\beta_{X}}}^{\mathbf{X},-}(\mathfrak{g}_{0,0})|^{2}\right).
\end{align}\normalsize\normalsize
The first term on the RHS of \eqref{eq:S1B1} is again addressed by elementary power-counting in $N\in\Z_{>0}$ that we come back to at the end of this proof. We focus on $\Gamma$-terms on the RHS of \eqref{eq:S1B1} first. To this end, by definitions in Definition \ref{definition:S1B}, we get
\small\begin{align}
\exp\left(N^{\beta_{X}-3\e_{X,1}}|\mathscr{A}_{N^{\beta_{X}}}^{\mathbf{X},-}(\mathfrak{g}_{0,0}) - \mathscr{C}_{N^{\beta_{X}}}^{\mathbf{X},-}(\mathfrak{g}_{0,0})|^{2}\right) \ &= \ \mathbf{1}[\mathscr{E}_{0,0}^{\mathbf{X},-}] \ + \ \exp\left(N^{\beta_{X}-3\e_{X,1}}|\mathscr{A}_{N^{\beta_{X}}}^{\mathbf{X},-}(\mathfrak{g}_{0,0})|^{2}\right)\mathbf{1}[(\mathscr{E}_{0,0}^{\mathbf{X},-})^{C}]. \label{eq:S1B2}
\end{align}\normalsize\normalsize
The superscript $C$ on the RHS of \eqref{eq:S1B2} denotes taking the complement of the $\mathscr{E}^{\mathbf{X},-}$ event we defined in Definition \ref{definition:S1B}. Indeed, on this $\mathscr{E}^{\mathbf{X},-}$-event, the $\mathscr{A}^{\mathbf{X},-}$ and $\mathscr{C}^{\mathbf{X},-}$ operators/terms agree by definition, and thus the exponential of the difference is equal to 1, while outside this $\mathscr{E}^{\mathbf{X},-}$-event we note that $\mathscr{C}^{\mathbf{X},-}$ vanishes. Thus, taking expectations of \eqref{eq:S1B2} we get the consequence
\small\begin{align}
\E^{\mu_{\varrho,\mathfrak{I}}^{\mathrm{can}}}\exp\left(N^{\beta_{X}-3\e_{X,1}}|\mathscr{A}_{N^{\beta_{X}}}^{\mathbf{X},-}(\mathfrak{g}_{0,0}) - \mathscr{C}_{N^{\beta_{X}}}^{\mathbf{X},-}(\mathfrak{g}_{0,0})|^{2}\right) \ &= \ \E^{\mu_{\varrho,\mathfrak{I}}^{\mathrm{can}}}\mathbf{1}[\mathscr{E}_{0,0}^{\mathbf{X},-}] \ + \ \E^{\mu_{\varrho,\mathfrak{I}}^{\mathrm{can}}}\exp\left(N^{\beta_{X}-3\e_{X,1}}|\mathscr{A}_{N^{\beta_{X}}}^{\mathbf{X},-}(\mathfrak{g}_{0,0})|^{2}\right)\mathbf{1}[(\mathscr{E}_{0,0}^{\mathbf{X},-})^{C}] \nonumber \\
&\leq \ 1+\E^{\mu_{\varrho,\mathfrak{I}}^{\mathrm{can}}}\exp\left(N^{\beta_{X}-3\e_{X,1}}|\mathscr{A}_{N^{\beta_{X}}}^{\mathbf{X},-}(\mathfrak{g}_{0,0})|^{2}\right)\mathbf{1}[(\mathscr{E}_{0,0}^{\mathbf{X},-})^{C}]. \label{eq:S1B3}
\end{align}\normalsize\normalsize
The estimate \eqref{eq:S1B3} is trivial as $\mathbf{1}[\mathscr{E}^{\mathbf{X},-}_{0,0}]\leq1$. We proceed by using the inequality $\log(1+|x|) \leq |x|$, with $|x|$ equal to the expectation in \eqref{eq:S1B3}, with \eqref{eq:S1B3} and definition of $\Gamma_{\varrho}$; we get the following with universal implied constant:
\small\begin{align}
\Gamma_{\varrho} \ &\lesssim \ N^{-\beta_{X}+3\e_{X,1}} \E^{\mu_{\varrho,\mathfrak{I}}^{\mathrm{can}}}\exp\left(N^{\beta_{X}-3\e_{X,1}}|\mathscr{A}_{N^{\beta_{X}}}^{\mathbf{X},-}(\mathfrak{g}_{0,0})|^{2}\right)\mathbf{1}[(\mathscr{E}_{0,0}^{\mathbf{X},-})^{C}]. \label{eq:S1B4}
\end{align}\normalsize\normalsize
To estimate this exponential expectation, we recall by definition that $\mathscr{A}^{\mathbf{X},-}(\mathfrak{g}_{0,0})$ is the average of spatial-shifts of the pseudo-gradient $\mathfrak{g}_{0,0}$ which all have mutually disjoint support; see Proposition \ref{prop:Duhamel} for the disjoint property of supports and Definition \ref{definition:S1B} for $\mathscr{A}^{\mathbf{X},-}$. Thus, we use Corollary \ref{corollary:LDP} with $J = N^{\beta_{X}}$, with $\varphi_{j} = \tau_{-7j\mathfrak{m}}\mathfrak{g}_{0,0}$, with $\e_{1} = 3\e_{X,1}$, and with $\e_{2} = \frac12\e_{X,1}$ to get
\small\begin{align}
\E^{\mu_{\varrho,\mathfrak{I}}^{\mathrm{can}}}\exp\left(N^{\beta_{X}-3\e_{X,1}}|\mathscr{A}_{N^{\beta_{X}}}^{\mathbf{X},-}(\mathfrak{g}_{0,0})|^{2}\right)\mathbf{1}[(\mathscr{E}_{0,0}^{\mathbf{X},-})^{C}] \ &\lesssim_{C} \ N^{-C}. \label{eq:S1B5}
\end{align}\normalsize\normalsize
The constant $C\in\R_{>0}$ in \eqref{eq:S1B5} is arbitrarily large but universal. We combine this with \eqref{eq:S1B4} to control $\Gamma_{\varrho}$ by an arbitrarily large but universal \emph{negative} power of $N\in\Z_{>0}$. We combine \eqref{eq:S1B1}, \eqref{eq:S1B4}, and \eqref{eq:S1B5} to get
\small\begin{align}
N^{\frac54+\e_{\mathrm{cpt}}}\E^{\mu_{0,\Z}}\bar{\mathfrak{f}}_{1,\mathbb{T}_{N}}^{\mathfrak{I}} |\mathscr{A}_{N^{\beta_{X}}}^{\mathbf{X},-}(\mathfrak{g}_{0,0}) - \mathscr{C}_{N^{\beta_{X}}}^{\mathbf{X},-}(\mathfrak{g}_{0,0})|^{2} \ &\lesssim_{C} \ N^{\frac54+\e_{\mathrm{cpt}}}N^{-\frac34+\e}N^{-\beta_{X}+3\e_{X,1}}|\mathbb{T}_{N}|^{-1}|\mathfrak{I}|^{3} \ + \ N^{\frac54+\e_{\mathrm{cpt}}}N^{-C}. \label{eq:S1B6}
\end{align}\normalsize\normalsize
We recall $\mathfrak{I}\subseteq\Z$ within the statement of Lemma \ref{lemma:S1B2}. Choosing $C\in\R_{>0}$ sufficiently large and elementary power-counting for the first term on the RHS of \eqref{eq:S1B6}, recalling $|\mathbb{T}_{N}|\gtrsim N^{5/4+\e_{\mathrm{cpt}}}$ from the beginning of Section \ref{section:Ctify}, completes the proof.
\end{proof}
%
%
%
\section{Stochastic Time-Regularity Estimates}\label{section:KPZ1}
As pointed out in the would-be-proof of Pseudo-Proposition \ref{pprop:S3}, to perform time-replacement in the statement of Pseudo-Proposition \ref{pprop:S3} we need time-regularity of the heat kernel $\bar{\mathbf{H}}^{N}$ and of the compactified microscopic Cole-Hopf transform $\bar{\mathbf{Z}}^{N}$. The former is given in Lemma \ref{lemma:HKE} and is deterministic. The latter is the current focus. Let us first introduce some notation.
\begin{definition}
Provided any $\mathfrak{t}\in\R$, let us define $\grad_{\mathfrak{t}}^{\mathbf{T}}\varphi_{\bullet} = \varphi_{\bullet+\mathfrak{t}} - \varphi_{\bullet}$ for functions $\varphi:\R_{\geq0}\to\R$. Like with data of the particle system, if $\varphi$ is evaluated at negative times we instead take its value at time 0.
\end{definition}
\begin{prop}\label{prop:TRGTProp}
 Consider any time-scale $\mathfrak{t}_{\mathrm{reg}} \in \R_{\geq0}$ satisfying $N^{-100}\lesssim\mathfrak{t}_{\mathrm{reg}}\lesssim N^{-1}$ and any $\e,C \in \R_{>0}$. Provided a random time $\mathfrak{t}_{\mathrm{st}} \in \R_{\geq0}$ satisfying $\mathfrak{t}_{\mathrm{st}} \leq 1$ with probability 1, we have the following outside an event of probability at most $\kappa_{\e,C}N^{-C}$:
\small\begin{align}
\|\grad_{-\mathfrak{t}_{\mathrm{reg}}}^{\mathbf{T}}\bar{\mathbf{Z}}^{N}\|_{\mathfrak{t}_{\mathrm{st}};\mathbb{T}_{N}} \ &\lesssim_{\e} \ N^{-\frac12+\e} + N^{\e}\mathfrak{t}_{\mathrm{reg}}^{\frac14} + N^{-\frac12+\e}\|\bar{\mathbf{Z}}^{N}\|_{\mathfrak{t}_{\mathrm{st}};\mathbb{T}_{N}}^{2} + N^{\e}\mathfrak{t}_{\mathrm{reg}}^{\frac14}\|\bar{\mathbf{Z}}^{N}\|_{\mathfrak{t}_{\mathrm{st}};\mathbb{T}_{N}}^{2}.
\end{align}\normalsize\normalsize
\end{prop}
We explain Proposition \ref{prop:TRGTProp}. Assuming Theorem \ref{theorem:KPZ}, the space-time process $\mathbf{Z}^{N}$ looks like the SHE solution. The comparison in Proposition \ref{prop:Ctify} says the same is true for $\bar{\mathbf{Z}}^{N}$. The time-regularity of $\bar{\mathbf{Z}}^{N}$ is then that of the SHE. This is Holder regularity with exponent $(\frac14)^{-}$. Forgetting $N^{-1/2+\e}$-terms and $N^{\e}$-factors on the RHS of the bound in Proposition \ref{prop:TRGTProp} then gives the result.

To prove Proposition \ref{prop:TRGTProp}, we use the defining stochastic integral equation for $\bar{\mathbf{Z}}^{N}$ introduced in Section \ref{section:Ctify} and then estimate time-regularity in the same way that we would estimate time-regularity of the solution of SHE through its mild Duhamel form. Let us clarify that the $N^{\e}$-factors, which are sub-optimal but sufficient for our purposes, are here to guarantee that such a strong time-regularity estimate proposed in Proposition \ref{prop:TRGTProp} holds with the required high-probability. The $N^{-1/2+\e}$-errors that are not present in time-regularity estimates of the SHE solution come from the observation that the jumps in $\bar{\mathbf{Z}}^{N}$ are order $N^{-1/2}$. We lastly remark that the $N^{\e}$-factors in the proposed upper bound in the statement of Proposition \ref{prop:TRGTProp} will be harmless because we will only use Proposition \ref{prop:TRGTProp} when we compute power-saving estimates for $\bar{\Phi}^{N,2}$-related terms, so we will only use Proposition \ref{prop:TRGTProp} when we have a factor of $N^{-\beta_{\mathrm{univ}}}$ for $\beta_{\mathrm{univ}}\in\R_{>0}$ universal. Choosing $\e\in\R_{>0}$ in these $N^{\e}$-factors small then dwarfs them.

We additionally comment on the quadratic dependence of the $\|\|$-norm on the RHS of the proposed estimate in Proposition \ref{prop:TRGTProp}. This is mostly a technical consequence of obtaining time-regularity estimates for the stochastic-integral-type quantity in the stochastic equation for $\bar{\mathbf{Z}}^{N}$ introduced in Section \ref{section:Ctify}. In particular, we will need to apply martingale estimates to control the time-regularity of this quantity; a direct bound on $\bar{\mathbf{Z}}^{N}$ in this stochastic integral type term would not let us employ martingale inequalities, so we need to take a little more care, and this gives quadratic dependence of the $\|\|$-norm.

Given that the proof of Proposition \ref{prop:TRGTProp} looks like the proof of time-regularity estimates for the solution of SHE as we noted above, the reader is invited to skip or skim the remainder of the section. All we do is follow the proof of Proposition 3.2 in \cite{DT}, namely the proof of (3.14) therein, with some technical adjustments to account for the $\|\|_{\mathfrak{t}_{\mathrm{st}};\mathbb{T}_{N}}$-norms.
\subsection{Proof of Proposition \ref{prop:TRGTProp}}
The first ingredient we use in the proof of Proposition \ref{prop:TRGTProp} is simplification of Proposition \ref{prop:TRGTProp}. In particular, we will control the time-gradient of $\bar{\mathbf{Z}}^{N}$ first on a discretization of the semi-discrete set $[0,\mathfrak{t}_{\mathrm{st}}]\times\mathbb{T}_{N}$. This first step is easier since we can use a union bound as in the proof of Proposition \ref{prop:Ctify}. We also specialize to deterministic times $\mathfrak{t}_{\mathrm{st}}\in\R_{\geq0}$.
\begin{lemma}\label{lemma:TRGTProp1}
 We admit the setting in \emph{Proposition \ref{prop:TRGTProp}} and assume that $\mathfrak{t}_{\mathrm{st}} \in \R_{\geq0}$ is deterministic. Outside an event with probability at most $\kappa_{\e,C}N^{-C}$, we have the following estimate for the fully discretized norm defined in the statement of \emph{Proposition \ref{prop:Ctify}}:
\small\begin{align}
[\grad_{-\mathfrak{t}_{\mathrm{reg}}}^{\mathbf{T}}\bar{\mathbf{Z}}^{N}]_{\mathfrak{t}_{\mathrm{st}};\mathbb{T}_{N}} \ &\lesssim_{\e} \ N^{-\frac12+\e} + N^{\e}\mathfrak{t}_{\mathrm{reg}}^{\frac14} + N^{-\frac12+\e}\|\bar{\mathbf{Z}}^{N}\|_{\mathfrak{t}_{\mathrm{st}};\mathbb{T}_{N}}^{2} + N^{\e}\mathfrak{t}_{\mathrm{reg}}^{\frac14}\|\bar{\mathbf{Z}}^{N}\|_{\mathfrak{t}_{\mathrm{st}};\mathbb{T}_{N}}^{2}. \label{eq:TRGTProp1}
\end{align}\normalsize\normalsize
\end{lemma}
We defer proof of Lemma \ref{lemma:TRGTProp1} and other ingredients in the proof of Proposition \ref{prop:TRGTProp} to future subsections. We now bootstrap to the semi-discrete norm $\|\|_{\mathfrak{t}_{\mathrm{st}};\mathbb{T}_{N}}$ and to possibly random times $\mathfrak{t}_{\mathrm{st}}\in\R_{\geq0}$ through a stochastic continuity estimate.
\begin{lemma}\label{lemma:TRGTProp3}
 Consider any possibly random time $\mathfrak{t}_{\mathrm{st}} \in \R_{\geq0}$ satisfying $\mathfrak{t}_{\mathrm{st}} \leq 1$ with probability 1. Outside some event of probability at most $\kappa_{\e,C}N^{-C}$ with $C \in \R_{>0}$ arbitrarily large but universal and $\e \in \R_{>0}$ arbitrarily small but universal, we have
\small\begin{align}
\|\grad_{-\mathfrak{t}_{\mathrm{reg}}}^{\mathbf{T}}\bar{\mathbf{Z}}^{N}\|_{\mathfrak{t}_{\mathrm{st}};\mathbb{T}_{N}} \ &\lesssim \ [\grad_{-\mathfrak{t}_{\mathrm{reg}}}^{\mathbf{T}}\bar{\mathbf{Z}}^{N}]_{\mathfrak{t}_{\mathrm{st}};\mathbb{T}_{N}} \ + \ N^{-\frac12+\e} + N^{-\frac12+\e}\|\bar{\mathbf{Z}}^{N}\|_{\mathfrak{t}_{\mathrm{st}};\mathbb{T}_{N}}^{2}. \label{eq:TRGTProp3I}
\end{align}\normalsize\normalsize
We define the random time $\wt{\mathfrak{t}}_{\mathrm{st}} \geq0$ by mapping $\mathfrak{t}_{\mathrm{st}}\geq0$ to the random closest element of the set $\{\mathfrak{t}_{\mathfrak{j}}\}_{\mathfrak{j}=0}^{3N^{100}}$, where $\mathfrak{t}_{\mathfrak{j}} \overset{\bullet}= \mathfrak{j}N^{-100}$. Outside an event of probability at most $\kappa_{\e,C}N^{-C}$ with $C$ arbitrarily large and $\e>0$ arbitrarily small but both universal,
\small\begin{align}
\|\grad_{-\mathfrak{t}_{\mathrm{reg}}}^{\mathbf{T}}\bar{\mathbf{Z}}^{N}\|_{\mathfrak{t}_{\mathrm{st}};\mathbb{T}_{N}} \ &\lesssim \ \|\grad_{-\mathfrak{t}_{\mathrm{reg}}}^{\mathbf{T}}\bar{\mathbf{Z}}^{N}\|_{\wt{\mathfrak{t}}_{\mathrm{st}};\mathbb{T}_{N}} \ + \ N^{-\frac12+\e} + N^{-\frac12+\e}\|\bar{\mathbf{Z}}^{N}\|_{\mathfrak{t}_{\mathrm{st}};\mathbb{T}_{N}}^{2}. \label{eq:TRGTProp3II}.
\end{align}\normalsize\normalsize
\end{lemma}
\begin{proof}[Proof of \emph{Proposition \ref{prop:TRGTProp}}]
If $\mathfrak{t}_{\mathrm{st}}\in\R_{\geq0}$ is deterministic, we apply the first estimate \eqref{eq:TRGTProp3I} of Lemma \ref{lemma:TRGTProp3} followed by Lemma \ref{lemma:TRGTProp1}. It remains to study possibly random times $\mathfrak{t}_{\mathrm{st}}\in\R_{\geq0}$. Adopting notation of Lemma \ref{lemma:TRGTProp3}, by \eqref{eq:TRGTProp3II} it suffices to prove the claim for the random time $\wt{\mathfrak{t}}_{\mathrm{st}}\in\R_{\geq0}$ associated to $\mathfrak{t}_{\mathrm{st}}\in\R_{\geq0}$. Note $\wt{\mathfrak{t}}_{\mathrm{st}}\in\R_{\geq0}$ can take at most $5N^{100}$-many possible values. Using a union bound over such values, we may assume $\wt{\mathfrak{t}}_{\mathrm{st}}$ is deterministic and multiply the complement probability $\kappa_{\e,C}N^{-C}$ by $5N^{100}$. Thus, we get the proposed bound with probability at least $1-5\kappa_{\e,C}N^{-C+100}$, so the proof is complete upon redefining $C\in\R_{\geq0}$.
\end{proof}
\subsection{Proof of Lemma \ref{lemma:TRGTProp1}}
We start with time-regularity for the stochastic-integral-type term in the defining equation for $\bar{\mathbf{Z}}^{N}$.
\begin{lemma}\label{lemma:C0SIProp}
 We admit the setting of \emph{Proposition \ref{prop:TRGTProp}} and we assume $\mathfrak{t}_{\mathrm{st}} \in \R_{\geq0}$ is deterministic. Consider any space-time process $\bar{\mathbf{V}}^{N}$ adapted to the canonical filtration of the particle system. Outside an event of probability at most $\kappa_{\e,C}N^{-C}$, we have
\small\begin{align}
[\grad_{-\mathfrak{t}_{\mathrm{reg}}}^{\mathbf{T}}\bar{\mathbf{H}}^{N}(\bar{\mathbf{V}}^{N}\d\xi^{N})]_{\mathfrak{t}_{\mathrm{st}};\mathbb{T}_{N}} \ &\lesssim_{\e} \ N^{-\frac12+\e} + N^{\e}\mathfrak{t}_{\mathrm{reg}}^{\frac14} + N^{-\frac12+\e}\|\bar{\mathbf{V}}^{N}\|_{\mathfrak{t}_{\mathrm{st}};\mathbb{T}_{N}}^{2} + N^{\e}\mathfrak{t}_{\mathrm{reg}}^{\frac14}\|\bar{\mathbf{V}}^{N}\|_{\mathfrak{t}_{\mathrm{st}};\mathbb{T}_{N}}^{2}.
\end{align}\normalsize\normalsize
\end{lemma}
\begin{proof}[Proof of \emph{Lemma \ref{lemma:TRGTProp1}}]
For convenience we recall the following stochastic equation for $\bar{\mathbf{Z}}^{N}$ with notation from Definition \ref{definition:ChiTorus}:
\small\begin{align}
\bar{\mathbf{Z}}_{T,x}^{N} \ &\overset{\bullet}= \ \bar{\mathbf{H}}_{T,x}^{N,\mathbf{X}}(\chi_{\bullet}\mathbf{Z}_{0,\bullet}^{N}) + \bar{\mathbf{H}}_{T,x}^{N}(\bar{\mathbf{Z}}^{N}\d\xi^{N}) + \bar{\mathbf{H}}_{T,x}^{N}(\bar{\Phi}^{N,2}) + \bar{\mathbf{H}}_{T,x}^{N}(\bar{\Phi}^{N,3}). 
\end{align}\normalsize\normalsize
We follow the proof of (3.14) in Proposition 3.2 in \cite{DT} with minor adjustments. Let us also assume $T\geq\mathfrak{t}_{\mathrm{reg}}$. For times $T\geq0$ that do not satisfy this lower bound the time-gradient of $\bar{\mathbf{Z}}^{N}$ on scale $\mathfrak{t}_{\mathrm{reg}}$ can be written as the time-gradient on a scale $\mathfrak{t}\leq\mathfrak{t}_{\mathrm{reg}}\wedge T$ because we extend $\bar{\mathbf{Z}}^{N}$ to negative times by its value at time 0. Thus we get $\mathfrak{t}$-versions of $\mathfrak{t}_{\mathrm{reg}}$-estimates below which are certainly controlled by such $\mathfrak{t}_{\mathrm{reg}}$-estimates below as $\mathfrak{t}\leq\mathfrak{t}_{\mathrm{reg}}$. First, we use a Chapman-Kolmogorov equation and a priori near-stationary moment bounds as in proof of (3.14) in Proposition 3.2 in \cite{DT} to get, for $\delta\in\R_{>0}$ arbitrarily small but universal and $p\in\R_{\geq1}$, the following bound where we remark $\bar{\mathbf{Z}}_{0,\bullet}^{N}=\chi_{\bullet}\mathbf{Z}_{0,\bullet}^{N}$ admits near-stationary-type regularity estimates with respect to geodesic distance on $\mathbb{T}_{N}$ since $\chi$ itself has regularity and lets us ignore the boundary condition on the torus $\mathbb{T}_{N}$; see Remark \ref{remark:ch2ChiT=0}:
\small\begin{align}
\|\grad_{-\mathfrak{t}_{\mathrm{reg}}}^{\mathbf{T}}\bar{\mathbf{H}}_{T,x}^{N,\mathbf{X}}\bar{\mathbf{Z}}_{0,\bullet}^{N}\|_{\omega;2p} \ \lesssim_{p,\delta} \ N^{-\frac12+\delta} + N^{2\delta}\mathfrak{t}_{\mathrm{reg}}^{1/4-\delta} \ \lesssim \ N^{-\frac12+\delta} + N^{200\delta}\mathfrak{t}_{\mathrm{reg}}^{1/4}.
\end{align}\normalsize\normalsize
Indeed, the last estimate comes from $\mathfrak{t}_{\mathrm{reg}}\gtrsim N^{-100}$. As we later explain in the proof of Lemma \ref{lemma:C0SIProp}, a union bound and Chebyshev inequality give, with required high probability, the following discretized bound that is $N^{100\delta}$-worse than the previous bound:
\small\begin{align}
[\mathbf{1}_{T\geq\mathfrak{t}_{\mathrm{reg}}}\grad_{-\mathfrak{t}_{\mathrm{reg}}}^{\mathbf{T}}\bar{\mathbf{H}}_{T,x}^{N,\mathbf{X}}\bar{\mathbf{Z}}_{0,\bullet}^{N}]_{\mathfrak{t}_{\mathrm{st}};\mathbb{T}_{N}} \ &\lesssim_{\delta} \ N^{-\frac12+101\delta} + N^{300\delta}\mathfrak{t}_{\mathrm{reg}}^{1/4}. \label{eq:TRGTProp11}
\end{align}\normalsize\normalsize
We already treated the second term in the equation for $\bar{\mathbf{Z}}^{N}$ so we move to the $\bar{\Phi}^{N,2}$-term. Note $\|\bar{\Phi}^{N,2}\|_{\mathfrak{t}_{\mathrm{st}};\mathbb{T}_{N}}\lesssim N^{1/2}\|\bar{\mathbf{Z}}^{N}\|_{\mathfrak{t}_{\mathrm{st}};\mathbb{T}_{N}}$; we apply the time-regularity estimate \eqref{eq:HKETR1} for the heat operator to get the following deterministic estimate for any $\delta\in\R_{>0}$:
\small\begin{align}
\|\mathbf{1}_{T\geq\mathfrak{t}_{\mathrm{reg}}}\grad_{-\mathfrak{t}_{\mathrm{reg}}}^{\mathbf{T}}\bar{\mathbf{H}}_{T,x}^{N}(\bar{\Phi}^{N,2})\|_{\mathfrak{t}_{\mathrm{st}};\mathbb{T}_{N}} \ \lesssim_{\delta} \ N^{2\delta}\|\bar{\Phi}^{N,2}\|_{\mathfrak{t}_{\mathrm{st}};\mathbb{T}_{N}}\mathfrak{t}_{\mathrm{reg}} \ \lesssim \ N^{\frac12+2\delta}\mathfrak{t}_{\mathrm{reg}}\|\bar{\mathbf{Z}}^{N}\|_{\mathfrak{t}_{\mathrm{st}};\mathbb{T}_{N}} \ \lesssim \ \mathfrak{t}^{1/4}_{\mathrm{reg}}\|\bar{\mathbf{Z}}^{N}\|_{\mathfrak{t}_{\mathrm{st}};\mathbb{T}_{N}}. \label{eq:TRGTProp12}
\end{align}\normalsize\normalsize
The final estimate giving the far RHS of \eqref{eq:TRGTProp12} follows by recalling that $\mathfrak{t}_{\mathrm{reg}}\lesssim N^{-1}$, and thus $\mathfrak{t}_{\mathrm{reg}} \lesssim \mathfrak{t}_{\mathrm{reg}}^{3/4}\mathfrak{t}_{\mathrm{reg}}^{1/4} \lesssim N^{-3/4}\mathfrak{t}_{\mathrm{reg}}^{1/4}$.

We now move to estimating the time-regularity of the heat operator term in the fixed-point equation for $\bar{\mathbf{Z}}^{N}$ corresponding to $\bar{\Phi}^{N,3}$. For the first term in $\bar{\Phi}^{N,3}$ we can use \eqref{eq:HKETR1} as we did to establish \eqref{eq:TRGTProp12} for $\bar{\Phi}^{N,2}$. However, the second term in $\bar{\Phi}^{N,3}$ has a gradient that we need to take advantage of as a naive bound of order $N$ would not suffice. To this end, we will instead use the space-time gradient bound \eqref{eq:HKETRXR1}. Recalling all coefficients hitting $\bar{\mathbf{Z}}^{N}$ in $\bar{\Phi}^{N,3}$ are uniformly bounded, for any $\delta\in\R_{>0}$ we have 
\small\begin{align}
\|\mathbf{1}_{T\geq\mathfrak{t}_{\mathrm{reg}}}\grad_{-\mathfrak{t}_{\mathrm{reg}}}^{\mathbf{T}}\bar{\mathbf{H}}_{T,x}^{N}(\bar{\Phi}^{N,3})\|_{\mathfrak{t}_{\mathrm{st}};\mathbb{T}_{N}} \ \lesssim_{\delta} \ \mathfrak{t}_{\mathrm{reg}}^{\frac14-\delta}\|\bar{\mathbf{Z}}^{N}\|_{\mathfrak{t}_{\mathrm{st}};\mathbb{T}_{N}} \ \lesssim \ N^{200\delta}\mathfrak{t}_{\mathrm{reg}}^{\frac14}\|\bar{\mathbf{Z}}^{N}\|_{\mathfrak{t}_{\mathrm{st}};\mathbb{T}_{N}}. \label{eq:TRGTProp13}
\end{align}\normalsize\normalsize
We now combine \eqref{eq:TRGTProp11}, \eqref{eq:TRGTProp12}, \eqref{eq:TRGTProp13}, and Lemma \ref{lemma:C0SIProp} along with the fixed-point equation for $\bar{\mathbf{Z}}^{N}$ to complete the proof upon allowing an additional term on the RHS of the proposed estimate from the statement of Lemma \ref{lemma:TRGTProp1} of type $N^{200\delta}\mathfrak{t}_{\mathrm{reg}}^{1/4}\|\bar{\mathbf{Z}}^{N}\|_{\mathfrak{t}_{\mathrm{st}};\mathbb{T}_{N}}$. In particular, we get something with the correct $\mathfrak{t}_{\mathrm{reg}}$-dependence but that is linear in $\|\bar{\mathbf{Z}}^{N}\|_{\mathfrak{t}_{\mathrm{st}};\mathbb{T}_{N}}$. However, to resolve such minor issue we use $\|\bar{\mathbf{Z}}^{N}\|_{\mathfrak{t}_{\mathrm{st}};\mathbb{T}_{N}} \lesssim 1 + \|\bar{\mathbf{Z}}^{N}\|_{\mathfrak{t}_{\mathrm{st}};\mathbb{T}_{N}}^{2}$ to get rid of this cosmetic difference, and this completes the proof.
\end{proof}
\begin{proof}[Proof of \emph{Lemma \ref{lemma:C0SIProp}}]
Define $\wt{\mathbf{V}} \overset{\bullet}= 1 + \|\bar{\mathbf{V}}^{N}\|_{\mathfrak{t}_{\mathrm{st}};\mathbb{T}_{N}}^{2}$. We claim that it suffices to establish the following for all $p\in\R_{\geq1}$ and $\e\in\R_{>0}$ for reasons we explain afterwards that are related to the moment estimates/union bounds in the proof of Proposition \ref{prop:Ctify}:
\small\begin{align}
\sup_{0\leq T\leq\mathfrak{t}_{\mathrm{st}}}\sup_{x\in\mathbb{T}_{N}}\left\|\wt{\mathbf{V}}^{-1}|\grad_{-\mathfrak{t}_{\mathrm{reg}}}^{\mathbf{T}}\bar{\mathbf{H}}_{T,x}^{N}(\bar{\mathbf{V}}^{N}\d\xi^{N})|\right\|_{\omega;2p} \ &\lesssim_{p,\e} \ N^{\frac12\e}\mathfrak{t}_{\mathrm{reg}}^{\frac14} + N^{-\frac12+\frac12\e}. \label{eq:C0SIProp1}
\end{align}\normalsize\normalsize
We have an additional $N^{-\e/2}$-factor in  \eqref{eq:C0SIProp1}, so by Chebyshev's inequality for $p$-th moments we deduce the desired bound in Lemma \ref{lemma:C0SIProp} \emph{for any fixed} $0\leq T\leq\mathfrak{t}_{\mathrm{st}}$ and $x\in\mathbb{T}_{N}$ outside an event of probability $\kappa_{p,\e}N^{-p\e/2}$. A union bound over all points in the discretization $\mathfrak{I}_{\mathfrak{t}_{\mathrm{st}}}\times\mathbb{T}_{N}$ then gives the proposed estimate uniformly over $\mathfrak{I}_{\mathfrak{t}_{\mathrm{st}}}\times\mathbb{T}_{N}$ outside an event of probability $\kappa_{p,\e}N^{-p\e/2+200}$ as in the proof for Proposition \ref{prop:Ctify}. Choosing $p\gtrsim_{\e}1$ sufficiently large depending only on $\e\in\R_{>0}$ completes the proof. To prove \eqref{eq:C0SIProp1}, take $T\geq\mathfrak{t}_{\mathrm{reg}}$ and then take $\mathfrak{t}_{\mathrm{reg}}\geq N^{-2}$. We start with the following decomposition of the time-gradient on the LHS of \eqref{eq:C0SIProp1} into a short-time/length-$\mathfrak{t}_{\mathrm{reg}}$-integral and an integral of a time-gradient of the heat kernel. First:
\begin{itemize}[leftmargin=*]
\item Decompose the time-integration domain $[0,T]$ into $I_{1}\cup I_{2}$ where $I_{1}$ looks at times until $T-\mathfrak{t}_{\mathrm{reg}}$ and $I_{2}$ looks at the rest.
\end{itemize}
An elementary calculation with the time-gradient of the heat operator, like with the proof for time-regularity in Proposition 3.2 in \cite{DT}, gives control on the time-gradient of $\bar{\mathbf{H}}^{N}$ by that of the integration-domain in $\bar{\mathbf{H}}^{N}$ and that of the heat kernel therein:
\small\begin{align}
|\grad_{-\mathfrak{t}_{\mathrm{reg}}}^{\mathbf{T}}\bar{\mathbf{H}}_{T,x}^{N}(\bar{\mathbf{V}}^{N}\d\xi^{N})| \ &\leq \ |\bar{\mathbf{H}}_{T,x}^{N}(\bar{\mathbf{V}}_{S,y}^{N}\d\xi_{S,y}^{N}\mathbf{1}_{S\in I_{2}})| \ + \ |\int_{0}^{T-\mathfrak{t}_{\mathrm{reg}}}{\sum}_{y\in\mathbb{T}_{N}}\grad_{-\mathfrak{t}_{\mathrm{reg}}}^{\mathbf{T}}\mathbf{H}_{S,T,x,y}^{N}\bar{\mathbf{V}}_{S,y}^{N}\d\xi_{S,y}^{N}| \ \overset{\bullet}= \ \Phi_{1}+\Phi_{2}.  \label{eq:C0SIProp0}
\end{align}\normalsize
We now consider the following decomposition for both $\Phi_{1},\Phi_{2}$ in terms of level sets of the ``squared-norm"-type quantity $\wt{\mathbf{V}}\geq1$:
\small\begin{align}
|\Phi_{\mathfrak{i}}| \ &\leq \ {\sum}_{\mathfrak{l}=1}^{\infty}|\Phi_{\mathfrak{i}}|\cdot\mathbf{1}_{\mathfrak{l}\leq(\wt{\mathbf{V}})^{\frac12}\leq\mathfrak{l}+1}.
\end{align}\normalsize\normalsize
We observe the product of any indicator function in the summation on the RHS of the above inequality with any other of these indicator functions can only avoid vanishing when multiplied by at most 2 other of these indicator functions. Thus, binomial expansion and this ``almost-orthogonality" of the indicator functions give the following multiscale estimate for $p\in\R_{\geq1}$:
\small\begin{align}
\|\wt{\mathbf{V}}^{-1}|\Phi_{\mathfrak{i}}|\|_{\omega;2p}^{2p} \ \lesssim_{p} \ \sum_{\mathfrak{l}=1}^{\infty} \|\wt{\mathbf{V}}^{-1}|\Phi_{\mathfrak{i}}|\mathbf{1}_{\mathfrak{l}\leq(\mathbf{V})^{\frac12}\leq\mathfrak{l}+1}\|_{\omega;2p}^{2p} \ &\lesssim \ \sum_{\mathfrak{l}=1}^{\infty} \mathfrak{l}^{-4p} \| \Phi_{\mathfrak{i}}\mathbf{1}_{\mathfrak{l}\leq(\mathbf{V})^{\frac12}\leq\mathfrak{l}+1}\|_{\omega;2p}^{2p} \ \lesssim \ \sup_{\mathfrak{l} \in \Z_{\geq1}} \mathfrak{l}^{-2p}\|\Phi_{\mathfrak{i}}\mathbf{1}_{(\mathbf{V})^{\frac12}\leq\mathfrak{l}+1}\|_{\omega;2p}^{2p}\sum_{\mathfrak{l}=1}^{\infty}\mathfrak{l}^{-2p} \nonumber \\
&\lesssim \ \sup_{\mathfrak{l} \in \Z_{\geq1}} \mathfrak{l}^{-2p}\|\Phi_{\mathfrak{i}}\mathbf{1}_{(\mathbf{V})^{\frac12}\leq\mathfrak{l}+1}\|_{\omega;2p}^{2p}. \label{eq:C0SIProp2}
\end{align}\normalsize\normalsize
At this point we will proceed with moment estimates almost exactly as in the proof of Proposition 3.2 in \cite{DT}. We will only use the indicator function hitting $\Phi_{\mathfrak{i}}$ in \eqref{eq:C0SIProp2} to get an a priori estimate; we afterwards quickly remove this indicator function. We now specialize the index $\mathfrak{i}\in\{1,2\}$; we start with $\mathfrak{i}=1$. In particular, we will estimate $\Phi_{1}$ starting with the estimate \eqref{eq:C0SIProp2}. To this end, we first define the uniformly bounded process $\mathbf{R}^{N} = \mathfrak{l}^{-1}\bar{\mathbf{V}}^{N}\mathbf{1}[|\mathfrak{l}^{-1}\bar{\mathbf{V}}^{N}| \lesssim 1]$. Concluding with the martingale bound Lemma \ref{lemma:MG}, which is a version of Lemma 3.1 in \cite{DT} and additionally needs uniform boundedness/adaptedness of $\mathbf{R}^{N}$, we have
\small\begin{align}
&\mathfrak{l}^{-2p}\|\Phi_{1}\mathbf{1}_{(\mathbf{V})^{\frac12}\leq\mathfrak{l}+1}\|_{\omega;2p}^{2p} \ \leq \ \E|\int_{T-\mathfrak{t}_{\mathrm{reg}}}^{T}\sum_{y\in\mathbb{T}_{N}}\bar{\mathbf{H}}_{S,T,x,y}^{N} \cdot \mathfrak{l}^{-1} \bar{\mathbf{V}}_{S,y}^{N}\d\xi_{S,y}^{N}|^{2p} \mathbf{1}_{\|\bar{\mathbf{V}}\|_{\mathfrak{t}_{\mathrm{st}};\mathbb{T}_{N}} \lesssim \mathfrak{l}+1} \\
&\leq \ \E|\int_{T-\mathfrak{t}_{\mathrm{reg}}}^{T}\sum_{y\in\mathbb{T}_{N}}\bar{\mathbf{H}}_{S,T,x,y}^{N} \cdot \mathbf{R}_{S,y}^{N}\d\xi_{S,y}^{N}|^{2p} \ \lesssim_{p} \ \left( \int_{T-\mathfrak{t}_{\mathrm{reg}}}^{T}N\sum_{y\in\mathbb{T}_{N}}|\bar{\mathbf{H}}_{S,T,x,y}^{N}|^{2} \ \d S \right)^{p} \ \overset{\bullet}= \ \Psi_{1}. \label{eq:C0SIProp3}
\end{align}\normalsize\normalsize
Indeed if $\|\bar{\mathbf{V}}\|_{\mathfrak{t}_{\mathrm{st}};\mathbb{T}_{N}} \lesssim \mathfrak{l}+1 \leq 2\mathfrak{l}$, then $\mathfrak{l}^{-1}\bar{\mathbf{V}}^{N}$ is uniformly bounded above; in this case the cutoff $\mathbf{R}^{N}$ does nothing and we get $\mathbf{R}^{N} = \mathfrak{l}^{-1}\bar{\mathbf{V}}^{N}$. We note that $\mathbf{R}^{N}$ is adapted because $\bar{\mathbf{V}}^{N}$ is adapted and therefore so is the indicator function in $\mathbf{R}^{N}$ while $\mathfrak{l} \in \Z_{\geq1}$ is deterministic. We now estimate $\Phi_{2}$ and we start with \eqref{eq:C0SIProp2} with the index $\mathfrak{i}=2$. By Lemma \ref{lemma:MG}, we similarly get
\small\begin{align}
\mathfrak{l}^{-2p}\|\Phi_{2}\mathbf{1}_{(\mathbf{V})^{\frac12}\leq\mathfrak{l}+1}\|_{\omega;2p}^{2p} \ &\leq \ \E|\int_{0}^{T-\mathfrak{t}_{\mathrm{reg}}}\sum_{y\in\mathbb{T}_{N}}\grad_{-\mathfrak{t}_{\mathrm{reg}}}^{\mathbf{T}}\bar{\mathbf{H}}_{S,T,x,y}^{N}\cdot \mathbf{R}_{S,y}^{N}\d\xi_{S,y}^{N}|^{2p} \mathbf{1}_{\|\bar{\mathbf{V}}\|_{\mathfrak{t}_{\mathrm{st}};\mathbb{T}_{N}} \lesssim \mathfrak{l}+1} \\
&\lesssim_{p} \ \left(\int_{0}^{T-\mathfrak{t}_{\mathrm{reg}}}N\sum_{y\in\mathbb{T}_{N}}|\grad_{-\mathfrak{t}_{\mathrm{reg}}}^{\mathbf{T}}\bar{\mathbf{H}}_{S,T,x,y}^{N}|^{2} \ \d S\right)^{p}\ \overset{\bullet}= \ \Psi_{2}. \label{eq:C0SIProp6}
\end{align}\normalsize\normalsize
The second line requires dropping the indicator from the bound before. Combining \eqref{eq:C0SIProp0}, \eqref{eq:C0SIProp2}, \eqref{eq:C0SIProp3}, and \eqref{eq:C0SIProp6} provides the following upper bound which is familiar from the proof of time-regularity in Proposition 3.2 in \cite{DT}:
\small\begin{align}
\left\|\wt{\mathbf{V}}^{-1}|\grad_{-\mathfrak{t}_{\mathrm{reg}}}^{\mathbf{T}}\bar{\mathbf{H}}_{T,x}^{N}(\bar{\mathbf{V}}^{N}\d\xi^{N})|\right\|_{\omega;2p}^{2p} \ \lesssim_{p} \ \sum_{\mathfrak{i}=1,2}\|\wt{\mathbf{V}}^{-1}|\Phi_{\mathfrak{i}}|\|_{\omega;2p}^{2p} \ &\lesssim_{p} \ \sum_{\mathfrak{i}=1,2}\sup_{\mathfrak{l} \in \Z_{\geq1}} \mathfrak{l}^{-2p}\|\Phi_{\mathfrak{i}}\mathbf{1}_{(\mathbf{V})^{\frac12}\leq\mathfrak{l}+1}\|_{\omega;2p}^{2p} \ \lesssim_{p} \ \sum_{\mathfrak{i}=1,2}\Psi_{\mathfrak{i}}. \label{eq:C0SIProp7}
\end{align}\normalsize\normalsize
We now follow the proof of (3.14) in Proposition 3.2 in \cite{DT} to bound $\Psi_{\mathfrak{i}}$-terms from \eqref{eq:C0SIProp3} and \eqref{eq:C0SIProp6} and use the on-diagonal heat kernel bounds in Corollary A.2 in \cite{DT} that hold for the $\bar{\mathbf{H}}^{N}$-heat kernel for order 1 times; see Remark \ref{remark:ch2HKE}. For any $\delta \in \R_{>0}$, we obtain the following final estimate in this proof:
\small\begin{align}
\left\|\wt{\mathbf{V}}^{-1}|\grad_{-\mathfrak{t}_{\mathrm{reg}}}^{\mathbf{T}}\bar{\mathbf{H}}_{T,x}^{N}(\bar{\mathbf{V}}^{N}\d\xi^{N})|\right\|_{\omega;2p} \ &\lesssim_{p,\delta} \ \mathfrak{t}_{\mathrm{reg}}^{\frac14-\delta} \ \lesssim \ N^{2\delta} \mathfrak{t}_{\mathrm{reg}}^{\frac14}.
\end{align}\normalsize\normalsize
The final bound follows by the assumption $\mathfrak{t}_{\mathrm{reg}}\geq N^{-2}$. This completes the proof for $\mathfrak{t}_{\mathrm{reg}}\geq N^{-2}$. In the case $\mathfrak{t}_{\mathrm{reg}}\leq N^{-2}$, we use the same strategy, but Lemma \ref{lemma:MG} provides, instead of the $\Psi_{\mathfrak{i}}$-terms in \eqref{eq:C0SIProp7}, the upper bound of $N^{-1/2}$ times $p$-dependent factors so we are done in the case $T\geq\mathfrak{t}_{\mathrm{reg}}$. To remove this final assumption for general times we can write the $\mathfrak{t}_{\mathrm{reg}}$-time-gradient as a time-gradient on a time-scale $\mathfrak{t}$ which is at most $\mathfrak{t}_{\mathrm{reg}}$ because we extend everything to negative times by their value at time 0. The previous argument then gives $\mathfrak{t}$-versions of the estimates above which are certainly controlled by the $\mathfrak{t}_{\mathrm{reg}}$-estimates above since $\mathfrak{t}\leq\mathfrak{t}_{\mathrm{reg}}$. This was the same observation that we made in the proof of Lemma \ref{lemma:TRGTProp1}. This completes the proof.
\end{proof}
\subsection{Proof of Lemma \ref{lemma:TRGTProp3}}
The differences between $\|\|_{\mathfrak{t}_{\mathrm{st}};\mathbb{T}_{N}}$ and $[]_{\mathfrak{t}_{\mathrm{st}};\mathbb{T}_{N}}$ come via behavior between times in the discretization $\mathfrak{I}_{\mathfrak{t}_{\mathrm{st}}}$ defining $[]_{\mathfrak{t}_{\mathrm{st}};\mathbb{T}_{N}}$. To apply this to $\bar{\mathbf{Z}}^{N}$, we note the following SDE-type formulation for $\bar{\mathbf{Z}}^{N}$-dynamics and explain it after:
\small\begin{align}
\d\bar{\mathbf{Z}}_{\mathfrak{t},x}^{N} \ &= \ \bar{\mathscr{L}}^{!!}\bar{\mathbf{Z}}_{\mathfrak{t},x}^{N}\d\mathfrak{t} \ + \ \bar{\mathbf{Z}}_{\mathfrak{t},x}^{N}\d\xi_{\mathfrak{t},x}^{N} \ + \ \bar{\Phi}^{N,2}_{\mathfrak{t},x}\d\mathfrak{t} \ + \ \bar{\Phi}^{N,3}_{\mathfrak{t},x}\d\mathfrak{t}. \label{eq:TRGTProp3SDE}
\end{align}\normalsize\normalsize
By SDE-type we mean $\d\xi^{N}$ is a discrete approximation to Brownian motions. We recall the notation in this equation in Section \ref{section:Ctify}. We emphasize \eqref{eq:TRGTProp3SDE} can be checked by taking the time-differential of the defining linear equation for $\bar{\mathbf{Z}}^{N}$ in Definition \ref{definition:ChiTorus}.

We observe the SDE-type equation \eqref{eq:TRGTProp3SDE} admits a continuous part corresponding to the first, third, and fourth terms on the RHS of \eqref{eq:TRGTProp3SDE} as well as the continuous ``compensation" part of the compensated Poisson process $\d\xi^{N}$. This latter continuous part in $\d\xi^{N}$ is a random drift with speed bounded by $N^{2}$ times universal factors. If \eqref{eq:TRGTProp3SDE} only had these continuous parts, then \eqref{eq:TRGTProp3SDE} would be a random \emph{linear} ODE. Short-time estimates would then follow by standard procedure for linear ODE.

As for jumps in \eqref{eq:TRGTProp3SDE}, by definition of $\d\xi^{N}$ from Proposition \ref{prop:Duhamel}/(2.4) from \cite{DT}, any jump in the SDE-type equation \eqref{eq:TRGTProp3SDE} is order $N^{-1/2}$. We will use this and control on the number of jumps in short times via tail bounds for the Poisson distribution.
\begin{lemma}\label{lemma:TRGTProp2}
 Outside an event of probability at most $\kappa_{\e,C}N^{-C}$ for $C$ arbitrarily large and $\e>0$ arbitrarily small but both universal, we have the following uniformly over $0\leq T\leq2$ and $x\in\mathbb{T}_{N}$ simultaneously on this same high-probability event:
\small\begin{align}
\sup_{0 \leq \mathfrak{s} \leq N^{-99}}|\grad_{\pm\mathfrak{s}}^{\mathbf{T}}\bar{\mathbf{Z}}_{T,x}^{N}| \ &\lesssim_{\e} \ N^{-\frac12+\e}\left(1+\|\bar{\mathbf{Z}}^{N}\|_{T;\mathbb{T}_{N}}^{2}\right) \ = \ N^{-\frac12+\e} + N^{-\frac12+\e} \|\bar{\mathbf{Z}}^{N}\|_{T;\mathbb{T}_{N}}^{2}. \label{eq:TRGTProp2}
\end{align}\normalsize\normalsize
\end{lemma}
\begin{proof}[Proof of \emph{Lemma \ref{lemma:TRGTProp3}}]
We will first prove \eqref{eq:TRGTProp3I}. Provided any $\mathfrak{t} \in [0,\mathfrak{t}_{\mathrm{st}}]$, we first employ the following decomposition in which $\mathfrak{t}_{\mathfrak{j}}\in\R_{\geq0}$ denotes the time \emph{in the discretization} $\mathfrak{I}_{\mathfrak{t}_{\mathrm{st}}} = \{\mathfrak{t}_{\mathrm{st}} \cdot \mathfrak{j}N^{-100}\}_{j=1}^{N^{100}}$ that is closest to $\mathfrak{t}$, and $x\in\mathbb{T}_{N}$ is any arbitrary point:
\small\begin{align}
|\grad_{-\mathfrak{t}_{\mathrm{reg}}}\bar{\mathbf{Z}}_{\mathfrak{t},x}^{N}| \ = \ |\bar{\mathbf{Z}}_{\mathfrak{t}-\mathfrak{t}_{\mathrm{reg}},x}^{N} - \bar{\mathbf{Z}}_{\mathfrak{t},x}^{N}| \ &\leq \ |\bar{\mathbf{Z}}_{\mathfrak{t}-\mathfrak{t}_{\mathrm{reg}},x}^{N} - \bar{\mathbf{Z}}_{\mathfrak{t}_{\mathfrak{j}}-\mathfrak{t}_{\mathrm{reg}},x}^{N}| + |\bar{\mathbf{Z}}_{\mathfrak{t}_{\mathfrak{j}}-\mathfrak{t}_{\mathrm{reg}},x}^{N} - \bar{\mathbf{Z}}_{\mathfrak{t}_{\mathfrak{j}},x}^{N}| + |\bar{\mathbf{Z}}_{\mathfrak{t}_{\mathfrak{j}},x}^{N} - \bar{\mathbf{Z}}_{\mathfrak{t},x}^{N}| \\
&\leq \ |\bar{\mathbf{Z}}_{\mathfrak{t}-\mathfrak{t}_{\mathrm{reg}},x}^{N} - \bar{\mathbf{Z}}_{\mathfrak{t}_{\mathfrak{j}}-\mathfrak{t}_{\mathrm{reg}},x}^{N}| + |\bar{\mathbf{Z}}_{\mathfrak{t}_{\mathfrak{j}},x}^{N} - \bar{\mathbf{Z}}_{\mathfrak{t},x}^{N}| + [\grad_{-\mathfrak{t}_{\mathrm{reg}}}^{\mathbf{T}}\bar{\mathbf{Z}}_{T,x}^{N}]_{\mathfrak{t}_{\mathrm{st}};\mathbb{T}_{N}}. \label{eq:TRGTProp3I1}
\end{align}\normalsize\normalsize
Observe $|\mathfrak{t}-\mathfrak{t}_{\mathfrak{j}}| \leq N^{-100}$, as the points in $\mathfrak{I}_{\mathfrak{t}_{\mathrm{st}}}$ are separated by at most $N^{-100}$. Thus, estimating the first two terms in \eqref{eq:TRGTProp3I1} with the desired high probability via Lemma \ref{lemma:TRGTProp2} with $T = \mathfrak{t}-\mathfrak{t}_{\mathrm{reg}},\mathfrak{t}$ and $\mathfrak{s} = |\mathfrak{t}-\mathfrak{t}_{\mathfrak{j}}|$ gives \eqref{eq:TRGTProp3I} as $\mathfrak{t}-\mathfrak{t}_{\mathrm{reg}},\mathfrak{t}\leq\mathfrak{t}_{\mathrm{st}}$ so we may bound the RHS of \eqref{eq:TRGTProp2} for these choices of $T$ with $1+\|\bar{\mathbf{Z}}^{N}\|_{\mathfrak{t};\mathbb{T}_{N}}^{2},1+\|\bar{\mathbf{Z}}^{N}\|_{\mathfrak{t}-\mathfrak{t}_{\mathrm{reg}};\mathbb{T}_{N}}^{2}\leq1+\|\bar{\mathbf{Z}}^{N}\|_{\mathfrak{t}_{\mathrm{st}};\mathbb{T}_{N}}^{2}$.

We now show \eqref{eq:TRGTProp3II}. If $\mathfrak{t}_{\mathrm{st}} \leq \wt{\mathfrak{t}}_{\mathrm{st}}$, there is nothing to do, so assume $\wt{\mathfrak{t}}_{\mathrm{st}} \leq \mathfrak{t}_{\mathrm{st}}$. Take any $\mathfrak{t} \in [\wt{\mathfrak{t}}_{\mathrm{st}},\mathfrak{t}_{\mathrm{st}}]$ and any $x\in\mathbb{T}_{N}$. As above,
\small\begin{align}
|\grad_{-\mathfrak{t}_{\mathrm{reg}}}\bar{\mathbf{Z}}_{\mathfrak{t},x}^{N}| \ = \ |\bar{\mathbf{Z}}_{\mathfrak{t}-\mathfrak{t}_{\mathrm{reg}},x}^{N}-\bar{\mathbf{Z}}_{\mathfrak{t},x}^{N}| \ &\leq \ |\bar{\mathbf{Z}}_{\mathfrak{t}-\mathfrak{t}_{\mathrm{reg}},x}^{N} - \bar{\mathbf{Z}}_{\wt{\mathfrak{t}}_{\mathrm{st}}-\mathfrak{t}_{\mathrm{reg}},x}^{N}| + |\bar{\mathbf{Z}}_{\wt{\mathfrak{t}}_{\mathrm{st}}-\mathfrak{t}_{\mathrm{reg}},x}^{N} - \bar{\mathbf{Z}}_{\wt{\mathfrak{t}}_{\mathrm{st}},x}^{N}| + |\bar{\mathbf{Z}}_{\wt{\mathfrak{t}}_{\mathrm{st}},x}^{N} - \bar{\mathbf{Z}}_{\mathfrak{t},x}^{N}| \\
&\leq \ |\bar{\mathbf{Z}}_{\mathfrak{t}-\mathfrak{t}_{\mathrm{reg}},x}^{N} - \bar{\mathbf{Z}}_{\wt{\mathfrak{t}}_{\mathrm{st}}-\mathfrak{t}_{\mathrm{reg}},x}^{N}| + |\bar{\mathbf{Z}}_{\wt{\mathfrak{t}}_{\mathrm{st}},x}^{N} - \bar{\mathbf{Z}}_{\mathfrak{t},x}^{N}| + \|\grad_{-\mathfrak{t}_{\mathrm{reg}}}^{\mathbf{T}}\bar{\mathbf{Z}}_{T,x}^{N}\|_{\wt{\mathfrak{t}}_{\mathrm{st}};\mathbb{T}_{N}}. \label{eq:TRGTProp3II1}
\end{align}\normalsize\normalsize
Observe that $|\mathfrak{t}-\wt{\mathfrak{t}}_{\mathrm{st}}| = |\mathfrak{t}-\mathfrak{t}_{\mathrm{reg}} - (\wt{\mathfrak{t}}_{\mathrm{st}}-\mathfrak{t}_{\mathrm{reg}})| \leq N^{-100}$ because $|\mathfrak{t}_{\mathrm{st}}-\wt{\mathfrak{t}}_{\mathrm{st}}| \leq N^{-100}$ by construction. Thus, as with deducing \eqref{eq:TRGTProp3I} by \eqref{eq:TRGTProp3I1}, we get \eqref{eq:TRGTProp3II} with the required high-probability from \eqref{eq:TRGTProp3II1} via Lemma \ref{lemma:TRGTProp2} with $T = \mathfrak{t}-\mathfrak{t}_{\mathrm{reg}},\mathfrak{t}$ and $\mathfrak{s} = |\wt{\mathfrak{t}}_{\mathrm{st}}-\mathfrak{t}|$.
\end{proof}
\begin{proof}[Proof of \emph{Lemma \ref{lemma:TRGTProp2}}]
We prove \eqref{eq:TRGTProp2} for forwards time-gradient, so $+\mathfrak{s}$. The proof for $-\mathfrak{s}$ amounts to the same argument but backwards in time. Define $\mathfrak{I}_{\mathfrak{j}} \overset{\bullet}= [\mathfrak{t}_{\mathfrak{j}},\mathfrak{t}_{\mathfrak{j}+1}]$ with $\mathfrak{t}_{\mathfrak{j}} = \mathfrak{j}N^{-99}$ and $\mathfrak{j}\in\llbracket0,3N^{99}\rrbracket$. We restrict to the following event denoted by $\mathscr{E}$.
\begin{itemize}[leftmargin=*]
\item The number of ringings among all Poisson clocks on $\mathbb{T}_{N}$ in any $\mathfrak{I}_{\mathfrak{j}}$-interval is at most $N^{\delta}$ with $\delta \in \R_{>0}$ small but universal.
\end{itemize}
We first observe that if $\mathscr{E}^{C}$ is the complement event to $\mathscr{E}$, we have $\mathbf{P}[\mathscr{E}^{C}] \lesssim_{\delta,C} N^{-C}$. Indeed, the sum of all Poisson clocks over all points in $\mathbb{T}_{N}$ is another Poisson clock of rate at most $N^{10}$ times uniformly bounded factors upon considering there are at most $N^{5/4+\e_{\mathrm{cpt}}}$-many clocks each of rate at most $N^{2}$, both again times uniformly bounded factors. Thus, the probability that there are more than $N^{\delta}$-many ringings in any interval $\mathfrak{I}_{\mathfrak{j}}$ of time-scale $N^{-99}$ is exponentially small in $N \in \Z_{>0}$. The estimate for $\mathbf{P}[\mathscr{E}^{C}]$ then follows by taking a union bound of these \emph{exponentially small} probabilities over all \emph{polynomially many} $\mathfrak{j}\in\llbracket0,3N^{99}\rrbracket$.

We proceed to show that on the event $\mathscr{E}$ above, the proposed estimate \eqref{eq:TRGTProp2} holds. We make the following observations.
\begin{itemize}[leftmargin=*]
\item Observe with probability 1 on the event $\mathscr{E}$, for any $0 \leq T \leq 2$ the total number of Poisson clocks ringing in the time-window $[T,T+N^{-99}]$ is at most $N^{2\delta}$, as this time-window is covered by a uniformly bounded number of intervals $\mathfrak{I}_{\mathfrak{j}}$ for $\mathfrak{j}\in\llbracket0,3N^{99}\rrbracket$.
\item Between jumps \eqref{eq:TRGTProp3SDE} gives $\partial_{T}\bar{\mathbf{Z}}_{T,x}^{N} = \mathscr{Q}\bar{\mathbf{Z}}_{T,x}^{N}$, where $\mathscr{Q}$ is a linear operator satisfying the following with $\kappa\in\R_{\geq0}$ universal:
\small\begin{align}
{\sup}_{x\in\mathbb{T}_{N}}|\mathscr{Q}\bar{\mathbf{Z}}_{T,x}^{N}| \ &\leq \ \kappa N^{2}{\sup}_{x\in\mathbb{T}_{N}}|\bar{\mathbf{Z}}_{T,x}^{N}|.
\end{align}\normalsize\normalsize
This can be checked directly. Thus, provided any $T \in [0,\mathfrak{t}_{\mathrm{st}}]$, for all $T+\mathfrak{s} \in \R_{\geq0}$ before the first jump time after $T \in \R_{\geq0}$, assuming $\mathfrak{s}\leq N^{-99}$ we have the following by expanding the relevant exponential matrix/operator below in a power series:
\small\begin{align}
|\grad_{\mathfrak{s}}^{\mathbf{T}}\bar{\mathbf{Z}}_{T,x}^{N}| \ = \ |\exp(\mathfrak{s}\mathscr{Q})\bar{\mathbf{Z}}^{N}_{T,x}- \bar{\mathbf{Z}}_{T,x}^{N}| \ \leq \ \sum_{\mathfrak{l}=1}^{\infty} \mathfrak{s}^{\mathfrak{l}}(\mathfrak{l}!)^{-1}|\mathscr{Q}^{\mathfrak{l}}\bar{\mathbf{Z}}_{T,x}^{N}| \ \leq \ \sum_{\mathfrak{l}=1}^{\infty}N^{-99\mathfrak{l}}(\mathfrak{l}!)^{-1}\kappa^{\mathfrak{l}}N^{2\mathfrak{l}} \|\bar{\mathbf{Z}}^{N}\|_{T;\mathbb{T}_{N}} \ &\lesssim \ N^{-97}\|\bar{\mathbf{Z}}^{N}\|_{T;\mathbb{T}_{N}}. \label{eq:TRGT4}
\end{align}\normalsize\normalsize
\item The jumps of $\bar{\mathbf{Z}}^{N}$ at any space-time point have sizes that are bounded above by $N^{-1/2}$ times the value of $\bar{\mathbf{Z}}^{N}$ at that space-time point times uniformly bounded factors. This follows by definition of the martingale differential $\d\xi^{N}$ within (2.4) of \cite{DT}.
\end{itemize}
Iterating the last two bullet points at most $N^{2\delta}$-many times, given the first bullet point in that list, yields the proof if we replace $1+\|\bar{\mathbf{Z}}_{T,x}^{N}\|_{T;\mathbb{T}_{N}}^{2}$ with $\|\bar{\mathbf{Z}}_{T,x}^{N}\|_{T;\mathbb{T}_{N}}$ in the proposed bound. But $\|\bar{\mathbf{Z}}_{T,x}^{N}\|_{T;\mathbb{T}_{N}}\lesssim1+\|\bar{\mathbf{Z}}_{T,x}^{N}\|_{T;\mathbb{T}_{N}}^{2}$ by the Cauchy-Schwarz inequality, so we deduce the proposed estimate as written as well.
\end{proof}
%
%
%
\section{Key Estimates for Pseudo-Gradients and $\Phi^{N,2}$}\label{section:KPZ2}
We will control $\bar{\mathbf{H}}^{N}(\bar{\Phi}^{N,2})$ in the defining equation for $\bar{\mathbf{Z}}^{N}$ via the following version of Pseudo-Proposition \ref{pprop:S1}.
\begin{prop}\label{prop:KPZNL}
 Consider a random time $\mathfrak{t}_{\mathrm{st}} \in \R_{\geq0}$ with $\mathfrak{t}_{\mathrm{st}}\leq1$ with probability 1. There exist universal constants $\beta_{\mathrm{univ},1}\in\R_{>0}$ and $\beta_{\mathrm{univ},2}\in\R_{>0}$ such that outside an event of probability at most $N^{-\beta_{\mathrm{univ},1}}$ times uniformly bounded factors, we have
\small\begin{align}
\|\bar{\mathbf{H}}^{N}(\bar{\Phi}^{N,2})\|_{\mathfrak{t}_{\mathrm{st}};\mathbb{T}_{N}} \ &\lesssim \ N^{-\beta_{\mathrm{univ},2}} + N^{-\beta_{\mathrm{univ},2}}\|\bar{\mathbf{Z}}^{N}\|_{\mathfrak{t}_{\mathrm{st}};\mathbb{T}_{N}}^{2}. \label{eq:KPZNL}
\end{align}\normalsize\normalsize
We recall $\bar{\Phi}^{N,2}$ from the defining equation \emph{\eqref{eq:QBarEquation}} for $\bar{\mathbf{Z}}^{N}$. It contains the relevant pseudo-gradient data for $\bar{\mathbf{Z}}^{N}$-dynamics.
\end{prop}
We invite the reader to take Proposition \ref{prop:KPZNL} for granted and proceed to the next section to see how we use it to get Theorem \ref{theorem:KPZ}, at least in a first reading. The proof of Proposition \ref{prop:KPZNL} is quite technical. It consists of the following three ingredients, which we overviewed in the would-be-proof of Pseudo-Proposition \ref{pprop:S1}.
\begin{itemize}[leftmargin=*]
\item We replace the pseudo-gradient-spatial average in $\bar{\Phi}^{N,2}$ with a cutoff via Proposition \ref{prop:S1B} in view of Pseudo-Proposition \ref{pprop:S2}.
\item We then replace the spatial-average of pseudo-gradients, now with a cutoff given the previous bullet point, with its dynamic average on the time-scale $\mathfrak{t}_{\mathrm{av},\mathfrak{m}_{+}} \in \R_{>0}$ from the multiscale estimate Corollary \ref{corollary:D1B2A}. We do this by replacement with dynamic averages on progressively larger time-scales. Here, time-regularity for the heat kernel $\bar{\mathbf{H}}^{N}$ and for $\bar{\mathbf{Z}}^{N}$, the latter established in Proposition \ref{prop:TRGTProp}, will be crucial. The multiscale estimate in Corollary \ref{corollary:D1B1A} will be crucial too. We do something similar for the term in $\bar{\Phi}^{N,2}$ of order $N^{\beta_{X}}$ but via Corollary \ref{corollary:D1B1B} instead of Corollary \ref{corollary:D1B1A}. This step resembles Pseudo-Proposition \ref{pprop:S3}.
\item The third step is to estimate the pseudo-gradient content of $\bar{\Phi}^{N,2}$ with their dynamic averages from the previous bullet point. We employ another multiscale strategy via replacements by progressively sharper cutoff on progressively bigger time-scale as outlined in the would-be-proof of Pseudo-Proposition \ref{pprop:S4}. Corollary \ref{corollary:D1B2A} and Corollary \ref{corollary:D1B2B} are important for this step.
\end{itemize}
We now introduce some useful notation which is sometimes referred to as a squared version of the ``Japanese bracket" in PDE.
\begin{definition}
For $\mathfrak{t}_{\mathrm{st}}\in\R_{\geq0}$, define $\langle\rangle_{\mathfrak{t}_{\mathrm{st}};\mathbb{T}_{N}} \overset{\bullet}= 1 + \|\|_{\mathfrak{t}_{\mathrm{st}};\mathbb{T}_{N}}^{2}$ acting on $\mathfrak{f}:\R_{\geq0}\times\mathbb{T}_{N} \to \R$. Via the Cauchy-Schwarz inequality, we have the following estimate, which we will frequently use in this section, that compares this bracket to the $\|\|_{\mathfrak{t}_{\mathrm{st}};\mathbb{T}_{N}}$-norm:
\small\begin{align}
\|\mathfrak{f}\|_{\mathfrak{t}_{\mathrm{st}};\mathbb{T}_{N}} \ &\leq \ \langle\mathfrak{f}\rangle_{\mathfrak{t}_{\mathrm{st}};\mathbb{T}_{N}}. \label{eq:KPZNLCS}
\end{align}\normalsize\normalsize
\end{definition}
A significant portion of our upcoming analysis for order- $N^{1/2}$ terms in $\bar{\Phi}^{N,2}$ holds equally well for order-$N^{\beta_{X}}$ terms in $\bar{\Phi}^{N,2}$ with a possible exception of elementary but ultimately negligible adjustments in power-counting and application of Corollary \ref{corollary:D1B2B}/Corollary \ref{corollary:D1B1B} instead of Corollary \ref{corollary:D1B2A}/Corollary \ref{corollary:D1B1A}. So, we make the following ``formal replacement" occasionally:
\small\begin{align}
N^{\frac12} \ \to \ N^{\beta_{X}} \quad \mathrm{and} \quad \mathscr{C}_{N^{\beta_{X}}}^{\mathbf{X},-}(\mathfrak{g}) \ \to \ \wt{\sum}_{\mathfrak{l}=1,\ldots,N^{\beta_{X}}}\wt{\mathfrak{g}}^{\mathfrak{l}}. \label{eq:KPZNLReplace}
\end{align}\normalsize\normalsize
%
\subsection{Proof of Proposition \ref{prop:KPZNL}}
First we provide the key inputs each of which correspond to a bullet point in the list following Proposition \ref{prop:KPZNL}. We then combine them to prove Proposition \ref{prop:KPZNL}. The rest of the section is dedicated to proofs of each input. 

We first introduce some notation for the aforementioned key inputs which capture the errors in the first two bullet points in the outline above. Again, these errors come from introducing cutoff into a spatial-average and replacements-by-time-average.
\begin{itemize}[leftmargin=*]
\item Recall the spatial-average of pseudo-gradients $\mathscr{A}_{N^{\beta_{X}}}^{\mathbf{X},-}(\mathfrak{g})$ in Proposition \ref{prop:Duhamel} and its cutoff $\mathscr{C}_{N^{\beta_{X}}}^{\mathbf{X},-}(\mathfrak{g})$ in Definition \ref{definition:S1B}. We set
\small\begin{align}
\Psi^{N,1} \ &\overset{\bullet}= \ N^{\frac12}\mathscr{A}_{N^{\beta_{X}}}^{\mathbf{X},-}(\mathfrak{g}) - N^{\frac12}\mathscr{C}_{N^{\beta_{X}}}^{\mathbf{X},-}(\mathfrak{g}).
\end{align}\normalsize\normalsize
\item Taking $\mathfrak{t}_{\mathrm{av},\mathfrak{l}_{+}} \in \R_{>0}$ as the ``maximal time" defined in Corollary \ref{corollary:D1B1A}, we define the difference-via-time-average-replacement below in which we recall the space-time averages in Definition \ref{definition:D1B1A} and the cutoff-spatial-average in Definition \ref{definition:S1B}:
\small\begin{align}
\Psi^{N,2} \ \overset{\bullet}= \ N^{\frac12}\mathscr{A}_{\mathfrak{t}_{\mathrm{av},\mathfrak{l}_{+}}}^{\mathbf{T},+}\mathscr{C}_{N^{\beta_{X}}}^{\mathbf{X},-}(\mathfrak{g}) - N^{\frac12}\mathscr{C}_{N^{\beta_{X}}}^{\mathbf{X},-}(\mathfrak{g}).
\end{align}\normalsize\normalsize
Taking $\mathfrak{t}_{\mathrm{av},\mathfrak{l}_{+}^{\sim}}^{\sim} \in \R_{>0}$ as the ``maximal time" defined in Corollary \ref{corollary:D1B1B}, we define the difference-via-time-average-replacement below in which we recall the time-average in Definition \ref{definition:D1B1B} of the functional-with-pseudo-gradient-factor:
\small\begin{align}
\wt{\Psi}^{N,2} \ \overset{\bullet}= \ N^{\beta_{X}}\wt{\sum}_{\mathfrak{l}=1,\ldots,N^{\beta_{X}}}\left(\mathscr{A}_{\mathfrak{t}_{\mathrm{av},\mathfrak{l}_{+}^{\sim}}^{\sim}}^{\mathbf{T},+}(\wt{\mathfrak{g}}^{\mathfrak{l}}) - \wt{\mathfrak{g}}^{\mathfrak{l}}\right).
\end{align}\normalsize\normalsize
\end{itemize}
The first technical ingredient corresponds to the first bullet point from the list following the statement of Proposition \ref{prop:KPZNL}. The proof of Lemma \ref{lemma:KPZNLS1B} below is a direct consequence of the Markov inequality and Proposition \ref{prop:S1B}, in short.
\begin{lemma}\label{lemma:KPZNLS1B}
 Consider a random time $\mathfrak{t}_{\mathrm{st}} \in \R_{\geq0}$ with $\mathfrak{t}_{\mathrm{st}}\leq1$ with probability 1. There exist universal constants $\beta_{\mathrm{univ},1}\in\R_{>0}$ and $\beta_{\mathrm{univ},2}\in\R_{>0}$ such that outside an event of probability at most $N^{-\beta_{\mathrm{univ},1}}$ times uniformly bounded factors, we have
\small\begin{align}
\|\bar{\mathbf{H}}_{T,x}^{N}(\Psi^{N,1}\bar{\mathbf{Z}}^{N})\|_{\mathfrak{t}_{\mathrm{st}};\mathbb{T}_{N}} \ &\lesssim \ N^{-\beta_{\mathrm{univ},2}}\langle\bar{\mathbf{Z}}^{N}\rangle_{\mathfrak{t}_{\mathrm{st}};\mathbb{T}_{N}}. \label{eq:KPZNLS1B}
\end{align}\normalsize\normalsize
\end{lemma}
The second technical ingredient we require is the replacement of $\mathscr{C}^{\mathbf{X},-}$ in $\Psi^{N,1}$ within the previous Lemma \ref{lemma:KPZNLS1B} with its time-average on mesoscopic time-scale that corresponds to the second bullet point in the list following the statement of Proposition \ref{prop:KPZNL}. Roughly speaking we may replace local functionals with their time-averages inside the heat operator $\bar{\mathbf{H}}^{N}$ up to error terms controlled by the time-regularity of the heat kernel and of $\bar{\mathbf{Z}}^{N}$. We apply Proposition \ref{prop:TRGTProp} and time-regularity of the heat kernel in Lemma \ref{lemma:HKE} to control these errors. We give details behind this heuristic after our proof of Proposition \ref{prop:KPZNL}.
\begin{lemma}\label{lemma:KPZNLD1B1A}
 Consider a random time $\mathfrak{t}_{\mathrm{st}} \in \R_{\geq0}$ with $\mathfrak{t}_{\mathrm{st}}\leq1$ with probability 1. There exist universal constants $\beta_{\mathrm{univ},1}\in\R_{>0}$ and $\beta_{\mathrm{univ},2}\in\R_{>0}$ such that outside an event of probability at most $N^{-\beta_{\mathrm{univ},1}}$ times uniformly bounded factors, we have
\small\begin{align}
\|\bar{\mathbf{H}}_{T,x}^{N}(\Psi^{N,2}\bar{\mathbf{Z}}^{N})\|_{\mathfrak{t}_{\mathrm{st}};\mathbb{T}_{N}} \ &\lesssim \ N^{-\beta_{\mathrm{univ},2}}\langle\bar{\mathbf{Z}}^{N}\rangle_{\mathfrak{t}_{\mathrm{st}};\mathbb{T}_{N}}. \label{eq:KPZNLD1B1A}
\end{align}\normalsize\normalsize
\end{lemma}
\begin{lemma}\label{lemma:KPZNLD1B1B}
 Consider a random time $\mathfrak{t}_{\mathrm{st}} \in \R_{\geq0}$ with $\mathfrak{t}_{\mathrm{st}}\leq1$ with probability 1. There exist universal constants $\beta_{\mathrm{univ},1}\in\R_{>0}$ and $\beta_{\mathrm{univ},2}\in\R_{>0}$ such that outside an event of probability at most $N^{-\beta_{\mathrm{univ},1}}$ times uniformly bounded factors, we have
\small\begin{align}
\|\bar{\mathbf{H}}_{T,x}^{N}(\wt{\Psi}^{N,2}\bar{\mathbf{Z}}^{N})\|_{\mathfrak{t}_{\mathrm{st}};\mathbb{T}_{N}} \ &\lesssim \ N^{-\beta_{\mathrm{univ},2}}\langle\bar{\mathbf{Z}}^{N}\rangle_{\mathfrak{t}_{\mathrm{st}};\mathbb{T}_{N}}. \label{eq:KPZNLD1B1B}
\end{align}\normalsize\normalsize
\end{lemma}
The third ingredient is the following pair of estimates for time-averages on maximal time-scales $\mathfrak{t}_{\mathrm{av},\mathfrak{m}_{+}},\mathfrak{t}_{\mathrm{av},\mathfrak{m}_{+}^{\sim}}^{\sim}$ from Corollary \ref{corollary:D1B2A} and Corollary \ref{corollary:D1B2B}. This corresponds to the third bullet point from the list following Proposition \ref{prop:KPZNL}. As alluded to therein, the following is a consequence of a scheme that avoids controlling time-averages on the aforementioned maximal time-scales $\mathfrak{t}_{\mathrm{av},\mathfrak{m}_{+}},\mathfrak{t}_{\mathrm{av},\mathfrak{m}_{+}^{\sim}}^{\sim}$ directly and instead controls time-averages on smaller time-scales, glues these shorter-time estimates into longer time-estimates, and iterates until we arrive at the maximal time-scales from Corollary \ref{corollary:D1B2A} and Corollary \ref{corollary:D1B2B}.
\begin{lemma}\label{lemma:Step3}
 Consider a random time $\mathfrak{t}_{\mathrm{st}} \in \R_{\geq0}$ with $\mathfrak{t}_{\mathrm{st}}\leq1$ with probability 1. There exist universal constants $\beta_{\mathrm{univ},1}\in\R_{>0}$ and $\beta_{\mathrm{univ},2}\in\R_{>0}$ such that outside an event of probability at most $N^{-\beta_{\mathrm{univ},1}}$ times uniformly bounded factors, we have
\small\begin{align}
\|\bar{\mathbf{H}}_{T,x}^{N}(N^{\frac12}\mathscr{A}^{\mathbf{T},+}_{\mathfrak{t}_{\mathrm{av},\mathfrak{m}_{+}}}\mathscr{C}_{N^{\beta_{X}}}^{\mathbf{X},-}(\mathfrak{g})\cdot\bar{\mathbf{Z}}^{N})\|_{\mathfrak{t}_{\mathrm{st}};\mathbb{T}_{N}} \ &\lesssim \ N^{-\beta_{\mathrm{univ},2}}\langle\bar{\mathbf{Z}}^{N}\rangle_{\mathfrak{t}_{\mathrm{st}};\mathbb{T}_{N}}. \label{eq:Step3I}
\end{align}\normalsize\normalsize
The time-scale $\mathfrak{t}_{\mathrm{av},\mathfrak{m}_{+}} \in \R_{>0}$ is the ``maximal" time-scale in \emph{Corollary \ref{corollary:D1B2A}}, which we emphasize is equal to the ``maximal" time-scale $\mathfrak{t}_{\mathrm{av},\mathfrak{l}_{+}} \in \R_{>0}$ in \emph{Corollary \ref{corollary:D1B1A}}. Moreover, if instead we take the time-scales $\mathfrak{t}_{\mathrm{av},\mathfrak{m}_{+}^{\sim}}^{\sim},\mathfrak{t}_{\mathrm{av},\mathfrak{l}_{+}^{\sim}}^{\sim}$ in \emph{Corollary \ref{corollary:D1B2B}} and \emph{Corollary \ref{corollary:D1B1B}}, which are also equal to each other, we have the following with at least the same probability and same $\beta_{\mathrm{univ},2}>0$:
\small\begin{align}
\|\bar{\mathbf{H}}_{T,x}^{N}\left(N^{\beta_{X}}\left(\wt{\sum}_{\mathfrak{l}=1,\ldots,N^{\beta_{X}}}\mathscr{A}^{\mathbf{T},+}_{\mathfrak{t}_{\mathrm{av},\mathfrak{m}_{+}^{\sim}}^{\sim}}(\wt{\mathfrak{g}}^{\mathfrak{l}})\right) \cdot \bar{\mathbf{Z}}^{N}\right)\|_{\mathfrak{t}_{\mathrm{st}};\mathbb{T}_{N}} \ &\lesssim \ N^{-\beta_{\mathrm{univ},2}}\langle\bar{\mathbf{Z}}^{N}\rangle_{\mathfrak{t}_{\mathrm{st}};\mathbb{T}_{N}}. \label{eq:Step3II}
\end{align}\normalsize\normalsize
\end{lemma}
\begin{proof}[Proof of \emph{Proposition \ref{prop:KPZNL}}]
Throughout we will make statements which hold with high probability. Here high probability means that the complement event has probability at most $N^{-\beta_{\mathrm{univ},1}}$ times uniformly bounded constants, with $\beta_{\mathrm{univ},1} \in \R_{>0}$ universal. Because we make a uniformly bounded number of such statements, the conclusion holds with high-probability too. 

By definition of $\bar{\Phi}^{N,2}$ in the stochastic equation for $\bar{\mathbf{Z}}^{N}$ in Definition \ref{definition:ChiTorus} and the triangle inequality for $\|\|_{\mathfrak{t}_{\mathrm{st}};\mathbb{T}_{N}}$, we first have
\small\begin{align}
\|\bar{\mathbf{H}}^{N}(\bar{\Phi}^{N,2})\|_{\mathfrak{t}_{\mathrm{st}};\mathbb{T}_{N}} \ &\leq \ \|\bar{\mathbf{H}}_{T,x}^{N}(N^{\frac12}\mathscr{A}_{N^{\beta_{X}}}^{\mathbf{X},-}(\mathfrak{g}) \cdot \bar{\mathbf{Z}}^{N})\|_{\mathfrak{t}_{\mathrm{st}};\mathbb{T}_{N}} \ + \ \|\bar{\mathbf{H}}_{T,x}^{N}\left(N^{\beta_{X}}\wt{\sum}_{\mathfrak{l}=1,\ldots,N^{\beta_{X}}}\wt{\mathfrak{g}}^{\mathfrak{l}} \cdot \bar{\mathbf{Z}}^{N}\right)\|_{\mathfrak{t}_{\mathrm{st}};\mathbb{T}_{N}}  \ + \ \Xi, \label{eq:KPZNL1}
\end{align}\normalsize\normalsize
where the last quantity corresponds to gradient terms inside $\bar{\Phi}^{N,2}$; precisely, the $\Xi$ term is 
\small\begin{align}
\Xi \ &\overset{\bullet}= \ \wt{\sum}_{\mathfrak{l}=1,\ldots,N^{\beta_{X}}} N^{-\frac12}\|\bar{\mathbf{H}}_{T,x}^{N}(\bar{\grad}_{-7\mathfrak{l}\mathfrak{m}}^{!}(\mathfrak{b}^{\mathfrak{l}}\bar{\mathbf{Z}}^{N}))\|_{\mathfrak{t}_{\mathrm{st}};\mathbb{T}_{N}}.
\end{align}\normalsize\normalsize
Recall $\mathfrak{m} \in \Z_{>0}$ is the maximal jump-length in the particle random walks in the model, and $\bar{\grad}^{!} \overset{\bullet}= N\bar{\grad}$ is the continuum-scaling of the discrete gradient acting on functions $\mathbb{T}_{N}\to\R$ on the torus. We denote the first and second terms on the RHS of \eqref{eq:KPZNL1} by $\Xi_{1}$ and $\Xi_{2}$, respectively. We adopt the notation in Lemmas \ref{lemma:KPZNLS1B}, \ref{lemma:KPZNLD1B1A}, and \ref{lemma:Step3} with $\mathfrak{t}_{\mathrm{av},\mathfrak{m}_{+}}$ from Corollary \ref{corollary:D1B2A}/\ref{corollary:D1B1A} to first get
\small\begin{align}
\Xi_{1} \ &\leq \ \|\bar{\mathbf{H}}_{T,x}^{N}(\Psi^{N,1}\bar{\mathbf{Z}}^{N})\|_{\mathfrak{t}_{\mathrm{st}};\mathbb{T}_{N}} \ + \ \|\bar{\mathbf{H}}_{T,x}^{N}(\Psi^{N,2}\bar{\mathbf{Z}}^{N})\|_{\mathfrak{t}_{\mathrm{st}};\mathbb{T}_{N}} \ + \ \|\bar{\mathbf{H}}_{T,x}^{N}(N^{\frac12}\mathscr{A}^{\mathbf{T},+}_{\mathfrak{t}_{\mathrm{av},\mathfrak{m}_{+}}}\mathscr{C}_{N^{\beta_{X}}}^{\mathbf{X},-}(\mathfrak{g})\cdot \bar{\mathbf{Z}}^{N})\|_{\mathfrak{t}_{\mathrm{st}};\mathbb{T}_{N}}
\end{align}\normalsize\normalsize
Combining the last bound with Lemmas \ref{lemma:KPZNLS1B}, \ref{lemma:KPZNLD1B1A}, and \ref{lemma:Step3}, with the required high probability we obtain $\Xi_{1} \lesssim N^{-\beta_{\mathrm{univ},2}}\langle\bar{\mathbf{Z}}^{N}\rangle_{\mathfrak{t}_{\mathrm{st}};\mathbb{T}_{N}}$. We now study $\Xi_{2}$. Adopting the same notation and also that of Lemma \ref{lemma:KPZNLD1B1B} but with $\mathfrak{t}_{\mathrm{av},\mathfrak{m}_{+}^{\sim}}^{\sim}$ from Corollary \ref{corollary:D1B2B}/\ref{corollary:D1B1B}, we get
\small\begin{align}
\Xi_{2} \ &\leq \ \|\bar{\mathbf{H}}_{T,x}^{N}(\wt{\Psi}^{N,2}\bar{\mathbf{Z}}^{N})\|_{\mathfrak{t}_{\mathrm{st}};\mathbb{T}_{N}} \ + \ \|\bar{\mathbf{H}}_{T,x}^{N}\left(N^{\beta_{X}}\left(\wt{\sum}_{\mathfrak{l}=1,\ldots,N^{\beta_{X}}}\mathscr{A}^{\mathbf{T},+}_{\mathfrak{t}_{\mathrm{av},\mathfrak{m}_{+}^{\sim}}^{\sim}}(\wt{\mathfrak{g}}^{\mathfrak{l}})\right) \cdot \bar{\mathbf{Z}}^{N}\right)\|_{\mathfrak{t}_{\mathrm{st}};\mathbb{T}_{N}}.
\end{align}\normalsize\normalsize
We now apply Lemma \ref{lemma:KPZNLD1B1B} and Lemma \ref{lemma:Step3} to deduce $\Xi_{2} \lesssim N^{-\beta_{\mathrm{univ},2}}\langle\bar{\mathbf{Z}}^{N}\rangle_{\mathfrak{t}_{\mathrm{st}};\mathbb{T}_{N}}$ also with the required high-probability. It remains to estimate $\Xi$. To this end, we estimate each term inside the averaged summation defining $\Xi$ given right after the first triangle inequality estimate \eqref{eq:KPZNL1}. Estimating all of these terms amounts to analytic estimates in Lemma \ref{lemma:HKE}. In particular, we do not use randomness and we only require the uniform bound $|\mathfrak{b}^{\mathfrak{l}}|\lesssim1$. To be precise, we use the regularity estimate \eqref{eq:HKEKPZNL} to get
\small\begin{align}
\Xi \ \overset{\bullet}= \ \wt{\sum}_{\mathfrak{l}=1}^{N^{\beta_{X}}} N^{-\frac12}\|\bar{\mathbf{H}}_{T,x}^{N}(\bar{\grad}_{-7\mathfrak{l}\mathfrak{m}}^{!}(\mathfrak{b}^{\mathfrak{l}}\bar{\mathbf{Z}}^{N}))\|_{\mathfrak{t}_{\mathrm{st}};\mathbb{T}_{N}} \ &\lesssim_{\mathfrak{m}} \ \wt{\sum}_{\mathfrak{l}=1}^{N^{\beta_{X}}} N^{-\frac12} |\mathfrak{l}|\|\bar{\mathbf{Z}}^{N}\|_{\mathfrak{t}_{\mathrm{st}};\mathbb{T}_{N}} \nonumber \\
&\lesssim \ N^{-\frac12+\beta_{X}}\|\bar{\mathbf{Z}}^{N}\|_{\mathfrak{t}_{\mathrm{st}};\mathbb{T}_{N}} \ \leq \ N^{-\frac16+\e_{X,1}}\langle\bar{\mathbf{Z}}^{N}\rangle_{\mathfrak{t}_{\mathrm{st}};\mathbb{T}_{N}}. \label{eq:KPZNL4}
\end{align}\normalsize\normalsize
Indeed, the far RHS of \eqref{eq:KPZNL4} follows by the middle upon recalling $\beta_{X} = \frac13+\e_{X,1}$ and applying the bracket inequality \eqref{eq:KPZNLCS}. We now use \eqref{eq:KPZNL4} with \eqref{eq:KPZNL1} and the previous $\Xi_{1}+\Xi_{2} \lesssim N^{-\beta_{\mathrm{univ},2}}\langle\bar{\mathbf{Z}}^{N}\rangle_{\mathfrak{t}_{\mathrm{st}};\mathbb{T}_{N}}$ for the first two terms on the RHS of \eqref{eq:KPZNL1}.
\end{proof}
We now prove ingredients used in the proof of Proposition \ref{prop:KPZNL}. For all of Lemmas \ref{lemma:KPZNLS1B}, \ref{lemma:KPZNLD1B1A}, \ref{lemma:KPZNLD1B1B}, and \ref{lemma:Step3}, we give ingredients for their proofs, combine them, and then defer their technical proofs to the end of the section to avoid obscuring key ideas.
\subsection{Proof of Lemma \ref{lemma:KPZNLS1B}}
The proof of replacement-by-cutoff we give here provides a good template for later arguments in this section. As $\mathfrak{t}_{\mathrm{st}}\leq1$ with probability 1, for any $T \leq \mathfrak{t}_{\mathrm{st}} \leq 1$ we have $\|\|_{\mathfrak{t}_{\mathrm{st}};\mathbb{T}_{N}}\leq\|\|_{1;\mathbb{T}_{N}}$ as norms, and the deterministic bound
\small\begin{align}
\|\bar{\mathbf{H}}_{T,x}^{N}(\Psi^{N,1}\bar{\mathbf{Z}}^{N})\|_{\mathfrak{t}_{\mathrm{st}};\mathbb{T}_{N}} \ &\leq \ \|\bar{\mathbf{Z}}^{N}\|_{\mathfrak{t}_{\mathrm{st}};\mathbb{T}_{N}} \|\bar{\mathbf{H}}_{T,x}^{N}(N^{\frac12}|\mathscr{A}_{N^{\beta_{X}}}^{\mathbf{X},-}(\mathfrak{g}) - \mathscr{C}_{N^{\beta_{X}}}^{\mathbf{X},-}(\mathfrak{g})|)\|_{1;\mathbb{T}_{N}} \ \leq \ \langle\bar{\mathbf{Z}}^{N}\rangle_{\mathfrak{t}_{\mathrm{st}};\mathbb{T}_{N}} \|\bar{\mathbf{H}}_{T,x}^{N}(N^{\frac12}|\mathscr{A}_{N^{\beta_{X}}}^{\mathbf{X},-}(\mathfrak{g}) - \mathscr{C}_{N^{\beta_{X}}}^{\mathbf{X},-}(\mathfrak{g})|)\|_{1;\mathbb{T}_{N}}. \nonumber
\end{align}\normalsize\normalsize
The second bound follows from applying \eqref{eq:KPZNLCS} to the $\bar{\mathbf{Z}}^{N}$-norm in the first bound. By Proposition \ref{prop:S1B} and the Markov inequality, the $\|\|_{1;\mathbb{T}_{N}}$-norm in this last estimate is at most $N^{-\beta_{\mathrm{univ},2}}$ times uniformly bounded factors with the required high probability for $\beta_{\mathrm{univ},2} \in \R_{>0}$ universal. Precisely, by the Markov inequality and expectation bound in Proposition \ref{prop:S1B}, we deduce
\small\begin{align}
\mathbf{P}\left(\|\bar{\mathbf{H}}_{T,x}^{N}(N^{\frac12}|\mathscr{A}_{N^{\beta_{X}}}^{\mathbf{X},-}(\mathfrak{g}) - \mathscr{C}_{N^{\beta_{X}}}^{\mathbf{X},-}(\mathfrak{g})|)\|_{1;\mathbb{T}_{N}} \geq N^{-\beta_{\mathrm{univ},2}}\right) \ &\leq \ N^{\beta_{\mathrm{univ},2}}\E\|\bar{\mathbf{H}}_{T,x}^{N}(N^{\frac12}|\mathscr{A}_{N^{\beta_{X}}}^{\mathbf{X},-}(\mathfrak{g}) - \mathscr{C}_{N^{\beta_{X}}}^{\mathbf{X},-}(\mathfrak{g})|)\|_{1;\mathbb{T}_{N}} \nonumber \\
&\lesssim \ N^{-\beta_{\mathrm{univ}}+\beta_{\mathrm{univ},2}}. \nonumber
\end{align}\normalsize\normalsize
We emphasize that $\beta_{\mathrm{univ}}\in\R_{>0}$ is universal and $\beta_{\mathrm{univ},2}\in\R_{>0}$ is a universal constant of our choosing. Choosing $\beta_{\mathrm{univ},2} = \frac12\beta_{\mathrm{univ}}$ guarantees the event in the probability on the LHS fails with the required high-probability. Outside such an event, the proposed estimate in Lemma \ref{lemma:KPZNLS1B} holds courtesy of the first display in this proof. This completes the proof of Lemma \ref{lemma:KPZNLS1B}. \qed
\subsection{Proof of Lemma \ref{lemma:KPZNLD1B1A}}
The first ingredient towards the proof for Lemma \ref{lemma:KPZNLD1B1A} is the following inductive estimate that we will iterate to prove Lemma \ref{lemma:KPZNLD1B1A}. This identifies a cost of replacing a time-average in the heat operator by the same time-average but on a slightly larger time-scale. This cost is ultimately controlled by time-regularity of the heat kernel $\bar{\mathbf{H}}^{N}$ and of $\bar{\mathbf{Z}}^{N}$, both on the larger of the two time-scales that we time-average with respect to, times the time-average on the shorter time-scale.
\begin{lemma}\label{lemma:KPZNLStep2}
 Consider any possibly random time $\mathfrak{t}_{\mathrm{st}}\in\R_{\geq0}$ satisfying $\mathfrak{t}_{\mathrm{st}}\leq1$ with probability 1. Consider any times $\mathfrak{t}_{1},\mathfrak{t}_{2}\in\R_{\geq0}$ satisfying $N^{-100} \lesssim \mathfrak{t}_{1}\leq\mathfrak{t}_{2}\lesssim N^{-1}$ and $\mathfrak{t}_{2}\mathfrak{t}_{1}^{-1}\in\Z_{\geq0}$. There exist universal constants $\beta_{\mathrm{univ},1},\beta_{\mathrm{univ},2}>0$ such that outside an event of probability at most $N^{-\beta_{\mathrm{univ},1}}$ times uniformly bounded factors, we have the following estimate for any $\e>0$:
\small\begin{align}
\|\bar{\mathbf{H}}_{T,x}^{N}(\Psi^{N,2,\mathfrak{t}_{1},\mathfrak{t}_{2}}\bar{\mathbf{Z}}^{N})\|_{\mathfrak{t}_{\mathrm{st}};\mathbb{T}_{N}} \ &\lesssim_{\e} \ N^{-\beta_{\mathrm{univ},2}}\langle\bar{\mathbf{Z}}^{N}\rangle_{\mathfrak{t}_{\mathrm{st}};\mathbb{T}_{N}} \ + \ \langle\bar{\mathbf{Z}}^{N}\rangle_{\mathfrak{t}_{\mathrm{st}};\mathbb{T}_{N}} N^{\e}\mathfrak{t}_{2}^{\frac14}\|\bar{\mathbf{H}}_{T,x}^{N}(N^{\frac12}|\mathscr{A}_{\mathfrak{t}_{1}}^{\mathbf{T},+}\mathscr{C}_{N^{\beta_{X}}}^{\mathbf{X},-}(\mathfrak{g})|)\|_{1;\mathbb{T}_{N}}. \label{eq:KPZNLStep2I}
\end{align}\normalsize\normalsize
Above we recalled the space-time average of pseudo-gradients in \emph{Definition \ref{definition:D1B1A}}, and we also introduced the following quantity:
\small\begin{align}
\Psi^{N,2,\mathfrak{t}_{1},\mathfrak{t}_{2}} \ &\overset{\bullet}= \ N^{\frac12}\mathscr{A}_{\mathfrak{t}_{2}}^{\mathbf{T},+}\mathscr{C}_{N^{\beta_{X}}}^{\mathbf{X},-}(\mathfrak{g}) - N^{\frac12}\mathscr{A}_{\mathfrak{t}_{1}}^{\mathbf{T},+}\mathscr{C}_{N^{\beta_{X}}}^{\mathbf{X},-}(\mathfrak{g}).
\end{align}\normalsize\normalsize
\end{lemma}
We will require another version of Lemma \ref{lemma:KPZNLStep2} but when the initial time-scale is $\mathfrak{t}_{1} = 0$, so the following result estimates the error in introducing a time-average in the first place that will kickstart the inductive procedure referenced at the beginning of this subsection that is based on Lemma \ref{lemma:KPZNLStep2}. The proof of this upcoming estimate first replaces a time-0 time-average by a time-average on a very sub-microscopic time-scale. Because in expectation we expect nothing to happen on a very sub-microscopic time-scale, we directly get a little ``regularity" of the local functional we are time-averaging for this initial replacement-by-time-average. In particular, we do not use regularity of the heat kernel or $\bar{\mathbf{Z}}^{N}$ for the initial step. After, we use Lemma \ref{lemma:KPZNLStep2}.
\begin{lemma}\label{lemma:KPZNLStep2b}
 Assume the setting and notation in \emph{Lemma \ref{lemma:KPZNLStep2}} but for $\mathfrak{t}_{1} = 0$ and $\mathfrak{t}_{2} = \mathfrak{t}_{\mathrm{av},0}$ in \emph{Corollary \ref{corollary:D1B1A}}. There are universal constants $\beta_{\mathrm{univ},1},\beta_{\mathrm{univ},2}\in\R_{>0}$ such that outside an event of probability at most $N^{-\beta_{\mathrm{univ},1}}$ times uniformly bounded factors,
\small\begin{align}
\|\bar{\mathbf{H}}_{T,x}^{N}(\Psi^{N,2,0,\mathfrak{t}_{\mathrm{av},0}}\bar{\mathbf{Z}}^{N})\|_{\mathfrak{t}_{\mathrm{st}};\mathbb{T}_{N}} \ &\lesssim \  N^{-\beta_{\mathrm{univ},2}}\langle\bar{\mathbf{Z}}^{N}\rangle_{\mathfrak{t}_{\mathrm{st}};\mathbb{T}_{N}}. \label{eq:KPZNLStep2b}
\end{align}\normalsize\normalsize
\end{lemma}
\begin{proof}[Proof of \emph{Lemma \ref{lemma:KPZNLD1B1A}}]
We make the same note about ``high-probability" as in the beginning of the proof of Proposition \ref{prop:KPZNL}.

Recall the times $\mathfrak{t}_{\mathrm{av},\ell}\in\R_{>0}$ in Corollary \ref{corollary:D1B1A}, including $\mathfrak{t}_{\mathrm{av},0} = \mathscr{O}(N^{-2+\e_{X,2}})$. We will iterate the estimate \eqref{eq:KPZNLStep2I} in Lemma \ref{lemma:KPZNLStep2} to replace the time-average on time-scale $\mathfrak{t}_{\mathrm{av},\ell}$ by the time-average on time-scale $\mathfrak{t}_{\mathrm{av},\ell+1}$ until we reach the maximal time-scale $\mathfrak{t}_{\mathrm{av},\mathfrak{l}_{+}}\in\R_{>0}$. We start this off by replacing the time-0 average, namely the identity operator, by a time-$\mathfrak{t}_{\mathrm{av},0}$ average. Estimating the errors in this first step of introducing a time-$\mathfrak{t}_{\mathrm{av},0}$-average and the subsequent ``multiscale" steps then completes the proof.

We now make the previous paragraph precise. Let us first observe the following telescoping sum that follows by definition. It replaces the time-0 average with a time-$\mathfrak{t}_{\mathrm{av},\mathfrak{l}_{+}}$ average via intermediate steps. Recall $\Psi^{N,2}$ defined before/for Lemma \ref{lemma:KPZNLD1B1A}:
\small\begin{align}
\bar{\mathbf{H}}_{T,x}^{N}(\Psi^{N,2}\bar{\mathbf{Z}}^{N}) \ = \ \bar{\mathbf{H}}_{T,x}^{N}(\Psi^{N,2,0,\mathfrak{t}_{\mathrm{av},\mathfrak{l}_{+}}}\bar{\mathbf{Z}}^{N}) \ &= \ \bar{\mathbf{H}}_{T,x}^{N}(\Psi^{N,2,0,\mathfrak{t}_{\mathrm{av},0}}\bar{\mathbf{Z}}^{N}) \ + \ {\sum}_{\ell=0}^{\mathfrak{l}_{+}-1} \bar{\mathbf{H}}_{T,x}^{N}(\Psi^{N,2,\mathfrak{t}_{\mathrm{av},\ell},\mathfrak{t}_{\mathrm{av},\ell+1}}\bar{\mathbf{Z}}^{N}). \label{eq:KPZNLD1B1A1}
\end{align}\normalsize\normalsize
We first address the first term on the RHS of \eqref{eq:KPZNLD1B1A1}. By Lemma \ref{lemma:KPZNLStep2b} with high-probability we get the estimate:
\small\begin{align}
\|\bar{\mathbf{H}}_{T,x}^{N}(\Psi^{N,2,0,\mathfrak{t}_{\mathrm{av},0}}\bar{\mathbf{Z}}^{N})\|_{\mathfrak{t}_{\mathrm{st}};\mathbb{T}_{N}}\  &\lesssim \ N^{-\beta_{\mathrm{univ},2}}\langle\bar{\mathbf{Z}}^{N}\rangle_{\mathfrak{t}_{\mathrm{st}};\mathbb{T}_{N}}. \label{eq:KPZNLD1B1A6}
\end{align}\normalsize\normalsize
For the second term on the RHS of \eqref{eq:KPZNLD1B1A1}, we use the triangle inequality for $\|\|_{\mathfrak{t}_{\mathrm{st}};\mathbb{T}_{N}}$ and the uniform boundedness for $\mathfrak{l}_{+}\in\Z_{\geq0}$ from Corollary \ref{corollary:D1B1A} to get the following which controls a sum by the number of terms in the sum times a supremum:
\small\begin{align}
\|{\sum}_{\ell=0}^{\mathfrak{l}_{+}-1} \bar{\mathbf{H}}_{T,x}^{N}(\Psi^{N,2,\mathfrak{t}_{\mathrm{av},\ell},\mathfrak{t}_{\mathrm{av},\ell+1}}\bar{\mathbf{Z}}^{N})\|_{\mathfrak{t}_{\mathrm{st}};\mathbb{T}_{N}} \ &\lesssim \ \sup_{\ell=0,\ldots,\mathfrak{l}_{+}-1}\|\bar{\mathbf{H}}_{T,x}^{N}(\Psi^{N,2,\mathfrak{t}_{\mathrm{av},\ell},\mathfrak{t}_{\mathrm{av},\ell+1}}\bar{\mathbf{Z}}^{N})\|_{\mathfrak{t}_{\mathrm{st}};\mathbb{T}_{N}}. \label{eq:KPZNLD1B1A2}
\end{align}\normalsize\normalsize
By \eqref{eq:KPZNLStep2I} in Lemma \ref{lemma:KPZNLStep2}, we estimate each term in the supremum on the RHS of \eqref{eq:KPZNLD1B1A2} with high probability \emph{uniformly} in $\ell$:
\small\begin{align}
\|\bar{\mathbf{H}}_{T,x}^{N}(\Psi^{N,2,\mathfrak{t}_{\mathrm{av},\ell},\mathfrak{t}_{\mathrm{av},\ell+1}}\bar{\mathbf{Z}}^{N})\|_{\mathfrak{t}_{\mathrm{st}};\mathbb{T}_{N}} \ &\lesssim \ N^{-\beta_{\mathrm{univ},2}}\langle\bar{\mathbf{Z}}^{N}\rangle_{\mathfrak{t}_{\mathrm{st}};\mathbb{T}_{N}} \ + \ \langle\bar{\mathbf{Z}}^{N}\rangle_{\mathfrak{t}_{\mathrm{st}};\mathbb{T}_{N}} N^{\e} \mathfrak{t}_{\mathrm{av},\ell+1}^{1/4}\|\bar{\mathbf{H}}_{T,x}^{N}(N^{\frac12}|\mathscr{A}_{\mathfrak{t}_{\mathrm{av},\ell}}^{\mathbf{T},+}\mathscr{C}_{N^{\beta_{X}}}^{\mathbf{X},-}(\mathfrak{g})|)\|_{1;\mathbb{T}_{N}}. \label{eq:KPZNLD1B1A3}
\end{align}\normalsize\normalsize
We may now apply the expectation estimate in Corollary \ref{corollary:D1B1A} along with the Markov inequality to deduce the following bound with high-probability for the second term on the RHS of \eqref{eq:KPZNLD1B1A3} in which $\beta_{\mathrm{univ},2}\in\R_{>0}$ is possibly updated from \eqref{eq:KPZNLD1B1A6}/\eqref{eq:KPZNLD1B1A3}:
\small\begin{align}
\sup_{\ell=0,\ldots,\mathfrak{l}_{+}-1}N^{\e} \mathfrak{t}_{\mathrm{av},\ell+1}^{1/4}\|\bar{\mathbf{H}}_{T,x}^{N}(N^{\frac12}|\mathscr{A}_{\mathfrak{t}_{\mathrm{av},\ell}}^{\mathbf{T},+}\mathscr{C}_{N^{\beta_{X}}}^{\mathbf{X},-}(\mathfrak{g})|)\|_{1;\mathbb{T}_{N}} \ &\lesssim \ N^{-\beta_{\mathrm{univ},2}}. \label{eq:KPZNLD1B1A4}
\end{align}\normalsize\normalsize
We now combine \eqref{eq:KPZNLD1B1A2}, \eqref{eq:KPZNLD1B1A3}, and \eqref{eq:KPZNLD1B1A4} to get the following high-probability bound:
\small\begin{align}
\|{\sum}_{\ell=0}^{\mathfrak{l}_{+}-1} \bar{\mathbf{H}}_{T,x}^{N}(\Psi^{N,2,\mathfrak{t}_{\mathrm{av},\ell},\mathfrak{t}_{\mathrm{av},\ell+1}}\bar{\mathbf{Z}}^{N})\|_{\mathfrak{t}_{\mathrm{st}};\mathbb{T}_{N}} \ &\lesssim \ N^{-\beta_{\mathrm{univ},2}}\langle\bar{\mathbf{Z}}^{N}\rangle_{\mathfrak{t}_{\mathrm{st}};\mathbb{T}_{N}}. \label{eq:KPZNLD1B1A5}
\end{align}\normalsize\normalsize
Combining \eqref{eq:KPZNLD1B1A1}, \eqref{eq:KPZNLD1B1A6}, and \eqref{eq:KPZNLD1B1A5} with the triangle inequality for $\|\|_{\mathfrak{t}_{\mathrm{st}};\mathbb{T}_{N}}$ completes the proof.
\end{proof}
\subsection{Proof of Lemma \ref{lemma:KPZNLD1B1B}}
The proof of Lemma \ref{lemma:KPZNLD1B1B} is near identical to the proof of Lemma \ref{lemma:KPZNLD1B1A}. We just make the replacement \eqref{eq:KPZNLReplace} and adjust power-counting in $N\in\Z_{>0}$ in a fashion that is ultimately negligible. This starts with an analog of Lemma \ref{lemma:KPZNLStep2}.
\begin{lemma}\label{lemma:KPZNLStep22}
Retain the notation of \emph{Lemma \ref{lemma:KPZNLStep2}}. There exist universal constants $\beta_{\mathrm{univ},1},\beta_{\mathrm{univ},2}\in\R_{>0}$ such that outside an event of probability at most $N^{-\beta_{\mathrm{univ},1}}$ times uniformly bounded factors, we have the following estimate for any $\e\in\R_{>0}$:
\small\begin{align}
\|\bar{\mathbf{H}}_{T,x}^{N}(\wt{\Psi}^{N,2,\mathfrak{t}_{1},\mathfrak{t}_{2}}\bar{\mathbf{Z}}^{N})\|_{\mathfrak{t}_{\mathrm{st}};\mathbb{T}_{N}} \ &\lesssim_{\e} \ N^{-\beta_{\mathrm{univ},2}}\langle\bar{\mathbf{Z}}^{N}\rangle_{\mathfrak{t}_{\mathrm{st}};\mathbb{T}_{N}} \ + \ \langle\bar{\mathbf{Z}}^{N}\rangle_{\mathfrak{t}_{\mathrm{st}};\mathbb{T}_{N}} N^{\e}\mathfrak{t}_{2}^{\frac14}\left(\wt{\sum}_{\mathfrak{l}=1}^{N^{\beta_{X}}} \|\bar{\mathbf{H}}_{T,x}^{N}(N^{\beta_{X}}|\mathscr{A}_{\mathfrak{t}_{1}}^{\mathbf{T},+}(\wt{\mathfrak{g}}^{\mathfrak{l}})|)\|_{1;\mathbb{T}_{N}}\right). \label{eq:KPZNLStep2II}
\end{align}\normalsize\normalsize
We recall the time-average introduced in \emph{Definition \ref{definition:D1B1B}} of the functionals $\mathfrak{g}^{\mathfrak{l}}$ with pseudo-gradient factor. Also,
\small\begin{align}
\wt{\Psi}^{N,2,\mathfrak{t}_{1},\mathfrak{t}_{2}} \ &\overset{\bullet}= \ N^{\beta_{X}} \wt{\sum}_{\mathfrak{l}=1,\ldots,N^{\beta_{X}}}\left(\mathscr{A}_{\mathfrak{t}_{2}}^{\mathbf{T},+}(\wt{\mathfrak{g}}^{\mathfrak{l}}) -\mathscr{A}_{\mathfrak{t}_{1}}^{\mathbf{T},+}(\wt{\mathfrak{g}}^{\mathfrak{l}})\right).
\end{align}\normalsize\normalsize
\end{lemma}
We emphasize that the only difference between this and Lemma \ref{lemma:KPZNLStep2} is the replacement \eqref{eq:KPZNLReplace}. In particular, as we suggested at the beginning of this subsection its proof is similar to that of Lemma \ref{lemma:KPZNLStep2}. The details of what we time-average in Lemma \ref{lemma:KPZNLStep2} and Lemma \ref{lemma:KPZNLStep22} are unimportant, just that we are replacing a time-average by another on a larger time-scale.

The second preliminary ingredient we will need towards the proof of Lemma \ref{lemma:KPZNLD1B1B} is another parallel to a step in the proof for Lemma \ref{lemma:KPZNLD1B1A}. This second ingredient is an analog of Lemma \ref{lemma:KPZNLStep2b}. It serves to introduce a preliminary time-average that kicks off the inductive procedure driven by the previous estimate in Lemma \ref{lemma:KPZNLStep22}. This is similar to the purpose of Lemma \ref{lemma:KPZNLStep2b}. 
\begin{lemma}\label{lemma:KPZNLStep22b}
 Assume the setting and notation in \emph{Lemma \ref{lemma:KPZNLStep2}} but for $\mathfrak{t}_{1} = 0$ and $\mathfrak{t}_{2} = \mathfrak{t}_{\mathrm{av},0}^{\sim}$ in \emph{Corollary \ref{corollary:D1B1B}}. There are universal constants $\beta_{\mathrm{univ},1},\beta_{\mathrm{univ},2}\in\R_{>0}$ such that outside an event of probability at most $N^{-\beta_{\mathrm{univ},1}}$ times uniformly bounded factors, 
\small\begin{align}
\|\bar{\mathbf{H}}_{T,x}^{N}(\wt{\Psi}^{N,2,0,\mathfrak{t}_{\mathrm{av},0}^{\sim}}\bar{\mathbf{Z}}^{N})\|_{\mathfrak{t}_{\mathrm{st}};\mathbb{T}_{N}} \ &\lesssim \  N^{-\beta_{\mathrm{univ},2}}\langle\bar{\mathbf{Z}}^{N}\rangle_{\mathfrak{t}_{\mathrm{st}};\mathbb{T}_{N}}. \label{eq:KPZNLStep22b}
\end{align}\normalsize\normalsize
\end{lemma}
\begin{proof}[Proof of \emph{Lemma \ref{lemma:KPZNLD1B1B}}]
It suffices to follow the proof of Lemma \ref{lemma:KPZNLD1B1A} verbatim, make the replacement \eqref{eq:KPZNLReplace}, use the bound \eqref{eq:KPZNLStep2II} in place of \eqref{eq:KPZNLStep2I}, use time-scales in Corollary \ref{corollary:D1B1B} in place of those in Corollary \ref{corollary:D1B1A}, and use the estimate in Corollary \ref{corollary:D1B1B} in place of that in Corollary \ref{corollary:D1B1A}. This also means using Lemma \ref{lemma:KPZNLStep22}/\ref{lemma:KPZNLStep22b} instead of Lemma \ref{lemma:KPZNLStep2}/\ref{lemma:KPZNLStep2b}.
\end{proof}
\subsection{Proof of Lemma \ref{lemma:Step3}}
We have cutoff the space-average of the pseudo-gradient in $\bar{\Phi}^{N,2}$ in Lemma \ref{lemma:KPZNLS1B}, and we replaced this spatial-average-with-cutoff by a time-average on a mesoscopic scale in Lemma \ref{lemma:KPZNLD1B1A}. In Lemma \ref{lemma:KPZNLD1B1B}, we replaced the functionals $\wt{\mathfrak{g}}^{\mathfrak{l}}$ with pseudo-gradient factors with mesoscopic time-averages, too. For Lemma \ref{lemma:Step3}, we introduce the following inductive scheme transferring a priori estimates, of main interest in Propositions \ref{prop:D1B2A} and \ref{prop:D1B2B}, and Corollaries \ref{corollary:D1B2A} and \ref{corollary:D1B2B}, between time-averages on progressively larger scales. This induction is a precise implementation of the would-be-proof of Pseudo-Proposition \ref{pprop:S4}.
\begin{lemma}\label{lemma:Step3A}
 Recall the time-scales $\{\mathfrak{t}_{\mathrm{av},m}\}_{m=1}^{\mathfrak{m}_{+}}$ and exponents $\{\beta_{m}\}_{m=0}^{\mathfrak{m}_{+}}$ in \emph{Corollary \ref{corollary:D1B2A}} and the space-time averages with cutoffs in \emph{Definition \ref{definition:D1B2A}}. Given any $m \in \Z_{\geq 1}$, define the following double averaging which groups scale-$\mathfrak{t}_{\mathrm{av},m}$ time-averages with a priori upper bounds into blocks of time-scale $\mathfrak{t}_{\mathrm{av},m+1}$, and these blocks, composed of scale-$\mathfrak{t}_{\mathrm{av},m}$ time-averages-with-upper-bound-cutoff, are grouped into the ``maximal" time-scale $\mathfrak{t}_{\mathrm{av},\mathfrak{m}_{+}} \in \R_{>0}$. Precisely, we define the following double average in which scale-$\mathfrak{t}_{\mathrm{av},m}$ time-averages with the upper-bound-cutoff of order $N^{-\beta_{m}}$ are defined afterwards/in the second line below:
\small\begin{align}
\Gamma_{T,x}^{N,m,m+1} \ &\overset{\bullet}= \ \wt{\sum}_{\mathfrak{l}=0}^{\mathfrak{t}_{\mathrm{av},\mathfrak{m}_{+}}\mathfrak{t}_{\mathrm{av},m+1}^{-1}-1}\Gamma_{T+\mathfrak{l}\mathfrak{t}_{\mathrm{av},m+1},x}^{N,\mathfrak{t}_{\mathrm{av},m}} \ \overset{\bullet}= \ \wt{\sum}_{\mathfrak{l}=0}^{\mathfrak{t}_{\mathrm{av},\mathfrak{m}_{+}}\mathfrak{t}_{\mathrm{av},m+1}^{-1}-1}\wt{\sum}_{\mathfrak{j}=0}^{\mathfrak{t}_{\mathrm{av},m+1}\mathfrak{t}_{\mathrm{av},m}^{-1}-1}\Gamma_{T+\mathfrak{l}\mathfrak{t}_{\mathrm{av},m+1},x}^{N,\mathfrak{t}_{\mathrm{av},m},\mathfrak{j}} \\
\Gamma_{T,x}^{N,\mathfrak{t}_{\mathrm{av},m},\mathfrak{j}} \ &\overset{\bullet}= \ \mathscr{A}_{\mathfrak{t}_{\mathrm{av},m}}^{\mathbf{T},+}\mathscr{C}_{N^{\beta_{X}}}^{\mathbf{X},-}(\mathfrak{g}_{T+\mathfrak{j}\mathfrak{t}_{\mathrm{av},m},x})\cdot\mathbf{1}\left(\sup_{0\leq\mathfrak{t}\leq\mathfrak{t}_{\mathrm{av},m}}\mathfrak{t}\cdot\mathfrak{t}_{\mathrm{av},m}^{-1}|\mathscr{A}_{\mathfrak{t}}^{\mathbf{T},+}\mathscr{C}_{N^{\beta_{X}}}^{\mathbf{X},-}(\mathfrak{g}_{T+\mathfrak{j}\mathfrak{t}_{\mathrm{av},m},x})| \leq N^{-\beta_{m}}\right).
\end{align}\normalsize\normalsize
Take any random time $\mathfrak{t}_{\mathrm{st}} \in \R_{\geq0}$ satisfying $\mathfrak{t}_{\mathrm{st}}\leq1$ with probability 1. There exists a pair of universal constants $\beta_{\mathrm{univ},1},\beta_{\mathrm{univ},2}\in\R_{>0}$ so that outside an event of probability at most $N^{-\beta_{\mathrm{univ},1}}$ times uniformly bounded constants, simultaneously in $m \in\llbracket1,\mathfrak{m}_{+}-2\rrbracket$ we have the following which replaces an average of scale-$\mathfrak{t}_{\mathrm{av},m}$ time-averages with upper bound cutoffs with an average of scale-$\mathfrak{t}_{\mathrm{av},m+1}$ time-averages with slightly improved upper bound cutoffs:
\small\begin{align}
\|\bar{\mathbf{H}}_{T,x}^{N}(N^{\frac12}\Gamma^{N,m,m+1}\bar{\mathbf{Z}}^{N})\|_{\mathfrak{t}_{\mathrm{st}};\mathbb{T}_{N}} \ &\lesssim \ \|\bar{\mathbf{H}}_{T,x}^{N}(N^{\frac12}\Gamma^{N,m+1,m+2}\bar{\mathbf{Z}}^{N})\|_{\mathfrak{t}_{\mathrm{st}};\mathbb{T}_{N}} \ + \ N^{-\beta_{\mathrm{univ},2}}\|\bar{\mathbf{Z}}^{N}\|_{\mathfrak{t}_{\mathrm{st}};\mathbb{T}_{N}}.
\end{align}\normalsize\normalsize
The same is true after replacement \emph{\eqref{eq:KPZNLReplace}} if we use time-scales and exponents in \emph{Corollary \ref{corollary:D1B2B}} instead of those in \emph{Corollary \ref{corollary:D1B2A}}.
\end{lemma}
Lemma \ref{lemma:Step3A} is difficult to justify with words; it roughly amounts to comparing $\Gamma^{N,m,m+1}$ and $\Gamma^{N,m+1,m+2}$ with bootstrapping and improving a priori bounds on time-scale $\mathfrak{t}_{\mathrm{av},m}$ into a priori bounds on time-scale $\mathfrak{t}_{\mathrm{av},m+1}$. One important observation for the proof of Lemma \ref{lemma:Step3A} is that time-averages on larger time-scales are averages of time-averages on shorter time-scales. This lets us transfer between time-scales in the proof for Lemma \ref{lemma:Step3A}. We emphasize that error terms we get when we compare $\Gamma^{N,m,m+1}$ and $\Gamma^{N,m+1,m+2}$ and when we bootstrap a priori bounds to larger time-scales can be controlled by Corollaries \ref{corollary:D1B2A} and \ref{corollary:D1B2B}.

We proceed with another preparatory lemma which starts the inductive procedure that Lemma \ref{lemma:Step3A} propagates.
\begin{lemma}\label{lemma:Step3B}
 We recap setting of \emph{Lemma \ref{lemma:Step3A}}. Recall the sequences $\{\mathfrak{t}_{\mathrm{av},m}\}_{m=1}^{\mathfrak{m}_{+}}$ and $\{\beta_{m}\}_{m=0}^{\mathfrak{m}_{+}}$ in \emph{Corollary \ref{corollary:D1B2A}}, and define
\small\begin{align}
\Gamma_{T,x}^{N,1,2} \ &\overset{\bullet}= \ \wt{\sum}_{\mathfrak{l}=0}^{\mathfrak{t}_{\mathrm{av},\mathfrak{m}_{+}}\mathfrak{t}_{\mathrm{av},2}^{-1}-1}\Gamma_{T+\mathfrak{l}\mathfrak{t}_{\mathrm{av},2},x}^{N,\mathfrak{t}_{\mathrm{av},1}} \ \overset{\bullet}= \ \wt{\sum}_{\mathfrak{l}=0}^{\mathfrak{t}_{\mathrm{av},\mathfrak{m}_{+}}\mathfrak{t}_{\mathrm{av},2}^{-1}-1}\wt{\sum}_{\mathfrak{j}=0}^{\mathfrak{t}_{\mathrm{av},2}\mathfrak{t}_{\mathrm{av},1}^{-1}-1}\Gamma_{T+\mathfrak{l}\mathfrak{t}_{\mathrm{av},2},x}^{N,\mathfrak{t}_{\mathrm{av},1},\mathfrak{j}} \\
\Gamma_{T,x}^{N,\mathfrak{t}_{\mathrm{av},1},\mathfrak{j}} \ &\overset{\bullet}= \ \mathscr{A}_{\mathfrak{t}_{\mathrm{av},1}}^{\mathbf{T},+}\mathscr{C}_{N^{\beta_{X}}}^{\mathbf{X},-}(\mathfrak{g}_{T+\mathfrak{j}\mathfrak{t}_{\mathrm{av},1},x}) \cdot \mathbf{1}\left(\sup_{0\leq\mathfrak{t}\leq\mathfrak{t}_{\mathrm{av},1}}\mathfrak{t}\cdot\mathfrak{t}_{\mathrm{av},1}^{-1}|\mathscr{A}_{\mathfrak{t}}^{\mathbf{T},+}\mathscr{C}_{N^{\beta_{X}}}^{\mathbf{X},-}(\mathfrak{g}_{T+\mathfrak{j}\mathfrak{t}_{\mathrm{av},1},x})| \leq N^{-\beta_{1}} \right).
\end{align}\normalsize\normalsize
Consider any random time $\mathfrak{t}_{\mathrm{st}} \in \R_{\geq0}$ satisfying $\mathfrak{t}_{\mathrm{st}}\leq1$ with probability 1. There exist universal constants $\beta_{\mathrm{univ},1},\beta_{\mathrm{univ},2}\in\R_{>0}$ such that outside an event of probability at most $N^{-\beta_{\mathrm{univ},1}}$ times uniformly bounded constants, we have
\small\begin{align}
\|\bar{\mathbf{H}}_{T,x}^{N}(N^{\frac12}\mathscr{A}_{\mathfrak{t}_{\mathrm{av},\mathfrak{m}_{+}}}^{\mathbf{T},+}\mathscr{C}_{N^{\beta_{X}}}^{\mathbf{X},-}(\mathfrak{g}) \cdot \bar{\mathbf{Z}}^{N})\|_{\mathfrak{t}_{\mathrm{st}};\mathbb{T}_{N}} \ &\lesssim \ \|\bar{\mathbf{H}}_{T,x}^{N}(N^{\frac12}\Gamma^{N,1,2}\bar{\mathbf{Z}}^{N})\|_{\mathfrak{t}_{\mathrm{st}};\mathbb{T}_{N}} \ + \ N^{-\beta_{\mathrm{univ},2}}\|\bar{\mathbf{Z}}^{N}\|_{\mathfrak{t}_{\mathrm{st}};\mathbb{T}_{N}}.
\end{align}\normalsize\normalsize
The same is true after replacement \emph{\eqref{eq:KPZNLReplace}} if we use time-scales and exponents in \emph{Corollary \ref{corollary:D1B2B}} instead of those in \emph{Corollary \ref{corollary:D1B2A}}.
\end{lemma}
\begin{proof}[Proof of \emph{Lemma \ref{lemma:Step3}}]
We first make the same disclaimer concerning high-probability statements and events as we made at the beginning of the proof of Proposition \ref{prop:KPZNL}. We also focus first on showing the first estimate \eqref{eq:Step3I} from the statement of Lemma \ref{lemma:Step3}. We use the first bound for order $N^{1/2}$ quantities in Lemma \ref{lemma:Step3B} and iterate the first estimate for order $N^{1/2}$ terms in Lemma \ref{lemma:Step3A}. As $\mathfrak{m}_{+} \lesssim 1$ by Corollary \ref{corollary:D1B2A}, we iterate Lemma \ref{lemma:Step3A} a uniformly bounded number of times, so with high probability
\small\begin{align}
\|\bar{\mathbf{H}}_{T,x}^{N}(N^{\frac12}\mathscr{A}^{\mathbf{T},+}_{\mathfrak{t}_{\mathrm{av},\mathfrak{m}_{+}}}\mathscr{C}_{N^{\beta_{X}}}^{\mathbf{X},-}(\mathfrak{g}) \cdot \bar{\mathbf{Z}}^{N})\|_{\mathfrak{t}_{\mathrm{st}};\mathbb{T}_{N}} \ &\lesssim \ \|\bar{\mathbf{H}}_{T,x}^{N}(N^{\frac12}\Gamma^{N,\mathfrak{m}_{+}-1,\mathfrak{m}_{+}}\bar{\mathbf{Z}}^{N})\|_{\mathfrak{t}_{\mathrm{st}};\mathbb{T}_{N}} \ + \ N^{-\beta_{\mathrm{univ},2}}\|\bar{\mathbf{Z}}^{N}\|_{\mathfrak{t}_{\mathrm{st}};\mathbb{T}_{N}}. \label{eq:Step31}
\end{align}\normalsize\normalsize
By definition in Lemma \ref{lemma:Step3A} $\Gamma^{N,\mathfrak{m}_{+}-1,\mathfrak{m}_{+}}$ is an average of terms that are $\mathrm{O}(N^{-\beta_{\mathfrak{m}_{+}-1}})$. As $\beta_{\mathfrak{m}_{+}-1} = \beta_{\mathfrak{m}_{+}}-\e_{X,3} \geq \frac12 + 2\e_{X,3}$, by the space-time convolution estimate \eqref{eq:HKEConvolution}, we control the first term on the RHS of \eqref{eq:Step31} directly/deterministically and get 
\small\begin{align}
\|\bar{\mathbf{H}}_{T,x}^{N}(N^{\frac12}\Gamma^{N,\mathfrak{m}_{+}-1,\mathfrak{m}_{+}}\bar{\mathbf{Z}}^{N})\|_{\mathfrak{t}_{\mathrm{st}};\mathbb{T}_{N}} \ \lesssim \ N^{\frac12}\|\Gamma^{N,\mathfrak{m}_{+}-1,\mathfrak{m}_{+}}\bar{\mathbf{Z}}^{N}\|_{\mathfrak{t}_{\mathrm{st}};\mathbb{T}_{N}} \ \lesssim \ N^{\frac12-\beta_{\mathfrak{m}_{+}-1}}\|\bar{\mathbf{Z}}^{N}\|_{\mathfrak{t}_{\mathrm{st}};\mathbb{T}_{N}} \ &\leq \ N^{-2\e_{X,3}}\langle\bar{\mathbf{Z}}^{N}\rangle_{\mathfrak{t}_{\mathrm{st}};\mathbb{T}_{N}}. \label{eq:Step32}
\end{align}\normalsize\normalsize
Combining \eqref{eq:Step31} and \eqref{eq:Step32} yields the first estimate \eqref{eq:Step3I}. The bound \eqref{eq:Step3II} follows via the same argument except that for our application of Lemmas \ref{lemma:Step3A} and \ref{lemma:Step3B}, and also for \eqref{eq:Step31} and \eqref{eq:Step32} above, we make the replacement \eqref{eq:KPZNLReplace}, and we adopt the time-scales $\{\mathfrak{t}_{\mathrm{av},m}^{\sim}\}_{m=1}^{\mathfrak{m}_{+}^{\sim}}$, exponents $\{\beta_{m}^{\sim}\}_{m=0}^{\mathfrak{m}_{+}^{\sim}}$, and estimates in Corollary \ref{corollary:D1B2B} instead of in Corollary \ref{corollary:D1B2A}.
\end{proof}
\subsection{Proofs of Technical Estimates}
We now give proofs of all technical ingredients in the order that they are presented.
\begin{proof}[Proof of \emph{Lemma \ref{lemma:KPZNLStep2}}]
Recall the notation $\Psi^{N,2,\mathfrak{t}_{1},\mathfrak{t}_{2}}$ in Lemma \ref{lemma:KPZNLStep2}. The architecture of our proof will be computing explicitly the heat-operator term inside the $\|\|_{\mathfrak{t}_{\mathrm{st}};\mathbb{T}_{N}}$-norm on the LHS of the proposed estimate and then estimating each term in our resulting calculations. This starts with the following elementary consideration in which we decompose a scale-$\mathfrak{t}_{2}$ time-average in terms of time-shifted scale-$\mathfrak{t}_{1}$ time-averages and then we remove these time-shifts to get the $\Phi_{\mathfrak{j}}$-errors below. We clarify this after:
\small\begin{align}
\bar{\mathbf{H}}_{T,x}^{N}(N^{\frac12}\mathscr{A}_{\mathfrak{t}_{2}}^{\mathbf{T},+}\mathscr{C}_{N^{\beta_{X}}}^{\mathbf{X},-}(\mathfrak{g}_{S,y}) \cdot \bar{\mathbf{Z}}^{N}_{S,y}) \ &= \ \bar{\mathbf{H}}_{T,x}^{N}\left(N^{\frac12}\wt{\sum}_{\mathfrak{j}=0}^{\mathfrak{t}_{2}\mathfrak{t}_{1}^{-1}-1} \mathscr{A}_{\mathfrak{t}_{1}}^{\mathbf{T},+}\mathscr{C}_{N^{\beta_{X}}}^{\mathbf{X},-}(\mathfrak{g}_{S+\mathfrak{j}\mathfrak{t}_{1},y}) \cdot \bar{\mathbf{Z}}^{N}_{S,y}\right) \\
&= \ \bar{\mathbf{H}}_{T,x}^{N}(N^{\frac12}\mathscr{A}_{\mathfrak{t}_{1}}^{\mathbf{T},+}\mathscr{C}_{N^{\beta_{X}}}^{\mathbf{X},-}(\mathfrak{g}_{S,y}) \cdot \bar{\mathbf{Z}}_{S,y}^{N}) \ + \ \wt{\sum}_{\mathfrak{j}=0}^{\mathfrak{t}_{2}\mathfrak{t}_{1}^{-1}-1} \Phi_{\mathfrak{j}}. \label{eq:KPZNLStep2I1}
\end{align}\normalsize\normalsize
The first line follows from the observation that a time-average with respect to the time-scale $\mathfrak{t}_{2}$ is an average of suitably shifted time-averages with respect to the time-scale $\mathfrak{t}_{1}$. We recall from the statement of Lemma \ref{lemma:KPZNLStep2} that $\mathfrak{t}_{2}$ is a positive integer multiple of $\mathfrak{t}_{1}$, so that the sum and its limits of indices are all well-defined. For example, the previous decomposition yielding \eqref{eq:KPZNLStep2I1} is a relative of the fact that an average of 10 terms may be written as the average of 5 other terms, each of which is the average of 2 ``adjacent" terms. We have also introduced the following $\Phi_{\mathfrak{j}}$-errors obtained after removing the $\mathfrak{j}\mathfrak{t}_{1}$-time shift to get \eqref{eq:KPZNLStep2I1}:
\small\begin{align}
\Phi_{\mathfrak{j}} \ &\overset{\bullet}= \ \bar{\mathbf{H}}_{T,x}^{N}\left(N^{\frac12}\mathscr{A}_{\mathfrak{t}_{1}}^{\mathbf{T},+}\mathscr{C}_{N^{\beta_{X}}}^{\mathbf{X},-}(\mathfrak{g}_{S+\mathfrak{j}\mathfrak{t}_{1},y}) \cdot \bar{\mathbf{Z}}^{N}_{S,y}\right) \ - \ \bar{\mathbf{H}}_{T,x}^{N}\left(N^{\frac12}\mathscr{A}_{\mathfrak{t}_{1}}^{\mathbf{T},+}\mathscr{C}_{N^{\beta_{X}}}^{\mathbf{X},-}(\mathfrak{g}_{S,y}) \cdot \bar{\mathbf{Z}}_{S,y}^{N}\right) \ = \ \bar{\mathbf{H}}_{T,x}^{N}\left(N^{\frac12}\grad_{\mathfrak{j}\mathfrak{t}_{1}}^{\mathbf{T}}\left(\mathscr{A}_{\mathfrak{t}_{1}}^{\mathbf{T},+}\mathscr{C}_{N^{\beta_{X}}}^{\mathbf{X},-}(\mathfrak{g}_{S,y})\right)\bar{\mathbf{Z}}_{S,y}^{N}\right). \nonumber
\end{align}\normalsize\normalsize
From \eqref{eq:KPZNLStep2I1}, we subtract the first term on the RHS from both sides. By definition of $\Psi^{N,2,\mathfrak{t}_{1},\mathfrak{t}_{2}}$, we get the following formula:
\small\begin{align}
\bar{\mathbf{H}}_{T,x}^{N}(\Psi^{N,2,\mathfrak{t}_{1},\mathfrak{t}_{2}}\bar{\mathbf{Z}}^{N}) \ = \ \bar{\mathbf{H}}_{T,x}^{N}(N^{\frac12}\mathscr{A}_{\mathfrak{t}_{2}}^{\mathbf{T},+}\mathscr{C}_{N^{\beta_{X}}}^{\mathbf{X},-}(\mathfrak{g}_{S,y}) \cdot \bar{\mathbf{Z}}^{N}_{S,y}) - \bar{\mathbf{H}}_{T,x}^{N}(N^{\frac12}\mathscr{A}_{\mathfrak{t}_{1}}^{\mathbf{T},+}\mathscr{C}_{N^{\beta_{X}}}^{\mathbf{X},-}(\mathfrak{g}_{S,y}) \cdot \bar{\mathbf{Z}}_{S,y}^{N}) \ &= \ \wt{\sum}_{\mathfrak{j}=0}^{\mathfrak{t}_{2}\mathfrak{t}_{1}^{-1}-1} \Phi_{\mathfrak{j}}. \label{eq:KPZNLStep2I2}
\end{align}\normalsize\normalsize
We proceed to rewrite $\Phi_{\mathfrak{j}}$ to transform the time-gradient of the space-time average defining $\Phi_{\mathfrak{j}}$-terms into time-gradients of the heat operator and the microscopic Cole-Hopf transform. We will achieve this by a time-discrete-version of integration-by-parts for the time-integral in the heat operator $\bar{\mathbf{H}}^{N}$. To make this precise, we first use the following time-discrete-type Leibniz rule:
\small\begin{align}
\grad_{\mathfrak{j}\mathfrak{t}_{1}}^{\mathbf{T}}\left(\mathscr{A}_{\mathfrak{t}_{1}}^{\mathbf{T},+}\mathscr{C}_{N^{\beta_{X}}}^{\mathbf{X},-}(\mathfrak{g}_{S,y})\right)\bar{\mathbf{Z}}_{S,y}^{N} \ &= \ \grad_{\mathfrak{j}\mathfrak{t}_{1}}^{\mathbf{T}}\left(\mathscr{A}_{\mathfrak{t}_{1}}^{\mathbf{T},+}\mathscr{C}_{N^{\beta_{X}}}^{\mathbf{X},-}(\mathfrak{g}_{S,y})\bar{\mathbf{Z}}_{S-\mathfrak{j}\mathfrak{t}_{1},y}^{N}\right) + \mathscr{A}_{\mathfrak{t}_{1}}^{\mathbf{T},+}\mathscr{C}_{N^{\beta_{X}}}^{\mathbf{X},-}(\mathfrak{g}_{S,y})\grad_{-\mathfrak{j}\mathfrak{t}_{1}}^{\mathbf{T}}\bar{\mathbf{Z}}_{S,y}^{N}. \label{eq:LeibnizKPZNL}
\end{align}\normalsize\normalsize
We will now plug the previous display into the heat operator in the definition of $\Phi_{\mathfrak{j}}$/the far RHS of the display preceding \eqref{eq:KPZNLStep2I2} to get the following representation of $\Phi_{\mathfrak{j}}$ in which the time-gradient below that acts on the space-time pseudo-gradient average times $\bar{\mathbf{Z}}^{N}$ will be transferred to the heat kernel in $\bar{\mathbf{H}}^{N}$ via an integration-by-parts-type consideration as we will soon see:
\small\begin{align}
\Phi_{\mathfrak{j}} \ &= \ \bar{\mathbf{H}}_{T,x}^{N}\left(N^{\frac12}\grad_{\mathfrak{j}\mathfrak{t}_{1}}^{\mathbf{T}}\left(\mathscr{A}_{\mathfrak{t}_{1}}^{\mathbf{T},+}\mathscr{C}_{N^{\beta_{X}}}^{\mathbf{X},-}(\mathfrak{g}_{S,y})\bar{\mathbf{Z}}_{S-\mathfrak{j}\mathfrak{t}_{1},y}^{N}\right)\right) + \bar{\mathbf{H}}_{T,x}^{N}\left(N^{\frac12}\mathscr{A}_{\mathfrak{t}_{1}}^{\mathbf{T},+}\mathscr{C}_{N^{\beta_{X}}}^{\mathbf{X},-}(\mathfrak{g}_{S,y})\grad_{-\mathfrak{j}\mathfrak{t}_{1}}^{\mathbf{T}}\bar{\mathbf{Z}}_{S,y}^{N}\right) \ \overset{\bullet}= \ \Phi_{\mathfrak{j},1}+\Phi_{\mathfrak{j},2}. \label{eq:KPZNLStep2I3}
\end{align}\normalsize\normalsize
The identities \eqref{eq:KPZNLStep2I2} and \eqref{eq:KPZNLStep2I3} combined with the triangle inequality yield the following estimate:
\small\begin{align}
\|\bar{\mathbf{H}}_{T,x}^{N}(\Psi^{N,2,\mathfrak{t}_{1},\mathfrak{t}_{2}}\bar{\mathbf{Z}}^{N})\|_{\mathfrak{t}_{\mathrm{st}};\mathbb{T}_{N}} \ \leq \ \wt{\sum}_{\mathfrak{j}=0}^{\mathfrak{t}_{2}\mathfrak{t}_{1}^{-1}-1}\|\Phi_{\mathfrak{j}}\|_{\mathfrak{t}_{\mathrm{st}};\mathbb{T}_{N}} \ \leq \ \wt{\sum}_{\mathfrak{j}=0}^{\mathfrak{t}_{2}\mathfrak{t}_{1}^{-1}-1}{\sum}_{\mathfrak{k}=1}^{2}\|\Phi_{\mathfrak{j},\mathfrak{k}}\|_{\mathfrak{t}_{\mathrm{st}};\mathbb{T}_{N}} \ = \ 2\wt{\sum}_{\mathfrak{j}=0}^{\mathfrak{t}_{2}\mathfrak{t}_{1}^{-1}-1}\wt{\sum}_{\mathfrak{k}=1}^{2}\|\Phi_{\mathfrak{j},\mathfrak{k}}\|_{\mathfrak{t}_{\mathrm{st}};\mathbb{T}_{N}}. \label{eq:KPZNLStep2I5}
\end{align}\normalsize\normalsize
To control the average of the norms on the far RHS of \eqref{eq:KPZNLStep2I5}, it suffices to estimate each summand in \eqref{eq:KPZNLStep2I5} uniformly in both indices on the same high-probability event. We do this by heat operator/time-regularity estimates. First note $\mathfrak{j}\mathfrak{t}_{1}\lesssim\mathfrak{t}_{2}\lesssim N^{-1}$.
\begin{itemize}[leftmargin=*]
\item We bound $\Phi_{\mathfrak{j},1}$ in \eqref{eq:KPZNLStep2I3} by deterministic analytic means. In particular we move the time-gradient onto the heat operator and take advantage of smoothness for the heat operator in time with a sufficiently integrable short-time singularity. This is done in Lemma \ref{lemma:HKEB} which yields the following given any $\delta\in\R_{>0}$ arbitrarily small but universal for which we recall $\mathfrak{j}\mathfrak{t}_{1}\lesssim\mathfrak{t}_{2}\lesssim N^{-1}$ and boundedness of $\mathscr{A}^{\mathbf{T},+}\mathscr{C}^{\mathbf{X},-}(\mathfrak{g})$. Note $\Phi_{\mathfrak{j},1}$ in \eqref{eq:KPZNLStep2I3} sees $\bar{\mathbf{Z}}^{N}$ until time $\mathfrak{t}_{\mathrm{st}}$, which is why its $\|\|_{\mathfrak{t}_{\mathrm{st}};\mathbb{T}_{N}}$-norm appears below. We also clarify such application of Lemma \ref{lemma:HKEB} is done with $\bar{\varphi}$ equal to the space-time average times the time-shifted $\bar{\mathbf{Z}}^{N}$-process in the time-gradient in $\Phi_{\mathfrak{j},1}$ and with the time-scale $\mathfrak{t}_{\mathrm{reg}}=\mathfrak{j}\mathfrak{t}_{1}$ as well as $\mathfrak{t}_{\mathrm{st}}$ therein equal to $\mathfrak{t}_{\mathrm{st}}$ here:
\small\begin{align}
\|\Phi_{\mathfrak{j},1}\|_{\mathfrak{t}_{\mathrm{st}};\mathbb{T}_{N}} \ \lesssim_{\delta} \ N^{\frac12+2\delta}\mathfrak{j}\mathfrak{t}_{1}\|\bar{\mathbf{Z}}^{N}\|_{\mathfrak{t}_{\mathrm{st}};\mathbb{T}_{N}} \ \lesssim \ N^{\frac12+2\delta}\mathfrak{t}_{2}\|\bar{\mathbf{Z}}^{N}\|_{\mathfrak{t}_{\mathrm{st}};\mathbb{T}_{N}} \ \lesssim \ N^{-\frac12+2\delta}\|\bar{\mathbf{Z}}^{N}\|_{\mathfrak{t}_{\mathrm{st}};\mathbb{T}_{N}} \ \leq \ N^{-\frac12+2\delta}\langle\bar{\mathbf{Z}}^{N}\rangle_{\mathfrak{t}_{\mathrm{st}};\mathbb{T}_{N}}. \label{eq:KPZNLStep2I6}
\end{align}\normalsize\normalsize
\item It remains to estimate $\Phi_{\mathfrak{j},2}$. Observe that all of our steps/bounds have been deterministic thus far. We will now use a random high-probability estimate for $\Phi_{\mathfrak{j},2}$. It will be convenient to first use the notation $\kappa_{\mathfrak{j},\mathfrak{t}_{1},\e} = N^{-2+\e} + N^{\e}\mathfrak{j}\mathfrak{t}_{1}$. We think of $\e\in\R_{>0}$ as arbitrarily small but universal in such notation. We also emphasize $\Phi_{\mathfrak{j},2}$ is the heat operator acting on the space-time average of the pseudo-gradient times the time-gradient of $\bar{\mathbf{Z}}^{N}$ before time $\mathfrak{t}_{\mathrm{st}}\in\R_{\geq0}$.

Via Proposition \ref{prop:TRGTProp}, with probability at least $1-\kappa_{\e,C}N^{-C}$ for any $\e,C\in\R_{>0}$, where in this application of Proposition \ref{prop:TRGTProp} we choose $\mathfrak{t}_{\mathrm{st}}\in\R_{\geq0}$ therein equal to $\mathfrak{t}_{\mathrm{st}}\in\R_{\geq0}$ here and we choose $\mathfrak{t}_{\mathrm{reg}} = \mathfrak{j}\mathfrak{t}_{1}$, we may control the time-gradients of $\bar{\mathbf{Z}}^{N}$ in $\Phi_{\mathfrak{j},2}$ uniformly in the integral on the same high-probability event. We then establish the following estimate upon observing again the $\bar{\mathbf{Z}}^{N}$-process is only evaluated before time $\mathfrak{t}_{\mathrm{st}}\in\R_{\geq0}$ in the integral/heat operator $\Phi_{\mathfrak{j},2}$. We clarify after:
\small\begin{align}
|\Phi_{\mathfrak{j},2}| \ &\lesssim \ \|\grad_{-\mathfrak{j}\mathfrak{t}_{1}}^{\mathbf{T}}\bar{\mathbf{Z}}^{N}\|_{\mathfrak{t}_{\mathrm{st}};\mathbb{T}_{N}}\bar{\mathbf{H}}_{T,x}^{N}(N^{\frac12}|\mathscr{A}_{\mathfrak{t}_{1}}^{\mathbf{T},+}\mathscr{C}_{N^{\beta_{X}}}^{\mathbf{X},-}(\mathfrak{g}_{S,y})|) \ \lesssim \ \kappa_{\mathfrak{j},\mathfrak{t}_{1},\e}^{1/4}\langle\bar{\mathbf{Z}}^{N}\rangle_{\mathfrak{t}_{\mathrm{st}};\mathbb{T}_{N}} \|\bar{\mathbf{H}}_{T,x}^{N}(N^{\frac12}|\mathscr{A}_{\mathfrak{t}_{1}}^{\mathbf{T},+}\mathscr{C}_{N^{\beta_{X}}}^{\mathbf{X},-}(\mathfrak{g})|)\|_{1;\mathbb{T}_{N}}. \label{eq:KPZNLStep2I9}
\end{align}\normalsize\normalsize
Let us emphasize \eqref{eq:KPZNLStep2I9} holds uniformly over $[0,\mathfrak{t}_{\mathrm{st}}]\times\mathbb{T}_{N}$ with high-probability. We clarify that the second estimate, again, follows by estimating the time-gradient in $\Phi_{\mathfrak{j},2}$ via the Holder regularity bound of $\kappa_{\mathfrak{j},\mathfrak{t}_{1},\e}^{1/4}$ times the bracket-norm of $\bar{\mathbf{Z}}^{N}$, which holds uniformly in integration variables in the heat operator $\bar{\mathbf{H}}^{N}$ in $\Phi_{\mathfrak{j},2}$ with high probability by Proposition \ref{prop:TRGTProp}.

We recall \eqref{eq:KPZNLStep2I9} holds uniformly over $[0,\mathfrak{t}_{\mathrm{st}}]\times\mathbb{T}_{N}$. Thus, upon recalling $\kappa_{\mathfrak{j},\mathfrak{t}_{1},\e}$ defined at the beginning of this bullet point, we get the following high-probability estimate for which we provide a little more explanation afterwards as well:
\small\begin{align}
\|\Phi_{\mathfrak{j},2}\|_{\mathfrak{t}_{\mathrm{st}};\mathbb{T}_{N}} \ &\lesssim \ N^{-1/2+\e/4}\langle\bar{\mathbf{Z}}^{N}\rangle_{\mathfrak{t}_{\mathrm{st}};\mathbb{T}_{N}} \|\bar{\mathbf{H}}_{T,x}^{N}(N^{1/2}|\mathscr{A}_{\mathfrak{t}_{1}}^{\mathbf{T},+}\mathscr{C}_{N^{\beta_{X}}}^{\mathbf{X},-}(\mathfrak{g})|)\|_{1;\mathbb{T}_{N}} \ + \ N^{\e/4}\mathfrak{j}^{1/4}\mathfrak{t}_{1}^{1/4}\langle\bar{\mathbf{Z}}^{N}\rangle_{\mathfrak{t}_{\mathrm{st}};\mathbb{T}_{N}} \|\bar{\mathbf{H}}_{T,x}^{N}(N^{1/2}|\mathscr{A}_{\mathfrak{t}_{1}}^{\mathbf{T},+}\mathscr{C}_{N^{\beta_{X}}}^{\mathbf{X},-}(\mathfrak{g})|)\|_{1;\mathbb{T}_{N}} \nonumber \\
&\lesssim \ N^{-1/6+\e/4}\langle\bar{\mathbf{Z}}^{N}\rangle_{\mathfrak{t}_{\mathrm{st}};\mathbb{T}_{N}} \ + \ N^{\e/4}\mathfrak{j}^{1/4}\mathfrak{t}_{1}^{1/4}\langle\bar{\mathbf{Z}}^{N}\rangle_{\mathfrak{t}_{\mathrm{st}};\mathbb{T}_{N}} \|\bar{\mathbf{H}}_{T,x}^{N}(N^{1/2}|\mathscr{A}_{\mathfrak{t}_{1}}^{\mathbf{T},+}\mathscr{C}_{N^{\beta_{X}}}^{\mathbf{X},-}(\mathfrak{g})|)\|_{1;\mathbb{T}_{N}}. \label{eq:KPZNLStep2I10}
\end{align}\normalsize\normalsize
The line \eqref{eq:KPZNLStep2I10} follows from the line above by the convolution bound \eqref{eq:HKEConvolution} applied to the first term in the line before and the bound $|\mathscr{A}^{\mathbf{T},+}\mathscr{C}^{\mathbf{X},-}(\mathfrak{g})| \lesssim N^{-\beta_{X}/2-\e_{X,1}/2} \leq N^{-1/6}$ via time-averaging the a priori $\mathscr{C}^{\mathbf{X},-}$-bound; see Definition \ref{definition:S1B}.
\end{itemize}
We observe the previous two bullet points provide high-probability estimates for $\|\Phi_{\mathfrak{j},\mathfrak{k}}\|_{\mathfrak{t}_{\mathrm{st}};\mathbb{T}_{N}}$ \emph{for each} of the indices $\mathfrak{j},\mathfrak{k}\in\Z_{\geq0}$. By ``high-probability" in this claim, we mean an event whose complement has probability $\kappa_{C}N^{-C}$ for any $C\in\R_{\geq0}$. With a union bound over the complement of all events over all indices $\mathfrak{j},\mathfrak{k}$, we deduce the event on which all of our $\|\Phi_{\mathfrak{j},\mathfrak{k}}\|_{\mathfrak{t}_{\mathrm{st}};\mathbb{T}_{N}}$-estimates hold \emph{simultaneously} has complement with probability at most $\kappa_{C}N^{-C}\mathfrak{t}_{2}\mathfrak{t}_{1}^{-1} \lesssim \kappa_{C}N^{-C+100}$ with universal implied constant. This final probability bound follows from the assumed constraints $N^{-100}\lesssim\mathfrak{t}_{1},\mathfrak{t}_{2}\lesssim N^{-1}$ in the statement of Lemma \ref{lemma:KPZNLStep2}. Combining these simultaneous high-probability $\|\Phi_{\mathfrak{j},\mathfrak{k}}\|_{\mathfrak{t}_{\mathrm{st}};\mathbb{T}_{N}}$-estimates with \eqref{eq:KPZNLStep2I5} yields the desired estimate.
\end{proof}
\begin{proof}[Proof of \emph{Lemma \ref{lemma:KPZNLStep2b}}]
Let us first define the intermediate sub-microscopic time-scale of $\mathfrak{t}_{1,2} \overset{\bullet}= N^{-100}$ between $\mathfrak{t}_{1},\mathfrak{t}_{2}\in\R_{\geq0}$. The triangle inequality gives the following in which we first replace a scale-$0$ time-average by the intermediate time-scale $\mathfrak{t}_{1,2}$:
\small\begin{align}
\|\bar{\mathbf{H}}_{T,x}^{N}(\Psi^{N,2,0,\mathfrak{t}_{\mathrm{av},0}}\bar{\mathbf{Z}}^{N})\|_{\mathfrak{t}_{\mathrm{st}};\mathbb{T}_{N}} \ &\leq \ \|\bar{\mathbf{H}}_{T,x}^{N}(\Psi^{N,2,0,\mathfrak{t}_{1,2}}\bar{\mathbf{Z}}^{N})\|_{\mathfrak{t}_{\mathrm{st}};\mathbb{T}_{N}}  + \|\bar{\mathbf{H}}_{T,x}^{N}(\Psi^{N,2,\mathfrak{t}_{1,2},\mathfrak{t}_{\mathrm{av},0}}\bar{\mathbf{Z}}^{N})\|_{\mathfrak{t}_{\mathrm{st}};\mathbb{T}_{N}}. \label{eq:KPZNLStep2b1}  
\end{align}\normalsize\normalsize
Let us now explain the utility of this intermediate time-scale $\mathfrak{t}_{1,2} = N^{-100}$. The first term on the RHS of \eqref{eq:KPZNLStep2b1} can be estimated via first moment after pulling the $\bar{\mathbf{Z}}^{N}$-factor outside the heat operator. This first moment bound would yield a ``time-regularity" bound for local functionals of the particle system. It says at times $\mathfrak{t}_{1,2}=N^{-100}$, we do not expect to see any of the Poisson clocks ring, and thus local functionals are ``constant" or ``smooth" on the time-scale $\mathfrak{t}_{1,2}=N^{-100}$. The second term on the RHS of \eqref{eq:KPZNLStep2b1} is then bounded by \eqref{eq:KPZNLStep2I}, as we have introduced a preliminary time-average $\mathfrak{t}_{1,2}=N^{-100}$ for this second term.

We make the above precise. Pulling $\|\bar{\mathbf{Z}}^{N}\|_{\mathfrak{t}_{\mathrm{st}};\mathbb{T}_{N}}\leq\langle\bar{\mathbf{Z}}^{N}\rangle_{\mathfrak{t}_{\mathrm{st}};\mathbb{T}_{N}}$ from the heat operator in the first term on the RHS of \eqref{eq:KPZNLStep2b1},
\small\begin{align}
\|\bar{\mathbf{H}}_{T,x}^{N}(\Psi^{N,2,0,\mathfrak{t}_{1,2}}\bar{\mathbf{Z}}^{N})\|_{\mathfrak{t}_{\mathrm{st}};\mathbb{T}_{N}} \ &\lesssim \ \langle\bar{\mathbf{Z}}^{N}\rangle_{\mathfrak{t}_{\mathrm{st}};\mathbb{T}_{N}}\|\left(\int_{0}^{T}{\sum}_{y\in\mathbb{T}_{N}}|\Psi^{N,2,0,\mathfrak{t}_{1,2}}_{S,y}| \ \d S\right)\|_{\mathfrak{t}_{\mathrm{st}};\mathbb{T}_{N}} \nonumber \\
&\lesssim \ \langle\bar{\mathbf{Z}}^{N}\rangle_{\mathfrak{t}_{\mathrm{st}};\mathbb{T}_{N}} \int_{0}^{1}{\sum}_{y\in\mathbb{T}_{N}}|\Psi^{N,2,0,\mathfrak{t}_{1,2}}_{S,y}| \ \d S. \label{eq:KPZNLStep2b3}
\end{align}\normalsize\normalsize
The estimate before the far RHS of \eqref{eq:KPZNLStep2b3}/in the middle of the above display follows by a maximum principle $\mathbf{1}_{T\geq S}\bar{\mathbf{H}}_{S,T,x,y}^{N}\leq1$. The far RHS of \eqref{eq:KPZNLStep2b3} follows by extending the integration-domain in the integral of a non-negative term from $[0,T]\to[0,1]$.

To get a high-probability estimate for the RHS, by the Markov inequality it suffices to bound expectation of the RHS of \eqref{eq:KPZNLStep2b3} \emph{without} the bracket factor in front. To this end, we pull said expectation past the integral and summation so it hits the $\Psi$-factor. To bound expectation of this $\Psi$-factor, recall by definition this $\Psi$-term is the difference between the spatial-average-with-cutoff $\mathscr{C}^{\mathbf{X},-}(\mathfrak{g})$ and its time-average on scale-$\mathfrak{t}_{1,2}=N^{-100}$. As $\mathscr{C}^{\mathbf{X},-}(\mathfrak{g})$ is uniformly bounded with support with size of order $N^{\beta_{X}}$, this $\Psi$-term, in absolute value, is controlled by the number of times any of the Poisson clocks in the support of $\mathscr{C}^{\mathbf{X},-}(\mathfrak{g})$, all speed at most $N^{2}$ times uniformly bounded factors, ring in the time-scale $\mathfrak{t}_{1,2} = N^{-100}$. As a sum of independent Poissons is Poisson via additive rates, this is asking for the number of times a Poisson clock of speed $N^{2+\beta_{X}-100}$ rings in order 1 time. The probability that such Poisson clock rings at all is controlled by its speed if its speed is uniformly bounded. Thus, as $\beta_{X} = \frac13+\e_{X,1} \leq 1$,
\small\begin{align}
\E|\Psi^{N,2,0,\mathfrak{t}_{1,2}}_{S,y}| \ \lesssim \ N^{2+\beta_{X}-100} \ &\lesssim \ N^{-97}. \label{eq:KPZNLStep2b5}
\end{align}\normalsize\normalsize
As a consequence \eqref{eq:KPZNLStep2b3}, the proof of the Markov inequality gives the required estimate for the first term on the RHS of \eqref{eq:KPZNLStep2b1} outside an event of probability bounded by the following in which we employ \eqref{eq:KPZNLStep2b5} along with the Fubini theorem and control on an unaveraged sum/integral by the length of the sum/integration set times a supremum; recall $|\mathbb{T}_{N}|\lesssim N^{2}$ from Section \ref{section:Ctify}:
\small\begin{align}
N^{\beta_{\mathrm{univ},2}} \E\int_{0}^{1}{\sum}_{y\in\mathbb{T}_{N}}|\Psi^{N,2,0,\mathfrak{t}_{1,2}}_{S,y}| \ \d S \ = \ N^{\beta_{\mathrm{univ},2}}\int_{0}^{1}{\sum}_{y\in\mathbb{T}_{N}}\E|\Psi^{N,2,0,\mathfrak{t}_{1,2}}_{S,y}| \ \d S \ &\lesssim \ N^{\beta_{\mathrm{univ},2}}|\mathbb{T}_{N}|N^{-97} \ \lesssim \ N^{-95+\beta_{\mathrm{univ},2}}. \label{eq:KPZNLStep2b6}
\end{align}\normalsize\normalsize
The last bound follows from \eqref{eq:KPZNLStep2b5}. Choosing $\beta_{\mathrm{univ},2} \in \R_{>0}$ sufficiently small but still universal gives that with high-probability, the first term from the RHS of \eqref{eq:KPZNLStep2b1} is controlled by the RHS of the proposed estimate in Lemma \ref{lemma:KPZNLStep2b}. By \eqref{eq:KPZNLStep2b1}, the proposed estimate in Lemma \ref{lemma:KPZNLStep2b} follows if we control the second term on the RHS of \eqref{eq:KPZNLStep2b1} with high probability.

To control the second term from the RHS of \eqref{eq:KPZNLStep2b1} with required high probability, we use Lemma \ref{lemma:KPZNLStep2} with $\mathfrak{t}_{1} = \mathfrak{t}_{1,2} = N^{-100}$ and $\mathfrak{t}_{2} = \mathfrak{t}_{\mathrm{av},0} \lesssim N^{-2+\e_{X,2}}$ for $\e_{X,2}\in\R_{>0}$ arbitrarily small but universal. We also recall $|\mathscr{A}^{\mathbf{T},+}\mathscr{C}^{\mathbf{X},-}(\mathfrak{g})|\lesssim N^{-1/6}$. Ultimately, with the required high probability we have the following estimate for the second term on the RHS of \eqref{eq:KPZNLStep2b1}:
\small\begin{align}
\|\bar{\mathbf{H}}_{T,x}^{N}(\Psi^{N,2,\mathfrak{t}_{1,2},\mathfrak{t}_{\mathrm{av},0}}\bar{\mathbf{Z}}^{N})\|_{\mathfrak{t}_{\mathrm{st}};\mathbb{T}_{N}} \ &\lesssim_{\e} \ N^{-\beta_{\mathrm{univ},2}}\langle\bar{\mathbf{Z}}^{N}\rangle_{\mathfrak{t}_{\mathrm{st}};\mathbb{T}_{N}} \ + \ \langle\bar{\mathbf{Z}}^{N}\rangle_{\mathfrak{t}_{\mathrm{st}};\mathbb{T}_{N}} N^{\e}\mathfrak{t}_{\mathrm{av},0}^{1/4}\|\bar{\mathbf{H}}_{T,x}^{N}(N^{\frac12}|\mathscr{A}_{\mathfrak{t}_{1,2}}^{\mathbf{T},+}\mathscr{C}_{N^{\beta_{X}}}^{\mathbf{X},-}(\mathfrak{g})|)\|_{1;\mathbb{T}_{N}} \\ 
&\lesssim \ N^{-\beta_{\mathrm{univ},2}}\langle\bar{\mathbf{Z}}^{N}\rangle_{\mathfrak{t}_{\mathrm{st}};\mathbb{T}_{N}} \ + \ \langle\bar{\mathbf{Z}}^{N}\rangle_{\mathfrak{t}_{\mathrm{st}};\mathbb{T}_{N}} N^{\frac12+\e}N^{-\frac16}\mathfrak{t}_{\mathrm{av},0}^{1/4}. \label{eq:KPZNLStep2b7}
\end{align}\normalsize\normalsize
Elementary power-counting with $\mathfrak{t}_{\mathrm{av},0}\lesssim N^{-2+\e_{X,2}}$ bounds the second term on the RHS of \eqref{eq:KPZNLStep2b1}. This completes the proof.
\end{proof}
\begin{proof}[Proof of \emph{Lemma \ref{lemma:KPZNLStep22}}]
We observe that Lemma \ref{lemma:KPZNLStep22} differs from the earlier result Lemma \ref{lemma:KPZNLStep2} just through the replacement \eqref{eq:KPZNLReplace}. The proof of Lemma \ref{lemma:KPZNLStep22} amounts to verbatim following proof of Lemma \ref{lemma:KPZNLStep2} upon making the replacement \eqref{eq:KPZNLReplace}. In particular, the only important thing is that we replace the time-scale of time-averaging from $\mathfrak{t}_{1}\rightsquigarrow\mathfrak{t}_{2}$. The details of what we time-average are irrelevant. Actually, the replacement of \eqref{eq:KPZNLReplace} in the proof of Lemma \ref{lemma:KPZNLStep2} changes power-counting in the proof of Lemma \ref{lemma:KPZNLStep2} but in ultimately irrelevant ways if instead of the bound $|\mathscr{C}^{\mathbf{X},-}(\mathfrak{g})|\lesssim N^{-\frac16}$, we instead use $|\wt{\mathfrak{g}}^{\mathfrak{l}}|\lesssim1$ in this power-counting.
\end{proof}
\begin{proof}[Proof of \emph{Lemma \ref{lemma:KPZNLStep22b}}]
We follow the proof of Lemma \ref{lemma:KPZNLStep2b} with the minor changes mentioned in the proof of Lemma \ref{lemma:KPZNLStep22}.
\end{proof}
\begin{proof}[Proof of \emph{Lemma \ref{lemma:Step3A}}]
The architecture behind this argument is to relate $\Gamma^{N,m,m+1}$ and $\Gamma^{N,m+1,m+2}$ from the statement of Lemma \ref{lemma:Step3A} via exact identities. We then estimate the errors obtained when relating $\Gamma^{N,m,m+1}$ and $\Gamma^{N,m+1,m+2}$ in these exact identities. Before we start, we invite the reader to refer to Lemma \ref{lemma:Step3A} to recall definitions of $\Gamma$-terms in this proof. First, notation.
\begin{itemize}[leftmargin=*]
\item Take $m,\mathfrak{l},\mathfrak{k},\mathfrak{j}\in\Z_{\geq0}$. For $T\geq0$ let $T^{m}_{\mathfrak{l},\mathfrak{k},\mathfrak{j}}\overset{\bullet}=T+\mathfrak{l}\mathfrak{t}_{\mathrm{av},m+2}+\mathfrak{k}\mathfrak{t}_{\mathrm{av},m+1}+\mathfrak{j}\mathfrak{t}_{\mathrm{av},m}$ be time-shifts at 3 $m$-dependent time-scales. In particular each subscript in this notation indicates shifting at one of the three time-scales $\mathfrak{t}_{\mathrm{av},m+2},\mathfrak{t}_{\mathrm{av},m+1},\mathfrak{t}_{\mathrm{av},m}$ in this order.
\item By definition the term $\Gamma^{N,m,m+1}$ from the LHS of the proposed estimate is an average of $\mathfrak{t}_{\mathrm{av},\mathfrak{m}_{+}}\mathfrak{t}_{\mathrm{av},m+1}^{-1}$-many time-shifted terms $\Gamma^{N,\mathfrak{t}_{\mathrm{av},m}}$ each of which is the average of scale-$\mathfrak{t}_{\mathrm{av},m}$ time-averages equipped with a priori upper bound cutoff of $N^{-\beta_{m}}$. To relate this to $\Gamma^{N,m+1,m+2}$, we will group each of the $\mathfrak{t}_{\mathrm{av},\mathfrak{m}_{+}}\mathfrak{t}_{\mathrm{av},m+1}^{-1}$-many time-shifted terms $\Gamma^{N,\mathfrak{t}_{\mathrm{av},m}}$ into $\mathfrak{t}_{\mathrm{av},\mathfrak{m}_{+}}\mathfrak{t}_{\mathrm{av},m+2}^{-1}$-many groups. We will then relate each of these $\mathfrak{t}_{\mathrm{av},\mathfrak{m}_{+}}\mathfrak{t}_{\mathrm{av},m+2}^{-1}$-many groups to $\Gamma^{N,\mathfrak{t}_{\mathrm{av},m+1}}$-terms. Because the term $\Gamma^{N,m,m+1}$ is the average of these $\mathfrak{t}_{\mathrm{av},\mathfrak{m}_{+}}\mathfrak{t}_{\mathrm{av},m+2}^{-1}$-many $\Gamma^{N,\mathfrak{t}_{\mathrm{av},m+1}}$-terms, making such a relation and estimating error terms will suffice. Precisely, the first step is to compute $\Gamma^{N,m,m+1}$ by decompositions with respect to $\mathfrak{t}_{\mathrm{av},m+2}$-, $\mathfrak{t}_{\mathrm{av},m+1}$-, and $\mathfrak{t}_{\mathrm{av},m}$-time scales. We collect all $\mathfrak{t}_{\mathrm{av},\mathfrak{m}_{+}}\mathfrak{t}_{\mathrm{av},m+1}^{-1}$-many length-$\mathfrak{t}_{\mathrm{av},m+1}$ $\Gamma^{N,\mathfrak{t}_{\mathrm{av},m}}$-terms inside $\Gamma^{N,m,m+1}$ into length-$\mathfrak{t}_{\mathrm{av},m+2}$ sums, each with $\mathfrak{t}_{\mathrm{av},m+2}\mathfrak{t}_{\mathrm{av},m+1}^{-1}$-many summands. Equivalently, we rewrite the sum in $\Gamma^{N,m,m+1}$ with time-scale increments given by multiples of $\mathfrak{t}_{\mathrm{av},m+1}$ by grouping time-scales by multiples of $\mathfrak{t}_{\mathrm{av},m+2}$. For example a sum of 10 terms is a sum of 5 other terms which are each sums of 2 terms with neighboring indices:
\small\begin{align}
\Gamma_{T,x}^{N,m,m+1} \ &= \ \wt{\sum}_{\mathfrak{l}=0}^{\mathfrak{t}_{\mathrm{av},\mathfrak{m}_{+}}\mathfrak{t}_{\mathrm{av},m+2}^{-1}-1}\wt{\sum}_{\mathfrak{k}=0}^{\mathfrak{t}_{\mathrm{av},m+2}\mathfrak{t}_{\mathrm{av},m+1}^{-1}-1} \Gamma_{T+\mathfrak{l}\mathfrak{t}_{\mathrm{av},m+2}+\mathfrak{k}\mathfrak{t}_{\mathrm{av},m+1},x}^{N,\mathfrak{t}_{\mathrm{av},m}} \nonumber \\
&= \ \wt{\sum}_{\mathfrak{l}=0}^{\mathfrak{t}_{\mathrm{av},\mathfrak{m}_{+}}\mathfrak{t}_{\mathrm{av},m+2}^{-1}-1}\wt{\sum}_{\mathfrak{k}=0}^{\mathfrak{t}_{\mathrm{av},m+2}\mathfrak{t}_{\mathrm{av},m+1}^{-1}-1} \Gamma_{T_{\mathfrak{l},\mathfrak{k},0}^{m},x}^{N,\mathfrak{t}_{\mathrm{av},m}}. \label{eq:Step3A0}
\end{align}\normalsize\normalsize
Recall in Corollary \ref{corollary:D1B2A} that $\mathfrak{t}_{\mathrm{av},m}$ are integer multiples of each other. Our goal is now to relate the $\mathfrak{k}$-sum in \eqref{eq:Step3A0} to $\Gamma^{N,\mathfrak{t}_{\mathrm{av},m+1}}$.
\item Building off the previous bullet point, we recall $\Gamma^{N,\mathfrak{t}_{\mathrm{av},m+1}}$ is an average of time-shifted time-averages of $\mathscr{C}^{\mathbf{X},-}(\mathfrak{g})$ each on the time-scale $\mathfrak{t}_{\mathrm{av},m+1}$ with upper bound cutoffs of $N^{-\beta_{m+1}}$. The first step we take towards getting such $\Gamma^{N,\mathfrak{t}_{\mathrm{av},m+1}}$-terms out of the inner summations from the RHS of \eqref{eq:Step3A0} is to first equip the scale-$\mathfrak{t}_{\mathrm{av},m+1}$ inner summations in \eqref{eq:Step3A0} with an upper bound cutoff of $N^{-\beta_{m}}$ for a scale-$\mathfrak{t}_{\mathrm{av},m+1}$ time-average which we later upgrade to $N^{-\beta_{m+1}}$. Precisely, with explanation after,
\small\begin{align}
\Gamma_{T,x}^{N,m,m+1} \ = \ \wt{\sum}_{\mathfrak{l}=0}^{\mathfrak{t}_{\mathrm{av},\mathfrak{m}_{+}}\mathfrak{t}_{\mathrm{av},m+2}^{-1}-1}\wt{\sum}_{\mathfrak{k}=0}^{\mathfrak{t}_{\mathrm{av},m+2}\mathfrak{t}_{\mathrm{av},m+1}^{-1}-1} \Gamma_{\mathfrak{l},\mathfrak{k},1} + \wt{\sum}_{\mathfrak{l}=0}^{\mathfrak{t}_{\mathrm{av},\mathfrak{m}_{+}}\mathfrak{t}_{\mathrm{av},m+2}^{-1}-1}\wt{\sum}_{\mathfrak{k}=0}^{\mathfrak{t}_{\mathrm{av},m+2}\mathfrak{t}_{\mathrm{av},m+1}^{-1}-1} \Gamma_{\mathfrak{l},\mathfrak{k},2} \ &\overset{\bullet}= \ \Phi_{1} + \Phi_{2}. \label{eq:Step3A1}
\end{align}\normalsize\normalsize
Here $\Gamma_{\mathfrak{l},\mathfrak{k},i} \overset{\bullet}= \Gamma_{T_{\mathfrak{l},\mathfrak{k},0}^{m},x}^{N,\mathfrak{t}_{\mathrm{av},m}}\mathbf{1}[\mathscr{E}_{i}]$ with $\mathscr{E}_{2}$ the complement of $\mathscr{E}_{1}$ below and thus $\Gamma_{T_{\mathfrak{l},\mathfrak{k},0}^{m},x}^{N,\mathfrak{t}_{\mathrm{av},m}}=\Gamma_{T_{\mathfrak{l},\mathfrak{k},0}^{m},x}^{N,\mathfrak{t}_{\mathrm{av},m}}\mathbf{1}[\mathscr{E}_{1}]+\Gamma_{T_{\mathfrak{l},\mathfrak{k},0}^{m},x}^{N,\mathfrak{t}_{\mathrm{av},m}}\mathbf{1}[\mathscr{E}_{2}]=\Gamma_{\mathfrak{l},\mathfrak{k},1}+\Gamma_{\mathfrak{l},\mathfrak{k},2}$. In particular, plugging this last identity into the far RHS of \eqref{eq:Step3A0} produces \eqref{eq:Step3A1}. The $\mathscr{E}_{1}$ event is defined as
\small\begin{align}
\mathbf{1}[\mathscr{E}_{1}] \ \overset{\bullet}= \ \mathbf{1}\left({\sup}_{0\leq\mathfrak{t}\leq\mathfrak{t}_{\mathrm{av},m+1}}\mathfrak{t}\cdot\mathfrak{t}_{\mathrm{av},m+1}^{-1}|\mathscr{A}_{\mathfrak{t}}^{\mathbf{T},+}\mathscr{C}_{N^{\beta_{X}}}^{\mathbf{X},-}(\mathfrak{g}_{T_{\mathfrak{l},\mathfrak{k},0}^{m},x})| \leq N^{-\beta_{m}} \right).
\end{align}\normalsize\normalsize
The $\mathscr{E}_{1}$-event gives a cutoff for time-averages on scale $\mathfrak{t}_{\mathrm{av},m+1}$. The $\Phi_{2}$-term from the far RHS of \eqref{eq:Step3A1} will be treated as an error term at the end of this proof. We unfold the $\Phi_{1}$-term by unfolding each $\Gamma_{\mathfrak{l},\mathfrak{k},1}$-summand. This summand is a time-shifted $\Gamma^{N,\mathfrak{t}_{\mathrm{av},m}}$-term with the a priori upper bound cutoff in $\mathscr{E}_{1}$. Each of the $\Gamma^{N,\mathfrak{t}_{\mathrm{av},m}}$-terms is an average of time-shifted time-averages of $\mathscr{C}^{\mathbf{X},-}(\mathfrak{g})$ each with an a priori upper bound cutoff of $N^{-\beta_{m}}$. If we take away these upper bound cutoffs for the time-shifted time-averages inside $\Gamma^{N,\mathfrak{t}_{\mathrm{av},m}}$, then the resulting $\Gamma^{N,\mathfrak{t}_{\mathrm{av},m}}$-term would be the average of time-shifted time-averages of $\mathscr{C}^{\mathbf{X},-}(\mathfrak{g})$ on adjacent time-intervals of length $\mathfrak{t}_{\mathrm{av},m}$ which would glue together into a time-average of $\mathscr{C}^{\mathbf{X},-}(\mathfrak{g})$ on the time-scale $\mathfrak{t}_{\mathrm{av},m+1}$. The indicator function $\mathscr{E}_{1}$ would then grant the resulting scale-$\mathfrak{t}_{\mathrm{av},m+1}$ average an a priori upper bound cutoff of $N^{-\beta_{m}}$. Thus, the next step we will take is to adjust $\Gamma_{\mathfrak{l},\mathfrak{k},1}$ by keeping the $\mathbf{1}[\mathscr{E}_{1}]$-factor but removing the a priori upper bound cutoffs inside the time-averages $\Gamma^{N,\mathfrak{t}_{\mathrm{av},m},\mathfrak{j}}$ defining the $\Gamma^{N,\mathfrak{t}_{\mathrm{av},m}}$-factor in $\Gamma_{\mathfrak{l},\mathfrak{k},1}$. Precisely, recalling $\Gamma_{\mathfrak{l},\mathfrak{k},1}$ defined immediately after \eqref{eq:Step3A1} and looking at definitions of $\Gamma^{N,\mathfrak{t}_{\mathrm{av},m}}$ and $\Gamma^{N,\mathfrak{t}_{\mathrm{av},m},\mathfrak{j}}$ in Lemma \ref{lemma:Step3A}, we get, with explanation to come, a scale-$\mathfrak{t}_{\mathrm{av},m}$/$\mathfrak{t}_{\mathrm{av},m+1}$ expansion below in which we treat each of the scale-$\mathfrak{t}_{\mathrm{av},m}$ $\Gamma^{N,\mathfrak{t}_{\mathrm{av},m},\mathfrak{j}}$-terms in $\Gamma^{N,\mathfrak{t}_{\mathrm{av},m}}$ by removing their indicator functions to get an honest scale-$\mathfrak{t}_{\mathrm{av},m}$ time-average. This is the first term on the far RHS below. We get an error of the same scale-$\mathfrak{t}_{\mathrm{av},m}$ average although with lower bound cutoff complement to the upper bound event in $\Gamma^{N,\mathfrak{t}_{\mathrm{av},m},\mathfrak{j}}$. This is the second term on the far RHS below:
\small\begin{align}
\Gamma_{\mathfrak{l},\mathfrak{k},1} \ = \ \mathbf{1}[\mathscr{E}_{1}]\Gamma_{T_{\mathfrak{l},\mathfrak{k},0}^{m},x}^{N,\mathfrak{t}_{\mathrm{av},m}} \ = \ \wt{\sum}_{\mathfrak{j}=0}^{\mathfrak{t}_{\mathrm{av},m+1}\mathfrak{t}_{\mathrm{av},m}-1}\mathbf{1}[\mathscr{E}_{1}]\Gamma_{T_{\mathfrak{l},\mathfrak{k},0}^{m},x}^{N,\mathfrak{t}_{\mathrm{av},m},\mathfrak{j}} \ &= \ \wt{\sum}_{\mathfrak{j}=0}^{\mathfrak{t}_{\mathrm{av},m+1}\mathfrak{t}_{\mathrm{av},m}^{-1}-1}\Gamma_{\mathfrak{l},\mathfrak{k},\mathfrak{j},1,1} \ + \ \wt{\sum}_{\mathfrak{j}=0}^{\mathfrak{t}_{\mathrm{av},m+1}\mathfrak{t}_{\mathrm{av},m}^{-1}-1} \Gamma_{\mathfrak{l},\mathfrak{k},\mathfrak{j},1,2}. \label{eq:Step3A2}
\end{align}\normalsize\normalsize
The terms in the two-term decomposition within the RHS of \eqref{eq:Step3A2} are defined below with an event defined afterwards. The first term is a scale-$\mathfrak{t}_{\mathrm{av},m}$ average over a window contained inside the length-$\mathfrak{t}_{\mathrm{av},m+1}$ block starting at the time $T_{\mathfrak{l},\mathfrak{k},0}^{m}$ associated to $\Gamma_{\mathfrak{l},\mathfrak{k},1}$, equipped with the event $\mathscr{E}_{1}$ which is defined at scale $\mathfrak{t}_{\mathrm{av},m+1}$/depends only on $\mathfrak{l},\mathfrak{k}$ variables. The second term below is the same but now with an additional indicator function $\mathbf{1}[\mathscr{F}]$ obtained by removing the indicator function in $\Gamma^{N,\mathfrak{t}_{\mathrm{av},m},\mathfrak{j}}$-terms:
\small\begin{align}
\Gamma_{\mathfrak{l},\mathfrak{k},\mathfrak{j},1,1} \ &\overset{\bullet}= \ \mathscr{A}_{\mathfrak{t}_{\mathrm{av},m}}^{\mathbf{T},+}\mathscr{C}_{N^{\beta_{X}}}^{\mathbf{X},-}(\mathfrak{g}_{T_{\mathfrak{l},\mathfrak{k},\mathfrak{j}}^{m},x}) \cdot \mathbf{1}[\mathscr{E}_{1}] \quad \mathrm{and} \quad \Gamma_{\mathfrak{l},\mathfrak{k},\mathfrak{j},1,2} \ \overset{\bullet}= \ -\mathscr{A}_{\mathfrak{t}_{\mathrm{av},m}}^{\mathbf{T},+}\mathscr{C}_{N^{\beta_{X}}}^{\mathbf{X},-}(\mathfrak{g}_{T_{\mathfrak{l},\mathfrak{k},\mathfrak{j}}^{m},x}) \cdot \mathbf{1}[\mathscr{E}_{1}] \mathbf{1}[\mathscr{F}]. \nonumber
\end{align}\normalsize\normalsize
We introduced the following complement of the event in the indicator function defining $\Gamma^{N,\mathfrak{t}_{\mathrm{av},m},\mathfrak{j}}$. Indeed, it appears in \eqref{eq:Step3A2} by removing said indicator function/observing the indicator function in $\Gamma^{N,\mathfrak{t}_{\mathrm{av},m},\mathfrak{j}}$ is equal to $1-\mathbf{1}[\mathscr{F}]$: 
\small\begin{align}
\mathbf{1}[\mathscr{F}] \ &\overset{\bullet}= \ \mathbf{1}\left({\sup}_{0\leq\mathfrak{t}\leq\mathfrak{t}_{\mathrm{av},m}}\mathfrak{t}\cdot\mathfrak{t}_{\mathrm{av},m}^{-1}|\mathscr{A}_{\mathfrak{t}}^{\mathbf{T},+}\mathscr{C}_{N^{\beta_{X}}}^{\mathbf{X},-}(\mathfrak{g}_{T_{\mathfrak{l},\mathfrak{k},\mathfrak{j}}^{m},x})| > N^{-\beta_{m}}\right).
\end{align}\normalsize\normalsize
Thus, averaging the previous expansion \eqref{eq:Step3A2} over $\mathfrak{l},\mathfrak{k}$-indices gives the decomposition
\small\begin{align}
\Phi_{1} \ &= \ \wt{\sum}_{\mathfrak{l}=0}^{\mathfrak{t}_{\mathrm{av},\mathfrak{m}_{+}}\mathfrak{t}_{\mathrm{av},m+2}^{-1}-1}\wt{\sum}_{\mathfrak{k}=0}^{\mathfrak{t}_{\mathrm{av},m+2}\mathfrak{t}_{\mathrm{av},m+1}^{-1}-1}\wt{\sum}_{\mathfrak{j}=0}^{\mathfrak{t}_{\mathrm{av},m+1}\mathfrak{t}_{\mathrm{av},m}^{-1}-1} \Gamma_{\mathfrak{l},\mathfrak{k},\mathfrak{j},1,1} \nonumber \\
&\quad + \ \wt{\sum}_{\mathfrak{l}=0}^{\mathfrak{t}_{\mathrm{av},\mathfrak{m}_{+}}\mathfrak{t}_{\mathrm{av},m+2}^{-1}-1}\wt{\sum}_{\mathfrak{k}=0}^{\mathfrak{t}_{\mathrm{av},m+2}\mathfrak{t}_{\mathrm{av},m+1}^{-1}-1}\wt{\sum}_{\mathfrak{j}=0}^{\mathfrak{t}_{\mathrm{av},m+1}\mathfrak{t}_{\mathrm{av},m}^{-1}-1} \Gamma_{\mathfrak{l},\mathfrak{k},\mathfrak{j},1,2}. \label{eq:Step3A3}
\end{align}\normalsize\normalsize
We denote the first sum on the RHS of \eqref{eq:Step3A3} by $\Phi_{3}$ and the second by $\Phi_{4}$. We treat $\Phi_{4}$ as an error like $\Phi_{2}$. We now study $\Phi_{3}$. We first take the sum over the index $\mathfrak{j}$. We observe the event $\mathscr{E}_{1}$ defined after \eqref{eq:Step3A1} is independent of this sum variable since it is defined by a constraint at scale $\mathfrak{t}_{\mathrm{av},m+1}$/with respect to $\mathfrak{l},\mathfrak{k}$-indices. By definition of $\Gamma_{\mathfrak{l},\mathfrak{k},\mathfrak{j},1,1}$ given after \eqref{eq:Step3A2}, we sum over the $\mathfrak{j}$-variable/the finest $\mathfrak{t}_{\mathrm{av},m}$ time-scale in $\Phi_{3}$ and glue such scale-$\mathfrak{t}_{\mathrm{av},m}$ terms into scale-$\mathfrak{t}_{\mathrm{av},m+1}$/$\mathfrak{l},\mathfrak{k}$-terms:
\small\begin{align}
\wt{\sum}_{\mathfrak{j}=0}^{\mathfrak{t}_{\mathrm{av},m+1}\mathfrak{t}_{\mathrm{av},m}^{-1}-1} \Gamma_{\mathfrak{l},\mathfrak{k},\mathfrak{j},1,1} \ &= \ \mathbf{1}[\mathscr{E}_{1}] \cdot \wt{\sum}_{\mathfrak{j}=0}^{\mathfrak{t}_{\mathrm{av},m+1}\mathfrak{t}_{\mathrm{av},m}^{-1}-1}\mathscr{A}_{\mathfrak{t}_{\mathrm{av},m}}^{\mathbf{T},+}\mathscr{C}_{N^{\beta_{X}}}^{\mathbf{X},-}(\mathfrak{g}_{T_{\mathfrak{l},\mathfrak{k},\mathfrak{j}}^{m},x}) \ = \ \mathbf{1}[\mathscr{E}_{1}] \cdot \mathscr{A}_{\mathfrak{t}_{\mathrm{av},m+1}}^{\mathbf{T},+}\mathscr{C}_{N^{\beta_{X}}}^{\mathbf{X},-}(\mathfrak{g}_{T_{\mathfrak{l},\mathfrak{k},0}^{m},x}). \label{eq:Step3A4}
\end{align}\normalsize\normalsize
Above, \eqref{eq:Step3A4} follows by observing the $\mathfrak{t}_{\mathrm{av},m+1}$-scale time average is an average of the preceding time-shifted time averages at the smaller $\mathfrak{t}_{\mathrm{av},m}$-scale. We emphasize \eqref{eq:Step3A2} and \eqref{eq:Step3A4} are a precise implementation of the statement previous to \eqref{eq:Step3A2} concerning adjustment of $\Gamma_{\mathfrak{l},\mathfrak{k},1}$ to turn it into a time-average of $\mathscr{C}^{\mathbf{X},-}(\mathfrak{g})$ on time-scale $\mathfrak{t}_{\mathrm{av},m+1}$ with an a priori upper bound cutoff of $N^{-\beta_{m}}$. By definition of $\Phi_{3}$ as the first triple sum on the RHS of \eqref{eq:Step3A3}, from \eqref{eq:Step3A4} we get the following representation of $\Phi_{3}$ as an average of time-shifted time-averages of $\mathscr{C}^{\mathbf{X},-}(\mathfrak{g})$ on scales $\mathfrak{t}_{\mathrm{av},m+1}$ with upper bound cutoff $N^{-\beta_{m}}$:
\small\begin{align}
\Phi_{3} \ &= \ \wt{\sum}_{\mathfrak{l}=0}^{\mathfrak{t}_{\mathrm{av},\mathfrak{m}_{+}}\mathfrak{t}_{\mathrm{av},m+2}^{-1}-1}\wt{\sum}_{\mathfrak{k}=0}^{\mathfrak{t}_{\mathrm{av},m+2}\mathfrak{t}_{\mathrm{av},m+1}^{-1}-1}\mathbf{1}[\mathscr{E}_{1}] \cdot \mathscr{A}_{\mathfrak{t}_{\mathrm{av},m+1}}^{\mathbf{T},+}\mathscr{C}_{N^{\beta_{X}}}^{\mathbf{X},-}(\mathfrak{g}_{T_{\mathfrak{l},\mathfrak{k},0}^{m},x}). \label{eq:Step3A4.5}
\end{align}\normalsize\normalsize
\item We build off the previous bullet point starting with \eqref{eq:Step3A4.5}. Let us observe that $\Phi_{3}$ is \emph{almost} equal to $\Gamma^{N,m+1,m+2}$. This would give us an explicit formula to relate $\Gamma^{N,m,m+1}$ and $\Gamma^{N,m+1,m+2}$ because $\Phi_{3}$ is related to $\Phi_{1}$ with an error term $\Phi_{4}$ via \eqref{eq:Step3A3}, and $\Phi_{1}$ is related to $\Gamma^{N,m,m+1}$ through an error term $\Phi_{2}$ via \eqref{eq:Step3A1}. The ``almost"-ness of the equality between $\Phi_{3}$ and $\Gamma^{N,m+1,m+2}$ is because the term in \eqref{eq:Step3A4} or equivalently the summand in \eqref{eq:Step3A4.5} is \emph{almost} equal to the time-average of $\mathscr{C}^{\mathbf{X},-}$ on time-scale $\mathfrak{t}_{\mathrm{av},m+1}$ with upper bound cutoff of $N^{-\beta_{m+1}}$, and such a time-average with $N^{-\beta_{m+1}}$-cutoff is equal to $\Gamma^{N,\mathfrak{t}_{\mathrm{av},m+1},\mathfrak{k}}$ by definition from the statement of Lemma \ref{lemma:Step3A}, and similarly by definition $\Gamma^{N,m+1,m+2}$ averages these $\Gamma^{N,\mathfrak{t}_{\mathrm{av},m+1},\mathfrak{k}}$-terms. However, the cutoff in $\mathscr{E}_{1}$ is the upper bound of $N^{-\beta_{m}}$, not $N^{-\beta_{m+1}}$. Thus, the next step we take is to upgrade the cutoff $N^{-\beta_{m}}$ in $\mathscr{E}_{1}$ to $N^{-\beta_{m+1}}$. Precisely,
\small\begin{align}
\mathbf{1}[\mathscr{E}_{1}] \cdot \mathscr{A}_{\mathfrak{t}_{\mathrm{av},m+1}}^{\mathbf{T},+}\mathscr{C}_{N^{\beta_{X}}}^{\mathbf{X},-}(\mathfrak{g}_{T_{\mathfrak{l},\mathfrak{k},0}^{m},x}) \ &= \ \left(\mathbf{1}[\mathscr{G}_{1}] + \mathbf{1}[\mathscr{E}_{1}]\mathbf{1}[\mathscr{G}_{2}]\right)\cdot\mathscr{A}_{\mathfrak{t}_{\mathrm{av},m+1}}^{\mathbf{T},+}\mathscr{C}_{N^{\beta_{X}}}^{\mathbf{X},-}(\mathfrak{g}_{T_{\mathfrak{l},\mathfrak{k},0}^{m},x}) \label{eq:Step3A5}
\end{align}\normalsize\normalsize
where $\mathscr{G}_{1}$ is the following upgrade of $\mathscr{E}_{1}$ that is certainly contained in $\mathscr{E}_{1}$, so $\mathbf{1}[\mathscr{G}_{1}]\mathbf{1}[\mathscr{E}_{1}]=\mathbf{1}[\mathscr{G}_{1}]$, and $\mathscr{G}_{2}$ is its complement:
\small\begin{align}
\mathbf{1}[\mathscr{G}_{1}] \ &\overset{\bullet}= \ \mathbf{1}\left({\sup}_{0\leq\mathfrak{t}\leq\mathfrak{t}_{\mathrm{av},m+1}}\mathfrak{t}\cdot\mathfrak{t}_{\mathrm{av},m+1}^{-1}|\mathscr{A}_{\mathfrak{t}}^{\mathbf{T},+}\mathscr{C}_{N^{\beta_{X}}}^{\mathbf{X},-}(\mathfrak{g}_{T_{\mathfrak{l},\mathfrak{k},0}^{m},x})| \leq N^{-\beta_{m+1}}\right).
\end{align}\normalsize\normalsize
By \eqref{eq:Step3A4.5}, \eqref{eq:Step3A5}, and definition of $\Gamma^{N,\mathfrak{t}_{\mathrm{av},m+1}}$ and $\Gamma^{N,\mathfrak{t}_{\mathrm{av},m+1},\mathfrak{k}}$ in the statement of Lemma \ref{lemma:Step3A}, we average over $\mathfrak{l},\mathfrak{k}$-indices:
\small\begin{align}
\Phi_{3} \ &= \ \wt{\sum}_{\mathfrak{l}=0}^{\mathfrak{t}_{\mathrm{av},\mathfrak{m}_{+}}\mathfrak{t}_{\mathrm{av},m+2}^{-1}-1}\wt{\sum}_{\mathfrak{k}=0}^{\mathfrak{t}_{\mathrm{av},m+2}\mathfrak{t}_{\mathrm{av},m+1}^{-1}-1}\mathbf{1}[\mathscr{E}_{1}] \cdot \mathscr{A}_{\mathfrak{t}_{\mathrm{av},m+1}}^{\mathbf{T},+}\mathscr{C}_{N^{\beta_{X}}}^{\mathbf{X},-}(\mathfrak{g}_{T_{\mathfrak{l},\mathfrak{k},0}^{m},x}) \\
&= \ \wt{\sum}_{\mathfrak{l}=0}^{\mathfrak{t}_{\mathrm{av},\mathfrak{m}_{+}}\mathfrak{t}_{\mathrm{av},m+2}^{-1}-1}\wt{\sum}_{\mathfrak{k}=0}^{\mathfrak{t}_{\mathrm{av},m+2}\mathfrak{t}_{\mathrm{av},m+1}^{-1}-1} \mathbf{1}[\mathscr{G}_{1}] \cdot \mathscr{A}_{\mathfrak{t}_{\mathrm{av},m+1}}^{\mathbf{T},+}\mathscr{C}_{N^{\beta_{X}}}^{\mathbf{X},-}(\mathfrak{g}_{T_{\mathfrak{l},\mathfrak{k},0}^{m},x}) \ + \ \Phi_{5} \\
&= \ \wt{\sum}_{\mathfrak{l}=0}^{\mathfrak{t}_{\mathrm{av},\mathfrak{m}_{+}}\mathfrak{t}_{\mathrm{av},m+2}^{-1}-1}\wt{\sum}_{\mathfrak{k}=0}^{\mathfrak{t}_{\mathrm{av},m+2}\mathfrak{t}_{\mathrm{av},m+1}^{-1}-1} \Gamma_{T+\mathfrak{l}\mathfrak{t}_{\mathrm{av},m+2},x}^{N,\mathfrak{t}_{\mathrm{av},m+1},\mathfrak{k}} \ + \ \Phi_{5} \\
&= \ \wt{\sum}_{\mathfrak{l}=0}^{\mathfrak{t}_{\mathrm{av},\mathfrak{m}_{+}}\mathfrak{t}_{\mathrm{av},m+2}^{-1}-1}\Gamma^{N,\mathfrak{t}_{\mathrm{av},m+1}}_{T+\mathfrak{l}\mathfrak{t}_{\mathrm{av},m+2},x} \ + \ \Phi_{5} \ = \ \Gamma_{T,x}^{N,m+1,m+2} \ + \ \Phi_{5}. \label{eq:Step3A6}
\end{align}\normalsize\normalsize
The $\Phi_{5}$-term introduced in the derivation of \eqref{eq:Step3A6} above is defined below:
\small\begin{align}
\Phi_{5} \ &\overset{\bullet}= \ \wt{\sum}_{\mathfrak{l}=0}^{\mathfrak{t}_{\mathrm{av},\mathfrak{m}_{+}}\mathfrak{t}_{\mathrm{av},m+2}^{-1}-1}\wt{\sum}_{\mathfrak{k}=0}^{\mathfrak{t}_{\mathrm{av},m+2}\mathfrak{t}_{\mathrm{av},m+1}^{-1}-1}\mathbf{1}[\mathscr{E}_{1}]\mathbf{1}[\mathscr{G}_{2}]\cdot\mathscr{A}_{\mathfrak{t}_{\mathrm{av},m+1}}^{\mathbf{T},+}\mathscr{C}_{N^{\beta_{X}}}^{\mathbf{X},-}(\mathfrak{g}_{T_{\mathfrak{l},\mathfrak{k},0}^{m},x}).
\end{align}\normalsize\normalsize
We repeat that the first two identities in the calculation to get \eqref{eq:Step3A6} follow by \eqref{eq:Step3A4.5}, \eqref{eq:Step3A5}, respectively. The rest follow by definitions in the statement of the current Lemma \ref{lemma:Step3A}. We explain this as follows in case of interest/clarity. In the following explanation all of the references to the definitions of $\Gamma$-terms can be found in the statement of Lemma \ref{lemma:Step3A}. In the calculation to get \eqref{eq:Step3A6}, the third identity follows from observing the summand in the second line in that calculation is a time-shifted time-average of $\mathscr{C}^{\mathbf{X},-}(\mathfrak{g})$ on time-scale $\mathfrak{t}_{\mathrm{av},m+1}$ with an a priori upper bound cutoff of $N^{-\beta_{m+1}}$ in $\mathscr{G}_{1}$, and this is precisely the $\Gamma^{N,\mathfrak{t}_{\mathrm{av},m+1},\mathfrak{k}}$-type term in the statement of the current Lemma \ref{lemma:Step3A}. The fourth line follows by recalling $\Gamma^{N,\mathfrak{t}_{\mathrm{av},m+1}}$ collects-via-averaging $\mathfrak{t}_{\mathrm{av},m+2}\mathfrak{t}_{\mathrm{av},m+1}^{-1}$-many time-shifted versions of $\Gamma^{N,\mathfrak{t}_{\mathrm{av},m+1},\mathfrak{k}}$-terms by definition as well. The fifth identity in \eqref{eq:Step3A6} follows by similarly recalling $\Gamma^{N,m+1,m+2}$ collects $\mathfrak{t}_{\mathrm{av},\mathfrak{m}_{+}}\mathfrak{t}_{\mathrm{av},m+2}^{-1}$-many time-shifted versions of $\Gamma^{N,\mathfrak{t}_{\mathrm{av},m+1}}$.
\end{itemize}
By \eqref{eq:Step3A1}, \eqref{eq:Step3A3}, and \eqref{eq:Step3A6}, we have a transfer-of-scales with errors $\Phi_{2},\Phi_{4},\Phi_{5}$ we are left to bound after multiplying by $N^{1/2}$:
\small\begin{align}
\Gamma_{T,x}^{N,m,m+1} \ &= \ \Gamma_{T,x}^{N,m+1,m+2} + \Phi_{2} + \Phi_{4} + \Phi_{5}. \label{eq:Step3A7}
\end{align}\normalsize\normalsize
Each of the error terms $\Phi_{2},\Phi_{4},\Phi_{5}$, by their respective definitions, come from manipulating time-averages of $\mathscr{C}^{\mathbf{X},-}(\mathfrak{g})$ on time-scales $\mathfrak{t}_{\mathrm{av},m}$ or $\mathfrak{t}_{\mathrm{av},m+1}$ by putting on indicator functions which all give either an upper bound cutoff or a lower bound cutoff for said time-averages. Thus, as we make precise shortly, these error terms will all eventually be treated via Corollary \ref{corollary:D1B2A}.

We now move to analysis. We first define $\Psi_{\mathfrak{u}} \overset{\bullet}= \bar{\mathbf{H}}^{N}(\Phi_{\mathfrak{u}}\bar{\mathbf{Z}}^{N})$ and observe, via the proof of the Markov inequality, the bound
\small\begin{align}
\mathbf{P}\left(\|\bar{\mathbf{Z}}^{N}\|_{\mathfrak{t}_{\mathrm{st}};\mathbb{T}_{N}}^{-1}{\sum}_{\mathfrak{u}=2,4,5}\|\Psi_{\mathfrak{u}}\|_{\mathfrak{t}_{\mathrm{st}};\mathbb{T}_{N}} \gtrsim N^{-\frac12-\beta_{\mathrm{univ},2}}\right) \ &\lesssim \ N^{\frac12+\beta_{\mathrm{univ},2}}{\sum}_{\mathfrak{u} = 2,4,5} \E[\|\bar{\mathbf{Z}}^{N}\|_{\mathfrak{t}_{\mathrm{st}};\mathbb{T}_{N}}^{-1}\|\Psi_{\mathfrak{u}}\|_{\mathfrak{t}_{\mathrm{st}};\mathbb{T}_{N}}]. \label{eq:Step3A8}
\end{align}\normalsize\normalsize
Given \eqref{eq:Step3A7}, we treat each of the three expectations on the RHS of \eqref{eq:Step3A8}. Let us first reemphasize that all three $\Psi_{\mathfrak{u}}$-terms will be turned into space-time averages of pseudo-gradients with both upper and lower bound cutoffs. We then use Corollary \ref{corollary:D1B2A}.
\begin{itemize}[leftmargin=*]
\item We first treat $\mathfrak{u}=2$. Recalling $\Psi_{2} = \bar{\mathbf{H}}^{N}(\Phi_{2}\cdot\bar{\mathbf{Z}}^{N})$ and $\Phi_{2}$ in \eqref{eq:Step3A1}, via the triangle inequality we get the deterministic bound
\small\begin{align}
|\Psi_{2}|  \ &\leq \ \wt{\sum}_{\mathfrak{l}=0}^{\mathfrak{t}_{\mathrm{av},\mathfrak{m}_{+}}\mathfrak{t}_{\mathrm{av},m+2}^{-1}-1}\wt{\sum}_{\mathfrak{k}=0}^{\mathfrak{t}_{\mathrm{av},m+2}\mathfrak{t}_{\mathrm{av},m+1}^{-1}-1} \bar{\mathbf{H}}_{T,x}^{N}(|\Gamma_{\mathfrak{l},\mathfrak{k},2}|) \cdot \|\bar{\mathbf{Z}}^{N}\|_{\mathfrak{t}_{\mathrm{st}};\mathbb{T}_{N}}. \label{eq:Step3A9}
\end{align}\normalsize\normalsize
Thus, from \eqref{eq:Step3A9} and pushing all $\|\|_{\mathfrak{t}_{\mathrm{st}};\mathbb{T}_{N}}$-norms to $\|\|_{1;\mathbb{T}_{N}}$, we get the following in which we bound averages by suprema:
\small\begin{align}
\E[\|\bar{\mathbf{Z}}^{N}\|_{\mathfrak{t}_{\mathrm{st}};\mathbb{T}_{N}}^{-1}\|\Psi_{2}\|_{\mathfrak{t}_{\mathrm{st}};\mathbb{T}_{N}}] \ &\leq \ \wt{\sum}_{\mathfrak{l}=0}^{\mathfrak{t}_{\mathrm{av},\mathfrak{m}_{+}}\mathfrak{t}_{\mathrm{av},m+2}^{-1}-1}\wt{\sum}_{\mathfrak{k}=0}^{\mathfrak{t}_{\mathrm{av},m+2}\mathfrak{t}_{\mathrm{av},m+1}^{-1}-1} \E \|\bar{\mathbf{H}}_{T,x}^{N}(|\Gamma_{\mathfrak{l},\mathfrak{k},2}|)\|_{\mathfrak{t}_{\mathrm{st}};\mathbb{T}_{N}} \\
&\leq \ {\sup}_{\mathfrak{l}=0,\ldots,\mathfrak{t}_{\mathrm{av},\mathfrak{m}_{+}}\mathfrak{t}_{\mathrm{av},m+2}^{-1}-1}{\sup}_{\mathfrak{k}=0,\ldots,\mathfrak{t}_{\mathrm{av},m+2}\mathfrak{t}_{\mathrm{av},m+1}^{-1}-1}\E \|\bar{\mathbf{H}}_{T,x}^{N}(|\Gamma_{\mathfrak{l},\mathfrak{k},2}|)\|_{1;\mathbb{T}_{N}}. \label{eq:Step3A10}
\end{align}\normalsize\normalsize
We control these expectations uniformly over $\mathfrak{l},\mathfrak{k}$. We first make the following observation regarding the event $\mathscr{E}_{2}$ which we defined earlier as the complement of the $\mathscr{E}_{1}$-event introduced after \eqref{eq:Step3A1}. This effectively takes an a priori lower bound for a scale-$\mathfrak{t}_{\mathrm{av},m+1}$ time-average from $\mathscr{E}_{2}$ and provides a priori lower bounds for \emph{at least one} of its scale-$\mathfrak{t}_{\mathrm{av},m}$ ``pieces":
\small\begin{align}
\mathbf{1}[\mathscr{E}_{2}] \ \leq \ {\sum}_{\mathfrak{n}=0}^{\mathfrak{t}_{\mathrm{av},m+1}\mathfrak{t}_{\mathrm{av},m}^{-1}-1} \mathbf{1}\left({\sup}_{0\leq\mathfrak{t}\leq\mathfrak{t}_{\mathrm{av},m}}\mathfrak{t}\cdot\mathfrak{t}_{\mathrm{av},m}^{-1}|\mathscr{A}_{\mathfrak{t}}^{\mathbf{T},+}\mathscr{C}_{N^{\beta_{X}}}^{\mathbf{X},-}(\mathfrak{g}_{T_{\mathfrak{l},\mathfrak{k},\mathfrak{n}}^{m},x})| \geq N^{-\beta_{m}} \right) \ &\overset{\bullet}= \ {\sum}_{\mathfrak{n}=0}^{\mathfrak{t}_{\mathrm{av},m+1},\mathfrak{t}_{\mathrm{av},m}^{-1}-1} \mathbf{1}[\mathscr{E}_{2,\mathfrak{n},T}^{\mathfrak{l},\mathfrak{k}}]. \label{eq:Step3A11}
\end{align}\normalsize\normalsize
Indeed, observe that the dynamic-average $\mathscr{A}^{\mathbf{T},+}\mathscr{C}^{\mathbf{X},-}$ with respect to time-scales $\mathfrak{t} \leq \mathfrak{t}_{\mathrm{av},m+1}$ is an average of $\mathfrak{t}_{\mathrm{av},m+1}\mathfrak{t}_{\mathrm{av},m}^{-1}$-many suitably time-shifted time-averages $\mathscr{A}^{\mathbf{T},+}\mathscr{C}^{\mathbf{X},-}$ that are each defined with respect to time-scales $\mathfrak{t}\leq\mathfrak{t}_{\mathrm{av},m}$. Thus, if the former is bigger than $N^{-\beta_{m}}$, at least one of the pieces that it is averaging over must be bigger than $N^{-\beta_{m}}$ as well. The summation over $\mathfrak{n}=0,\ldots,\mathfrak{t}_{\mathrm{av},m+1}\mathfrak{t}_{\mathrm{av},m}^{-1}-1$ is a union bound that accounts for which of the aforementioned time-scale $\mathfrak{t}_{\mathrm{av},m}\in\R_{>0}$ pieces does the job, of which there certainly may be multiple. By the definition of $\Gamma_{\mathfrak{l},\mathfrak{k},2}$ in \eqref{eq:Step3A1}, from \eqref{eq:Step3A11} we trade in the $\mathbf{1}[\mathscr{E}_{2}]$-factor in $\Gamma_{\mathfrak{l},\mathfrak{k},2}$ for the sum of the $\mathbf{1}[\mathscr{E}_{2,\mathfrak{n},T}^{\mathfrak{l},\mathfrak{k}}]$-indicator functions which we then rewrite as an average of these latter indicator functions upon inserting the normalization factor back. Using this and the triangle inequality, we get the following upper bound which controls $\Gamma_{\mathfrak{l},\mathfrak{k},2}$ by controlling the $\mathbf{1}[\mathscr{E}_{2}]$-factor therein by the sum of the events $\mathbf{1}[\mathscr{E}_{2,\mathfrak{n},T}^{\mathfrak{l},\mathfrak{k}}]$:
\small\begin{align}
\E \|\bar{\mathbf{H}}_{T,x}^{N}(|\Gamma_{\mathfrak{l},\mathfrak{k},2}|)\|_{1;\mathbb{T}_{N}} \ &\leq \ \mathfrak{t}_{\mathrm{av},m+1}\mathfrak{t}_{\mathrm{av},m}^{-1}\wt{\sum}_{\mathfrak{n}=0}^{\mathfrak{t}_{\mathrm{av},m+1}\mathfrak{t}_{\mathrm{av},m}^{-1}-1} \E\|\bar{\mathbf{H}}_{T,x}^{N}(|\Gamma_{S_{\mathfrak{l},\mathfrak{k},0}^{m},y}^{N,\mathfrak{t}_{\mathrm{av},m}}| \cdot \mathbf{1}[\mathscr{E}_{2,\mathfrak{n},S}^{\mathfrak{l},\mathfrak{k}}])\|_{1;\mathbb{T}_{N}}. \label{eq:Step3A12}
\end{align}\normalsize\normalsize
Observe that $\mathfrak{t}_{\mathrm{av},m+1}\mathfrak{t}_{\mathrm{av},m}^{-1} \lesssim N^{\e}$ with $\e \in \R_{>0}$ arbitrarily small but universal as any of these time-scales with adjacent indices are constructed as arbitrarily small but universal powers of $N\in\Z_{>0}$ times each other. This may be verified by an elementary calculation with the definition of these times in Corollary \ref{corollary:D1B2A}. Recalling $\Gamma^{N,\mathfrak{t}_{\mathrm{av},m}}$ as an average over $\Gamma^{N,\mathfrak{t}_{\mathrm{av},m},\mathfrak{j}}$, \eqref{eq:Step3A12} gives
\small\begin{align}
\E \|\bar{\mathbf{H}}_{T,x}^{N}(|\Gamma_{\mathfrak{l},\mathfrak{k},2}|)\|_{1;\mathbb{T}_{N}} \ &\lesssim \ N^{\e} {\sup}_{\mathfrak{n}=0,\ldots,\mathfrak{t}_{\mathrm{av},m+1}\mathfrak{t}_{\mathrm{av},m}^{-1}-1}{\sup}_{\mathfrak{j}=0,\ldots,\mathfrak{t}_{\mathrm{av},m+1}\mathfrak{t}_{\mathrm{av},m}^{-1}-1} \E\|\bar{\mathbf{H}}_{T,x}^{N}(|\Gamma_{S_{\mathfrak{l},\mathfrak{k},0}^{m},y}^{N,\mathfrak{t}_{\mathrm{av},m},\mathfrak{j}}|\cdot\mathbf{1}[\mathscr{E}_{2,\mathfrak{n},S}^{\mathfrak{l},\mathfrak{k}}])\|_{1;\mathbb{T}_{N}}. \label{eq:Step3A13}
\end{align}\normalsize\normalsize
In this final expectation, we first observe that $\Gamma^{N,\mathfrak{t}_{\mathrm{av},m},\mathfrak{j}}$-term associated to indices $\mathfrak{l},\mathfrak{k},\mathfrak{j}$ is the time-average $\mathscr{A}^{\mathbf{T},+}\mathscr{C}^{\mathbf{X},-}$ evaluated at time $S+\mathfrak{l}\mathfrak{t}_{\mathrm{av},m+2}+\mathfrak{k}\mathfrak{t}_{\mathrm{av},m+1}+\mathfrak{j}\mathfrak{t}_{\mathrm{av},m}$ by definition of $\Gamma^{N,\mathfrak{t}_{\mathrm{av},m},\mathfrak{j}}$ in the statement of Lemma \ref{lemma:Step3A}. It also carries the upper bound cutoff of $N^{-\beta_{m}}$ which therefore induces an upper bound cutoff of $N^{-\beta_{m-1}}$ for trivial reasons. Observe now that courtesy of the $\mathscr{E}^{\mathfrak{l},\mathfrak{k}}_{2,\mathfrak{n},S}$-event hitting this $\Gamma^{N,\mathfrak{t}_{\mathrm{av},m},\mathfrak{j}}$-term it also carries a lower bound cutoff $N^{-\beta_{m}}$ for the same time-average but now evaluated at the time $S+\mathfrak{l}\mathfrak{t}_{\mathrm{av},m+2}+\mathfrak{k}\mathfrak{t}_{\mathrm{av},m+1}+\mathfrak{n}\mathfrak{t}_{\mathrm{av},m}$. In particular, observe these times are separated by $|\mathfrak{j}-\mathfrak{n}|\mathfrak{t}_{\mathrm{av},m} \lesssim \mathfrak{t}_{\mathrm{av},m+1}\lesssim N^{\e}\mathfrak{t}_{\mathrm{av},m}$, because $\mathfrak{j},\mathfrak{n} \in \Z_{\geq 0}$ satisfy $\mathfrak{j},\mathfrak{n}\leq \mathfrak{t}_{\mathrm{av},m+1}\mathfrak{t}_{\mathrm{av},m}^{-1}-1$. We may thus use Corollary \ref{corollary:D1B2A} for $\mathfrak{t}_{\mathfrak{s}} = \mathfrak{l}\mathfrak{t}_{\mathrm{av},m+2}+\mathfrak{k}\mathfrak{t}_{\mathrm{av},m+1}+(\mathfrak{j}\wedge\mathfrak{n})\mathfrak{t}_{\mathrm{av},m}$ with $\mathfrak{t}_{N,\e,m} = (\mathfrak{j}\vee\mathfrak{n})\mathfrak{t}_{\mathrm{av},m}-(\mathfrak{j}\wedge\mathfrak{n})\mathfrak{t}_{\mathrm{av},m}$ to get the following for $\beta_{\mathrm{univ}} \in \R_{>0}$ a universal constant. Our application of Corollary \ref{corollary:D1B2A} is used with $\mathfrak{i}=2$ if the lower-bound cutoff in $\mathbf{1}[\mathscr{E}_{2,\mathfrak{n},S}^{\mathfrak{l},\mathfrak{k}}]$ is shifted ahead of the time-average $\Gamma^{N,\mathfrak{t}_{\mathrm{av},m},\mathfrak{j}}$ so that $\mathfrak{n}\geq\mathfrak{j}$ and otherwise $\mathfrak{i}=1$; the index $\mathfrak{i}\in\{1,2\}$ here is that in the superscript of the time-average-with-cutoff $\mathscr{C}^{\mathbf{T},+,\mathfrak{t}_{N,\e,m},\mathfrak{i}}$ in Corollary \ref{corollary:D1B2A}:
\small\begin{align}
\E\|\bar{\mathbf{H}}_{T,x}^{N}(|\Gamma_{S_{\mathfrak{l},\mathfrak{k},0}^{m},y}^{N,\mathfrak{t}_{\mathrm{av},m},\mathfrak{j}}| \cdot \mathbf{1}[\mathscr{E}_{2,\mathfrak{n},S}^{\mathfrak{l},\mathfrak{k}}])\|_{1;\mathbb{T}_{N}} \ &\lesssim \ N^{-1/2-\beta_{\mathrm{univ}}}. \label{eq:Step3A14}
\end{align}\normalsize\normalsize
Combining \eqref{eq:Step3A10}, \eqref{eq:Step3A13}, and \eqref{eq:Step3A14} gives the following if we choose $2\e\leq\beta_{\mathrm{univ}}$ sufficiently small but universal:
\small\begin{align}
\E[\|\bar{\mathbf{Z}}^{N}\|_{\mathfrak{t}_{\mathrm{st}};\mathbb{T}_{N}}^{-1}\|\Psi_{2}\|_{\mathfrak{t}_{\mathrm{st}};\mathbb{T}_{N}}] \ &\lesssim \ N^{-\frac12-\frac12\beta_{\mathrm{univ}}}. \label{eq:Step3Ai=2}
\end{align}\normalsize\normalsize
\item We move to $\mathfrak{u}=4$. Following the observation \eqref{eq:Step3A10} made for $\mathfrak{u}=2$ and recalling $\Psi_{4} = \bar{\mathbf{H}}^{N}(\Phi_{4}\cdot\bar{\mathbf{Z}}^{N})$ with $\Phi_{4}$ from \eqref{eq:Step3A3}, we get the following estimate in which we remove the $\bar{\mathbf{Z}}^{N}$-process from $\Psi_{4}$ via pulling out its $\|\|_{\mathfrak{t}_{\mathrm{st}};\mathbb{T}_{N}}$-norm and dividing it out to get just the $\Gamma_{\mathfrak{l},\mathfrak{k},\mathfrak{j},1,2}$-terms in $\Phi_{4}$ from \eqref{eq:Step3A3}; we additionally push the remaining $\|\|_{\mathfrak{t}_{\mathrm{st}};\mathbb{T}_{N}}$-norm to the $\|\|_{1;\mathbb{T}_{N}}$-norm:
\small\begin{align}
\E[\|\bar{\mathbf{Z}}^{N}\|_{\mathfrak{t}_{\mathrm{st}};\mathbb{T}_{N}}^{-1}\|\Psi_{\mathfrak{4}}\|_{\mathfrak{t}_{\mathrm{st}};\mathbb{T}_{N}}] \ &\leq \ \sup_{\mathfrak{l}=0,\ldots,\mathfrak{t}_{\mathrm{av},\mathfrak{m}_{+}}\mathfrak{t}_{\mathrm{av},m+2}^{-1}-1}\sup_{\mathfrak{k}=0,\ldots,\mathfrak{t}_{\mathrm{av},m+2}\mathfrak{t}_{\mathrm{av},m+1}^{-1}-1}\sup_{\mathfrak{j}=0,\ldots,\mathfrak{t}_{\mathrm{av},m+1}\mathfrak{t}_{\mathrm{av},m}^{-1}-1} \E\|\bar{\mathbf{H}}_{T,x}^{N}(|\Gamma_{\mathfrak{l},\mathfrak{k},\mathfrak{j},1,2}|)\|_{1;\mathbb{T}_{N}}. \label{eq:Step3A15}
\end{align}\normalsize\normalsize
To bound expectations on the RHS of \eqref{eq:Step3A15}, recall the definition of $\Gamma_{\mathfrak{l},\mathfrak{k},\mathfrak{j},1,2}$. This leads us to studying the event $\mathscr{E}_{1}$ introduced after \eqref{eq:Step3A1}. We make the following observation. We will explain it in detail shortly but it roughly transfers a priori estimates in $\mathscr{E}_{1}$ after \eqref{eq:Step3A1} for scale-$\mathfrak{t}_{\mathrm{av},m+1}$ time-integrals to scale-$\mathfrak{t}_{\mathrm{av},m}$ time-integrals by controlling the latter ``smaller-scale" integrals by integrals on intervals of the form $T_{\mathfrak{l},\mathfrak{k},0}^{m}+[0,\mathfrak{t}]$ with $\mathfrak{t}\leq\mathfrak{t}_{\mathrm{av},m+1}$ for which we have integral estimates in $\mathscr{E}_{1}$. Such a ``transfer" is performed by a close relative of the fact that we may control an integral on $[1,2]$ by integrals on $[0,2]$ and $[0,1]$. Roughly, we control integrals on scale-$\mathfrak{t}_{\mathrm{av},m}$ intervals by referring to the scale-$\mathfrak{t}_{\mathrm{av},m+1}$ block/$\mathfrak{k}$-index the scale-$\mathfrak{t}_{\mathrm{av},m}$ intervals live in:
\small\begin{align}
\mathbf{1}[\mathscr{E}_{1}] \ &\leq \ {\prod}_{\mathfrak{n}=0}^{\mathfrak{t}_{\mathrm{av},m+1}\mathfrak{t}_{\mathrm{av},m}^{-1}-1} \mathbf{1}\left({\sup}_{0\leq\mathfrak{t}\leq\mathfrak{t}_{\mathrm{av},m}}\mathfrak{t}\cdot\mathfrak{t}_{\mathrm{av},m+1}^{-1}|\mathscr{A}_{\mathfrak{t}}^{\mathbf{T},+}\mathscr{C}_{N^{\beta_{X}}}^{\mathbf{X},-}(\mathfrak{g}_{T_{\mathfrak{l},\mathfrak{k},\mathfrak{n}}^{m},x})| \lesssim N^{-\beta_{m}}\right) \label{eq:Step3A16} \\
&\leq \ {\prod}_{\mathfrak{n}=0}^{\mathfrak{t}_{\mathrm{av},m+1}\mathfrak{t}_{\mathrm{av},m}^{-1}-1} \mathbf{1}\left({\sup}_{0\leq\mathfrak{t}\leq\mathfrak{t}_{\mathrm{av},m}}\mathfrak{t}\cdot\mathfrak{t}_{\mathrm{av},m}^{-1}|\mathscr{A}_{\mathfrak{t}}^{\mathbf{T},+}\mathscr{C}_{N^{\beta_{X}}}^{\mathbf{X},-}(\mathfrak{g}_{T_{\mathfrak{l},\mathfrak{k},\mathfrak{n}}^{m},x})| \lesssim N^{-\beta_{m-1}}\right). \label{eq:Step3A17}
\end{align}\normalsize\normalsize
To get \eqref{eq:Step3A16}, the $\mathfrak{t}\mathscr{A}^{\mathbf{T},+}\mathscr{C}^{\mathbf{X},-}$-term in the sup in the indicator function on the RHS of \eqref{eq:Step3A16} is an \emph{unnormalized} integral of $\mathscr{C}^{\mathbf{X},-}$ over $T_{\mathfrak{l},\mathfrak{k},\mathfrak{n}}^{m}+[0,\mathfrak{t}]$. We control it by integrals on $[T_{\mathfrak{l},\mathfrak{k},0}^{m},T_{\mathfrak{l},\mathfrak{k},\mathfrak{n}}^{m}]$ and $[T_{\mathfrak{l},\mathfrak{k},0}^{m},T_{\mathfrak{l},\mathfrak{k},\mathfrak{n}}^{m}+\mathfrak{t}]$, whose set difference is $T_{\mathfrak{l},\mathfrak{k},\mathfrak{n}}^{m}+[0,\mathfrak{t}]$, via linearity of the integral with respect to domain of integration. Uniformly in $0\leq\mathfrak{t}\leq\mathfrak{t}_{\mathrm{av},m}$ these last two integrals are bounded on the event $\mathscr{E}_{1}$ by definition of $\mathscr{E}_{1}$ if $0\leq\mathfrak{n}\leq\mathfrak{t}_{\mathrm{av},m+1}\mathfrak{t}_{\mathrm{av},m}^{-1}-1$ because these two domains of integration $[T_{\mathfrak{l},\mathfrak{k},0}^{m},T_{\mathfrak{l},\mathfrak{k},\mathfrak{n}}^{m}]$ and $[T_{\mathfrak{l},\mathfrak{k},0}^{m},T_{\mathfrak{l},\mathfrak{k},\mathfrak{n}}^{m}+\mathfrak{t}]$ are both of the form $T_{\mathfrak{l},\mathfrak{k},0}^{m}+[0,\mathfrak{s}]$ with $\mathfrak{s}\leq\mathfrak{t}_{\mathrm{av},m+1}$ on which we have integral bounds for $\mathscr{C}^{\mathbf{X},-}$ on $\mathscr{E}_{1}$. Thus the $N^{-\beta_{m}}$-cutoff in $\mathscr{E}_{1}$ gets transferred to the terms inside the indicator functions on the RHS of \eqref{eq:Step3A16} \emph{on the event} $\mathscr{E}_{1}$. On the other hand, \eqref{eq:Step3A17} follows by \eqref{eq:Step3A16} and $\mathfrak{t}_{\mathrm{av},m}^{-1}\lesssim\mathfrak{t}_{\mathrm{av},m+1}^{-1}N^{-\beta_{m-1}+\beta_{m}}$ which can be checked using the definitions in Corollary \ref{corollary:D1B2A}. In particular, we may trade the $\mathfrak{t}_{\mathrm{av},m+1}^{-1}$-factor in the indicator function on the RHS of \eqref{eq:Step3A16} for the worse $\mathfrak{t}_{\mathrm{av},m}^{-1}$-factor if we loosen the upper bound $N^{-\beta_{m}}$ on the RHS of \eqref{eq:Step3A16} to $N^{-\beta_{m-1}}$ by the previous inequality for $\mathfrak{t}_{\mathrm{av},m+1}^{-1}$ and $\mathfrak{t}_{\mathrm{av},m}^{-1}$.

Looking at the definition of $\Gamma_{\mathfrak{l},\mathfrak{k},\mathfrak{j},1,2}$ given after \eqref{eq:Step3A2}, observe that it is a time-average $\mathscr{A}^{\mathbf{T},+}\mathscr{C}^{\mathbf{X},-}$ with the a priori lower-bound cutoff of $N^{-\beta_{m}}$. Moreover, by taking the $\mathfrak{n}=\mathfrak{j}$ factor within \eqref{eq:Step3A17}, we may impose the upper bound cutoff of $N^{-\beta_{m-1}}$ for the same time average, therefore controlling $\Gamma_{\mathfrak{l},\mathfrak{k},\mathfrak{j},1,2}$ by a $\mathscr{C}^{\mathbf{T},+}\mathscr{C}^{\mathbf{X},-}$-term. Thus, Corollary \ref{corollary:D1B2A} implies the following for $\beta_{\mathrm{univ}}>0$ universal upon choosing $\mathfrak{t}_{\mathfrak{s}} = \mathfrak{l}\mathfrak{t}_{\mathrm{av},m+2}+\mathfrak{k}\mathfrak{t}_{\mathrm{av},m+1}+\mathfrak{j}\mathfrak{t}_{\mathrm{av},m}$ and $\mathfrak{t}_{N,\e,m} = 0$ and for either $\mathfrak{i}\in\{1,2\}$ as $\mathfrak{t}_{N,\e,m} = 0$:
\small\begin{align}
\E\|\bar{\mathbf{H}}_{T,x}^{N}(|\Gamma_{\mathfrak{l},\mathfrak{k},\mathfrak{j},1,2}|)\|_{1;\mathbb{T}_{N}} \ &\lesssim \ N^{-1/2-\beta_{\mathrm{univ}}}.
\end{align}\normalsize\normalsize
Combining this with \eqref{eq:Step3A15} yields the expectation estimate
\small\begin{align}
\E[\|\bar{\mathbf{Z}}^{N}\|_{\mathfrak{t}_{\mathrm{st}};\mathbb{T}_{N}}^{-1}\|\Psi_{4}\|_{\mathfrak{t}_{\mathrm{st}};\mathbb{T}_{N}}] \ &\lesssim \ N^{-\frac12-\beta_{\mathrm{univ}}}. \label{eq:Step3Ai=4}
\end{align}\normalsize\normalsize
\item We are left with $\mathfrak{u}=5$ on the RHS of \eqref{eq:Step3A8}. As with \eqref{eq:Step3A10} and \eqref{eq:Step3A15}, by definition of $\Phi_{5}$ given after \eqref{eq:Step3A6}, we get the next bound again upon removing the $\bar{\mathbf{Z}}^{N}$-process from $\Psi_{5} = \bar{\mathbf{H}}^{N}(\Phi_{5}\cdot\bar{\mathbf{Z}}^{N})$ by pulling out its $\|\|_{\mathfrak{t}_{\mathrm{st}};\mathbb{T}_{N}}$-norm and dividing it out:
\small\begin{align}
\E[\|\bar{\mathbf{Z}}^{N}\|_{\mathfrak{t}_{\mathrm{st}};\mathbb{T}_{N}}^{-1}\|\Psi_{5}\|_{\mathfrak{t}_{\mathrm{st}};\mathbb{T}_{N}}] \ &\leq \ {\sup}_{\substack{\mathfrak{l}=0,\ldots,\mathfrak{t}_{\mathrm{av},\mathfrak{m}_{+}}\mathfrak{t}_{\mathrm{av},m+2}^{-1}-1\\\mathfrak{k}=0,\ldots,\mathfrak{t}_{\mathrm{av},m+2}\mathfrak{t}_{\mathrm{av},m+1}^{-1}-1}}\E\|\bar{\mathbf{H}}_{T,x}^{N}(\mathbf{1}[\mathscr{E}_{1}]\mathbf{1}[\mathscr{G}_{2}]\cdot|\mathscr{A}_{\mathfrak{t}_{\mathrm{av},m+1}}^{\mathbf{T},+}\mathscr{C}_{N^{\beta_{X}}}^{\mathbf{X},-}(\mathfrak{g}_{S_{\mathfrak{l},\mathfrak{k},0}^{m},y})|)\|_{1;\mathbb{T}_{N}}. \label{eq:Step3A18}
\end{align}\normalsize\normalsize
The quantity that we are integrating in the heat-operator on the RHS of \eqref{eq:Step3A18}, per every pair of indices $\mathfrak{l},\mathfrak{k}$, is a time-average term $\mathscr{A}^{\mathbf{T},+}\mathscr{C}^{\mathbf{X},-}$ that, by definition of $\mathscr{E}_{1}$, carries an upper bound cutoff of $N^{-\beta_{m}}$ and, by definition of $\mathscr{G}_{2}$, carries a lower bound cutoff of $N^{-\beta_{m+1}}$. Thus, the quantity we are integrating in the heat operator on the RHS of \eqref{eq:Step3A18} is a $\mathscr{C}^{\mathbf{T},+}\mathscr{C}^{\mathbf{X},-}$ that is treated in Corollary \ref{corollary:D1B2A}. In particular, by Corollary \ref{corollary:D1B2A} with $\mathfrak{t}_{\mathfrak{s}} = \mathfrak{l}\mathfrak{t}_{\mathrm{av},m+2}+\mathfrak{k}\mathfrak{t}_{\mathrm{av},m+1}$ and $\mathfrak{t}_{N,\e,m} = 0$ and either $\mathfrak{i}\in\{1,2\}$, we get
\small\begin{align}
\E\|\bar{\mathbf{H}}_{T,x}^{N}(\mathbf{1}[\mathscr{E}_{1}]\mathbf{1}[\mathscr{G}_{2}]\cdot|\mathscr{A}_{\mathfrak{t}_{\mathrm{av},m+1}}^{\mathbf{T},+}\mathscr{C}_{N^{\beta_{X}}}^{\mathbf{X},-}(\mathfrak{g}_{S_{\mathfrak{l},\mathfrak{k},0}^{m},y})|)\|_{1;\mathbb{T}_{N}} \ \lesssim \ N^{-1/2-\beta_{\mathrm{univ}}}.
\end{align}\normalsize\normalsize
From this last estimate combined with \eqref{eq:Step3A18}, we get the expectation estimate
\small\begin{align}
\E[\|\bar{\mathbf{Z}}^{N}\|_{\mathfrak{t}_{\mathrm{st}};\mathbb{T}_{N}}^{-1}\|\Psi_{5}\|_{\mathfrak{t}_{\mathrm{st}};\mathbb{T}_{N}}] \ &\lesssim \ N^{-\frac12-\beta_{\mathrm{univ}}}. \label{eq:Step3Ai=5}
\end{align}\normalsize\normalsize
\end{itemize}
By \eqref{eq:Step3A8} and \eqref{eq:Step3Ai=2}, \eqref{eq:Step3Ai=4}, and \eqref{eq:Step3Ai=5}, with the required high-probability, for some $\beta_{\mathrm{univ},2}\in\R_{>0}$ universal, we get
\small\begin{align}
\|\bar{\mathbf{Z}}^{N}\|_{\mathfrak{t}_{\mathrm{st}};\mathbb{T}_{N}}^{-1}{\sum}_{\mathfrak{u}=2,4,5}\|\bar{\mathbf{H}}_{T,x}^{N}(\Phi_{\mathfrak{u}}\bar{\mathbf{Z}}^{N})\|_{\mathfrak{t}_{\mathrm{st}};\mathbb{T}_{N}} \ = \ \|\bar{\mathbf{Z}}^{N}\|_{\mathfrak{t}_{\mathrm{st}};\mathbb{T}_{N}}^{-1}{\sum}_{\mathfrak{u}=2,4,5}\|\Psi_{\mathfrak{u}}\|_{\mathfrak{t}_{\mathrm{st}};\mathbb{T}_{N}} \ \lesssim \ N^{-\frac12-\beta_{\mathrm{univ},2}}. \label{eq:Step3AFinal}
\end{align}\normalsize\normalsize
Indeed we recall $\Psi_{\mathfrak{u}} = \bar{\mathbf{H}}_{T,x}^{N}(\Phi_{\mathfrak{u}}\bar{\mathbf{Z}}^{N})$ by definition given right after the equation \eqref{eq:Step3A7}. Applying \eqref{eq:Step3AFinal} with \eqref{eq:Step3A7} finishes the proof for the proposed estimate that is explicitly written in the statement of Lemma \ref{lemma:Step3A}. To get the estimate after replacement \eqref{eq:KPZNLReplace} and a replacement of time-scales/exponents by those in Corollary \ref{corollary:D1B2B}, it suffices to write these replacements in formally and then employ an identical argument. In particular, the details of what we are time-averaging and cutting off are not relevant, as are the details of the exponents as long as we use the estimates in Corollary \ref{corollary:D1B2B} instead of those in Corollary \ref{corollary:D1B2A}.
\end{proof}
\begin{proof}[Proof of \emph{Lemma \ref{lemma:Step3B}}]
In principle we have already written a proof for Lemma \ref{lemma:Step3B} in the middle of the proof for Lemma \ref{lemma:Step3A}. However we provide details anyway for clarity. The architecture for the proof of Lemma \ref{lemma:Step3B} is similar to that of Lemma \ref{lemma:Step3A}. We first relate $\Gamma^{N,1,2}$ within the RHS of the proposed estimate to the space-time average $\mathscr{A}^{\mathbf{T},+}\mathscr{C}^{\mathbf{X},-}$ within the LHS of the same proposed estimate through exact identities, and then we estimate the error terms in such relation/the quantities in these exact identities.  The approach that we take towards relating these two quantities from the previous sentence/writing exact identities is again by upgrading in time-scale $\mathfrak{t}_{\mathrm{av},1} \to \mathfrak{t}_{\mathrm{av},2}$ and cutoffs then estimating errors which turn out to be time-averages of $\mathscr{C}^{\mathbf{X},-}(\mathfrak{g})$ with upper/lower bound cutoffs. First, as in the proof of Lemma \ref{lemma:Step3A} we introduce notation for time-shifts at two time-scales:
\begin{itemize}[leftmargin=*]
\item For any $\mathfrak{l},\mathfrak{j}\in\Z_{\geq0}$ and $T\geq0$, define a shift $T_{\mathfrak{l},\mathfrak{j}}\overset{\bullet}=T+\mathfrak{l}\mathfrak{t}_{\mathrm{av},2}+\mathfrak{j}\mathfrak{t}_{\mathrm{av},1}$ with respect to time-scales $\mathfrak{t}_{\mathrm{av},2},\mathfrak{t}_{\mathrm{av},1}$, respectively.
\end{itemize}
Recall the $\mathscr{A}^{\mathbf{T},+}\mathscr{C}^{\mathbf{X},-}(\mathfrak{g})$-term is a time-average with respect to the ``maximal" scale $\mathfrak{t}_{\mathrm{av},\mathfrak{m}_{+}}$. With more explanation after,
\small\begin{align}
\mathscr{A}_{\mathfrak{t}_{\mathrm{av},\mathfrak{m}_{+}}}^{\mathbf{T},+}\mathscr{C}_{N^{\beta_{X}}}^{\mathbf{X},-}(\mathfrak{g}_{T,x}) \ &= \ \wt{\sum}_{\mathfrak{l}=0}^{\mathfrak{t}_{\mathrm{av},\mathfrak{m}_{+}}\mathfrak{t}_{\mathrm{av},2}^{-1}-1}\wt{\sum}_{\mathfrak{j}=0}^{\mathfrak{t}_{\mathrm{av},2}\mathfrak{t}_{\mathrm{av},1}^{-1}-1} \mathscr{A}_{\mathfrak{t}_{\mathrm{av},1}}^{\mathbf{T},+}\mathscr{C}_{N^{\beta_{X}}}^{\mathbf{X},-}(\mathfrak{g}_{T_{\mathfrak{l},\mathfrak{j}},x}). \label{eq:Step3B1}
\end{align}\normalsize\normalsize
Indeed, we interpret the LHS of \eqref{eq:Step3B1} to be the average of suitably time-shifted dynamic-averages with respect to the smaller time-scale $\mathfrak{t}_{\mathrm{av},2}$ in Corollary \ref{corollary:D1B2A}, and we further decompose each of these time averages with respect to $\mathfrak{t}_{\mathrm{av},2}$ into more suitably time-shifted time averages with respect to the smallest time-scale $\mathfrak{t}_{\mathrm{av},1}$. As $\mathscr{C}^{\mathbf{X},-}$ has the upper bound cutoff $N^{-\beta_{0}}$ by definition, where $\beta_{0} = \frac12\beta_{X} - \e_{X,2}$ with $\e_{X,2} \geq \frac12\e_{X,1}$ arbitrarily small but universal in Corollary \ref{corollary:D1B2A}, for free by \eqref{eq:Step3B1} we get
\small\begin{align}
\mathscr{A}_{\mathfrak{t}_{\mathrm{av},\mathfrak{m}_{+}}}^{\mathbf{T},+}\mathscr{C}_{N^{\beta_{X}}}^{\mathbf{X},-}(\mathfrak{g}_{T,x}) \ &= \ \wt{\sum}_{\mathfrak{l}=0}^{\mathfrak{t}_{\mathrm{av},\mathfrak{m}_{+}}\mathfrak{t}_{\mathrm{av},2}^{-1}-1}\wt{\sum}_{\mathfrak{j}=0}^{\mathfrak{t}_{\mathrm{av},2}\mathfrak{t}_{\mathrm{av},1}^{-1}-1} \mathscr{A}_{\mathfrak{t}_{\mathrm{av},1}}^{\mathbf{T},+}\mathscr{C}_{N^{\beta_{X}}}^{\mathbf{X},-}(\mathfrak{g}_{T_{\mathfrak{l},\mathfrak{j}},x}) \cdot \mathbf{1}[\mathscr{E}_{\mathfrak{l},\mathfrak{j}}] \label{eq:Step3B2}
\end{align}\normalsize\normalsize
provided that we have defined the following event. We clarify that the supremum in the following event is the supremum over \emph{integrals} on time-scales $0\leq\mathfrak{t}\leq\mathfrak{t}_{\mathrm{av},1}$ of the cutoff spatial average $\mathscr{C}^{\mathbf{X},-}$ then reweighted by $\mathfrak{t}_{\mathrm{av},1}^{-1}$. In particular, every $\mathfrak{t}$-indexed term in the sup below is bounded by $|\mathscr{C}^{\mathbf{X},-}|\lesssim N^{-\beta_{0}}$ as every $\mathfrak{t}$-indexed term time-averages $\mathscr{C}^{\mathbf{X},-}$ on a time-scale $\mathfrak{t}\leq\mathfrak{t}_{\mathrm{av},1}$:
\small\begin{align}
\mathbf{1}[\mathscr{E}_{\mathfrak{l},\mathfrak{j}}] \ &\overset{\bullet}= \ \mathbf{1}\left({\sup}_{0\leq\mathfrak{t}\leq\mathfrak{t}_{\mathrm{av},1}}\mathfrak{t}\cdot\mathfrak{t}_{\mathrm{av},1}^{-1}|\mathscr{A}_{\mathfrak{t}}^{\mathbf{T},+}\mathscr{C}_{N^{\beta_{X}}}^{\mathbf{X},-}(\mathfrak{g}_{T_{\mathfrak{l},\mathfrak{j}},x})| \lesssim N^{-\beta_{0}} \right).
\end{align}\normalsize\normalsize
Observe that the $\mathfrak{l},\mathfrak{j}$-term in the sum from the RHS of \eqref{eq:Step3B2} is \emph{almost} the $\Gamma^{N,\mathfrak{t}_{\mathrm{av},1},\mathfrak{j}}$-term, except the cutoff defining $\mathscr{E}_{\mathfrak{l},\mathfrak{j}}$ is of order $N^{-\beta_{0}}$ rather than $N^{-\beta_{1}}$. To this end, we define the following event $\mathscr{G}_{\mathfrak{l},\mathfrak{j},1}$ with a stricter constraint giving us the cutoff we need:
\small\begin{align}
\mathbf{1}[\mathscr{G}_{\mathfrak{l},\mathfrak{j},1}] \ &\overset{\bullet}= \ \mathbf{1}\left({\sup}_{0\leq\mathfrak{t}\leq\mathfrak{t}_{\mathrm{av},1}}\mathfrak{t}\cdot\mathfrak{t}_{\mathrm{av},1}^{-1}|\mathscr{A}_{\mathfrak{t}}^{\mathbf{T},+}\mathscr{C}_{N^{\beta_{X}}}^{\mathbf{X},-}(\mathfrak{g}_{T_{\mathfrak{l},\mathfrak{j}},x})| \leq N^{-\beta_{1}} \right).
\end{align}\normalsize\normalsize
Let $\mathscr{G}_{\mathfrak{l},\mathfrak{j},2}$ denote the complement of $\mathscr{G}_{\mathfrak{l},\mathfrak{j},1}$. Observe the containment $\mathscr{G}_{\mathfrak{l},\mathfrak{j},1}\subseteq\mathscr{E}_{\mathfrak{l},\mathfrak{j}}$ that follows because $\beta_{1} \geq \beta_{0}$, so $\mathscr{G}_{\mathfrak{l},\mathfrak{j},1}$ imposes a stricter constraint than $\mathscr{E}_{\mathfrak{l},\mathfrak{j}}$. Thus $\mathbf{1}[\mathscr{E}_{\mathfrak{l},\mathfrak{j}}]=\mathbf{1}[\mathscr{G}_{\mathfrak{l},\mathfrak{j},1}]+\mathbf{1}[\mathscr{G}_{\mathfrak{l},\mathfrak{j},2}]\mathbf{1}[\mathscr{E}_{\mathfrak{l},\mathfrak{j}}]$. We apply this decomposition to \eqref{eq:Step3B2} and deduce
\small\begin{align}
\mathscr{A}_{\mathfrak{t}_{\mathrm{av},\mathfrak{m}_{+}}}^{\mathbf{T},+}\mathscr{C}_{N^{\beta_{X}}}^{\mathbf{X},-}(\mathfrak{g}_{T,x}) \ &= \ \wt{\sum}_{\mathfrak{l}=0}^{\mathfrak{t}_{\mathrm{av},\mathfrak{m}_{+}}\mathfrak{t}_{\mathrm{av},2}^{-1}-1}\wt{\sum}_{\mathfrak{j}=0}^{\mathfrak{t}_{\mathrm{av},2}\mathfrak{t}_{\mathrm{av},1}^{-1}-1}\Phi_{1,\mathfrak{l},\mathfrak{j}} + \wt{\sum}_{\mathfrak{l}=0}^{\mathfrak{t}_{\mathrm{av},\mathfrak{m}_{+}}\mathfrak{t}_{\mathrm{av},2}^{-1}-1}\wt{\sum}_{\mathfrak{j}=0}^{\mathfrak{t}_{\mathrm{av},2}\mathfrak{t}_{\mathrm{av},1}^{-1}-1}\Phi_{2,\mathfrak{l},\mathfrak{j}} \ \overset{\bullet}= \ \Phi_{1} + \Phi_{2} \label{eq:Step3B3}
\end{align}\normalsize\normalsize
where $\Phi_{1,\mathfrak{l},\mathfrak{j}}$ replaces the free $\mathscr{E}_{\mathfrak{l},\mathfrak{j}}$-cutoff of $N^{-\beta_{0}}$ with the $\mathscr{G}_{\mathfrak{l},\mathfrak{j},1}$-cutoff of $N^{-\beta_{1}}$, and $\Phi_{2,\mathfrak{l},\mathfrak{j}}$ is the resulting error in this upgrade:
\small\begin{align}
\Phi_{1,\mathfrak{l},\mathfrak{j}} \ &\overset{\bullet}= \ \mathscr{A}_{\mathfrak{t}_{\mathrm{av},1}}^{\mathbf{T},+}\mathscr{C}_{N^{\beta_{X}}}^{\mathbf{X},-}(\mathfrak{g}_{T_{\mathfrak{l},\mathfrak{j}},x}) \cdot \mathbf{1}[\mathscr{G}_{\mathfrak{l},\mathfrak{j},1}] \quad \mathrm{and} \quad \Phi_{2,\mathfrak{l},\mathfrak{j}} \ \overset{\bullet}= \ \mathscr{A}_{\mathfrak{t}_{\mathrm{av},1}}^{\mathbf{T},+}\mathscr{C}_{N^{\beta_{X}}}^{\mathbf{X},-}(\mathfrak{g}_{T_{\mathfrak{l},\mathfrak{j}},x}) \cdot \mathbf{1}[\mathscr{E}_{\mathfrak{l},\mathfrak{j}}] \mathbf{1}[\mathscr{G}_{\mathfrak{l},\mathfrak{j},2}].
\end{align}\normalsize\normalsize
We observe that $\Phi_{1} = \Gamma^{N,1,2}$ just by its definition within the statement of Lemma \ref{lemma:Step3B} as it is an average of suitably time-shifted time-averages with respect to time-scale $\mathfrak{t}_{\mathrm{av},1}$ each with an upper bound cutoff of $N^{-\beta_{1}}$. By \eqref{eq:Step3B3} and this observation we get
\small\begin{align}
|\bar{\mathbf{H}}_{T,x}^{N}(N^{\frac12}\mathscr{A}_{\mathfrak{t}_{\mathrm{av},\mathfrak{m}_{+}}}^{\mathbf{T},+}\mathscr{C}_{N^{\beta_{X}}}^{\mathbf{X},-}(\mathfrak{g}) \cdot \bar{\mathbf{Z}}^{N})| \ &\leq \ |\bar{\mathbf{H}}_{T,x}^{N}(N^{\frac12}\Gamma^{N,1,2} \cdot \bar{\mathbf{Z}}^{N})| \ + \ \|\bar{\mathbf{Z}}^{N}\|_{\mathfrak{t}_{\mathrm{st}};\mathbb{T}_{N}} \bar{\mathbf{H}}_{T,x}^{N}(N^{\frac12}|\Phi_{2}|). \label{eq:Step3B4}
\end{align}\normalsize\normalsize
It suffices to show that with the required high-probability, we can control the second term in \eqref{eq:Step3B4}:
\small\begin{align}
\|\bar{\mathbf{H}}_{T,x}^{N}(N^{\frac12}|\Phi_{2}|)\|_{\mathfrak{t}_{\mathrm{st}};\mathbb{T}_{N}} \ &\lesssim \ N^{-\beta_{\mathrm{univ},2}}. \label{eq:Step3B!}
\end{align}\normalsize\normalsize
We will do this using a procedure that is a simplified or specialized version of estimates for $\Psi_{\mathfrak{u}}$ with $\mathfrak{u}\in\{2,4,5\}$ in the proof for Lemma \ref{lemma:Step3A}. In particular, we will control $|\Phi_{2}|$ in terms of quantities of the form $\mathscr{C}^{\mathbf{T},+}\mathscr{C}^{\mathbf{X},-}$ that are estimated in Corollary \ref{corollary:D1B2A}. Recall the $\mathfrak{l},\mathfrak{j}$-term in the sum defining $\Phi_{2}$ is $\Phi_{2,\mathfrak{l},\mathfrak{j}}$. To establish the previous high-probability statement, we employ the Markov inequality as with the estimate \eqref{eq:Step3A8} in the proof of Lemma \ref{lemma:Step3A} and then unfold the definition of $\Phi_{2}$. Since $\mathfrak{t}_{\mathrm{st}}\leq2$,
\small\begin{align}
\mathbf{P}\left(\|\bar{\mathbf{H}}_{T,x}^{N}(N^{\frac12}|\Phi_{2}|)\|_{\mathfrak{t}_{\mathrm{st}};\mathbb{T}_{N}} \gtrsim N^{-\beta_{\mathrm{univ},2}}\right) \ &\leq \ N^{\beta_{\mathrm{univ},2}} \E\|\bar{\mathbf{H}}_{T,x}^{N}(N^{\frac12}|\Phi_{2}|)\|_{2;\mathbb{T}_{N}} \\
&\leq \ \wt{\sum}_{\mathfrak{l}=0}^{\mathfrak{t}_{\mathrm{av},\mathfrak{m}_{+}}\mathfrak{t}_{\mathrm{av},2}^{-1}-1}\wt{\sum}_{\mathfrak{j}=0}^{\mathfrak{t}_{\mathrm{av},2}\mathfrak{t}_{\mathrm{av},1}^{-1}-1} N^{\beta_{\mathrm{univ},2}} \E\|\bar{\mathbf{H}}^{N}_{T,x}(N^{\frac12}|\Phi_{2,\mathfrak{l},\mathfrak{j}}|)\|_{2;\mathbb{T}_{N}} \\
&\leq \ {\sup}_{\mathfrak{l}=0,\ldots,\mathfrak{t}_{\mathrm{av},\mathfrak{m}_{+}}\mathfrak{t}_{\mathrm{av},2}^{-1}-1}{\sup}_{\mathfrak{j}=0,\ldots,\mathfrak{t}_{\mathrm{av},2}\mathfrak{t}_{\mathrm{av},1}^{-1}-1} N^{\beta_{\mathrm{univ},2}} \E\|\bar{\mathbf{H}}^{N}_{T,x}(N^{\frac12}|\Phi_{2,\mathfrak{l},\mathfrak{j}}|)\|_{2;\mathbb{T}_{N}}. \label{eq:Step3B5}
\end{align}\normalsize\normalsize
In view of \eqref{eq:Step3B5}, we want to control expectations in \eqref{eq:Step3B5} uniformly in $\mathfrak{l},\mathfrak{j}$ as this would provide the estimate \eqref{eq:Step3B!}. To this end, note that $\Phi_{2,\mathfrak{l},\mathfrak{j}}$, by its definition as the $(\mathfrak{l},\mathfrak{j})$-summand in the summation for $\Phi_{2}$ that we wrote immediately after the decomposition \eqref{eq:Step3B3}, is a space-time average $\mathscr{A}^{\mathbf{T},+}\mathscr{C}^{\mathbf{X},-}$ equipped with the upper bound cutoff $N^{-\beta_{0}}$ courtesy of the event $\mathscr{E}_{\mathfrak{l},\mathfrak{j}}$ inside $\Phi_{2,\mathfrak{l},\mathfrak{j}}$ as well as a lower bound cutoff of $N^{-\beta_{1}}$ courtesy of the $\mathscr{G}_{\mathfrak{l},\mathfrak{j},2}$-event inside $\Phi_{2,\mathfrak{l},\mathfrak{j}}$. Thus, we may use the $m=1$ bound in Corollary \ref{corollary:D1B2A} with $\mathfrak{t}_{\mathfrak{s}} = \mathfrak{l}\mathfrak{t}_{\mathrm{av},2}+\mathfrak{j}\mathfrak{t}_{\mathrm{av},1}$ and $\mathfrak{t}_{N,\e,m} = 0$ and either index $\mathfrak{i}\in\{1,2\}$ to get, for $\beta_{\mathrm{univ}}>0$ universal,
\small\begin{align}
{\sup}_{\mathfrak{l}=0,\ldots,\mathfrak{t}_{\mathrm{av},\mathfrak{m}_{+}}\mathfrak{t}_{\mathrm{av},2}^{-1}-1}{\sup}_{\mathfrak{j}=0,\ldots,\mathfrak{t}_{\mathrm{av},2}\mathfrak{t}_{\mathrm{av},1}^{-1}-1} N^{\beta_{\mathrm{univ},2}} \E\|\bar{\mathbf{H}}^{N}_{T,x}(N^{\frac12}|\Phi_{2,\mathfrak{l},\mathfrak{j}}|)\|_{2;\mathbb{T}_{N}} \ &\lesssim \ N^{-\beta_{\mathrm{univ}}+\beta_{\mathrm{univ},2}}.
\end{align}\normalsize\normalsize
Choosing $\beta_{\mathrm{univ},2} = \frac12\beta_{\mathrm{univ}}$ and plugging the previous estimate into \eqref{eq:Step3B5} establishes \eqref{eq:Step3B!} with the required high-probability and completes the proof of the proposed bound that is explicitly written in the statement of Lemma \ref{lemma:Step3B}. To prove the estimate \emph{after} the replacement \eqref{eq:KPZNLReplace} and the replacement of time-scales and exponents by those in Corollary \ref{corollary:D1B2B}, we follow the remarks given at the end of the proof of Lemma \ref{lemma:Step3A}. In particular, we make the aforementioned replacements \emph{in the proof} and use the estimates in Corollary \ref{corollary:D1B2B} as opposed to those in Corollary \ref{corollary:D1B2A}.
\end{proof}
%
%
%
\section{Proof of Theorem \ref{theorem:KPZ}}\label{section:KPZ3}
The proof of Theorem \ref{theorem:KPZ} will follow the strategy that is outlined in the bullet points below.
\begin{itemize}[leftmargin=*]
\item We compare the microscopic Cole-Hopf transform $\mathbf{Z}^{N}$ to an auxiliary space-time process we denote by $\mathbf{Y}^{N}$. This space-time process is defined by the same stochastic integral equation as $\bar{\mathbf{Z}}^{N}$ in Section \ref{section:Ctify} but tossing out $\bar{\Phi}^{N,2}$ and replacing all $\bar{\mathbf{Z}}^{N}$-terms by $\mathbf{Y}^{N}$. In particular, besides microscopic SHE terms, the $\mathbf{Y}^{N}$-equation only has data of weakly vanishing functionals.
\item After the comparison between $\mathbf{Z}^{N}$ and $\mathbf{Y}^{N}$, we prove that $\mathbf{Y}^{N}$ converges to the solution of SHE in the Skorokhod space $\mathbf{D}_{1}$. To this end, recall that $\mathbf{Y}^{N}$ is defined by a microscopic version of the SHE plus some extra weakly vanishing data. In particular, proof of convergence to SHE for $\mathbf{Y}^{N}$ will follow the analysis of weakly vanishing terms in \cite{DT}. Because most of the work is done in \cite{DT}, we only present necessary adjustments for the proof of convergence to SHE for $\mathbf{Y}^{N}$.
\end{itemize}
We first define the $\mathbf{Y}^{N}$-process; recall $\mathbb{T}_{N}$ in the beginning of Section \ref{section:Ctify} and $\bar{\grad}^{!} = N\bar{\grad}$ where $\bar{\grad}$ is gradient on the \emph{torus} $\mathbb{T}_{N}$.
\begin{definition}\label{definition:YProcess}
Define $\mathbf{Y}^{N}$ as the solution to the following stochastic equation on $\R_{\geq0}\times\mathbb{T}_{N}$; recall $\chi$ from Definition \ref{definition:ChiTorus}:
\small\begin{align}
\mathbf{Y}_{T,x}^{N} \ &= \ \bar{\mathbf{H}}_{T,x}^{N,\mathbf{X}}(\bar{\mathbf{Z}}_{0,\bullet}^{N}) + \bar{\mathbf{H}}_{T,x}^{N}(\mathbf{Y}^{N}\d\xi^{N}) + \bar{\mathbf{H}}_{T,x}^{N}(\bar{\Phi}^{N,4}) \ = \ \bar{\mathbf{H}}_{T,x}^{N,\mathbf{X}}(\chi_{\bullet}\mathbf{Z}_{0,\bullet}^{N}) + \bar{\mathbf{H}}_{T,x}^{N}(\mathbf{Y}^{N}\d\xi^{N}) + \bar{\mathbf{H}}_{T,x}^{N}(\bar{\Phi}^{N,4}).
\end{align}\normalsize\normalsize
As in Remark \ref{remark:ch2Jumps} $\mathbf{Y}^{N}\d\xi^{N}$ is the martingale differential corresponding to the Poisson process with jumps of $\d M$ in (2.4) in \cite{DT} then scaled by $\mathbf{Y}^{N}$ at the same space-time point. Also $\bar{\Phi}^{N,4}$ is the following analog of $\bar{\Phi}^{N,3}$ from Definition \ref{definition:ChiTorus}:
\small\begin{align}
\bar{\Phi}^{N,4}_{T,x} \ &\overset{\bullet}= \ \mathfrak{w}_{T,x}\mathbf{Y}_{T,x}^{N} + {\sum}_{k=-2\mathfrak{m}}^{2\mathfrak{m}}c_{k} \bar{\grad}_{k}^{!}(\mathfrak{w}_{T,x}^{k}\mathbf{Y}_{T,x}^{N}). 
\end{align}\normalsize\normalsize
\end{definition}
A function on $\mathbb{T}_{N}$ or $\R_{\geq0}\times\mathbb{T}_{N}$, if it is not already given by restricting a function on $\Z$ or $\R_{\geq0}\times\Z$ to the torus $\mathbb{T}_{N}$, is implicitly lifted to a function on $\Z$ or $\R_{\geq0}\times\Z$ by periodic extension outside $\mathbb{T}_{N}$. We then lift to $\R$ or $\R_{\geq0}\times\R$ by linear interpolation. This applies to $\mathbf{Y}^{N}$, for example. The main ingredients that we briefly overviewed above are stated precisely as follows.
\begin{prop}\label{prop:KPZ1}
 Consider any compact set $\mathbb{K}\subseteq\R$ and define its microscopic coordinates $\mathbb{K}_{N}\overset{\bullet}=N\mathbb{K}\cap\Z$. There exist universal constants $\beta_{\mathrm{univ},1},\beta_{\mathrm{univ},2}\in\R_{>0}$ such that outside an event of probability at most $N^{-\beta_{\mathrm{univ},1}}$ times uniformly bounded factors,
\small\begin{align}
\|\mathbf{Z}^{N}-\mathbf{Y}^{N}\|_{1;\mathbb{K}_{N}} \ &\lesssim_{\mathbb{K}} \ N^{-\beta_{\mathrm{univ},2}}. 
\end{align}\normalsize\normalsize
\end{prop}
\begin{prop}\label{prop:KPZ2}
 The process $\mathbf{Y}^{N}_{\bullet,N\bullet}$ is tight in $\mathbf{D}_{1}$ and all limit points are the law of the solution of \emph{SHE} with initial data $\mathbf{Z}_{0,\bullet}$.
\end{prop}
\begin{lemma}\label{lemma:KPZ3}
 Consider processes $\mathbf{X}^{N,1}$ and $\mathbf{X}^{N,2}$ in $\mathbf{D}_{1}$. Suppose $\mathbf{X}^{N,2}$ is tight in $\mathbf{D}_{1}$ with limit $\mathbf{X}^{\infty,2}$, and there are universal constants $\beta_{\mathrm{univ},1},\beta_{\mathrm{univ},2}\in\R_{>0}$ so that for any compact set $\mathbb{K}\subseteq\R$, outside an event with probability at most order $N^{-\beta_{\mathrm{univ},1}}$,
\small\begin{align}
\|\mathbf{X}^{N,1}-\mathbf{X}^{N,2}\|_{1;\mathbb{K}} \ \lesssim_{\mathbb{K}} \ N^{-\beta_{\mathrm{univ},2}}.
\end{align}\normalsize\normalsize
The sequence $\mathbf{X}^{N,1}$ is also tight and it converges to the same limit $\mathbf{X}^{\infty,2}$. All limits are as probability measures on $\mathbf{D}_{1}$.
\end{lemma}
\begin{proof}[Proof of \emph{Theorem \ref{theorem:KPZ}}]
Observe Proposition \ref{prop:KPZ1} gives a power-saving bound in $N\in\Z_{>0}$ on the difference of spatially-rescaled versions of $\mathbf{Z}^{N}$ and $\mathbf{Y}^{N}$ uniformly over a discretization of the compact space-time set $[0,1]\times\mathbb{K}$. Because both processes extend \emph{by linear interpolation} outside such a discretization to the whole space-time set $[0,1]\times\mathbb{K}$, the difference between the spatially rescaled versions of $\mathbf{Z}^{N}$ and $\mathbf{Y}^{N}$ on the set $[0,1]\times\mathbb{K}$ is controlled by the same difference but restricting to the aforementioned discretization. Thus we deduce Theorem \ref{theorem:KPZ} from the tightness and convergence of $\mathbf{Y}^{N}$ within Proposition \ref{prop:KPZ2} combined with the comparison result in Lemma \ref{lemma:KPZ3}. This completes the proof.
\end{proof}
The rest of the section is organized as follows. We will focus on a proof of Proposition \ref{prop:KPZ1} because it takes up the bulk of the section since it requires a few auxiliary lemmas and an elementary but detailed ``high-probability" pathwise argument. We will present the auxiliary lemmas, but we will defer their proofs to the end of this section in order to avoid obscuring key arguments and since their proofs are based on standard methods or are a lot of technical work. On the other hand, the proof of Proposition \ref{prop:KPZ2} is relatively short as it is the focus of \cite{DT}. Lemma \ref{lemma:KPZ3} is an elementary topological result proved at the end of this section.
\subsection{Proof of Proposition \ref{prop:KPZ1}}
Let us inherit the notation from Proposition \ref{prop:KPZ1} throughout this subsection. We observe that the intersection of any two events whose complements hold with probability at most $N^{-\beta_{\mathrm{univ},1}}$ times uniformly bounded factors also has complement holding with probability at most $N^{-\beta_{\mathrm{univ},1}}$ times uniformly bounded factors. Thus, Proposition \ref{prop:KPZ1} follows by the following pair of comparison estimates courtesy of this observation and the triangle inequality.
\begin{lemma}\label{lemma:KPZ12}
 Consider a compact set $\mathbb{K}\subseteq\R$ and define microscopic coordinates $\mathbb{K}_{N}\overset{\bullet}=N\mathbb{K}\cap\Z$. There exist universal $\beta_{\mathrm{univ},1},\beta_{\mathrm{univ},2}>0$ such that outside an event of probability at most $N^{-\beta_{\mathrm{univ},1}}$ times uniformly bounded factors, we have
\small\begin{align}
\|\bar{\mathbf{Z}}^{N}-\mathbf{Y}^{N}\|_{1;\mathbb{K}_{N}} \ &\lesssim_{\mathbb{K}} \ \|\bar{\mathbf{Z}}^{N}-\mathbf{Y}^{N}\|_{1;\mathbb{T}_{N}} \ \lesssim \ N^{-\beta_{\mathrm{univ},2}}. 
\end{align}\normalsize\normalsize
\end{lemma}
\begin{lemma}\label{lemma:KPZ11}
 \emph{Lemma \ref{lemma:KPZ12}} holds upon replacing $\mathbf{Y}^{N}$ by $\mathbf{Z}^{N}$ and forgetting the middle inequality/$\|\|_{1;\mathbb{T}_{N}}$-term.
\end{lemma}
Lemma \ref{lemma:KPZ11} is almost direct from Proposition \ref{prop:Ctify}. The difference is that Lemma \ref{lemma:KPZ11} uses $\|\|_{1;\mathbb{K}_{N}}$ and Proposition \ref{prop:Ctify} uses the discretized norm $[]_{1;\mathbb{K}_{N}}$. We will use short-time stochastic continuity to bootstrap norms similar to the proof of Lemma \ref{lemma:TRGTProp3}. We also use Lemma \ref{lemma:KPZ12} to prove Lemma \ref{lemma:KPZ11}; the proof of Lemma \ref{lemma:KPZ12} will be independent of Lemma \ref{lemma:KPZ11}.

The proof of Lemma \ref{lemma:KPZ11} will be deferred to a later section on technical estimates as it will require no preliminary ingredients. The proof of Lemma \ref{lemma:KPZ12}, however, is more involved and requires a few preliminaries. It is based on the following outline.
\begin{itemize}[leftmargin=*]
\item The first estimate in Lemma \ref{lemma:KPZ12} follows by noting $\mathbb{T}_{N}$ is centered at $0\in\Z$ and $|\mathbb{T}_{N}|\gg N$ and thus $\mathbb{K}_{N}\subseteq\mathbb{T}_{N}$ if $N\gtrsim_{\mathbb{K}}1$.
\item Consider Proposition \ref{prop:KPZNL} and suppose that $\bar{\mathbf{Z}}^{N}$ were bounded; a fortiori it resembles the continuous SHE solution. Since the stochastic equations for $\bar{\mathbf{Z}}^{N}$ and $\mathbf{Y}^{N}$ differ only in the $\bar{\Phi}^{N,2}$-term of interest in Proposition \ref{prop:KPZNL}, with high-probability we would deduce this term is negligible in the large-$N$ limit and Lemma \ref{lemma:KPZ12} would follow by linear theory for equations on $\mathbb{T}_{N}$.
\item The previous remark depends on the assumption $\bar{\mathbf{Z}}^{N}$ is uniformly bounded on $\mathbb{T}_{N}$. This is not clear, nor will we try to show it. Note it is actually enough for $\bar{\mathbf{Z}}^{N}$ to be bounded by small powers of $N$ with high-probability. We will show this for $\mathbf{Y}^{N}$ via moment estimates and union bound as in the proof of Proposition \ref{prop:TRGTProp}. As $\mathbf{Y}^{N}$ and $\mathbf{Z}^{N}$ are supposed to be close we expect this to hold true for $\mathbf{Z}^{N}$ as well. To implement this, we use a continuity argument. In what follows $\e>0$ is thought of as small.
\item Consider the first time $\mathfrak{t}_{2\e}$ before 1 that $\bar{\mathbf{Z}}^{N}$ exceeds $N^{2\e}$ in the $\|\|_{\mathfrak{t}_{2\e};\mathbb{T}_{N}}$-norm. Proposition \ref{prop:KPZNL} tells us with high-probability the $\bar{\Phi}^{N,2}$-term is negligible, so $\bar{\mathbf{Z}}^{N}\approx\mathbf{Y}^{N}$ until time $\mathfrak{t}_{2\e}$. With high-probability we know $\mathbf{Y}^{N}$ is at most $N^{\e}$ in $\|\|_{\mathfrak{t}_{2\e};\mathbb{T}_{N}}$-norm, thus the same is true for $\bar{\mathbf{Z}}^{N}$. This means we can wait \emph{after} $\mathfrak{t}_{2\e}$ for $\bar{\mathbf{Z}}^{N}$ to exceed $N^{2\e}$. This propagates $\mathfrak{t}_{2\e}\to1$.
\item We emphasize our estimates for the $\bar{\Phi}^{N,2}$-term in Proposition \ref{prop:KPZNL} and for $\mathbf{Y}^{N}$ will not depend on the random time $\mathfrak{t}_{2\e}$, so even though this scheme may take a very large number of iterations, our comparison estimates for $\bar{\mathbf{Z}}^{N}$ and $\mathbf{Y}^{N}$ stay the same. 
\end{itemize}
We now make the previous outline precise and prove Lemma \ref{lemma:KPZ12}. This begins with the following construction of random times.
\begin{definition}\label{definition:KPZ12}
Consider any $\e_{\mathrm{st}} \in \R_{>0}$ arbitrarily small but universal. Define the random time $\mathfrak{t}_{1,\mathrm{st}} \overset{\bullet}= \mathfrak{t}_{1,\mathrm{st},1}\wedge\mathfrak{t}_{1,\mathrm{st},2}$, where, for some universal constant $\beta_{\mathrm{univ}} \in \R_{>0}$, we define the following in which the implied constant for $\mathfrak{t}_{1,\mathrm{st},2}$ is large but universal:
\begin{subequations}
\small\begin{align}
\mathfrak{t}_{1,\mathrm{st},1} \ &\overset{\bullet}= \ \inf\left\{\mathfrak{t}\in[0,1]: \ \|\bar{\mathbf{Z}}^{N}\|_{\mathfrak{t};\mathbb{T}_{N}} \geq N^{2\e_{\mathrm{st}}} \right\} \wedge 1 \\
\mathfrak{t}_{1,\mathrm{st},2} \ &\overset{\bullet}= \ \inf\left\{\mathfrak{t}\in[0,1]: \ \|\bar{\mathbf{H}}_{T,x}^{N}(\bar{\Phi}^{N,2})\|_{\mathfrak{t};\mathbb{T}_{N}} \gtrsim N^{-\beta_{\mathrm{univ}}} + N^{-\beta_{\mathrm{univ}}}\|\bar{\mathbf{Z}}^{N}\|_{\mathfrak{t};\mathbb{T}_{N}}^{2} \right\} \wedge 1.
\end{align}\normalsize\normalsize
\end{subequations}
Set $\mathbf{D}^{N} \overset{\bullet}= \mathbf{Y}^{N} - \mathbf{Q}^{N}$ where $\mathbf{Q}^{N}$ solves the stochastic equation on $\R_{\geq0}\times\mathbb{T}_{N}$ below with $\bar{\mathbf{Z}}_{0,\bullet}^{N}=\chi_{\bullet}\mathbf{Z}_{0,\bullet}^{N}$ from Definition \ref{definition:ChiTorus}:
\small\begin{align}
\mathbf{Q}_{T,x}^{N} \ &= \ \bar{\mathbf{H}}_{T,x}^{N,\mathbf{X}}(\bar{\mathbf{Z}}^{N}_{0,\bullet}) + \bar{\mathbf{H}}_{T,x}^{N}(\mathbf{Q}^{N}\d\xi^{N}) + \bar{\mathbf{H}}_{T,x}^{N}(\bar{\Phi}^{N,2}_{S,y}\mathbf{1}_{S\leq\mathfrak{t}_{1,\mathrm{st}}}) + \bar{\mathbf{H}}_{T,x}^{N}(\bar{\Phi}^{N,5}).
\end{align}\normalsize\normalsize
See Remark \ref{remark:ch2Jumps} for definition of $\mathbf{Q}^{N}\d\xi^{N}$. Recall $\bar{\Phi}^{N,2}$ in Definition \ref{definition:ChiTorus}. We have also introduced the following adaptation $\bar{\Phi}^{N,5}$ of $\bar{\Phi}^{N,3}$ in the defining equation for $\bar{\mathbf{Z}}^{N}$ but catered to $\mathbf{Q}^{N}$. Recall $\bar{\grad}^{!} = N\bar{\grad}$ where $\bar{\grad}$ is gradient on the torus $\mathbb{T}_{N}$:
\small\begin{align}
\bar{\Phi}^{N,5}_{T,x} \ &\overset{\bullet}= \ \mathfrak{w}_{T,x}\mathbf{Q}_{T,x}^{N} + {\sum}_{k=-2\mathfrak{m}}^{2\mathfrak{m}} c_{k}\bar{\grad}_{k}^{!}(\mathfrak{w}_{T,x}^{k}\mathbf{Q}_{T,x}^{N}). 
\end{align}\normalsize\normalsize
\end{definition}
We will eventually pick $\beta_{\mathrm{univ}}\in\R_{>0}$ small depending only on universal constants including $\beta_{\mathrm{univ},2}\in\R_{>0}$ from Proposition \ref{prop:KPZNL}. We will then choose $\e_{\mathrm{st}}\in\R_{>0}$ sufficiently small depending only on $\beta_{\mathrm{univ}}\in\R_{>0}$. To be concrete, the reader is welcome to take $\e_{\mathrm{st}} = 99^{-99}\wedge99^{-99}\beta_{\mathrm{univ}}$ and $\beta_{\mathrm{univ}} = 99^{-99}\beta_{\mathrm{univ},2}$ with $\beta_{\mathrm{univ},2}\in\R_{>0}$ from the statement of Proposition \ref{prop:KPZNL}.

We first provide the following pathwise result which relates $\mathbf{Q}^{N}$ to $\bar{\mathbf{Z}}^{N}$ relative to the random time $\mathfrak{t}_{1,\mathrm{st}}$. Roughly speaking, it follows from the observation that $\bar{\mathbf{Z}}^{N}$ and $\mathbf{Q}^{N}$ solve the same defining stochastic linear equation before time $\mathfrak{t}_{1,\mathrm{st}}$.
\begin{lemma}\label{lemma:KPZ121}
 We have $\bar{\mathbf{Z}}^{N}_{\mathfrak{s},x}=\mathbf{Q}_{\mathfrak{s},x}^{N}$ for all $0\leq\mathfrak{s}\leq\mathfrak{t}_{1,\mathrm{st}}$ and $x\in\mathbb{T}_{N}$ with probability 1.
\end{lemma}
Roughly speaking, the result Lemma \ref{lemma:KPZ122} below provides $\|\|_{1;\mathbb{T}_{N}}$-estimates for the difference process $\mathbf{D}^{N}=\mathbf{Y}^{N}-\mathbf{Q}^{N}$. Note $\mathbf{D}^{N}$ solves a linear equation like that defining $\mathbf{Y}^{N}$ but with the extra $\bar{\Phi}^{N,2}$-contribution in the $\mathbf{Q}^{N}$ equation and vanishing initial condition since the initial data of $\mathbf{Y}^{N}$ and $\mathbf{Q}^{N}$ coincide. The proposed bound in the first result below will be shown by standard moment bounds to get pointwise control on $\mathbf{D}^{N}$ and a stochastic continuity argument like with the proof of Proposition \ref{prop:TRGTProp}. In the proof for Lemma \ref{lemma:KPZ122} we rely heavily on the fact that we have cutoff the $\bar{\Phi}^{N,2}$-contribution in $\mathbf{Q}^{N}$ by the upper bound on the RHS of the estimate in Proposition \ref{prop:KPZNL} so we have \emph{deterministic} control on the $\bar{\Phi}^{N,2}$-contribution in $\mathbf{Q}^{N}$. Afterwards, we record a second result Lemma \ref{lemma:KPZ123} that shows the $\mathfrak{t}_{1,\mathrm{st}}$-cutoff in the $\mathbf{Q}^{N}$-equation can be ignored with high-probability.
\begin{lemma}\label{lemma:KPZ122}
 For a $\beta_{\mathrm{univ},2}>0$ universal and any $C>0$, with probability at least $1 - \kappa_{C}N^{-C}$ we have $\|\mathbf{D}^{N}\|_{1;\mathbb{T}_{N}}\lesssim N^{-\beta_{\mathrm{univ},2}}$.
\end{lemma}
\begin{lemma}\label{lemma:KPZ123}
 There exists $\beta_{\mathrm{univ},1}>0$ universal so that with probability at least $1 - \kappa N^{-\beta_{\mathrm{univ},1}}$, we have $\mathfrak{t}_{1,\mathrm{st}} = 1$ provided $\beta_{\mathrm{univ}}>0$ in the definition of $\mathfrak{t}_{1,\mathrm{st}}$ is sufficiently small and $\e_{\mathrm{st}} \in \R_{>0}$ is sufficiently small depending only on our choice of $\beta_{\mathrm{univ}}$.
\end{lemma}
\begin{proof}[Proof of \emph{Lemma \ref{lemma:KPZ12}}]
Lemma \ref{lemma:KPZ122} gives Lemma \ref{lemma:KPZ12} if we replace $\bar{\mathbf{Z}}^{N}$ by $\mathbf{Q}^{N}$ in the statement of Lemma \ref{lemma:KPZ12}. But Lemma \ref{lemma:KPZ123} says that with the required high-probability, such replacement can be removed as $\bar{\mathbf{Z}}^{N}=\mathbf{Q}^{N}$ with high-probability on $[0,1]\times\mathbb{T}_{N}$ by pathwise identification in Lemma \ref{lemma:KPZ121}. This completes the proof of Lemma \ref{lemma:KPZ12} modulo proofs of Lemmas \ref{lemma:KPZ121}, \ref{lemma:KPZ122}, and \ref{lemma:KPZ123}. We defer these to the final subsection as they are on the technical side.
\end{proof}
\subsection{Proof of Proposition \ref{prop:KPZ2}}
Observe the defining stochastic linear equation for $\mathbf{Y}^{N}$ contains two terms which resemble a microscopic SHE and another that contains only weakly vanishing terms. Thus, the stochastic linear equation for $\mathbf{Y}^{N}$ resembles that of the microscopic Cole-Hopf transform within \cite{DT}. We are now in a position to follow the proof of Proposition 2.2 in \cite{DT} and thus the proof of Theorem 1.1 in \cite{DT}. We explain this in more detail below.
\begin{itemize}[leftmargin=*]
\item Tightness of the microscopic Cole-Hopf transform under the appropriate space-time scaling in \cite{DT} is stated in Proposition 1.4 in \cite{DT}. It uses two ingredients. The first is Corollary 3.3 in \cite{DT} and the second is boundedness of a Poisson clock speed. For the former, the list of moment estimates in Corollary 3.3 in \cite{DT} also holds for the $\mathbf{Y}^{N}$-process here because it only needs the defining linear equation (3.2) in \cite{DT} for the microscopic Cole-Hopf transform. The defining linear equation for $\mathbf{Y}^{N}$ has the same structure as that in (3.2) in \cite{DT}. We also use Lemma \ref{lemma:MG} instead of Lemma 3.1 in \cite{DT} here. The second ingredient concerning Poisson clocks speeds also holds in our case; in \cite{DT} the maximal jump-length did not play an important role.
\item To get convergence of the microscopic Cole-Hopf transform to the continuum SHE in \cite{DT}, the approach taken therein is via the martingale problem. To use ideas in \cite{DT}, we require two ingredients. The first is spatial regularity of $\mathbf{Y}^{N}$ on microscopic length-scales to treat the correlations in $\d\xi^{N}$ at different spatial points that come from the non-simple nature of the particle system. For the microscopic Cole-Hopf transform this was done via explicit formulas in terms of the particle system in \cite{DT}, but the regularity from Corollary 3.3 in \cite{DT}, which holds for $\mathbf{Y}^{N}$ as noted in the last bullet point, suffices for this as well. The second ingredient we require is a hydrodynamic limit argument for analysis of weakly vanishing functionals at macroscopic space-time scales; see Lemma 2.5 from \cite{DT}. Our version is Lemma \ref{lemma:KPZ21}.
\item The analysis in \cite{DT} is done on $\R_{\geq0}\times\Z$, while $\mathbf{Y}^{N}$ is defined on $\R_{\geq0}\times\mathbb{T}_{N}$. This does not matter as our heat kernel estimates in Lemma \ref{lemma:HKE} hold for $\bar{\mathbf{H}}^{N}$. We may also follow the proof of Proposition \ref{prop:Ctify} but for $\mathbf{Y}^{N}$ to replace $\mathbf{Y}^{N}$ by its $\R_{\geq0}\times\Z$-version.
\end{itemize}
\begin{lemma}\label{lemma:KPZ21}
 Consider a weakly vanishing functional $\mathfrak{w}:\Omega_{\Z}\to\R$. Provided any smooth test function $\varphi:\R\to\R$ with compact support, for $\mathfrak{u}\in\{1,2\}$ and for any fixed $T \in \R_{\geq0}$ independent of $N\in\Z_{>0}$, we have 
\small\begin{align}
{\lim}_{N\to\infty}\frac{1}{N}\int_{0}^{T}\sum_{y\in\Z}\varphi_{N^{-1}y} \cdot \mathfrak{w}_{S,y} \cdot |\mathbf{Y}_{S,y}^{N}|^{\mathfrak{u}} \ \d S \ = \ 0.
\end{align}\normalsize\normalsize
\end{lemma}
We defer the proof of Lemma \ref{lemma:KPZ21} to the last subsection along with other technical results. We remark that the proof of Lemma \ref{lemma:KPZ21} follows that of Lemma 2.5 in \cite{DT}, as we have spatial regularity of $\mathbf{Y}^{N}$ needed to replace $\mathfrak{w}$ by mesoscopic spatial average and we have the entropy production estimate in Proposition \ref{prop:EProd} needed to run the one-block/two-blocks steps in the proof for Lemma 2.5 in \cite{DT}. There is a caveat. In the proof of Lemma 2.5 in \cite{DT}, density fluctuations are controlled by regularity of the microscopic Cole-Hopf transform. Here it is also given by the \emph{original} microscopic Cole-Hopf transform $\mathbf{Z}^{N}$, not $\mathbf{Y}^{N}$, as only $\mathbf{Z}^{N}$ is defined via the particle system. On the other hand, what we need from $\mathbf{Z}^{N}$ can be estimated via $\mathbf{Y}^{N}$ by Proposition \ref{prop:KPZ1}.
\subsection{Proofs of Technical Estimates}
It will be helpful to have the following a priori moment estimate for $\mathbf{Y}^{N}$. Its proof was described in the subsection concerning the proof of Proposition \ref{prop:KPZ2} in the bullet point about tightness of $\mathbf{Y}^{N}$ so we omit it.
\begin{lemma}\label{lemma:APY}
 Consider any $p \in \R_{\geq 1}$. We have the moment estimate $\sup_{T\in[0,1]}\sup_{x\in\mathbb{T}_{N}} \|\mathbf{Y}_{T,x}^{N}\|_{\omega;2p}\lesssim_{p}1$.
\end{lemma}
\begin{corollary}\label{corollary:APY}
 Consider any $\e,C\in\R_{>0}$. Outside an event with probability at most $\kappa_{\e,C}N^{-C}$, we have $\|\mathbf{Y}^{N}\|_{1;\mathbb{T}_{N}}\lesssim N^{\e}$.
\end{corollary}
\begin{proof}
In SDE-type equation formulation, for times in $[0,1]$ we get $\d\mathbf{Y}^{N} = \mathscr{Q}\mathbf{Y}^{N}\d T$ with jumps of order $N^{-1/2}\|\mathbf{Y}^{N}\|_{1;\mathbb{T}_{N}}$ where $\mathscr{Q}$ satisfies the operator bounds for $\mathscr{Q}$ in the proof of Lemma \ref{lemma:TRGTProp2}. Indeed, the only difference between the two operators is a term that is multiplicative in the solution $\mathbf{Y}^{N}$ or $\bar{\mathbf{Z}}^{N}$ with coefficient $\bar{\Phi}^{N,2}$ of order $N^{1/2}$. Thus, the proof of Lemma \ref{lemma:TRGTProp2} applies to $\mathbf{Y}^{N}$ in place of $\bar{\mathbf{Z}}^{N}$. This controls $\|\|_{1;\mathbb{T}_{N}}$ by $[]_{1;\mathbb{T}_{N}}$ with the required high-probability as with Lemma \ref{lemma:TRGTProp3}: 
\small\begin{align}
\|\mathbf{Y}^{N}\|_{1;\mathbb{T}_{N}} \ &\lesssim \ [\mathbf{Y}^{N}]_{1;\mathbb{T}_{N}}. \label{eq:APY1}
\end{align}\normalsize\normalsize
Indeed, the only way for \eqref{eq:APY1} to fail is $\mathbf{Y}^{N}$ to have large change between times in the discretization $\{\mathfrak{j}N^{-100}\}_{\mathfrak{j}=0}^{N^{100}}$. Lemma \ref{lemma:TRGTProp2} though for $\mathbf{Y}^{N}$ prevents that with high-probability. Thus it is left to show Corollary \ref{corollary:APY} upon replacing $\|\mathbf{Y}^{N}\|_{1;\mathbb{T}_{N}}$ by $[\mathbf{Y}^{N}]_{1;\mathbb{T}_{N}}$. To this end, a union bound allows us to control the probability where $[\mathbf{Y}^{N}]_{1;\mathbb{T}_{N}} \gtrsim N^{\e}$ by the probability for which $\mathbf{Y}^{N}_{\mathfrak{t},x}\gtrsim N^{\e}$, then multiplied by $N^{105}$. The one-point probabilities are at most $\kappa_{\e,C'}N^{-C'}$ for any $C'>0$ by the Chebyshev inequality for high $p$-moments and the moment estimate in Lemma \ref{lemma:APY} as in the proofs of Proposition \ref{prop:Ctify}/Lemma \ref{lemma:C0SIProp}.
\end{proof}
\begin{proof}[Proof of \emph{Lemma \ref{lemma:KPZ11}}]
If $N$ is sufficiently large depending only on $\mathbb{K}\subseteq\R$, note $\mathbb{K}_{N}=N\mathbb{K}\cap\Z\subseteq\wt{\mathbb{T}}_{N}$, where $\wt{\mathbb{T}}_{N}$ is defined in Proposition \ref{prop:Ctify}. Thus, it suffices to get the following comparison with high-probability in which $\beta_{\mathrm{univ}}\in\R_{>0}$ is universal:
\small\begin{align}
\|\mathbf{Z}^{N}-\bar{\mathbf{Z}}^{N}\|_{1;\mathbb{K}_{N}} \ &\lesssim \ [\mathbf{Z}^{N}-\bar{\mathbf{Z}}^{N}]_{1;\mathbb{K}_{N}} \ + \ N^{-\beta_{\mathrm{univ}}}. \label{eq:KPZ111}
\end{align}\normalsize\normalsize
Indeed, Proposition \ref{prop:Ctify} tells us the first term on the RHS of \eqref{eq:KPZ111} is at most $\kappa_{C}N^{-C}$ with high-probability as $\mathbb{K}_{N}\subseteq\wt{\mathbb{T}}_{N}$. Observe the LHS of \eqref{eq:KPZ111} is controlled by the first term on the RHS of \eqref{eq:KPZ111} and behavior of $\mathbf{Z}^{N}$ and $\bar{\mathbf{Z}}^{N}$ between times in $\{\mathfrak{j}N^{-100}\}_{\mathfrak{j}=0}^{N^{100}}$. Precisely, we get a deterministic bound by the triangle inequality; recall time-gradients defined prior to Proposition \ref{prop:TRGTProp}:
\small\begin{align}
\|\mathbf{Z}^{N}-\bar{\mathbf{Z}}^{N}\|_{1;\mathbb{K}_{N}} \ &\lesssim \ [\mathbf{Z}^{N}-\bar{\mathbf{Z}}^{N}]_{1;\mathbb{K}_{N}} \ + \ \sup_{0\leq\mathfrak{t}\leq1}\sup_{0\leq\mathfrak{s}\leq N^{-100}}\sup_{x\in\mathbb{K}_{N}} |\grad_{\mathfrak{s}}^{\mathbf{T}}\mathbf{Z}_{\mathfrak{t},x}^{N}| \ + \ \sup_{0\leq\mathfrak{t}\leq1}\sup_{0\leq\mathfrak{s}\leq N^{-100}}\sup_{x\in\mathbb{K}_{N}} |\grad_{\mathfrak{s}}^{\mathbf{T}}\bar{\mathbf{Z}}_{\mathfrak{t},x}^{N}|. \label{eq:KPZ112}
\end{align}\normalsize\normalsize
To get \eqref{eq:KPZ111} with the required high-probability, we estimate the second and third terms on the RHS of \eqref{eq:KPZ112}.
\begin{itemize}[leftmargin=*]
\item We cite (2.3) in \cite{DT} for the following stochastic equation for the microscopic Cole-Hopf transform $\mathbf{Z}^{N}$ that is a starting point for the proof of the stochastic equation within Proposition \ref{prop:Duhamel} we have been employing until this point. We emphasize that this holds for  $\mathbf{Z}^{N}$ in this paper because the proof of equation (2.3) in \cite{DT} does not depend on the maximal jump-length:
\small\begin{align}
(\mathbf{Z}_{T,x}^{N})^{-1}\d\mathbf{Z}_{T,x}^{N} \ &= \ \mathfrak{f}_{T,x}^{N}\d T \ + \ \d\xi_{T,x}^{N}.
\end{align}\normalsize\normalsize
Above the functional $\mathfrak{f}^{N}$ satisfies $|\mathfrak{f}^{N}|\lesssim N^{2}$ and $\d\xi^{N}$ is the same martingale integrator that is in Proposition \ref{prop:Duhamel}. In words, the dynamic of the original microscopic Cole-Hopf transform $\mathbf{Z}^{N}$ is multiplicative  in $\mathbf{Z}^{N}$ \emph{at the same point} with a continuous part that has speed of order $N^{2}$ and jumps of speed $N^{2}$ that are of order $N^{-1/2}$. This is in contrast to $\bar{\mathbf{Z}}^{N}$, because the only dynamic equation we have for $\bar{\mathbf{Z}}^{N}$ has the discrete-type Laplacian $\bar{\mathscr{L}}^{!!}$ that relates the growth of $\bar{\mathbf{Z}}^{N}$ to its values at neighboring points. We use reasoning in the proof of Lemma \ref{lemma:TRGTProp2} to get the following high-probability short-time $\|\|_{1;\mathbb{K}_{N}}$-bound for any $\e\in\R_{>0}$:
\small\begin{align}
\sup_{0\leq\mathfrak{t}\leq1}\sup_{0\leq\mathfrak{s}\leq N^{-100}}\sup_{x\in\mathbb{K}_{N}} |\grad_{\mathfrak{s}}^{\mathbf{T}}\mathbf{Z}_{\mathfrak{t},x}^{N}| \ &\lesssim \ N^{-\frac12+\e}\|\mathbf{Z}^{N}\|_{1;\mathbb{K}_{N}}. \label{eq:KPZ113}
\end{align}\normalsize\normalsize
\item We now use Lemma \ref{lemma:TRGTProp2} as written to get, also with the required high-probability, a short-time bound now in terms of $\|\|_{1;\mathbb{T}_{N}}$:
\small\begin{align}
\sup_{0\leq\mathfrak{t}\leq1}\sup_{0\leq\mathfrak{s}\leq N^{-100}}\sup_{x\in\mathbb{K}_{N}} |\grad_{\mathfrak{s}}^{\mathbf{T}}\bar{\mathbf{Z}}_{\mathfrak{t},x}^{N}| \ &\lesssim \ N^{-\frac12+\e}\|\bar{\mathbf{Z}}^{N}\|_{1;\mathbb{T}_{N}}. \label{eq:KPZ114}
\end{align}\normalsize\normalsize
Via Lemma \ref{lemma:KPZ12} and Corollary \ref{corollary:APY} and the triangle inequality, we estimate $\bar{\mathbf{Z}}^{N}$ by comparing it to $\mathbf{Y}^{N}$ and then by estimating $\mathbf{Y}^{N}$. Ultimately, with the required high-probability we get the following for some universal constant $\beta_{\mathrm{univ},2}$:
\small\begin{align}
\|\bar{\mathbf{Z}}^{N}\|_{1;\mathbb{T}_{N}} \ \leq \ \|\bar{\mathbf{Z}}^{N}-\mathbf{Y}^{N}\|_{1;\mathbb{T}_{N}} \ + \ \|\mathbf{Y}^{N}\|_{1;\mathbb{T}_{N}} \ &\lesssim \ N^{-\beta_{\mathrm{univ},2}} \ + \ N^{\e}. \label{eq:KPZ115}
\end{align}\normalsize\normalsize
\end{itemize}
We now combine \eqref{eq:KPZ112}, \eqref{eq:KPZ113}, \eqref{eq:KPZ114}, and \eqref{eq:KPZ115} to deduce the following for which we give more explanation after:
\small\begin{align}
\|\mathbf{Z}^{N}-\bar{\mathbf{Z}}^{N}\|_{1;\mathbb{K}_{N}} \ &\lesssim \ [\mathbf{Z}^{N}-\bar{\mathbf{Z}}^{N}]_{1;\mathbb{K}_{N}} \ + \ N^{-\frac12+\e}\|\mathbf{Z}^{N}\|_{1;\mathbb{K}_{N}} \ + \ N^{-\frac12+\e}\|\bar{\mathbf{Z}}^{N}\|_{1;\mathbb{T}_{N}} \\
 &\lesssim \ [\mathbf{Z}^{N}-\bar{\mathbf{Z}}^{N}]_{1;\mathbb{K}_{N}} \ + \ N^{-\frac12+\e}\|\mathbf{Z}^{N}-\bar{\mathbf{Z}}^{N}\|_{1;\mathbb{K}_{N}} \ + \ N^{-\frac12+\e}\|\bar{\mathbf{Z}}^{N}\|_{1;\mathbb{T}_{N}} \\
 &\lesssim \ [\mathbf{Z}^{N}-\bar{\mathbf{Z}}^{N}]_{1;\mathbb{K}_{N}} \ + \ N^{-\frac12+\e}\|\mathbf{Z}^{N}-\bar{\mathbf{Z}}^{N}\|_{1;\mathbb{K}_{N}} \ + \ N^{-\frac12+\e}N^{-\beta_{\mathrm{univ},2}} \ + \ N^{-\frac12+2\e}. \label{eq:KPZ116}
\end{align}\normalsize\normalsize
The first line in the above calculation follows from directly applying \eqref{eq:KPZ113} and \eqref{eq:KPZ114} to \eqref{eq:KPZ112}. The second line follows from the triangle inequality $\|\mathbf{Z}^{N}\|_{1;\mathbb{K}_{N}} \leq \|\mathbf{Z}^{N}-\bar{\mathbf{Z}}^{N}\|_{1;\mathbb{K}_{N}} + \|\bar{\mathbf{Z}}^{N}\|_{1;\mathbb{K}_{N}}$ combined with the bound $\|\bar{\mathbf{Z}}^{N}\|_{1;\mathbb{K}_{N}}\leq\|\bar{\mathbf{Z}}^{N}\|_{1;\mathbb{T}_{N}}$. The last bound \eqref{eq:KPZ116} follows from employing \eqref{eq:KPZ115}. We now move the second term within \eqref{eq:KPZ116} to the LHS of the first line in this previous set of estimates and choose $\e\in\R_{>0}$ small to deduce \eqref{eq:KPZ111} and complete the proof.
\end{proof}
\begin{proof}[Proof of \emph{Lemma \ref{lemma:KPZ121}}]
Observe until time $\mathfrak{t}_{1,\mathrm{st}} \in \R_{\geq0}$, the difference $\Phi = \bar{\mathbf{Z}}^{N}-\mathbf{Q}^{N}$ solves the same equation as $\bar{\mathbf{Z}}^{N}$ upon replacing every factor of $\bar{\mathbf{Z}}^{N}$ with $\Phi$. We emphasize $\Phi$ has vanishing initial condition. Thus, because $\Phi$ solves a linear evolution equation, whose solutions are unique with probability 1, and a vanishing initial condition until $\mathfrak{t}_{1,\mathrm{st}}$, we have $\Phi = 0$ until $\mathfrak{t}_{1,\mathrm{st}}$.
\end{proof}
\begin{proof}[Proof of \emph{Lemma \ref{lemma:KPZ122}}]
The first step we take to control $\mathbf{D}^{N}$ is the following stochastic equation for $\mathbf{D}^{N}$:
\small\begin{align}
\mathbf{D}_{T,x}^{N} \ &= \ \bar{\mathbf{H}}_{T,x}^{N}(\mathbf{D}^{N}\d\xi^{N}) - \bar{\mathbf{H}}_{T,x}^{N}(\bar{\Phi}_{S,y}^{N,2}\mathbf{1}_{S\leq\mathfrak{t}_{1,\mathrm{st}}}) + \bar{\mathbf{H}}_{T,x}^{N}(\bar{\Phi}^{N,6}).
\end{align}\normalsize\normalsize
The last term in the $\mathbf{D}^{N}$-equation is the following relevant weakly-vanishing content from Proposition \ref{prop:Duhamel} but catered to $\mathbf{D}^{N}$:
\small\begin{align}
\bar{\Phi}_{T,x}^{N,6} \ &\overset{\bullet}= \ \mathfrak{w}_{T,x}\mathbf{D}_{T,x}^{N} + {\sum}_{k=-2\mathfrak{m}}^{2\mathfrak{m}} c_{k}\bar{\grad}_{k}^{!}(\mathfrak{w}_{T,x}^{k}\mathbf{D}_{T,x}^{N}).
\end{align}\normalsize\normalsize
This equation for $\mathbf{D}^{N}$ follows by subtracting stochastic equations for $\mathbf{Q}^{N}$ and $\mathbf{Y}^{N}$. We now proceed with the following steps.
\begin{itemize}[leftmargin=*]
\item Observe $\d\mathbf{D}^{N} = \mathscr{Q}\mathbf{D}^{N}\d T + \mathbf{1}_{T\leq\mathfrak{t}_{1,\mathrm{st}}}\bar{\Phi}^{N,2}\d T$ with jumps of order $N^{-1/2}\mathbf{D}^{N}$. Here the operator $\mathscr{Q}$ satisfies the same operator-estimates as that within the proof for Lemma \ref{lemma:TRGTProp2}. Indeed, the operator $\mathscr{Q}$ here is that from the proof of Corollary \ref{corollary:APY}, so the only difference between $\mathscr{Q}$ in here and that in the proof for Lemma \ref{lemma:TRGTProp2} is the lack of a multiplicative term in the solution with pseudo-gradient coefficients of order $N^{1/2}$. In particular, Lemma \ref{lemma:TRGTProp2} holds for $\mathbf{D}^{N}$ as well except now in short time-intervals we also get short-time contribution of $\bar{\Phi}^{N,2}$ for times before $\mathfrak{t}_{1,\mathrm{st}}$. However, before time $\mathfrak{t}_{1,\mathrm{st}}$, we control $\bar{\Phi}^{N,2}$ by $\bar{\mathbf{Z}}^{N}$ times small powers of $N$, which is then controlled via small powers of $N$. Thus, the contribution of $\bar{\Phi}^{N,2}$ before time $\mathfrak{t}_{1,\mathrm{st}}$ for short times is at most $N^{10}$ times the length $N^{-100}$ of these short time-windows. With the required high-probability, as in the proof of \eqref{eq:APY1} we have
\small\begin{align}
\|\mathbf{D}^{N}\|_{1;\mathbb{T}_{N}} \ &\lesssim \ [\mathbf{D}^{N}]_{1;\mathbb{T}_{N}} \ + \ N^{-90}. \label{eq:KPZ1221}
\end{align}\normalsize\normalsize
By \eqref{eq:KPZ1221} it is left to get Lemma \ref{lemma:KPZ122} upon replacing $\|\mathbf{D}^{N}\|_{1;\mathbb{T}_{N}}$ by $[\mathbf{D}^{N}]_{1;\mathbb{T}_{N}}$. As in proofs for Proposition \ref{prop:Ctify}/Corollary \ref{corollary:APY}, it suffices to bound pointwise moments of $\mathbf{D}^{N}$. Precisely, it suffices to show, for any $p\geq1$ and some $\beta>0$ universal, that
\small\begin{align}
{\sup}_{\mathfrak{t}\in[0,1]}{\sup}_{x\in\mathbb{T}_{N}}\|\mathbf{D}_{\mathfrak{t},x}^{N}\|_{\omega;2p} \ &\lesssim_{p} \ N^{-\beta}. \label{eq:KPZ1222}
\end{align}\normalsize\normalsize
\end{itemize}
We turn to the stochastic heat operator equation for $\mathbf{D}^{N}$ at the beginning of this proof. First, this stochastic equation gives
\small\begin{align}
\|\mathbf{D}^{N}_{T,x}\|_{\omega;2p}^{2} \ &\lesssim \ \| \bar{\mathbf{H}}_{T,x}^{N}(\bar{\Phi}_{S,y}^{N,2}\mathbf{1}_{S\leq\mathfrak{t}_{1,\mathrm{st}}})\|_{\omega;2p}^{2} \ + \ \|\bar{\mathbf{H}}_{T,x}^{N}(\mathbf{D}^{N}\d\xi^{N})\|_{\omega;2p}^{2} \ + \ \| \bar{\mathbf{H}}_{T,x}^{N}(\bar{\Phi}^{N,6})\|_{\omega;2p}^{2}. \label{eq:KPZ1223}
\end{align}\normalsize\normalsize
We now estimate the first term on the RHS of \eqref{eq:KPZ1223}. To this end, we apply the deterministic bound in Lemma \ref{lemma:HKEC} to get
\small\begin{align}
\|\bar{\mathbf{H}}_{T,x}^{N}(\bar{\Phi}_{S,y}^{N,2}\mathbf{1}_{S\leq\mathfrak{t}_{1,\mathrm{st}}})\|_{1;\mathbb{T}_{N}} \ \leq \ \|\bar{\mathbf{H}}_{T,x}^{N}(\bar{\Phi}_{S,y}^{N,2})\|_{\mathfrak{t}_{1,\mathrm{st}};\mathbb{T}_{N}} \ \lesssim \ N^{-\beta_{\mathrm{univ}}+2\e_{\mathrm{st}}} \ &\lesssim \ N^{-\frac12\beta_{\mathrm{univ}}}. \label{eq:KPZ1224}
\end{align}\normalsize\normalsize
The second bound in \eqref{eq:KPZ1224} follows by definition of $\mathfrak{t}_{1,\mathrm{st}}$ in Definition \ref{definition:KPZ12}. The third bound in \eqref{eq:KPZ1224} follows by recalling $\e_{\mathrm{st}}$ can be chosen small depending only on $\beta_{\mathrm{univ}}$, for example $\e_{\mathrm{st}}\leq99^{-99}\beta_{\mathrm{univ}}$. Using the deterministic bound \eqref{eq:KPZ1224} with \eqref{eq:KPZ1223},
\small\begin{align}
\|\mathbf{D}^{N}_{T,x}\|_{\omega;2p}^{2} \ &\lesssim \ N^{-\beta_{\mathrm{univ}}} \ + \ \|\bar{\mathbf{H}}_{T,x}^{N}(\mathbf{D}^{N}\d\xi^{N})\|_{\omega;2p}^{2} \ + \ \| \bar{\mathbf{H}}_{T,x}^{N}(\bar{\Phi}^{N,6})\|_{\omega;2p}^{2}. \label{eq:KPZ1226}
\end{align}\normalsize\normalsize
At this point, we follow the proof for (3.12) in Proposition 3.2 in \cite{DT}, as we did in the proof of Lemma \ref{lemma:CtifySFS1}, to get \eqref{eq:KPZ1222} from \eqref{eq:KPZ1226}; we again use the martingale inequality in Lemma \ref{lemma:MG} instead of Lemma 3.1 in \cite{DT}. This completes the proof.
\end{proof}
\begin{proof}[Proof of \emph{Lemma \ref{lemma:KPZ123}}]
We first start with constructing some random times in addition to the previous $\mathfrak{t}_{1,\mathrm{st}}$-random time.
\begin{definition}
Recall $\e_{\mathrm{st}}\in\R_{\geq0}$ from the construction of $\mathfrak{t}_{1,\mathrm{st}}$. Define the random time $\mathfrak{t}_{2,\mathrm{st}} = \mathfrak{t}_{2,\mathrm{st},1}\wedge\mathfrak{t}_{2,\mathrm{st},2}$, where
\begin{subequations}
\small\begin{align}
\mathfrak{t}_{2,\mathrm{st},1} \ &\overset{\bullet}= \ \inf\left\{\mathfrak{t}\in[0,1]: \ \|\mathbf{Y}^{N}\|_{\mathfrak{t};\mathbb{T}_{N}} \geq N^{\e_{\mathrm{st}}} \right\} \wedge 1 \\
\mathfrak{t}_{2,\mathrm{st},2} \ &\overset{\bullet}= \ \inf\left\{\mathfrak{t}\in[0,1]: \ \sup_{0 \leq \mathfrak{s} \leq N^{-100}}\sup_{0\leq\mathfrak{t}_{0}\leq\mathfrak{t}}\sup_{x\in\mathbb{T}_{N}}\left(1+\|\bar{\mathbf{Z}}^{N}\|_{\mathfrak{t}_{0};\mathbb{T}_{N}}^{2}\right)^{-1}|\grad_{\pm\mathfrak{s}}^{\mathbf{T}}\bar{\mathbf{Z}}_{\mathfrak{t}_{0},x}^{N}| \ \gtrsim \ N^{-\frac12+\e_{\mathrm{st}}} \right\} \wedge 1.
\end{align}\normalsize\normalsize
\end{subequations}
We additionally define the random time $\mathfrak{t}_{3,\mathrm{st}} \overset{\bullet}= \inf\{\mathfrak{t}\in[0,1]: \ \|\mathbf{D}^{N}\|_{\mathfrak{t};\mathbb{T}_{N}} \geq N^{\e_{\mathrm{st}}}\} \wedge 1$.
\end{definition}
The time $\mathfrak{t}_{2,\mathrm{st}}$ will both control $\mathbf{Y}^{N}$ and the maximal change in $\bar{\mathbf{Z}}^{N}$ over any short time-interval. The time $\mathfrak{t}_{3,\mathrm{st}}$ will control the difference between $\mathbf{Y}^{N}$ and $\mathbf{Q}^{N}$. Thus the minimum of $\mathfrak{t}_{2,\mathrm{st}}$ and $\mathfrak{t}_{3,\mathrm{st}}$ will control $\mathbf{Q}^{N}$. Moreover, with the pathwise identification in Lemma \ref{lemma:KPZ121}, this will control $\bar{\mathbf{Z}}^{N}$ until time $\mathfrak{t}_{1,\mathrm{st}}$. The short-time continuity estimate within $\mathfrak{t}_{2,\mathrm{st},2}$ will then help us to propagate this control on $\bar{\mathbf{Z}}^{N}$. We first show $\mathfrak{t}_{2,\mathrm{st}}$ and $\mathfrak{t}_{3,\mathrm{st}}$ are equal to 1 with high probability. We also do this for $\mathfrak{t}_{1,\mathrm{st},2}$ in Definition \ref{definition:KPZ12}.
\begin{itemize}[leftmargin=*]
\item For $\mathfrak{t}_{2,\mathrm{st}}$, we first note that Lemma \ref{lemma:TRGTProp2} guarantees $\mathbf{P}[\mathfrak{t}_{2,\mathrm{st},2}\neq1]\lesssim_{\e_{\mathrm{st}},C}N^{-C}$. Moreover, Corollary \ref{corollary:APY} guarantees the same is true for $\mathfrak{t}_{2,\mathrm{st},1}$. Thus, because the event $\mathfrak{t}_{2,\mathrm{st}}\neq1$ implies either $\mathfrak{t}_{2,\mathrm{st},1}\neq1$ or $\mathfrak{t}_{2,\mathrm{st},2}\neq1$ by definition, we get
\small\begin{align}
\mathbf{P}[\mathfrak{t}_{2,\mathrm{st}}\neq1] \ \leq \ \mathbf{P}[\mathfrak{t}_{2,\mathrm{st},1}\neq1] + \mathbf{P}[\mathfrak{t}_{2,\mathrm{st},2}\neq1] \ \lesssim_{\e_{\mathrm{st}},C} \ N^{-C}. \label{eq:KPZ1231}
\end{align}\normalsize\normalsize
\item Consequence of Lemma \ref{lemma:KPZ122}, we gain very high-probability control of $\|\mathbf{D}^{N}\|_{1;\mathbb{T}_{N}}$, and thus
\small\begin{align}
\mathbf{P}[\mathfrak{t}_{3,\mathrm{st}}\neq1] \ &\lesssim_{\e_{\mathrm{st}},C} \ N^{-C}. \label{eq:KPZ1232}
\end{align}\normalsize\normalsize
\item We use Proposition \ref{prop:KPZNL} with the random time $\mathfrak{t}_{\mathrm{st}} = \mathfrak{t}_{1,\mathrm{st},2}$. As $\mathfrak{t}_{1,\mathrm{st},2}\leq1$, if $\mathfrak{t}_{1,\mathrm{st},2}\neq1$ then $\mathfrak{t}_{1,\mathrm{st},2}<1$. If $\beta_{\mathrm{univ}}\in\R_{>0}$ in definition of $\mathfrak{t}_{1,\mathrm{st},2}$ is sufficiently small but still universal depending only on $\beta_{\mathrm{univ},2}\in\R_{>0}$ in Proposition \ref{prop:KPZNL} and $\mathfrak{t}_{1,\mathrm{st},2}<1$, then \eqref{eq:KPZNL} in Proposition \ref{prop:KPZNL} fails for $\mathfrak{t}_{\mathrm{st}}=\mathfrak{t}_{1,\mathrm{st},2}$, which happens with probability at most order $N^{-\beta_{\mathrm{univ},1}}$:
\small\begin{align}
\mathbf{P}[\mathfrak{t}_{1,\mathrm{st},2}\neq1] \ &\lesssim \ N^{-\beta_{\mathrm{univ},1}}. \label{eq:KPZ12321}
\end{align}\normalsize\normalsize
\end{itemize}
Thus, to prove Lemma \ref{lemma:KPZ123}, it suffices to show $\mathfrak{t}_{1,\mathrm{st},1} = 1$ conditioning on the event on which $\mathfrak{t}_{1,\mathrm{st},2},\mathfrak{t}_{2,\mathrm{st}},\mathfrak{t}_{3,\mathrm{st}}=1$. Precisely, because $\mathfrak{t}_{1,\mathrm{st}} = \mathfrak{t}_{1,\mathrm{st},1}\wedge\mathfrak{t}_{1,\mathrm{st},2}$, we get a union bound inequality that says if $\mathfrak{t}_{1,\mathrm{st}}\neq1$ then $\mathfrak{t}_{1,\mathrm{st},2}\neq1$ or if $\mathfrak{t}_{1,\mathrm{st},2}=1$ then $\mathfrak{t}_{1,\mathrm{st},1}\neq1$:
\small\begin{align}
\mathbf{P}[\mathfrak{t}_{1,\mathrm{st}}\neq1] \ &\leq \ \mathbf{P}[\mathfrak{t}_{1,\mathrm{st},2}\neq1] \ + \ \mathbf{P}[\mathfrak{t}_{1,\mathrm{st},2}=1, \mathfrak{t}_{1,\mathrm{st},1}\neq1]. \label{eq:KPZ1233}
\end{align}\normalsize\normalsize
We decompose the second probability on the RHS of \eqref{eq:KPZ1233} according to a similar observation for $\mathfrak{t}_{2,\mathrm{st}},\mathfrak{t}_{3,\mathrm{st}}$:
\small\begin{align}
\mathbf{P}[\mathfrak{t}_{1,\mathrm{st},2}=1, \mathfrak{t}_{1,\mathrm{st},1}\neq1] \ &\leq \ \mathbf{P}[\mathfrak{t}_{2,\mathrm{st}}\neq1] \ + \ \mathbf{P}[\mathfrak{t}_{3,\mathrm{st}}\neq1] \ + \ \mathbf{P}[\mathfrak{t}_{1,\mathrm{st},2},\mathfrak{t}_{2,\mathrm{st}},\mathfrak{t}_{3,\mathrm{st}}=1, \mathfrak{t}_{1,\mathrm{st},1}\neq1]. \label{eq:KPZ1234}
\end{align}\normalsize\normalsize
We combine the union bound inequalities \eqref{eq:KPZ1233} and \eqref{eq:KPZ1234} with \eqref{eq:KPZ1231}, \eqref{eq:KPZ1232}, and \eqref{eq:KPZ12321} to get
\small\begin{align}
\mathbf{P}[\mathfrak{t}_{1,\mathrm{st}}\neq1]  \ &\lesssim \ N^{-\beta_{\mathrm{univ},1}} \ + \ \mathbf{P}[\mathfrak{t}_{1,\mathrm{st},2},\mathfrak{t}_{2,\mathrm{st}},\mathfrak{t}_{3,\mathrm{st}}=1, \mathfrak{t}_{1,\mathrm{st},1}\neq1]. \label{eq:KPZ1235}
\end{align}\normalsize\normalsize
Thus, to prove Lemma \ref{lemma:KPZ123}/estimate the LHS of \eqref{eq:KPZ1235} above, it suffices to show the last probability is equal to 0 at least when $N$ is sufficiently large depending only on $\e_{\mathrm{st}},\beta_{\mathrm{univ},1}\in\R_{>0}$. We do this with a continuity argument that is classical in PDEs. 

Suppose $\mathfrak{t}_{1,\mathrm{st},1}\neq1$ while $\mathfrak{t}_{1,\mathrm{st},2},\mathfrak{t}_{2,\mathrm{st}},\mathfrak{t}_{3,\mathrm{st}}=1$. In this case $\mathfrak{t}_{1,\mathrm{st},1}<1$ since $\mathfrak{t}_{1,\mathrm{st},1}\leq1$ with probability 1. Thus, we can pick $0<\mathfrak{t}_{N}\leq N^{-100}$ such that $\mathfrak{t}_{1,\mathrm{st},1}+\mathfrak{t}_{N}\leq1$. We will show that $\|\bar{\mathbf{Z}}^{N}\|_{\mathfrak{t}_{1,\mathrm{st},1}+\mathfrak{t}_{N};\mathbb{T}_{N}} \lesssim N^{\e_{\mathrm{st}}}$ with universal implied constant with probability 1. If $N\in\Z_{>0}$ is sufficiently large depending only on $\e_{\mathrm{st}}>0$, then we get $\|\bar{\mathbf{Z}}^{N}\|_{\mathfrak{t}_{1,\mathrm{st},1}+\mathfrak{t}_{N};\mathbb{T}_{N}} \leq N^{3\e_{\mathrm{st}}/2}$. This would contradict the definition of $\mathfrak{t}_{1,\mathrm{st},1}$ conditioning on $\mathfrak{t}_{1,\mathrm{st},1}\neq1$; we could push $\mathfrak{t}_{1,\mathrm{st},1}$ past whatever we condition it to be if $\mathfrak{t}_{1,\mathrm{st},1}\neq1$ and $\mathfrak{t}_{1,\mathrm{st},2},\mathfrak{t}_{2,\mathrm{st}},\mathfrak{t}_{3,\mathrm{st}}=1$. This would show $\mathfrak{t}_{1,\mathrm{st},1}>\mathfrak{t}_{1,\mathrm{st},1}$, so $\mathfrak{t}_{1,\mathrm{st},1}\neq1$ and $\mathfrak{t}_{1,\mathrm{st},2},\mathfrak{t}_{2,\mathrm{st}},\mathfrak{t}_{3,\mathrm{st}}=1$ is impossible.

Because $\mathfrak{t}_{2,\mathrm{st}}=1$, from the definitions of $\mathfrak{t}_{2,\mathrm{st}} = 1$ and $\mathfrak{t}_{2,\mathrm{st},2}$ we get $\mathfrak{t}_{2,\mathrm{st},2}=1$ so $\bar{\mathbf{Z}}^{N}$ cannot change very much in short time, so given any $\mathfrak{s}\in[0,\mathfrak{t}_{N}]$ we establish the following continuity estimate. We recall $\mathfrak{t}_{N}\leq N^{-100}$ and $\e_{\mathrm{st}}\in\R_{>0}$ is arbitrarily small but universal. For example, we may pick $\e_{\mathrm{st}} = 99^{-99}\wedge99^{-99}\beta_{\mathrm{univ}}$ where $\beta_{\mathrm{univ}}>0$ is the universal exponent defining $\mathfrak{t}_{1,\mathrm{st},2}$:
\small\begin{align}
|\bar{\mathbf{Z}}^{N}_{\mathfrak{t}_{1,\mathrm{st},1}+\mathfrak{s},x}| \ &\lesssim \ |\bar{\mathbf{Z}}^{N}_{\mathfrak{t}_{1,\mathrm{st},1},x}| + N^{-\frac12+\e_{\mathrm{st}}} + N^{-\frac12+\e_{\mathrm{st}}}\|\bar{\mathbf{Z}}^{N}\|_{\mathfrak{t}_{1,\mathrm{st},1};\mathbb{T}_{N}}^{2} \nonumber \\
&\lesssim \ \|\bar{\mathbf{Z}}^{N}\|_{\mathfrak{t}_{1,\mathrm{st},1};\mathbb{T}_{N}}+N^{-\frac12+\e_{\mathrm{st}}} + N^{-\frac12+\e_{\mathrm{st}}}\|\bar{\mathbf{Z}}^{N}\|_{\mathfrak{t}_{1,\mathrm{st},1};\mathbb{T}_{N}}^{2}. \label{eq:KPZ1236}
\end{align}\normalsize\normalsize
The last bound getting the far RHS of \eqref{eq:KPZ1236} follows by trivially bounding the first term in the previous bound by its space-time supremum over $[0,\mathfrak{t}_{1,\mathrm{st},1}]\times\mathbb{T}_{N}$. We now observe that because $\mathfrak{t}_{1,\mathrm{st}} = \mathfrak{t}_{1,\mathrm{st},1}\wedge\mathfrak{t}_{1,\mathrm{st},2}$ while $\mathfrak{t}_{1,\mathrm{st},1}<1$ and $\mathfrak{t}_{1,\mathrm{st},2}=1$ by assumption on the event we are working on, we have $\mathfrak{t}_{1,\mathrm{st}}=\mathfrak{t}_{1,\mathrm{st},1}$. Thus the pathwise identification in Lemma \ref{lemma:KPZ121} gives
\small\begin{align}
\|\bar{\mathbf{Z}}^{N}\|_{\mathfrak{t}_{1,\mathrm{st},1};\mathbb{T}_{N}} \ = \ \|\mathbf{Q}^{N}\|_{\mathfrak{t}_{1,\mathrm{st},1};\mathbb{T}_{N}} \ \leq \ \|\mathbf{D}^{N}\|_{\mathfrak{t}_{1,\mathrm{st},1};\mathbb{T}_{N}} \ + \ \|\mathbf{Y}^{N}\|_{\mathfrak{t}_{1,\mathrm{st},1};\mathbb{T}_{N}} \ &\lesssim \ N^{\e_{\mathrm{st}}}. \label{eq:KPZ1237}
\end{align}\normalsize\normalsize
The first identity follows from Lemma \ref{lemma:KPZ121} and $\mathfrak{t}_{1,\mathrm{st}}=\mathfrak{t}_{1,\mathrm{st},1}$ as we noted earlier. The second statement follows from definition of $\mathbf{D}^{N} = \mathbf{Y}^{N}-\mathbf{Q}^{N}$ and the triangle inequality. The final estimate follows by the assumption that $\mathfrak{t}_{2,\mathrm{st}},\mathfrak{t}_{3,\mathrm{st}}=1$ which gives a priori control on $\mathbf{D}^{N}$ and $\mathbf{Y}^{N}$ in the $\|\|_{1;\mathbb{T}_{N}}$-norm; recall $\mathfrak{t}_{1,\mathrm{st},1}\leq1$. We now combine \eqref{eq:KPZ1236} and \eqref{eq:KPZ1237} to deduce
\small\begin{align}
|\bar{\mathbf{Z}}^{N}_{\mathfrak{t}_{1,\mathrm{st},1}+\mathfrak{s},x}| \ &\lesssim \ N^{\e_{\mathrm{st}}} + N^{-1/2+3\e_{\mathrm{st}}} \ \lesssim \ N^{\e_{\mathrm{st}}}. \label{eq:KPZ1238}
\end{align}\normalsize\normalsize
This final estimate \eqref{eq:KPZ1238} is deterministic and the implied constant is universal. Moreover, \eqref{eq:KPZ1236} and \eqref{eq:KPZ1237} are uniform over $\mathfrak{s}\in[0,\mathfrak{t}_{N}]$ and $x\in\mathbb{T}_{N}$, and thus so is the estimate \eqref{eq:KPZ1238}. Let us now observe that $\|\bar{\mathbf{Z}}^{N}\|_{\mathfrak{t}_{1,\mathrm{st},1}+\mathfrak{t}_{N};\mathbb{T}_{N}}$ is controlled by $\|\bar{\mathbf{Z}}^{N}\|_{\mathfrak{t}_{1,\mathrm{st},1};\mathbb{T}_{N}}$ plus the supremum over $\mathfrak{s}\in[0,\mathfrak{t}_{N}]$ and $x\in\mathbb{T}_{N}$ of the LHS of \eqref{eq:KPZ1238}, so $\|\bar{\mathbf{Z}}^{N}\|_{\mathfrak{t}_{1,\mathrm{st},1}+\mathfrak{t}_{N};\mathbb{T}_{N}} \lesssim N^{\e_{\mathrm{st}}}$ with universal implied constant; see \eqref{eq:KPZ1237} for control on $\|\bar{\mathbf{Z}}^{N}\|_{\mathfrak{t}_{1,\mathrm{st},1};\mathbb{T}_{N}}$. As noted before \eqref{eq:KPZ1236}, this completes the proof.
\end{proof}
\begin{proof}[Proof of \emph{Lemma \ref{lemma:KPZ21}}]
Like with the proof of Lemma 2.5 in \cite{DT}, we have the entropy production in Proposition \ref{prop:EProd} and spatial regularity of $\mathbf{Y}^{N}$, so the one-block, two-blocks scheme implies that it suffices to prove the following expectation estimate which is pointwise on compact space-time sets; the following LHS/term we bound resembles $\wt{U}_{13}$ in the proof of Lemma 2.5 in \cite{DT}:
\small\begin{align}
\sup_{\mathfrak{u}=1,2}\sup_{0\leq S\leq1}\sup_{|y| \lesssim_{\varphi} N}\E |\mathfrak{A}_{S,y,\delta}^{N}| |\mathbf{Y}_{S,y}^{N}|^{\mathfrak{u}} \ \overset{\bullet}= \ \sup_{\mathfrak{u}=1,2}\sup_{0\leq S\leq1}\sup_{|y| \lesssim_{\varphi} N}\E |\left(\wt{\sum}_{\mathfrak{j}=1,\ldots,\delta N^{1/2}}\eta_{S,y+\mathfrak{j}}\right)| |\mathbf{Y}_{S,y}^{N}|^{\mathfrak{u}} \ \lesssim_{\delta,\varphi} \ N^{-\beta_{\mathrm{univ}}}. \label{eq:KPZ1WV1}
\end{align}\normalsize\normalsize
All estimates in this argument will be allowed to depend on $\delta$ and $\varphi$. For $\sigma \in \R_{>0}$ arbitrarily small but universal, as $|\mathfrak{A}^{N}| \lesssim 1$, we consider the following decomposition which is similar to the $D_{1}$ and $D_{2}$ events from the proof of Lemma 2.5 in \cite{DT} but with refinements to address the relevance of both of $\mathbf{Z}^{N}$ and $\mathbf{Y}^{N}$ as well as refinements in the cutoff-exponents for technical reasons; see the final estimate \eqref{eq:KPZ1WV8} of this proof for why we will need to choose the cutoff-exponent $\sigma$ appearing below carefully:
\small\begin{align}
|\mathfrak{A}_{S,y,\delta}^{N}| |\mathbf{Y}_{S,y}^{N}|^{\mathfrak{u}} \ &\leq \ |\mathfrak{A}_{S,y,\delta}^{N}| |\mathbf{Y}_{S,y}^{N}|^{\mathfrak{u}} \mathbf{1}_{|\mathbf{Z}_{S,y}^{N}|\leq N^{-\sigma}}\mathbf{1}_{|\mathbf{Y}_{S,y}^{N}|\leq N^{-\sigma/2}} + |\mathfrak{A}_{S,y,\delta}^{N}| |\mathbf{Y}_{S,y}^{N}|^{\mathfrak{u}} \mathbf{1}_{|\mathbf{Z}_{S,y}^{N}|\leq N^{-\sigma}}\mathbf{1}_{|\mathbf{Y}_{S,y}^{N}|\geq N^{-\sigma/2}} + |\mathfrak{A}_{S,y,\delta}^{N}| |\mathbf{Y}_{S,y}^{N}|^{\mathfrak{u}} \mathbf{1}_{|\mathbf{Z}_{S,y}^{N}|\geq N^{-\sigma}} \nonumber \\
&\lesssim \ N^{-\frac12\sigma} + |\mathfrak{A}_{S,y,\delta}^{N}| |\mathbf{Y}_{S,y}^{N}|^{\mathfrak{u}} \mathbf{1}_{|\mathbf{Z}_{S,y}^{N}|\leq N^{-\sigma}}\mathbf{1}_{|\mathbf{Y}_{S,y}^{N}|\geq N^{-\sigma/2}} + |\mathfrak{A}_{S,y,\delta}^{N}| |\mathbf{Y}_{S,y}^{N}|^{\mathfrak{u}} \mathbf{1}_{|\mathbf{Z}_{S,y}^{N}|\geq N^{-\sigma}}.\label{eq:KPZ1WV!}
\end{align}\normalsize\normalsize
We first analyze the second term on the RHS of \eqref{eq:KPZ1WV!}. To this end, we first observe the following for $N \in \Z_{>0}$ sufficiently large:
\small\begin{align}
|\mathfrak{A}_{S,y,\delta}^{N}| |\mathbf{Y}_{S,y}^{N}|^{\mathfrak{u}} \mathbf{1}_{|\mathbf{Z}_{S,y}^{N}|\leq N^{-\sigma}}\mathbf{1}_{|\mathbf{Y}_{S,y}^{N}|\geq N^{-\sigma/2}} \ &\leq \ |\mathfrak{A}_{S,y,\delta}^{N}| |\mathbf{Y}_{S,y}^{N}|^{\mathfrak{u}} \mathbf{1}_{|\mathbf{Z}_{S,y}^{N}|\leq N^{-\sigma}}\mathbf{1}_{|\mathbf{Y}_{S,y}^{N}-\mathbf{Z}_{S,y}^{N}|\gtrsim N^{-\sigma/2}}. \label{eq:KPZ1WV?}
\end{align}\normalsize\normalsize
Indeed, if $|\mathbf{Z}^{N}| \leq N^{-\sigma}$ and $|\mathbf{Y}^{N}| \geq N^{-\sigma/2}$, then the difference, in absolute value, is $\gtrsim N^{-\sigma/2}$. As $|\mathfrak{A}^{N}|\lesssim1$, we get
\small\begin{align}
\E |\mathfrak{A}_{S,y,\delta}^{N}| |\mathbf{Y}_{S,y}^{N}|^{\mathfrak{u}} \mathbf{1}_{|\mathbf{Z}_{S,y}^{N}|\leq N^{-\sigma}}\mathbf{1}_{|\mathbf{Y}_{S,y}^{N}-\mathbf{Z}_{S,y}^{N}|\gtrsim N^{-\sigma/2}} \ &\lesssim \ \left( \E|\mathbf{Y}_{S,y}^{N}|^{2\mathfrak{u}} \right)^{1/2} \left( \mathbf{P}\left(|\mathbf{Y}_{S,y}^{N}-\mathbf{Z}_{S,y}^{N}| \gtrsim N^{-\frac12\sigma} \right)\right)^{1/2}. \label{eq:KPZ1WV??}
\end{align}\normalsize\normalsize
Lemma \ref{lemma:APY} shows the $\mathbf{Y}^{N}$-factor from the RHS of \eqref{eq:KPZ1WV??} is uniformly bounded. Proposition \ref{prop:KPZ1} applied using $\mathbb{K}$ any compact neighborhood of the support of our test function $\varphi$ controls the probability in the second factor on the RHS of \eqref{eq:KPZ1WV??}. Thus we get the following if $\sigma \in \R_{>0}$ is sufficiently small but still universal and for $\beta_{\mathrm{univ},1} \in \R_{>0}$ universal and $|y|\lesssim_{\varphi}N$:
\small\begin{align}
\left( \E|\mathbf{Y}_{S,y}^{N}|^{2\mathfrak{u}} \right)^{1/2} \left( \mathbf{P}\left(|\mathbf{Y}_{S,y}^{N}-\mathbf{Z}_{S,y}^{N}| \gtrsim N^{-\frac12\sigma} \right)\right)^{1/2} \ \lesssim \ \left( \mathbf{P}\left(|\mathbf{Y}_{S,y}^{N}-\mathbf{Z}_{S,y}^{N}| \gtrsim N^{-\frac12\sigma}\right)\right)^{1/2} \ &\lesssim_{\varphi} \ N^{-\frac12\beta_{\mathrm{univ},1}}. \label{eq:KPZ1WV???}
\end{align}\normalsize\normalsize
The implied constant in the final inequality of \eqref{eq:KPZ1WV???} depends on $\varphi$ through its support because the estimate in Proposition \ref{prop:KPZ1} depends on the compact set $\mathbb{K}\subseteq\R$ taken therein. Combining \eqref{eq:KPZ1WV?}, \eqref{eq:KPZ1WV??}, and \eqref{eq:KPZ1WV???} gives
\small\begin{align}
\E |\mathfrak{A}_{S,y,\delta}^{N}| |\mathbf{Y}_{S,y}^{N}|^{\mathfrak{u}} \mathbf{1}_{|\mathbf{Z}_{S,y}^{N}|\leq N^{-\sigma}}\mathbf{1}_{|\mathbf{Y}_{S,y}^{N}|\geq N^{-\sigma/2}} \ &\lesssim \ N^{-\frac12\beta_{\mathrm{univ},1}}. \label{eq:KPZ1WV????}
\end{align}\normalsize\normalsize
We now analyze the last term on the RHS of \eqref{eq:KPZ1WV!} in expectation. For $\beta \in \R_{>0}$ small but to be determined, we decompose
\begin{align}
|\mathfrak{A}_{S,y,\delta}^{N}| |\mathbf{Y}_{S,y}^{N}|^{\mathfrak{u}} \mathbf{1}_{|\mathbf{Z}_{S,y}^{N}|\geq N^{-\sigma}} \ &\leq \ |\mathfrak{A}_{S,y,\delta}^{N}| |\mathbf{Y}_{S,y}^{N}|^{\mathfrak{u}} \mathbf{1}_{|\mathbf{Z}_{S,y}^{N}|\geq N^{-\sigma}}\mathbf{1}_{|\mathbf{Y}_{S,\star}^{N}-\mathbf{Z}_{S,\star}^{N}| \geq N^{-\beta}} \label{eq:KPZ1WV!!} \\
&\quad + |\mathfrak{A}_{S,y,\delta}^{N}| |\mathbf{Y}_{S,y}^{N}|^{\mathfrak{u}} \mathbf{1}_{|\mathbf{Z}_{S,y}^{N}|\geq N^{-\sigma}}\mathbf{1}_{|\mathbf{Y}_{S,\star}^{N}-\mathbf{Z}_{S,\star}^{N}| \leq N^{-\beta}}. \nonumber
\end{align}\normalsize
In the newly introduced event $\star \in \{y,y+\delta N^{1/2}\}$ is allowed to take two values, so this event is defined by two constraints. Like the first term on the RHS of \eqref{eq:KPZ1WV!!} and the estimate \eqref{eq:KPZ1WV??}, for the first term on the RHS of \eqref{eq:KPZ1WV!!} we employ Lemma \ref{lemma:APY} and Proposition \ref{prop:KPZ1} with $\mathbb{K}$ any compact neighborhood of the support of $\varphi$, and we choose $\beta \in \R_{>0}$ sufficiently small to get
\small\begin{align}
\E|\mathfrak{A}_{S,y,\delta}^{N}| |\mathbf{Y}_{S,y}^{N}|^{\mathfrak{u}} \mathbf{1}_{|\mathbf{Z}_{S,y}^{N}|\geq N^{-\sigma}}\mathbf{1}_{|\mathbf{Y}_{S,\star}^{N}-\mathbf{Z}_{S,\star}^{N}| \geq N^{-\beta}} \ &\lesssim \ \E|\mathbf{Y}_{S,y}^{N}|^{\mathfrak{u}} \mathbf{1}_{|\mathbf{Y}_{S,\star}^{N}-\mathbf{Z}_{S,\star}^{N}| \geq N^{-\beta}} \\
&\lesssim \ \left( \E|\mathbf{Y}_{S,y}^{N}|^{2\mathfrak{u}} \right)^{1/2} \left( \mathbf{P}\left(|\mathbf{Y}_{S,\star}^{N}-\mathbf{Z}_{S,\star}^{N}| \geq N^{-\beta}\right)\right)^{1/2} \ \lesssim_{\varphi} \ N^{-\frac12\beta_{\mathrm{univ},1}}. \label{eq:KPZ1WV!!?}
\end{align}\normalsize\normalsize
The last bound in \eqref{eq:KPZ1WV!!?} follows by observing $\star\in N\mathbb{K}$ from which we obtain high-probability comparison via Proposition \ref{prop:KPZ1}.

We now address second term on the RHS of \eqref{eq:KPZ1WV!!}. By definition of $\mathbf{h}^{N}$ and $\mathbf{Z}^{N}$, as in the proof of Lemma 2.5 in \cite{DT} we get
\small\begin{align}
|\mathfrak{A}_{S,y,\delta}^{N}| \ &\lesssim \ \delta^{-1} |\log(1 + (\mathbf{Z}_{S,y}^{N})^{-1}\grad_{\delta N^{1/2}}\mathbf{Z}_{S,y}^{N})|. \label{eq:KPZ1WV2}
\end{align}\normalsize\normalsize
At this point, we differ slightly from the proof of Lemma 2.5 in \cite{DT}. In particular, we observe that the term $(\mathbf{Z}_{S,y}^{N})^{-1}\grad_{\delta N^{1/2}}\mathbf{Z}_{S,y}^{N}$ is at most the power series for the exponential evaluated at something that is at most $\delta$ and without the constant term. This is a consequence of Taylor expansion and definition of $\mathbf{Z}^{N}$. More precisely, we have the deterministic inequality
\small\begin{align}
|(\mathbf{Z}_{S,y}^{N})^{-1}\grad_{\delta N^{1/2}}\mathbf{Z}_{S,y}^{N}| \ \lesssim \ {\sum}_{\mathfrak{j}=1}^{\infty} |\lambda|^{\mathfrak{j}}(\mathfrak{j}!)^{-1}|\delta|^{\mathfrak{j}} \ &\lesssim_{\lambda} |\delta|. \label{eq:KPZ1WV3}
\end{align}\normalsize\normalsize
The second estimate \eqref{eq:KPZ1WV3} follows from choosing $\delta \in \R_{>0}$ sufficiently small but depending only on $\lambda \in \R$. We now note, like in the proof of Lemma 2.5 in \cite{DT}, that $|\log(1+x)| \lesssim_{\delta} |x|$ for $|x| \lesssim \delta$. Thus, combining this with \eqref{eq:KPZ1WV2} and \eqref{eq:KPZ1WV3}, we have
\small\begin{align}
|\mathfrak{A}_{S,y,\delta}^{N}| |\mathbf{Y}_{S,y}^{N}|^{\mathfrak{u}} \mathbf{1}_{|\mathbf{Z}_{S,y}^{N}|\geq N^{-\sigma}}\mathbf{1}_{|\mathbf{Y}_{S,\star}^{N}-\mathbf{Z}_{S,\star}^{N}| \leq N^{-\beta}} \ &\lesssim_{\lambda} \ \delta^{-1} |(\mathbf{Z}_{S,y}^{N})^{-1}\grad_{\delta N^{1/2}}\mathbf{Z}_{S,y}^{N}||\mathbf{Y}_{S,y}^{N}|^{\mathfrak{u}}\mathbf{1}_{|\mathbf{Z}_{S,y}^{N}|\geq N^{-\sigma}}\mathbf{1}_{|\mathbf{Y}_{S,\star}^{N}-\mathbf{Z}_{S,\star}^{N}| \leq N^{-\beta}} \\
&\lesssim \ \delta^{-1}N^{\sigma}|\grad_{\delta N^{1/2}}\mathbf{Z}_{S,y}^{N}||\mathbf{Y}_{S,y}^{N}|^{\mathfrak{u}}\mathbf{1}_{|\mathbf{Z}_{S,y}^{N}|\geq N^{-\sigma}}\mathbf{1}_{|\mathbf{Y}_{S,\star}^{N}-\mathbf{Z}_{S,\star}^{N}| \leq N^{-\beta}}. \label{eq:KPZ1WV4}
\end{align}\normalsize\normalsize
On the event corresponding to the second indicator function in \eqref{eq:KPZ1WV4}, the gradient in question of $\mathbf{Z}^{N}$ in \eqref{eq:KPZ1WV4} can be bounded in terms of the same gradient but of $\mathbf{Y}^{N}$ up to an admissible error. More precisely, we have
\small\begin{align}
|\grad_{\delta N^{1/2}}\mathbf{Z}_{S,y}^{N}|\mathbf{1}_{|\mathbf{Z}_{S,y}^{N}|\geq N^{-\sigma}}\mathbf{1}_{|\mathbf{Y}_{S,\star}^{N}-\mathbf{Z}_{S,\star}^{N}| \leq N^{-\beta}} \ &\lesssim \ |\grad_{\delta N^{1/2}}\mathbf{Y}_{S,y}^{N}|\mathbf{1}_{|\mathbf{Y}_{S,\star}^{N}-\mathbf{Z}_{S,\star}^{N}| \leq N^{-\beta}} \ + \ N^{-\beta}. \label{eq:KPZ1WV5}
\end{align}\normalsize\normalsize
This step is the precise implementation of transferring relevant $\mathbf{Z}^{N}$-data in the proof of Lemma \ref{lemma:KPZ21} into $\mathbf{Y}^{N}$-estimates that we discussed in the proof of Proposition \ref{prop:KPZ2}. By \eqref{eq:KPZ1WV4}, \eqref{eq:KPZ1WV5}, the Cauchy-Schwarz inequality, and Lemma \ref{lemma:APY}, we get
\small\begin{align}
\E|\mathfrak{A}_{S,y,\delta}^{N}| |\mathbf{Y}_{S,y}^{N}|^{\mathfrak{u}} \mathbf{1}_{|\mathbf{Z}_{S,y}^{N}|\geq N^{-\sigma}}\mathbf{1}_{|\mathbf{Y}_{S,\star}^{N}-\mathbf{Z}_{S,\star}^{N}| \leq N^{-\beta}} \ &\lesssim \ \delta^{-1}N^{\sigma}\E|\grad_{\delta N^{1/2}}\mathbf{Z}_{S,y}^{N}||\mathbf{Y}_{S,y}^{N}|^{\mathfrak{u}}\mathbf{1}_{|\mathbf{Z}_{S,y}^{N}|\geq N^{-\sigma}}\mathbf{1}_{|\mathbf{Y}_{S,\star}^{N}-\mathbf{Z}_{S,\star}^{N}| \leq N^{-\beta}} \nonumber\\
&\lesssim \ \delta^{-1}N^{\sigma}\E|\grad_{\delta N^{1/2}}\mathbf{Y}_{S,y}^{N}||\mathbf{Y}_{S,y}^{N}|^{\mathfrak{u}}\mathbf{1}_{|\mathbf{Y}_{S,\star}^{N}-\mathbf{Z}_{S,\star}^{N}| \leq N^{-\beta}} \ + \ \delta^{-1}N^{\sigma}N^{-\beta}\E|\mathbf{Y}_{S,y}^{N}|^{\mathfrak{u}} \nonumber\\
&\lesssim \ \delta^{-1}N^{\sigma}(\E|\grad_{\delta N^{1/2}}\mathbf{Y}_{S,y}^{N}|^{2}\mathbf{1}_{|\mathbf{Y}_{S,\star}^{N}-\mathbf{Z}_{S,\star}^{N}| \leq N^{-\beta}})^{\frac12}(\E|\mathbf{Y}_{S,y}^{N}|^{2\mathfrak{u}})^{\frac12} \ + \ \delta^{-1}N^{\sigma-\beta}\nonumber \\
&\lesssim \ \delta^{-1}N^{\sigma}(\E|\grad_{\delta N^{1/2}}\mathbf{Y}_{S,y}^{N}|^{2})^{\frac12} \ + \ \delta^{-1}N^{\sigma-\beta}. \label{eq:KPZ1WV6}
\end{align}\normalsize\normalsize
To estimate the first term, provided Lemma \ref{lemma:APY} note that the spatial-regularity estimate (3.13) from Proposition 3.2 of \cite{DT} still holds for $\mathbf{Y}^{N}$, again because the estimate therein for the $Z$-field therein comes just from its mild form (3.2) in \cite{DT}, which is sufficiently identical to the defining equation for $\mathbf{Y}^{N}$. We used this spatial regularity of $\mathbf{Y}^{N}$ in qualitative fashion earlier at the beginning to reduce proof of Lemma \ref{lemma:KPZ21} into proof of \eqref{eq:KPZ1WV1}. Ultimately we get the following Holder bound for any $\vartheta < 1$:
\small\begin{align}
\delta^{-1}N^{\sigma}(\E|\grad_{\delta N^{1/2}}\mathbf{Y}_{S,y}^{N}|^{2})^{\frac12} \ &\lesssim_{\vartheta} \ \delta^{-1}N^{\sigma} \cdot \delta^{\frac12\vartheta}N^{\frac14\vartheta} \cdot N^{-\frac12\vartheta}. \label{eq:KPZ1WV7}
\end{align}\normalsize\normalsize
We combine \eqref{eq:KPZ1WV6} and \eqref{eq:KPZ1WV7} to deduce
\small\begin{align}
\E|\mathfrak{A}_{S,y,\delta}^{N}| |\mathbf{Y}_{S,y}^{N}|^{\mathfrak{u}} \mathbf{1}_{|\mathbf{Z}_{S,y}^{N}|\geq N^{-\sigma}}\mathbf{1}_{|\mathbf{Y}_{S,\star}^{N}-\mathbf{Z}_{S,\star}^{N}| \leq N^{-\beta}} \ &\lesssim_{\vartheta} \ \delta^{-1+\frac12\vartheta}N^{\sigma-\frac14\vartheta} + \delta^{-1}N^{\sigma-\beta}. \label{eq:KPZ1WV8}
\end{align}\normalsize\normalsize
The estimate \eqref{eq:KPZ1WV1} follows from combining \eqref{eq:KPZ1WV!}, \eqref{eq:KPZ1WV????}, \eqref{eq:KPZ1WV!!}, \eqref{eq:KPZ1WV!!?}, and \eqref{eq:KPZ1WV8} with the last of these bounds employed having chosen $\sigma \in \R_{>0}$ sufficiently small depending on $\beta \in \R_{>0}$ and $\vartheta < 1$. This completes the proof.
\end{proof}
\begin{proof}[Proof of \emph{Lemma \ref{lemma:KPZ3}}]
We begin with the following general properties of $\mathbf{D}_{1}$ in \cite{Bil}.
\begin{itemize}[leftmargin=*]
\item The set $\mathbf{D}_{1}$ is metrizable with metric bounded by the uniform norm; in the definition (12.13) on page 124 in \cite{Bil} but replacing $\R$-valued paths with $\mathbf{C}(\mathbb{K})$-valued paths, we can choose the identity function in the infimum and recover the uniform metric.
\end{itemize}
Thus $\mathbf{X}^{N,1}-\mathbf{X}^{N,2}$ converges to the 0 process in probability. As $\mathbf{X}^{N,1} = \mathbf{X}^{N,2} + (\mathbf{X}^{N,1}-\mathbf{X}^{N,2})$ and $\mathbf{X}^{N,2}$ converges in law to $\mathbf{X}^{\infty,2}$ while $\mathbf{X}^{N,1}-\mathbf{X}^{N,2}$ converges \emph{in probability} to 0, standard probability on separable metric spaces finishes the proof.
\end{proof}
%
%
%
\appendix
\section{Heat Operator Estimates}
We collect heat kernel estimates for $\mathbf{H}^{N}$ and $\bar{\mathbf{H}}^{N}$ based on Proposition A.1/Corollary A.2 from \cite{DT}. The following estimates are effectively the heat kernel estimates that are used in \cite{DT} as well as versions for the compactified heat operator $\bar{\mathbf{H}}^{N}$ though with a few elementary additional ingredients.
\begin{lemma}\label{lemma:HKE}
 Recall \emph{Definition \ref{definition:Exps}} for the entire statement of this lemma. Provided any $\kappa\in\R_{\geq0}$, uniformly bounded $k\in\Z$, and uniformly bounded $S,T\in\R_{\geq0}$ with $S\leq T$, we have the following pointwise off-diagonal estimates for $\mathbf{H}^{N}$ and $\bar{\mathbf{H}}^{N}$:
\small\begin{align}
{\sup}_{x,y\in\Z}\left(e_{S,T,x,y}^{N,\kappa}\mathbf{H}_{S,T,x,y}^{N}\right) \ + \ {\sup}_{x,y\in\mathbb{T}_{N}}\left(e_{S,T,x,y}^{N,\kappa,\mathrm{cpt}}\bar{\mathbf{H}}_{S,T,x,y}^{N}\right) \ &\lesssim_{\kappa} \ N^{-1}\mathfrak{s}_{S,T}^{-\frac12} \wedge 1 \label{eq:HKENash} \\
{\sup}_{x,y\in\Z}\left(e_{S,T,x,y}^{N,\kappa}|\grad_{k}^{!}\mathbf{H}_{S,T,x,y}^{N}|\right) \ + \ {\sup}_{x,y\in\mathbb{T}_{N}}\left(e_{S,T,x,y}^{N,\kappa,\mathrm{cpt}}|\bar{\grad}_{k}^{!}\bar{\mathbf{H}}_{S,T,x,y}^{N}|\right) \ &\lesssim_{\kappa} \ |k| \left( N^{-1}\mathfrak{s}_{S,T}^{-1} \wedge N\right). \label{eq:HKEXR}
\end{align}\normalsize\normalsize
Provided $\varphi:\R_{\geq0}\times\Z\to\R$ and $\bar{\varphi}:\R_{\geq0}\times\mathbb{T}_{N}\to\R$, any subset $I\subseteq\R_{\geq0}$ with measure $|I|$, any uniformly bounded $\mathfrak{t}_{\mathrm{st}}\in\R_{\geq0}$, any uniformly bounded $\mathfrak{t}_{\mathrm{reg}}\in\R_{\leq0}$, any $\delta\in\R_{>0}$, any $k\in\Z$ uniformly bounded, and any $\mathfrak{l}\in\Z$, we have
\small\begin{align}
\|\varphi\|_{\mathfrak{t}_{\mathrm{st}};\Z}^{-1}\|\mathbf{H}^{N}(\varphi_{S,y}\mathbf{1}_{S\in I})\|_{\mathfrak{t}_{\mathrm{st}};\Z} \ + \ \|\bar{\varphi}\|_{\mathfrak{t}_{\mathrm{st}};\mathbb{T}_{N}}^{-1}\|\bar{\mathbf{H}}^{N}(\bar{\varphi}_{S,y}\mathbf{1}_{S\in I})\|_{\mathfrak{t}_{\mathrm{st}};\mathbb{T}_{N}} \ &\lesssim \ |I|\wedge\mathfrak{t}_{\mathrm{st}} \label{eq:HKEConvolution} \\
\|\varphi\|_{\mathfrak{t}_{\mathrm{st}};\Z}^{-1}\|\mathbf{1}_{T\geq|\mathfrak{t}_{\mathrm{reg}}|}\mathfrak{s}_{|\mathfrak{t}_{\mathrm{reg}}|,T}^{1-\delta}\grad_{\mathfrak{t}_{\mathrm{reg}}}^{\mathbf{T}}\mathbf{H}_{T,x}^{N,\mathbf{X}}(\varphi)\|_{\mathfrak{t}_{\mathrm{st}};\Z} \ + \ \|\bar{\varphi}\|_{\mathfrak{t}_{\mathrm{st}};\mathbb{T}_{N}}^{-1}\|\mathbf{1}_{T\geq|\mathfrak{t}_{\mathrm{reg}}|}\mathfrak{s}_{|\mathfrak{t}_{\mathrm{reg}}|,T}^{1-\delta}\grad_{\mathfrak{t}_{\mathrm{reg}}}^{\mathbf{T}}\bar{\mathbf{H}}_{T,x}^{N,\mathbf{X}}(\bar{\varphi})\|_{\mathfrak{t}_{\mathrm{st}};\mathbb{T}_{N}} \ &\lesssim_{\delta} \ N^{2\delta}|\mathfrak{t}_{\mathrm{reg}}|\label{eq:HKETR2} \\
\|\varphi\|_{\mathfrak{t}_{\mathrm{st}};\Z}^{-1}\|\mathbf{1}_{T\geq|\mathfrak{t}_{\mathrm{reg}}|}\grad_{\mathfrak{t}_{\mathrm{reg}}}^{\mathbf{T}}\mathbf{H}^{N}_{T,x}(\varphi)\|_{\mathfrak{t}_{\mathrm{st}};\Z} \ + \ \|\bar{\varphi}\|_{\mathfrak{t}_{\mathrm{st}};\mathbb{T}_{N}}^{-1}\|\mathbf{1}_{T\geq|\mathfrak{t}_{\mathrm{reg}}|}\grad_{\mathfrak{t}_{\mathrm{reg}}}^{\mathbf{T}}\bar{\mathbf{H}}^{N}_{T,x}(\bar{\varphi})\|_{\mathfrak{t}_{\mathrm{st}};\mathbb{T}_{N}} \ &\lesssim_{\delta} \ N^{2\delta}|\mathfrak{t}_{\mathrm{reg}}| \label{eq:HKETR1} \\
\|\varphi\|_{\mathfrak{t}_{\mathrm{st}};\Z}^{-1}\|\mathbf{1}_{T\geq|\mathfrak{t}_{\mathrm{reg}}|}\grad_{\mathfrak{t}_{\mathrm{reg}}}^{\mathbf{T}}\mathbf{H}^{N}_{T,x}(\grad_{k}^{!}\varphi)\|_{\mathfrak{t}_{\mathrm{st}};\Z} \ + \ \|\bar{\varphi}\|_{\mathfrak{t}_{\mathrm{st}};\mathbb{T}_{N}}^{-1}\|\mathbf{1}_{T\geq|\mathfrak{t}_{\mathrm{reg}}|}\grad_{\mathfrak{t}_{\mathrm{reg}}}^{\mathbf{T}}\bar{\mathbf{H}}^{N}_{T,x}(\bar{\grad}_{k}^{!}\bar{\varphi})\|_{\mathfrak{t}_{\mathrm{st}};\mathbb{T}_{N}} \ &\lesssim_{\delta} \ |\mathfrak{t}_{\mathrm{reg}}|^{\frac14-\delta} \label{eq:HKETRXR1} \\
\|\varphi\|_{\mathfrak{t}_{\mathrm{st}};\Z}^{-1}\|\mathbf{H}^{N}(\grad_{\mathfrak{l}}^{!}\varphi)\|_{\mathfrak{t}_{\mathrm{st}};\Z} \ + \ \|\bar{\varphi}\|_{\mathfrak{t}_{\mathrm{st}};\mathbb{T}_{N}}^{-1}\|\bar{\mathbf{H}}^{N}(\bar{\grad}_{\mathfrak{l}}^{!}\bar{\varphi})\|_{\mathfrak{t}_{\mathrm{st}};\mathbb{T}_{N}} \ &\lesssim \ |\mathfrak{l}|. \label{eq:HKEKPZNL}
\end{align}\normalsize\normalsize
\end{lemma}
\begin{proof}
We first discuss $\mathbf{H}^{N}$-estimates. The $\mathbf{H}^{N}$ estimates in \eqref{eq:HKENash} and \eqref{eq:HKEXR} follow from Proposition A.1 in \cite{DT}. The $\mathbf{H}^{N}$-bound in \eqref{eq:HKEConvolution} follows by the fact that the spatial heat operator $\mathbf{H}^{N,\mathbf{X}}$ is a contraction with respect to the supremum-norm on functions on $\Z$ and $\mathbf{H}^{N}$ integrates $\mathbf{H}^{N,\mathbf{X}}$-operators on $I\cap[0,\mathfrak{t}_{\mathrm{st}}]$ on the LHS of \eqref{eq:HKEConvolution}. The other $\mathbf{H}^{N}$-bounds start with the observation that by definition of $\mathbf{H}^{N}$, we have a fundamental-theorem-of-calculus estimate where $\delta\in\R_{>0}$ is arbitrarily small and $S+|\mathfrak{t}_{\mathrm{reg}}|\leq T$. Below, $\grad^{\mathbf{T}}$ on the far LHS below acts on the heat kernel via its forwards time-variable and $\|\varphi\|=\|\varphi\|_{T;\Z}$:
\small\begin{align}
\sum_{y\in\Z}|\grad_{\mathfrak{t}_{\mathrm{reg}}}^{\mathbf{T}}\mathbf{H}_{S,T,x,y}^{N}| |\varphi_{S,y}| \ \lesssim \ \|\varphi\|\int_{\mathfrak{t}_{\mathrm{reg}}}^{0}\sum_{y\in\Z}|\mathscr{L}^{!!}\mathbf{H}_{S,T+\mathfrak{r},x,y}^{N}| \d\mathfrak{r} \ \lesssim_{\mathfrak{m}} \ N^{2\delta}\|\varphi\|\int_{\mathfrak{t}_{\mathrm{reg}}}^{0}\mathfrak{s}_{S,T+\mathfrak{r}}^{-1+\delta}\d\mathfrak{r} \ \lesssim \ N^{2\delta}|\mathfrak{t}_{\mathrm{reg}}|\mathfrak{s}_{S+|\mathfrak{t}_{\mathrm{reg}}|,T}^{-1+\delta}\|\varphi\|. \label{eq:HKE1}
\end{align}\normalsize\normalsize
The second estimate above follows by (A.29) in Corollary A.2 in \cite{DT} for $v = 1-2\delta$, and the last estimate follows by observing the integrand is decreasing in $\mathfrak{r}\in[\mathfrak{t}_{\mathrm{reg}},0]$; the heat kernel here is scaled by $N^{2}$ in time but the heat kernel in \cite{DT} is not. 
\begin{itemize}[leftmargin=*]
\item As $T\leq\mathfrak{t}_{\mathrm{st}}$, the norm $\|\varphi\|$ in \eqref{eq:HKE1} can be extended to $\|\varphi\|_{\mathfrak{t}_{\mathrm{st}};\Z}$. We get from this and \eqref{eq:HKE1} for $S=0$ the $\mathbf{H}^{N}$-bound in \eqref{eq:HKETR2}.
\item To obtain the $\mathbf{H}^{N}$ estimate in \eqref{eq:HKETR1}, we decompose the space-time heat operator $\grad^{\mathbf{T}}\mathbf{H}^{N}$ into a time-integral of the $1$-norm on $\Z$ of the time-gradient of the heat kernel, which we denote by $\Phi_{1}$, and a short-time integral of the spatial heat operator, which we denote by $\Phi_{2}$. This was the decomposition used to get $I_{31}$ and $I_{32}$ in the proof of (3.14) in Proposition 3.2 in \cite{DT}. For the heat-kernel-time-gradient term $\Phi_{1}$, we integrate \eqref{eq:HKE1} in $S\in[0,T-|\mathfrak{t}_{\mathrm{reg}}|]$. For the short-time integral $\Phi_{2}$, we employ \eqref{eq:HKEConvolution} for $I=T+[\mathfrak{t}_{\mathrm{reg}},0]$. Again we refer to $I_{31}$ and $I_{32}$ terms and their estimates in the proof of (3.14) in Proposition 3.2 in \cite{DT}.
\item The proof of the $\mathbf{H}^{N}$-estimate in \eqref{eq:HKETRXR1} follows via the proof of the estimates for $I_{41}$ and $I_{42}$ in the proof of (3.14) in Proposition 3.2 in \cite{DT}. Roughly speaking, we move the gradient from $\varphi$ onto the heat kernel and employ space-time regularity estimates for the heat kernel given in (A.10) from Proposition A.1 in \cite{DT} and (A.27) from Corollary A.2 in \cite{DT}.
\item The proof of the $\mathbf{H}^{N}$ estimate in \eqref{eq:HKEKPZNL} follows by summation-by-parts to move the gradient from $\varphi$ to the heat kernel, pulling out the $\varphi$-factor in the resulting spatial gradient of the $\mathbf{H}^{N}$ heat operator via its $\|\|_{\mathfrak{t}_{\mathrm{st}};\Z}$-norm, applying (A.27) in Corollary A.2 in \cite{DT} with $v=1$  to control the $1$-norm/sum of the absolute values of the spatial gradient of the heat kernel, and integrating the resulting integrable singularity in time. The summation-by-parts is the same giving (3.2) in \cite{DT}, and the estimate noted in this last sentence is the same that gives the $I_{4}$ estimate after (3.23) in the proof of (3.12) in Proposition 3.2 in \cite{DT}.
\end{itemize}
The proof of the proposed $\bar{\mathbf{H}}^{N}$-bounds follows by identical considerations; see Remark \ref{remark:ch2HKE}. This completes the proof.
\end{proof}
We now collect a time-regularity estimate for $\bar{\mathbf{H}}^{N}$ which requires an additional integration-by-parts-type ingredient, though it is still an integrated time-smoothness estimate for the heat operator like \eqref{eq:HKETR1} and requires only a little more gymnastics.
\begin{lemma}\label{lemma:HKEB}
For any possibly random $\bar{\varphi}:\R_{\geq0}\times\mathbb{T}_{N}\to\R$, any $\mathfrak{t}_{\mathrm{st}},\mathfrak{t}_{\mathrm{reg}}\in\R_{\geq0}$ uniformly bounded such that $\mathfrak{t}_{\mathrm{st}}$ is possibly random and with probability 1 we have $\mathfrak{t}_{\mathrm{st}}\leq1$, and any $\delta\in\R_{>0}$, we have the following forwards-time-gradient estimate:
\small\begin{align}
\|\bar{\varphi}\|_{\mathfrak{t}_{\mathrm{st}}+\mathfrak{t}_{\mathrm{reg}};\mathbb{T}_{N}}^{-1}\|\bar{\mathbf{H}}_{T,x}^{N}(\grad_{\mathfrak{t}_{\mathrm{reg}}}^{\mathbf{T}}\bar{\varphi})\|_{\mathfrak{t}_{\mathrm{st}};\mathbb{T}_{N}} \ \lesssim_{\delta} \ N^{2\delta}|\mathfrak{t}_{\mathrm{reg}}|.
\end{align}\normalsize\normalsize
\end{lemma}
\begin{proof}
Similar to \eqref{eq:LeibnizKPZNL} we have the following time-discrete Leibniz rule in which $\grad^{\mathbf{T}}$ acts only on $S$-variables:
\small\begin{align}
\bar{\mathbf{H}}_{S,T,x,y}^{N}\mathbf{1}_{S\in[0,T]}\grad^{\mathbf{T}}_{\mathfrak{t}_{\mathrm{reg}}}\bar{\varphi}_{S,y} \ &= \ \grad_{\mathfrak{t}_{\mathrm{reg}}}^{\mathbf{T}}\left(\bar{\mathbf{H}}_{S-\mathfrak{t}_{\mathrm{reg}},T,x,y}^{N}\mathbf{1}_{S-\mathfrak{t}_{\mathrm{reg}}\in[0,T]}\bar{\varphi}_{S,y}\right) + \bar{\varphi}_{S,y}\grad_{-\mathfrak{t}_{\mathrm{reg}}}^{\mathbf{T}}\left(\bar{\mathbf{H}}_{S,T,x,y}^{N}\mathbf{1}_{S\in[0,T]}\right). \label{eq:HKEB1}
\end{align}\normalsize\normalsize
The identity \eqref{eq:HKEB1} is \eqref{eq:LeibnizKPZNL} after replacing the $\bar{\mathbf{Z}}^{N}$-process with the product of the $\bar{\mathbf{H}}^{N}$-heat kernel and the indicator function. We proceed with expansion of the time-gradient in the second term on the RHS of \eqref{eq:HKEB1} which also resembles a time-discrete Leibniz rule similar to \eqref{eq:HKEB1}. The following can be checked by an elementary calculation; again $\grad^{\mathbf{T}}$ acts only on $S$-variables:
\small\begin{align}
\bar{\varphi}_{S,y}\grad_{-\mathfrak{t}_{\mathrm{reg}}}^{\mathbf{T}}\left(\bar{\mathbf{H}}_{S,T,x,y}^{N}\mathbf{1}_{S\in[0,T]}\right) \ &= \ \bar{\varphi}_{S,y}\left(\grad_{-\mathfrak{t}_{\mathrm{reg}}}^{\mathbf{T}}\bar{\mathbf{H}}_{S,T,x,y}^{N}\right)\mathbf{1}_{S\in[0,T]} + \bar{\varphi}_{S,y}\bar{\mathbf{H}}_{S-\mathfrak{t}_{\mathrm{reg}},T,x,y}^{N}\left(\grad_{-\mathfrak{t}_{\mathrm{reg}}}^{\mathbf{T}}\mathbf{1}_{S\in[0,T]}\right) \ \overset{\bullet}= \ \Phi_{1}+\Phi_{2}. \label{eq:HKEB2}
\end{align}\normalsize\normalsize
We will now sum each term on the LHS/RHS of \eqref{eq:HKEB1} over $y\in\mathbb{T}_{N}$ and additionally integrate over $S\in\R$.
\begin{itemize}[leftmargin=*]
\item Summation over $y\in\mathbb{T}_{N}$ and integration over $S\in\R$ of the LHS of \eqref{eq:HKEB1} gives the heat operator $\bar{\mathbf{H}}^{N}_{T,x}(\grad^{\mathbf{T}}_{\mathfrak{t}_{\mathrm{reg}}}\bar{\varphi})$ by definition.
\item Integrating the first term on the RHS of \eqref{eq:HKEB1} over $S\in\R$ yields 0 because the integral of the time-discrete gradient of any compactly supported function vanishes; note that the first term on the RHS of \eqref{eq:HKEB1} is compactly supported because of the indicator function. This general observation about integrating time-discrete gradients is analogous to the fact that summing a compactly supported discrete gradient on $\Z$ gives 0, or that integrating a compactly supported derivative on $\R$ gives 0.
\item We now treat the second term on the RHS of \eqref{eq:HKEB1} by \eqref{eq:HKEB2}. We may let the $\grad^{\mathbf{T}}$-operator in $\Phi_{1}$ act instead on the forward time-variable of the heat kernel if we change $-\mathfrak{t}_{\mathrm{reg}}\to\mathfrak{t}_{\mathrm{reg}}$, since the heat kernel is time-homogeneous. Elementary calculation lets us \emph{then} replace $\mathfrak{t}_{\mathrm{reg}}\to-\mathfrak{t}_{\mathrm{reg}}$ if we replace $T\to T+\mathfrak{t}_{\mathrm{reg}}$ and introduce a factor of $-1$. With these changes,
\small\begin{align}
\|\left(\int_{\R}\sum_{y\in\mathbb{T}_{N}}\left(\grad_{-\mathfrak{t}_{\mathrm{reg}}}^{\mathbf{T}}\bar{\mathbf{H}}_{S,T+\mathfrak{t}_{\mathrm{reg}},x,y}^{N}\right)\mathbf{1}_{S\in[0,T]}\bar{\varphi}_{S,y} \d S\right)\|_{\mathfrak{t}_{\mathrm{st}};\mathbb{T}_{N}} \ \leq \ \|\bar{\varphi}\|_{\mathfrak{t}_{\mathrm{st}}+\mathfrak{t}_{\mathrm{reg}};\mathbb{T}_{N}}\|\left(\int_{0}^{T}\sum_{y\in\mathbb{T}_{N}}|\grad_{-\mathfrak{t}_{\mathrm{reg}}}^{\mathbf{T}}\bar{\mathbf{H}}_{S,T+\mathfrak{t}_{\mathrm{reg}},x,y}^{N}| \ \d S \right)\|_{\mathfrak{t}_{\mathrm{st}};\mathbb{T}_{N}}. \label{eq:HKEB3}
\end{align}\normalsize\normalsize
Note the $\mathfrak{t}_{\mathrm{reg}}$-parameter in \eqref{eq:HKE1} is non-positive while $\mathfrak{t}_{\mathrm{reg}}$ here is non-negative. So we may use \eqref{eq:HKE1}, with $\mathfrak{t}_{\mathrm{reg}}$ there equal to $-\mathfrak{t}_{\mathrm{reg}}$ here and $T$ therein equal to $T+\mathfrak{t}_{\mathrm{reg}}$ here, to the RHS of \eqref{eq:HKEB3}. Recalling the term in the integration/summation on the LHS of \eqref{eq:HKEB3} is $\Phi_{1}$ from \eqref{eq:HKEB2} by definition, after applying \eqref{eq:HKE1} as just described to \eqref{eq:HKEB3} we get, for any $\delta\in\R_{>0}$,
\small\begin{align}
\|\left(\int_{\R}{\sum}_{y\in\mathbb{T}_{N}}\Phi_{1}\d S\right)\|_{\mathfrak{t}_{\mathrm{st}};\mathbb{T}_{N}} \ &\lesssim \ N^{2\delta}|\mathfrak{t}_{\mathrm{reg}}|\|\bar{\varphi}\|_{\mathfrak{t}_{\mathrm{st}}+\mathfrak{t}_{\mathrm{reg}};\mathbb{T}_{N}}\|\left(\int_{0}^{T}\mathfrak{s}_{S+|\mathfrak{t}_{\mathrm{reg}}|,T+|\mathfrak{t}_{\mathrm{reg}}|}^{-1+\delta}\d S\right)\|_{\mathfrak{t}_{\mathrm{st}};\mathbb{T}_{N}} \ \lesssim_{\delta} \ N^{2\delta}|\mathfrak{t}_{\mathrm{reg}}|\|\bar{\varphi}\|_{\mathfrak{t}_{\mathrm{st}}+\mathfrak{t}_{\mathrm{reg}};\mathbb{T}_{N}}. \label{eq:HKEB4}
\end{align}\normalsize\normalsize
The last estimate in \eqref{eq:HKEB4} follows by integrating the integral in the middle term in \eqref{eq:HKEB4} to get a space-time uniform bound.
\item Lastly for the $\Phi_{2}$-term in \eqref{eq:HKEB2}, the time-gradient therein is controlled by the indicator function of two intervals of length $\mathfrak{t}_{\mathrm{reg}}$. Thus $\Phi_{2}$ gives a space-time heat operator acting on $\bar{\varphi}$ times the indicator function $\mathbf{1}_{S\in I}$ with $|I|\lesssim\mathfrak{t}_{\mathrm{reg}}$. Technically, when we integrate $\Phi_{2}$ in $S\in\R$ we may employ change-of-variables $S-\mathfrak{t}_{\mathrm{reg}}\to S$ to remove the cosmetic time-shift $-\mathfrak{t}_{\mathrm{reg}}$ in the heat kernel in $\Phi_{2}$. Thus we employ \eqref{eq:HKEConvolution} to deduce the following estimate where the $\|\bar{\varphi}\|_{\mathfrak{t}_{\mathrm{st}}+\mathfrak{t}_{\mathrm{reg}};\mathbb{T}_{N}}$-factor comes from the fact that $\Phi_{2}$ and every other ``heat-term" in this proof only sees the test function $\bar{\varphi}$ until the time $\mathfrak{t}_{\mathrm{st}}+\mathfrak{t}_{\mathrm{reg}}$:
\small\begin{align}
\|\left(\int_{\R}{\sum}_{y\in\mathbb{T}_{N}}\Phi_{2}\d S\right)\|_{\mathfrak{t}_{\mathrm{st}};\mathbb{T}_{N}} \ &\lesssim \ \|\bar{\varphi}\|_{\mathfrak{t}_{\mathrm{st}}+\mathfrak{t}_{\mathrm{reg}};\mathbb{T}_{N}}|I| \ \lesssim \ \|\bar{\varphi}\|_{\mathfrak{t}_{\mathrm{st}}+\mathfrak{t}_{\mathrm{reg}};\mathbb{T}_{N}}|\mathfrak{t}_{\mathrm{reg}}|.
\end{align}\normalsize\normalsize
\end{itemize}
The proof of Lemma \ref{lemma:HKEB} follows immediately upon combining the previous bullet points with \eqref{eq:HKEB1} and \eqref{eq:HKEB2}.
\end{proof}
\begin{lemma}\label{lemma:HKEC}
Consider any $\bar{\varphi}:\R_{\geq0}\times\mathbb{T}_{N}\to\R$ and possibly random time $\mathfrak{t}_{1,\mathrm{st}}\geq0$ satisfying $\mathfrak{t}_{1,\mathrm{st}}\leq1$ with probability 1. We emphasize $\bar{\varphi}$ may be random. We have the following deterministic estimate for a space-time heat operator with time-cutoff:
\small\begin{align}
\|\bar{\mathbf{H}}_{T,x}^{N}(\bar{\varphi}_{S,y}\mathbf{1}_{S\leq\mathfrak{t}_{1,\mathrm{st}}})\|_{1;\mathbb{T}_{N}} \ &\leq \ \|\bar{\mathbf{H}}_{T,x}^{N}(\bar{\varphi}_{S,y})\|_{\mathfrak{t}_{1,\mathrm{st}};\mathbb{T}_{N}}.
\end{align}\normalsize\normalsize
\end{lemma}
\begin{proof}
We consider the following two cases distinguished by $\mathfrak{t}_{1,\mathrm{st}}$. In what follows we always assume $T\in\R_{\geq0}$ satisfies $T\leq1$.
\begin{itemize}[leftmargin=*]
\item Take any $T\leq\mathfrak{t}_{1,\mathrm{st}}$. We have the following two bounds which certainly suffice for this case. The first bound is straightforward and the second bound follows from the cutoff $\mathbf{1}_{S\leq\mathfrak{t}_{1,\mathrm{st}}}$ being already built into the heat operator $\bar{\mathbf{H}}^{N}$ for $T\leq\mathfrak{t}_{1,\mathrm{st}}$:
\small\begin{align}
|\bar{\mathbf{H}}_{T,x}^{N}(\bar{\varphi}_{S,y}\mathbf{1}_{S\leq\mathfrak{t}_{1,\mathrm{st}}})| \ \leq \ \|\bar{\mathbf{H}}_{T,x}^{N}(\bar{\varphi}_{S,y}\mathbf{1}_{S\leq\mathfrak{t}_{1,\mathrm{st}}})\|_{\mathfrak{t}_{1,\mathrm{st}};\mathbb{T}_{N}} \ = \ \|\bar{\mathbf{H}}_{T,x}^{N}(\bar{\varphi}_{S,y})\|_{\mathfrak{t}_{1,\mathrm{st}};\mathbb{T}_{N}}.
\end{align}\normalsize\normalsize
\item Take $T\geq\mathfrak{t}_{1,\mathrm{st}}$. By definition of the space-time heat operator and by the Chapman-Kolmogorov equation for the heat kernel, 
\small\begin{align}
\bar{\mathbf{H}}_{T,x}^{N}(\bar{\varphi}_{S,y}\mathbf{1}_{S\leq\mathfrak{t}_{1,\mathrm{st}}}) \ = \ \int_{0}^{\mathfrak{t}_{1,\mathrm{st}}}{\sum}_{y\in\mathbb{T}_{N}}\bar{\mathbf{H}}_{S,T,x,y}^{N}\bar{\varphi}_{S,y} \ \d S \ &= \ {\sum}_{w\in\mathbb{T}_{N}}\bar{\mathbf{H}}_{\mathfrak{t}_{1,\mathrm{st}},T,x,w}^{N}\left(\int_{0}^{\mathfrak{t}_{1,\mathrm{st}}}{\sum}_{y\in\mathbb{T}_{N}}\bar{\mathbf{H}}_{S,\mathfrak{t}_{1,\mathrm{st}},w,y}^{N}\bar{\varphi}_{S,y} \ \d S\right).\label{eq:HKEC1}
\end{align}\normalsize\normalsize
We observe the parenthetical term on the RHS of \eqref{eq:HKEC1} is a space-time heat operator acting on $\bar{\varphi}$ evaluated at time $\mathfrak{t}_{1,\mathrm{st}}$. Thus the far RHS of \eqref{eq:HKEC1} is the \emph{spatial} heat operator acting on this time-$\mathfrak{t}_{1,\mathrm{st}}$ space-time heat operator. Because the spatial heat operator is a contraction with respect to the supremum-norm on functions on $\mathbb{T}_{N}$, we get from \eqref{eq:HKEC1} the bound
\small\begin{align}
\|\mathbf{1}_{T\geq\mathfrak{t}_{1,\mathrm{st}}}\bar{\mathbf{H}}_{T,x}^{N}(\bar{\varphi}_{S,y}\mathbf{1}_{S\leq\mathfrak{t}_{1,\mathrm{st}}})\|_{1;\mathbb{T}_{N}} \ &\leq \ \|\bar{\mathbf{H}}_{T,x}^{N}(\bar{\varphi}_{S,y})\|_{\mathfrak{t}_{1,\mathrm{st}};\mathbb{T}_{N}}.
\end{align}\normalsize\normalsize
\end{itemize}
Combining the previous two bullet points gives the required bound for all $T\leq1$ so we are done.
\end{proof}
%
%
%
\section{Martingale Inequalities}
We give here a generalization of the martingale estimate in Lemma 3.1 in \cite{DT}; the proof of the result in \cite{DT} is specific to the microscopic Cole-Hopf transform while for this paper we will need it for other processes as well. Roughly speaking, Lemma 3.1 in \cite{DT} requires a short-time bound for the microscopic Cole-Hopf transform that holds at the level of $\|\|_{\omega;2p}$-norms; results like Lemma \ref{lemma:TRGTProp2}, which holds for processes we are interested in, are for very a different norm and thus not engineered for this.
\begin{lemma}\label{lemma:MG}
Retain the setting of \emph{Lemma 3.1} in \cite{DT} and consider fundamental solutions $\mathbf{J}^{N},\bar{\mathbf{J}}^{N}$ defined prior to the statement of \emph{Proposition \ref{prop:CtifySFS}}. Recall \emph{Definition \ref{definition:tShortweights}} and consider the following in \emph{Lemma 3.1} in \cite{DT} in which $0\leq\mathfrak{t}_{1}\leq\mathfrak{t}_{2}$ are fixed. In what follows, let $\lfloor R\rfloor$ be the largest element in $N^{-2}\Z$ that is bounded above by $R$ to account for the $N^{2}$ time-scaling.
\begin{itemize}[leftmargin=*]
\item We define the following local quadratic function of $\varphi:\R_{\geq0}\times\Z^{2}\to\R$ where $\mathfrak{m}$ is the uniformly bounded maximal jump-length:
\small\begin{align}
\wt{\varphi}_{R,x,w}^{\mathfrak{t}_{1},\mathfrak{t}_{2}} \ &\overset{\bullet}= \ {\sup}_{\mathfrak{r}'\in[\mathfrak{t}_{1},\mathfrak{t}_{2}): \ \lfloor\mathfrak{r}'\rfloor=\lfloor R\rfloor}{\sup}_{|\mathfrak{j}|\leq\mathfrak{m}} |\varphi_{\mathfrak{r}',x,w+\mathfrak{j}}\varphi_{\mathfrak{r}',x,w}|.
\end{align}\normalsize\normalsize
\end{itemize}
Given any $p\geq1$ and $\mathfrak{t}_{1}\leq\mathfrak{t}_{2}$ and $\varphi:\R_{\geq0}\times\Z^{2}\to\R$, all deterministic, we have, again recalling notation in \emph{Definition \ref{definition:tShortweights}},
\small\begin{align}
\|\int_{\mathfrak{t}_{1}}^{\mathfrak{t}_{2}}\sum_{w\in\Z}\varphi_{R,x,w} \cdot \mathbf{J}_{S,T+R,w,y}^{N} \d\xi_{R,w}^{N}\|_{\omega;2p}^{2} \ &\lesssim_{p,\mathfrak{m}} \ \int_{\mathfrak{t}_{1}}^{\mathfrak{t}_{2}}N\left(\sup_{\mathfrak{t}_{1}\leq\mathfrak{r}\leq\mathfrak{t}_{2}}\sup_{w\in\Z}e_{N,w,y}^{2}\|\mathbf{J}_{S,T+\mathfrak{r},w,y}^{N}\|_{\omega;2p}^{2}\right)\sum_{w\in\Z}\wt{\varphi}_{R,x,w}^{\mathfrak{t}_{1},\mathfrak{t}_{2}}e_{N,w,y}^{-2} \ \d R. \label{eq:MGII}
\end{align}\normalsize\normalsize
We have the same bound after replacing $\mathbf{J}^{N}$ by $\bar{\mathbf{J}}^{N}$, $\Z$ by $\mathbb{T}_{N}$, $e_{N,w,y}$ by $e_{N,w,y}^{\mathrm{cpt}}$, $\varphi$ by $\bar{\varphi}: \R_{\geq0}\times\mathbb{T}_{N}^{2}\to\R$, and $\wt{\varphi}^{\mathfrak{t}_{1},\mathfrak{t}_{2}}$ by
\small\begin{align}
\wt{\bar{\varphi}}_{R,x,w}^{\mathfrak{t}_{1},\mathfrak{t}_{2}} \ &\overset{\bullet}= \ {\sup}_{\mathfrak{r}'\in[\mathfrak{t}_{1},\mathfrak{t}_{2}): \ \lfloor\mathfrak{r}'\rfloor=\lfloor R\rfloor}{\sup}_{|\mathfrak{j}|\leq\mathfrak{m}} \mathbf{1}_{w+\mathfrak{j}\in\mathbb{T}_{N}}|\bar{\varphi}_{\mathfrak{r}',x,w+\mathfrak{j}}\bar{\varphi}_{\mathfrak{r}',x,w}|.
\end{align}\normalsize\normalsize
Recall $\mathbf{Y}^{N}$ from the beginning of \emph{Section \ref{section:KPZ3}} and recall $\mathbf{D}^{N}$ in \emph{Definition \ref{definition:KPZ12}}. We also have the following estimates for $\mathbf{X}^{N}=\mathbf{Y}^{N},\mathbf{D}^{N}$ and for any uniformly bounded adapted process $\mathbf{R}^{N}:\R_{\geq0}\times\mathbb{T}_{N}\to\R$:
\begin{subequations}
\small\begin{align}
\|\int_{\mathfrak{t}_{1}}^{\mathfrak{t}_{2}}\sum_{w\in\mathbb{T}_{N}}\bar{\varphi}_{R,x,w} \cdot \mathbf{X}_{R,w}^{N}\d\xi_{R,w}^{N}\|_{\omega;2p}^{2} \ &\lesssim_{p,\mathfrak{m}} \ \int_{\mathfrak{t}_{1}}^{\mathfrak{t}_{2}} N\left(\sup_{w\in\mathbb{T}_{N}}\|\mathbf{X}_{\lfloor R\rfloor,w}^{N}\|_{\omega;2p}^{2}\right)\sum_{w\in\mathbb{T}_{N}}\wt{\bar{\varphi}}_{R,x,w}^{\mathfrak{t}_{1},\mathfrak{t}_{2}} \ \d R \label{eq:MGIII} \\
\|\int_{\mathfrak{t}_{1}}^{\mathfrak{t}_{2}}\sum_{w\in\mathbb{T}_{N}}\bar{\varphi}_{R,x,w} \cdot \mathbf{R}_{R,w}^{N}\d\xi_{R,w}^{N}\|_{\omega;2p}^{2} \ &\lesssim_{p,\mathfrak{m}} \ \int_{\mathfrak{t}_{1}}^{\mathfrak{t}_{2}} N \sum_{w\in\mathbb{T}_{N}}\wt{\bar{\varphi}}_{R,x,w}^{\mathfrak{t}_{1},\mathfrak{t}_{2}} \ \d R. \label{eq:MGIV}
\end{align}\normalsize\normalsize
\end{subequations}
\end{lemma}
We first define the kernel for the total mass of a branching random walk to study short-time behavior of processes in Lemma \ref{lemma:MG}. We will first recall the discrete approximations of Laplacians given by $\mathscr{L}^{!!}$ in Definition \ref{definition:HEAT} and $\bar{\mathscr{L}}^{!!}$ in Definition \ref{definition:HEATCPT}.
\begin{definition}
Define the kernel $\mathbf{L}^{N}$ as the function on $\R_{\geq0}^{2}\times\Z^{2}$ satisfying $\mathbf{L}^{N}_{S,S,x,y} = \mathbf{1}_{x=y}$ and the \emph{deterministic} equation
\small\begin{align}
\partial_{T}\mathbf{L}_{S,T,x,y}^{N} \ &= \ \mathscr{L}^{!!}\mathbf{L}_{S,T,x,y}^{N}+N^{\frac32}{\sum}_{|k|\leq N^{\beta_{X}}}\mathbf{L}_{S,T,x+k,y}^{N}. \label{eq:MGL}
\end{align}\normalsize\normalsize
We define $\bar{\mathbf{L}}^{N}$ to be the solution to the same equation after replacing $\mathbf{L}^{N}$ by $\bar{\mathbf{L}}^{N}$ and $\mathscr{L}^{!!}$ by $\bar{\mathscr{L}}^{!!}$ and $\Z$ by $\mathbb{T}_{N}$. We will additionally define the maximal processes $\mathbf{L}^{N,\max}_{S,T,x,y} \overset{\bullet}= \sup_{0\leq\mathfrak{r}\leq N^{-2}}\mathbf{L}^{N}_{S,T+\mathfrak{r},x,y}$ and $\bar{\mathbf{L}}^{N,\max}_{S,T,x,y} \overset{\bullet}= \sup_{0\leq\mathfrak{r}\leq N^{-2}}\bar{\mathbf{L}}^{N}_{S,T+\mathfrak{r},x,y}$.
\end{definition}
The following auxiliary estimate for the two maximal kernels defined immediately above will help control their short-time behavior and, as we later explain in the proof of Lemma \ref{lemma:MG}, the short-time behavior of the processes of interest in the statement of Lemma \ref{lemma:MG} too. The proof of the following is an elementary deterministic idea for linear ODEs; it is a substitute for short-time estimates on the microscopic Cole-Hopf transform which are key to the martingale estimate in Lemma 3.1 in \cite{DT} but which are inaccessible without a formula in terms of the particle system (there is such formula for the microscopic Cole-Hopf transform).
\begin{lemma}\label{lemma:MG2}
Recall \emph{Definition \ref{definition:tShortweights}} and $\beta_{X}=\frac13+\e_{X,1}$. Define $\wt{e}_{N,x,y}=\exp(N^{-\beta_{X}}\mathbf{d}_{x,y})$ and $\wt{e}_{N,x,y}^{\mathrm{cpt}}=\exp(N^{-\beta_{X}}\mathbf{d}_{x,y}^{\mathrm{cpt}})$. We have
\small\begin{align}
{\sum}_{y\in\Z}\mathbf{L}^{N,\max}_{0,0,x,y}e_{N,x,y}^{2} \ + \ {\sum}_{y\in\mathbb{T}_{N}}\bar{\mathbf{L}}^{N,\max}_{0,0,x,y}(e_{N,x,y}^{\mathrm{cpt}})^{2} \ \leq \ {\sum}_{y\in\Z}\mathbf{L}^{N,\max}_{0,0,x,y}\wt{e}_{N,x,y}^{2} \ + \ {\sum}_{y\in\mathbb{T}_{N}}\bar{\mathbf{L}}^{N,\max}_{0,0,x,y}(\wt{e}_{N,x,y}^{\mathrm{cpt}})^{2} \ \lesssim_{\mathfrak{m}} \ 1.
\end{align}\normalsize\normalsize
\end{lemma}
\begin{proof}
The first bound follows via $e_{N,x,y}\leq\wt{e}_{N,x,y}$, and the same for $\mathrm{cpt}$-versions, as $N^{-3/4}\leq N^{-\beta_{X}}$; see Definition \ref{definition:tShortweights}. We first prove the claimed bound for $\mathbf{L}_{0,\mathfrak{t},x,y}^{N}$ for $\mathfrak{t}\leq N^{-2}$ instead of $\mathbf{L}^{N,\max}$. We use the PDE for $\mathbf{L}^{N}$ to get, with explanation after,
\small\begin{align}
{\sum}_{y\in\Z}|\partial_{T}\mathbf{L}_{0,T,x,y}^{N}|\wt{e}_{N,x,y}^{2} \ &\lesssim \ N^{2}{\sum}_{y\in\Z}\left({\sum}_{|k|\leq\mathfrak{m}}\mathbf{L}_{0,T,x+k,y}^{N}\wt{e}_{N,x+k,y}^{2} + N^{-\frac12}{\sum}_{|k|\leq N^{\beta_{X}}}\mathbf{L}_{0,T,x+k,y}^{N}\wt{e}_{N,x+k,y}^{2}\right) \nonumber \\
&\lesssim_{\mathfrak{m}} \ N^{2}(1+N^{-\frac12+\beta_{X}}){\sup}_{x\in\Z}{\sum}_{y\in\Z}\mathbf{L}_{0,T,x,y}^{N}\wt{e}_{N,x,y}^{2} \ \lesssim \ N^{2}{\sup}_{x\in\Z}{\sum}_{y\in\Z}\mathbf{L}_{0,T,x,y}^{N}\wt{e}_{N,x,y}^{2} \ \overset{\bullet}= \ N^{2}\Phi_{T}. \nonumber
\end{align}\normalsize\normalsize
We used the inequality $\wt{e}_{N,x,y}\lesssim\wt{e}_{N,x+k,y}$ for any $|k|\leq N^{\beta_{X}}$ as well as $N^{\beta_{X}}\leq N^{1/2}$. As the last display is uniform in $x\in\Z$,
\small\begin{align}
\Phi_{\mathfrak{t}} \ \lesssim \ \Phi_{0}+\sup_{x\in\Z}\int_{0}^{\mathfrak{t}}|\left({\sum}_{y\in\Z}\partial_{T}\mathbf{L}_{0,T,x,y}^{N}\wt{e}_{N,x,y}^{2}\right)| \ \d T \ \leq \ \Phi_{0}+\sup_{x\in\Z}\int_{0}^{\mathfrak{t}}{\sum}_{y\in\Z}|\partial_{T}\mathbf{L}_{0,T,x,y}^{N}|\wt{e}_{N,x,y}^{2} \ \d T \ \lesssim_{\mathfrak{m}} \ \Phi_{0}+N^{2}\int_{0}^{\mathfrak{t}}\Phi_{T} \ \d T. \nonumber
\end{align}\normalsize\normalsize
Above we bounded each term in the sup defining $\Phi_{\mathfrak{t}}$ by initial data plus the integral of the absolute value of its time-derivative, and we took a sup over $x\in\Z$ of that bound to control $\Phi_{\mathfrak{t}}$ itself. If $\mathfrak{t}\leq N^{-2}$, the Gronwall inequality and the simple observation $\Phi_{0}\lesssim1$ (note $\mathbf{L}_{0,0,x,y}^{N}=\mathbf{1}_{x=y}$) give the claim for $\mathbf{L}_{0,\mathfrak{t},x,y}^{N}$. For $\mathbf{L}^{N,\max}$, we have, with explanation after, a similar integral bound:
\small\begin{align}
|\mathbf{L}_{0,0,x,y}^{N,\max}| \ &\lesssim \ \mathbf{1}_{x=y} + {\sup}_{0\leq\mathfrak{t}\leq N^{-2}}\left(N^{2}\int_{0}^{\mathfrak{t}}{\sum}_{|k|\leq\mathfrak{m}}|\mathbf{L}_{0,\mathfrak{r},x+k,y}^{N}| \d\mathfrak{r}+N^{\frac32}\int_{0}^{\mathfrak{t}}{\sum}_{|k|\leq N^{\beta_{X}}}|\mathbf{L}_{0,\mathfrak{r},x+k,y}^{N}|\d\mathfrak{r}\right) \label{eq:MG21} \\
&\leq \ \mathbf{1}_{x=y} + N^{2}\int_{0}^{N^{-2}}{\sum}_{|k|\leq\mathfrak{m}}|\mathbf{L}_{0,\mathfrak{r},x+k,y}^{N}| \d\mathfrak{r} + N^{\frac32}\int_{0}^{N^{-2}}{\sum}_{|k|\leq N^{\beta_{X}}}|\mathbf{L}_{0,\mathfrak{r},x+k,y}^{N}|\d\mathfrak{r}. \label{eq:MG22}
\end{align}\normalsize\normalsize
The first bound follows by writing the $\mathbf{L}^{N}$-kernel as initial data plus the integral of the absolute value of its time-derivative and then applying its PDE. The second line follows from noting the integrals on the RHS of the first line are of non-negative terms. Let us now recall $\wt{e}_{N,x,y}\lesssim\wt{e}_{N,x+k,y}$ given any $|k|\leq N^{\beta_{X}}$. Thus, we multiply \eqref{eq:MG22} and the LHS of \eqref{eq:MG21} by these $\wt{e}_{N,x,y}^{2}$-weights, sum over $y\in\Z$, apply the result for $\mathbf{L}^{N}$ instead of $\mathbf{L}^{N,\max}$ established in the above paragraph, and then integrate-in-time on the interval $[0,N^{-2}]$ to get the desired bound for $\mathbf{L}^{N,\max}$. A similar argument works to prove the $\bar{\mathbf{L}}^{N,\max}$-estimates.
\end{proof}
\begin{proof}[Proof of \emph{Lemma \ref{lemma:MG}}]
Let us first prove \eqref{eq:MGII}. As $\mathbf{J}^{N}$ is adapted in the forwards time-variable, we can follow the proof of Lemma 3.1 in \cite{DT} starting with (3.5) except we replace the supremum over $|\mathfrak{j}|\leq\mathfrak{m}$ with a sum, and we estimate each term in this sum. We now find a replacement for (3.7) in \cite{DT} for short-time control on $\mathbf{J}^{N}$. Recall $\mathbf{J}^{N}$ from before Proposition \ref{prop:CtifySFS}.
\begin{itemize}[leftmargin=*]
\item The process $\mathbf{J}^{N}$ solves a linear stochastic equation with a discrete-type Laplacian $\mathscr{L}^{!!}$, some gradient terms with coefficients that are at most order $N^{3/2}$, a random multiplicative potential of order $N^{3/2}$, and jumps of order $N^{-1/2}$. Note this equation is time-inhomogeneous. However, we may still use a Feynman-Kac representation for $\mathbf{J}^{N}$ where the potential is the sum of the aforementioned order $N^{3/2}$-potential and order $N^{-1/2}$ jumps, while the underlying random walk is the one whose generator is $\mathscr{L}^{!!}$ plus gradient-operators in the $\mathbf{J}^{N}$-equation. It is easy to see the transition probability of this random walk is bounded by the $\mathbf{L}^{N}$-kernel, since gradient terms in the $\mathbf{J}^{N}$-equation are controlled by non-$\mathscr{L}^{!!}$-operator terms in the $\mathbf{L}^{N}$-equation. 
\item We make the bullet point above precise. First, we let $\mathfrak{y}^{\star}$ denote the random walk in the previous bullet point independent of the particle system and we denote expectation with respect to $\mathfrak{y}^{\star}$ by $\mathbb{E}$. We also define the accumulation of order $N^{-1/2}$-jumps and the order $N^{3/2}$-potential in the $\mathbf{J}^{N}$-equation along a sample $\mathfrak{y}^{\star}$-path below in which $\kappa\in\R_{>0}$ is a universal constant:
\small\begin{align}
\mathbf{J}_{\lfloor R\rfloor,\mathfrak{t}} \ \overset{\bullet}= \ |\left(\int_{0}^{\mathfrak{t}} \d\xi_{T+\lfloor R\rfloor+\mathfrak{t}-\mathfrak{s},\mathfrak{y}^{\star}_{T+\lfloor R\rfloor+\mathfrak{s}}}^{N}\right)| + \kappa N^{3/2}\mathfrak{t}.
\end{align}\normalsize\normalsize
The integral on the RHS above, which is with respect to $\mathfrak{s}$, is a sum of jumps plus the integral of the order $N^{3/2}$ compensation drift in $\d\xi^{N}$; see (2.4) in \cite{DT}. By the Feynman-Kac formula given by Proposition 7.1 in Appendix 1 of \cite{KL}, which holds for time-inhomogeneous processes after straightforward replacements in its proof, for $\mathfrak{t}\leq N^{-2}$ we have, with explanation after,
\small\begin{align}
(\mathbf{J}_{S,T+\lfloor R\rfloor+\mathfrak{t},w,y}^{N})^{2} \ &\lesssim \ \left(\mathbb{E}_{\mathfrak{y}_{T+\lfloor R\rfloor}^{\star}=w}\exp\left(\mathbf{J}_{\lfloor R\rfloor,\mathfrak{t}}\right)\mathbf{J}_{S,T+\lfloor R\rfloor,\mathfrak{y}_{T+\lfloor R\rfloor+\mathfrak{t}}^{\star},y}^{N}\right)^{2} \nonumber \\
&\leq \ \left(\mathbb{E}_{\mathfrak{y}_{T+\lfloor R\rfloor}^{\star}=w}\exp\left(2\mathbf{J}_{\lfloor R\rfloor,\mathfrak{t}}\right)\right)\left(\mathbb{E}_{\mathfrak{y}_{T+\lfloor R\rfloor}^{\star}=w}|\mathbf{J}_{S,T+\lfloor R\rfloor,\mathfrak{y}_{T+\lfloor R\rfloor+\mathfrak{t}}^{\star},y}^{N}|^{2}\right) \nonumber \\
&\lesssim \ \left(\mathbb{E}_{\mathfrak{y}_{T+\lfloor R\rfloor}^{\star}=w}\exp\left(2\mathbf{J}_{\lfloor R\rfloor,\mathfrak{t}}\right)\right){\sum}_{z\in\Z}\mathbf{L}_{0,0,w,z}^{N,\max}|\mathbf{J}_{S,T+\lfloor R\rfloor,z,y}^{N}|^{2}. \label{eq:MG1}
\end{align}\normalsize\normalsize
The first estimate is the Feynman-Kac formula because $\mathbf{J}_{\lfloor R\rfloor,\mathfrak{t}}$ bounds all jumps and potentials in the $\mathbf{J}^{N}$-equation. The second bound follows by the Cauchy-Schwarz inequality. The third/final bound follows by recalling the transition probability of $\mathfrak{y}^{\star}$ is controlled by the $\mathbf{L}^{N}$-kernel, as noted in the first bullet point, which is then controlled by the $\mathbf{L}^{N,\max}$-kernel. This will be our replacement for (3.7) in \cite{DT} because $\mathbf{L}^{N,\max}$ is deterministic so beyond the first exponential factor in\eqref{eq:MG1} the randomness in \eqref{eq:MG1} is $\lfloor R\rfloor$-measurable. We also have the following conditional expectation estimate, where $\mathfrak{n}_{\lfloor R\rfloor,\mathfrak{t},w}$ denotes the number of times that any Poisson clock across $w\in\Z$ rings in the interval $T+\lfloor R\rfloor+[0,\mathfrak{t}]$ and $\E_{\lfloor R\rfloor}$ conditions on $\lfloor R\rfloor$-data. Here $\kappa\in\R_{>0}$ is, again, universal, and $\mathfrak{t}\leq N^{-2}$, and $p\in\R_{\geq1}$. We give more explanation for each bound below after:
\small\begin{align}
\E_{\lfloor R\rfloor}|\mathfrak{n}_{\lfloor R\rfloor,\mathfrak{t},w}|^{2p}\mathbb{E}_{\mathfrak{y}_{T+\lfloor R\rfloor}^{\star}=w}\exp\left(2p\mathbf{J}_{\lfloor R\rfloor,\mathfrak{t}}\right) \ &\lesssim \ {\sum}_{\mathfrak{k}\in\Z_{\geq1}} \mathrm{e}^{-\mathfrak{k}} \E_{\lfloor R\rfloor}\left(|\mathfrak{n}_{\lfloor R\rfloor,\mathfrak{t},w}|^{2p}\exp\left(2p\kappa N^{-1/2}{\sum}_{z\in w+\llbracket-10N^{\beta_{X}}\mathfrak{k},10N^{\beta_{X}}\mathfrak{k}\rrbracket}\mathfrak{n}_{\lfloor R\rfloor,\mathfrak{t},z}\right)\right) \nonumber \\
&\lesssim_{p} \ \mathfrak{t}{\sum}_{\mathfrak{k}\in\Z_{\geq1}}\mathrm{e}^{-\frac12\mathfrak{k}} \ \lesssim \ \mathfrak{t}. \label{eq:MG2}
\end{align}\normalsize\normalsize
First recall the $\mathfrak{y}^{\star}$-walk has transition probability bounded by $\mathbf{L}^{N,\max}$. The first inequality follows by conditioning on $\mathfrak{y}^{\star}$ to hit $w+\llbracket-10N^{\beta_{X}}\mathfrak{k},10N^{\beta_{X}}\mathfrak{k}\rrbracket\setminus\llbracket-10N^{\beta_{X}}(\mathfrak{k}-1),10N^{\beta_{X}}(\mathfrak{k}-1)\rrbracket$, which happens with probability $\mathscr{O}(\mathrm{e}^{-\mathfrak{k}})$ because of the exponential tail estimate on $\mathbf{L}^{N,\max}$ in Lemma \ref{lemma:MG2}, but stay in $w+\llbracket-10N^{\beta_{X}}\mathfrak{k},10N^{\beta_{X}}\mathfrak{k}\rrbracket$, and then bounding $\mathbf{J}_{\lfloor R\rfloor,\mathfrak{t}}$ for this $\mathfrak{y}^{\star}$-walk-trajectory by the total number of ringings $\mathfrak{n}_{\lfloor R\rfloor,\mathfrak{t},z}$ across all points $z\in w+\llbracket-10N^{\beta_{X}}\mathfrak{k},10N^{\beta_{X}}\mathfrak{k}\rrbracket$. The second bound follows by elementary Poisson calculations as in (3.8) in \cite{DT}. We note the exponential in $\E_{\lfloor R\rfloor}$, per-sum-index $\mathfrak{k}\in\Z_{\geq1}$, is the exponential of a Poisson variable, with speed of order $N^{\beta_{X}}\mathfrak{k}$, scaled by order $N^{-1/2}$. The Laplace transform of such a Poisson variable is exponential in $\mathscr{O}_{p}(N^{-1/2+\beta_{X}}\mathfrak{k})$, which is then dwarfed by the $\mathrm{e}^{-\mathfrak{k}}$-factors in the $\mathfrak{k}\in\Z_{\geq1}$-sum.
\end{itemize}
We proceed after (3.5) in \cite{DT} but having replaced the supremum over $|\mathfrak{j}|\leq\mathfrak{m}$ with a sum and skipping (3.6) in \cite{DT}. Namely, by \eqref{eq:MG1}, the conditional expectation estimate \eqref{eq:MG2}, and the $\lfloor R\rfloor$-measurable nature of the second factor in \eqref{eq:MG1}, we show the following parallel to the bound after (3.8) in \cite{DT}; the $N$-factor comes by change-of-variables to account for the $N^{2}$-speed:
\small\begin{align}
\|\int_{\mathfrak{t}_{1}}^{\mathfrak{t}_{2}}\sum_{w\in\Z}\varphi_{R,x,w} \cdot \mathbf{J}_{S,T+R,w,y}^{N} \d\xi_{R,w}^{N}\|_{\omega;2p}^{2} \ &\lesssim_{p,\mathfrak{m}} \ \sum_{|\mathfrak{j}|\leq\mathfrak{m}}\int_{\mathfrak{t}_{1}}^{\mathfrak{t}_{2}}N\sum_{w\in\Z}\wt{\varphi}_{R,x,w}^{\mathfrak{t}_{1},\mathfrak{t}_{2}} \cdot \|\sum_{z\in\Z}\mathbf{L}_{0,0,w+\mathfrak{j},z}^{N,\max}|\mathbf{J}_{S,T+\lfloor R\rfloor,z,y}^{N}|^{2}\|_{\omega;p} \ \d R \\
&\lesssim_{p,\mathfrak{m}} \ \sum_{|\mathfrak{j}|\leq\mathfrak{m}}\int_{\mathfrak{t}_{1}}^{\mathfrak{t}_{2}}N\sum_{w\in\Z}\wt{\varphi}_{R,x,w}^{\mathfrak{t}_{1},\mathfrak{t}_{2}} \cdot \sum_{z\in\Z}\mathbf{L}_{0,0,w+\mathfrak{j},z}^{N,\max}\|\mathbf{J}_{S,T+\lfloor R\rfloor,z,y}^{N}\|_{\omega;2p}^{2} \ \d R \label{eq:MGII4} \\
&\lesssim \ \sum_{|\mathfrak{j}|\leq\mathfrak{m}}\int_{\mathfrak{t}_{1}}^{\mathfrak{t}_{2}}N\sum_{w\in\Z}\wt{\varphi}_{R,x,w}^{\mathfrak{t}_{1},\mathfrak{t}_{2}} e_{N,w,y}^{-2} \cdot e_{N,w,y}^{2}\sum_{z\in\Z}\mathbf{L}_{0,0,w+\mathfrak{j},z}^{N,\max}\|\mathbf{J}_{S,T+\lfloor R\rfloor,z,y}^{N}\|_{\omega;2p}^{2} \ \d R. \label{eq:MGII5}
\end{align}\normalsize\normalsize
The bound \eqref{eq:MGII4} follows from the previous line combined with the triangle inequality for the $\|\|_{\omega;2p}$-norm; for this step recall the $\mathbf{L}^{N,\max}$-kernel is deterministic. The final bound \eqref{eq:MGII5} follows from multiplying by $e_{N,w,y}^{-2}$ and $e_{N,w,y}^{2}$. At this point, it suffices to use the following in which we observe $e_{N,w,y}^{2}\lesssim e_{N,w,z}^{2}e_{N,z,y}^{2}\lesssim e_{N,w+\mathfrak{j},z}^{2}e_{N,z,y}^{2}$ for any $|\mathfrak{j}|\leq\mathfrak{m}$; this follows by definition of these exponential weights in Definition \ref{definition:tShortweights}, the bound $\mathfrak{m}\lesssim1$, and elementary triangle inequality. We deduce
\small\begin{align}
e_{N,w,y}^{2}{\sum}_{z\in\Z}\mathbf{L}_{0,0,w+\mathfrak{j},z}^{N,\max}\cdot\|\mathbf{J}_{S,T+\lfloor R\rfloor,z,y}^{N}\|_{\omega;2p}^{2} \ &\lesssim \ {\sum}_{z\in\Z}\mathbf{L}_{0,0,w+\mathfrak{j},z}^{N,\max}e_{N,w+\mathfrak{j},z}^{2} \cdot e_{N,z,y}^{2}\|\mathbf{J}_{S,T+\lfloor R\rfloor,z,y}^{N}\|_{\omega;2p}^{2} \nonumber \\
&\lesssim \ {\sup}_{\substack{\mathfrak{t}_{1}\leq \mathfrak{r}\leq\mathfrak{t}_{2} \\ z \in \Z}}e_{N,z,y}^{2}\|\mathbf{J}_{S,T+\mathfrak{r},z,y}^{N}\|_{\omega;2p}^{2}. \nonumber
\end{align}\normalsize\normalsize
The second estimate in the last display follows via pulling out the space-time supremum of the $\|\|^{2}$-factor in the $\Z$-summation, and then estimating the remaining $\Z$-summation above using Lemma \ref{lemma:MG2}. If we plug the above display into \eqref{eq:MGII5} and multiply by $3\mathfrak{m}$ to bound the sum over $|\mathfrak{j}|\leq\mathfrak{m}$, then we get \eqref{eq:MGII}. To establish the estimate \eqref{eq:MGII} but with compactification replacements, the same argument works if we replace $\Z$ by $\mathbb{T}_{N}$, $\mathscr{L}^{!!}$ by $\bar{\mathscr{L}}^{!!}$, gradients on $\Z$ by those on $\mathbb{T}_{N}$, and $\mathbf{L}^{N,\max}$ by $\bar{\mathbf{L}}^{N,\max}$.

To establish \eqref{eq:MGIII}, observe that $\mathbf{X}^{N}$ solves a similar stochastic equation as $\bar{\mathbf{J}}^{N}$, so that \eqref{eq:MGII4} holds upon replacing $\mathbf{J}^{N}$ by $\mathbf{X}^{N}$ including the necessary replacements $\wt{\varphi}$ by $\wt{\bar{\varphi}}$ and $\Z$ by $\mathbb{T}_{N}$ and $\mathbf{L}^{N,\max}$ by $\bar{\mathbf{L}}^{N,\max}$. At that point we pull out the supremum over $z\in\Z$ of $\|\mathbf{X}^{N}_{\lfloor R\rfloor,z}\|_{\omega;2p}^{2}$ and estimate the 1-norm of the $\bar{\mathbf{L}}^{N,\max}$-kernel with Lemma \ref{lemma:MG2}. This provides \eqref{eq:MGIII} after again multiplying the resulting estimate by $3\mathfrak{m}$. To get \eqref{eq:MGIV}, we actually instead follow the proof for Lemma 3.1 in \cite{DT}, but with a replacement of $\Z$ by $\mathbb{T}_{N}$ until (3.5) therein, so we have (3.5) therein but replacing the $Z$-process with $\mathbf{R}$. At that point, we drop the $\mathbf{R}$-factors because they are uniformly bounded and proceed like in the rest of the proof for Lemma 3.1 in \cite{DT} starting at (3.8), though for our version of (3.8) here we can do without the conditioning. This proves \eqref{eq:MGIV} and therefore finishes the proof.
\end{proof}
%
%
%
\section{Notation}
\begin{itemize}[leftmargin=*]
\item ``Arbitrarily small/large but universal" constants are taken as small/large as we want but independent of $N$. Also, we take $N$ sufficiently large depending only on some universal constants to make certain statements true, such as $100 \leq N^{\e}$.
\item We always let $\mathfrak{m}\in\Z_{>0}\cup\{+\infty\}$ denote the maximal jump-length in the particle system.
\item Provided any $a,b\in\R$, we will define the discretized interval $\llbracket a,b\rrbracket \overset{\bullet}= [a,b]\cap\Z$.
\item We use big-Oh notation $\mathscr{O}$, and $\lesssim$ is synonymous with $\mathscr{O}$. Dependences of implied constants in $\lesssim$ and $\mathscr{O}$ will be recorded as subscripts. For example $\lesssim_{I}$ means the implied constant depends only on a set $I$. We also use the pre-factor $\kappa_{I}$ synonymously with $\lesssim_{I}$ or $\mathscr{O}_{I}$ so that the constants $\kappa_{I}$ and $\kappa_{i_{1},\ldots,i_{\mathfrak{n}}}$ depend only on the set $I=\{i_{1},\ldots,i_{\mathfrak{n}}\}$.
\item For any $X\in\Z$ define the shift-map $\tau_{X}:\Omega_{\Z}\to\Omega_{\Z}$ via $(\tau_{X}\eta)_{Z} = \eta_{Z+X}$. For any $\varphi:\Omega_{\Z}\to\R$, set $\tau_{X}\varphi = \varphi \circ \tau_{X}:\Omega_{\Z}\to\R$.
\item For any $\varphi:\Omega_{\Z}\to\R$, its support is defined as the smallest subset $\mathfrak{I}\subseteq\Z$ such that $\varphi$ depends only on spins $\eta_{x}$ for $x\in\mathfrak{I}$.
\item Provided any \emph{finite set} $I$, we define the averaged sum $\wt{{{\sum}}}_{i\in I} = |I|^{-1}{{\sum}}_{i\in I}$ where $|I|$ denotes the cardinality/size of $I$.
\item For any operator $\mathscr{T}$, each superscript $!$ denotes scaling by $N$. In particular, define the discrete gradient $\grad_{k}\varphi_{x} = \varphi_{x+k}-\varphi_{x}$ for any $k\in\Z$ and the discrete Laplacian $\Delta_{k}\varphi_{x} = \varphi_{x+k}+\varphi_{x-k}-2\varphi_{x}$ for any $k\in\Z$. Define $\grad_{k}^{!} = N\grad_{k}$ and $\Delta_{k}^{!!} = N^{2}\Delta_{k}$.
\item Define the \emph{torus} $\mathbb{T}_{N}\overset{\bullet}=\llbracket-N^{5/4+\e_{\mathrm{cpt}}},N^{5/4+\e_{\mathrm{cpt}}}\rrbracket\subseteq\Z$ with $\e_{\mathrm{cpt}}\in\R_{>0}$ arbitrarily small but universal.
\item For $k\in\Z$ define $\bar{\grad}_{k}\varphi_{x} = \varphi_{x\wt{+}k}-\varphi_{x}$ and $\bar{\Delta}_{k}\varphi_{x} = \varphi_{x\wt{+}k}+\varphi_{x\wt{-}k}-2\varphi_{x}$. Here $\wt{+}$/$\wt{-}$ denote the induced addition/subtraction on the \emph{torus} $\mathbb{T}_{N}$. This addition/subtraction notation is specific to this bullet point. Define $\bar{\grad}_{k}^{!}=N\bar{\grad}_{k}$ and $\bar{\Delta}^{!!}_{k}=N^{2}\bar{\Delta}_{k}$.
\item The macroscopically smooth cutoff function $\chi$ is catered to the torus $\mathbb{T}_{N}$; it is defined at the beginning of Section \ref{section:Ctify}.
\item For any $\mathfrak{t}\in\R_{\geq0}$ and any subset $\mathbb{X}\subseteq\R$ we define a discretization $\mathfrak{I}_{\mathfrak{t}}\overset{\bullet}=\{\mathfrak{t}\cdot\mathfrak{j}N^{-100}\}_{\mathfrak{j}=0}^{N^{100}}$ and the norms/norm-type quantities
\begin{align*}
\|\varphi\|_{\mathfrak{t};\mathbb{X}}\ &\overset{\bullet}= \ \sup_{0\leq\mathfrak{s}\leq\mathfrak{t}}\sup_{x\in\mathbb{X}}|\varphi_{\mathfrak{s},x}| \ \quad\mathrm{and}\quad [\varphi]_{\mathfrak{t};\mathbb{X}}\ \overset{\bullet}= \ \sup_{\mathfrak{s}\in\mathfrak{I}_{\mathfrak{t}}}\sup_{x\in\mathbb{X}}|\varphi_{\mathfrak{s},x}| \quad\mathrm{and}\quad \langle\varphi\rangle_{\mathfrak{t};\mathbb{X}} \ \overset{\bullet}= \ 1+\|\varphi\|_{\mathfrak{t};\mathbb{X}}^{2}.
\end{align*}
\item Define $\mathbf{d}_{x,y}=|x-y|$ as the usual absolute value distance for $x,y\in\Z$ and $\mathbf{d}^{\mathrm{cpt}}_{x,y}$ as geodesic distance on the torus $\mathbb{T}_{N}$.
\item For any $p\in\R_{\geq1}$, define $\|\varphi\|_{\omega;p}$ as the $p$-norm \emph{over all randomness in the exclusion process} including the initial measure.
\item Provided any $S,T\in\R_{\geq0}$, we define $\mathfrak{s}_{S,T} \overset{\bullet}= |T-S|$ usually as an integrable singularity in a time-integral.
\end{itemize}
%
%
%


\end{document}